\numberwithin{equation}{section}
\newtheorem{letterthm}{Theorem}
\newtheorem{thm}{Theorem}[section]
\newtheorem{lem}[thm]{Lemma}
\newtheorem{cor}[thm]{Corollary}
\newtheorem{prop}[thm]{Proposition}
\theoremstyle{definition}
\newtheorem{rem}[thm]{Remark}
\newtheorem{remark}[thm]{Remarks}
\newtheorem{example}[thm]{Example}
\newtheorem{df}[thm]{Definition}
\newtheorem*{newclaim}{Claim}
\newtheorem*{question}{Question}
\newcommand{\R}{\mathbf{R}}
\newcommand{\C}{\mathbf{C}}
\newcommand{\Z}{\mathbf{Z}}
\newcommand{\F}{\mathbf{F}}
\newcommand{\N}{\mathbf{N}}
\newcommand{\B}{\mathbf{B}}
\newcommand{\cH}{\mathcal{H}}
\newcommand{\cF}{\mathcal{F}}
\newcommand{\cL}{\mathcal{L}}
\newcommand{\cK}{\mathcal{K}}
\newcommand{\cN}{\mathcal{N}}
\newcommand{\id}{\text{\rm id}}
\newcommand{\Aut}{\operatorname{Aut}}
\newcommand{\rL}{\mathord{\text{\rm L}}}
\newcommand{\Aff}{\mathord{\text{\rm Aff}}}
\newcommand{\Isom}{\mathord{\text{\rm Isom}}}
\newcommand{\supp}{\mathord{\text{\rm supp}}}
\newcommand{\Mod}{\mathord{\text{\rm Mod}}}
\newcommand{\rd}{\: \mathrm{d}}
\newcommand{\ri}{\mathrm{i}}
\newcommand{\II}{{\rm II}}
\newcommand{\III}{{\rm III}}
\DeclareMathOperator*\lowlim{\underline{lim}}
\DeclareMathOperator*\uplim{\overline{lim}}
\begin{document}

\title[Ergodic theory of affine isometric actions on Hilbert spaces]{Ergodic theory of affine isometric actions\\ on Hilbert spaces}


\author{Yuki Arano}
\email{y.arano@math.kyoto-u.ac.jp}
\thanks{YI is supported by JSPS KAKENHI Grant Number JP18K13424.}
\address{Kyoto University, 606-8502 Japan}

\author{Yusuke Isono}
\email{isono@kurims.kyoto-u.ac.jp}
\thanks{YI is supported by JSPS KAKENHI Grant Number JP17K14201.}
\address{RIMS, Kyoto University, 606-8502 Japan}

\author{Amine Marrakchi}
\email{amine.marrakchi@ens-lyon.fr}
\thanks{AM was a JSPS International Research Fellow (PE18760)}
\address{CNRS, UMPA, 69007 France}

\subjclass[2010]{37A40, 20E08, 20F65, 28C20, 37A50}

\keywords{affine isometric action; hilbert space; gaussian measure, nonsingular action; type III; phase transition; trees; orthogonal representation; property (T)}

\begin{abstract}
The classical Gaussian functor associates to every orthogonal representation of a locally compact group $G$ a probability measure preserving action of $G$ called a Gaussian action. In this paper, we generalize this construction by associating to every affine isometric action of $G$ on a Hilbert space, a one-parameter family of nonsingular Gaussian actions whose ergodic properties are related in a very subtle way to the geometry of the original action. We show that these nonsingular Gaussian actions exhibit a phase transition phenomenon and we relate it to new quantitative invariants for affine isometric actions. We use the Patterson-Sullivan theory as well as Lyons-Pemantle work on tree-indexed random walks in order to give a precise description of this phase transition for affine isometric actions of groups acting on trees. We also show that every locally compact group without property (T) admits a nonsingular Gaussian that is free, weakly mixing and of stable type $\III_1$.
\end{abstract}

\maketitle


\section{Introduction and main results}
The theory of orthogonal (or unitary) representations of locally compact groups on Hilbert spaces is a central part of representation theory which has deep connections with many other topics in mathematics and physics. Of particular interest is the connection with ergodic theory. Indeed, starting from a probability measure preserving (pmp) action $\sigma$ of a group $G$ on a probability space $(X,\mu)$ one can construct an orthogonal representation of $G$ on $\rL^2(X,\mu)$ called the \emph{Koopman representation} of $\sigma$ which is by now a fundamental tool in ergodic theory. In the other direction, starting from an orthogonal representation $\pi$ of $G$ on a Hilbert space $\cH$, it is possible to construct a pmp action of $G$ called the \emph{Gaussian action} of $\pi$ which has many applications \cite{CW80, Sc81, Sc96}. 

Recently, the study of affine isometric actions of locally compact groups on Hilbert spaces became also an important topic in representation theory and geometric group theory (see \cite{BHV08} and the references therein). The main goal of this paper is to show that this new chapter of representation theory also has an interesting connection with ergodic theory. The fact that such a connection should exist was already hinted by a very recent work of Vaes and Wahl \cite{VW18} where they related nonsingular Bernoulli actions of a given discrete group to the cohomology of its left regular representation (see also \cite{BKV19}). Inspired by their work, we provide  in this article a different and more general construction which associates to every affine isometric action of a locally compact group $G$ on a real affine Hilbert space a \emph{nonsingular Gaussian action}. Let us explain this construction.

Let $\cH$ be a real affine Hilbert space. Suppose first that $\cH$ is finite dimensional. Then, for each $x \in \cH$, one can consider the standard Gaussian probability measure $\mu_x$ on $\cH$ centered at $x$. Observe that for every affine isometry $g \in \Isom(\cH)$ one has $g_* \mu_x=\mu_{gx}$ for all $x \in \cH$. It turns out that this situation generalizes perfectly to the infinite dimensional case, with the only difference that the probability measures $(\mu_x)_{x \in \cH}$ can no longer be defined on the space $\cH$ itself\footnote{The reason is that the Gaussian measure $\mu_x(B)$ of the unit ball centered at $x$ goes to $0$ when the dimension of $\cH$ goes to infinity}. Indeed, we show (Section \ref{Affine Gaussian functor}) that one can naturally associate to every real affine Hilbert space $\cH$ a family of equivalent probability measures $(\mu_x)_{x \in \cH}$ on some standard borel space $\widehat{\cH}$ which behaves exactly as a family of Gaussian probability measures. The pair $(\widehat{\cH}, (\mu_x)_{x \in \cH})$ can be characterized uniquely (up to null-sets) by the property that every continuous affine function $f : \cH \rightarrow \R$ defines a random variable $\widehat{f}$ on $\widehat{\cH}$ which has a Gaussian dsitribution (not necessarily centered) with respect to $\mu_x$ for every $x \in \cH$. We then observe that for every affine isometry $g \in \Isom(\cH)$, there exists a unique (up to null-sets) measurable map $\widehat{g} : \widehat{\cH} \rightarrow \widehat{\cH}$ such that $\widehat{g}_* \mu_x=\mu_{gx}$ for all $x \in \cH$. It follows that every affine isometric action $\alpha : G \curvearrowright \cH$ of a locally compact group $G$ induces a \emph{nonsingular Gaussian action} $\widehat{\alpha} : G \curvearrowright \widehat{\cH}$ which preserves the measure class of $(\mu_x)_{x \in \cH}$. Of course, our construction is new only when $\alpha$ has \emph{no fixed point} in $\cH$. Indeed, when the affine isometric action $\alpha$ fixes a point $x \in \cH$, the action $\widehat{\alpha}$ will preserve the probability measure $\mu_x$ and in that case, by declaring $x$ to be the origin of the Hilbert space, one recovers the classical pmp Gaussian action associated to an orthogonal representation.  Recall that by Guichardet's theorem, a polish locally compact group $G$ admits an affine isometric action without fixed points if and only if it does not have Kazhdan's property (T). We thus obtain a new and large class of nonsingular actions for all groups without property (T). Our main problem is the following.

\begin{question} What are the ergodic properties of the Gaussian action $\widehat{\alpha}$ and how do they relate to the geometry of the original affine isometric action $\alpha$?
\end{question}

The special case of pmp Gaussian actions is very well understood (see \cite{Bo14}). For example, one knows that the pmp Gaussian action associated to an orthogonal representation $\pi$ is ergodic if and only if $\pi$ is \emph{weakly mixing}, it has no finite-dimensional subrepresentation. As we will see, for nonsingular Gaussian actions, the problem becomes much more subtle. Moreover, one can no longer give purely \emph{qualitative} statements. In order to explain why, we need to make the following key observation. Let $\cH$ be an affine Hilbert space. By simply rescaling the metric of $\cH$ by a parameter $t > 0$, one obtains a new affine Hilbert space $\cH^t$, and every affine isometric action $\alpha : G \curvearrowright \cH$ induces an affine isometric action $\alpha^t : G \curvearrowright \cH^t$. The key point is that when $\cH$ is \emph{infinite-dimensional}, the Gaussian actions $\widehat{\alpha}^t : G \curvearrowright \widehat{\cH}^t$ are in general not conjugate for different values of $t$. In fact, as we shall see, the behaviour of $\widehat{\alpha}^t$ can change dramatically when $t$ grows from $0$ to $\infty$ exhibiting a fascinating \emph{phase transition} phenomenon. Intuitively, high values of $t$ correspond to a cold ordered phase where one expects the orbits of the action $\widehat{\alpha}^t$ to be well-behaved, while small values of $t$ correspond to a hot disordered phase where the structure of the orbits becomes more chaotic. If we denote by $\alpha^0$ the linear part of $\alpha$, then we can also think of $\widehat{\alpha}^t$ as converging to the pmp Gaussian action $\widehat{\alpha}^0$ when $t \to 0$.


Let us start right away with a concrete example of such a phase transition. Let $T$ be a locally finite tree which we view as a metric space where the distance $d(x,y)$ between two vertices $x,y \in T$ is simply the length of the segment $[x,y]$. It is well-known that this metric $d$ is of \emph{negative-type}. This means that there is a unique way $\iota : T \rightarrow \cH$ to embed $T$ into a real affine Hilbert space $\cH$ such that:
\begin{itemize}
\item $d(x,y)=\| \iota(x)-\iota(y) \|^2$ for all $x,y \in T$.
\item The affine span of $\iota(T)$ is dense in $\cH$.
\end{itemize}
It follows that any automorphism of $T$ extends uniquely to an affine isometry of $\cH$. Now, let $\Gamma$ be a discrete group of automorphisms of $T$. Then the action of $\Gamma$ on $T$ extends uniquely to an affine isometric action $\alpha : \Gamma \curvearrowright \cH$. The following theorem completely settles the ergodicity and type of the Gaussian actions $\widehat{\alpha}^t$ except for one critical value of $t$. Moreover, we relate this critical value to the \emph{Poincar\'e exponent} of $\Gamma$, denoted by $\delta(\Gamma)$, which is a fundamental invariant of discrete isometry groups of hyperbolic spaces. 

\begin{letterthm} \label{letter tree Gaussian}
Let $T$ be a locally finite tree and $\Gamma < \Aut(T)$ a nonelementary discrete subgroup. Let $\alpha : \Gamma \curvearrowright \cH$ be the associated affine isometric action. Let $\delta:=\delta(\Gamma)$ be the infinimum of all $s > 0$ such that
$$ \sum_{ g \in \Gamma} e^{-sd(gx,y)} < + \infty$$
for some (hence any) $x,y \in T$. Then the Krieger type of the Gaussian actions $\widehat{\alpha}^t$ for $t > 0$ is given by:
\begin{center}
\renewcommand{\arraystretch}{1.5}
\begin{tabular}{l|l}
	 Value of $t$ \quad &  Ergodicity and type of $\widehat{\alpha}^t$ \\
	\hline
	$  t < 2 \sqrt{2 \delta}$ & Ergodic of type $\III_1$. \\
	$ t=2 \sqrt{2 \delta}$ & $?$ \\
	$t > 2 \sqrt{2\delta}$ & Not ergodic and of type $\rm I$ (it has a fundamental domain).\\

	\end{tabular}
	\renewcommand{\arraystretch}{1}
\end{center}
Moreover, we have:
\begin{enumerate}[ \rm (i)]
\item If $\delta < + \infty$, the actions $\widehat{\alpha}^t$ are pairwise non-conjugate for $t < 2 \sqrt{2 \delta}$.
\item  The action $\widehat{\alpha}^t$ is strongly ergodic (and in particular nonamenable) for $t$ small enough.
\end{enumerate}

\end{letterthm}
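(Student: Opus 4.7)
The plan is to express each regime as a convergence/divergence statement for the Poincar\'e series of $\Gamma$ via the identity $\|\alpha^t(g)\xi - \xi\|_{\cH^t}^2 = t^2 d(gx_0, x_0)$, where $x_0 \in T$ is a base vertex and $\xi = \iota(x_0)$. In the supercritical regime $t > 2\sqrt{2\delta}$ the Hellinger integral between $\mu_\xi$ and its translate is
\[
\int \sqrt{\frac{d\widehat{g}_*\mu_\xi}{d\mu_\xi}}\, d\mu_\xi = \exp\!\left(-\frac{\|\alpha^t(g)\xi - \xi\|_{\cH^t}^2}{8}\right) = e^{-t^2 d(gx_0,x_0)/8},
\]
so summing over $g \in \Gamma$ yields the Poincar\'e series at $s = t^2/8 > \delta$, which is finite. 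A standard Borel--Cantelli argument combined with the Cameron--Martin formula for the Radon--Nikodym cocycle of $\widehat{\alpha}^t$ then implies that $\widehat{\alpha}^t$ is dissipative and hence of type I with a fundamental domain.

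The subcritical regime $t < 2\sqrt{2\delta}$ requires genuine geometry. The Poincar\'e series diverges at $s = t^2/8$, so Patterson--Sullivan theory produces a $\Gamma$-quasi-invariant conformal density $\nu$ on $\partial T$ of dimension $\delta$ with Radon--Nikodym cocycle $e^{-\delta \beta(g^{-1}, \omega)}$, $\beta$ being the Busemann cocycle. For $\omega \in \partial T$, the Busemann function $b_\omega$ extends to a continuous affine function on $\cH$ and thus, by Section \ref{Affine Gaussian functor}, yields a Gaussian random variable $\widehat{b}_\omega$ on $\widehat{\cH^t}$. Coupling $(\widehat{\cH^t}, \mu_\xi)$ with $(\partial T, \nu)$ via the family $(\widehat{b}_\omega)_{\omega \in \partial T}$ and combining the ergodicity of $\Gamma \curvearrowright (\partial T, \nu)$ with the Gaussian fluctuations of $\widehat{b}_\omega$ should give both conservativity and ergodicity, and moreover should identify the essential range of the Radon--Nikodym cocycle of $\widehat{\alpha}^t$ as all of $\R$, establishing type $\III_1$.

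For the strong ergodicity claim (ii), the Koopman representation on $\rL^2(\widehat{\cH^t}) \ominus \C$ decomposes as a symmetric Fock-space direct sum of tensor powers of an orthogonal representation $\pi^t$ of $\Gamma$ naturally associated to $\alpha^t$. The Lyons--Pemantle analysis of tree-indexed random walks gives, for $t$ below an explicit threshold, a spectral radius bound for the related branching walk on $T$ that forbids the existence of almost invariant unit vectors in the tensor powers, hence strong ergodicity. For non-conjugacy (i), the parameter $t$ controls the exponential decay rate of the Hellinger integrals above, and thus a conjugacy invariant such as the $\ell^p$-integrability threshold of the Koopman matrix coefficients; this distinguishes the $\widehat{\alpha}^t$ for distinct $t \in (0, 2\sqrt{2\delta})$.

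The main obstacle is the subcritical regime. Patterson--Sullivan theory furnishes only one distinguished conformal density (at the critical exponent $\delta$), yet the theorem demands type $\III_1$ uniformly for every $t < 2\sqrt{2\delta}$; the interplay between Gaussian fluctuations on $\widehat{\cH^t}$ and the boundary measure $\nu$ must be calibrated so that the essential range calculation is robust across the whole open interval. The critical value $t = 2\sqrt{2\delta}$ is left open because its behavior depends on the finer dichotomy between divergence and convergence type for $\Gamma$ at the critical exponent.
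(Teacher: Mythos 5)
Your treatment of the regime $t>2\sqrt{2\delta}$ is essentially the paper's (Hellinger integrals plus the Gaussian Dirichlet domain / Borel--Cantelli, as in Theorem \ref{dirichlet domain} and Theorem \ref{affine dissipativity}), but the subcritical regime contains two concrete gaps. First, your geometric input rests on a false premise: a Busemann function $x\mapsto b_\omega(x,x_0)$ of the tree does \emph{not} extend to a continuous affine function on $\cH$. Along a geodesic ray $x_0,x_1,\dots$ converging to $\omega$ one has $b_\omega(x_n,x_0)=-n$ while $\|\iota(x_n)-\iota(x_0)\|=\sqrt{n}$ (since $d(x,y)=\|\iota(x)-\iota(y)\|^2$), so the putative linear part would have to have norm at least $\sqrt{n}$ for every $n$. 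Hence there is no Gaussian variable $\widehat{b}_\omega$ and no coupling of $\widehat{\cH}^t$ with $(\partial T,\nu)$ of the kind you describe. Second, even granting some coupling, divergence of the Poincar\'e series at $s=t^2/8$ only says $\sum_g\langle\mu^{1/2},g_*\mu^{1/2}\rangle=+\infty$, which by Proposition \ref{L2 criterion recurrent} is a \emph{necessary} condition for recurrence, not a sufficient one; the general bounds $\sqrt{2\delta}\leq t_{\rm diss}(\alpha)\leq 2\sqrt{2\delta}$ leave exactly the window $\sqrt{2\delta}\leq t<2\sqrt{2\delta}$ in which conservativity must be proved by other means. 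The paper closes this window with the Lyons--Pemantle theorem on tree-indexed Gaussian random walks (Example \ref{gaussian random walk}, Theorem \ref{dissipativity general type}): the pointwise dissipativity integral of Theorem \ref{affine dissipativity}(\ref{dissipativity integral}) diverges almost surely because the walk $S_v=\langle\varphi,v-\rho\rangle$ exceeds $\frac{t}{2}|v|$ infinitely often precisely when $t<2\sqrt{2\delta}$. Patterson--Sullivan theory is used only to produce compactly generated subgroups of general type with critical exponent close to $\delta$ and noncompact quotient (Theorem \ref{subgroup of large critical exponent}); it does not supply the conservativity or the type.

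The type $\III_1$ claim is asserted rather than proved: ``identify the essential range of the Radon--Nikodym cocycle as all of $\R$'' \emph{is} the theorem, and you acknowledge you cannot calibrate it across the whole interval. The paper's mechanism (Theorem \ref{general type proper weakly mixing}) is to restrict to a subgroup $H$ with $2\sqrt{2\delta(H)}>t$ stabilizing a proper subtree $S$, apply Schmidt--Walters (Theorem \ref{schmidt} via Lemma \ref{affine recurrent and mixing}) to the recurrent Maharam extension of $\widehat{\alpha}^t|_H$ to localize invariant functions into $\rL^\infty(\Mod(\widehat{\cK}))$ for the affine span $\cK$ of $S$, and then intersect over all conjugates $g\cK$ using $\bigcap_g gS=\emptyset$ and Proposition \ref{intersection maharam extension}. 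For (i), your proposed $\ell^p$-integrability threshold of Koopman matrix coefficients is never computed nor shown to separate the actions; the paper's Proposition \ref{dissipative nonconjugate} does it in one line with the rotation trick ($\widehat{\alpha}^t\otimes\widehat{\alpha}^r$ recurrent while $\widehat{\alpha}^s\otimes\widehat{\alpha}^r$ dissipative). For (ii), the symmetric Fock space decomposition you invoke describes the Koopman representation of the \emph{pmp} Gaussian action of the linear part, not of the nonsingular action $\widehat{\alpha}^t$; the paper instead uses stable spectral gap of $\alpha^0$ (a sum of quasi-regular representations $\ell^2(\Gamma/K)$ with $K$ finite, Proposition \ref{general type spectral gap}) together with the perturbative argument of Theorem \ref{strongly ergodic}.
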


The proof of Theorem \ref{letter tree Gaussian} relies on ingredients from geometry (Patterson-Sullivan theory for isometry groups of hyperbolic spaces \cite{Pa76, Su79}) and probability theory (Lyons and Pemantle work on tree-indexed random walks \cite{LP92}). 

\begin{example}
Let $\Gamma=\F_d$ where $d \geq 2$ and let it act on its Cayley tree $T$. It is then easy to see that $\delta=\log(2d-1)$. Thus, we know that $\widehat{\alpha}^t$ is ergodic of type $\III_1$ for all $t < 2\sqrt{2 \log(2d-1)}$ and has a fundamental domain for $t > 2 \sqrt{2\log(2d-1)}$. For this very specific example, we can also show that $\widehat{\alpha}^t$ has a fundamental domain at the critical value $t=2 \sqrt{2\delta}$ and we also show that $\widehat{\alpha}^t$ is nonamenable for all $t < 2 \sqrt{\delta}$ (but we do not know whether $\widehat{\alpha}^t$ is amenable or not for $2 \sqrt{\delta} \leq  t < 2 \sqrt{2\delta}$). The result of Theorem \ref{spectral radius cayley} suggests that there should be a second phase transition at $t=2\sqrt{\delta}$.
\end{example}

\begin{rem}
Recall that a nonsingular action $\sigma : G \curvearrowright X$ is \emph{weakly mixing} if the diagonal action $\sigma \otimes \rho : G \curvearrowright X \otimes Y$ is ergodic for every ergodic pmp action $\rho : G \curvearrowright (Y,\nu)$. If moreover $\sigma \otimes \rho$ is of type $\III_1$ for every ergodic pmp action $\rho : G \curvearrowright (Y,\nu)$, we say that $\sigma$ is of \emph{stable type $\III_1$} (this is equivalent to saying that the Maharam extension of $\sigma$ is weakly mixing). In Theorem \ref{letter tree Gaussian}, we actually show that $\widehat{\alpha}^t$ is weakly mixing of stable type $\III_1$ for all $t < 2 \sqrt{2\delta}$.
\end{rem}

\begin{rem}
In Theorem \ref{main trees Gaussian} we generalize Theorem \ref{letter tree Gaussian} by dealing with actions of locally compact groups on trees which are not proper. In that case, the Gaussian actions $\widehat{\alpha}^t$ will be of type $\II_\infty$ for large values of $t$. In Section \ref{section bernoulli trees} (which can be read independently from the rest of the paper), we also study a class of nonsingular Bernoulli actions which are discretized versions of the Gaussian actions of Theorem \ref{letter tree Gaussian}. For these nonsingular Bernoulli actions, we obtain an analog of Theorem \ref{letter tree Gaussian} where the type $\III_\lambda, \; \lambda \in ]0,1[$ appears.
\end{rem}

Let us now state some general results on Gaussian actions which will put Theorem \ref{letter tree Gaussian} into a larger perspective. A free nonsingular action $\sigma : G \curvearrowright X$ of a locally compact group is called \emph{dissipative} (or equivalently of type $\mathrm{I}$) if and only if it is conjugate to a nonsingular action of the form $G \curvearrowright G \otimes Y$ where $G$ acts only on the first coordinate by left translation. This really means that the orbits of $\sigma$ are perfectly ordered and well-behaved. In particular, when $G$ is discrete, this means that $\sigma$ admits a fundamental domain. On the opposite side, a free nonsingular action $\sigma$ is \emph{recurrent} (also called \emph{conservative}) if and only if for every set of positive measure $A \subset X$ and every compact set $K \subset G$, one can find $g \in G \setminus K$ such that $gA \cap A$ has positive measure. This implies that the structure of the orbits is rather chaotic and this situation is of course the most interesting to study from the ergodic theory point of view. Note that, by the Poincar\'e recurrence theorem, a probability measure preserving action of a non-compact group is always recurrent.

We now give a quite sharp estimate of when Gaussian actions are recurrent/dissipative. For this we need to introduce a new invariant for affine isometric actions. Let $\alpha : G \curvearrowright \cH$ be an affine isometric action of a locally compact group $G$. The \emph{(quadratic) Poincar\'e exponent} $\delta(\alpha) \in [0,+\infty]$ is defined as the infinimum of all $s > 0$ such that
$$ \int_G e^{-s\|gx-y\|^2} \rd g < +\infty, \quad x,y \in \cH.$$
This number does not depend on the choice of $x,y \in \cH$ and is defined by analogy with the Poincar\'e exponent of discrete groups of isometries of hyperbolic spaces. In fact, this is not just an analogy and both notions are actually related. For example, when $\alpha : \Gamma \curvearrowright \cH$ is the affine isometric action associated to a discrete automorphism group $\Gamma < \Aut(T)$ of a locally finite tree then $\delta(\alpha)=\delta(\Gamma)$.

The following theorem is inspired by the dissipativity criterion of Vaes and Wahl \cite{VW18} where the Poincar\'e exponent already appears implicitely.

\begin{letterthm} \label{letter dissipativity}
Let $\alpha : G \curvearrowright \cH$ be an affine isometric action of a locally compact group $G$. There exists $t_{\rm diss}(\alpha) \in [0,+\infty]$ such that the Gaussian action $\widehat{\alpha}^t$ is recurrent for all $t < t_{\rm diss}(\alpha)$ and dissipative for all $t > t_{\rm diss}(\alpha)$. Moreover, we have the following inequalities 
$$ \sqrt{2 \delta(\alpha)} \leq  t_{\rm diss}(\alpha) \leq 2 \sqrt{2 \delta(\alpha)}.$$
\end{letterthm}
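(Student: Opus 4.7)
The plan is to combine a monotonicity argument, deduced from functoriality of the Gaussian construction, with two moment estimates on the Radon--Nikodym cocycle of $\widehat{\alpha}^t$.

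First I establish monotonicity of dissipativity in $t$ and use it to define $t_{\rm diss}(\alpha)$. For $0<s<t$ set $t':=\sqrt{t^2-s^2}$. The diagonal map $v\mapsto(v,v)$ is an affine $G$-equivariant isometric embedding of $\cH^t$ into $\cH^s\oplus\cH^{t'}$ that carries the full cocycle $g\mapsto(c(g),c(g))$ of $\alpha^s\oplus\alpha^{t'}$, so the orthogonal complement $\Delta^\perp$ of the diagonal is a $G$-invariant \emph{linear} subspace and one obtains a decomposition of affine isometric actions $\alpha^s\oplus\alpha^{t'}\cong \alpha^t\oplus \pi|_{\Delta^\perp}$, with $\pi$ the linear part of $\alpha$. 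Applying the Gaussian functor, which converts orthogonal direct sums into products of actions, yields
$$\widehat{\alpha}^s\otimes\widehat{\alpha}^{t'}\;\cong\;\widehat{\alpha}^t\otimes\widehat{\pi|_{\Delta^\perp}},$$
the last factor being a pmp Gaussian action. Combined with two standard facts---the nonsingular product of a dissipative action with any nonsingular action is dissipative (via the wandering set $W\times Y$), and tensoring with a pmp action preserves the Hopf decomposition---this gives the implication $\widehat{\alpha}^s$ dissipative $\Rightarrow$ $\widehat{\alpha}^t$ dissipative, so setting $t_{\rm diss}(\alpha):=\inf\{t>0:\widehat{\alpha}^t\text{ is dissipative}\}$ handles the ``dissipative above'' half of the dichotomy. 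The ``recurrent below'' half reduces, via the Hopf decomposition, to an ergodicity/$0$--$1$-law input forcing the $G$-invariant conservative part of $\widehat{\alpha}^t$ to have measure $0$ or $1$, which I treat as input established elsewhere in the paper.

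Next, for the upper bound $t_{\rm diss}(\alpha)\leq 2\sqrt{2\delta(\alpha)}$, the key is an $L^{1/2}$-moment computation. By the defining property of the Gaussian functor, the Radon--Nikodym cocycle relative to the base measure $\mu_0$ on $\widehat{\cH}^t$ is
$$\log \frac{d g_*\mu_0}{d\mu_0}(\omega)\;=\;\widehat{c(g)}(\omega)-\tfrac{t^2}{2}\|c(g)\|^2,$$
with $\widehat{c(g)}$ centered Gaussian of variance $t^2\|c(g)\|^2$. The elementary identity $\mathbb{E}[e^{X/2}]=e^{\mathrm{Var}(X)/8}$ for centered Gaussians immediately gives
$$\int \Bigl(\frac{dg_*\mu_0}{d\mu_0}\Bigr)^{1/2}\,d\mu_0\;=\;e^{-t^2\|c(g)\|^2/8}.$$
For $t>2\sqrt{2\delta(\alpha)}$ one has $t^2/8>\delta(\alpha)$, so the definition of the Poincar\'e exponent makes $\int_G e^{-t^2\|c(g)\|^2/8}\,dg$ finite, and Fubini yields $\int_G(dg_*\mu_0/d\mu_0)^{1/2}\,dg<\infty$ almost surely. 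A Hopf-type dissipativity criterion in the spirit of \cite{VW18} then turns this $L^{1/2}$-integrability into dissipativity of $\widehat{\alpha}^t$.

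For the lower bound $\sqrt{2\delta(\alpha)}\leq t_{\rm diss}(\alpha)$ the idea is a Paley--Zygmund (second-moment) argument. Fix $t<\sqrt{2\delta(\alpha)}$, so $\int_G e^{-t^2\|c(g)\|^2/2}\,dg=+\infty$. On the probability-$1/2$ event $\{\widehat{c(g)}\geq 0\}$ the density $dg_*\mu_0/d\mu_0$ is at least $e^{-t^2\|c(g)\|^2/2}$, so for compact $K\subset G$ put
$$Y_K(\omega):=\int_K \mathbf{1}_{\{\widehat{c(g)}(\omega)\geq 0\}}\,e^{-t^2\|c(g)\|^2/2}\,dg.$$
The first moment $\mathbb{E}[Y_K]=\tfrac12\int_K e^{-t^2\|c(g)\|^2/2}\,dg$ tends to $+\infty$ as $K$ exhausts $G$, while the crude bound $P(\widehat{c(g)}\geq 0,\widehat{c(h)}\geq 0)\leq 1/2$ (valid for any pair of jointly Gaussian variables) gives $\mathbb{E}[Y_K^2]\leq 2\mathbb{E}[Y_K]^2$. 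Paley--Zygmund then produces $P(Y_K\geq\tfrac12\mathbb{E}[Y_K])\geq 1/8$ uniformly in $K$, so the $G$-invariant event $\{\int_G dg_*\mu_0/d\mu_0\,dg=+\infty\}$ has probability at least $1/8$, and the ergodicity input from the first step upgrades this to probability $1$; hence $\widehat{\alpha}^t$ is recurrent.

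The main obstacle is precisely this upgrade from positive probability to full measure in the Paley--Zygmund step, together with the closely related conservative-or-dissipative dichotomy required to interpret $t_{\rm diss}(\alpha)$ as a genuine phase transition: both rest on an ergodicity/$0$--$1$-law input for Gaussian actions that is far from tautological and must be established independently. Once that input is available, the upper bound is a clean Gaussian moment computation that automatically selects the constant $2\sqrt{2\delta(\alpha)}$, and the lower bound is an essentially elementary second-moment calculation.
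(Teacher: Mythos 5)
Your architecture is essentially the paper's: the diagonal embedding of $\cH^t$ into $\cH^s\times\cH^{t'}$ with $s^2+t'^2=t^2$ is exactly the rotation trick of Proposition \ref{Rotation trick}, and both inequalities come from the same Gaussian moment identities (the half-moment $e^{-t^2\|c(g)\|^2/8}$ for the upper bound; your Paley--Zygmund computation for the lower bound is a workable, if more roundabout, substitute for the paper's symmetrization, which sums the a.e.\ convergent integrals at $\varphi$ and $-\varphi$ and uses $e^u+e^{-u}\geq 2$). However, there is a genuine gap: you defer the entire recurrent/dissipative dichotomy --- the assertion that $\widehat{\alpha}^t$ is \emph{recurrent}, not merely non-dissipative, for $t<t_{\rm diss}(\alpha)$ --- to an ``ergodicity/$0$--$1$-law input established elsewhere,'' and you misidentify that input. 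You ask that the $G$-invariant conservative part of $\widehat{\alpha}^t$ have measure $0$ or $1$; but $\widehat{\alpha}^t$ need not be ergodic, so $G$-invariance of the conservative part gives nothing. The paper's Proposition \ref{diss scale} instead transports the dissipative part through the rotation isomorphism $\widehat{R}_\theta$, observes that the resulting set is invariant under every translation $\widehat{j}_\xi$, $\xi\in\cH^0$, and invokes ergodicity of the translation action of $\cH^0$ on the Gaussian space (Proposition \ref{fixed point Maharam}): the $0$--$1$ law comes from the translation group, not from $G$. This is the heart of the first assertion and cannot be treated as a black box; it is also exactly what your Paley--Zygmund step needs to upgrade ``conservative part of measure at least $1/8$'' to full recurrence.

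A second, smaller issue is in your upper bound, which runs through the implication ``$\int_G(dg_*\mu_0/d\mu_0)^{1/2}\,dg<\infty$ a.e.\ $\Rightarrow$ dissipative.'' The dissipativity criterion (Theorem \ref{dissipative trivialization}) involves the first moment $\int_G dg_*\mu_0/d\mu_0\,dg$, and for a non-discrete locally compact group the pointwise implication $\int_G f^{1/2}\,dg<\infty\Rightarrow\int_G f\,dg<\infty$ is false in general, so this step requires an argument (it is fine for discrete $G$, where the integrand tends to $0$ along the group). The paper avoids the issue by arguing the contrapositive: recurrence forces the fibered integral $\int_G(dg_*\mu/d\mu)^{1/2}\,dg$ to be purely infinite (Proposition \ref{L2 criterion recurrent}, via the Maharam extension), whence $\int_G e^{-t^2\|gx-x\|^2/8}\,dg=+\infty$ by Fubini and $t\leq 2\sqrt{2\delta(\alpha)}$, and then feeds this into the dichotomy.
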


\begin{rem} Unfortunately, it turned out to be difficult to compute the exact value of $t_{\rm diss}(\alpha)$ in general. It is even more difficult to determine what happens at $t=t_{\rm diss}(\alpha)$. See Proposition \ref{translation action} for a very simple example. For affine isometric actions of automorphism groups of trees, we found that $t_{\rm diss}(\alpha) =2 \sqrt{2 \delta(\alpha)}$ which shows that the upper bound in Theorem \ref{letter dissipativity} is optimal, but we have a priori no reason to believe that the equality $t_{\rm diss}(\alpha) =2 \sqrt{2 \delta(\alpha)}$ is always true.
\end{rem}

\begin{rem}
Let $\alpha : G \curvearrowright \cH$ be an affine isometric action and suppose that $G$ is \emph{nonamenable}. Then $\delta(\alpha) > 0$ (Proposition \ref{poincare strictly positive}). If we assume moreover that $\alpha$ has \emph{almost fixed points}, then one can show that $\delta(\alpha)=+\infty$ (Corollary \ref{poincare infinite}). In contrast, one can easily construct an affine isometric action of $\Z$ which has almost fixed points yet a vanishing Poincar\'e exponent. See also Corollary \ref{lower bound amenability} which gives a lower bound for the Poincar\'e exponent of an affine isometric action of a given nonamenable group $G$ in terms of the spectral radius of $G$.
\end{rem}

 
Once we know when a Gaussian action is recurrent, we can ask about its ergodicity and its Krieger type. It is well-known that a pmp Gaussian action associated to an orthogonal representation $\pi$ is ergodic if and only if $\pi$ is \emph{weakly mixing}, i.e.\ $\pi$ has no finite dimensional subrepresentation. For nonsingular Gaussian actions, the situation is much more subtle and in our opinion, it is hopeless to find a general necessary and sufficient condition for ergodicity (see for instance Example \ref{counter-example weak mixing}). However, we provide a rather satisfactory answer under the assumption that the linear part of the affine isometric action is \emph{mixing}. Recall that an orthogonal representation $\pi : G \rightarrow \mathcal{O}(\cH)$ of a locally compact group $G$ on a Hilbert space $\cH$ is mixing if $ \lim_{g \to \infty} \langle \pi(g) \xi, \eta \rangle  = 0$ for all $\xi, \eta \in \cH$. We also say that $\pi$ has \emph{spectral gap} if it has no almost invariant vectors and we say that $\pi$ has \emph{stable spectral gap} if $\pi \otimes \rho$ has spectral gap for every orthogonal representation $\rho$ of $G$. By \cite{Bo14}, one knows that the pmp Gaussian action of $\pi$ is strongly ergodic if and only if $\pi$ has stable spectral gap. The second part of the following theorem is therefore not surprising.

\begin{letterthm} \label{letter ergodic}
Let $\alpha : G \curvearrowright \cH$ be an affine isometric action such that $\alpha$ has no fixed point and the linear part of $\alpha$ is mixing. Then $\widehat{\alpha}^t$ is weakly mixing for all $t < t_{\rm diss}(\alpha)$ and for all $ t < \frac{1}{\sqrt{2}} t_{\rm diss}(\alpha)$, the action $\widehat{\alpha}^t$ is either of type $\III_0$ or of type $\III_1$.

Suppose moreover that the linear part of $\alpha$ has stable spectral gap. Then there exists $t_0 > 0$ such that the action $\widehat{\alpha}^t$ is strongly ergodic of type $\III_1$ for all $t \in ]0, t_0[$.
\end{letterthm}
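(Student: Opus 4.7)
The strategy is to exploit the symmetric Fock structure of the Koopman representation of a nonsingular Gaussian action, reducing questions about $\widehat{\alpha}^t$ to questions about $\pi$ and the cocycle $b$, and then to feed in the recurrence information from Theorem \ref{letter dissipativity}.

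\textbf{Setup.} I would first fix a base point $0 \in \cH$ and identify $\rL^2(\widehat{\cH^t}, \mu_0)$ with the symmetric Fock space over $\cH$. On exponential vectors, the Koopman operator of $g \in G$ acts by
\[
\exp(f) \mapsto \chi_g^t \cdot \exp(\pi(g) f + t b(g)),
\]
where $\chi_g^t = \exp\bigl(t\,\widehat{b(g)} - \tfrac{t^2}{2}\|b(g)\|^2\bigr)$ is the Radon--Nikodym derivative, itself a Gaussian exponential. So the Koopman is a twisted second quantisation of $\pi$, and at each $g$ the log-RN cocycle is, under $\mu_0$, a Gaussian with mean $-\tfrac{t^2}{2}\|b(g)\|^2$ and variance $t^2\|b(g)\|^2$. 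Since $\alpha$ has no fixed point and $\pi$ is mixing, a Guichardet-type argument ensures that $\|b(g)\| \to \infty$ as $g \to \infty$.

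\textbf{Weak mixing on $(0, t_{\mathrm{diss}}(\alpha))$.} Let $\rho : G \curvearrowright (Y,\nu)$ be an arbitrary ergodic pmp action; I need to rule out non-constant invariant vectors in $\rL^2(\widehat{\cH^t} \times Y)$. Decomposing $\rL^2(\widehat{\cH^t})$ via the Fock filtration, the relevant matrix coefficients of the twisted Koopman on the $n$-particle subspace are controlled by those of $\pi^{\otimes_s n}$ multiplied by $\chi_g^t$. Mixing of $\pi$ implies mixing of each $\pi^{\otimes_s n}$ for $n \geq 1$, while $\|b(g)\|\to\infty$, combined with the recurrence of $\widehat{\alpha}^t$ from Theorem \ref{letter dissipativity} for $t < t_{\mathrm{diss}}(\alpha)$, drives the zero-particle Koopman coefficient to zero in an averaged sense. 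Together they exclude non-trivial invariant vectors and give weak mixing.

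\textbf{Type in $\{\III_0,\III_1\}$ for $t<\tfrac{1}{\sqrt{2}} t_{\mathrm{diss}}(\alpha)$.} The Maharam extension of $\widehat{\alpha}^t$ can be realised inside the diagonal Gaussian product $\widehat{\alpha}^t \otimes \widehat{\alpha}^t$, which is itself the Gaussian action of $\alpha\oplus\alpha$ on $\cH\oplus\cH$ with effective cocycle norm squared $2t^2 \|b(g)\|^2$. Hence the Maharam extension is recurrent exactly when $\sqrt{2}\,t < t_{\mathrm{diss}}(\alpha)$. In that range, the previous weak-mixing argument applied to $\alpha\oplus\alpha$ (whose linear part $\pi\oplus\pi$ is still mixing) shows that $\widehat{\alpha\oplus\alpha}^t$ is weakly mixing, and this transfers to ergodicity of the Maharam extension. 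The flow of weights is therefore either trivial (type $\III_1$) or properly ergodic without atoms (type $\III_0$); types $\II$ and $\III_\lambda$ for $\lambda\in(0,1)$ are excluded because the log-RN cocycle is absolutely continuous (Gaussian) at each $g$, so its essential range cannot be contained in any proper closed subgroup of $\R$.

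\textbf{Strong ergodicity of type $\III_1$ for $t$ small.} Assume now that $\pi$ has stable spectral gap. By \cite{Bo14} the pmp Gaussian action of $\pi$ is then strongly ergodic, i.e.\ its Koopman has stable spectral gap on the orthogonal complement of scalars. For $t$ small, the twist $\chi_g^t$ is a small perturbation of the constant $1$, and stable spectral gap of $\pi$ (which survives under tensor product with the Koopman of any ergodic pmp test action $\rho$) yields strong ergodicity of $\widehat{\alpha}^t$. To pin down type $\III_1$, I would apply the same perturbative argument to the doubled Gaussian describing the Maharam extension: stable spectral gap of $\pi\oplus\pi$ implies strong ergodicity of the Maharam extension for $t$ small, and this forces the flow of weights to be trivial.

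The main obstacle is the Maharam-extension identification in the third step: one must realise the $\R$-extension explicitly as a factor of a doubled Gaussian and track the cocycle under the diagonal rotation $\cH\oplus\cH \to \cH_{+}\oplus\cH_{-}$ that decouples the sum and the difference. A secondary delicate point is the uniform perturbation estimate in the fourth step, which must remain valid after tensoring against an arbitrary pmp test representation.
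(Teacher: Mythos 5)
Your proposal has the right objects in view (the chaos decomposition, the doubling trick, a perturbation argument for spectral gap), but it rests on a step that fails. You identify ergodicity and weak mixing of the nonsingular action $\widehat{\alpha}^t$ with the absence of (almost) invariant vectors in its Koopman representation, and likewise treat strong ergodicity as a Koopman spectral gap statement. For \emph{nonsingular} actions this equivalence is false: by Proposition \ref{criterion invariant proba}, the Koopman representation of $\widehat{\alpha}^t$ has no invariant vectors for \emph{every} $t>0$ simply because $\alpha$ has no fixed point (no invariant probability measure) --- including $t > t_{\rm diss}(\alpha)$, where $\widehat{\alpha}^t$ is dissipative and maximally non-ergodic. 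An invariant function $f \in \rL^\infty(\widehat{\cH}^t)$ corresponds to the vector $f\mu^{1/2}$, which is \emph{not} fixed by the Koopman operators because of the Radon--Nikodym twist, so no analysis of matrix coefficients on the $n$-particle subspaces can detect it. The paper has to argue at the level of invariant sets: it applies the Schmidt--Walters theorem (a recurrent action tensored with a mixing pmp action has invariant functions only in the first factor) after transporting the invariant function through the rotation isometry $R_\theta : \cH \times \cH^0 \rightarrow \cH^t \times \cH^s$, and then uses the subalgebra-intersection computation of Proposition \ref{intersection maharam extension} to force the function to be constant. Your recurrence input is the right one, but you have no mechanism to convert it into a statement about $\rL^\infty$-invariant functions.

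The type assertion has a second gap. Excluding types $\II$ and $\III_\lambda$ because ``the log-RN cocycle is absolutely continuous at each $g$'' is not a valid argument: the Krieger $T$-set is an asymptotic invariant of the orbit equivalence relation, not the closure of the pointwise essential ranges of the individual derivatives $\frac{\rd g_*\mu}{\rd\mu}$ (if it were, every nonsingular Gaussian action would be of type $\III_1$, contradicting the type $\III_0$ alternative the theorem explicitly allows). The actual proof that the $T$-set is trivial is the substance of this part: given $r$ in the $T$-set one finds a unitary $u$ with $u\mu_x^{\ri r}$ fixed by $\Mod(\widehat{\alpha})$, doubles via $V(\sqrt 2 x,\sqrt 2\xi)=(x+\xi,x-\xi)$, applies Schmidt--Walters to $\Mod(\widehat{\alpha}^{\sqrt 2}\otimes\widehat{\alpha}^0)$ --- this is where the factor $\frac{1}{\sqrt 2}$ enters: what is needed is recurrence of $\widehat{\alpha}^{\sqrt 2 t}$, not of the Maharam extension of $\widehat{\alpha}^t$, which is recurrent iff $\widehat{\alpha}^t$ is --- derives the functional equation $\mu_{y+\xi}(u)\mu_{y-\xi}(u)=\mu_y(u)^2$, and invokes Lemma \ref{functional equation} to get $u=e^{\ri\widehat{f}}$ with $f$ affine, which produces a fixed point for $\alpha$, a contradiction. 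For the last part, your perturbative idea is close in spirit to Lemma \ref{lemma spectral gap}, but it must be run on bounded almost invariant sequences in $\rL^\infty$ rather than almost invariant $\rL^2$-vectors, and type $\III_1$ is then deduced not from strong ergodicity of the Maharam extension (which a spectral gap hypothesis on $\pi\oplus\pi$ cannot give in the $\R^*_+$-direction) but by combining strong ergodicity of $\widehat{\alpha}^t$ with the trivial-$T$-set statement and the fact that a type $\III_0$ action is never strongly ergodic.
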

\begin{example}
Let $G$ be a locally compact group with a non-vanishing first $\rL^2$-betti number $\beta^{(2)}_1(G) > 0$. Then there exists an affine isometric action $\alpha : G \curvearrowright \cH$ without fixed point whose linear part is the left regular representation. The left regular representation of $G$ is mixing and since $G$ is nonamenable, it also has stable spectral gap. Therefore, we know that $\widehat{\alpha}^t$ is strongly ergodic of type $\III_1$ for $t > 0$ small enough. Even though our method of proof is completely different, notice the analogy with the results of \cite{VW18} and \cite{BKV19} for nonsingular Bernoulli actions of nonamenable groups.
\end{example}

\begin{example}
The actions considered in Theorem \ref{letter tree Gaussian} also satisfy the assumptions of Theorem \ref{letter ergodic}. However, we cannot use Theorem \ref{letter ergodic} to deal with large values of $t > 0$ (notice also that the technique of \cite{BKV19} cannot deal with large values of $t > 0$). For this reason, we use a different geometric approach to obtain the sharp result of Theorem \ref{letter tree Gaussian}.
\end{example}

In the first part of Theorem \ref{letter ergodic}, it is unclear whether the type $\III_0$ case can really appear and it seems that $\widehat{\alpha}^t$ should always be of type $\III_1$ for all $t < t_{\rm diss}(\alpha)$. We were unable to prove this in full generality. However, there is a class of affine isometric actions, which we call \emph{evanescent}, for which one can prove an optimal result. Very roughly, an affine isometric action is evanescent if it admits arbitrarily small invariant subspaces (see Section \ref{section evanescent} and Proposition \ref{evanescent small subspaces}). For this class of affine isometric actions, we obtain the following very sharp dichotomy.

\begin{letterthm} \label{letter evanescent}
Let $\alpha : G \curvearrowright \cH$ be an affine isometric action. Suppose that $\alpha$ is evanescent, that it has no fixed point and that its linear part is mixing. Then for every $t > 0$ and every ergodic nonsingular action $\rho : G \curvearrowright Y$ (not necessarily pmp), the diagonal action $\widehat{\alpha}^t \otimes \rho : G \curvearrowright \widehat{\cH}^t \otimes Y$ is either dissipative or ergodic of type $\III_1$. In particular, for all $t > 0$, $\widehat{\alpha}^t$ is either dissipative or weakly mixing of stable type $\III_1$.
\end{letterthm}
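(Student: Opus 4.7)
The plan is to exploit the evanescence hypothesis to realize $\widehat{\alpha}^t$ as an infinite tensor product of Gaussian actions on small invariant subspaces, and to read off both ergodicity and the type $\III_1$ property from this decomposition. The Hopf decomposition of $\widehat{\alpha}^t \otimes \rho$ is $G$-invariant, so once enough ergodicity is available to control the structure of $G$-invariant subsets, we will be reduced to showing that any non-dissipative instance is in fact ergodic of type $\III_1$.

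The first step is to invoke the evanescent hypothesis, as packaged in Proposition \ref{evanescent small subspaces}, to produce an orthogonal decomposition $\cH = \bigoplus_{n \geq 1} \cH_n$ of the affine Hilbert space into $G$-invariant affine subspaces, each individually fixed-point free, with quantitative control on the sizes of the associated cocycles (in particular, one can arrange that these sizes shrink arbitrarily quickly). By the multiplicativity of the affine Gaussian functor developed in Section \ref{Affine Gaussian functor}, this translates into a $G$-equivariant tensor decomposition $\widehat{\alpha}^t = \bigotimes_{n \geq 1} \widehat{\alpha}_n^t$, and hence $\widehat{\alpha}^t \otimes \rho = \bigl(\bigotimes_{n \geq 1} \widehat{\alpha}_n^t\bigr) \otimes \rho$. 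Since the rescaling $\cH \mapsto \cH^t$ preserves evanescence, the very same decomposition is available for every $t > 0$, which is why the dichotomy must hold uniformly in the parameter.

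With this tensor decomposition in hand, I would address ergodicity and type $\III_1$ in parallel. For ergodicity, each linear part $\alpha_n^0$ is mixing as a subrepresentation of the mixing linear part of $\alpha$, and this mixing propagates through the Gaussian functor to yield an asymptotic independence estimate between $\bigotimes_{k \leq n} \widehat{\alpha}_k^t$ and $\bigotimes_{k > n} \widehat{\alpha}_k^t$ as $n \to \infty$; a tail argument then forces every $G$-invariant set in the diagonal action with $\rho$ to be trivial, and in particular the conservative part of the action to be either null or the whole space. For type $\III_1$, the key observation is that the ratio set of $\widehat{\alpha}^t \otimes \rho$ contains the log-Radon--Nikodym values of each factor $\widehat{\alpha}_n^t$, and for Gaussian actions these values are computed directly from the norms of the cocycles of $\alpha_n$; because the evanescent decomposition lets these norms cluster arbitrarily close to, but strictly away from, $0$, the resulting ratios accumulate at $1$ nontrivially, and since the ratio set is a closed multiplicative subgroup of $\R_+^\times$, it must then fill out all of $\R_+^\times$, giving type $\III_1$ by Connes' theorem. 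The main obstacle I anticipate is carrying out these two steps on the \emph{same} decomposition: one needs the individual summands $\cH_n$ small enough to fill the ratio set and, simultaneously, enough residual mixing in every tail $\bigotimes_{k > n} \widehat{\alpha}_k^t$ to power the ergodicity argument through the extra twist by the nonsingular factor $\rho$, which is precisely what the evanescent hypothesis is engineered to provide.
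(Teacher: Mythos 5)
Your proposal rests on a decomposition that evanescence does not actually provide, and both halves of the argument built on it have gaps. Proposition \ref{evanescent small subspaces} produces a \emph{decreasing} sequence of invariant affine subspaces $\cH_n$ with $\bigcap_n \cH_n^0=\{0\}$ (and, since $\alpha$ has no fixed point, $\bigcap_n \cH_n=\emptyset$ by Proposition \ref{empty intersection diverge}); it does not produce an orthogonal sum of small, individually fixed-point-free invariant affine pieces. Writing $\cH\cong E_n\times\cH_n$ with $E_n=(\cH_n^0)^\perp$, the action on each complementary factor $E_n$ is the \emph{linear} part restricted to $E_n$, hence a pmp Gaussian action; all of the affine (nonsingular) content escapes to infinity, in the sense that $d(x,\cH_n)\to\infty$. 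So there is no tensor factorization $\widehat{\alpha}^t=\bigotimes_n\widehat{\alpha}_n^t$ into fixed-point-free factors with controlled cocycle norms, and the picture of ``summands small enough to fill the ratio set'' is not available.

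Consequently: (1) your ergodicity step relies on a tail argument, but a $G$-invariant set of $\widehat{\alpha}^t\otimes\rho$ is not a priori a tail event for any tensor decomposition, and no zero--one law applies to it; the paper's mechanism is the Schmidt--Walters theorem (Theorem \ref{schmidt}), applied via Lemma \ref{affine recurrent and mixing} with the recurrent factor coming from $\cH_n$ and the mixing pmp Gaussian factor $\widehat{\alpha^0|_{E_n}}$ --- this is where the mixing hypothesis and the recurrence (obtained from Proposition \ref{dissipativity evanescent}) actually enter, and it is needed precisely because $\rho$ is an arbitrary nonsingular action. (2) Your type $\III_1$ step via ratio sets is a non sequitur for Gaussian actions: the Radon--Nikodym derivatives $\frac{\rd \mu_{gx}}{\rd \mu_x}$ have non-atomic log-normal distributions, so they take all values in $\R^*_+$ regardless of cocycle norms, yet this says nothing about the ratio set, whose computation requires exactly the recurrence and ergodicity of the Maharam extension that you are trying to establish; ``ratios accumulating at $1$'' is automatic and vacuous. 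The paper instead shows that every $\Mod(\widehat{\alpha}\otimes\rho)$-invariant function lies in $\bigcap_n\rL^\infty(\Mod(\widehat{\cH_n}\otimes Y))$ and then invokes the geometric computation of Proposition \ref{intersection maharam extension}: because $\bigcap_n\cH_n=\emptyset$, this intersection collapses to $\rL^\infty(Y)$, killing the modular coordinate, which yields ergodicity and type $\III_1$ simultaneously. That computation (resting on Propositions \ref{groups and two subspaces} and \ref{groups and decreasing subspaces}) is the real content of the theorem and is absent from your outline.
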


\begin{example}
The assumptions of Theorem \ref{letter evanescent} are satisfied in the following cases:
\begin{enumerate}[ \rm (i)]
\item $G$ is discrete and amenable, $\alpha$ has no fixed point and its linear part is contained in a multiple of the left regular representation (Proposition \ref{amenable evanescent}).
\item $G$ is nilpotent (e.g.\ abelian), $\alpha$ has no fixed point and its linear part is mixing (Proposition \ref{nilpotent evanescent}).
\item $\alpha$ is the affine isometric action associated to a closed \emph{parabolic} subgroup $G < \Aut(T)$ for some locally finite tree $T$ (Proposition \ref{parabolic evanescent}).
\end{enumerate}
Compare $(\rm i)$ and $(\rm ii)$ with \cite[Theorem 3.2]{BKV19} and \cite[Theorem 3.3]{BKV19} for nonsingular Bernoulli actions.
\end{example}

\begin{rem}
Recall that an affine isometric action is \emph{irreducible} if it admits no proper invariant affine subspace (see \cite{BPV16} for a systematic study of this notion). In Section \ref{section evanescent}, we observe that every affine isometric action decomposes as a product of an evanescent part and an irreducible part. Therefore, in the mixing case, if the evanescent part has no fixed point, Theorem \ref{letter evanescent} shows that the Gaussian action can only be of type $\III_1$ if it is recurrent. If the evanescent part has a fixed point, then the problem reduces to the irreducible part. The latter case occurs for example for affine isometric actions whose linear part has spectral gap. This means that the context of Theorem \ref{letter evanescent} is completely ``orthogonal" to the second part of Theorem \ref{letter ergodic}. In particular, Theorem \ref{letter evanescent} does not apply at all to the actions considered in Theorem \ref{letter tree Gaussian}.
\end{rem}

We note that a locally compact but non-compact group admits an evanescent affine isometric action without fixed point whose linear part is mixing if and only if it has the Haagerup property (see Proposition \ref{evanescent properties}). This limits the class of groups to which Theorem \ref{letter evanescent} can be applied. However, every group without property (T) admits an evanescent affine isometric action whithout fixed point whose linear part is \emph{weakly mixing}. This case is much more subtle than the mixing case (to understand why, see Section \ref{section weakly mixing} and Example \ref{counter-example weak mixing}). Nevertheless, under additional assumptions, we provide in Section \ref{section weakly mixing} some technical tools to deal with the weakly mixing case. As an application, we obtain the following result.

\begin{letterthm} \label{letter non T}
Let $G$ be a locally compact group without property (T). Then there exists an affine isometric action $\alpha : G \curvearrowright \cH$ such that for all $t > 0$, the nonsingular Gaussian action $\widehat{\alpha}^t$ satisfies :
\begin{enumerate}[ \rm (i)]
\item $\widehat{\alpha}^t$ is free.
\item $\widehat{\alpha}^t$ is weakly mixing of stable type $\III_1$.
\item $\widehat{\alpha}^t$ has almost vanishing entropy. In particular, $\widehat{\alpha}^t$ has an invariant mean and it is nonamenable if $G$ is nonamenbale.
\end{enumerate}
\end{letterthm}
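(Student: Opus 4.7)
The strategy is to build, for the given non-(T) group $G$, a single affine isometric action $\alpha : G \curvearrowright \cH$ such that all three properties (i)--(iii) hold simultaneously for every $t > 0$. The action $\alpha$ is designed to combine four features: no fixed point, evanescence, weakly mixing linear part, and orbits growing so slowly that $\delta(\alpha) = +\infty$.

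Since $G$ does not have property (T), Guichardet's theorem gives an affine isometric action of $G$ without fixed point. A standard device (tensoring the linear part with a weakly mixing orthogonal representation and transporting the cocycle accordingly) yields such a $\beta : G \curvearrowright \cK$ whose linear part is already weakly mixing. The orbit growth of $\beta$ need not be small, but one can tame it by an infinite direct-sum construction of the form $\alpha = \bigoplus_{n \geq 1} \beta_n$, where each $\beta_n$ is a rescaling of $\beta$ (or a cocycle for another weakly mixing representation with almost invariant vectors) with weights chosen so that $\|c_\alpha(g)\|^2 = \sum_n \|c_{\beta_n}(g)\|^2$ tends to $+\infty$ arbitrarily slowly as $g$ leaves compact sets of $G$. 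This single construction is designed to enforce three properties at once: evanescence, because each tail $\bigoplus_{n \geq N} \beta_n$ is an invariant affine subspace shrinking as $N \to \infty$; infiniteness of the Poincar\'e exponent, since $\int_G e^{-s\|c_\alpha(g)\|^2}\,\mathrm{d}g$ diverges for every $s > 0$ by the slow growth of the cocycle; and almost vanishing entropy, via explicit control of the Radon--Nikodym cocycle of $\widehat{\alpha}^t$ in terms of $c_\alpha$.

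With this $\alpha$ in hand, Theorem \ref{letter dissipativity} yields $t_{\rm diss}(\alpha) = +\infty$, so $\widehat{\alpha}^t$ is recurrent for every $t > 0$. Since $\alpha$ is evanescent with weakly mixing linear part, the weakly-mixing refinement of Theorem \ref{letter evanescent} developed in Section \ref{section weakly mixing} applies: for any ergodic nonsingular action $\rho : G \curvearrowright Y$, the diagonal product $\widehat{\alpha}^t \otimes \rho$ is either dissipative or ergodic of type $\III_1$. Combined with recurrence, this gives (ii). Freeness (i) follows from a standard Gaussian argument: if some $g \neq e$ fixed a positive-measure subset of $\widehat{\cH}^t$, the linear part of $\alpha$ would admit a nonzero invariant vector, contradicting weak mixing. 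Property (iii) follows from the explicit form of the Radon--Nikodym cocycle of a Gaussian action, whose size is controlled by $\|c_\alpha(g)\|$, together with the slow growth imposed on $c_\alpha$; the invariant-mean and nonamenability statements are then direct consequences from the general theory.

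The main obstacle is the construction in the first step: producing a single action that is simultaneously evanescent, weakly mixing at the linear level, and with arbitrarily slowly growing cocycle, uniformly for every non-(T) group $G$. The quantitative slow-growth requirement behind $\delta(\alpha) = +\infty$ coexists uneasily with the spectral condition of a weakly mixing linear part, and the fact that only weak mixing -- not mixing -- is available prevents any direct invocation of Theorem \ref{letter evanescent}. Handling this forces the use of the technical machinery of Section \ref{section weakly mixing} in full strength. Once the right $\alpha$ is built, all three conclusions follow from the general theory already developed in the paper.
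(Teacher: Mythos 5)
Your overall architecture matches the paper's (an evanescent affine isometric action with weakly mixing linear part, no fixed point, and a slowly growing cocycle), but the key step --- deducing that $\widehat{\alpha}^t$ is weakly mixing of stable type $\III_1$ --- has a genuine gap. You claim that evanescence, a weakly mixing linear part, and recurrence of $\widehat{\alpha}^t$ (obtained from $\delta(\alpha)=+\infty$) suffice to invoke a ``weakly mixing refinement of Theorem \ref{letter evanescent}''. No such refinement holds under those hypotheses, and Example \ref{counter-example weak mixing} is essentially a counterexample: the action $\beta=\alpha\times\pi$ of $\Z\times\Gamma$ there can be arranged to be evanescent, has weakly mixing linear part and no fixed point, and $\widehat{\beta}^t$ is recurrent for all $t$ (indeed $\delta(\beta)=+\infty$ when $\Gamma$ is infinite, since the $\pi$-component of the cocycle is bounded), yet $\widehat{\beta}^t$ is never ergodic. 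Global recurrence does not guarantee that the recurrence is witnessed in the ``weakly mixing directions'', which is exactly the phenomenon the paper warns about at the start of Section \ref{section weakly mixing}.

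What the machinery of that section actually needs is a single countable subset $\Lambda\subset G$ such that $\Lambda\Lambda^{-1}$ is simultaneously mixing with respect to $\alpha^0$ (Lemma \ref{mixing set}) and recurrent with respect to $\Mod(\widehat{\alpha}^t)\otimes\rho$, the latter being guaranteed by the divergence of $\sum_{g\in\Lambda}e^{-s\|gx-x\|^2}$ for all $s>0$ (Lemma \ref{affine recurrent criterion}); only then does Theorem \ref{generalized schmidt} apply and allow one to conclude as in Theorem \ref{evanescent ergodicity}. This forces a coordination your construction omits: one must first extract $\Lambda$ from the weakly mixing representation $\pi$ and only then build the cocycle, using the ``moreover'' clause of Proposition \ref{existence evanescent} with a proper function $f$ chosen small along that particular $\Lambda$. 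Slow growth of $\|c_\alpha(g)\|$ measured over all of $G$ is not enough, since the divergence of the Poincar\'e integral may come from regions of $G$ on which $\alpha^0$ is not mixing. Two smaller points: your freeness argument (a positive-measure fixed set forces an invariant vector for $\alpha^0$) does not work for non-discrete $G$ --- the paper instead replaces $\alpha$ by $\alpha\oplus\pi^{\oplus\N}$ for a faithful mixing representation and uses the argument of \cite{AEG94} (Remark \ref{freeness}); and almost vanishing entropy comes from the existence of almost fixed points (Proposition \ref{evanescent properties} combined with the formula $H(\mu_y\mid\mu_x)=\frac{1}{2}\|y-x\|^2$ in Theorem \ref{dictionary}), not from the behaviour of the cocycle at infinity.
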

For groups with the Haagerup property, we obtain a stronger result (Theorem \ref{strong Haagerup}). We refer to Definition \ref{almost vanishing entropy} for the definition of ``almost vanishing entropy". This property implies that the action $\sigma$ admits probability measures with arbitrarily small Furstenberg entropy. Thus we recover, in a stronger form, the main result of \cite{BHT14}.

\subsection*{Acknowledgments} The authors express their deep gratitude to Stefaan Vaes for his interest in our work and for finding several mistakes in an earlier version of this paper. His advice also helped us to improve the presentation. We are grateful to Narutaka Ozawa for his numerous comments and for providing us with a correct proof of Proposition \ref{single vector spectral radius}. We are grateful to Frederic Paulin for his explanations regarding the references \cite{Su79} and \cite{RT13}. We thank Michel Pain for explaining to us the additive martingale argument of Theorem \ref{speed regular tree}. We thank Tushar Das for attracting our attention to the monograph \cite{DSU17} and his comments on our paper. Finally we thank Amaury Freslon for pointing out to us the relations between our work and \cite{Fr18} and \cite{HP84}.

\tableofcontents



\subsection*{Conventions} All locally compact groups are assumed to be Polish (second countable). All Hilbert spaces are assumed to be separable. All measure spaces are standard borel spaces.

\section{Affine Hilbert spaces and affine isometric actions}
In this paper, we will use the language of affine Hilbert spaces as in \cite[Chapter 2]{BHV08}. 

\subsection{Affine Hilbert spaces}
An \emph{affine Hilbert space} is a Hilbert space whithout an origin. More formally, a (real) affine Hilbert space is a nonempty set $\cH$ equipped with a free and transitive action of a (real) linear Hilbert space denoted $\cH^0$ and called the \emph{tangent space} of $\cH$. This action is denoted by
$$ \cH \times \cH^0 \ni (x,\xi) \mapsto x+\xi \in \cH.$$
For every $x,y \in \cH$, we also denote by $y-x$ the unique vector of $\cH^0$ such that $x+(y-x)=y$. Note that $\cH$ is a metric space for the distance $(x,y) \mapsto \|y-x\|$.

\subsubsection*{Affine maps} A continuous map $\theta  :  \cH \rightarrow \cK$ between two affine Hilbert spaces is \emph{affine} if there exists a continuous linear map $\theta^0 : \cH^0 \rightarrow \cK^0$ such that $\theta(x+\xi)=\theta(x)+\theta^0(\xi)$ for all $x \in \cH$ and $\xi \in \cH^0$. Then $\theta^0$ is unique and is called the \emph{linear part} of $\theta$. We equip the set of all continuous affine maps $\Aff(\cH,\cK)$ with the topology of pointwise convergence (also called strong topology). If $\theta \in \Aff(\cH,\cK)$ and $\theta^0$ is an orthogonal operator, we say that $\theta$ is an \emph{affine isometry}. It is a fact that every isometric map $\theta : \cH \rightarrow \cK$ with respect to the natural metrics is automatically an affine isometry. We denote by $\Isom(\cH)$ the (affine) isometry group of $\cH$. We equip it with the topology of pointwise convergence on $\cH$ which turns it into a Polish group. The map $\Isom(\cH) \ni \theta \mapsto \theta^0 \in \mathcal{O}(\cH^0)$ is a continuous group homomorphism. We also have a continuous group homomorphism $j:\cH^0 \rightarrow \Isom(\cH)$ defined by $j_\xi(x)=x+\xi$ for $x \in \cH$ and $\xi \in \cH^0$. These two maps fit together in an exact sequence
 $$ 0 \rightarrow \cH^0 \rightarrow \Isom(\cH) \rightarrow \mathcal{O}(\cH^0) \rightarrow 1.$$

\subsubsection*{Affine subspaces} Let $\cH$ be an affine Hilbert space. A (closed) \emph{affine subspace} $\cK \subset \cH$ is a subset of the form $x+ E$ for some closed linear subspace $E \subset \cH^0$ and some $x \in \cH$. Then $\cK$ is naturally an affine Hilbert space whose tangent space is identified with $\cK^0=E$. Note however, that the choice of $x$ is not unique. By definition, the inclusion map $\iota : \cK \rightarrow \cH$ is an affine isometry. Our convention is that an affine subspace is never empty.

\subsubsection*{Products}
Let $\cK$ and $\cL$ be two affine Hilbert spaces. Then $\cK \times \cL$ has a natural free and transitive action of $\cK^0 \oplus \cL^0$ so that $\cL \times \cL$ is naturally an affine Hilbert space with $(\cK \times \cL)^0=\cK^0 \times \cL^0$. We call it the \emph{product} of $\cK$ and $\cL$. Note that $\cK$ and $\cL$ are \emph{not} affine subspaces of $\cK \times \cL$. If $\cK$ is an affine subspace of $\cH$, then naturally, we have $\cH \cong \cK \times E$ where $E=(\cK^0)^{\perp} \subset \cH^0$.

\subsubsection*{Quotients}
Let $\cH$ be an affine Hilbert space and $E \subset \cH^0$ a closed linear subspace. Then the quotient $\cH/E$ admits a natural free and transitive action of $E^{\perp}$ so that $\cH/E$ is an affine Hilbert space with $(\cH/E)^0=E^{\perp}$. Note that the projection map 
$$\pi : \cH \ni x \mapsto x+E \in \cH/E$$ is an affine map and $\pi^0$ is simply the orthogonal projection on $E^{\perp}$. Finally, if we let $F=E^{\perp}$, then the map
$$ \cH \ni x \mapsto (x+E,x+F) \in \cH/E \times \cH/F$$
is a surjective affine isometry and we will always use it to make the identification $\cH=\cH/E \times \cH/F$.

\subsection{Geometry of subspaces} We prove some elementary facts about the geometry of subspaces in an affine Hilbert space that we will need later.

\begin{prop} \label{empty intersection diverge}
Let $\cH$ be an affine Hilbert space and $(\cH_i)_{i \in I}$ a decreasing net of affine subspaces of $\cH$. Then the following are equivalent:
\begin{enumerate}[ \rm (i)]
\item $\bigcap_i \cH_i=\emptyset$
\item $\lim_i d(x,\cH_i)=+\infty$ for some $x \in \cH$.
\item $\lim_i d(x,\cH_i)=+\infty$ for all $x \in \cH$.
\end{enumerate}
\end{prop}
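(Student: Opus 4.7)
The implications (iii) $\Rightarrow$ (ii) and (ii) $\Rightarrow$ (i) are essentially free. The first is trivial, and the second follows by contrapositive: if some $y$ lies in $\bigcap_i \cH_i$, then $d(x, \cH_i) \leq \|x - y\|$ for every $i$, so the distance stays bounded. The implication (ii) $\Rightarrow$ (iii) is also cheap, using the $1$-Lipschitz property $|d(x, \cH_i) - d(y, \cH_i)| \leq \|x - y\|$ of the distance to a fixed closed set. So the substantive content is (i) $\Rightarrow$ (ii).

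My plan is to prove the contrapositive of (i) $\Rightarrow$ (ii): assume there is $x \in \cH$ for which $d(x, \cH_i)$ stays bounded, and produce a point of $\bigcap_i \cH_i$. Let $p_i \in \cH_i$ denote the orthogonal projection of $x$ onto the affine subspace $\cH_i$ (unique because $\cH_i$ is a closed convex subset of a Hilbert space), and set $d_i := \|x - p_i\| = d(x, \cH_i)$. Since $(\cH_i)_{i \in I}$ is decreasing, the net $(d_i)$ is monotone nondecreasing; being bounded by hypothesis, it converges to some finite $d \geq 0$.

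The next step, which is the key analytic input, is to show that $(p_i)_{i \in I}$ is Cauchy. For $j \geq i$ we have $p_j \in \cH_j \subset \cH_i$, and because $p_i$ is the orthogonal projection of $x$ onto $\cH_i$, the vector $x - p_i$ is orthogonal to the tangent space $\cH_i^0$, while $p_j - p_i \in \cH_i^0$. Pythagoras therefore gives
\[
\|x - p_j\|^2 \;=\; \|x - p_i\|^2 + \|p_i - p_j\|^2,
\]
so $\|p_i - p_j\|^2 = d_j^2 - d_i^2$. Since $d_i^2 \to d^2$, this expression becomes small uniformly for $i,j$ large enough in the directed set $I$ (combining comparable pairs via a common upper bound and the triangle inequality for incomparable pairs). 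Hence $(p_i)$ is a Cauchy net in the complete metric space $\cH$ and converges to some $p \in \cH$.

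It remains to check $p \in \bigcap_i \cH_i$. Fix any index $i$; then for every $j \geq i$ we have $p_j \in \cH_j \subset \cH_i$, and because $\cH_i$ is closed and $p_j \to p$, we conclude $p \in \cH_i$. This holds for all $i$, contradicting (i) and completing the argument. I do not expect any real obstacle here; the only point requiring a touch of care is handling the net (as opposed to a sequence), which is routine once one notes that $(d_i)$ is a monotone bounded net of reals and that directedness of $I$ lets the Pythagoras estimate control incomparable indices.
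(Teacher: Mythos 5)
Your proof is correct and follows essentially the same route as the paper: both arguments project $x$ orthogonally onto each $\cH_i$, note that the bounded monotone net of distances converges, use the Pythagorean identity to show the projections form a Cauchy net, and conclude that the limit lies in $\bigcap_i \cH_i$. The only cosmetic difference is that you route the equivalence through (ii) and make the Lipschitz step (ii) $\Rightarrow$ (iii) explicit, whereas the paper proves (i) $\Rightarrow$ (iii) directly; the substance is identical.
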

\begin{proof}
The implicatios $(\rm iii) \Rightarrow (\rm ii) \Rightarrow (\rm i)$ are obvious. Let us show that $(\rm i) \Rightarrow (\rm iii)$. Take $x \in \cH$. Let $e_i(x)$ be the orthogonal projection of $x$ on $\cH_i$. The net $\| e_i(x)-x\|$ is increasing. Thus, if it does not go to $\infty$ when $i \to \infty$ then it must converge $\lim_i \| e_i(x)-x\| \to \ell <+\infty$. But since $e_i(x)$ are orthgonal projections, we have $\| e_i(x)-e_j(x)\|^2 \leq \|e_j(x)-x\|^2 - \|e_i(x)-x\|^2$ for all $i \leq j \in I$. This implies that $\lim_{i,j \to \infty} \| e_i(x)-e_j(x) \| =0$, i.e.\ that $(e_i(x))_{i \in I}$ is a Cauchy net. Therefore it must converge to a point in $\bigcap_i \cH_i$.
\end{proof}

Let $\cH$ be an affine Hilbert space and $\cK \subset \cH$ a closed subspace. Define the subgroup $G_{\cK}=\{ g \in \Isom(\cH) \mid gx=x \text{ for all } x \in \cK \}$. Let $E \subset \cH^0$ be a closed subspace. Define the subgroup $H_{E}=\{ g \in \Isom(\cH) \mid gx-x \in E^{\perp} \text{ for all } x \in \cH \}$. Note that with respect to the decomposition $\cH=(\cH/E) \times (\cH/E^{\perp})$, we have $H_E=\Isom(\cH/E) \times \{\id\}$.

\begin{prop} \label{groups and two subspaces}
Let $\cH$ be an affine Hilbert space and let $\cH_1$ and $\cH_2$ be two affine subspaces of $\cH$. Define $\cK =\cH_1 \cap \cH_2$, $E= \cH_1^0 \cap \cH_2^0$ and $F=(\cH_1^0)^\perp \cap (\cH_2^0)^\perp$. Let $G \subset \Isom(\cH)$ be the closed subgroup generated by $G_{\cH_1} \cup G_{\cH_2}$. 
\begin{enumerate}[ \rm (i)]
\item If $F \neq \{0\}$ and $\cK \neq \emptyset$ then $G=G_{\cK}$. 
\item If $\dim F \geq 2$ and $\cK=\emptyset$, then $G=H_{E}$.
\end{enumerate}
\end{prop}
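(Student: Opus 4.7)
The plan is to reduce both parts to closed-subgroup questions about orthogonal groups of Hilbert spaces, via a choice of basepoint in $\cK$ for (i) and via the product decomposition $\cH = \cH/E \times \cH/E^\perp$ for (ii).

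For (i), I would pick any $p \in \cK$ and identify the stabilizer of $p$ in $\Isom(\cH)$ with $\mathcal{O}(\cH^0)$. Under this identification $G_{\cH_i}$ becomes $\mathcal{O}(V_i)$ and $G_\cK$ becomes $\mathcal{O}(E^\perp)$, and the inclusion $G \subset G_\cK$ is immediate since $E \subset V_i^\perp$ for both $i$. The reverse inclusion reduces to the linear claim: if $V_1, V_2 \subset W$ are closed subspaces of a Hilbert space with $V_1 + V_2$ dense in $W$ and $V_1 \cap V_2 \neq \{0\}$, then $\mathcal{O}(V_1) \cup \mathcal{O}(V_2)$ topologically generate $\mathcal{O}(W)$. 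To prove this I would fix a nonzero $u \in V_1 \cap V_2$ and show that its orbit under the generated group is dense in the sphere of radius $\|u\|$ in $W$, the key input being the Cartan--Dieudonn\'e theorem applied to an exhaustion of $W$ by finite-dimensional subspaces $W_n = V_1^n + V_2^n$ with $u \in V_1^n \cap V_2^n$. Orbit density then lets one conjugate reflections through $V_1 \cup V_2$ to obtain reflections through every unit vector of $W$, and such reflections topologically generate $\mathcal{O}(W)$.

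For (ii), the product decomposition $\cH = \cL \times \cM$ (with $\cL = \cH/E$, $\cM = \cH/E^\perp$) combined with $E \subset \cH_i^0$ gives $\cH_i = \tilde{\cL}_i \times \cM$ for an affine subspace $\tilde{\cL}_i \subset \cL$, so $G_{\cH_i} \subset \Isom(\cL) \times \{\id\} = H_E$ and thus $G \subset H_E$. In $\cL$ we have $\tilde{\cL}_1 \cap \tilde{\cL}_2 = \emptyset$ and $(\tilde{\cL}_1^0)^\perp \cap (\tilde{\cL}_2^0)^\perp = F$ with $\dim F \geq 2$. By applying the linear claim from (i) to linear parts, the image of $G$ under the linear-part map $\pi$ is dense in $\mathcal{O}(\cL^0)$, and so the translation subgroup $T := G \cap \cL^0$ is a closed, $\mathcal{O}(\cL^0)$-invariant subgroup of $\cL^0$.

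To conclude $T = \cL^0$ I would first produce a nonzero translation in $G$ as follows. Since $\dim F \geq 2$, the involution $S$ that is $-I$ on $F$ and $I$ on $F^\perp$ lies in $\mathcal{O}(V_1) \cap \mathcal{O}(V_2)$; picking $g \in G_{\cH_1}, h \in G_{\cH_2}$ with $g^0 = h^0 = S$, a direct computation shows $gh^{-1}$ is the translation by $-2(q-p)_F$ for any $p \in \cH_1, q \in \cH_2$. This is nonzero whenever $d(\cH_1, \cH_2) > 0$; in the exceptional case $d(\cH_1,\cH_2) = 0$ (which can occur when $V_1^\perp + V_2^\perp$ is non-closed), an additional cocycle argument would be needed to deduce $T \neq \{0\}$ from the fact that $\Fix(G) \subset \tilde{\cL}_1 \cap \tilde{\cL}_2 = \emptyset$. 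Once $T \neq \{0\}$, a spherical-orbit argument using $\dim \cL^0 \geq 2$ forces $T = \cL^0$, and combined with the density of $\pi(G)$ and closedness of $G$ this yields $G = \cL^0 \rtimes \mathcal{O}(\cL^0) = H_E$. The main obstacles I expect are the orbit-density step in (i) and the exceptional $d = 0$ subcase of (ii).
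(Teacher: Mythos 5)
For part (i) your reduction is sound and close in spirit to what the paper does: the paper also reduces to a purely linear generation statement (that $\mathcal{O}(\R^n)$ is generated by $\mathcal{O}(E_1)\cup\mathcal{O}(E_2)$ for two distinct hyperplanes when $n\ge 3$, followed by an induction to build up $G_\cK$), and leaves the details at roughly the same level of sketchiness as your orbit-density/reflection argument. I have no objection there.

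The genuine gap is in part (ii), and it is exactly the case you flag: $\cK=\emptyset$ but $d(\cH_1,\cH_2)=0$, which really does occur when $\cH_1^0+\cH_2^0$ is not closed. Your only mechanism for producing a nonzero translation is the element $gh^{-1}$ with $g^0=h^0=S$, whose translation vector is $(I-S)(p-q)=2P_F(p-q)$; since $P_F$ annihilates $\cH_1^0+\cH_2^0$, this vector has norm exactly $2\,d(\cH_1,\cH_2)$, and the same computation shows that \emph{any} product $gh$ with $g\in G_{\cH_1}$, $h\in G_{\cH_2}$ and trivial linear part is the translation by $(I-A)P_F(p-q)$ for some $A\in\mathcal{O}(F)$ --- so no two-letter word can ever give a nonzero translation in the degenerate case, and it is not at all clear that longer words do either. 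The appeal to ``an additional cocycle argument'' deducing $T\neq\{0\}$ from $\Fix(G)=\emptyset$ is not routine: since $\pi(G)$ is only dense in $\mathcal{O}(\cL^0)$ and need not be closed, one cannot directly transport a cocycle to $\mathcal{O}(\cL^0)$ and invoke a fixed-point property. The paper sidesteps all of this with a different geometric move: for $x\in\cH_1$ it forms the closed affine span $\cL$ of $\cH_2\cup\{x\}$, shows by a purely algebraic argument (using only $0\notin\cH_1-\cH_2$ as a set, not any distance bound) that $\cH_1\cap\cL=x+E$, applies part (i) to the pair $(\cH_1,\cL)$ to get $G_{x+E}\subset G$, repeats with $x'\in\cH_1$, $x'-x\notin E$, and then reduces to the parallel case $\langle G_{x+E},G_{x'+E}\rangle=H_E$, where the two parallel subspaces are at genuinely positive distance. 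You would need either to import this auxiliary-subspace construction or to supply the missing argument for the $d=0$ case; as written, part (ii) is incomplete.
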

\begin{proof}
$(\rm i)$ Since $\cK \neq 0$, this is a statement about linear spaces and reduces easily to the fact that $\mathcal{O}(\R^n)$ is generated by $\mathcal{O}(E_1) \cup \mathcal{O}(E_2)$ whenever $E_1$ and $E_2$ are two distinct hyperplanes in $\R^n$ and $n \geq 3$. By using this we can inductively construct larger and larger subgroups of $G_\cK$ inside $G$.

Let us now deal with $(\rm ii)$. We first assume that $\cH_1$ and $\cH_2$ are two parallel subspaces, i.e.\ $\cH_1^0=\cH_2^0=E$. By replacing $\cH$ by $\cH/E$, the problem reduces to the easy fact that $G_{\{x\}} \cup G_{\{y\}}$ generates $\Isom(\cH)$ whenever $x,y$ are two different points of $\cH$ and $\dim \cH \geq 2$. Next, we assume that $\cH_1$ and $\cH_2$ are not parallel. For example, assume that $\cH_1^0 \neq E$. Take $x \in \cH_1$ and let $\cL$ the closed affine subspace spanned by $\cH_2$ and $x$. Observe that $\cH_1 \cap \cL=x+E$. Indeed, the inclusion $x+ E \subset \cH_1 \cap \cL$ is obvious. Conversely, if $y \in \cH_1 \cap \cL$, then we can write $y=z+\lambda(x-P_{\cH_2}(x))$ where $z \in \cH_2$ and $\lambda \in \R$. We have $y-z=\lambda(x-P_{\cH_2}(x))$. This shows that $\cH_1-\cH_2$, which is an affine subspace of $\cH^0$, contains both $x-P_{\cH_2}(x)$ and $\lambda(x-P_{\cH_2}(x))$. By assumption, $\cH_1-\cH_2$ does not contain $0$. This forces $\lambda=1$. We thus get $y-z=x-P_{\cH_2}(x)$. We obtain $y-x=z-P_{\cH_2}(x) \in \cH_1^0 \cap \cH_2^0=E$. We conclude that $y \in x + E$ as we wanted.

Now, since $\cH_2 \subset \cL$, we have $G_{\cL} \subset G_{\cH_2} \subset G$. Since $\cH_1 \cap \cL=x+E$, we conclude by item $(\rm i)$, that $G$ contains $G_{x+E}$. Take $x' \in \cH_1$ such that $x'-x \notin E$. Then we also have $G_{x'+E} \subset G$. By the first case of item $(\rm ii)$, we conclude that $G$ contains $H_E$.
\end{proof}

\begin{prop} \label{groups and decreasing subspaces}
Let $\cH$ be an affine Hilbert space and $(\cH_i)_{i \in I}$ a decreasing net of affine subspaces. Define $\cK =\bigcap_{i \in I} \cH_i$ and $E= \bigcap_{i \in I} \cH_i^{0}.$ Let $G \subset \Isom(\cH)$ be the closed subgroup generated by $\bigcup_i G_{\cH_i}$. If $\cK \neq \emptyset$ then $E=\cK^0$ and $G=G_{\cK}$. If $\cK=\emptyset$, then $G=H_{E}$.
\end{prop}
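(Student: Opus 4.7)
The plan is to treat the two cases separately. Observe first that $(G_{\cH_i})$ is an increasing net of subgroups (if $\cH_j \subseteq \cH_i$, any isometry fixing $\cH_i$ pointwise also fixes $\cH_j$), so $G = \overline{\bigcup_i G_{\cH_i}}$.

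For $\cK \neq \emptyset$, fix $x \in \cK$. Since $x \in \cH_i$ for every $i$, $\cH_i = x + \cH_i^0$ and $\cK = x + E$, giving $\cK^0 = E$. The inclusion $G \subseteq G_\cK$ is immediate; for the reverse, decompose $\cH \cong \cK \times E^\perp$ based at $x$, under which $G_\cK$ is identified with $\mathcal{O}(E^\perp)$ and each $G_{\cH_i}$ with $\mathcal{O}((\cH_i^0)^\perp)$ (extended by identity on $E^\perp \ominus (\cH_i^0)^\perp$). Since $(\cH_i^0)^\perp$ is an increasing family of closed subspaces of $E^\perp$ with dense union (as $\bigcap_i \cH_i^0 = E$), a standard strong-operator-topology density argument yields $\overline{\bigcup_i \mathcal{O}((\cH_i^0)^\perp)} = \mathcal{O}(E^\perp)$, hence $G = G_\cK$.

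For $\cK = \emptyset$, the inclusion $G \subseteq H_E$ follows directly from the definition of $G_{\cH_i}$. For the reverse, first reduce to $E = \{0\}$ using $\cH \cong \cH/E \times \cH/E^\perp$: under the resulting identification $H_E \cong \Isom(\cH/E) \times \{\id\}$, the subgroup $G_{\cH_i}$ corresponds to $G_{\pi(\cH_i)}$ (a direct check using $E \subseteq \cH_i^0$), and the projected net $(\pi(\cH_i))$ in $\cH/E$ satisfies $\bigcap \pi(\cH_i) = \emptyset$ (any common coset $y + E$ would lie in $\bigcap \cH_i = \emptyset$) with trivial tangent intersection. Hence assume $E = \{0\}$ and aim to show $G = \Isom(\cH)$. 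The closed subgroup $G \cap \cH^0$ is invariant under conjugation by $\bigcup_i G_{\cH_i}$, whose linear parts span $\mathcal{O}(\cH^0)$ in SOT by the Case 1 density, so $G \cap \cH^0$ is $\mathcal{O}(\cH^0)$-invariant. Since the only closed $\mathcal{O}(\cH^0)$-invariant subgroups of $\cH^0$ are $\{0\}$ and $\cH^0$, it suffices to exhibit one nonzero translation in $G$; combined with the rotation density (after correction by translations to a fixed basepoint) this then yields $G = \Isom(\cH)$.

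The production of this translation is the main obstacle and the geometric heart of the argument. Fix $0 \in \cH$ as origin and set $p_i := P_{\cH_i}(0) \in (\cH_i^0)^\perp$, so $\|p_i\| = d(0, \cH_i) \to \infty$ by Proposition \ref{empty intersection diverge}. Pass to a subnet so that $p_i/\|p_i\|$ converges weakly to some $u_\infty \in \cH^0$, and pick a nonzero $v \in (\cH_{i_0}^0)^\perp$ with $v \perp u_\infty$ (possible once $\dim (\cH_{i_0}^0)^\perp \geq 2$, which holds for large $i_0$). For $i \geq i_0$, let $g_i \in G_{\cH_i}$ have linear part the rotation in the $2$-plane $\spn(p_i, v) \subseteq (\cH_i^0)^\perp$ sending $p_i$ to $\|p_i\|(p_i - v)/\|p_i - v\|$, extended by identity on the orthogonal complement. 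Because $\langle p_i/\|p_i\|, v\rangle \to \langle u_\infty, v\rangle = 0$, the rotation angle $\theta_i$ tends to $0$, so $g_i^0 \to \id$ uniformly on bounded sets; meanwhile $g_i(0) = p_i - g_i^0(p_i) \to v$. Hence $g_i \to j_v$ in SOT and $j_v \in G$, giving the required nonzero translation.
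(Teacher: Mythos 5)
Your argument is correct, and its geometric heart---obtaining a translation as a pointwise limit of small-angle rotations whose fixed subspaces recede to infinity---is exactly the trick the paper uses. The surrounding organization is genuinely different, though. In the case $\cK=\emptyset$ the paper neither reduces to $E=\{0\}$ nor extracts a weak limit of $p_i/\|p_i\|$: it fixes $\xi\in(\cH_{i_0}^0)^\perp$ and observes that $x_j-x_{i_0}\in\cH_{i_0}^0$ for $j\ge i_0$, so $\langle x-x_j,\xi\rangle$ is \emph{constant} in $j$ and the asymptotic orthogonality you extract from $u_\infty$ comes for free, for \emph{every} $\xi\in(\cH_{i_0}^0)^\perp$; this produces all translations by vectors of $E^\perp$ at once, after which the paper translates the $\cH_i$ to parallel subspaces through a common point and falls back on the first case. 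You instead manufacture a single nonzero translation and then invoke the fact that a nonzero closed $\mathcal O(\cH^0)$-invariant subgroup of $\cH^0$ must be all of $\cH^0$; this buys you a shorter endgame at the price of the (unnecessary) subnet extraction. Two small points you should record to make your route airtight. First, the classification of invariant closed subgroups fails in dimension one (where $a\Z$ is invariant under $\pm 1$), so you need to observe that $\cK=\emptyset$ forces $\dim E^\perp=\infty$: otherwise, after projecting to $\cH/E$, the tangent spaces $\pi(\cH_i)^0$ form a decreasing net of subspaces of a finite-dimensional space with trivial intersection, hence are eventually $\{0\}$, so the $\pi(\cH_i)$ are eventually a constant singleton and $\bigcap_i\pi(\cH_i)\neq\emptyset$, a contradiction. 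This also justifies your claim that $\dim(\cH_{i_0}^0)^\perp\ge 2$ for large $i_0$. Second, the weak limit $u_\infty$ is superfluous: by the same orthogonality observation as in the paper, $\langle p_i,v\rangle$ is eventually constant for any $v\in(\cH_{i_0}^0)^\perp$, so $\langle p_i/\|p_i\|,v\rangle\to 0$ automatically and you may take $v$ arbitrary in $(\cH_{i_0}^0)^\perp\setminus\{0\}$.
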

\begin{proof}
If $\cK \neq \emptyset$, this is easy because every element $g \in G_{\cK}$ can be approximated by an isometry which fixes a subspace $\cK'$ with finite codimension. Since $\cK' \supset \cH_i$ for $i$ large enough, the conclusion follows. 

Next we deal with the more involved case where $\cK=\emptyset$. We will show that $G$ contains all translations by elements of $E^{\perp}=\left( \bigcap_i \cH_i^0 \right)^{\perp}=\overline{\bigcup_i (\cH_i^0)^{\perp}}$ by approximating them by rotations with very small angle. First, let $x \in \cH$ and let $x_i$ be the orthogonal projection of $x$ on $\cH_i$. By the Proposition \ref{empty intersection diverge}, we have $\lim_i \| x_i-x\| = +\infty$. Fix $i$ and pick $\xi \in (\cH_i^0)^{\perp}$. Since $x_j-x_i$ is orthogonal to $\xi$ and $\|x_j - x\| \to \infty$, we know that asymptotically, $x_j-x$ and $\xi$ are almost orthogonal. Consider the rotation $R_j \in \mathcal{O}(\cH^0)$ of the plane generated by $\xi$ and $x-x_j$ and with angle $\theta_j=\| \xi \|/ \|x-x_j\|$. Since $\theta_j \to 0$ and $\theta_j \sim \sin\theta_j$, we get $R_j(x-x_j) \sim (x-x_j)+\xi$. Let $g_j \in G_{\cH_j}$ be the isometry which fixes $x_j$ and with linear part $R_j$. Then we get $g_j x \to x+ \xi$. Since we also have $g_j^0 \to \id$, we conclude easily that $g_j$ converges in the pointwise topology to the translation by $\xi$. Therefore, $G$ contains the translation by $\xi$. This holds for every $\xi \in (\cH_i^0)^{\perp}$ and every $i \in I$. Thus it holds for all $\xi \in E^{\perp}$. Finally, let us show that $H_{E} \subset G$. Fix a point $x \in \cH$. Let $\cH'_i=x+\cH_i^0$ be the subspace parallel to $\cH_i$ and which contains $x$. Since $G$ contains $G_{\cH_i}$ and every translation by an element of $(\cH_i^0)^{\perp}$, then $G$ also contains $G_{\cH'_i}$. Since $\bigcap_i \cH'_i=x+E$ then $G$ contains $G_{x+E}$ by the first part of the proof. Finally, every element of $H_{E}$ can be written as an element of $G_{x+E}$ composed by a translation by a vector in $E^{\perp}$.
\end{proof}

\subsection{Inflations of an affine Hilbert space}
 We now explain a construction which will play a key role in our study. Let $\cH$ be an affine Hilbert space. By rescaling the metric of $\cH$ by a parameter $t > 0$, we obtain a new affine Hilbert space $\cH^t$. The elements of $\cH^t$ are formally denoted $tx$ for $x \in \cH$ so that $\| tx-ty\|=t\|x-y\|$. The space $\cH^t$ has the same tangent space $\cH^0$ but with a rescaled action so that $tx-ty=t(x-y)$ for all $x,y \in \cH$. We have a natural identification $(\cH^t)^s=\cH^{st}$ for all $s,t> 0$ with the obvious relation $s(tx)=(st)x$ for all $x \in \cH$. For every isometry $g \in \Isom(\cH)$, we can define an isometry $g^t \in \Isom(\cH^t)$ by the formula $g^t(tx)=tg(x)$. Then $\Isom(\cH) \ni g \mapsto g^t \in \Isom(\cH^t)$ is an isomorphism of topological group. For all $s,t \geq 0$, we have $(g^t)^s=g^{ts}$ with the obvious identifications.

\begin{rem}
When $H$ is a linear Hilbert space, we of course have a natural identification $H=H^t$ for all $t > 0$. However, when $\cH$ is an affine Hilbert space, there is no \emph{natural} affine isometry between $\cH$ and $\cH^t$ for $t \neq 1$. In other words, the functor $\cH \mapsto \cH^t$ is a nontrivial self-equivalence of the category of affine Hilbert spaces.
\end{rem}

The following observation will be crucial to relate the Gaussian actions $\widehat{\alpha}^t$ for different values of $t$.

\begin{prop} \label{Rotation trick}
Take $\theta \in [0,\frac{\pi}{2}]$ and let $t=\cos \theta$ and $s=\sin \theta$. Then the formula $$ R_\theta(x,\xi)=(t x+s\xi, sx- t\xi), \quad (x,\xi) \in \cH \times \cH^0$$
defines an isometry $R_\theta : \cH \times \cH^0 \rightarrow \cH^t \times \cH^s$. Moreover, the isometry $R_\theta$ is equivariant with respect to the diagonal actions of $\Isom(\cH)$, i.e.\ for all $g \in \Isom(\cH)$, we have
$$ R_\theta \circ (g \times g^0)=(g^t \times g^s) \circ R_\theta$$
\end{prop}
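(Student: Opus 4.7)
The plan is to verify both assertions by direct computation, after carefully unpacking the rescaling conventions. Recall that the tangent space of $\cH^t$ is identified with $\cH^0$ itself (same inner product), and the action is rescaled so that $tx - ty = t(x-y)$ as an element of $\cH^0$. In particular, for any $x \in \cH$ and any tangent vector $\eta \in \cH^0$, we have the identity $(tx) + \eta \in \cH^t$, and for $y \in \cH$ we have $(ty) - (tx) = t(y-x)$ in $\cH^0$. Using these conventions the map $R_\theta$ is well-defined as a map from $\cH \times \cH^0$ to $\cH^t \times \cH^s$.

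For the isometry property, I would fix two points $(x_1,\xi_1),(x_2,\xi_2) \in \cH \times \cH^0$, set $u = x_1 - x_2 \in \cH^0$ and $v = \xi_1 - \xi_2 \in \cH^0$, and compute the difference of the images coordinatewise. The first coordinate of $R_\theta(x_1,\xi_1) - R_\theta(x_2,\xi_2)$ in $\cH^t$ is the tangent vector $t(x_1-x_2) + s(\xi_1-\xi_2) = tu + sv$, and the second coordinate in $\cH^s$ is $su - tv$. The squared distance is therefore
\[
\|tu+sv\|^2 + \|su-tv\|^2 = (t^2+s^2)\|u\|^2 + (s^2+t^2)\|v\|^2 + 2st\langle u,v\rangle - 2st\langle u,v\rangle = \|u\|^2 + \|v\|^2,
\]
since $t^2+s^2 = \cos^2\theta + \sin^2\theta = 1$. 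This is exactly $\|(x_1,\xi_1)-(x_2,\xi_2)\|^2$ in $\cH \times \cH^0$, so $R_\theta$ is an isometry (and hence automatically an affine isometry).

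For the equivariance, I would simply evaluate both sides on an arbitrary $(x,\xi)$. On the one hand,
\[
R_\theta \circ (g \times g^0)(x,\xi) = R_\theta(gx, g^0\xi) = \bigl(t(gx)+s\,g^0(\xi),\; s(gx) - t\,g^0(\xi)\bigr).
\]
On the other hand, since the linear part of $g^t$ is $g^0$ (indeed $g^t(tx+\eta) = tg(x) + g^0(\eta)$ by the very definition of $g^t$, and similarly for $g^s$), we get
\[
(g^t \times g^s) \circ R_\theta(x,\xi) = \bigl(g^t(tx+s\xi),\; g^s(sx-t\xi)\bigr) = \bigl(t(gx)+s\,g^0(\xi),\; s(gx)-t\,g^0(\xi)\bigr),
\]
which matches.

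I do not anticipate any serious obstacle: the only subtlety is keeping straight the distinction between the points $tx \in \cH^t$ and the tangent vector $t(x-y) \in \cH^0$, together with the identity $t^2+s^2=1$ that makes the cross terms cancel. Once the conventions of the inflation $\cH \mapsto \cH^t$ are made explicit, both parts reduce to one-line verifications.
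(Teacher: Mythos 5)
Your proof is correct and follows essentially the same direct computation as the paper: the equivariance is verified by exactly the two identities $g^t(tx+s\xi)=tg(x)+sg^0(\xi)$ and $g^s(sx-t\xi)=sg(x)-tg^0(\xi)$ that the paper writes down. You additionally spell out the isometry check via the cancellation of cross terms using $t^2+s^2=1$, which the paper leaves implicit; that verification is also correct.
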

\begin{proof}
This follows from the two equations
$$ g^t(tx+s\xi)=g^t(tx)+g^0(s\xi)=tg(x)+sg^0(\xi),$$
$$ g^s(sx-t\xi)=g^s(sx)-g^0(t\xi)=sg(x)-tg^0(\xi).$$
\end{proof}

\subsection{Affine isometric actions}
An \emph{affine isometric action} $\alpha : G \curvearrowright \cH$ of a locally compact group $G$ on an affine Hilbert space $\cH$ is a continuous group homomorphism $\alpha : G \rightarrow \Isom(\cH)$. By taking linear parts $\alpha^0 : g \mapsto (\alpha_g)^0$ we obtain an orthogonal representation $\alpha^0 : G \rightarrow \mathcal{O}(\cH^0)$ called the \emph{linear part} of $\alpha$. If $\alpha : G \curvearrowright \cH$ is an affine isometric action, then we can form a new affine isometric action $\alpha^t : G \curvearrowright \cH^t$ defined by $\alpha_g^t(tx)=t\alpha_g(x)$. We have $(\alpha^t)^s=\alpha^{ts}$ for all $t,s \geq 0$ with the obvious identifications.

We say that $x \in \cH$ is a \emph{fixed point} of $\alpha$ if $\alpha_g(x)=x$ for all $g \in G$. We recall that $\alpha$ admits a fixed point if and only if for some (hence any) $y \in \cH$, the orbit $G \cdot y$ is bounded.

We say that  $\alpha$ has \emph{almost fixed points} if there exists a sequence $(x_n)_{n \in \N}$ in $\cH$ such that the sequence $\|gx_n-x_n \|$ converges to $0$ uniformly on compact subsets of $G$.

We say that $\alpha$ is (metrically) \emph{proper} if $\lim_{g \to \infty} \|gx-x\|=+\infty$ for some (hence any) $x \in \cH$.

\subsubsection*{Restricted action} If $\cK \subset \cH$ is an affine subspace which is invariant by the action $\alpha$ then we can restrict $\alpha$ to an action $\alpha|_{\cK} : G \curvearrowright \cK$.

\subsubsection*{Product action}
If $\alpha : G \curvearrowright \cK$ and $\beta : G \curvearrowright \cL$ are two affine isometric actions, then we can define naturally a new affine isometric action $\alpha \times \beta : G \curvearrowright \cK \times \cL$. 

\subsubsection*{Projected action}
If $E \subset \cH^0$ is a closed linear subspace which is invariant under $\alpha^0$ then we can define a \emph{projected action} $\alpha/E : G \curvearrowright \cH/E$ by the formula $g(x+E)=gx + E$ for all $x \in \cH$. Note that $F=E^{\perp}$ is also $\alpha^0$-invariant and we have $\alpha=(\alpha/E) \times (\alpha/F)$ on $\cH=(\cH/E) \times (\cH/F)$.

\subsubsection*{The harmonic seminorm} \label{mu-harmonic norm}
Let $\alpha : G \curvearrowright \cH$ be an affine isometric action. Let $\mu$ be a probability measure on $G$. The \emph{$\mu$-harmonic seminorm} of $\alpha$ is defined by
$$ \| \alpha \|_\mu^2 =\inf_{x \in \cH} \int_G \| gx-x\|^2 \rd \mu(g).$$
Observe that if $\mu$ is compactly supported and $\alpha$ has almost fixed points then $\| \alpha\|_\mu=0$.

\subsection{Relation with $1$-cohomology}
We recall the relationship between affine isometric actions and the $1$-cohomology of orthogonal representations \cite[Section 2.2]{BHV08}.

Let $\pi : G \rightarrow \mathcal{O}(H)$ be an orthogonal representation on a linear Hilbert space $H$. A \emph{$1$-cocycle} for $\pi$ is a continuous map $c : G \rightarrow H$ such that $c(gh)=c(g)+\pi(g) c(h)$ for all $g,h \in G$. For every $\xi \in H$, the function $\partial_\xi : g \mapsto \partial_\xi(g)=\pi(g)\xi-\xi$ is a $1$-cocycle, which is called a \emph{$1$-coboundary}. We denote by $Z^1(\pi)$ the space of all $1$-cocycles and by $B^1(\pi)$ the space of all $1$-coboundaries. We equip $Z^1(\pi)$ with the topology of uniform convegence on compact subsets of $G$. The function $\partial : \xi \mapsto \partial_\xi \in Z^1(\pi)$ is continuous but its image is not closed in general. We denote by $H^1(\pi)=Z^1(\pi)/B^1(\pi)$ the quotient space. It is called the \emph{$1$-cohomology} of $\pi$. The quotient $\overline{H}^1(\pi)=Z^1(\pi)/\overline{B^1(\pi)}$ is called the \emph{reduced $1$-cohomology}.

For every  $c \in Z^1(\pi)$, the formula $\alpha_g(\xi)=\pi(g) \xi + c(g)$ defines an affine isometric action $\alpha : G \curvearrowright H$. Observe that $\alpha$ has a fixed point if and only if $c \in B^1(\pi)$. Similarly, $\alpha$ has almost fixed points if and only if $c \in \overline{B}^1(\pi)$. For every $t > 0$, under the natural identification $H^t=H$, the affine isometric action $\alpha^t : G \curvearrowright H$ is given by the formula $\alpha^t_g(\xi)=\pi(g)\xi + tc(g)$, i.e.\ the cocycle $c$ is simply scaled by $t$. If $H=E \oplus F$, under the natural identification $H/E=F$, the projected action $\alpha/E : G \curvearrowright F$ is given by the formula $(\alpha/E)_g(\xi)=\pi(g)\xi+P_Fc(g)$ for all $g \in G$ where $P_F : H \rightarrow F$ is the orthogonal projection. Finally, the $\mu$-harmonic seminorm of $\alpha$ of Definition \ref{mu-harmonic norm} corresponds to the norm of the \emph{harmonic part} of $c$ as in \cite{EO18} (when $\mu$ is cohomologically adapted).

\medskip

Conversely, let $\alpha : G \curvearrowright \cH$ be an affine isometric action on some affine Hilbert space $\cH$ and let $\pi=\alpha^0 : G \rightarrow \mathcal{O}(\cH^0)$ be its linear part. For every $x$, we can define a \emph{cocycle} $\partial_x \in Z^1(\pi)$ by the formula $\partial_x(g)=gx-x$. Observe that if $y \in \cH$ is another point, then $\partial_x=\partial_y + \partial_\xi$ where $\xi=x-y \in \cH^0$. In particular, the cohomology class $[\partial_x]$ does not depend on the choice of $x$ but only on $\alpha$. If we choose $x$ to be the origin of $\cH$ and thus identify $\cH$ with $\cH^0$, then $\alpha$ is identified with the affine isometric action asssociated to $\partial_x$ in the previous paragraph. Therefore, we have a complete correspondance between affine isometric actions and the $1$-cohomology of orthogonal representations.

\subsection{Evanescent affine isometric actions} \label{section evanescent}

A systematic study of irreducible affine isometric actions can be found in \cite{BPV16}. Here we introduce another notion which is somehow the complete opposite.

\begin{df}
Let $\alpha : G \curvearrowright \cH$ be an affine isometric action. The \emph{support} of $\alpha$ is the linear subspace of $\cH^0$ defined by
$$ \supp(\alpha)=\bigcap_{\cK \in \mathcal{E}} \cK^0$$
where $\mathcal{E}$ is the set of all $\alpha$-invariant affine subspaces of $\cH$. If $\supp(\alpha)=\cH^0$, we say that $\alpha$ is \emph{irreducible}. If $\supp(\alpha)=\{0\}$ we say that $\alpha$ is \emph{evanescent}.
\end{df}

\begin{rem} \label{irreducible times evanescent}
Let $\alpha : G \curvearrowright \cH$ be any affine isometric action. Then $\alpha$ can be decomposed as a product of an irreducible action and an evanescent action. Indeed, let $E=\supp(\alpha)$ and $F=E^{\perp}$. Then the projected action $\alpha/F : G \curvearrowright \cH/F$ is irreducible while $\alpha/E : G \curvearrowright \cH/E$ is evanescent.
\end{rem}

\begin{prop} \label{nilpotent evanescent}
Let $\alpha : G \curvearrowright \cH$ be an affine isometric action.  Suppose that $G$ is nilpotent and $\alpha^0$ has no invariant vectors. Then $\alpha$ is evanescent.
\end{prop}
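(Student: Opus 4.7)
The plan is to induct on the nilpotency class of $G$, with the following core claim as the main step: \emph{for every $z \in Z(G)$ one has $\supp(\alpha) \subset \ker(1 - \alpha^0(z))$}. Granting this, set $E := \bigcap_{z \in Z(G)} \ker(1 - \alpha^0(z))$, so that $\supp(\alpha) \subset E$. The base case (nilpotency class one, i.e.\ $G$ abelian) is then immediate, since $Z(G) = G$ and by hypothesis $\bigcap_{g \in G} \ker(1 - \alpha^0(g)) = 0$. For the inductive step, consider the projected action $\beta := \alpha/E^\perp : G \curvearrowright \cH/E^\perp$, whose tangent space is $E$ and whose linear part is $\alpha^0|_E$. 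Each $\beta_z$ for $z \in Z(G)$ is a translation, since $\alpha^0(z)|_E = \id$; commuting $\beta_z$ with every $\beta_g$ then forces its translation vector to be $\beta^0$-invariant, and hence zero because $\alpha^0$ (and a fortiori $\alpha^0|_E$) has no nonzero invariant vectors. Thus $\beta$ descends to an action of $G/Z(G)$, which is nilpotent of strictly smaller class and satisfies the same hypotheses, so the induction hypothesis gives $\supp(\beta) = 0$. Finally, any $\beta$-invariant affine subspace $\cL$ of $\cH/E^\perp$ (necessarily with $\cL^0 \subset E$) pulls back to an $\alpha$-invariant affine subspace of $\cH$ with tangent $\cL^0 + E^\perp$; since $\cL^0 \subset E$ gives $E \cap (\cL^0 + E^\perp) = \cL^0$, we deduce $\supp(\alpha) \subset \cL^0$ for every such $\cL$, hence $\supp(\alpha) \subset \supp(\beta) = 0$.

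To prove the core claim, fix $z \in Z(G)$ and set $T := \alpha^0(z)$. Apply the spectral theorem to the orthogonal operator $T$, and for each $\delta > 0$ let $F_\delta \subset \cH^0$ denote the spectral subspace of $T$ corresponding to rotation angles in $[-\delta, \delta]$. Then $F_\delta$ decreases to $\ker(1 - T)$ as $\delta \to 0$, and $T|_{F_\delta^\perp} - 1$ is invertible with bounded inverse because the spectrum of $T|_{F_\delta^\perp}$ avoids a neighborhood of $1$. Centrality of $z$ ensures that every $\alpha^0(g)$ commutes with $T$, hence with its spectral projections, so $F_\delta$ is $\alpha^0$-invariant and the projected action $\alpha/F_\delta : G \curvearrowright \cH/F_\delta$ is well-defined. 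On $\cH/F_\delta$ (which has tangent $F_\delta^\perp$), the affine isometry $(\alpha/F_\delta)_z$ has its linear part minus the identity invertible, so it admits a \emph{unique} fixed point $y_\delta$. Since $z$ is central, $(\alpha/F_\delta)_g(y_\delta)$ is also a fixed point of $(\alpha/F_\delta)_z$ for every $g \in G$, and uniqueness then forces $(\alpha/F_\delta)_g(y_\delta) = y_\delta$; thus $y_\delta$ is fixed by the full action $\alpha/F_\delta$. The preimage of $\{y_\delta\}$ under the quotient map $\cH \to \cH/F_\delta$ is therefore an $\alpha$-invariant affine subspace of $\cH$ with tangent $F_\delta$, yielding $\supp(\alpha) \subset F_\delta$. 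Letting $\delta \to 0$ proves the claim.

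The main obstacle is that when the spectrum of $\alpha^0(z)$ accumulates at $1$, the affine isometry $\alpha_z$ need not admit a fixed point in $\cH$ itself, so $\Fix(\alpha_z)$ is not available as the candidate $\alpha$-invariant affine subspace. The spectral truncation by $F_\delta$ sidesteps this obstacle by sacrificing precisely the directions in which $T - 1$ fails to be coercive, and centrality of $z$ is what promotes the unique $z$-fixed point in the quotient $\cH/F_\delta$ to a global $G$-fixed point; nilpotency of $G$ enters only through the outer induction on the class.
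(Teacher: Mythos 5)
Your proof is correct, but it takes a genuinely different route from the paper. The paper disposes of this in two lines: by \cite[Corollary 4.21]{BPV16} a nilpotent group admits no irreducible affine isometric action whose linear part lacks invariant vectors, and since the same applies to every nontrivial projected action, the irreducible factor in the decomposition $\alpha = (\alpha/F) \times (\alpha/E)$ of Remark \ref{irreducible times evanescent} must be trivial. You instead give a self-contained argument that in effect re-proves the content of the cited result: the spectral truncation $F_\delta$ of $\alpha^0(z)$ for central $z$ is a clean way to manufacture $\alpha$-invariant affine subspaces with tangent $F_\delta$ even when $\alpha_z$ has no fixed point in $\cH$ itself (the key obstacle you correctly identify), and the outer induction through the upper central series, with the translation-vector argument killing the action of $Z(G)$ on the projected space, is exactly where nilpotency is consumed. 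I checked the details — the commutation of $\alpha^0(g)$ with the spectral projections of $T$, the unique fixed point of $(\alpha/F_\delta)_z$ promoted to a $G$-fixed point by centrality, the identity $E \cap (\cL^0 + E^\perp) = \cL^0$ in the final descent — and they all hold. What the paper's approach buys is brevity via the literature; what yours buys is a transparent, constructive explanation of where the small invariant subspaces come from and why the hypothesis of nilpotency (rather than, say, mere amenability) suffices.
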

\begin{proof}
By \cite[Corollary 4.21]{BPV16}, such an action $\alpha$ cannot be irreducible. Since the same holds for all of its nontrivial projected actions, we conclude by Remark \ref{irreducible times evanescent} that $\alpha$ is evanescent.
\end{proof}

\begin{prop} \label{amenable evanescent}
Let $\alpha : \Gamma \curvearrowright \cH$ be an affine isometric action of a discrete group $\Gamma$. Suppose that $\Gamma$ is amenable and $\alpha^0$ is contained in a multiple of the left regular representation. Then $\alpha$ is evanescent.
\end{prop}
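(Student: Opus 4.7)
By Remark \ref{irreducible times evanescent}, the action $\alpha$ decomposes as a product $\alpha_{\rm irr} \times \alpha_{\rm ev}$ of an irreducible and an evanescent part, and both factors inherit the hypotheses of the proposition (amenability of $\Gamma$ and linear part contained in a multiple of the left regular representation). It is therefore enough to prove that any irreducible affine isometric action $\alpha : \Gamma \curvearrowright \cH$ of an amenable discrete group $\Gamma$ whose linear part $\alpha^0$ embeds in a multiple of the left regular representation $\lambda_\Gamma$ must act on a single point. We argue by contradiction: assume $\cH$ is nontrivial, fix an origin to identify $\cH$ with its tangent space $\cH^0$, and let $c \in Z^1(\alpha^0)$ be the resulting cocycle.

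The key reformulation, which follows directly from the definition, is that an affine subspace of the form $x + E \subset \cH$ with $E$ a closed $\alpha^0$-invariant subspace of $\cH^0$ is $\alpha$-invariant if and only if the projected cocycle $P_{E^\perp} c$ is a coboundary in $E^\perp$. Writing $F = E^\perp$, evanescence of $\alpha$ is equivalent to the density in $\cH^0$ of the union of all closed $\alpha^0$-invariant subspaces $F$ on which $P_F c$ is a coboundary, while irreducibility of a nontrivial $\alpha$ would say that no such proper nonzero $F$ exists. The goal is thus to construct a proper nonzero such $F$.

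Equivariantly embed $\alpha^0$ as a subrepresentation of $\lambda_\Gamma \otimes 1_K$ on $\ell^2(\Gamma) \otimes K$ for some separable Hilbert space $K$. A direct computation identifies every $1$-cocycle for $\lambda_\Gamma \otimes 1_K$ with a $K$-valued function $\phi : \Gamma \to K$ via the formula $c(g)(k) = \phi(k^{-1} g) - \phi(k^{-1})$, and shows that $c$ is a coboundary if and only if $\phi \in \ell^2(\Gamma,K)$. The $\alpha^0$-invariant closed subspaces of $\cH^0$ correspond to projections in the commutant of $\alpha^0$, which lies inside the commutant of $\lambda_\Gamma \otimes 1_K$; by amenability of $\Gamma$ the latter is a finite hyperfinite von Neumann algebra. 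Combining a Følner sequence for $\Gamma$ with this finite structure, one produces a net $(q_n)$ of proper projections in the commutant of $\alpha^0$, with $q_n \to 1$ strongly, such that each truncated function $q_n \phi$ is square-summable. Then $F_n := q_n \cH^0$ is a proper nonzero $\alpha^0$-invariant subspace on which $P_{F_n} c$ is a coboundary, contradicting irreducibility.

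The main technical obstacle is the explicit construction of the projections $q_n$. For abelian $\Gamma$ this is transparent via Fourier analysis on $\hat\Gamma$: the function $\phi$ becomes a distribution, and one cuts it off away from the singular set where $|\chi(g) - 1|^{-1}$ fails to be $L^2$ against the spectral measure, producing a dense family of spectral projections with the required property (as already seen concretely in the $\Gamma = \Z$ case). For general amenable $\Gamma$ the analogous cutoff must be performed noncommutatively in the commutant of $\lambda_\Gamma$, exploiting simultaneously the hyperfiniteness of this commutant and the Følner property of $\Gamma$ to truncate $\phi$ into an $\ell^2(\Gamma,K)$-function inside a controlled invariant subspace.
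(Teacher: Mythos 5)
Your reduction is exactly the paper's: decompose $\alpha$ into an irreducible and an evanescent factor via Remark \ref{irreducible times evanescent}, note that both factors inherit the hypotheses, and reduce to showing that a nontrivial irreducible affine isometric action of an amenable discrete group with linear part contained in a multiple of $\lambda_\Gamma$ cannot exist. At that point the paper simply invokes \cite[Theorem 4.31]{BPV16}, whereas you set out to prove the nonexistence statement from scratch, and this is where the proposal has a genuine gap. Your cocycle reformulation (an invariant affine subspace with tangent space $E$ exists iff $E$ is $\alpha^0$-invariant and $P_{E^\perp}c$ is a coboundary) and the identification of cocycles for $\lambda_\Gamma \otimes 1_K$ with functions $\phi : \Gamma \rightarrow K$ modulo constants are correct. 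But the whole content of the theorem is then concentrated in the single assertion that one can produce proper projections $q_n \to 1$ strongly in the commutant with each $q_n\phi$ square-summable. You give an actual argument only in the abelian case (the Fourier cutoff away from the trivial character, which does work because the spectral measure of a subrepresentation of a multiple of $\lambda_\Gamma$ is absolutely continuous); for general amenable $\Gamma$ you only announce that "the analogous cutoff must be performed noncommutatively" using F{\o}lner sets and hyperfiniteness. That is a description of the difficulty, not a solution: $\phi$ is not an $\ell^2$ vector, so even defining $q\phi$ for $q$ in the commutant $R(\Gamma)\ovt\B(K)$ requires an argument, and producing such truncations is precisely the content of the cited theorem. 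As written, you have replaced the citation by an unproved claim of comparable depth, so the proof is incomplete at its central step.

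Two smaller inaccuracies, neither fatal. The commutant of $\lambda_\Gamma\otimes 1_K$ is $R(\Gamma)\ovt\B(K)$, which is hyperfinite and semifinite but not finite when $K$ is infinite dimensional, so "finite hyperfinite von Neumann algebra" is wrong as stated. And evanescence is equivalent to the closed \emph{linear span} of the admissible subspaces $F$ being all of $\cH^0$, not to the density of their union; this does not affect your contradiction argument, which only needs one nonzero admissible $F$.
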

\begin{proof}
By \cite[Theorem 4.31]{BPV16}, such an action $\alpha$ cannot be irreducible. Since the same holds for all of its nontrivial projected actions, we conclude by Remark \ref{irreducible times evanescent} that $\alpha$ is evanescent.
\end{proof}

The following proposition gives a good picture of evanescent actions which was not obvious from the definition. We are grateful to Stefaan Vaes for noticing a mistake in the proof in an earlier version of the paper.

\begin{prop} \label{evanescent small subspaces}
Let $\alpha : G \curvearrowright \cH$ be an affine isometric action. Then $\alpha$ is evanescent if and only if there exists a decreasing sequence $(\cH_n)_{n \in \N}$ of $\alpha$-invariant affine subspaces of $\cH$  such that $\bigcap_n \cH_n^0=\{0\}$.
\end{prop}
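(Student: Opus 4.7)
The direction $(\Leftarrow)$ is immediate: if $(\cH_n)$ is such a decreasing sequence then $\supp(\alpha)\subseteq\bigcap_n\cH_n^0=\{0\}$.

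For $(\Rightarrow)$, the heart of the argument is the following local statement, which I would prove first:

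\smallskip
\emph{Key claim.} If $\alpha$ is evanescent, $\cK$ is an $\alpha$-invariant affine subspace with $\cK^0\neq\{0\}$, and $\xi\in\cK^0\setminus\{0\}$, then there exists a proper $\alpha$-invariant affine subspace $\cK'\subsetneq\cK$ with $\xi\notin\cK'^0$.

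To prove the key claim, use evanescence to pick some $\alpha$-invariant $\cL$ with $\xi\notin\cL^0$. If $\cL\cap\cK\neq\emptyset$, the subspace $\cK':=\cL\cap\cK$ has tangent $\cL^0\cap\cK^0$, which misses $\xi$ and is strictly contained in $\cK^0$, so we are done. The delicate case is $\cL\cap\cK=\emptyset$: fix $x_0\in\cK$, $y_0\in\cL$, set $c(g):=gx_0-x_0$ and $v:=y_0-x_0$. Invariance of $\cK$ through $x_0$ gives $c(g)\in\cK^0=:E_1$, while invariance of $\cL$ through $y_0$ gives $c(g)+(\alpha^0(g)-1)v\in\cL^0=:E_2$. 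Decomposing $\cH^0$ using the $\alpha^0$-invariant subspaces $E_1\cap E_2$, $E_1\ominus(E_1\cap E_2)$, $E_2\ominus(E_1\cap E_2)$ and $(E_1+E_2)^\perp$, and reading off the components of this containment, I expect to conclude that $c$ is in fact a coboundary of the form $c(g)=(1-\alpha^0(g))a$ for some $a\in E_1$, and that the $(E_1+E_2)^\perp$-component of $v$ is $\alpha^0$-fixed. Then $p:=x_0+a$ is a fixed point of $\alpha$ lying inside $\cK$, so $\cK':=\{p\}$ does the job.

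Once the key claim is established, it follows immediately that for every $\alpha$-invariant $\cK$ the restricted action $\alpha|_\cK$ is itself evanescent: applied to every $\xi\in\cK^0\setminus\{0\}$ it produces an $\alpha|_\cK$-invariant proper subspace of $\cK$ not containing $\xi$, whence $\supp(\alpha|_\cK)=\{0\}$. This allows me to iterate the key claim inside any $\cH_n$ already built.

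The assembly of the decreasing sequence then goes as follows. Using separability of $\cH^0$, reformulate the condition $\bigcap_{\cK\in\mathcal{E}}\cK^0=\{0\}$ as density of $\sum_{\cK\in\mathcal{E}}(\cK^0)^\perp$ in $\cH^0$ and extract a countable family $(\cJ_m)_{m\in\N}$ in $\mathcal{E}$ with $\bigcap_m\cJ_m^0=\{0\}$; also fix a countable dense sequence $(\xi_k)$ in $\cH^0$. Setting $\cH_0=\cH$, build $(\cH_n)$ inductively via a diagonal enumeration of pairs $(k,j)\in\N\times\N$: at the step addressing $(k,j)$, if $\|P_{\cH_n^0}(\xi_k)\|\geq 1/j$, apply the key claim to the nonzero vector $P_{\cH_n^0}(\xi_k)\in\cH_n^0$ (with $\alpha|_{\cH_n}$, which we know is evanescent) to produce $\cH_{n+1}\subsetneq\cH_n$ with $P_{\cH_n^0}(\xi_k)\notin\cH_{n+1}^0$. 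To make the reduction quantitative (so that finitely many such steps eventually drive the projection norm below $1/j$), I would take $\cL$ inside a suitable member of the countable family $(\cJ_m)$ with $\|P_{\cJ_m^0}(\xi_k)\|$ small---which is legitimate because for the decreasing closed $\alpha^0$-invariant subspaces $F_m:=\cJ_1^0\cap\cdots\cap\cJ_m^0$ the Cauchy-projection argument shows $\|P_{F_m}(\xi_k)\|\downarrow 0$---and then transfer the smallness to $\cH_{n+1}^0\subseteq\cL^0$ via Case~1 of the key claim. Once every pair $(k,j)$ has been addressed, the monotone sequence $\|P_{\cH_n^0}(\xi_k)\|$ tends to $0$ for each $k$; combined with the Cauchy property of $(P_{\cH_n^0}(\xi_k))_n$ and the fact that its limit lies in $\bigcap_n\cH_n^0$, this forces $\bigcap_n\cH_n^0$ to be orthogonal to the dense set $\{\xi_k\}$, hence equal to $\{0\}$.

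The main obstacle is the Case~2 ($\cL\cap\cK=\emptyset$) bookkeeping in the key claim, where the orthogonal decomposition of $\cH^0$ relative to the possibly non-orthogonal invariant subspaces $E_1,E_2$ has to be handled carefully; the other obstacle is making the assembly quantitative so that the intersection $\bigcap_n\cH_n^0$ really collapses to $\{0\}$ rather than to a proper non-trivial subspace. Both are resolved by systematically exploiting that under evanescence the countable family $(\cJ_m)$ supplies invariant subspaces with arbitrarily small projection onto any prescribed vector.
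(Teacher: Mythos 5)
Your reduction to the Key Claim is reasonable, and Case 1 ($\cL\cap\cK\neq\emptyset$) is fine, but Case 2 contains a genuine gap: it is simply not true that two disjoint $\alpha$-invariant affine subspaces force $\alpha$ to have a fixed point in $\cK$. The relation you extract, $c(g)+(\alpha^0(g)-1)v\in E_2$, controls $c(g)$ only modulo $E_2$, so it says nothing about the part of the cocycle that $E_1$ and $E_2$ share; worse, disjointness of $\cK$ and $\cL$ with $E_1\cap E_2$ small forces $E_1+E_2$ to be non-closed, and then your four subspaces do not give an orthogonal decomposition of $\cH^0$ from which components can be "read off". Concretely: let $G=\Z$ act on $\cH^0=\bigoplus_{n\ge1}(\C\oplus\C)$ with $\pi(1)$ multiplying the $n$-th block by $e^{\ri/n}$, set $\theta_n=1/n$, $E_1=\bigoplus(\C\oplus0)$, $E_2=\bigoplus\{(z\cos\theta_n,z\sin\theta_n)\}$, $v=(0,1/n)_n$ and $c(g)=\bigl(\cot\theta_n\,(e^{\ri g/n}-1)\tfrac1n,\;0\bigr)_n$. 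One checks that $c$ is a genuine $\ell^2$-valued cocycle, that $\cK=E_1$ and $\cL=v+E_2$ are disjoint invariant affine subspaces, that $\alpha$ is evanescent (Proposition \ref{nilpotent evanescent}), and yet $c|_{E_1}$ is only the coboundary of the non-$\ell^2$ vector $(-\cot\theta_n/n)_n$, so $\alpha$ has no fixed point in $\cK$ (nor anywhere). Since your quantitative assembly also routes everything through "Case 1 of the key claim", the empty-intersection situation cannot be quarantined, and the whole construction collapses at this point.

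This is exactly the difficulty the paper's proof is built to avoid: rather than intersecting invariant subspaces (which may be empty, or meet at angle zero), it works with the orthogonal projections $P_{\cK}$, which are affine maps commuting with $\alpha$, forms finite averages $T=\frac{1}{|F|}\sum_{\cK\in F}P_{\cK}$, and invokes \cite[Proposition 3.6]{BPV16} to see that the spectral projections $1_{[1-\varepsilon,1]}(T^0)$ are again tangent projections of invariant subspaces ("reducing projections"); these converge to the projection onto $\bigcap_{\cK\in F}\cK^0$ without ever requiring $\bigcap_{\cK\in F}\cK\neq\emptyset$. A similar trick ($1_{[1/2,1]}(PQP)$) produces small reducing projections \emph{below} a given one, which is what makes the sequence decreasing. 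If you want to salvage your approach, you would need to replace Case 2 by an argument of this functional-analytic type; the purely geometric bookkeeping will not close the gap.
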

\begin{proof}
The if direction is obvious. Let us prove the other one. Let $\mathcal{E}$ the set of all $\alpha$-invariant affine subspaces of $\cH$. We say that an orthogonal projection $P \in \B(H^0)$ is \emph{reducing} if there exists $\cK \in \mathcal{E}$ such that $P=P_{\cK}^0$ where $P_{\cK} : \cH \rightarrow \cH$ is the orthogonal projection onto $\cK$. We just have to show that there exists a decreasing sequence of reducing projections $P_n \in \B(\cH^0)$ such that $P_n$ converges strongly to $0$. For this, we will use the idea of \cite[Proposition 3.6]{BPV16} which shows that if an affine transformation $T : \cH \rightarrow \cH$ commutes with $\alpha$ then $1_{[0,\varepsilon]}(|T^0-1|)$ is a reducing projection for all $\varepsilon > 0$.

For every finite set $F \subset \mathcal{E}$ and every $\varepsilon > 0$, define the reducing projection $Q_{(F,\varepsilon)}=1_{[1-\varepsilon,1]}(T^0)$ where $T: \cH \rightarrow \cH$ is the affine transformation given by
$$T = \frac{1}{|F|} \sum_{\cK \in F} P_{\cK}.$$
Note that $T$ commutes with $\alpha$ because each $P_{\cK}$ does (by definition of $\mathcal{E}$). Now, take $\mathcal{V} \subset \B(\cH^0)$ an open neighbourhood of $0$ in the strong topology. Since $\supp(\alpha)=\{0\}$, there exists a finite subset $F \subset \mathcal{E}$ such that $P \in \mathcal{V}$ where $P$ is the orthogonal projection onto $\bigcap_{\cK \in F} \cK^0$. Observe that $\lim_{\varepsilon \to 0} Q_{(F,\varepsilon)}=P$ in the strong topology. In particular, we have $Q_{(F,\varepsilon)} \in \mathcal{V}$ for $\varepsilon > 0$ small enough. Thus, we have shown that there exists reducing projections which are arbitrarily small in the strong topology. Now, we need to show that we can take them to be decreasing. Let $P,Q \in \B(\cH^0)$ be two reducing projections. Then $PQP$ is the linear part of some affine transformation which commutes with $\alpha$. Therefore $P'=1_{[\frac{1}{2},1]}(PQP)$ is again a reducing projection which is below $P$. Moreover, if $Q \to 0$ strongly, then also $P' \to 0$ strongly (because $P' \leq 2 PQP$). This shows that we can find arbitrarily small reducing projections below $P$. By using this observation, we can construct inductively a decreasing sequence of reducing projection $P_n \in \B(\cH^0)$ such that $\|P_n \xi_m\| \leq 2^{-n}$ for all $m \leq n$ where $(\xi_n)_{n \in \N}$ is some dense sequence of vectors in $\cH^0$,  and we get the desired conclusion.
\end{proof}

\begin{rem}
The last part of the proof of Proposition \ref{evanescent small subspaces} shows that if $\alpha$ is evanescent and $\cK \in \mathcal{E}$, then $\alpha|_{\cK}$ is again evanescent. This also was not obvious from the definition.
\end{rem}

\begin{prop} \label{evanescent properties}
Let $\alpha : G \curvearrowright \cH$ be an affine isometric action. Suppose that $\alpha$ is evanescent. Then $\alpha$ has almost fixed points. If moreover $\alpha^0$ is mixing and $\alpha$ has no fixed point, then $\alpha$ is proper.
\end{prop}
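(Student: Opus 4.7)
The plan is to reduce both statements to the characterization of evanescence given by Proposition \ref{evanescent small subspaces}: fix a decreasing sequence $(\cH_n)_{n \in \N}$ of $\alpha$-invariant closed affine subspaces with $\bigcap_n \cH_n^0 = \{0\}$. Choose any base point $x \in \cH$, let $x_n := P_{\cH_n}(x)$, and set $\xi_n := x - x_n \in (\cH_n^0)^\perp$.

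For the first claim, $\alpha$-invariance of $\cH_n$ forces the orthogonal projection $P_{\cH_n}$ to intertwine $\alpha$, so $gx_n = P_{\cH_n}(gx)$ and hence $gx_n - x_n = P_n(gx - x)$, where $P_n$ denotes the orthogonal projection of $\cH^0$ onto $\cH_n^0$. Since $\bigcap_n \cH_n^0 = \{0\}$, the projections $P_n$ converge strongly to $0$; combined with $\|P_n\| \leq 1$ and the fact that $\{gx - x : g \in K\}$ is relatively compact in $\cH^0$ for every compact $K \subset G$, this gives $\|gx_n - x_n\| \to 0$ uniformly on compact subsets of $G$, so $(x_n)$ is a sequence of almost fixed points.

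For the second claim I will argue by contradiction. Suppose $\alpha^0$ is mixing, $\alpha$ has no fixed point, and yet there is a sequence $g_k \to \infty$ in $G$ with $\|g_k x - x\| \leq C$. The key step is the orthogonal decomposition
\[
g_k x - x \;=\; (g_k x_n - x_n) + (\alpha^0(g_k) - 1)\xi_n,
\]
whose summands lie in $\cH_n^0$ and $(\cH_n^0)^\perp$ respectively; this yields $\|(\alpha^0(g_k) - 1)\xi_n\| \leq C$ for all $n$ and $k$. Mixing of $\alpha^0$ gives $\alpha^0(g_k)\xi_n \to 0$ weakly for each fixed $n$, hence $(\alpha^0(g_k) - 1)\xi_n \to -\xi_n$ weakly, and weak lower semicontinuity of the norm forces $\|\xi_n\| \leq C$ for every $n$. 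On the other hand, since $\alpha$ has no fixed point, the intersection $\bigcap_n \cH_n$ must be empty (otherwise, because its tangent space is $\{0\}$, it would consist of a single $\alpha$-fixed point), so Proposition \ref{empty intersection diverge} gives $\|\xi_n\| = d(x, \cH_n) \to +\infty$, contradicting the uniform bound.

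The only conceptually nontrivial step is recognizing the orthogonal decomposition above, which is what couples the geometric input (the divergence $d(x, \cH_n) \to +\infty$ coming from evanescence together with the absence of a fixed point) to the representation-theoretic input (mixing of $\alpha^0$). Once it is in place, both assertions follow without further analytic obstacles.
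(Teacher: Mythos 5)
Your proof is correct and follows essentially the same route as the paper: both parts hinge on projecting onto a decreasing sequence of invariant subspaces from Proposition \ref{evanescent small subspaces}, with the identity $gx_n-x_n=P_n^0(gx-x)$ for the almost fixed points and the orthogonal complement $(\alpha^0(g)-1)(x-x_n)$ of that projection, combined with mixing, for properness. The only cosmetic differences are that you run the second part by contradiction using weak lower semicontinuity of the norm where the paper expands $\|(\alpha^0(g)-1)(x-x_n)\|^2$ directly, and that you justify $\|x-x_n\|\to\infty$ explicitly via Proposition \ref{empty intersection diverge}, which the paper leaves terse.
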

\begin{proof}
By Proposition \ref{evanescent small subspaces} there exists a sequence of $\alpha$-invariant subspaces $\cH_n \subset \cH$ such that $P_n^0$ converges strongly to $0$ where $P_n$ is the orthogonal projection on $\cH_n$. Take $x \in \cH$ and set $x_n=P_n(x)$. Then for all $g \in G$, we have $gx_n-x_n = gP_n(x)-P_n(x) = P_n^0(gx-x)$. Since $P_n^0$ converges strongly to $0$, we have $\lim_n \|P_n^0(gx-x)\|=0$ uniformly on compact subsets of $G$. Thus $(x_n)_{n \in \N}$ is a sequence of almost fixed points.

Now, suppose that $\alpha^0$ is mixing and $\alpha$ has no fixed points. For every $n \in \N$, we have
\begin{align*}
\| gx-x\|^2	& \geq \|(gx-x)-P_n^0(gx-x)\|^2\\
				&= \|(gx-x)-(gx_n-x_n)\|^2\\
				&= \|g^0(x-x_n)-(x-x_n)\|^2\\
 				&= 2\|x-x_n\|^2-2\langle g^0(x-x_n),x-x_n \rangle.
\end{align*}
Since $\alpha^0$ is mixing, this implies that $\liminf_{g \to \infty} \|gx-x\|^2 \geq 2\|x-x_n\|^2$ for every $n \in \N$. Since $(x_n)_{n \in \N}$ is a sequence of almost fixed points and $\alpha$ has no fixed point, we must have $\lim_n \|x-x_n\|=+\infty$. We conclude that $\lim_{g \to \infty} \|gx-x\|=+\infty$, i.e.\ $\alpha$ is proper.
\end{proof}



The usual method to construct an affine isometric action without fixed point from a representation with almost invariant vectors (see for instance \cite[Proposition 2.4.5]{BHV08}) actually produces evanescent affine isometric actions.

\begin{prop} \label{existence evanescent}
Let $G$ be a locally compact group and $\pi$ be an orthogonal representation of $G$ which has almost invariant vectors but no invariant vector. Then there exists an evanescent affine isometric action $\alpha : G \curvearrowright \cH$ with no fixed point such that $\alpha^0$ is contained in a multiple of $\pi$. Moreover, for any proper function $f : G \rightarrow [1,+\infty)$, we can choose $\alpha$ so that $\| \alpha_g(x)-x\| \leq f(g)$ for all $g \in G$ and some $x \in \cH$.
\end{prop}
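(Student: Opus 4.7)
The plan is to adapt the classical construction of \cite[Proposition 2.4.5]{BHV08}: build an affine isometric action without fixed point from a sequence of almost invariant vectors, arranging the direct-sum structure so as to automatically produce a decreasing filtration of invariant affine subspaces with trivial intersection of tangent spaces, forcing evanescence via Proposition \ref{evanescent small subspaces}. The growth condition $\| \alpha_g(x) - x \| \leq f(g)$ will be enforced by calibrating the speed of almost invariance to the proper function $f$.

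First, using that $G$ is $\sigma$-compact and $f$ is proper, I would pick an increasing compact exhaustion $(L_n)_{n \geq 1}$ of $G$ with $f \geq n$ on $G \setminus L_{n-1}$ (for instance $L_n := K_n \cup f^{-1}([1,n])$ for some prescribed compact exhaustion $(K_n)$). Using almost invariance, I would then select unit vectors $\xi_n \in H$ with $\sup_{g \in L_n} \| \pi(g) \xi_n - \xi_n \| \leq 2^{-n}$, so that the tail of the would-be cocycle decays geometrically at every compact level.

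Next, on $\cH := \bigoplus_{n \geq 1} H$ with diagonal representation $\pi' := \bigoplus_n \pi$, I would define $c(g) := (\pi(g) \xi_n - \xi_n)_{n \geq 1}$. Splitting the sum at $m(g) := \min \{ n \geq 1 : g \in L_n \}$ and using $\| \pi(g) \xi_n - \xi_n \| \leq 2$ for $n < m(g)$ and $\leq 2^{-n}$ for $n \geq m(g)$ gives the estimate $\| c(g) \|^2 \leq 4 m(g) + 1$, which is $\leq C f(g)^2$ for a universal constant $C$ since $m(g) \leq f(g) + 1$ and $f \geq 1$. Thus $c$ is a well-defined continuous $1$-cocycle for $\pi'$, and after replacing $\alpha$ by $\alpha^{1/\sqrt{C}}$, the affine isometric action $\alpha_g(x) := \pi'(g) x + c(g)$ satisfies $\| \alpha_g(0) \| \leq f(g)$; its linear part $\pi'$ is a countable multiple of $\pi$. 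It will then remain to verify the no-fixed-point and evanescence properties. If $c$ were a coboundary $\partial_\eta$ for some $\eta = (\eta_n) \in \cH$, coordinate-wise comparison would force $\eta_n - \xi_n$ to be $\pi$-invariant, hence zero since $\pi$ has no invariant vector, forcing $\| \eta \|^2 = \sum_n \| \xi_n \|^2 = + \infty$, a contradiction; so $\alpha$ has no fixed point. For evanescence, set $E_n := \bigoplus_{k > n} H \subset \cH^0$ and $x^{(n)} := -(\xi_1, \dots, \xi_n, 0, 0, \dots) \in \cH$: a direct coordinate computation (the first $n$ coordinates of $\pi'(g) x^{(n)} + c(g)$ collapse to $-\xi_k$ for $k \leq n$, while the remaining coordinates $c_k(g)$ lie in $E_n$) gives $\alpha_g(x^{(n)}) - x^{(n)} \in E_n$ for every $g \in G$, so $\cH_n := x^{(n)} + E_n$ is an $\alpha$-invariant affine subspace with tangent space $E_n$; since $\bigcap_n E_n = \{ 0 \}$, Proposition \ref{evanescent small subspaces} yields evanescence.

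The main obstacle will be balancing four competing constraints simultaneously: $\ell^2$-convergence of $c$, non-triviality of the class $[c] \in H^1(\pi')$, evanescence, and the growth bound $\| c \| \leq f$. The choice of unit almost invariant vectors with geometric decay $2^{-n}$ on $L_n$ handles the first, third (via the direct sum filtration), and fourth constraints at once, while the unit normalization of the $\xi_n$ is exactly what prevents $c$ from being a genuine coboundary.
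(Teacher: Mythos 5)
Your construction is correct and is essentially the paper's own proof: the same direct sum $\bigoplus_n H$ with a cocycle built from almost invariant vectors calibrated to $f$, the same tail filtration $\cH_n = x^{(n)} + E_n$ feeding into Proposition \ref{evanescent small subspaces}, and the same rescaling to enforce $\|c(g)\| \leq f(g)$. The only cosmetic differences are that the paper scales the vectors to $\xi_n/\sqrt{n}$ (unnecessary, as your estimate shows) and rules out fixed points by projecting onto the $\cH_n$ and invoking $\bigcap_n \cH_n = \emptyset$, whereas your coordinate-wise coboundary argument reaches the same contradiction more directly.
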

\begin{proof}
Choose inductively a sequence of unit vectors $(\xi_n)_{n \in \N}$ in $H_\pi$ such that $$ \| g \xi_n-\xi_n \|^2 \leq  2^{-n}$$ for all $g \in G$ such that $f(g)^2 \leq n$. Now let $F(n)=\frac{1}{\sqrt{n}} \xi_n$. Observe that for all $g$ such that $f(g)^2 \leq m$, we have
$$ \sum_n \|\pi(g) F(n)-F(n)\|^2 \leq \sum_{n < m} \|\pi(g) F(n)-F(n)\|^2 + \sum_{n \geq m} \frac{1}{n} 2^{-n} < +\infty.$$
This shows that $c(g)=\left( \pi(g)F(n)-F(n) \right)_{n \in \N} \in \cH:=\ell^2( \N,H_\pi)$ for all $g \in G$. It also shows that the map $c: G \rightarrow \cH$ is continuous. Thus $c$ is a $1$-cocycle for the orthogonal representation $\pi^{\oplus \N}$ on $\cH$. Let $\alpha : G \curvearrowright \cH$ be the affine isometric action associated to this cocycle $c$, i.e.\ $\alpha_g(\xi)=\pi^{\oplus \N}(g) \xi + c(g)$ for all $\xi \in \cH$. We show first that $\alpha$ is evanescent. Indeed, for all $m \in \N$, let 
$$ \cK_m =\{ (\eta_n)_{n \in \N} \in \cH \mid \forall n \leq m, \; \eta_n=-F(n) \}.$$
Then $\cK_m$ is an $\alpha$-invariant subspace of $\cH$ and clearly, we have $\bigcap_{m \in \N} \cK_m^0=\{0\}$. Moreover, since $$\sum_{n \in \N} \|F(n)\|^2=\sum_{n \in \N} \frac{1}{n}=+\infty,$$ we also have $\bigcap_{m \in \N} \cK_m=\emptyset$ by  Proposition \ref{empty intersection diverge}. Suppose that $\alpha$ had a fixed point $x \in \cH$. Then the projection $x_m$ of $x$ on  $\cK_m$ is also a fixed point for all $m \in \N$. Since $\pi$, hence also $\alpha^0$, has no invariant vectors, this forces $x=x_m$ for all $m \in \N$. Thus $x \in \bigcap_m \cK_m=\emptyset$ which is a contradiction. We conclude that $\alpha$ has no fixed point.

Finally, for every $g \in G$, by taking $m$ to be the smallest integer such that $f(g)^2 \leq m$, we have
\begin{align*}
\|c(g)\|^2   &\leq \sum_{n < m} \|\pi(g) F(n)-F(n)\|^2 + \sum_{n \geq m} \frac{1}{m} 2^{-n}  \\
& \leq \sum_{n < m} \frac{2}{n} + \sum_{n \geq m} \frac{1}{n} 2^{-n}\\
& \leq 2m+2 \\
&\leq 2f(g)^2 + 4\\
&\leq 6f(g)^2
\end{align*} 
Thus, up to replacing $c$ by $\frac{1}{3}c$, we get $\|c(g)\| \leq f(g)$ for all $g \in G$.
\end{proof}

\section{The Poincar\'e exponent}

In the study of discrete isometry groups $\Gamma < \Isom(X)$ where $X$ is a metric space (usually a hyperbolic space, or a tree as in Section \ref{trees}), the \emph{Poincar\'e exponent} of $\Gamma$ is an important invariant which measures the ``size" of $\Gamma$. It plays a central role in the Patterson-Sullivan theory (see Section \ref{trees}). This exponent $\delta(\Gamma) \in [0,+\infty]$ is defined as the infinimum of all $s > 0$ such that
$$ \sum_{g \in \Gamma} e^{-sd(gx,y)} < +\infty$$
where $x,y$ is any given pair of points of $X$. This series is called the \emph{Poincar\'e series} of $\Gamma$ and $\delta(\Gamma)$ is its exponent of convergence. 

We will now define an analog of the Poincar\'e exponent for affine isometric actions on Hilbert spaces. We will later use it to determine when a Gaussian action is recurrent or dissipative. Note that in the discrete case, this Poincar\'e exponent already appears implicitely in \cite[Proposition 4.1]{VW18}.

\begin{df}
Let $\alpha : G \curvearrowright \cH$ be an affine isometric action. The \emph{(quadratic) Poincar\'e exponent} $\delta(\alpha) \in [0,+\infty]$ of this action is the exponent of convergence of the \emph{(quadratic) Poincar\'e integral}
$$ P_s(x,y,m)=\int_G e^{-s\|gx-y\|^2} \rd m(g), \: s > 0$$
where $x,y$ is any pair of points of $\cH$ and $m$ is any (left or right) Haar measure on $G$.
\end{df}

\begin{prop} \label{poincare well-defined}
The Poincar\'e exponent $\delta(\alpha)$ does not depend on the choice of the Haar measure $m$ and the points $x,y \in \cH$.
\end{prop}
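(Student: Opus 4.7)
The proof splits into two independent parts: (a) independence from the choice of points $x,y \in \cH$ for a fixed Haar measure, and (b) independence from the choice of Haar measure (including left versus right).

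For (a), fix a Haar measure $m$ and take $x,y,x',y' \in \cH$. Writing $gx - y = (gx' - y') + w$ with $w = \alpha_g^0(x-x') + (y'-y)$, the fact that $\alpha_g^0$ is orthogonal gives $\|w\| \leq C$ where $C := \|x-x'\| + \|y-y'\|$. Applying the elementary inequality $\|a+b\|^2 \leq (1+\varepsilon)\|a\|^2 + (1+\varepsilon^{-1})\|b\|^2$ for any $\varepsilon > 0$, I obtain
\[
 \|gx - y\|^2 \leq (1+\varepsilon)\|gx'-y'\|^2 + (1+\varepsilon^{-1})C^2,
\]
and hence $P_s(x,y,m) \geq e^{-s(1+\varepsilon^{-1})C^2} \cdot P_{s(1+\varepsilon)}(x',y',m)$. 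If $s > \delta(x,y)$, the left side is finite, so $P_{s(1+\varepsilon)}(x',y',m) < +\infty$, which forces $s(1+\varepsilon) \geq \delta(x',y')$. Taking $s \downarrow \delta(x,y)$ then $\varepsilon \downarrow 0$ gives $\delta(x,y) \geq \delta(x',y')$, and the reverse inequality follows by symmetry.

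For (b), two left Haar (respectively two right Haar) measures on $G$ differ by a positive multiplicative constant, so they give the same Poincar\'e integrals up to a scalar and hence the same exponent of convergence. To compare a left Haar $m_L$ with a right Haar $m_R$, I use the fact that the pushforward of $m_L$ under inversion $g \mapsto g^{-1}$ is right-invariant, hence is a right Haar measure (up to rescaling). Substituting $g \mapsto g^{-1}$ yields
\[
 \int_G e^{-s\|gx-y\|^2}\, \rd m_L(g) = \int_G e^{-s\|g^{-1}x - y\|^2}\, \rd m_R(g).
\]
Now $\alpha_g^0$ is an orthogonal operator and $\alpha_g^0(\alpha_g^{-1}x - y) = x - \alpha_g(y)$, so $\|g^{-1}x - y\| = \|x - gy\| = \|gy - x\|$. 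Hence the right side equals $P_s(y,x,m_R)$, giving
\[
 P_s(x,y,m_L) = P_s(y,x,m_R).
\]
The two integrals therefore have identical sets of convergence, so $\delta_L(x,y) = \delta_R(y,x)$, and combining with part (a) shows that all four quantities coincide.

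The main technical point is the left/right Haar comparison, since a priori $m_R$ can differ from $m_L$ by an unbounded modular factor. The trick is not to estimate the modular function directly but to use the inversion symmetry, where the orthogonality of $\alpha_g^0$ miraculously interchanges the roles of $x$ and $y$, reducing the left/right question to the already established invariance in the point variables.
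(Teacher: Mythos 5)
Your proof is correct and follows essentially the same route as the paper: the elementary inequality $\|a+b\|^2 \le (1+\varepsilon)\|a\|^2 + (1+\varepsilon^{-1})\|b\|^2$ to absorb the translation by $x-x'$ and $y-y'$ at the cost of slightly increasing the exponent, and the inversion identity $P_s(x,y,m) = P_s(y,x,m^{\sharp})$ to handle the left/right Haar comparison. The only difference is organizational (the paper treats the $y$-dependence first and lets the inversion trick dispose of the $x$-dependence and the measure simultaneously), which is immaterial.
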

\begin{proof}
First, we show that $\delta$ does not depend on the choice of $y$. Take $\xi \in \cH^0$. Fix $s > 0$. We have
$$ P_s(x,y+\xi,m)=\int_G e^{-s\|gx-y-\xi\|^2} \rd m(g).$$
For any $t > s$, there exists a constant $C > 0$ such that
 $$\forall g \in G, \; s\| g x-y-\xi\|^2 \leq t\| g x-y \|^2 + C.$$
Therefore, if $P_s(x,y+\xi,m)$ converges then $P_{t}(x,y,m)$ also converges. Thus $\delta(\alpha)$ does not depend on the choice of $y$. Now, observe that $P_s(x,y,m)=P_s(y,x,m^{\sharp})$ where $m^{\sharp}$ is the Haar measure defined by $\rd m^{\sharp}(g)=\rd m(g^{-1})$. This easily implies that $\delta(\alpha)$ does not depend on the choice of $x$ and of $m$ (if $m$ is a right Haar measure then $m^{\sharp}$ is a left Haar measure).
\end{proof}

\begin{rem}
A priori the convergence or divergence of $P_s(x,y,m)$ at $s=\delta(\alpha)$ might depend on the choice of $x,y$ and $m$.
\end{rem}

\begin{lem} \label{growth exponent}
Let $(X,\mu)$ be a measure space (possibly infinite) and $f : X \rightarrow \R_+$ a positive function. Let $M(t)=\mu\{x \in X \mid f(x) \leq t \}$. Then we have
$$\inf \{ s > 0 \mid \int_X e^{-sf} \rd \mu < +\infty \}= \uplim_{t \to \infty} \frac{1}{t}\log M(t).$$
In particular, for any affine isometric action $\alpha : G \curvearrowright \cH$, we have
$$ \delta(\alpha)=\uplim_{t \to \infty} \frac{1}{t} \log m( \{ g \in G \mid \| gx-y\|^2 \leq t\})$$
where $x,y$ is any pair of points in $\cH$ and $m$ is any Haar measure (left or right) on $G$.
\end{lem}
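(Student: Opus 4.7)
The plan is to prove the general statement about the measure space $(X,\mu)$ and the function $f$, and then simply specialize to the case $X=G$, $\mu = m$, $f(g)=\|gx-y\|^2$ to get the formula for $\delta(\alpha)$. Denote $\delta := \inf\{s>0 : \int_X e^{-sf}\rd\mu < +\infty\}$ and $L := \uplim_{t\to\infty} \frac{1}{t}\log M(t)$. We need to show $\delta = L$. If some $M(t_0)$ is infinite then $M(t)=+\infty$ for all $t\ge t_0$, so $L=+\infty$; also $\int_X e^{-sf}\rd\mu \geq e^{-s t_0}\mu\{f\leq t_0\} = +\infty$ for every $s>0$, hence $\delta =+\infty$ too. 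So we may assume $M(t)<+\infty$ for every $t\geq 0$.

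For the inequality $\delta \leq L$, I would slice the domain: set $A_0=\{f\le 1\}$ and $A_n=\{n<f\le n+1\}$ for $n\geq 1$. Then $e^{-sf}\le e^{-sn}$ on $A_n$ and $\mu(A_n)\le M(n+1)$, so
\begin{equation*}
\int_X e^{-sf}\rd\mu \;\le\; M(1) + \sum_{n\ge 1} e^{-sn} M(n+1).
\end{equation*}
Assume $L<+\infty$ (otherwise there is nothing to prove) and fix any $s>L$. Choose $L<s'<s$; then $M(t)\le e^{s't}$ for $t$ large enough, and the geometric series $\sum_n e^{-sn}e^{s'(n+1)}$ converges. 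Hence the Laplace integral is finite and $\delta\le s$. Letting $s\downarrow L$ gives $\delta\le L$.

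For the reverse inequality $\delta \geq L$, fix any $s<L$ and pick $s<c<L$. By definition of $L$ there is a sequence $t_n\to+\infty$ with $M(t_n)\ge e^{ct_n}$. On the set $\{f\le t_n\}$ we have $e^{-sf}\ge e^{-s t_n}$, so
\begin{equation*}
\int_X e^{-sf}\rd\mu \;\ge\; e^{-s t_n}\, M(t_n) \;\ge\; e^{(c-s)t_n} \;\xrightarrow[n\to\infty]{}\; +\infty.
\end{equation*}
Thus $\int_X e^{-sf}\rd\mu = +\infty$ for every $s<L$, giving $\delta\ge L$.

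Finally, the ``in particular'' statement is immediate: apply the equality just proved with $X=G$, $\mu=m$ a Haar measure on $G$, and $f(g)=\|gx-y\|^2$. The definition of the Poincar\'e exponent together with Proposition \ref{poincare well-defined} (independence of $m$ and $x,y$) then yields the displayed formula for $\delta(\alpha)$. There is no real obstacle in this argument; the only subtle point is the trivial handling of the case where $M$ takes the value $+\infty$, which is dispatched at the very beginning.
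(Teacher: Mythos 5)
Your argument is correct. The paper actually states Lemma \ref{growth exponent} without any proof, treating it as a routine fact, so there is nothing to compare against; your unit-interval slicing for the upper bound on the Laplace integral and the $\int_X e^{-sf}\rd\mu \ge e^{-st_n}M(t_n)$ estimate along a sequence realizing the $\uplim$ for the lower bound is exactly the standard argument one would supply, and your preliminary reduction to the case where $M$ is everywhere finite is handled correctly. The specialization to $X=G$, $f(g)=\|gx-y\|^2$ giving the formula for $\delta(\alpha)$ is immediate, as you say.
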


\begin{prop} \label{uniformly discrete cocompact}
Let $\alpha : G \curvearrowright \cH$ be an affine isometric action and $\Lambda \subset G$ any countable subset of $G$. Take $x,y \in \cH$ and let $d$ be the exponent of convergence of the series
$$ \sum_{g \in \Lambda} e^{-s\|gx-y\|^2}.$$
Then $d \leq \delta(\alpha)$ if $\Lambda$ is uniformly discrete and $d \geq \delta(\alpha)$ if $\Lambda$ is cocompact.
\end{prop}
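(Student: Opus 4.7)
By Proposition \ref{poincare well-defined}, I may fix $m$ to be a left Haar measure on $G$ throughout. The plan is to sandwich the Poincar\'e integral $P_s(x,y,m)$ between two discrete sums of the same shape as the series defining $d$, up to an arbitrarily small perturbation of the exponent $s$; then letting the perturbation go to $0$ gives each inequality. The whole argument is driven by the following isometry/triangle-inequality estimate: for any relatively compact subset $A\subset G$, setting $C_A=\sup_{u\in A}\|ux-x\|<+\infty$, one has for all $g\in G$ and $u\in A$
$$
\big|\,\|gux-y\|-\|gx-y\|\,\big| \leq \|gux-gx\| = \|ux-x\| \leq C_A,
$$
so that by the elementary inequalities $(a+C)^2\leq (1+\varepsilon)a^2+C'_\varepsilon$ and $(a-C)^2_+\geq (1-\varepsilon)a^2 -C'_\varepsilon$ (valid for all $a\geq 0$ and $\varepsilon\in(0,1)$, with $C'_\varepsilon$ depending only on $\varepsilon$ and $C$),
$$
(1-\varepsilon)\|gx-y\|^2 - C'_\varepsilon \;\leq\; \|gux-y\|^2 \;\leq\; (1+\varepsilon)\|gx-y\|^2 + C'_\varepsilon.
$$

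\textbf{Uniformly discrete case.} I pick a relatively compact open neighborhood $U$ of the identity such that $\{gU\}_{g\in\Lambda}$ are pairwise disjoint (such a $U$ exists by standard arguments: take any symmetric $V$ witnessing uniform discreteness and then $W$ symmetric with $WW\subset V$). Using the upper bound from the estimate above with $A=U$, and the translation invariance of $m$,
$$
\int_{gU} e^{-s\|hx-y\|^2}\,\rd m(h) \;\geq\; m(U)\,e^{-sC'_\varepsilon}\,e^{-s(1+\varepsilon)\|gx-y\|^2}.
$$
Summing over $g\in\Lambda$ and invoking disjointness,
$$
P_s(x,y,m) \;\geq\; m(U)\,e^{-sC'_\varepsilon}\sum_{g\in\Lambda} e^{-s(1+\varepsilon)\|gx-y\|^2}.
$$
So if $s>\delta(\alpha)$ the series on the right converges, giving $d\leq s(1+\varepsilon)$; let $s\downarrow \delta(\alpha)$ and then $\varepsilon\downarrow 0$ to obtain $d\leq \delta(\alpha)$.

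\textbf{Cocompact case.} I pick a compact set $K\subset G$ with $\Lambda K=G$. Applying the lower bound with $A=K$,
$$
\int_{gK} e^{-s\|hx-y\|^2}\,\rd m(h) \;\leq\; m(K)\,e^{sC'_\varepsilon}\,e^{-s(1-\varepsilon)\|gx-y\|^2},
$$
and summing over $g\in\Lambda$ using the covering $G=\bigcup_{g\in\Lambda} gK$,
$$
P_s(x,y,m) \;\leq\; m(K)\,e^{sC'_\varepsilon}\sum_{g\in\Lambda} e^{-s(1-\varepsilon)\|gx-y\|^2}.
$$
Hence if $s(1-\varepsilon)>d$, the right-hand side is finite and so $\delta(\alpha)\leq s$. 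This gives $\delta(\alpha)\leq d/(1-\varepsilon)$, and letting $\varepsilon\downarrow 0$ yields $\delta(\alpha)\leq d$, i.e.\ $d\geq \delta(\alpha)$.

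\textbf{Main obstacle.} There is no real obstacle: the only mildly technical point is the squaring trick, which turns the additive perturbation $C_A$ on the distances into an arbitrarily small multiplicative perturbation of the exponent $s$, the crucial feature that allows the $\varepsilon\downarrow 0$ limit to close the argument.
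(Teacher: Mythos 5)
Your proof is correct and follows essentially the same route as the paper's: both arguments compare the Poincar\'e integral with the discrete sum by integrating over pairwise disjoint (resp.\ covering) translates $gU$ (resp.\ $gK$) of a fixed relatively compact set, and both absorb the additive error $\|ux-x\|\le C$ from the triangle inequality into an arbitrarily small multiplicative perturbation of the exponent before letting that perturbation tend to zero. The only cosmetic difference is that the paper phrases the perturbation as ``for any $t>s$ there is $C$ with $s\|ghx-y\|^2\le t\|gx-y\|^2+C$'' rather than via an explicit $(1\pm\varepsilon)$ factor.
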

\begin{proof}
Take $x,y \in \cH$ and $m$ a left Haar measure. Suppose that $\Lambda$ is uniformly discrete. Then we can find a compact neighborhood of the identity $K$ in $G$ such that the sets $(gK)_{g \in \Lambda}$ are pairwise disjoint. We have
$$\int_G e^{-s\|gx-y\|^2} \rd m(g) \geq \sum_{g \in \Lambda} \int_K e^{-s\|ghx-y\|^2} \rd m(h).$$
Take $t > s$. Since $\sup_{h \in K} \|hx-x\| < +\infty$, there exists a constant $C > 0$ such that for all $g \in G$ and all $h \in K$, we have
$$ s\|ghx-y\|^2 \leq t\|gx-y\|^2+C.$$
This shows that if
$$\int_G e^{-s\|gx-y\|^2} \rd m(g) < +\infty$$
then 
$$\sum_{g \in \Lambda} e^{-t\|gx-y\|^2}m(K)e^{-C} <+ \infty.$$
Since $t > s$ is arbitrary, we conclude that $d \leq \delta(\alpha)$.

Suppose that $\Lambda$ is cocompact. Take a compact set $K \subset G$ such that $\bigcup_{g \in \Lambda} gK=G$. Then we have
$$\int_G e^{-s\|gx-y\|^2} \rd m(g) \leq \sum_{g \in \Lambda} \int_K e^{-s\|ghx-y\|^2} \rd m(h).$$ 
Take $t < s$. Then there exists a constant $C > 0$ such that for all $g \in G$ and all $h \in K$, we have
$$ s\|ghx-y\|^2 \geq t\|gx-y\|^2-C.$$
This shows that if
$$\int_G e^{-s\|gx-y\|^2} \rd m(g) = +\infty$$
then 
$$\sum_{g \in \Lambda} e^{-t\|gx-y\|^2}m(K)e^C =+ \infty.$$
Since $t < s$ is arbitrary, we conclude that $d \geq \delta(\alpha)$.
\end{proof}

\begin{prop}
Let $\alpha : G \curvearrowright \cH$ be an affine isometric action and $G_0 \subset G$ a closed subgroup. Then $\delta(\alpha|_{G_0}) \leq \delta(\alpha)$ and equality holds if $G_0$ is cocompact in $G$.
\end{prop}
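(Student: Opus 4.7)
The plan is to reduce to the discrete case by applying Proposition \ref{uniformly discrete cocompact} twice, using a single common countable subset $\Lambda \subset G_0$ chosen so that it is simultaneously uniformly discrete when viewed inside the ambient group $G$ \emph{and} cocompact in $G_0$. Such a $\Lambda$ serves as a common skeleton that allows one to compare the two Poincar\'e integrals via the same discrete Poincar\'e series.

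The construction of $\Lambda$ proceeds by a standard maximality argument. Fix a compact symmetric neighborhood $V$ of the identity in $G$ and, by Zorn's lemma, choose a maximal subset $\Lambda \subset G_0$ with the property that $g^{-1}g' \notin V$ for all distinct $g,g' \in \Lambda$ (second countability of $G$ makes $\Lambda$ countable). Two facts are then routine. First, by maximality every $h \in G_0$ lies in $\Lambda \cdot (V \cap G_0) \cup \Lambda$, and since $V \cap G_0$ is a compact neighborhood of the identity in $G_0$, this exhibits $\Lambda$ as cocompact in $G_0$. Second, picking a compact symmetric neighborhood $U$ of the identity in $G$ with $U^2 \subset V$, the separation condition $g^{-1}g' \notin V$ forces $gU \cap g'U = \emptyset$ for distinct $g,g' \in \Lambda$, so $\Lambda$ is uniformly discrete in $G$; consequently it is also uniformly discrete in $G_0$ via the compact neighborhood $U \cap G_0$.

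Now let $d_\Lambda$ denote the exponent of convergence of $\sum_{g \in \Lambda} e^{-s\|gx-y\|^2}$ for some fixed $x,y \in \cH$. Applying Proposition \ref{uniformly discrete cocompact} to the action $\alpha|_{G_0}$ with $\Lambda$ cocompact in $G_0$ yields $\delta(\alpha|_{G_0}) \leq d_\Lambda$, while applying it to $\alpha$ on $G$ with $\Lambda$ uniformly discrete in $G$ yields $d_\Lambda \leq \delta(\alpha)$. Combining these two inequalities gives $\delta(\alpha|_{G_0}) \leq \delta(\alpha)$, which is the first assertion.

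For the equality statement, suppose further that $G_0$ is cocompact in $G$. Then $\Lambda$ is cocompact in $G_0$ and $G_0$ is cocompact in $G$, so $\Lambda$ is automatically cocompact in $G$. The other two applications of Proposition \ref{uniformly discrete cocompact}, using uniform discreteness of $\Lambda$ in $G_0$ and cocompactness of $\Lambda$ in $G$, yield $d_\Lambda \leq \delta(\alpha|_{G_0})$ and $d_\Lambda \geq \delta(\alpha)$ respectively, hence $\delta(\alpha) \leq \delta(\alpha|_{G_0})$. The only genuine subtlety is arranging that a single $\Lambda$ witnesses all four inequalities at once, and the maximality construction handles this cleanly; no serious obstacle is anticipated.
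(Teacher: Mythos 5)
Your proof is correct and follows essentially the same route as the paper: the paper simply takes a uniformly discrete cocompact subset $\Lambda \subset G_0$, notes it stays uniformly discrete in $G$ (and cocompact in $G$ when $G_0$ is cocompact), and applies Proposition \ref{uniformly discrete cocompact}. The only difference is that you spell out the existence of such a $\Lambda$ via the maximal-separated-set argument, which the paper leaves implicit.
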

\begin{proof}
Take $\Lambda \subset G_0$ a uniformly discrete cocompact subset. Then $\Lambda$ is uniformly discrete in $G$ and if $G_0$ is cocompact then $\Lambda$ is also cocompact in $G$. We conclude by Proposition \ref{uniformly discrete cocompact}.
\end{proof}

\begin{prop}
Let $\alpha : G \curvearrowright \cH$ be an affine isometric action. If $\delta(\alpha) < +\infty$ then $\alpha$ is proper.
\end{prop}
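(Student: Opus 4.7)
I would prove the contrapositive: if $\alpha$ is not proper, then $\delta(\alpha)=+\infty$. The central idea is to exhibit a set of the form $A_R := \{g \in G \mid \|gx-x\| \leq R\}$ with infinite Haar measure, so that the Poincaré integral $P_s(x,x,m)$ diverges for all $s>0$.

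First I would dispose of the compact case: if $G$ is compact then $g\to\infty$ is vacuous and $\alpha$ is trivially proper, so I may assume $G$ non-compact. Fix any $x \in \cH$ and a left Haar measure $m$ on $G$. Assume $\alpha$ is not proper. Then there exists $R > 0$ and a sequence $(g_n) \subset G$ with $g_n \to \infty$ (i.e.\ leaving every compact set) such that $\|g_n x - x\| \leq R$ for all $n$. Choose a compact symmetric neighborhood $K$ of the identity in $G$ and set $C := \sup_{k \in K} \|kx-x\|<\infty$ (finite by continuity of $\alpha$). For every $k \in K$ and every $n$, the triangle inequality gives
\[
\|g_n k x - x\| \leq \|g_n k x - g_n x\| + \|g_n x - x\| = \|kx - x\| + \|g_n x - x\| \leq C + R.
\]
Hence $g_n K \subset A_{R+C}$ for all $n$.

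Next I would extract a subsequence $(g_{n_k})$ so that the translates $g_{n_k} K$ are pairwise disjoint. Inductively, having chosen $g_{n_1}, \dots, g_{n_j}$, the set $\bigcup_{i \leq j} g_{n_i} K K^{-1}$ is compact; since $g_n \to \infty$, there exists $n_{j+1} > n_j$ with $g_{n_{j+1}} \notin \bigcup_{i \leq j} g_{n_i} K K^{-1}$, which gives $g_{n_i} K \cap g_{n_{j+1}} K = \emptyset$ for all $i \leq j$. By left invariance of $m$,
\[
m(A_{R+C}) \geq \sum_{k} m(g_{n_k} K) = \sum_k m(K) = +\infty,
\]
since $m(K) > 0$ for any compact neighborhood of the identity.

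Finally, this bound yields
\[
P_s(x,x,m) = \int_G e^{-s\|gx-x\|^2} \rd m(g) \geq e^{-s(R+C)^2}\, m(A_{R+C}) = +\infty
\]
for every $s > 0$, so $\delta(\alpha) = +\infty$. Equivalently, via Lemma \ref{growth exponent}, the function $M(t) = m(\{g : \|gx-x\|^2 \leq t\})$ is infinite for $t \geq (R+C)^2$, forcing $\delta(\alpha)=+\infty$. The only non-routine step is the diagonal extraction of the disjoint translates, and it is genuinely straightforward once one fixes a compact neighborhood of the identity — I do not anticipate any real obstacle in this proof.
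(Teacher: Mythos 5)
Your proof is correct. It establishes the contrapositive directly: from a non-escaping sequence $(g_n)$ you manufacture a displacement ball $A_{R+C}=\{g\in G\mid \|gx-x\|\le R+C\}$ of infinite Haar measure by disjointifying the translates $g_{n_k}K$, which forces $P_s(x,x,m)=+\infty$ for every $s>0$. All the steps check out: the isometry identity $\|g_nkx-g_nx\|=\|kx-x\|$, the inductive choice of $g_{n_{j+1}}\notin\bigcup_{i\le j}g_{n_i}KK^{-1}$ guaranteeing disjointness, and the positivity of $m(K)$.

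The paper argues in the forward direction instead: it fixes a uniformly discrete cocompact subset $\Lambda\subset G$ and invokes Proposition \ref{uniformly discrete cocompact} to get $\sum_{g\in\Lambda}e^{-s\|gx-x\|^2}<+\infty$ for some $s>0$; convergence of the series forces $\|gx-x\|\to\infty$ along $\Lambda$, and cocompactness of $\Lambda$ upgrades this to properness of $\alpha$. The two arguments rest on the same mechanism --- each group element with bounded displacement carries a fixed quantum of Haar measure via a compact neighborhood of the identity, and this is exactly how Proposition \ref{uniformly discrete cocompact} is proved --- but your version is self-contained and avoids assuming the existence of a uniformly discrete cocompact subset of a general second countable locally compact group, while the paper's version reuses a comparison lemma it needs anyway and is correspondingly shorter. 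Either is acceptable.
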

\begin{proof}
Take $\Lambda \subset G$ a uniformly discrete cocompact subset. Then by Proposition \ref{uniformly discrete cocompact}, there exists $s > 0$ such that $\sum_{g \in \Lambda} e^{-s\|gx-x\|^2} < +\infty$ for some $x \in \cH$. In particular, $\lim_{g \to \infty, g \in \Lambda} \|gx-x\|=+\infty$. Since $\Lambda$ is cocompact in $G$, this easily implies that $\alpha$ is proper.
\end{proof}

The following fact has already been observed in the proof of \cite[Theorem 4.1]{CTV08} (see also the proof of Lemma 5.2 in \cite{VW18}). Later we will see a more quantitative statement which gives a lower bound on $\delta(\alpha)$ in terms of the spectral radius of $G$ (Corollary \ref{lower bound amenability}). See also Corollary \ref{poincare infinite}.

\begin{prop} \label{poincare strictly positive}
Let $\alpha : G \curvearrowright H$ be an affine isometric action of a nonamenable group $G$. Then $\delta(\alpha) > 0$. 
\end{prop}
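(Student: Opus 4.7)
The plan is to prove the contrapositive: if $\delta(\alpha)=0$, then $G$ is amenable. I will produce, for each sufficiently small $s>0$, a unit vector in $L^2(G)$ that is approximately invariant under the regular representation $\lambda$ on a prescribed compact set, which forces the trivial representation to be weakly contained in $\lambda$, hence $G$ to be amenable.

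Fix a basepoint $x \in \cH$ and consider the continuous $1$-cocycle $c : G \to \cH^0$, $c(g)=gx-x$, for the linear part $\alpha^0$. The cocycle identity combined with orthogonality of $\alpha^0$ gives $c(g^{-1}h) = \alpha^0(g^{-1})(c(h)-c(g))$, so $\|c(g^{-1}h)\|^2 = \|c(h)-c(g)\|^2$, and a standard polarization (for $c_1,\dots,c_n \in \C$ with $\sum c_i=0$, expanding $\|c(g_j)-c(g_i)\|^2$ produces $-2\|\sum c_i c(g_i)\|^2 \leq 0$) shows that $\psi(g) := \|gx-x\|^2$ is conditionally negative definite on $G$. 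By Schoenberg's theorem, for every $s>0$ the function
$$\varphi_s(g) := e^{-s\|gx-x\|^2}$$
is continuous positive definite with $\varphi_s(e)=1$. The assumption $\delta(\alpha)=0$ means $\varphi_s \in L^1(G,m)$ for every $s>0$, and since $|\varphi_s| \leq 1$ we also have $\varphi_s \in L^2(G,m)$.

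By Godement's theorem, any continuous positive definite function in $L^2(G)$ is realized as a diagonal matrix coefficient of $\lambda$: there exists $\xi_s \in L^2(G)$ with $\|\xi_s\|_2 = \varphi_s(e)^{1/2}=1$ such that $\varphi_s(g) = \langle \lambda(g)\xi_s, \xi_s\rangle$ for every $g \in G$. For any fixed compact set $K \subset G$, continuity of $c$ yields a finite $M_K$ with $\|c(g)\|^2 \leq M_K$ on $K$, so $|\varphi_s(g)-1| \leq 1-e^{-sM_K}$ uniformly for $g \in K$, which tends to $0$ as $s \to 0$. Combined with the identity $\|\lambda(g)\xi_s - \xi_s\|_2^2 = 2(1-\varphi_s(g))$, this exhibits $(\xi_s)_s$ as a family of approximately $\lambda$-invariant unit vectors in $L^2(G)$ for compact subsets of $G$, forcing $G$ to be amenable and contradicting the hypothesis.

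The only nontrivial external input is Godement's theorem, which realizes the abstract positive definite function $\varphi_s$ as a concrete diagonal coefficient of the regular representation; one could equivalently invoke the weak containment criterion of Cowling--Haagerup--Howe using that $\varphi_s \in L^p(G)$ for every $p \in [1,\infty]$. Once this step is in hand, the remainder is a direct combination of Schoenberg's theorem with the very definition of $\delta(\alpha)$.
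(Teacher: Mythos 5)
Your proof is correct, and it takes a genuinely different route from the one the paper offers. The paper gives no self-contained argument at this point: it attributes the fact to the proofs of \cite[Theorem 4.1]{CTV08} and \cite[Lemma 5.2]{VW18}, and then recovers it as a consequence of the quantitative inequality $-\log\rho_\mu(G)\leq \delta(\alpha)\,\|\alpha\|_\mu^2$ of Proposition \ref{lower bound amenability}, whose proof runs through the Gaussian machinery (the formula for $\rho_\mu(\widehat{\alpha}^t)$ in Theorem \ref{formula spectral radius}, amenability of $\widehat{\alpha}^t$ for $t>t_{\rm amen}(\alpha)$, and the chain $t_{\rm amen}(\alpha)\leq t_{\rm diss}(\alpha)\leq 2\sqrt{2\delta(\alpha)}$) together with Kesten's criterion $\rho_\mu(G)<1$. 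Your argument instead stays entirely at the level of classical representation theory: Schoenberg turns the conditionally negative definite function $\|gx-x\|^2$ into the normalized positive definite family $\varphi_s$, the vanishing of $\delta(\alpha)$ puts each $\varphi_s$ in $\rL^1(G)\cap\rL^\infty(G)\subset\rL^2(G)$, and realizing $\varphi_s$ as a coefficient of (a representation weakly contained in) the regular representation produces almost invariant vectors for $\lambda$ as $s\to 0$. This is almost certainly the argument of the references the paper cites, so you have in effect reconstructed the delegated proof; what you gain is a short, self-contained and elementary derivation, while the paper's route buys the stronger quantitative lower bound $\delta(\alpha)\geq -\log\rho_\mu(G)/\|\alpha\|_\mu^2$. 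One small caution: Godement's theorem in its classical form is usually stated for unimodular groups, so for a general locally compact $G$ you should lean on the Cowling--Haagerup--Howe criterion you already mention (or simply on the fact that the GNS representation of a square-integrable positive definite function is quasi-contained in $\lambda$); the conclusion $1\prec\bigoplus_s\pi_s\prec\lambda$ then gives amenability without needing the vectors $\xi_s$ to live literally in $\rL^2(G)$.
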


\section{The Gaussian functor} \label{Affine Gaussian functor}

Let $\cH$ be an affine Hilbert space. Let $A(\cH)$ denote the linear space of all continuous affine maps from $\cH$ to $\R$. Suppose first that $\cH$ is finite dimensional and for every $x \in \cH$, let $\mu_x$ be the standard Gaussian probability measure on $\cH$ centered at $x$. Then every $f \in A(\cH)$ has a normal distribution $\cN(f(x),\|f\|^2)$ with respect to the probability measure $\mu_x$ where $\| f\|:=\| f^0\|$ is the norm of the linear part of $f$. The Gaussian functor generalizes this to the infinite dimensional case and we will now explain this construction.

Let us first assume that we have a \emph{linear} Hilbert space $H$, i.e.\ that we have a specified origin $0 \in H$. The Gaussian functor associates to $H$ a probability space $(X,\mu)$ together with a linear map $\xi \mapsto \widehat{\xi}$ from $H$ to $\rL^0(X,\mu,\R)$ such that:
\begin{itemize}
\item For every $\xi \in H$, the random variable $\widehat{\xi}$ has a normal distribution $\cN(0,\| \xi \|^2)$ with respect to $\mu$.
\item The random variables $(\widehat{\xi})_{\xi \in H}$ generate the $\sigma$-algebra of $X$.
\end{itemize}
There are many ways to construct this probablity space $(X,\mu)$ and the random process $(\widehat{\xi})_{\xi \in H}$. Details can for example be found in \cite{Bo14}. However, the two properties above uniquely characterize the Gaussian functor and this is all what one needs to work with it.

Now suppose that $\cH$ is an affine Hilbert space. If we want to apply the previous construction, we need to chose an arbitrary point $x_0 \in \cH$ in order to identify $\cH$ with the tangent linear Hilbert space $\cH^0$. However, we are only interested in the properties of the Gaussian functor which do \emph{not} depend on the choice of $x_0$. Let $(X,\mu)$ be the Gaussian probability space associated to $\cH^0$ together with the linear map $\xi \mapsto \widehat{\xi}$ from $\cH^0$ to $\rL^0(X,\mu,\R)$. For every $x \in \cH$, define a new probability measure $\mu_x= e^{-\frac{1}{2}\| \xi \|^2 + \widehat{\xi}} \mu$ where $\xi=x-x_0$. One thinks of $\mu_{x_0}=\mu$ as a Gaussian probability measure centered at $x_0$ while the new measures $\mu_x$ are centered at different points $x \in \cH$. We thus obtain a family of equivalent probability measures $(\mu_x)_{x \in \cH}$ on $X$. Define a linear map $f \mapsto \widehat{f}$ from $A(\cH)$ into $\rL^0(X,\mu,\R)$ by letting $\widehat{f}:=\widehat{\xi}+f(x_0)1$ where $\langle \cdot, \xi \rangle$ is the linear part of $f$. It is straightforward to check that $\widehat{f}$ has a normal distribution $\cN(f(x),\|f\|^2)$ with respect to $\mu_x$ for all $x \in \cH$. We conclude from this discussion that there exists a \emph{Gaussian density} $(\mu_x)_{x \in \cH}$ in the sense of the following definition.

\begin{df} \label{Gaussian density}
Let $\cH$ be an affine Hilbert space. Let $(\mu_x)_{x \in \cH}$ be a family of equivalent probability measures on a standard borel space $X$. We say that $(X,(\mu_x)_{x \in \cH})$ is a \emph{Gaussian density} if there exists a linear map $f \mapsto \widehat{f}$ from $A(\cH)$ to $\rL^0(X,\R)$ such that
\begin{enumerate}[ \rm (i)]
\item For all $x \in \cH$ and all $f \in A(\cH)$, the random variable $\widehat{f}$ has a normal distribution $\mathcal{N}(f(x),\|f\|^2)$ with respect to $\mu_x$.
\item The random variables $\{ \widehat{f} \mid f \in A(\cH) \}$ generate the $\sigma$-algebra of $X$.
\end{enumerate}
\end{df}
\begin{rem}
Here, we implicitely use the measure class of $(\mu_x)_{x \in \cH}$ in order to define $\rL^0(X,\R)$.
\end{rem}

Note that we only require the existence of the map $f \mapsto \widehat{f}$. Indeed, if such a map exists then it is unique by the following proposition.

\begin{prop} \label{quadratic buseman}
Let $\cH$ be an affine Hilbert space. Let $(X,(\mu_x)_{x \in \cH})$ be a Gaussian density. Then there exists a unique linear map $A(\cH) \ni f \mapsto \widehat{f} \in \rL^0(X,\R)$ as in Definition \ref{Gaussian density}. Moreover, for all $x,y \in \cH$, we have
$$ \frac{\rd \mu_x}{\rd \mu_y} =e^{-\widehat{b(x,y)}}$$
where $b(x,y) \in A(\cH)$ is defined by
$$ b(x,y) : z \mapsto  \frac{1}{2}(\|z-x\|^2-\|z-y\|^2)=\langle z- \frac{x+y}{2},y-x \rangle.$$
\end{prop}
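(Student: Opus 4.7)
The plan is to prove the Radon-Nikodym formula first and then derive the uniqueness of $f \mapsto \widehat{f}$ as a consequence. Throughout I would invoke two preparatory observations. First, polarizing the variance formula in Definition \ref{Gaussian density}(i) applied to $\widehat{f+g} = \widehat{f} + \widehat{g}$ yields
$$ \operatorname{Cov}_{\mu_x}(\widehat{f}, \widehat{g}) = \langle f^0, g^0 \rangle $$
for all $f, g \in A(\cH)$ and $x \in \cH$, so the joint law under $\mu_x$ of any finite tuple $(\widehat{f_1}, \ldots, \widehat{f_n})$ is Gaussian with mean vector $(f_j(x))_j$ and explicit covariance matrix. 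Second, by the generating condition (ii), any probability measure absolutely continuous with respect to $\mu_x$ is completely determined by these joint laws.

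Setting $F := \widehat{b(x,y)}$, and using that $b(x,y)$ has linear part $\langle \cdot, y - x \rangle$ and satisfies $b(x,y)(x) = -\tfrac12\|y-x\|^2$, one finds that $F$ is $\cN(-\tfrac12\|y-x\|^2, \|y-x\|^2)$-distributed under $\mu_x$; in particular $\int e^F \rd\mu_x = 1$. To establish $e^F \mu_x = \mu_y$ I would compute, for any $f_1, \ldots, f_n \in A(\cH)$ and $t_1, \ldots, t_n \in \R$, the integral
$$ \int \exp\!\Bigl( F + i \sum_j t_j \widehat{f_j} \Bigr) \rd\mu_x $$
via the Gaussian moment generating formula applied to this complex linear combination of the jointly Gaussian family $(F, \widehat{f_1}, \ldots, \widehat{f_n})$. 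The cross-covariances $\operatorname{Cov}_{\mu_x}(\widehat{f_j}, F) = \langle f_j^0, y - x \rangle$ combine with the mean and variance of $F$ so that the $-\tfrac12\|y-x\|^2$ mean contribution cancels the $+\tfrac12\|y-x\|^2$ variance contribution, leaving precisely
$$ \exp\!\Bigl( i \sum_j t_j f_j(y) - \tfrac12 \sum_{j,k} t_j t_k \langle f_j^0, f_k^0 \rangle \Bigr), $$
which is the joint characteristic function of $(\widehat{f_1}, \ldots, \widehat{f_n})$ under $\mu_y$. By the generating condition, this forces $e^F \mu_x = \mu_y$, i.e.\ $\rd\mu_x/\rd\mu_y = e^{-F}$.

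For uniqueness, suppose $f \mapsto \widehat{f}$ and $f \mapsto \widehat{f}'$ both satisfy Definition \ref{Gaussian density}. Running the above argument for each separately identifies both $\widehat{b(x,y)}$ and $\widehat{b(x,y)}'$ with $\log(\rd\mu_y/\rd\mu_x)$, so they agree $\mu_x$-almost surely. To extend to arbitrary $f \in A(\cH)$, I would fix a basepoint $x_0 \in \cH$ and let $\xi \in \cH^0$ be the Riesz representative of $f^0$; a direct expansion gives $b(x_0, x_0+\xi)(z) = \langle z - x_0, \xi \rangle - \tfrac12\|\xi\|^2$, hence $f = b(x_0, x_0+\xi) + (f(x_0) + \tfrac12\|\xi\|^2) \cdot 1$. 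Since $\widehat{1}$ has distribution $\cN(1,0) = \delta_1$ under every $\mu_x$, necessarily $\widehat{1} = 1$ almost surely, so linearity forces
$$ \widehat{f} = \widehat{b(x_0, x_0+\xi)} + f(x_0) + \tfrac12\|\xi\|^2, $$
which is uniquely determined. The only nontrivial step is the characteristic function calculation in the second paragraph; I expect no other real obstacle, the rest being algebraic bookkeeping.
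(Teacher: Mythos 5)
Your proof is correct and follows essentially the same route as the paper: both establish the Radon--Nikodym identity $\rd\mu_y/\rd\mu_x = e^{\widehat{b(x,y)}}$ by matching exponential (then, via analyticity, characteristic) functionals of the Gaussian family $\{\widehat{f}\}$ against the generating condition, and both deduce uniqueness from the fact that every $f \in A(\cH)$ is, up to an additive constant, of the form $b(x,y)$. Your explicit covariance computation $\operatorname{Cov}_{\mu_x}(\widehat{f},\widehat{b(x,y)}) = \langle f^0, y-x\rangle$ is exactly the content of the paper's displayed moment identity, so the two arguments differ only in bookkeeping.
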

\begin{proof}
We just have to prove the Radon-Nikodym formula. Indeed, the uniqueness of $f \mapsto \widehat{f}$ follows from it since any non-constant affine function $f \in A(\cH)$ can be realized as $f=b(x,y)$ for some $x,y \in \cH$. 

So let $f \mapsto \widehat{f}$ be any linear map as in Definition \ref{Gaussian density}. Define $\mathcal{A} \subset \rL^0(X,\C)$ to be the linear span of $\{e^{\ri \widehat{f}} \mid f \in A(\cH) \}$. Then $\mathcal{A}$ is a subalgebra of $\rL^\infty(X,\C)$ which is dense in the measure topology thanks to Definition \ref{Gaussian density}.$(\rm ii)$. By Definition \ref{Gaussian density}.$(\rm i)$, we can check for all $f \in A(\cH)$ the following equalities
$$ \int_X e^{-\widehat{b(x,y)}} e^{\widehat{f}} \rd \mu_y=\exp\left( \frac{1}{2}\|f\|^2+f(x) \right) = \int_X e^{ \widehat{f}} \rd \mu_x$$
which imply by analyticity (see \cite[Lemma A.7.8]{BHV08}) that
$$ \int_X e^{-\widehat{b(x,y)}} e^{\ri \widehat{f}} \rd \mu_y=\exp\left( -\frac{1}{2}\|f\|^2+\ri f(x) \right) = \int_X e^{\ri \widehat{f}} \rd \mu_x$$
and the Radon-Nikodym formula follows.
\end{proof}
\begin{rem}
Notice the analogy between the Radon-Nikodym formula of Proposition \ref{quadratic buseman} and the definition of conformal densities in Patterson-Sullivan theory (Definition \ref{conformal density}).
\end{rem}

We now observe that there exists only one Gaussian density up to unique isomorphism.
\begin{prop}  \label{unique gaussian density}
Let $\cH$ be an affine Hilbert space. If $(X,(\mu_x)_{x \in \cH})$ and $(Y,(\nu_x)_{x \in \cH})$ are two Gaussian densities, then there exists a unique (up to null sets) borel map $\pi : X \rightarrow Y$ such that $\pi_* \mu_x=\nu_x$ for all $x \in \cH$.
\end{prop}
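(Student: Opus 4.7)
The plan is to realize both Gaussian densities as essentially the same standard Borel space by using the coordinates $\widehat{f}$ to embed them into $\R^\N$. Fix an origin $x_0 \in \cH$, so that $A(\cH)$ becomes a separable real Hilbert space under the norm $\|f\|_{x_0}^2 = f(x_0)^2 + \|f^0\|^2$. Property (i) of Definition \ref{Gaussian density} then makes $f \mapsto \widehat{f}^X$ an isometry from $A(\cH)$ into $\rL^2(\mu_{x_0})$. Pick a countable dense sequence $(f_n)_{n\in\N}$ in $A(\cH)$, consider the borel maps $\Phi_X\colon X \to \R^\N,\ \omega \mapsto (\widehat{f_n}^X(\omega))_n$, and $\Phi_Y$ defined analogously, and set $\pi := \Phi_Y^{-1}\circ\Phi_X$.

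For existence, two things have to be verified. First, $(\Phi_X)_*\mu_x = (\Phi_Y)_*\nu_x$ for every $x \in \cH$: by linearity and (i), every finite linear combination of coordinates of $\Phi_X$ is a real Gaussian of the prescribed mean and variance, so both pushforwards equal the unique Gaussian measure on $\R^\N$ with means $(f_n(x))_n$ and covariance $(\langle f_m^0, f_n^0\rangle)_{m,n}$. Second, each of $\Phi_X$ and $\Phi_Y$ is essentially injective with borel image: the isometry property combined with density of $(f_n)$ gives $\sigma(\widehat{f_n}^X) = \sigma(\widehat{f}^X : f \in A(\cH))$, which by (ii) coincides with the full Borel $\sigma$-algebra of $X$ modulo null sets, so by the Lusin--Souslin theorem $\Phi_X$ is a borel isomorphism of a conull subset of $X$ onto a borel subset of $\R^\N$. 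Since the pushforward measures coincide, so do the essential images, and $\pi$ is a well-defined borel map (off a null set) satisfying $\pi_*\mu_x = \nu_x$.

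For uniqueness, given any borel $\pi \colon X \to Y$ with $\pi_*\mu_z = \nu_z$ for every $z \in \cH$, the goal is to show that $\widehat{f}^Y\circ\pi = \widehat{f}^X$ a.e.\ for every $f \in A(\cH)$; this determines $\pi$ since by (ii) the $\widehat{f}^Y$ generate the Borel $\sigma$-algebra of $Y$. Combining the Radon--Nikodym formula of Proposition \ref{quadratic buseman} with change of variables applied to $\pi_*\mu_y = \nu_y$ and $\pi_*\mu_x = \nu_x$ gives
$$ e^{-\widehat{b(x,y)}^Y}\circ\pi \;=\; \rE_{\mu_y}\!\bigl[\, e^{-\widehat{b(x,y)}^X} \,\big|\, \pi^{-1}(\mathcal{B}_Y)\bigr] \quad \mu_y\text{-a.e.} $$
Both sides have the same log-normal distribution under $\mu_y$ (the left side by pushforward, the right side by direct computation via (i)), hence the same $\rL^2(\mu_y)$-norm. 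Since conditional expectation is an $\rL^2$-contraction that is strict on the orthogonal complement of the conditioning $\sigma$-algebra, this equality of norms forces $e^{-\widehat{b(x,y)}^X}$ to be $\pi^{-1}(\mathcal{B}_Y)$-measurable and equal a.e.\ to its conditional expectation. Taking logarithms and using that every non-constant element of $A(\cH)$ has the form $b(x,y)$ (as noted in the proof of Proposition \ref{quadratic buseman}), linearity extends the identity to $\widehat{f}^Y\circ\pi = \widehat{f}^X$ for all $f \in A(\cH)$.

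The delicate step is this last one: the pushforward condition $\pi_*\mu_x = \nu_x$ is by itself strictly weaker than $\pi$ being a spatial isomorphism (factor maps generally do not pull back Radon--Nikodym derivatives pointwise), and the equality $\widehat{f}^Y\circ\pi = \widehat{f}^X$ hinges entirely on the log-normal shape of the densities $\rd\mu_x/\rd\mu_y$, which yields the crucial equality of $\rL^2$-norms that collapses the contraction into an equality.
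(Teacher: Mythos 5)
Your proof is correct, and it takes a genuinely different route from the paper's. The paper argues at the level of function algebras: it introduces the abstract $*$-algebra generated by the unitaries $u(f)\mapsto e^{\ri\widehat{f}}$, observes that the two Gaussian densities induce the same states on it, deduces a normal $*$-isomorphism $\rL^\infty(Y)\to\rL^\infty(X)$, and then invokes the correspondence between commutative von Neumann algebras and nonsingular standard spaces to produce the point map $\pi$; uniqueness is dispatched in one line via the Radon--Nikodym formula. You instead coordinatize both spaces directly in $\R^\N$ via a countable dense family $(\widehat{f_n})$, identify the pushforwards as the same Gaussian measure through its finite-dimensional marginals, and get essential injectivity from the generation hypothesis plus Lusin--Souslin; this is more elementary and avoids the von Neumann duality machinery, at the cost of some point-set bookkeeping (e.g.\ you should note explicitly that one first discards a null set to make $\Phi_X$ genuinely injective before applying Lusin--Souslin, which is routine). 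Your uniqueness argument is in fact \emph{more} careful than the paper's: the paper's ``it follows from the Radon--Nikodym formula that $\pi^*=\theta$'' silently uses that a factor map pulls back Radon--Nikodym derivatives, which in general only holds up to a conditional expectation; your observation that the conditional expectation $\rE_{\mu_y}\bigl[e^{-\widehat{b(x,y)}^X}\mid\pi^{-1}(\mathcal B_Y)\bigr]$ has the same $\rL^2$-norm as $e^{-\widehat{b(x,y)}^X}$ itself (both being log-normal with identical parameters), hence that the projection is the identity on it, is exactly the missing justification. One phrasing quibble: where you say ``both sides have the same log-normal distribution'' you mean the left-hand side and the \emph{integrand} of the right-hand side, not the conditional expectation itself; the subsequent norm comparison makes clear that this is what you intend.
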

\begin{proof}
Define a unital commutative $*$-algebra $\C[\cH]$ generated by unitaries $u(f), \: f \in A(\cH)$ subject to the following relations: $u(f+g)=u(f)u(g)$ for all $f,g \in A(\cH)$ and $u(f)=e^{\ri \lambda} 1$ if $f(x)=\lambda$ for all $x \in \cH$. The map $u(f) \mapsto e^{\ri \widehat{f}}$ defines a $*$-homomorphism $\theta_X : \C[\cH] \rightarrow \rL^\infty(X,\C)$ with dense range. Similarly, we have $*$-homomorphism $\theta_X : \C[\cH] \rightarrow \rL^\infty(Y,\C)$ with dense range. For every $x \in \cH$, the probability measures $\mu_x$ and $\nu_x$ define the same state $\mu_x \circ \theta_X = \nu_x \circ \theta_Y$ on $\C[\cH]$. Therefore, there exists a unique continuous $*$-isomomorphism  $\theta : \rL^\infty(Y,\C) \rightarrow \rL^\infty(X,\C)$ such that $\theta_X = \theta \circ  \theta_Y$. By the correspondence between nonsingular spaces and commutative von Neumann algebras, we know that there exists a nonsingular map $\pi : X \rightarrow Y$ such that $\pi^*=\theta$. The map $\pi$ satisfies $\pi_* \mu_x=\nu_x$ for all $x \in \cH$. Conversely, it follows from the Radon-Nikodym formula of Proposition \ref{quadratic buseman} that any such map $\pi$ must satisfy $\pi^*=\theta$ so that $\pi$ is unique up to null sets.
\end{proof}

 From now on, for any affine Hilbert space $\cH$, we fix a Gaussian density that we denote by $(\widehat{\cH}, (\mu_x)_{x \in \cH})$. The precise way in which it is constructed is irrelevant since it is uniquely characterized up to unique isomorphism by the properties of Definition \ref{Gaussian density}. Note that $\widehat{\cH}$ has a canonical measure class and we will always neglect null-sets with respect to that measure class. We will use the notations and terminology on nonsingular spaces explained in the appendix.  If $f \in A(\cH)$ is given by $f(y)=\langle y-x,\xi \rangle$ for some $x \in \cH$ and $\xi \in \cH^0$, we will use the notation $\widehat{f}(\omega)=\langle \omega-x, \xi \rangle$ for $\omega \in \widehat{\cH}$, where $f \mapsto \widehat{f}$ is the linear map of Definition \ref{Gaussian density}. Fix $x \in \cH$. Then $(\widehat{\cH}^0, (\mu_{y-x})_{y \in \cH})$ is a $\cH$-Gaussian density and therefore there exists a unique nonsingular isomorphism $\theta : \widehat{\cH} \rightarrow \widehat{\cH}^0$ such that $\theta_* \mu_y=\mu_{y-x}$ for all $y \in \cH$. We will use the notation $\theta(\omega)=\omega-x$ for $\omega \in \widehat{\cH}$. We will use the letter $\varphi$ to denote elements of $\widehat{\cH}^0$. For the inverse map of $\theta$ we will use the notation $\theta^{-1}(\varphi)= x+ \varphi$ for $\varphi \in \widehat{\cH}^0$. Finally, we will use the notation $\widehat{f}(\varphi)=\langle \varphi, \xi \rangle$ when $f \in A(\cH^0)$ is given by $f(\eta)=\langle \eta, \xi \rangle$. All these notations are intuitive and compatible with each others. We record the following useful formulas.
\begin{prop} \label{formula Radon-Nikodym derivatives}
Let $\cH$ be an affine Hilbert space. The following holds:
\begin{enumerate}[ \rm (i)]
\item For all $x,y \in \cH$ and a.e.\ $\omega \in \widehat{\cH}$,\begin{align*}
\frac{\rd \mu_{y} }{\rd \mu_x} (\omega ) &= \exp \left( \langle \omega-\frac{x+y}{2}, y-x \rangle \right)\\
 &= \exp \left(-\frac{1}{2}\|y-x\|^2+ \langle \omega-x, y-x \rangle \right).
\end{align*}
\item For all $f \in A(\cH)$ and all $x \in \cH$,
$$ \int_{ \widehat{\cH}} e^{ z\widehat{f}(\omega)} \rd \mu_x(\omega)=\exp \left( \frac{z^2}{2} \|f\|^2+ z f(x) \right).$$
\item For all $x,y \in \cH$, $$ \langle \mu_x^{1/2},\mu_y^{1/2} \rangle =\exp\left( -\frac{1}{8}\|y-x\|^2 \right).$$
\item For all $x,y \in \cH$, we have (see Section \ref{densities Lp spaces} for the notations)
$$\| \mu_x^{1/2}-\mu_y^{1/2}\|^2=2-2\exp\left( -\frac{1}{8}\|y-x\|^2 \right).$$
\end{enumerate}
\end{prop}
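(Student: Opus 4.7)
The proposition is essentially a bookkeeping exercise combining the Radon–Nikodym formula of Proposition \ref{quadratic buseman} with the Gaussian moment generating function from Definition \ref{Gaussian density}.$(\rm i)$.

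For $(\rm i)$, I would start from Proposition \ref{quadratic buseman}, which gives $\frac{\rd \mu_x}{\rd \mu_y} = e^{-\widehat{b(x,y)}}$, and hence $\frac{\rd \mu_y}{\rd \mu_x} = e^{\widehat{b(x,y)}}$. Expanding the affine function $b(x,y)(z) = \langle z - \tfrac{x+y}{2}, y-x\rangle = \langle z-x, y-x\rangle - \tfrac{1}{2}\|y-x\|^2$ and using the linearity and the conventions on $\widehat{f}$ gives the two equivalent expressions stated.

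For $(\rm ii)$, the random variable $\widehat{f}$ has a $\mathcal{N}(f(x), \|f\|^2)$ distribution under $\mu_x$ by Definition \ref{Gaussian density}.$(\rm i)$, so for real $z$ the formula is just the classical Laplace transform of a Gaussian variable. For general complex $z$, I would invoke analytic continuation exactly as in the proof of Proposition \ref{quadratic buseman} (cf.\ the reference to \cite[Lemma A.7.8]{BHV08}), noting that both sides are entire in $z$ and agree on $\R$.

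For $(\rm iii)$, writing the pairing with respect to the common reference measure $\mu_x$, I would compute
\begin{equation*}
\langle \mu_x^{1/2}, \mu_y^{1/2}\rangle \;=\; \int_{\widehat{\cH}} \sqrt{\tfrac{\rd\mu_y}{\rd\mu_x}}\, \rd\mu_x \;=\; \int_{\widehat{\cH}} e^{\tfrac{1}{2}\widehat{b(x,y)}}\, \rd\mu_x,
\end{equation*}
using $(\rm i)$. Setting $f := b(x,y) \in A(\cH)$, one has $\|f\| = \|y-x\|$ and $f(x) = -\tfrac{1}{2}\|y-x\|^2$, so applying $(\rm ii)$ with $z=\tfrac{1}{2}$ yields $\exp\bigl(\tfrac{1}{8}\|y-x\|^2 - \tfrac{1}{4}\|y-x\|^2\bigr) = \exp\bigl(-\tfrac{1}{8}\|y-x\|^2\bigr)$, as desired.

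Finally, $(\rm iv)$ is a purely formal expansion: $\|\mu_x^{1/2} - \mu_y^{1/2}\|^2 = \|\mu_x^{1/2}\|^2 + \|\mu_y^{1/2}\|^2 - 2\langle \mu_x^{1/2}, \mu_y^{1/2}\rangle$, and since $\mu_x, \mu_y$ are probability measures one has $\|\mu_x^{1/2}\|^2 = \|\mu_y^{1/2}\|^2 = 1$, so the result follows from $(\rm iii)$. There is no real obstacle here; the only subtle point is the analytic continuation justifying $(\rm ii)$ for complex $z$, but the statement we need for $(\rm iii)$ only uses a positive real exponent so even that subtlety can be avoided.
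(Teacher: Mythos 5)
Your proof is correct, and since the paper records this proposition without proof, your derivation from Proposition \ref{quadratic buseman} and the Gaussian distribution property in Definition \ref{Gaussian density} is exactly the intended argument. All the computations check out, including the identification $\|b(x,y)\|=\|y-x\|$ and $b(x,y)(x)=-\tfrac{1}{2}\|y-x\|^2$ used in part $(\rm iii)$.
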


We deduce the following useful properties.

\begin{prop} \label{density}
Let $\cH$ be an affine Hilbert space. 
\begin{enumerate}[ \rm (i)]
\item The linear span of $\{\mu_x \mid x \in \cH \}$ is dense in $\rL^1(\widehat{\cH})$.
\item The map $x \mapsto \mu_x$ is a uniformly continuous homeomorphism from $\cH$ onto a closed subset of $\rL^1(\widehat{\cH})$.
\item On bounded subsets of $\cH$, the map $x \mapsto \mu_x$ is continuous with respect to the weak topologies , i.e.\ if $x_n \in \cH$ is a bounded sequence such that $\langle x_n-x, \xi \rangle \to 0$ for all $\xi \in \cH^0$ and some $x \in \cH$, then $\mu_{x_n}(f) \to \mu_x(f)$ for all $f \in \rL^\infty(\widehat{\cH})$.
\end{enumerate}
\end{prop}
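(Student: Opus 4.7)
For (i), I would argue by Hahn-Banach duality: any $h \in \rL^\infty(\widehat{\cH})$ orthogonal to every $\mu_x$ satisfies $\int h \, e^{\widehat{\xi}} \, d\mu_{x_0} = 0$ for all $\xi \in \cH^0$, after fixing a reference point $x_0$ and applying Proposition~\ref{formula Radon-Nikodym derivatives}(i). Extending $z \mapsto \int h \, e^{z\widehat{\xi}} \, d\mu_{x_0}$ analytically (cf.\ \cite[Lemma A.7.8]{BHV08}) and evaluating at $z=i$ gives $\int h \, e^{i\widehat{\xi}} \, d\mu_{x_0} = 0$ for all $\xi$. Applying this to arbitrary linear combinations $\sum_j t_j \xi_j$ and invoking the uniqueness of multivariate Fourier transforms shows that the conditional expectation of $h$ with respect to each finitely generated $\sigma$-algebra $\sigma(\widehat{\xi}_1, \ldots, \widehat{\xi}_n)$ vanishes; since these $\sigma$-algebras exhaust the ambient $\sigma$-algebra by Definition~\ref{Gaussian density}(ii), a martingale convergence argument forces $h = 0$ a.e.

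For (ii), the exact identity Proposition~\ref{formula Radon-Nikodym derivatives}(iv) combined with the standard Hellinger-TV sandwich
\[
\|\mu_x^{1/2}-\mu_y^{1/2}\|^2 \le \|\mu_x-\mu_y\|_1 \le 2\|\mu_x^{1/2}-\mu_y^{1/2}\|
\]
(the first from $(\sqrt{a}-\sqrt{b})^2 \le |a-b|$ pointwise, the second from Cauchy-Schwarz together with $\|\mu_x^{1/2} + \mu_y^{1/2}\|_2 \le 2$) yields
\[
2\bigl(1-e^{-\|x-y\|^2/8}\bigr) \le \|\mu_x-\mu_y\|_1 \le 2\sqrt{2\bigl(1-e^{-\|x-y\|^2/8}\bigr)},
\]
which gives both uniform continuity and injectivity of $x \mapsto \mu_x$. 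For closedness of the image and continuity of the inverse: if $(\mu_{x_n})$ is Cauchy in $\rL^1$ then the lower bound forces $(x_n)$ to be Cauchy in $\cH$, say $x_n \to x$, and uniform continuity then gives $\mu_{x_n} \to \mu_x$.

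For (iii), let $\mathcal{A}$ denote the linear span of $\{e^{i\widehat{g}} : g \in A(\cH)\}$. Proposition~\ref{formula Radon-Nikodym derivatives}(ii) with $z=i$ gives the explicit formula $\mu_x(e^{i\widehat{g}}) = \exp(-\|g\|^2/2 + ig(x))$, so weak convergence $\langle x_n-x,\xi\rangle \to 0$ in $\cH$ yields $\mu_{x_n}(f) \to \mu_x(f)$ for every $f \in \mathcal{A}$. To upgrade to arbitrary $f \in \rL^\infty$, I observe that boundedness of $(x_n)$ combined with Proposition~\ref{formula Radon-Nikodym derivatives}(ii) yields a uniform $\rL^p$-bound (for any fixed $p>1$) on the densities $d\mu_{x_n}/d\mu_{x_0}$; this gives uniform integrability, so by Dunford-Pettis $(\mu_{x_n})$ is relatively weakly compact in $\rL^1(\mu_{x_0})$. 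Any weak cluster point $\nu$ agrees with $\mu_x$ on $\mathcal{A}$, and since the annihilator of $\mathcal{A}$ in $\rL^1$ is trivial (by the same analytic/martingale argument as in (i), using Definition~\ref{Gaussian density}(ii)), $\nu = \mu_x$, so the entire sequence converges weakly to $\mu_x$.

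The main technical obstacle I anticipate is the final uniqueness step in (iii): converting convergence on the algebra $\mathcal{A}$ into weak $\rL^1$-convergence requires both a tightness-type input (the uniform $\rL^p$ bound derived from Proposition~\ref{formula Radon-Nikodym derivatives}(ii)) and a completeness-type input (triviality of the annihilator of $\mathcal{A}$), both of which ultimately rest on the generation property of Definition~\ref{Gaussian density}(ii). The argument of (i) is structurally parallel to the annihilator triviality needed in (iii), so one could consolidate these by extracting a single lemma stating that $\mathcal{A}$ is total in $\rL^1(\widehat{\cH},\mu_{x_0})$ for any $x_0$.
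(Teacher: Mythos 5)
Your proposal is correct and follows essentially the same route as the paper: Hahn--Banach plus analytic continuation to $e^{\ri\widehat{f}}$ for (i), the Hellinger--total variation sandwich combined with the exact formula $\|\mu_x^{1/2}-\mu_y^{1/2}\|^2=2-2e^{-\|x-y\|^2/8}$ for (ii), and convergence on the algebra $\mathcal{A}$ plus a uniform moment bound on the densities $\rd\mu_{x_n}/\rd\mu_x$ for (iii). The only cosmetic difference is in the last step of (iii), where the paper uses the uniform Cauchy--Schwarz estimate $\mu_{x_n}(|a-b|)\leq C\,\mu_x(|a-b|^2)^{1/2}$ to approximate directly in $\rL^2(\mu_x)$ by elements of $\mathcal{A}$, while you package the same moment bound as uniform integrability and invoke Dunford--Pettis; both are valid.
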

\begin{proof}
$(\rm i)$ Take $a \in \rL^\infty(\widehat{\cH})$ such that $\mu_x(a)=0$ for all $x \in \cH$. Then $\mu_x(e^{\widehat{f}} a)=0$ for all $f \in A(\cH)$. By analyticity, we get $\mu_x(e^{\ri \widehat{f}} a)=0$ for all $f \in A(\cH)$. But $\{ e^{\ri \widehat{f}} \mid f \in A(\cH) \}$ is a dense subalgebra of $\rL^\infty(\widehat{\cH})$. Therefore, we must have $a=0$. By the Hahn-Banach theorem, we conclude that  $\{\mu_x \mid x \in \cH \}$ is dense in $\rL^1(\widehat{\cH})$.
 
$(\rm ii)$ This follows from Proposition \ref{formula Radon-Nikodym derivatives} and the following inequalities:
$$\| \mu_x^{1/2}-\mu_y^{1/2}\|^2 \leq  \| \mu_x - \mu_y \| \leq 2\| \mu_x^{1/2}-\mu_y^{1/2} \|.$$
$(\rm iii)$ By the formula of Proposition \ref{formula Radon-Nikodym derivatives}.$(\rm ii)$, we have $\mu_{x_n}(e^{\ri \widehat{f}}) \to \mu_x(e^{\ri \widehat{f}} )$ for all $f \in A(\cH)$. For this, we only use the weak convergence of $x_n$. Now, since $x_n$ is bounded, we have
\begin{align*}
\mu_{x_n}(|a-b|) &=\mu_x(e^{\widehat{b(x,x_n)}} |a-b|) \\
 &\leq \mu_x(e^{2\widehat{b(x,x_n)}})^{1/2} \mu_x(|a-b|^2)^{1/2} \\
 & = e^{3\|x-x_n\|^2} \mu_x(|a-b|^2)^{1/2} \\
 & \leq C  \mu_x(|a-b|^2)^{1/2} 
\end{align*}
for some constant $C > 0$ which does not depend on $n$ and for all $a,b \in \rL^\infty(\widehat{\cH})$. This implies that we can use the density of $\{ e^{\ri \widehat{f}} \mid f \in A(\cH) \}$ to conclude that the convergence $\mu_{x_n}(a) \to \mu_x(a)$ holds for all $a \in \rL^\infty(\widehat{\cH})$.
\end{proof}

\subsection{Functorial properties}
The uniqueness of $(\widehat{\cH}, (\mu_x)_{x \in \cH})$ allows us to derive all of its functorial properties.

\subsubsection*{Isometries} First observe that if $V : \cK \rightarrow \cH$ is a surjective affine isometry, then $(\widehat{\cH}, (\mu_{Vx})_{x \in \cK})$ is a $\cK$-Gaussian density. Therefore, by Proposition \ref{unique gaussian density}, there exists a unique nonsingular isomorphism $\widehat{V} : \widehat{\cK} \rightarrow \widehat{\cH}$ such that $\widehat{V}_*\mu_x= \mu_{Vx}$ for all $x \in \cK$. In particular, we have a canonical homomorphism
$$ \Isom(\cH) \ni g \mapsto \widehat{g} \in \Aut(\widehat{\cH}).$$
Of course, this homomorphism is continuous.
\begin{prop} \label{continuous functoriality}
Let $\cH$ be an affine Hilbert space. Then the homomorphism $$\Isom(\cH) \ni g \mapsto \widehat{g} \in \Aut(\widehat{\cH})$$ is continuous and its range is closed.
\end{prop}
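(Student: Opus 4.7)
The plan is to reduce both assertions to Proposition \ref{density}, whose content is that $y \mapsto \mu_y$ is a homeomorphism from $\cH$ onto a closed subset of $\rL^1(\widehat{\cH})$ whose linear span is dense, combined with the defining identity $\widehat{g}_* \mu_x = \mu_{gx}$.

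For continuity, I would fix $x \in \cH$ and observe that if $g_n \to g$ in $\Isom(\cH)$ (i.e.\ pointwise on $\cH$), then $g_n x \to g x$ in $\cH$, hence $(\widehat{g_n})_* \mu_x = \mu_{g_n x} \to \mu_{gx} = \widehat{g}_* \mu_x$ in $\rL^1(\widehat{\cH})$ by Proposition \ref{density}(ii). With the topology on $\Aut(\widehat{\cH})$ being the one for which $T_n \to T$ means $(T_n)_* \nu \to T_* \nu$ in norm for every finite measure $\nu$ in the class, I need such convergence for every $\nu \in \rL^1(\widehat{\cH})$; but the operators $\nu \mapsto (T_n)_* \nu$ and $\nu \mapsto T_* \nu$ are isometries of $\rL^1(\widehat{\cH})$, so having the desired convergence on the dense subspace $\spn\{\mu_x : x \in \cH\}$ from Proposition \ref{density}(i), the standard uniform-boundedness argument will extend it to all of $\rL^1$.

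For closedness of the range, I will assume $(g_n) \subset \Isom(\cH)$ and $\widehat{g_n} \to T \in \Aut(\widehat{\cH})$. For every $x \in \cH$ this gives $\mu_{g_n x} = (\widehat{g_n})_* \mu_x \to T_* \mu_x$ in $\rL^1$. Proposition \ref{density}(ii) then forces $T_* \mu_x = \mu_{g(x)}$ for a unique $g(x) \in \cH$, with $g_n x \to g(x)$ in $\cH$. Passing to the limit in $\|g_n x - g_n y\| = \|x-y\|$ will show that $g : \cH \to \cH$ is isometric, hence automatically affine. Applying the same argument to the convergent sequence $\widehat{g_n^{-1}} = (\widehat{g_n})^{-1} \to T^{-1}$ will produce an isometric $h : \cH \to \cH$, and passing to the limit in $g_n g_n^{-1} = g_n^{-1} g_n = \id$ gives $gh = hg = \id$, so $g \in \Isom(\cH)$. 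Finally $\widehat{g}_* \mu_x = \mu_{gx} = T_* \mu_x$ for every $x \in \cH$, and density of $\{\mu_x\}$ in $\rL^1$ forces $\widehat{g} = T$, placing $T$ in the range of the homomorphism.

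I do not expect any substantial obstacle: the argument is essentially a direct corollary of Proposition \ref{density}. The only mild care required is matching the topology on $\Aut(\widehat{\cH})$ as set up in the appendix with push-forward convergence in $\rL^1$; since the family $\{\mu_x\}$ is dense and compatible with the reference measure class, the standard definitions coincide on the sequences relevant here, so there is no real difficulty beyond unwinding the conventions.
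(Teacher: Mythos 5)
Your proof is correct, and the continuity half is essentially identical to the paper's (convergence of $\mu_{g_nx}\to\mu_{gx}$ in $\rL^1$ via Proposition \ref{density}, extended to all of $\rL^1(\widehat{\cH})$ by density of $\spn\{\mu_x\}$ and the fact that push-forwards are $\rL^1$-isometries). For closedness of the range, however, you take a genuinely different and more hands-on route: you start from $\widehat{g_n}\to T$, use the closedness of $\{\mu_y\}$ in $\rL^1$ to produce a pointwise limit $g(x)=\lim_n g_nx$, check that $g$ is a surjective isometry by running the same argument on $\widehat{g_n}^{-1}\to T^{-1}$ (your passage to the limit in $g_ng_n^{-1}=\id$ works because the $g_n$ are isometries, hence equicontinuous), and conclude $\widehat{g}=T$ by density. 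The paper instead only verifies the converse implication $\widehat{g_n}\to\id\Rightarrow g_n\to\id$, concludes that $g\mapsto\widehat{g}$ is a homeomorphism onto its range, and leaves implicit the standard fact that a Polish subgroup of a Polish group (here, the homeomorphic image of the Polish group $\Isom(\cH)$ inside the Polish group $\Aut(\widehat{\cH})$) is automatically closed. Your argument is longer but entirely self-contained and explicitly exhibits the preimage of a limit point; the paper's is shorter at the cost of invoking that soft descriptive-set-theoretic fact. Both are valid.
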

\begin{proof}
It follows from Proposition \ref{density}, that for every $x$, the map $g \mapsto \widehat{g}_*\mu_x=\mu_{gx}$ is continuous and since the linear span of $\{ \mu_x \mid x \in \cH\}$ is dense in $\rL^1(\widehat{\cH})$, we deduce that $g \mapsto \widehat{g}$ is continuous. Conversely, if $\lim_n \widehat{g_n}=\id$ then $\lim_n \mu_{g_nx}=\lim_n \widehat{g_n}_*\mu_x=\mu_x$, which means that $\lim_n g_nx=x$ for all $x \in \cH$. This shows that $g \mapsto \widehat{g}$ is a homeomorphism on its range.
\end{proof}

It follows from Proposition \ref{continuous functoriality} that every affine isometric action $\alpha : G \curvearrowright \cH$ of a locally compact group $G$ gives rise to a nonsingular action $\widehat{\alpha} : G \curvearrowright \cH$ that we call the \emph{nonsingular Gaussian action} associated to $\alpha$.

\begin{prop} \label{faithful free}
Let $\cH$ be an affine Hilbert space and let $g \in \Isom(\cH)$ with $g \neq \id$. Then $\widehat{g} \in \Aut(\widehat{\cH})$ is essentially free.
\end{prop}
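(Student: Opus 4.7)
The plan is to exhibit a nonzero affine function $h \in A(\cH)$ such that, on a conull subset of $\widehat{\cH}$, the fix set of $\widehat{g}$ is contained in $\{\widehat{h} = 0\}$; the latter is automatically $\mu_x$-null because $\widehat{h}$ is either a nondegenerate Gaussian or a nonzero constant. To construct $h$, pick $x \in \cH$ with $gx \neq x$ (which exists since $g \neq \id$), set $v := gx - x$ and $f(y) := \langle y - x, v\rangle \in A(\cH)$. Using $gy = g^0(y - x) + x + v$, a direct computation gives
$$ h(y) := f(gy) - f(y) = \langle y - x,\, (g^0)^* v - v\rangle + \|v\|^2, $$
which is a nonzero element of $A(\cH)$. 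Either its linear part $(g^0)^* v - v$ is nonzero, in which case $\widehat{h}$ is a nondegenerate Gaussian under $\mu_x$ and $\mu_x\{\widehat{h} = 0\} = 0$; or $(g^0)^* v = v$, in which case $h$ is the nonzero constant $\|v\|^2$, so $\widehat{h} = \|v\|^2$ identically and $\{\widehat{h} = 0\} = \emptyset$.

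The crux of the argument is the functorial identity $\widehat{g}^* \widehat{f} = \widehat{f \circ g}$ in $\rL^0(\widehat{\cH})$, which I would deduce from the uniqueness part of Proposition \ref{quadratic buseman}. Indeed, the map $A(\cH) \ni f \mapsto \widehat{g}^* \widehat{f \circ g^{-1}}$ is linear, and because $\widehat{g}^*$ is an isomorphism of measure algebras sending generators to generators, its range generates the $\sigma$-algebra of $\widehat{\cH}$. Moreover, since $\widehat{g}_* \mu_x = \mu_{gx}$ and the orthogonal linear part of $g^{-1}$ preserves norms, the random variable $\widehat{g}^* \widehat{f \circ g^{-1}}$ has distribution $\cN(f(x), \|f\|^2)$ under $\mu_x$ for every $x \in \cH$. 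By the uniqueness statement of Definition \ref{Gaussian density}, this map must equal the original map $f \mapsto \widehat{f}$, and the identity $\widehat{g}^* \widehat{f} = \widehat{f \circ g}$ follows after replacing $f$ by $f \circ g$.

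Combining the two ingredients, on a conull subset $\Omega_0 \subset \widehat{\cH}$ one has $\widehat{f}(\widehat{g}\omega) = \widehat{f \circ g}(\omega)$ for every $\omega \in \Omega_0$; hence every $\omega \in \Omega_0$ satisfying $\widehat{g}\omega = \omega$ also satisfies $\widehat{h}(\omega) = \widehat{f \circ g}(\omega) - \widehat{f}(\omega) = 0$. Since $\{\widehat{h} = 0\}$ is $\mu_x$-null, so is the fix set of $\widehat{g}$, and since all the measures $\mu_y$ are equivalent, $\widehat{g}$ is essentially free. The main subtlety lies precisely in the clean formulation of the functorial identity: $\widehat{g}$ is only well-defined modulo null sets, so one must carefully justify the almost-everywhere equality before intersecting with the fix set; once this is done, the remainder is a direct Gaussian computation.
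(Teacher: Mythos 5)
Your proof is correct, but it takes a different route from the paper's. The paper argues in three lines: if the fixed-point set $A$ of $\widehat{g}$ has positive measure, then the Radon--Nikodym derivative $\frac{\rd\mu_{gx}}{\rd\mu_x}$ equals $1$ a.e.\ on $A$ (a general fact about nonsingular transformations restricted to their fixed-point set), and by the explicit formula of Proposition \ref{formula Radon-Nikodym derivatives} this derivative is $\exp\bigl(-\tfrac12\|gx-x\|^2+\langle\omega-x,gx-x\rangle\bigr)$, a nondegenerate log-normal variable whenever $gx\neq x$; so it can equal $1$ only on a null set, forcing $gx=x$ for all $x$ and hence $g=\id$. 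You instead first establish the equivariance identity $\widehat{f}\circ\widehat{g}=\widehat{f\circ g}$ from the uniqueness clause of Proposition \ref{quadratic buseman} (your verification that the candidate map $f\mapsto\widehat{g}^*\,\widehat{f\circ g^{-1}}$ satisfies both conditions of Definition \ref{Gaussian density} is sound), and then force the affine function $h=f\circ g-f$ to vanish on the fixed set. Both arguments reduce to the same mechanism --- a nonzero affine Gaussian observable constrained to a single value on the fixed set --- but the witnessing observables differ: the paper's is $b(gx,x)$, whose linear part $\langle\cdot,gx-x\rangle$ is automatically nonzero, so no case split is needed; yours is $h$, whose linear part $(g^0)^*v-v$ can vanish (e.g.\ for translations), whence your separate constant case. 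What your version buys is that it never invokes the unproved fact that the Radon--Nikodym derivative is $1$ a.e.\ on the fixed-point set, replacing it with the cleanly justified functorial identity; what it costs is length and the extra dichotomy. Two cosmetic points: in the constant case $\{\widehat{h}=0\}$ is null rather than literally empty (equalities in $\rL^0$ hold only a.e.), and your handling of the representative-dependence of the fixed set is appropriately flagged and unproblematic.
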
 
\begin{proof}
Let $A=\{ \omega \in \widehat{\cH} \mid \widehat{g}(\omega)=\omega \}$. Then for any $x \in \cH$, we have $\frac{\rd \mu_{gx}}{\rd \mu_x}(\omega)=1$ for all $\omega \in A$. If $A$ has positive measure, this forces $gx=x$ by Proposition \ref{formula Radon-Nikodym derivatives}. Since this holds for every $x \in \cH$, we conclude that $g=\id$.
\end{proof}

\begin{rem} \label{freeness}
Proposition \ref{faithful free} shows in particular that for every faithful affine isometric action $\alpha : \Gamma \curvearrowright \cH$ of a discrete group $\Gamma$, the Gaussian action $\widehat{\alpha}$ is essentially free. This, however, does not generalize to arbitrary locally compact groups even for pmp Gaussian actions. Indeed, a very easy counter-example is given by the action of $\mathrm{O}(3)$ on $\R^3$ which is faithful but not essentially free (every point has a nontrivial stabilizer isomorphic to $\mathrm{O}(2)$). However, in the proof \cite[Proposition 1.2]{AEG94}, it is shown that if $\pi$ is a faithful orthogonal representation of a locally compact group $G$, then the pmp Gaussian action associated to the infinite direct sum $\pi^{\oplus \N}$ is essentially free. We thank Cyril Houdayer and Stefaan Vaes for pointing this out.
\end{rem}

\begin{rem}
The fact that translations act in a nonsingular way on the Gaussian probability space associated to a Hilbert space $\cH$ was already known to many specialists under various forms. For example, in the context of probability theory, it is known as the Cameron--Martin theorem. The Koopman representation of $\Isom(\cH)$ on $\rL^2(\widehat{\cH})$ can also be identified with the well-known Weyl representation of $\Isom(\cH)$ on the Fock space (see \cite{HP84} for example). But surprisingly, the functor that associates to every affine isometric action a nonsingular Gaussian action, which is the central object of this paper, has not been studied before.
\end{rem}


\subsubsection*{Products} 
If $\cH=\cK \times \cK'$ is a product of two affine spaces, then it is easy to see that $(\widehat{\cK} \otimes \widehat{\cK'} , (\mu_x \otimes \mu_{x'})_{(x,x') \in \cH})$ is a Gaussian density. Therefore, there exists a unique nonsingular isomorphism $\theta : \widehat{\cH} \rightarrow \widehat{\cK} \otimes \widehat{\cK'}$ such that $\theta_* \mu_{(x,x')}=\mu_x \otimes \mu_{x'}$ for all $(x,x') \in \cH$. We will often omit the map $\theta$ and simply identify $\widehat{\cH}$ with $\widehat{\cK} \otimes \widehat{\cK'}$ and $\mu_{(x,x')}$ with $\mu_x \otimes \mu_{x'}$. We will also view $\rL^\infty(\widehat{\cK})$ as a subalgebra of $\rL^\infty(\widehat{\cH})$.

\begin{rem} \label{conjugacy different gaussian} From what preceeds and Proposition \ref{Rotation trick}, we deduce the following relations between the one-parameter family of Gaussian actions of a given affine isometric action $\alpha: G \curvearrowright \cH$. Write $t=\cos \theta $ and $s=\sin \theta$ for some $\theta \in ]0,\frac{\pi}{2}[$ and let $R_\theta : \cH \times \cH^0 \rightarrow \cH^t \times \cH^s$ be the isometry defined in Proposition \ref{Rotation trick}. Then it induces a nonsingular isomorphism $\widehat{R}_\theta : \widehat{\cH} \otimes \widehat{\cH}^0 \rightarrow \widehat{\cH}^t \otimes \widehat{\cH}^s$ which is $G$-equivariant with respect to the diagonal actions $\widehat{\alpha} \otimes \widehat{\alpha}^0$ and $\widehat{\alpha}^t \otimes \widehat{\alpha}^s$. More generally, we have $\widehat{\alpha}^{r} \otimes \widehat{\alpha}^0 \cong \widehat{\alpha}^t \otimes \widehat{\alpha}^s$ whenever $r^2=t^2+s^2$.
\end{rem}

\subsubsection*{Subspaces} Let $\cH$ be a affine Hilbert space and $\cK$ a nonempty affine subspace. Let $E=(\cK^0)^{\perp} \subset \cH^0$. Then $\cH=E \times \cK$. Therefore, we have an identification $\widehat{\cH} = \widehat{E} \otimes \widehat{\cK}$ and we can view $\rL^\infty(\widehat{\cK})$ as a subalgebra of $\rL^\infty(\widehat{\cH})$.

But we can do more at the level of the modular bundles (see the appendix). Indeed, since $E$ is a linear Hilbert space, we have a canonical probability measure $\mu_0$ on $\widehat{E}$. This implies that we have a canonical identification $\Mod(\widehat{\cH}) = \widehat{E} \otimes \Mod(\widehat{\cK})$ via the partial trivialization map $\kappa_{\mu_0}$ explained in the appendix. In particular, we can view $\rL^\infty(\Mod(\widehat{\cK}))$ as a subalgebra of  $\rL^\infty(\Mod(\widehat{\cH}))$.

Note that if $\cL$ is another affine subspace of $\cH$ such that $\cL^0=\cK^0$ ($\cL$ and $\cK$ are parallel), then we have $\rL^\infty(\widehat{\cL})=\rL^\infty(\widehat{\cK})$ as subalgebras of $\rL^\infty(\widehat{\cH})$. However, it is important to see that $\rL^\infty(\Mod(\widehat{\cK})) \neq \rL^\infty(\Mod(\widehat{\cL}))$ as subalgebras of $\rL^\infty(\Mod(\widehat{\cH}))$. This means that $\rL^\infty(\widehat{\cK})$ depends only on the tangent space $\cK^0$ while $\rL^\infty(\Mod(\widehat{\cK}))$ really depends on the affine subspace $\cK$ itself.

In the next two proposition, we describe how all these subalgebras interact with each others and with the action of $\Isom(\cH)$. We denote by $\Mod(\widehat{\cH})$ the modular bundle of $\widehat{\cH}$ (see Section \ref{the modular bundle}) and we denote by $\rL^\infty(\Mod(\widehat{\cH}))^G$ the fixed point subalgebra of a given subgroup $G \subset \Isom(\cH)$ acting on $\Mod(\widehat{\cH})$ via the Maharam extension.

\begin{prop} \label{fixed point Maharam}
Let $\cH$ be an affine Hilbert space. 
\begin{enumerate}[ \rm (i)]
\item Let $\cK \subset \cH$ be a closed subspace and let $G_{\cK}=\{ g \in \Isom(\cH) \mid gx=x \text{ for all } x \in \cK \}$. If $\cK$ has infinite codimension, then $$\rL^\infty(\Mod(\widehat{\cH}))^{G_{\cK}}=\rL^\infty(\Mod(\widehat{\cK})).$$
\item Let $E \subset \cH^0$ be a closed subspace. Let $H_{E}=\{ g \in \Isom(\cH) \mid gx-x \in E^{\perp} \text{ for all } x \in \cH \}$. If $E$ has infinite codimension, then 
$$\rL^\infty(\Mod(\widehat{\cH}))^{H_{E}}=\rL^\infty(\widehat{\cL})$$
where $\cL=\cH/E^{\perp}$.
\item Let $E \subset \cH^0$ be a closed subspace and let $T_E=\{ j_\xi \mid \xi \in E\}$ be the associated translation group. If $E$ has infinite dimension, then
$$\rL^\infty(\Mod(\widehat{\cH}))^{T_{E}}=\rL^\infty(\widehat{\cL})$$
where $\cL=\cH/E$. 
\end{enumerate}
\end{prop}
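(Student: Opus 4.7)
Proof plan. All three parts will be proved by a common strategy: apply the partial trivialization
$$\Mod(\widehat{\cH}) = \widehat{E'} \otimes \Mod(\widehat{\cK})$$
of the modular bundle (valid whenever $E' \subset \cH^0$ is a closed linear subspace and $\cK$ is an affine subspace of $\cH$ with $(\cK^0)^\perp = E'$), choosing the decomposition so that the relevant subgroup acts in a controlled way, and then exploit ergodicity of the induced action on the $\widehat{E'}$ factor to compute the fixed algebra.

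For part (i), I would set $E := (\cK^0)^\perp$, which is infinite-dimensional by hypothesis, and use $\cH = \cK \times E$. Every $g \in G_{\cK}$ fixes $\cK$ pointwise, so $\widehat{g}_* \mu_x = \mu_x$ for each $x \in \cK$; the Radon--Nikodym cocycle is therefore trivial and $G_{\cK}$ acts trivially on the $\Mod(\widehat{\cK})$-factor. Meanwhile, $g \mapsto g^0|_E$ identifies $G_\cK$ with $\mathcal{O}(E)$, and its action on the $\widehat{E}$-factor is the standard pmp Gaussian action of $\mathcal{O}(E)$ on $(\widehat{E},\mu_0)$. Since the defining representation of $\mathcal{O}(E)$ on the infinite-dimensional $E$ is irreducible, hence weakly mixing, the classical theory of pmp Gaussian actions gives $\rL^\infty(\widehat{E})^{\mathcal{O}(E)} = \C 1$, and we obtain
$$\rL^\infty(\Mod(\widehat{\cH}))^{G_{\cK}} = \C 1 \otimes \rL^\infty(\Mod(\widehat{\cK})) = \rL^\infty(\Mod(\widehat{\cK})).$$

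For part (iii), I would pick any closed affine subspace $\cK \subset \cH$ with $\cK^0 = E^\perp$, so that $\cH = E \times \cK$; since $\cK$ is parallel to $\cL = \cH/E$, the subalgebra $\rL^\infty(\widehat{\cK})$ coincides with $\rL^\infty(\widehat{\cL})$ inside $\rL^\infty(\widehat{\cH})$. The translation group $T_E$ acts by the nonsingular Gaussian translation action on $\widehat{E}$ and trivially on $\widehat{\cK}$, and its Radon--Nikodym cocycle depends only on $\omega_E$, so under the partial trivialization the Maharam extension has the form
$$\widetilde{j}_\xi(\omega_E,\omega_{\cK},t) = (\omega_E + \xi,\; \omega_{\cK},\; t + c_\xi(\omega_E))$$
with $c_\xi(\omega_E) = -\tfrac12\|\xi\|^2 + \langle \omega_E, \xi\rangle$. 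For each fixed $\omega_{\cK}$, a $T_E$-invariant $f$ thus restricts to an invariant function of the Maharam extension of the translation action of $E$ on $\widehat{E}$. The main obstacle will be the classical fact that, for infinite-dimensional $E$, this nonsingular translation action is of type $\III_1$ and in particular has ergodic Maharam extension; this follows from a direct computation of the ratio set (the distribution of $c_\xi$ under $\mu_0$ is $\cN(-\tfrac12\|\xi\|^2,\|\xi\|^2)$, so by varying $\xi \in E$ the log-cocycle sweeps out every real number as an essential value). Granted this ergodicity, $f$ depends only on $\omega_\cK$, yielding $\rL^\infty(\Mod(\widehat{\cH}))^{T_E} = \rL^\infty(\widehat{\cK}) = \rL^\infty(\widehat{\cL})$.

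Finally, part (ii) follows from part (iii) by sandwiching $H_E$ between $T_{E^\perp}$ and the trivial action on $\cL$. Since $E$ has infinite codimension, $E^\perp$ is infinite-dimensional, and part (iii) applied to $E^\perp$ gives $\rL^\infty(\Mod(\widehat{\cH}))^{T_{E^\perp}} = \rL^\infty(\widehat{\cH/E^\perp}) = \rL^\infty(\widehat{\cL})$. Since $T_{E^\perp} \subset H_E$ we immediately obtain $\rL^\infty(\Mod(\widehat{\cH}))^{H_E} \subset \rL^\infty(\widehat{\cL})$. For the reverse inclusion, by definition $gx - x \in E^\perp$ for every $g \in H_E$ and every $x \in \cH$, so $H_E$ acts trivially on the quotient $\cL = \cH/E^\perp$ and hence on $\widehat{\cL}$, giving $\rL^\infty(\widehat{\cL}) \subset \rL^\infty(\Mod(\widehat{\cH}))^{H_E}$.
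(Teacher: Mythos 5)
Your part (i) is essentially the paper's own proof: decompose $\Mod(\widehat{\cH})=\widehat{E}\otimes\Mod(\widehat{\cK})$ with $E=(\cK^0)^\perp$, identify $G_{\cK}$ with $\mathcal{O}(E)$ acting only on the first factor, and use weak mixing of the pmp Gaussian action of $\mathcal{O}(E)$. For (ii) and (iii), however, you reverse the paper's logical order, and this is a genuinely different route. The paper proves (ii) directly from (i): it applies (i) to two parallel subspaces $\cK$ and $\cF=\cK+\xi$ with $\cK^0=E$, places the invariant function in $\rL^\infty(\Mod(\widehat{\cK}))\cap\rL^\infty(\Mod(\widehat{\cF}))$, and computes that this intersection equals $\rL^\infty(\widehat{\cL})$ by comparing the two trivializations $\pi_{\mu_x}$ and $\pi_{\mu_y}$. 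It then deduces (iii) from (ii) via a finite-dimensional approximation: on $\R^n$ the translation group and the full isometry group have the same (Lebesgue) invariant measure and both act ergodically, so $T_F$-invariance upgrades to $H_{F^\perp}$-invariance for finite-dimensional $F\subset E$, and a density argument finishes. You instead prove (iii) directly, using ergodicity of the Maharam extension of the Gaussian translation action of an infinite-dimensional $E$ (i.e.\ that this action is of type $\III_1$), and then deduce (ii) from (iii) via the sandwich $T_{E^\perp}\subset H_E$ together with the trivial action of $H_E$ on $\cL$. The hypotheses match up correctly in both directions ($E$ of infinite codimension in (ii) is exactly what makes $E^\perp$ infinite-dimensional in (iii)), so there is no circularity. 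The trade-off: the paper's route needs only finite-dimensional ergodicity plus the two-trivialization intersection computation, while yours needs the stronger input that the infinite-dimensional Gaussian translation action has ergodic Maharam extension, but in exchange avoids the intersection computation entirely.

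The one step you should shore up is precisely that input. Your justification --- ``the distribution of $c_\xi$ under $\mu_0$ is $\cN(-\tfrac12\|\xi\|^2,\|\xi\|^2)$, so by varying $\xi$ the log-cocycle sweeps out every real number as an essential value'' --- is not yet an argument: knowing the law of $c_\xi$ does not by itself produce essential values, and the implication ``all of $\R$ consists of essential values $\Rightarrow$ ergodic Maharam extension'' also requires ergodicity of the base translation action, which you never establish. Both points are true and provable: ergodicity of the base action follows from the density of $\operatorname{span}\{\mu_\xi\mid\xi\in E\}$ in $\rL^1(\widehat{E})$ (Proposition \ref{density}); and for the essential values one should use the product structure of $\widehat{E}$ --- approximate a given positive-measure set $A$ by a set $A_N$ depending on finitely many coordinates, then translate by $\xi=\lambda e_{N+1}$ in an independent direction, so that $j_\xi$ fixes $A_N$ while $c_\xi$ is independent of $A_N$ and charges any prescribed interval. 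You should also restrict to a countable dense subgroup of $E$ before running the Fubini argument ``for a.e.\ fixed $\omega_{\cK}$'', since $T_E$ itself is not locally compact. With these repairs the proof is complete.
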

\begin{proof}
$(\rm i)$ Let $E=(\cK^{0})^\perp$. Then we can write $\Mod(\widehat{\cH})=\widehat{E} \otimes \Mod(\widehat{\cK})$ and the action of $G_{\cK}$ on $\Mod(\widehat{\cH})$ can be identified with the action of $\mathcal{O}(E)$ acting only on the first coordinate. Since $E$ has infinite dimension, it is well-known (see \cite{PS12}) that the probability measure preserving action $\mathcal{O}(E) \curvearrowright (\widehat{E},\mu_0)$ is weakly mixing, hence ergodic. The conclusion follows.

$(\rm ii)$ Take $f \in \rL^\infty(\Mod(\widehat{\cH}))^{H_E}$. Let $\cK \subset \cH$ be any subspace such that $\cK^0=E$. Then $G_\cK \subset H_E$. Thus $f \in \rL^\infty(\Mod(\widehat{\cK}))$ by $(\rm i)$. Take $x \in \cK$ and a nonzero $\xi \in E^{\perp}$ and let $\cF=\cK+\xi$ and $y=x+\xi$. Then we also have $f \in \rL^\infty(\Mod(\widehat{\cF}))$. By trivializing $\Mod(\widehat{\cH})$ with repsect to $\mu_x$ and $\mu_y$ (see Section \ref{the modular bundle}), we get that $$f \circ \pi_{\mu_x}^{-1} \in \rL^\infty(\widehat{\cK} \otimes \R^*_+)=\rL^\infty(\widehat{\cL} \otimes \R^*_+)$$ and $$f \circ \pi_{\mu_y}^{-1} \in \rL^\infty(\widehat{\cF} \otimes \R^*_+)=\rL^\infty(\widehat{\cL} \otimes \R^*_+).$$ But it is easily seen, by using the formula for $\frac{\rd \mu_y}{\rd \mu_x}$, that $\theta=\pi_{\mu_x} \circ \pi_{\mu_y}^{-1} \in \Aut( \widehat{\cH} \otimes \R^*_+)$ satisfies 
$$\theta(\rL^\infty( \widehat{\cL} \otimes \R^*_+)) \cap \rL^\infty( \widehat{\cL} \otimes \R^*_+)=\rL^\infty(\widehat{\cL}).$$
We conclude that $f \circ \pi_{\mu_x}^{-1} \in \rL^\infty(\widehat{\cL})$ hence $f \in \rL^\infty(\widehat{\cL})$.

$(\rm iii)$ On a finite-dimensional euclidian space, the whole isometry group and the subgroup of translations both act ergodically and they both preserve the Lebesgue measure. From this fact, it follows that if $F \subset E$ is a finite-dimensional subspace, then every $T_F$-invariant function on $\Mod(\widehat{H})$ is also $H_{F^\perp}$-invariant. Since the union of all $H_{F^\perp} \subset H_E$ for finite-dimensional $F \subset E$ is dense in $H_{E^\perp}$, this implies that every $T_E$-invariant function on $\Mod(\widehat{H})$ is also $H_{E^\perp}$-invariant and the conclusion follows from $(\rm ii)$. 
\end{proof}

\begin{prop} \label{intersection maharam extension}
Let $\cH$ be an affine Hilbert space and $(\cH_i)_{i \in I}$ a family of affine subspaces. Define 
$$\cK =\bigcap_{i \in I} \cH_i, \quad E= \bigcap_{i \in I} \cH_i^{0} \quad \text{and} \quad \cL=\cH/E^{\perp}.$$
\begin{enumerate}[ \rm (i)]
\item If $\cK \neq \emptyset$, then $E=\cK^0$ and we have 
$$ \rL^\infty(\widehat{\cL})=\rL^\infty(\widehat{\cK})=\bigcap_{i \in I} \rL^\infty(\widehat{\cH_i}).$$
$$ \rL^\infty(\Mod(\widehat{\cK}))=\bigcap_{i \in I} \rL^\infty(\Mod(\widehat{\cH_i})).$$
\item If $\cK=\emptyset$, then we have
$$ \rL^\infty( \widehat{\cL} )=  \bigcap_{i \in I} \rL^\infty(\widehat{\cH_i})= \bigcap_{i \in I} \rL^\infty(\Mod(\widehat{\cH_i})).$$
\end{enumerate}
\end{prop}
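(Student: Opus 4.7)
The plan is to realize each $\rL^\infty(\widehat{\cH_i})$ and each $\rL^\infty(\Mod(\widehat{\cH_i}))$ as a fixed-point subalgebra of a natural closed subgroup of $\Isom(\cH)$ acting on $\Mod(\widehat{\cH})$ via Proposition \ref{fixed point Maharam}, and then identify the intersection using the general principle $\bigcap_i A^{G_i} = A^{\overline{\langle \bigcup_i G_i \rangle}}$ together with Proposition \ref{groups and decreasing subspaces} to compute the generated closed subgroup. Concretely, setting $E_i := (\cH_i^0)^{\perp}$, Proposition \ref{fixed point Maharam}(iii) (applied to $\cH/E_i$, which is parallel to $\cH_i$ and thus has the same plain Gaussian algebra) gives $\rL^\infty(\widehat{\cH_i}) = \rL^\infty(\Mod(\widehat{\cH}))^{T_{E_i}}$, while Proposition \ref{fixed point Maharam}(i) gives $\rL^\infty(\Mod(\widehat{\cH_i})) = \rL^\infty(\Mod(\widehat{\cH}))^{G_{\cH_i}}$. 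Both identifications require the relevant subspace to be infinite-dimensional; the degenerate finite-dimensional cases are discussed at the end.

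For the intersection of the plain Gaussian algebras, I would use that all the $T_{E_i}$ sit inside the abelian translation group $T_{\cH^0}$, so the closed subgroup they generate is simply $T_F$ with
$$F = \overline{\sum_i E_i} = \Bigl(\bigcap_i E_i^{\perp}\Bigr)^{\perp} = \Bigl(\bigcap_i \cH_i^0\Bigr)^{\perp} = E^{\perp}.$$
A further application of Proposition \ref{fixed point Maharam}(iii) then gives, uniformly in cases $(\rm i)$ and $(\rm ii)$,
$$\bigcap_i \rL^\infty(\widehat{\cH_i}) = \rL^\infty(\Mod(\widehat{\cH}))^{T_{E^{\perp}}} = \rL^\infty(\widehat{\cH/E^{\perp}}) = \rL^\infty(\widehat{\cL}).$$

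For the intersection of the Maharam-extension algebras, I would first reduce to a decreasing net by replacing $I$ by its directed set of finite subsets $F$ and $\cH_i$ by $\cF_F := \bigcap_{i \in F} \cH_i$; iterated use of Proposition \ref{groups and two subspaces} shows that each $G_{\cF_F}$ is contained in the closed subgroup generated by $\{G_{\cH_i}\}_{i \in F}$, so the closed subgroup $G$ generated by $\bigcup_i G_{\cH_i}$ equals the one generated by $\bigcup_F G_{\cF_F}$. Proposition \ref{groups and decreasing subspaces} then identifies $G$ with $G_{\cK}$ if $\cK \neq \emptyset$ and with $H_E$ if $\cK = \emptyset$. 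Substituting this into $\bigcap_i \rL^\infty(\Mod(\widehat{\cH_i})) = \rL^\infty(\Mod(\widehat{\cH}))^{G}$ and invoking Proposition \ref{fixed point Maharam}(i) or (ii) respectively yields $\rL^\infty(\Mod(\widehat{\cK}))$ in case $(\rm i)$ and $\rL^\infty(\widehat{\cL})$ in case $(\rm ii)$, as required.

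Finally, in case $(\rm i)$, fixing any $x \in \cK$ gives $\cH_i = x + \cH_i^0$ for every $i$ and hence $\cK = x + E$, so $E = \cK^0$; thus $\cL = \cH/E^{\perp}$ and $\cK$ share the tangent space $E$, are parallel, and (as $\rL^\infty(\widehat{\cdot})$ depends only on the tangent space) $\rL^\infty(\widehat{\cL}) = \rL^\infty(\widehat{\cK})$. The main obstacle is purely technical: Propositions \ref{fixed point Maharam} and \ref{groups and two subspaces} carry infinite-dimensionality hypotheses on various auxiliary subspaces which may fail when $E^{\perp}$ (equivalently, the codimension of $\cK$) is finite. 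In that case the argument must be supplemented by either tensoring with an auxiliary infinite-dimensional linear Hilbert space to land in the non-degenerate regime, or by an elementary direct verification, since the finite-dimensional residual problem is essentially trivial.
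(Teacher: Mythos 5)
Your proposal follows essentially the same route as the paper: both identify the relevant algebras as fixed-point subalgebras via Proposition \ref{fixed point Maharam}, compute the closed subgroup generated by the $G_{\cH_i}$ by reducing to Propositions \ref{groups and two subspaces} and \ref{groups and decreasing subspaces}, and dispose of the finite-(co)dimension degeneracies by enlarging the ambient space. The only (harmless) variation is that you treat the intersection $\bigcap_i \rL^\infty(\widehat{\cH_i})$ explicitly via the translation groups $T_{(\cH_i^0)^\perp}$ and Proposition \ref{fixed point Maharam}(iii), whereas the paper deduces it implicitly from the Maharam-level computation.
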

\begin{proof}
Up to embedding $\cH$ into a larger affine Hilbert space, we may assume that $F=\bigcap_{i \in I} (\cH_i^0)^\perp$ has infinite dimension. In particular, $E$ has infinite codimension. Let $G$ be the closed subgroup of $\Isom(\cH)$ generated by $\bigcup_i G_{\cH_i}$. We claim that $G=G_\cK$ if $\cK\neq \emptyset$ and $G=H_{E^{\perp}}$ if $\cK=\emptyset$. Indeed, it is enough to show this when $I=\{1,2\}$ or when $(\cH_i)_{i \in I}$ is a decreasing net. The first case is given by Proposition \ref{groups and two subspaces} and the second case by Proposition \ref{groups and decreasing subspaces}. Now, every element of $\bigcap_{i \in I} \rL^\infty(\Mod(\widehat{\cH_i}))$ is fixed by $G_i$ for all $i \in I$, hence by $G$. Thus, the conclusion follows from Proposition \ref{fixed point Maharam}.
\end{proof}

\section{Dissipativity}
For the notions of dissipative and recurrent nonsingular actions, we refer the reader to Section \ref{appendix dissipative}.

\begin{prop} \label{diss scale} 
Let $\alpha : G \curvearrowright \cH$ be an affine isometric action. There exists $t_{\rm diss}(\alpha) \in [0,+\infty]$ such that the action $\widehat{\alpha}^t$ is recurrent for all $t < t_{\rm diss}(\alpha)$ and dissipative for all $t > t_{\rm diss}(\alpha)$. 
\end{prop}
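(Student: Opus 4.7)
The plan is to use the rotation trick of Remark \ref{conjugacy different gaussian} to show that recurrence of $\widehat{\alpha}^t$ is downward-closed in $t$ and dissipativity is upward-closed; the critical value $t_{\rm diss}(\alpha)$ then separates the two regimes. The key input is the $G$-equivariant nonsingular conjugacy
$$ \widehat{\alpha}^r \otimes \widehat{\alpha}^0 \;\cong\; \widehat{\alpha}^t \otimes \widehat{\alpha}^s $$
valid whenever $r^2 = t^2 + s^2$, combined with the crucial fact that the factor $\widehat{\alpha}^0$ is the classical \emph{probability measure preserving} Gaussian action associated to the linear part $\alpha^0$.

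Given this, I would invoke four elementary principles about recurrence and dissipativity under products and factors: (a) the tensor product of a recurrent action with a pmp action is recurrent (the Radon--Nikodym sums are unchanged); (b) the first factor of a recurrent product $\sigma \otimes \tau$ is recurrent (a positive measure set $A \subset X$ produces $A \times Y$ of positive measure in the product, and the $\tau$-component stays inside $Y$); (c) the tensor product of a dissipative action with any nonsingular action is dissipative (a fundamental domain in the first factor times the whole second space is a fundamental domain for the product); (d) if $\sigma \otimes \tau$ is dissipative and $\tau$ is pmp, then $\sigma$ is dissipative (again via the Radon--Nikodym criterion). Now fix $0 < t < r$ and write $r^2 = t^2 + s^2$ with $s > 0$. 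If $\widehat{\alpha}^r$ is recurrent, then (a) makes $\widehat{\alpha}^r \otimes \widehat{\alpha}^0$ recurrent, the conjugacy transports this to $\widehat{\alpha}^t \otimes \widehat{\alpha}^s$, and (b) descends it to $\widehat{\alpha}^t$. Conversely, if $\widehat{\alpha}^t$ is dissipative, then (c) makes $\widehat{\alpha}^t \otimes \widehat{\alpha}^s$ dissipative, the conjugacy transports this to $\widehat{\alpha}^r \otimes \widehat{\alpha}^0$, and (d) descends it to $\widehat{\alpha}^r$.

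Set $t_{\rm diss}(\alpha) := \sup\{t > 0 \mid \widehat{\alpha}^t \text{ is recurrent}\}$ with the convention $\sup \emptyset := 0$. The downward monotonicity of recurrence immediately gives that $\widehat{\alpha}^t$ is recurrent for all $t < t_{\rm diss}(\alpha)$. The main obstacle is the complementary half: for $t > t_{\rm diss}(\alpha)$ the definition only tells us $\widehat{\alpha}^t$ is \emph{not} recurrent, whereas its Hopf decomposition could in principle split into a nontrivial conservative part and a nontrivial dissipative part, so that upward monotonicity via (c)--(d) cannot be directly applied. To close the argument one needs the dichotomy ``not recurrent implies dissipative'' for the Gaussian actions $\widehat{\alpha}^t$, which I would expect to follow from a zero--one law: the conservative part of $\widehat{\alpha}^t$ is $G$-invariant, and the additional symmetry built into the Gaussian construction (in particular, the ergodic action of the orthogonal group of the tangent space on the Gaussian measure) should force this invariant set to be either null or conull. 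Once this dichotomy is in hand, picking any $s \in (t_{\rm diss}(\alpha), t)$ yields $\widehat{\alpha}^s$ dissipative, and upward monotonicity gives $\widehat{\alpha}^t$ dissipative, completing the proof.
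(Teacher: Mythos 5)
You set up the right framework: the rotation-trick conjugacy $\widehat{\alpha}^r\otimes\widehat{\alpha}^0\cong\widehat{\alpha}^t\otimes\widehat{\alpha}^s$ together with your principles (a)--(d) does give that recurrence is downward-closed and dissipativity is upward-closed in $t$, and you correctly isolate the crux: these two monotonicities alone leave open the possibility of a whole interval of $t$'s for which $\widehat{\alpha}^t$ has a nontrivial Hopf decomposition (neither recurrent nor dissipative), so that $\sup\{t:\text{recurrent}\}$ and $\inf\{t:\text{dissipative}\}$ could a priori differ. The problem is that your proposed way of closing this gap does not work. A zero--one law of the kind you invoke would require the conservative part of $\widehat{\alpha}^t$ to be invariant under some ergodic group of symmetries, but the conservative/dissipative decomposition is canonical only for the $G$-action, hence invariant only under automorphisms that \emph{commute} with $\alpha^t(G)$; neither $\mathcal{O}(\cH^0)$ nor the translation group of $\cH^0$ commutes with an affine isometric action in general, so there is no reason for the conservative part to be null or conull. (Indeed, the paper never proves a pointwise dichotomy at a fixed $t$, and nothing is claimed at $t=t_{\rm diss}$ itself.)

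What is actually needed, and what the paper proves, is a \emph{cross-scale} implication: if the dissipative part $D_t$ of $\widehat{\alpha}^t$ is nonzero for some $t<r$, then $\widehat{\alpha}^r$ is \emph{entirely} dissipative. The mechanism is the one ingredient missing from your proposal. Writing $r=1$, $t=\cos\theta$, $s=\sin\theta$, one has $D_t\otimes\widehat{\cH}^s\subset\widehat{R}_\theta(D\otimes\widehat{\cH}^0)$ where $D$ is the dissipative part of $\widehat{\alpha}$. The set $D\otimes\widehat{\cH}^0$ is trivially invariant under the translations $j_{(0,\xi)}$ acting only in the second coordinate (because that factor is the whole space), and conjugating these through $R_\theta$ produces the translations $j_{(-s\xi,t\xi)}$, which move the \emph{first} coordinate. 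Hence every translate $\widehat{j}_{-s\xi}(D_t)\otimes\widehat{\cH}^s$ lies in $\widehat{R}_\theta(D\otimes\widehat{\cH}^0)$, and since the translation action of $\cH^0$ on $\widehat{\cH}^t$ is ergodic (Proposition \ref{fixed point Maharam}), $D_t\neq 0$ forces $D=\widehat{\cH}$. With this implication in hand, taking $s\in\,]t_{\rm diss}(\alpha),t[$ (so $\widehat{\alpha}^s$ is not recurrent, i.e.\ $D_s\neq 0$) yields that $\widehat{\alpha}^t$ is dissipative, which is exactly the step your argument cannot reach. So the overall architecture of your proof is sound, but the decisive idea --- smearing the dissipative part by translations obtained from the \emph{second} coordinate via the rotation --- is absent, and the substitute you propose is not valid.
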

\begin{proof}
It is enough to show that if $\widehat{\alpha}$ is not dissipative then $\widehat{\alpha}^t$ is recurrent for all $t \in ]0,1[$. Write $t=\cos \theta $ and $s=\sin \theta$ for some $\theta \in ]0,\frac{\pi}{2}[$ and let $R_\theta : \cH \times \cH^0 \rightarrow \cH^t \times \cH^s$ be the isometry defined in Proposition \ref{Rotation trick}. Then it induces a nonsingular isomorphism $\widehat{R}_\theta : \widehat{\cH} \otimes \widehat{\cH}^0 \rightarrow \widehat{\cH}^t \otimes \widehat{\cH}^s$ which is $G$-equivariant with respect to the diagonal actions $\widehat{\alpha} \otimes \widehat{\alpha}^0$ and $\widehat{\alpha}^t \otimes \widehat{\alpha}^s$. Let $D \subset \widehat{\cH}$ be the dissipative part of $\widehat{\alpha}$. Then $D \otimes \widehat{\cH}^0$ is the dissipative part of $\widehat{\alpha} \otimes \widehat{\alpha}^0$. Let $D_t \subset \widehat{\cH}^t$ be the dissipative part of $\widehat{\alpha}^t$. Then $D_t \otimes \widehat{\cH}^s$ is contained in the dissipative part of $\widehat{\alpha}^t \otimes \widehat{\alpha}^s$. Thus $D_t \otimes \widehat{\cH}^s \subset \widehat{R}_\theta(D \otimes \widehat{\cH}^0)$. For every $(\xi,\eta) \in \cH^0 \times \cH^0$, let $j_{(\xi,\eta)}$ denote the translation action. We have $R_\theta \circ j_{(0,\xi)}=j_{(-s\xi,t\xi)} \circ R_\theta$. Therefore, since $D \otimes \widehat{\cH}^0$ is invariant by $\widehat{j}_{(0,\xi)}$, we get
$$ \widehat{j}_{-s\xi}(D_t) \otimes \widehat{\cH}^s=\widehat{j}_{(-s\xi,t\xi)}(D_t \otimes \widehat{\cH}^s) \subset \widehat{R}_\theta(D \otimes \widehat{\cH}^0).$$
Since this holds for all $\xi \in \cH^0$ and the translation action of $\cH^0$ on $\widehat{\cH}^t$ is ergodic (Proposition \ref{fixed point Maharam}), we conclude that if $D_t$ is nonzero, then necessarily $D=\widehat{\cH}$, i.e.\ if $\widehat{\alpha}^t$ is not recurrent, then $\widehat{\alpha}$ is dissipative.
\end{proof}

\begin{thm} \label{affine dissipativity}
Let $\alpha : G \curvearrowright \cH$ be an affine isometric action. Fix any pair of points $x,y \in \cH$ and choose any Haar measure on $G$. 
\begin{enumerate}[ \rm (i)]
\item \label{dissipativity integral} For all $t > 0$, the action $\widehat{\alpha}^t$ is dissipative (resp.\ recurrent) if and only if the following integral converges (resp.\ diverges) almost everywhere on $\widehat{\cH}^0$
$$ \int_G \exp \left(-\frac{1}{2}t^2\| gx-y\|^2 +t\langle \varphi, gx-y \rangle \right) \rd g, \quad \varphi \in \widehat{\cH}^0.$$

\item \label{estimation dissipativity exponent} We have $$ \sqrt{2 \delta(\alpha)} \leq  t_{\rm diss}(\alpha) \leq 2 \sqrt{2 \delta(\alpha)}.$$
\end{enumerate}
\end{thm}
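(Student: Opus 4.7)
The plan is to apply Hopf's dissipativity criterion to $\sigma := \widehat{\alpha}^t$ on $(\widehat{\cH^t}, \mu_{tx})$: $\sigma$ is dissipative (resp.\ recurrent) iff $J(\omega) := \int_G c_g(\omega)\,\rd g$ is finite (resp.\ infinite) for a.e.\ $\omega$, where $c_g := \rd(\sigma_g)_*\mu_{tx}/\rd\mu_{tx}$. Since $(\sigma_g)_*\mu_{tx} = \mu_{t(gx)}$, Proposition \ref{formula Radon-Nikodym derivatives}(i) provides the explicit exponential form of $c_g$. To recover the integrand stated in the theorem (which involves two distinct base points $x$ and $y$), I would use the chain rule
$$\frac{\rd \mu_{t(gx)}}{\rd \mu_{ty}}(\omega) = \frac{\rd \mu_{t(gx)}}{\rd \mu_{tx}}(\omega)\cdot \frac{\rd \mu_{tx}}{\rd \mu_{ty}}(\omega);$$
the second factor does not depend on $g$ and is a.e.\ positive and finite, so the two integrals over $g$ share the same convergence set. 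Setting $\varphi := \omega - ty$ via the canonical identification $\widehat{\cH^t} \cong \widehat{\cH^0}$ produces exactly the integrand in the statement.

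\textbf{Lower bound of (ii).} Assume $t^2 < 2\delta(\alpha)$; I prove that the integral diverges a.e. Set $B_R := \{g : \|gx - y\| \leq R\}$. Applying Jensen's inequality (concavity of $\log$) to the probability measure $\mathbf{1}_{B_R}\rd g/m(B_R)$ gives
$$\log \int_{B_R} \exp\Bigl(-\tfrac{t^2}{2}\|gx - y\|^2 + t\langle\varphi, gx - y\rangle\Bigr)\,\rd g \geq \log m(B_R) - \tfrac{t^2}{2}R^2 + t\langle\varphi, X_R\rangle,$$
where $X_R := m(B_R)^{-1}\int_{B_R}(gx - y)\,\rd g \in \cH^0$ satisfies $\|X_R\| \leq R$. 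By Lemma \ref{growth exponent} one can choose $R_n \to \infty$ with $\log m(B_{R_n}) \geq (\delta(\alpha) - \epsilon)R_n^2$ for any preset $\epsilon > 0$. Since $\langle\varphi, X_{R_n}\rangle$ is Gaussian with variance at most $R_n^2$, standard Gaussian tail bounds and Borel--Cantelli give $|\langle\varphi, X_{R_n}\rangle| \leq R_n\sqrt{2\log R_n}$ for all large $n$, almost surely. Choosing $\epsilon < \delta(\alpha) - t^2/2$ then forces the lower bound to tend to $+\infty$, establishing divergence a.e.

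\textbf{Upper bound of (ii).} Assume $t > 2\sqrt{2\delta(\alpha)}$. Set $\mu := \mu_{ty}$, $c_g := \rd(\sigma_g)_*\mu/\rd\mu$, and
$$h(\omega) := \int_G \sqrt{c_g(\omega)}\,\rd g.$$
By Proposition \ref{formula Radon-Nikodym derivatives}(iii), $\int \sqrt{c_g}\,\rd\mu = \langle \mu^{1/2}, (\sigma_g)_*\mu^{1/2}\rangle = e^{-t^2\|gy-y\|^2/8}$, so Fubini yields $\int h\,\rd\mu = \int_G e^{-t^2\|gy-y\|^2/8}\,\rd g < +\infty$. Hence $h$ is a.e.\ finite and (as an integral of strictly positive functions) a.e.\ positive. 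A cocycle computation exploiting the invariance of Haar measure yields the coboundary identity $\sqrt{c_g(\omega)} = h(\omega)/h(\sigma_{g^{-1}}\omega)$, from which one checks that $\nu := h^2\mu$ is $\sigma$-invariant. For each $M > 0$ set $A_M := \{h^2 \leq M\}$; then $\mathbf{1}_{A_M} \in L^1(\nu)$, and a Chebyshev-type bound using $h = \int \sqrt{c_g}\,\rd g$ gives
$$\int_G \mathbf{1}_{A_M}(\sigma_g\omega)\,\rd g \leq \sqrt{M} \quad \text{for a.e.\ }\omega.$$
If the conservative part $C$ of the $\sigma$-finite pmp system $(\widehat{\cH^t}, \nu, \sigma)$ had $\nu(A_M \cap C) > 0$, then Poincar\'e recurrence would force this integral to equal $+\infty$ on $A_M \cap C$, a contradiction. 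Hence $\nu(A_M \cap C) = 0$; letting $M \to \infty$ and using that $h < +\infty$ a.e.\ yields $\nu(C) = 0$, so $\sigma$ is dissipative with respect to $\nu$, and thus also with respect to $\mu$ since the measures are equivalent.

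\textbf{Main obstacle.} The delicate step is the upper bound: a direct moment estimate of the Hopf integral in (i) fails because its expectation is always $m(G) = +\infty$, and the Gaussian process $g \mapsto \langle\varphi, gx - y\rangle$ in the exponent is not pointwise bounded. The crucial substitution is to work with the square-root cocycle $\sqrt{c_g}$, whose integral over $G$ has expectation $\int_G e^{-t^2\|gy-y\|^2/8}\,\rd g$, integrable precisely at the threshold $t = 2\sqrt{2\delta(\alpha)}$. Converting this integrability into dissipativity of $\sigma$ via the invariant measure $\nu = h^2\mu$ and Poincar\'e recurrence on $\sigma$-finite pmp actions is the main technical step.
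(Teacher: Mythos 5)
Your part (i) coincides with the paper's: both reduce to the Hopf/fibered-Haar-measure criterion (Theorem \ref{dissipative trivialization}) combined with the Radon--Nikodym formula of Proposition \ref{formula Radon-Nikodym derivatives}, the change of base point from $x$ to $y$ being handled by the chain rule. For part (ii) you take genuinely different routes on both bounds, and both are sound. For the lower bound the paper has a one-line symmetrization: since the Gaussian measure class is invariant under $\varphi \mapsto -\varphi$, dissipativity gives a.e.\ convergence of the integral for both $\varphi$ and $-\varphi$, and summing with $e^{u}+e^{-u} \geq 2$ kills the random term, yielding $\int_G e^{-\frac{1}{2}t^2\|gx-y\|^2}\rd g < \infty$, i.e.\ $t^2 \geq 2\delta(\alpha)$. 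Your Jensen-plus-Borel--Cantelli argument proves the stronger a.e.\ divergence directly; it works provided you thin the subsequence $R_n$ coming from Lemma \ref{growth exponent} so that the Gaussian tail probabilities are summable. For the upper bound the paper simply integrates: recurrence gives $\int_G (\rd\mu_{tgx}/\rd\mu_{tx})^{1/2}\rd g = +\infty$ a.e.\ by Proposition \ref{L2 criterion recurrent}, and Tonelli together with Proposition \ref{formula Radon-Nikodym derivatives}(iii) converts this into divergence of $\int_G e^{-\frac{1}{8}t^2\|gx-x\|^2}\rd g$, whence $t^2 \leq 8\delta(\alpha)$. Your contrapositive argument in effect reproves the $s=\tfrac{1}{2}$ case of Proposition \ref{L2 criterion recurrent} by hand via the coboundary identity and the invariant measure $\nu = h^2\mu$; this is a classical and correct device. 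Two small points to tidy: the Chebyshev bound naturally controls $\int_G \mathbf{1}_{A_M}(\sigma_{g^{-1}}\omega)\rd g$ for the left Haar measure (equivalently $\sigma_g$ for the right Haar measure), which is harmless since the two Haar measures are equivalent; and rather than invoking a Poincar\'e recurrence theorem for locally compact group actions, you can sum the indicators into a single strictly positive function $f = \sum_M 2^{-M}(1+\sqrt{M})^{-1}\mathbf{1}_{A_M} \in \rL^1(\nu)$ satisfying $\int_G f(\sigma_{g^{-1}}\omega)\rd g < \infty$ a.e., and conclude dissipativity directly from Theorem \ref{dissipative trivialization}. What the paper's route buys is brevity and the reusable Proposition \ref{L2 criterion recurrent}; what yours buys is an explicit invariant measure in the dissipative regime and a quantitative rate of divergence in the recurrent one.
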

\begin{proof}

(\ref{dissipativity integral}) By Proposition \ref{formula Radon-Nikodym derivatives}, this integral is precisely 
$$ \frac{\rd ( \mu_x \circ T)}{\rd \mu_y}(\varphi + y)$$
where $T$ is the fibered Haar measure of $\widehat{\alpha}^t$. Thus, we just need to apply Theorem \ref{dissipative trivialization}.

(\ref{estimation dissipativity exponent}) We first prove the lower bound. Suppose that $\widehat{\alpha}^t$ is dissipative. Then for almost every $\varphi \in \widehat{\cH}^0$, we have
$$ \int_G \exp \left( -\frac{1}{2}t^2\|gx-y\|^2 +t\langle \varphi, gx-y \rangle \right) \rd g < + \infty$$
and also 
$$ \int_G \exp \left( -\frac{1}{2}t^2\|gx-y\|^2 +t\langle -\varphi, gx-y \rangle \right) \rd g < + \infty$$
By summing this two integrals and using the fact that $$e^{\langle t\varphi, gx-y \rangle} + e^{-t\langle \varphi, gx-y \rangle} \geq 2,$$ we get 
$$ \int_G \exp \left( -\frac{1}{2}t^2\|gx-y\|^2 \right) \rd g < + \infty$$ which means that $t^2 \geq 2 \delta(\alpha)$.

For the upper bound, suppose that $\widehat{\alpha}^t$ is recurrent. Then by Proposition \ref{L2 criterion recurrent} we have 
$$\int_G  \left( \frac{\rd \mu_{tgx}}{\rd  \mu_{tx}} \right)^{1/2} \rd g = +\infty.$$
By Proposition \ref{formula Radon-Nikodym derivatives} we have 
$$\int_{\widehat{\cH}} \left( \frac{\rd \mu_{tgx}}{\rd  \mu_{tx}} \right)^{1/2} \rd \mu_{tx} = e^{-\frac{1}{8}t^2\|gx-x\|^2}.$$
We conclude that
$$ \int_G e^{-\frac{1}{8}t^2\|gx-x\|^2} \rd g=+\infty,$$
which means that $t^2 \leq 8\delta(\alpha)$.

\end{proof}

\begin{prop} \label{dissipativity evanescent}
Let $\alpha : G \curvearrowright \cH$ be an affine isometric action. Suppose that $\alpha$ is evanescent. Then for every ergodic nonsingular action $\sigma : G \curvearrowright X$, the diagonal action $\widehat{\alpha} \otimes \sigma$ is either dissipative or recurrent.
\end{prop}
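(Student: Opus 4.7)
The plan is to show that the dissipative part $D \subset \widehat{\cH} \otimes X$ of $\widehat{\alpha} \otimes \sigma$ is either null or conull. Since $D$ is $G$-invariant and $\sigma$ is ergodic, it suffices to establish that $1_D \in \rL^\infty(X)$, that is, does not depend on the Gaussian coordinate. I would split into two cases according to whether $\alpha$ admits a fixed point.

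If $\alpha$ fixes some $x_0 \in \cH$, then $\widehat{\alpha}$ preserves the probability measure $\mu_{x_0}$, and I would invoke the standard fact that the dissipative part of a probability measure preserving action tensored with a nonsingular action $\sigma$ is the pmp space tensored with the dissipative part of $\sigma$. This gives $D = \widehat{\cH} \otimes D_\sigma$, which is trivial by ergodicity of $\sigma$.

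In the main case where $\alpha$ has no fixed point, I would use Proposition \ref{evanescent small subspaces} to produce a decreasing sequence $(\cH_n)$ of $\alpha$-invariant affine subspaces with $\bigcap_n \cH_n^0 = \{0\}$. Any point in $\bigcap_n \cH_n$ would be $\alpha$-fixed, hence $\bigcap_n \cH_n = \emptyset$. Since each $\cH_n$ is $\alpha$-invariant, $\alpha^0$ preserves both $\cH_n^0$ and $(\cH_n^0)^\perp$, yielding the $G$-equivariant splitting $\cH = \cH_n \times (\cH/\cH_n^0)$ with $\alpha = \alpha|_{\cH_n} \times (\alpha/\cH_n^0)$. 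The quotient factor fixes the coset of $\cH_n$, so it reduces to a linear representation and the associated Gaussian action $\pi_n$ on $\widehat{\cH/\cH_n^0}$ is pmp. Consequently $\widehat{\alpha} \otimes \sigma \cong (\widehat{\alpha|_{\cH_n}} \otimes \sigma) \otimes \pi_n$ and, applying the same pmp-tensor principle with pmp factor $\pi_n$, one gets $1_D \in \rL^\infty(\widehat{\cH_n} \otimes X)$ for every $n$. Intersecting over $n$ and invoking Proposition \ref{intersection maharam extension}.(ii), which here yields $\bigcap_n \rL^\infty(\widehat{\cH_n}) = \C$ because $\bigcap_n \cH_n = \emptyset$, forces $1_D \in \rL^\infty(X)$ as required.

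The main technical input is the pmp-tensor principle itself: that the dissipative part of $\beta \otimes \gamma$ with $\gamma$ a pmp action is $D_\beta \otimes Z$. The inclusion $D_\beta \otimes Z \subset D_{\beta \otimes \gamma}$ is immediate by extending fundamental domains, but recurrence on the complement $C_\beta \otimes Z$ requires a Hopf/Fubini-type argument combining conservativity of $\beta$ on $C_\beta$ with $\gamma$ being measure-preserving. A secondary point to verify is the commutation of countable intersection with tensor product, $\bigcap_n (\rL^\infty(\widehat{\cH_n}) \otimes \rL^\infty(X)) = (\bigcap_n \rL^\infty(\widehat{\cH_n})) \otimes \rL^\infty(X)$, which in the standard Borel setting follows by a slicing argument over almost every fiber in $X$.
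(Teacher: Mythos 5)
Your proof is correct and follows essentially the same route as the paper: split off the pmp Gaussian factor coming from $(\cH_n^0)^\perp$ for each invariant subspace, localize the Hopf decomposition of $\widehat{\alpha}\otimes\sigma$ into $\rL^\infty(\widehat{\cH_n}\otimes X)$ via the pmp-tensor principle, intersect using Proposition \ref{intersection maharam extension}, and finish by ergodicity of $\sigma$. The only cosmetic difference is your case split on whether $\alpha$ has a fixed point, which the paper avoids by intersecting over all nonempty invariant subspaces at once, since Proposition \ref{intersection maharam extension} treats the empty and nonempty intersection cases uniformly and yields $\C$ as soon as $\bigcap_n \cH_n^0=\{0\}$.
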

\begin{proof}
Let $\cK$ be a nonempty $\alpha$-invariant subspace of $\cH$ and wrtie $\cH=E \times \cK$ where $E=\cK^\perp$. Let $\pi=\alpha^0|_E$ and $\beta = \alpha|_\cK$. Let $A$ be the recurrent part of $\widehat{\alpha} \otimes \sigma$. Since $\widehat{\alpha} \otimes \sigma=\widehat{\pi} \otimes \widehat{\beta} \otimes \sigma$ acting diagonally on $\widehat{E} \otimes \widehat{\cK} \otimes X$ and since $\widehat{\pi}$ is measure preserving, we know that $A$ does not depend on the first coordinate, i.e.\  $1_A \in \rL^\infty(\widehat{\cK} \otimes X)$. Since this holds for every nonempty invariant subspace $\cK$ and $\alpha$ is evanescent, we conclude by Proposition \ref{intersection maharam extension} that $1_A \in \rL^\infty(X)$. But $\sigma$ is ergodic, hence $1_A=1$ or $0$.
\end{proof}

Let $\alpha : \Gamma \curvearrowright \cH$ be a proper faithful affine isometric action of a discrete group $\Gamma$. Take $x \in \cH$. Then $\Gamma_x=\{ g \in \Gamma \mid gx=x\}$ is a finite subgroup of $\Gamma$. Up to perturbing $x$, we may assume that $\Gamma_x=\{1\}$. Then a natural way to obtain a fundamental domain of the action $\alpha$ is to use the \emph{Dirichlet domain}
$$ \mathcal{D}:= \{ y \in \cH \mid \forall g \in \Gamma, \; \| y -x \| < \| y - gx \| \}.$$
Indeed, the sets $(g \mathcal{D})_{g \in \Gamma}$ are pairwise disjoint and thanks to the properness of the action, we have $$\cH=\bigcup_{g \in \Gamma} g \overline{\mathcal{D}}.$$
We use this idea to construct a fundamental domain of the dissipative part of the Gaussian action $\widehat{\alpha}$. However, the Dirichlet Domain may be a null-set at the Gaussian level. To give a precise statement, we introduce the following notation 
$$ \| \omega -x \| \leq \| \omega -y \| \; \Leftrightarrow \; \langle \omega -\frac{x+y}{2}, y-x \rangle \leq 0 $$
for all $x,y \in \cH$ and $\omega \in \widehat{\cH}$. Note that the quantity $\| \omega -x \| $ does not make sense on its own but the notation $\| \omega -x \| \leq \| \omega -y \|$ is very intuitive in view of Proposition \ref{quadratic buseman}. It is equivalent to $\frac{\rd \mu_x}{\rd \mu_y}(\omega) \geq 1$. Note also that when $x \neq y$, we have 
$$\{ \omega \in \widehat{\cH} \mid  \| \omega -x \| \leq \| \omega -y \| \text{ and }  \| \omega -x \| \geq \| \omega -y \| \} = 0$$
because the distribution of $\frac{\rd \mu_x}{\rd \mu_y}$ is non-atomic.
\begin{thm} \label{dirichlet domain}
Let $\alpha : \Gamma \curvearrowright \cH$ be a proper faithful affine isometric action of a discrete group $\Gamma$. Take $x \in \cH$ such that $\{ g \in \Gamma \mid gx=x\}=\{1\}$. Then a fundamental domain of the dissipative part of $\widehat{\alpha}$ is given by the Gaussian Dirichlet domain
$$  \widehat{\mathcal{D}}:= \{ \omega \in \widehat{\cH} \mid \forall g \in \Gamma, \; \| \omega -x \| \leq \| \omega - gx \| \}.$$
The recurrent part is given by
$$  \{ \omega \in \widehat{\cH} \mid \;  \| \omega - gx \| \leq \| \omega -x \|  \text{ for infinitely many } g \in \Gamma \}.$$
\end{thm}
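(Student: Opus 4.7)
The plan is to show that $\widehat{\mathcal{D}}$ and its $\Gamma$-translates are pairwise disjoint, that their union $A := \bigcup_{g} \widehat{g}(\widehat{\mathcal{D}})$ coincides with the dissipative part $D$ of $\widehat{\alpha}$, and that the complement $A^c$ coincides with the set $R$ from the statement. Pairwise disjointness follows from the equivariance $\widehat{g}_* \mu_y = \mu_{gy}$, which yields $\frac{\rd \mu_{gy}}{\rd \mu_{gz}}(\widehat{g}\omega) = \frac{\rd \mu_y}{\rd \mu_z}(\omega)$. Combined with the definition $\|\omega - y\| \leq \|\omega - z\| \Leftrightarrow \frac{\rd \mu_y}{\rd \mu_z}(\omega) \geq 1$ introduced just before the theorem, this gives $\|\widehat{g}\omega - gy\| \leq \|\widehat{g}\omega - gz\|$ iff $\|\omega - y\| \leq \|\omega - z\|$, and consequently $\widehat{g}(\widehat{\mathcal{D}}) = \{\omega : \|\omega - gx\| \leq \|\omega - hx\| \text{ for all } h \in \Gamma\}$. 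For $g \neq h$ we have $gx \neq hx$ by faithfulness and $\Gamma_x = \{1\}$, so $\widehat{g}(\widehat{\mathcal{D}}) \cap \widehat{h}(\widehat{\mathcal{D}}) \subset \{\|\omega - gx\| = \|\omega - hx\|\}$, a null set by the non-atomicity remark preceding the theorem. Hence $\widehat{\mathcal{D}}$ is a fundamental domain for the action on $A$, and in particular $A \subset D$.

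To show $A^c \subset R$, take $\omega \in A^c$. Then for every $g$ some $h$ satisfies $\|\omega - hx\| < \|\omega - gx\|$, so $g \mapsto \|\omega - gx\|$ admits no smallest value on $\Gamma$ in the natural comparison order (which is a total order modulo null sets, thanks to its transitivity and the non-atomicity remark). Since $e$ produces the value $\|\omega - x\|$, the set $F_\omega := \{g : \|\omega - gx\| \leq \|\omega - x\|\}$ cannot be finite: otherwise a smallest element of $F_\omega$ would be the global minimum on $\Gamma$, since any $g \notin F_\omega$ satisfies $\|\omega - gx\| > \|\omega - x\| \geq \|\omega - g_0 x\|$ for any $g_0 \in F_\omega$. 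Hence $F_\omega$ is infinite, i.e., $\omega \in R$.

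For the reverse inclusion $D \subset A \cap R^c$, pick a fundamental domain $W$ of $D$. The change of variables $\mu_x(\widehat{g}W) = \int_W \frac{\rd \mu_{g^{-1}x}}{\rd \mu_x} \rd \mu_x$, together with $D = \bigsqcup_g \widehat{g}W$, gives
\[
\int_W \sum_{g \in \Gamma} \frac{\rd \mu_{gx}}{\rd \mu_x}(\omega) \, \rd \mu_x(\omega) = \mu_x(D) \leq 1,
\]
so $S(\omega) := \sum_g \frac{\rd \mu_{gx}}{\rd \mu_x}(\omega)$ is finite $\mu_x$-a.e.\ on $W$; the cocycle identity $S(\widehat{h}\omega) = S(\omega) / \frac{\rd \mu_{hx}}{\rd \mu_x}(\omega)$ shows that $\{S < \infty\}$ is $\Gamma$-invariant, so $S < \infty$ a.e.\ on $D$. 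Convergence of $S$ forces $\frac{\rd \mu_{gx}}{\rd \mu_x}(\omega) \to 0$ as $g \to \infty$, equivalently $\|\omega - gx\| > \|\omega - x\|$ for all but finitely many $g$; thus $D \subset R^c$ and the minimum of $g \mapsto \|\omega - gx\|$ is (generically uniquely) attained at some $g_0$, placing $\omega \in \widehat{g_0}(\widehat{\mathcal{D}}) \subset A$. Combining all inclusions yields $A = D$ and $R = D^c$, proving both assertions. The main delicate step is this localization of $S < \infty$ to the dissipative part: the global dissipativity criterion of Theorem \ref{affine dissipativity} does not by itself give a pointwise Hopf dichotomy, and it is the fundamental-domain integration above that upgrades it to the pointwise statement needed here.
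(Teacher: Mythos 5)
Your proof is correct and follows the paper's overall scheme --- pairwise disjointness of the translates $\widehat{g}\widehat{\mathcal{D}}$ via the null tie-sets, the inclusion $A=\bigcup_g\widehat{g}\widehat{\mathcal{D}}\subset D$, and the identification of $A^c$ with the set $R$ where infinitely many $g$ beat the identity --- but it closes the argument by a different route. The paper establishes $A^c\subset R$ through the explicit partition identity $\widehat{\mathcal{D}}_F=\bigsqcup_{g\in F}\widehat{g}\widehat{\mathcal{D}}$ (your ``no attained minimum forces $F_\omega$ to be infinite'' argument is the same content in order-theoretic language) and then concludes by the pointwise Hopf dichotomy: for $\omega\in R$ the sum $\sum_g\frac{\rd\mu_{gx}}{\rd\mu_x}(\omega)$ diverges, hence $\omega$ lies in the recurrent part, and $A\subset D$ together with $A^c\subset C$ forces equality. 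You instead prove $D\subset A\cap R^c$ directly, by integrating $S=\sum_g\frac{\rd\mu_{gx}}{\rd\mu_x}$ over a measurable fundamental domain of $D$ (which exists by essential freeness, Proposition \ref{faithful free}, combined with the structure theorem \ref{dissipative trivialization}) and propagating finiteness of $S$ by the cocycle identity --- note the harmless index slip there: the correct relation is $S(\widehat{h}\omega)=S(\omega)\big/\frac{\rd\mu_{h^{-1}x}}{\rd\mu_x}(\omega)$, which still makes $\{S<\infty\}$ invariant. What your route buys is that it does not presuppose the pointwise statement ``divergence of the return sum implies membership in the recurrent part,'' which the paper invokes without proof and which is not among the global criteria recorded in its appendix; the cost is the extra input that the dissipative part of a free action of a discrete group admits a fundamental domain. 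Both arguments are legitimate, and yours is arguably the more self-contained of the two.
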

\begin{proof}
Observe that for every $g \neq h \in \Gamma $, we have
$$ \widehat{g}  \widehat{\mathcal{D}} \cap \widehat{h}  \widehat{\mathcal{D}} \subset \{ \omega \in \widehat{\cH} \mid \| \omega -gx \|=\| \omega-hx\| \} =0$$
because $gx \neq hx$. This means that the sets $(\widehat{g}  \widehat{\mathcal{D}})_{g \in \Gamma}$ are pairwise disjoint. Now define
$$ A=\bigcup_{g \in \Gamma} \widehat{g} \widehat{\mathcal{D}}.$$
Then $A$ is contained in the dissipative part of $\widehat{\alpha}$ and it only remains to show that $A$ contains the dissipative part of $\widehat{\alpha}$. For every finite set $F \subset \Gamma$ let
$$  \widehat{\mathcal{D}}_F=\{ \omega \in \widehat{\cH} \mid \forall g \in \Gamma \setminus F, \; \| \omega -x \| \leq \| \omega - gx \| \}.$$
Note that for all $g \in F$, we have
$$ \widehat{g}  \widehat{\mathcal{D}}= \widehat{\mathcal{D}}_F \cap \{ \omega \in \widehat{\cH} \mid \forall h \in F \setminus \{g\}, \; \| \omega -gx \| \leq \| \omega - hx \| \}.$$
But since $F$ is finite, one can check algebraically that we have a partition
$$ \widehat{\cH} = \bigsqcup_{g \in F} \{ \omega \in \widehat{\cH}\mid \forall h \in F \setminus \{g\}, \; \| \omega -gx \| \leq \| \omega - hx \| \}.$$
From this two facts, we conclude that
$$  \widehat{\mathcal{D}}_F=\bigsqcup_{g \in F} \widehat{g}  \widehat{\mathcal{D}}.$$
This implies that
$$ A=\bigcup_{F \text{ finite subset of } \Gamma} \widehat{\mathcal{D}}_F.$$
Now, observe that for almost every $\omega \in \widehat{\cH} \setminus A$, we have
$$ \|\omega -x \| \geq \| \omega - gx \| \text{ for infinitely many } g \in \Gamma$$
or equivalently (Proposition \ref{quadratic buseman})
$$ \frac{\rd \mu_{gx}}{\rd \mu_x}(\omega) \geq 1 \text{ for infinitely many } g \in \Gamma$$
which implies that
$$ \sum_{ g \in \Gamma} \frac{\rd \mu_{gx}}{\rd \mu_x}(\omega) =+\infty.$$
We conclude that $\widehat{\cH} \setminus A$ is contained in the recurrent part of $\widehat{\alpha}$ as we wanted.
\end{proof}

\begin{prop} \label{translation action}
Let $(\lambda_n)_{n \in \N}$ be a sequence of positive real numbers and let $\Gamma$ be the subgroup of $\ell^2(\N)$ generated by $\lambda_n e_n, \; n \in \N$ where $(e_n)_{n \in \N}$ is the canonical orthonormal basis of $\ell^2(\N)$. Consider the translation action $\alpha$ of $\Gamma$ on $\cH=\ell^2(\N)$. Then for all $t > 0$, the action $\widehat{\alpha}^t$ is dissipative if and only if
$$ \sum_{n \in \N} \frac{1}{\lambda_n}e^{-\frac{1}{8}t^2\lambda_n^2} < +\infty$$
and recurrent if and only if
$$ \sum_{n \in \N} \frac{1}{\lambda_n}e^{-\frac{1}{8}t^2\lambda_n^2} = +\infty.$$
In particular, we have $t_{\rm diss}(\alpha)=2 \sqrt{2 \delta(\alpha)}$ and for $t=t_{\rm diss}(\alpha)$, the action $\widehat{\alpha}^t$ can be dissipative or recurrent depending on the sequence $(\lambda_n)_{n \in \N}$.
\end{prop}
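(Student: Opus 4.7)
The plan is to apply the integral dissipativity criterion of Theorem~\ref{affine dissipativity}(\ref{dissipativity integral}) and reduce the question to the convergence of a series of independent non-negative random variables. Since $\alpha^0 = \id$, choosing $x = y = 0$ and the counting measure on $\Gamma$, we obtain that $\widehat{\alpha}^t$ is dissipative (resp.\ recurrent) if and only if the sum
\[
S(\varphi) := \sum_{\gamma \in \Gamma} \exp\Bigl( -\tfrac{1}{2} t^2 \|\gamma\|^2 + t\langle \varphi, \gamma\rangle \Bigr)
\]
is almost everywhere finite (resp.\ infinite) in $\varphi \in \widehat{\cH}^0$. Writing $\gamma = \sum_n k_n \lambda_n e_n$ with $(k_n) \in \bigoplus_n \Z$ and using that the $k_n = 0$ term contributes $1$, this sum factorizes as
\[
S(\varphi) = \prod_n \theta_n(\varphi_n), \qquad \theta_n(y) := \sum_{k \in \Z} e^{-\frac{1}{2} t^2 k^2 \lambda_n^2 + t k \lambda_n y} \geq 1,
\]
and under $\mu_0$ the coordinates $\varphi_n$ are i.i.d.\ standard Gaussians.

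Set $X_n := \log \theta_n(\varphi_n) \geq 0$. Since each $\theta_n \geq 1$, the product $S(\varphi)$ is a.s.\ finite if and only if the independent non-negative series $\sum_n X_n$ converges a.s., and by the standard three-series criterion this is equivalent to $\sum_n \mathbf{E}[X_n \wedge 1] < \infty$. The heart of the proof is the two-sided comparison
\[
\mathbf{E}[X_n \wedge 1] \asymp \frac{1}{\lambda_n} \exp\Bigl( -\frac{t^2 \lambda_n^2}{8} \Bigr),
\]
with constants independent of $n$ for $t \lambda_n \geq 1$; when $t \lambda_n$ stays bounded, both $\mathbf{E}[X_n \wedge 1]$ and $\lambda_n^{-1} e^{-t^2 \lambda_n^2 / 8}$ are bounded away from $0$, so both series diverge in parallel and the action automatically lies in the recurrent regime.

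To prove this estimate, write $a_n := t \lambda_n$ and split on $A_n := \{|\varphi_n| \leq a_n/2\}$. On $A_n$ we bound $\log \theta_n \leq \theta_n - 1 = \sum_{k \neq 0} e^{-\frac{1}{2} a_n^2 k^2 + a_n k \varphi_n}$; the terms with $|k| \geq 2$ contribute $O(e^{-a_n^2})$, and for $k = \pm 1$, completing the square gives
\[
\mathbf{E}\bigl[ e^{-\frac{1}{2} a_n^2 + a_n \varphi_n} \mathbf{1}_{A_n} \bigr] = \frac{1}{\sqrt{2\pi}} \int_{-3 a_n/2}^{-a_n/2} e^{-u^2/2}\, du = O\Bigl( \frac{1}{a_n} e^{-a_n^2/8} \Bigr),
\]
yielding the required upper bound on the $A_n$ contribution. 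On $A_n^c$, the standard Gaussian tail estimate yields $P(A_n^c) \asymp \frac{1}{a_n} e^{-a_n^2/8}$; moreover, on $A_n^c$ the appropriate $k = \pm 1$ term of $\theta_n$ already exceeds $1$, so $X_n \wedge 1 \geq \log 2$ there, providing the matching lower bound.

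Combining, $\sum_n \mathbf{E}[X_n \wedge 1]$ converges precisely when $\sum_n \lambda_n^{-1} e^{-t^2 \lambda_n^2 / 8} < \infty$, proving both dichotomies. The resulting identity $t_{\mathrm{diss}}(\alpha)^2 = 8 \delta(\alpha)$ saturates the upper bound of Theorem~\ref{affine dissipativity}(\ref{estimation dissipativity exponent}). At the critical value $t = t_{\mathrm{diss}}(\alpha)$, the sequence $\lambda_n^2 = c \log n$ produces a divergent critical series (so $\widehat{\alpha}^{t_{\mathrm{diss}}}$ is recurrent), while $\lambda_n^2 = c(\log n + 2 \log \log n)$ yields convergence (so $\widehat{\alpha}^{t_{\mathrm{diss}}}$ is dissipative). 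The main technical difficulty lies in obtaining the sharp two-sided estimate on $\mathbf{E}[X_n \wedge 1]$: the variable $\theta_n(\varphi_n)$ has infinite mean, so linearity of expectation is unavailable, and one must carefully balance the atypical large values of $\theta_n$ against the Gaussian tail probability of $A_n^c$.
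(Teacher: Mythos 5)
Your proof is correct, but it takes a genuinely different route from the paper's. You invoke the general integral criterion of Theorem~\ref{affine dissipativity}(\ref{dissipativity integral}), factor the Poincar\'e-type sum over $\Gamma$ into an infinite product $\prod_n \theta_n(\varphi_n)$ of independent terms, and then run a three-series/Borel--Cantelli argument on $X_n = \log\theta_n(\varphi_n)$, with the two-sided estimate $\mathbf{E}[X_n \wedge 1] \asymp \lambda_n^{-1}e^{-t^2\lambda_n^2/8}$ as the technical core. The paper instead goes through the Gaussian Dirichlet domain of Theorem~\ref{dirichlet domain}: for a translation group the domain is the product of half-spaces $\{\langle\varphi,e_n\rangle \leq \tfrac12 t\lambda_n\}$, the recurrent part is the lim sup of the complementary independent events, and plain Borel--Cantelli reduces everything to $\sum_n Q(\tfrac12 t\lambda_n)$ with $Q(x)\sim \tfrac1x e^{-x^2/2}$ --- the same Gaussian tail that drives your estimate. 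The paper's route is shorter and more geometric, but Theorem~\ref{dirichlet domain} is stated for \emph{proper} faithful actions, which here requires $\inf_n\lambda_n>0$ (if $\lambda_n\to 0$ the generators are not metrically separated); your argument via the integral criterion sidesteps properness entirely and covers that degenerate case within the same framework (there both series trivially diverge). Your proof also supplies explicit critical sequences ($\lambda_n^2 = c\log n$ versus $\lambda_n^2 = c(\log n + 2\log\log n)$) witnessing both behaviours at $t=t_{\rm diss}(\alpha)$, which the paper only asserts. Two small points worth tightening: the identity $S(\varphi)=\prod_n\theta_n(\varphi_n)$ deserves a one-line monotone-convergence justification (the finitely supported integer sequences on the first $N$ coordinates exactly enumerate the expansion of $\prod_{n\leq N}\theta_n$), and the "bounded $t\lambda_n$" case is cleanest phrased as a dichotomy on the set $\{n : t\lambda_n < 1\}$ being finite or infinite rather than on the whole sequence staying bounded.
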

\begin{proof}
It is easy to check that the Gaussian Dirichlet domain of $\widehat{\alpha}^t$ centered at the origin is given by
$$ \left \{ \varphi \in \widehat{\cH} \mid \forall n \in \N, \; \langle \varphi ,  e_n \rangle \leq \frac{1}{2}t \lambda_n \right \}$$
while the recurrent part is given by
$$ C=\left \{ \varphi \in \widehat{\cH} \mid \langle \varphi ,  e_n \rangle \geq \frac{1}{2}t \lambda_n  \text{ for infinitely many } n \in \N \right \}.$$
Since $(\langle \cdot, e_n \rangle)_{n \in \N}$ is a family of independent random variables, the Borel-Cantelli lemma and its converse shows that the event $C$ has probability $0$ or $1$ according to the convergence/divergence of the series
$$ \sum_{n \in \N} \mathbb{P} \left( \langle \varphi ,  e_n \rangle \geq \frac{1}{2}t \lambda_n \right)=\sum_{n \in \N} Q\left(\frac{1}{2}t\lambda_n \right)$$
where
$$ Q(x) = \int_x^{+\infty} \frac{1}{\sqrt{2\pi}} e^{-\frac{s^2}{2}} \rd s, \; \text{ for } x \geq 0.$$
There is no exact formula for $Q(x)$ but when $x \to \infty$, it is known that $Q(x) \sim \frac{1}{x}e^{-\frac{x^2}{2}}$.
\end{proof}

\begin{prop} \label{dissipative nonconjugate}
Let $\alpha : G \curvearrowright \cH$ be an affine isometric action such that $ t_{\rm diss}(\alpha) < +\infty$. Then the actions $\widehat{\alpha}^t$ are pairwise non-conjugate for $t \leq t_{\rm diss}(\alpha)$.
\end{prop}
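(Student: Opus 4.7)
The plan is to argue by contradiction using the rotation identity of Remark \ref{conjugacy different gaussian}, namely $\widehat{\alpha}^t \otimes \widehat{\alpha}^s \cong \widehat{\alpha}^{\sqrt{t^2+s^2}} \otimes \widehat{\alpha}^0$ for all $t,s > 0$. Suppose, for contradiction, that $\widehat{\alpha}^a \cong \widehat{\alpha}^b$ for some $0 < a < b \leq t_{\rm diss}(\alpha)$.

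I would first tensor both sides of the assumed conjugacy with $\widehat{\alpha}^s$ for a parameter $s > 0$ to be chosen, yielding $\widehat{\alpha}^a \otimes \widehat{\alpha}^s \cong \widehat{\alpha}^b \otimes \widehat{\alpha}^s$. Applying the rotation identity to each side then rewrites this as
$$ \widehat{\alpha}^u \otimes \widehat{\alpha}^0 \cong \widehat{\alpha}^v \otimes \widehat{\alpha}^0, \qquad u = \sqrt{a^2+s^2}, \; v = \sqrt{b^2+s^2}. $$
The key observation is that $v^2 - u^2 = b^2 - a^2 > 0$ is a fixed positive quantity, so by tuning $s$ one can force $u$ and $v$ to lie on opposite sides of the critical value $t_{\rm diss}(\alpha)$. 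The condition $u < t_{\rm diss}(\alpha) < v$ is the system $a^2 + s^2 < t_{\rm diss}(\alpha)^2 < b^2+s^2$, which admits a positive solution $s$ precisely because $a < b \leq t_{\rm diss}(\alpha) < +\infty$ (this is exactly where the finiteness hypothesis $t_{\rm diss}(\alpha) < +\infty$ is used).

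For such an $s$, Proposition \ref{diss scale} guarantees that $\widehat{\alpha}^u$ is recurrent while $\widehat{\alpha}^v$ is dissipative. Since $\widehat{\alpha}^0$ is pmp and tensoring with a pmp action preserves the Hopf dissipative/recurrent decomposition, we deduce that $\widehat{\alpha}^u \otimes \widehat{\alpha}^0$ is recurrent while $\widehat{\alpha}^v \otimes \widehat{\alpha}^0$ is dissipative, contradicting the displayed conjugacy.

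The only delicate point to verify is that tensoring with a pmp action preserves the Hopf decomposition; this reflects the fact that the Radon--Nikodym cocycle of $\widehat{\alpha}^u \otimes \widehat{\alpha}^0$ depends only on the first coordinate, so its dissipative part is $D_{\widehat{\alpha}^u} \otimes \widehat{\cH}^0$. This is standard and presumably included in Section \ref{appendix dissipative}, so I do not expect any genuine obstacle beyond invoking it.
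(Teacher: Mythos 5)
Your proposal is correct and is essentially identical to the paper's own argument: the paper also picks $r>0$ with $t^2+r^2 < t_{\rm diss}(\alpha)^2 < s^2+r^2$, uses Remark \ref{conjugacy different gaussian} to identify $\widehat{\alpha}^t \otimes \widehat{\alpha}^r$ and $\widehat{\alpha}^s \otimes \widehat{\alpha}^r$ with $\widehat{\alpha}^{\sqrt{t^2+r^2}} \otimes \widehat{\alpha}^0$ and $\widehat{\alpha}^{\sqrt{s^2+r^2}} \otimes \widehat{\alpha}^0$, and concludes that one product is recurrent and the other dissipative. The one auxiliary fact you flag (that tensoring with a pmp action preserves the Hopf decomposition) is indeed the same ingredient the paper uses in the proof of Proposition \ref{diss scale}.
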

\begin{proof}
Take $t < s \leq t_{\rm diss}(\alpha)$. Take $r > 0$ such that $t^2+r^2 < t_{\rm diss}(\alpha)$ but $s^2+r^2 > t_{\rm diss}(\alpha)^2$. Then, by Remark \ref{conjugacy different gaussian}, we have that $\widehat{\alpha}^t \otimes \widehat{\alpha}^r$ is recurrent while $\widehat{\alpha}^s \otimes \widehat{\alpha}^r$ is dissipative. In particular, $\widehat{\alpha}^t$ and $\widehat{\alpha}^s$ are not conjugate.
\end{proof}

\section{Koopman representation, invariant means and amenability}
In this section, we investigate the ``spectral" properties of Gaussian actions. We refer to \ref{appendix koopman} for the various notions used in this section.

\begin{prop}
Let $\alpha : G \curvearrowright \cH$ be an affine isometric action. Then either $\widehat{\alpha}^t$ has an invariant mean for all $t > 0$ or $\widehat{\alpha}^t$ does not have an invariant mean for all $t > 0$. 
\end{prop}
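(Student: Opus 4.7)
The proof combines the $G$-equivariant isomorphism of Remark~\ref{conjugacy different gaussian},
\[
\widehat{\alpha}^r \otimes \widehat{\alpha}^0 \cong \widehat{\alpha}^t \otimes \widehat{\alpha}^s \quad \text{whenever } r^2 = t^2 + s^2,
\]
with the fact that $\widehat{\alpha}^0$ is pmp (and hence automatically admits an invariant mean, namely integration against its Gaussian measure), together with the following dichotomy: for any two nonsingular actions $\sigma : G \curvearrowright X$ and $\rho : G \curvearrowright Y$, the diagonal action $\sigma \otimes \rho$ admits a $G$-invariant mean if and only if both $\sigma$ and $\rho$ do.

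For the dichotomy, the ``only if'' direction follows by restricting an invariant mean on $\rL^\infty(X \otimes Y)$ to the $G$-equivariant subalgebras $\rL^\infty(X) \otimes \C 1$ and $\C 1 \otimes \rL^\infty(Y)$. For the ``if'' direction, I would invoke the classical Day--Namioka characterization that a nonsingular action $G \curvearrowright X$ admits an invariant mean if and only if it admits a net of positive probability densities $(f_i) \subset \rL^1(X)$ with $\| g \cdot f_i - f_i \|_1 \to 0$ uniformly on compact subsets of $G$. Given such nets $(f_i)$ on $X$ and $(h_j)$ on $Y$, the tensor products $(f_i \otimes h_j)$ form an analogous net on $X \otimes Y$ by the triangle inequality, and any weak-$*$ limit point of the associated normal states yields an invariant mean on the product.

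Combining the dichotomy with Remark~\ref{conjugacy different gaussian} and the pmp character of $\widehat{\alpha}^0$ gives the key equivalence
\begin{equation*}
\widehat{\alpha}^r \text{ has an invariant mean} \iff \widehat{\alpha}^t \text{ and } \widehat{\alpha}^s \text{ both have invariant means}, \qquad r^2 = t^2 + s^2. \tag{$\star$}
\end{equation*}
Assuming $\widehat{\alpha}^{t_0}$ admits an invariant mean for some $t_0 > 0$, applying $(\star)$ with $t = s = t_0$ yields the biconditional ``$\widehat{\alpha}^{t_0}$ has an invariant mean if and only if $\widehat{\alpha}^{\sqrt{2}\, t_0}$ does,'' and iterating in both directions shows that $\widehat{\alpha}^{2^{k/2} t_0}$ has an invariant mean for every $k \in \Z$. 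For an arbitrary $t > 0$, choose $k \in \Z$ with $t_1 := 2^{k/2} t_0 > t$ and set $s := \sqrt{t_1^2 - t^2}$; applying $(\star)$ with $(r,t,s) = (t_1, t, s)$ transfers the invariant mean from $\widehat{\alpha}^{t_1}$ to $\widehat{\alpha}^t$ (and to $\widehat{\alpha}^s$). The only mildly technical ingredient is the Day--Namioka characterization of invariant means, which is standard and entirely independent of the Gaussian setup, so no serious obstacle is expected.
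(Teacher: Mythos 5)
Your proof is correct and follows essentially the same route as the paper: the paper also combines the equivariant isomorphism $\widehat{\alpha}^r \otimes \widehat{\alpha}^0 \cong \widehat{\alpha}^t \otimes \widehat{\alpha}^s$ for $r^2 = t^2 + s^2$ with the product dichotomy for invariant means (its Proposition on product actions) and the fact that $\widehat{\alpha}^0$ is pmp. The only difference is that you spell out the proof of the product dichotomy and the scaling iteration, both of which the paper leaves as ``the conclusion follows easily.''
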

\begin{proof}
By Remark \ref{conjugacy different gaussian}, for all $r,t,s > 0$ such that $r^2=t^2+s^2$, we have that the action $\widehat{\alpha}^r \otimes \widehat{\alpha}^0$ is conjugate to $\widehat{\alpha}^t \otimes \widehat{\alpha}^s$. By Proposition \ref{product action invariant mean}, this means that $\widehat{\alpha}^r$ has an invariant mean if and only if both $\widehat{\alpha}^t$ and $\widehat{\alpha}^s$ have an invariant mean. The conclusion follows easily.
\end{proof}

\begin{prop}\label{amen scale}
Let $\alpha : G \curvearrowright \cH$ be an affine isometric action. There exists $t_{\rm amen}(\alpha) \in [0,+\infty]$ such that $\widehat{\alpha}^t$ is amenable for all $t > t_{\rm amen}(\alpha)$ and nonamenable for all $t < t_{\rm amen}(\alpha)$. 

Moreover, we always have $t_{\rm amen}(\alpha) \leq t_{\rm diss}(\alpha)$.
\end{prop}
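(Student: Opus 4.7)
My plan is to mimic the structure of the dissipativity scaling result (Proposition \ref{diss scale}) and rely on the same rotation trick of Remark \ref{conjugacy different gaussian}, using only standard permanence properties of Zimmer amenability (which should live in Section \ref{appendix koopman}): namely, that amenability passes to $G$-equivariant factors, and that the diagonal product of an amenable action with an arbitrary nonsingular action is again amenable.

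First I would prove the monotonicity statement: if $\widehat{\alpha}^t$ is amenable and $r > t$, then $\widehat{\alpha}^r$ is amenable. Pick $s > 0$ with $r^2 = t^2 + s^2$. By Remark \ref{conjugacy different gaussian}, there is a $G$-equivariant nonsingular isomorphism
\[
\widehat{\alpha}^r \otimes \widehat{\alpha}^0 \;\cong\; \widehat{\alpha}^t \otimes \widehat{\alpha}^s.
\]
Amenability of $\widehat{\alpha}^t$ implies amenability of $\widehat{\alpha}^t \otimes \widehat{\alpha}^s$ (tensoring an amenable action with any nonsingular action preserves amenability), and hence amenability of $\widehat{\alpha}^r \otimes \widehat{\alpha}^0$. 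Since $\widehat{\alpha}^r$ is a $G$-equivariant factor of $\widehat{\alpha}^r \otimes \widehat{\alpha}^0$ via the first projection, it follows that $\widehat{\alpha}^r$ is amenable. Setting
\[
t_{\rm amen}(\alpha) := \inf\{ t > 0 \mid \widehat{\alpha}^t \text{ is amenable}\} \in [0,+\infty]
\]
(with $\inf \emptyset = +\infty$), this shows that $\widehat{\alpha}^t$ is amenable for all $t > t_{\rm amen}(\alpha)$ and nonamenable for all $t < t_{\rm amen}(\alpha)$.

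For the inequality $t_{\rm amen}(\alpha) \leq t_{\rm diss}(\alpha)$, I would invoke the general fact that any dissipative nonsingular action of a locally compact group is amenable in the sense of Zimmer; indeed, such an action is isomorphic to a translation action $G \curvearrowright G \times Y$ on the first coordinate, whose stabilizer groupoid is trivial and hence amenable. Thus for any $t > t_{\rm diss}(\alpha)$, Proposition \ref{diss scale} gives that $\widehat{\alpha}^t$ is dissipative, hence amenable, so $t \geq t_{\rm amen}(\alpha)$. Letting $t \searrow t_{\rm diss}(\alpha)$ yields the desired bound.

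The only potential obstacle is confirming that the two permanence properties of amenability invoked in the first step (stability under tensoring with an arbitrary nonsingular action, and stability under equivariant factors) are indeed stated in the form needed in Section \ref{appendix koopman}; both are classical results of Zimmer, so this should cause no difficulty, and no further estimates are required.
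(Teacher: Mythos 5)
Your overall route is the same as the paper's: the rotation trick of Remark \ref{conjugacy different gaussian}, permanence properties of Zimmer amenability for the scaling statement, and the fact that dissipative actions are amenable (Corollary \ref{dissipative amenable}) for the inequality $t_{\rm amen}(\alpha)\leq t_{\rm diss}(\alpha)$. The second half of your argument, and the first permanence property you invoke (the product of an amenable action with an arbitrary nonsingular action is amenable), are fine: the latter is exactly Proposition \ref{amenable factor map}, since the product maps $G$-equivariantly onto its amenable coordinate.

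The gap is the other permanence property: Zimmer amenability does \emph{not} pass to $G$-equivariant factors in general. If it did, every nonsingular action of every locally compact group would be amenable, since any $\sigma : G \curvearrowright X$ is a factor of the diagonal action $G \curvearrowright G \otimes X$, which is always amenable (it is conjugate to the translation action on the first coordinate alone); combined with Proposition \ref{mean amenable} this would force every group to be amenable. So the step ``$\widehat{\alpha}^r\otimes\widehat{\alpha}^0$ amenable $\Rightarrow\widehat{\alpha}^r$ amenable'' cannot rest on a general factor-permanence principle. It is nevertheless true here for a specific reason you must supply: the factor being projected away, $\widehat{\alpha}^0$, is \emph{probability measure preserving}, so integration against the invariant Gaussian measure $\mu_0$ on $\widehat{\cH}^0$ gives a $G$-equivariant normal conditional expectation $E:\rL^\infty(\widehat{\cH}^r\otimes\widehat{\cH}^0)\rightarrow\rL^\infty(\widehat{\cH}^r)$; composing $E$ with a $G$-equivariant conditional expectation $\rL^\infty(G\otimes\widehat{\cH}^r\otimes\widehat{\cH}^0)\rightarrow\rL^\infty(\widehat{\cH}^r\otimes\widehat{\cH}^0)$ and restricting to $\rL^\infty(G\otimes\widehat{\cH}^r)$ produces the conditional expectation witnessing amenability of $\widehat{\alpha}^r$. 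With this correction your proof is complete and coincides with the paper's.
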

\begin{proof}
 It is enough to show that if $\widehat{\alpha}$ is nonamenable, then $\widehat{\alpha}^t$ is nonamenable for all $t \in ]0,1[$. This follows easily from Remark \ref{conjugacy different gaussian} and Proposition \ref{amenable factor map}. The second part follows from Corollary \ref{dissipative amenable}.
\end{proof}

\begin{thm} \label{dictionary}
Let $\alpha : G \curvearrowright \cH$ be an affine isometric action and let $\widehat{\alpha} : G \curvearrowright \widehat{\cH}$ be the associated Gaussian action. Then the following holds:
\begin{enumerate}[ \rm (i)]
\item \label{item invariant probability} $\widehat{\alpha}$ admits an invariant probability measure (not necessarily faithful) if and only if $\alpha$ has a fixed point.
\item \label{item zero-type} $\widehat{\alpha}$ is of zero-type if and only if $\alpha$ is proper.
\item \label{item invariant mean} If $\alpha$ has almost fixed points, then $\widehat{\alpha}$ has almost vanishing entropy and in particular, $\widehat{\alpha}$ has an invariant mean. 
\item \label{item no invariant mean} If $\alpha^0$ has stable spectral gap\footnote{Recall that an orthogonal representation $\pi : G \rightarrow \mathcal{O}(H)$ has \emph{stable spectral gap} if the representation $\pi \otimes \rho$ has no almost invariant vectors for every representation $\rho : G \rightarrow \mathcal{O}(H)$.} and $\alpha$ has no fixed point, then $\widehat{\alpha}$ has no invariant mean.
\end{enumerate}
\end{thm}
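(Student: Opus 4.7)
The theorem has four items which I would address separately, exploiting the Gaussian formulas of Proposition \ref{formula Radon-Nikodym derivatives} and the density results of Proposition \ref{density}.

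For (i), the direction $(\Leftarrow)$ is immediate: if $gx=x$, then $\widehat{g}_*\mu_x=\mu_{gx}=\mu_x$, so $\mu_x$ is $\widehat{\alpha}$-invariant. For the converse, I would fix a basepoint $x_0 \in \cH$ and, given any $\widehat{\alpha}$-invariant probability $\nu$, study the characteristic functional $\phi(\xi) := \int_{\widehat{\cH}} e^{i\widehat{f_\xi}}\,d\nu$ for $\xi \in \cH^0$, where $f_\xi(y)=\langle y-x_0,\xi\rangle$. A direct computation using $f_\xi \circ g = f_{(g^0)^{-1}\xi} + \langle gx_0-x_0,\xi\rangle$ turns $\widehat{\alpha}$-invariance of $\nu$ into the twisted equivariance $\phi((g^0)^{-1}\xi) = e^{-i\langle gx_0-x_0,\xi\rangle}\phi(\xi)$. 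Since $\phi$ is continuous, positive-definite, and $\phi(0)=1$, a standard Bochner/barycenter argument extracts from this equivariance a point $x \in \cH$ with $gx = x$ for all $g \in G$.

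For (ii), Proposition \ref{formula Radon-Nikodym derivatives}(iii) gives the explicit formula $\langle \mu_x^{1/2}, \mu_y^{1/2}\rangle = e^{-\|x-y\|^2/8}$. Combined with $\widehat{g}\cdot \mu_x^{1/2} = \mu_{gx}^{1/2}$ and the totality of $\{\mu_x^{1/2} : x \in \cH\}$ in $L^2(\widehat{\cH}, \mu_{x_0})$ (a consequence of Proposition \ref{density}(i)), this immediately reduces the vanishing at infinity of Koopman matrix coefficients on the orthogonal complement of the constants to the vanishing of $e^{-\|gx-y\|^2/8}$ as $g\to\infty$, i.e.\ to properness of $\alpha$. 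For (iii), take almost fixed points $(x_n)$ of $\alpha$. Proposition \ref{formula Radon-Nikodym derivatives}(i) gives
\[ \log\frac{d\mu_{gx_n}}{d\mu_{x_n}}(\omega) = -\tfrac12\|gx_n-x_n\|^2 + \langle \omega - x_n, gx_n - x_n\rangle, \]
and integrating against $\mu_{gx_n}$ (under which $\omega-x_n$ has mean $gx_n-x_n$) produces exactly $\tfrac12\|gx_n-x_n\|^2$. Hence for any compactly supported probability measure $\mu$ on $G$, the Furstenberg entropy satisfies $h_\mu(\mu_{x_n}) = \tfrac12 \int_G \|gx_n-x_n\|^2\,d\mu(g) \to 0$, which is the almost vanishing of entropy. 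A weak* accumulation point of $(\mu_{x_n})$ is then an invariant mean, since by Proposition \ref{formula Radon-Nikodym derivatives}(iv) we have $\|\mu_{gx_n}-\mu_{x_n}\|_1\to 0$ uniformly on compacts.

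Item (iv) is where I expect the main obstacle. I would argue by contradiction: assuming $\widehat{\alpha}$ has an invariant mean, realize it as a weak* limit of probability measures $\nu_n \ll \mu_{x_0}$ on $\widehat{\cH}$ with $\|\widehat{g}_*\nu_n - \nu_n\|_1 \to 0$ uniformly on compacts, which yields almost invariant positive unit vectors in the Koopman representation of $\widehat{\alpha}$ on $L^2(\widehat{\cH}, \mu_{x_0})$. I would then use a Wiener-chaos-type decomposition in which the higher chaos pieces embed into tensor powers of $\alpha^0$ and so are killed by the stable spectral gap hypothesis; the remaining almost invariance must come from the first-chaos contribution, which transports the $1$-cocycle $g \mapsto gx_0 - x_0$ and forces it to be an approximate coboundary. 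Delorme's theorem (spectral gap of $\alpha^0$ implies $B^1(\alpha^0)$ is closed in $Z^1(\alpha^0)$) then upgrades this approximate coboundary to a genuine coboundary, contradicting the assumption that $\alpha$ has no fixed point. The genuinely delicate step is the first one: in the nonsingular setting, the Radon--Nikodym cocycle $\sqrt{d\mu_{gx_0}/d\mu_{x_0}} = \exp(\tfrac12\widehat{b(gx_0,x_0)})$ entangles the Wiener chaoses, so that the Koopman representation does not decompose diagonally along the chaos levels. Controlling this entanglement — and seeing why \emph{stable} rather than merely spectral gap of $\alpha^0$ is what one needs — is the technical heart of the argument.
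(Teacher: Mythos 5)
Items (ii) and (iii) of your proposal follow essentially the paper's route: everything reduces to the closed formulas $\langle \mu_x^{1/2},\mu_y^{1/2}\rangle = e^{-\|x-y\|^2/8}$ and $H(\mu_y \mid \mu_x)=\tfrac12\|y-x\|^2$ from Proposition \ref{formula Radon-Nikodym derivatives}, combined with Propositions \ref{mixing Koopman} and \ref{zero entropy implies invariant mean}. For (i) the paper is much more economical than you are: no fixed point gives an unbounded orbit, hence a sequence $g_n$ with $\langle \pi(g_n)\mu_x^{1/2},\mu_x^{1/2}\rangle = e^{-\|g_nx-x\|^2/8}\to 0$, and Proposition \ref{criterion invariant proba} applied to the faithful positive vector $\mu_x^{1/2}$ already excludes an invariant probability measure. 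Your characteristic-functional route can be completed (uniform tightness of the one-dimensional marginals of $\nu$ bounds $|\langle gx_0-x_0,\xi\rangle|$ uniformly over unit vectors $\xi$, so the orbit is bounded and the circumcenter is a fixed point), but the ``standard Bochner/barycenter argument'' is carrying real weight and is not standard as stated: a barycenter of $\nu$ need not exist, so you must argue via tightness, not via a first moment.

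The genuine gap is in (iv), and you have correctly located it without closing it. As you note, the Koopman representation of a \emph{nonsingular} Gaussian action does not preserve the Wiener chaos decomposition (the vectors $\mu_{gx_0}^{1/2}$ are exponential vectors and the representation is of Weyl type), so ``the higher chaos pieces are killed by stable spectral gap'' is not a statement about subrepresentations, and the plan stalls exactly where you say it does. The paper's missing ingredient is the rotation trick of Proposition \ref{Rotation trick} and Remark \ref{conjugacy different gaussian}: the $G$-equivariant isometry $\cH\times\cH \cong \cH^{\sqrt 2}\times\cH^0$ identifies $\widehat{\alpha}\otimes\widehat{\alpha}$ with $\widehat{\alpha}^{\sqrt 2}\otimes\widehat{\alpha}^0$, whose second factor is the \emph{pmp} Gaussian action of $\alpha^0$; there the Koopman representation on the orthocomplement of $\C\mu^{1/2}$ genuinely decomposes into tensor powers of $\alpha^0$. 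Stable spectral gap then forces the almost invariant vectors $\nu_n^{1/2}\otimes\nu_m^{1/2}$ to concentrate on $\rL^2(\widehat{\cH}^{\sqrt 2})\otimes\C\mu^{1/2}$; since that subspace is fixed by the flip of the two copies of $\widehat{\cH}$, one gets $\|\nu_n^{1/2}\otimes\nu_m^{1/2}-\nu_m^{1/2}\otimes\nu_n^{1/2}\|\to 0$, hence $(\nu_n^{1/2})_n$ is Cauchy and converges to an invariant vector, contradicting item (i). No cocycle transport or Delorme-type closedness of $B^1(\alpha^0)$ is needed. Without the rotation trick (or an equivalent device), your outline of (iv) is not a proof.
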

\begin{proof}
(\ref{item invariant probability}) If $\alpha$ has a fixed point $x \in \cH$, then $\mu_{x}$ is an invariant probability measure for $\widehat{\alpha}$. Conversely suppose that $\alpha$ has no fixed point. Then there exists a sequence $g_n \in G$ such that $\lim_n \|g_n x-x\|=+\infty$ for any $x \in \cH$. Proposition \ref{formula Radon-Nikodym derivatives} then shows that $\lim_n \langle \pi(g_n) \mu_x^{1/2},\mu_x^{1/2} \rangle =0$. We conclude by Proposition \ref{criterion invariant proba}.

(\ref{item zero-type}) This follows in a similar way from Proposition \ref{formula Radon-Nikodym derivatives} and Proposition \ref{mixing Koopman}.

(\ref{item invariant mean}) This follows from Proposition \ref{formula Radon-Nikodym derivatives} which gives the following formula for the relative entropy
$$ H(\mu_y \mid \mu_x)=\frac{1}{2}\| y-x\|^2 \quad \text{ for all } x,y \in \cH.$$
Then $\widehat{\alpha}$ has an invariant mean by Proposition \ref{zero entropy implies invariant mean}.

(\ref{item no invariant mean}) Suppose that $\nu_n$ is a sequence of almost invariant probability measures for $\widehat{\alpha}$. By using Remark \ref{conjugacy different gaussian}, identify $\widehat{\cH}^{\sqrt{2}} \otimes \widehat{\cH}^0$ with $\widehat{\cH} \otimes \widehat{\cH}$ and $\widehat{\alpha}^{\sqrt{2}} \otimes \widehat{\alpha}^0$ with $\widehat{\alpha} \otimes \widehat{\alpha}$. Let $\mu$ be the canonical Gaussian probability measure of $\widehat{\cH}^0$ and let $P$ be the orthogonal projection of $\rL^2( \widehat{\cH}^{\sqrt{2}} \otimes \widehat{\cH}^0)$ onto $\rL^2( \widehat{\cH}^{\sqrt{2}}) \otimes \C \mu^{1/2}$. Note that the net of vectors $(\nu_n^{1/2} \otimes \nu_m^{1/2})_{n,m \in \N}$ in $\rL^2(\widehat{\cH} \otimes \widehat{\cH})$ is almost invariant for the Koopman representation, when $n,m \to \infty$. Therefore, since $\alpha^0$ has stable spectral gap, we must have 
$$\lim_{n,m \to \infty} \| \nu_n^{1/2} \otimes \nu_m^{1/2} - P(\nu_n^{1/2} \otimes \nu_m^{1/2})\|=0$$
Since the range of $P$ is invariant under the flip map of $\widehat{\cH} \otimes \widehat{\cH}$, this implies that 
$$\lim_{n,m \to \infty} \| \nu_n^{1/2} \otimes \nu_m^{1/2} - \nu_m^{1/2} \otimes \nu_n^{1/2}\|=0$$
which is equivalent to
$$\lim_{n,m \to \infty} \| \nu_n^{1/2} - \nu_m^{1/2} \|=0.$$
We conclude that $(\nu_n^{1/2})_{n \in \N}$ is a Cauchy sequence which must therefore converge to an invariant vector contradicting the assumption that $\alpha$ has no fixed point.
\end{proof}

The following fact follows easily from the properties of Gaussian actions but is not obvious from the definition of the Poincar\'e exponent. Note that the nonamenability assumption is necessary since the group $\Z$ admits affine isometric actions with almost fixed points and a vanishing Poincar\'e exponent.
\begin{cor} \label{poincare infinite}
Let $\alpha : G \curvearrowright \cH$ be an affine isometric action. Suppose that $G$ is nonamenable and $\alpha$ has almost fixed points. Then $\delta(\alpha)=+\infty$
\end{cor}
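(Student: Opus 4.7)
The plan is to assume $\delta(\alpha) < +\infty$ and derive that $G$ must be amenable, contradicting the hypothesis. The strategy combines three ingredients we already have: the upper bound on the dissipativity threshold coming from the Poincar\'e exponent, the existence of an invariant mean for Gaussian actions coming from almost fixed points, and the fact that a free dissipative action of $G$ can carry an invariant mean only when $G$ itself is amenable.

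First, a reduction to the faithful case. Let $N = \ker \alpha$. If $N$ is non-compact then for any $g \in G$ and $n \in N$ we have $\|n g x - y\|^2 = \|gx - y\|^2$, so each coset of $N$ contributes $m(N) = +\infty$ to the Poincar\'e integral, forcing $\delta(\alpha) = +\infty$ trivially. If $N$ is compact, the quotient $G/N$ is still nonamenable (an extension of an amenable group by an amenable group is amenable), and $\alpha$ descends to a faithful affine isometric action of $G/N$ whose Poincar\'e exponent is a constant multiple of $\delta(\alpha)$. So we may assume $\alpha$ itself is faithful, in which case Proposition~\ref{faithful free} guarantees that every Gaussian action $\widehat{\alpha}^t$ is essentially free.

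Now assume $\delta(\alpha) < +\infty$. By Theorem~\ref{affine dissipativity}.(\ref{estimation dissipativity exponent}) we have $t_{\mathrm{diss}}(\alpha) \leq 2\sqrt{2\delta(\alpha)} < +\infty$, so we may fix $t > t_{\mathrm{diss}}(\alpha)$ with $\widehat{\alpha}^t$ dissipative. On the other hand, if $(x_n)_{n \in \N} \subset \cH$ is a sequence of almost fixed points for $\alpha$, then $(t x_n) \subset \cH^t$ is a sequence of almost fixed points for $\alpha^t$ (the norms just scale by $t$). Therefore Theorem~\ref{dictionary}.(\ref{item invariant mean}) tells us that $\widehat{\alpha}^t$ has an invariant mean.

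To close the argument, recall that a free dissipative action of $G$ is (up to null sets) conjugate to the translation action of $G$ on $G \times Y$ for some standard space $Y$. This yields a unital $G$-equivariant inclusion $\rL^\infty(G) \hookrightarrow \rL^\infty(\widehat{\cH}^t)$, and restricting the $G$-invariant mean on $\rL^\infty(\widehat{\cH}^t)$ along this inclusion produces a left-invariant mean on $\rL^\infty(G)$, contradicting the nonamenability of $G$. Hence $\delta(\alpha) = +\infty$. The only delicate point is the reduction to a faithful (hence free) situation so that the dissipative part has the product structure needed to pull back the invariant mean; once that is in place the proof is essentially an immediate combination of Theorems~\ref{affine dissipativity} and~\ref{dictionary}.
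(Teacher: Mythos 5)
Your overall skeleton is the same as the paper's: almost fixed points give an invariant mean on $\widehat{\alpha}^t$ for every $t$ (Theorem~\ref{dictionary}), while $\delta(\alpha)<+\infty$ would force $t_{\rm diss}(\alpha)\leq 2\sqrt{2\delta(\alpha)}<+\infty$ and hence a dissipative $\widehat{\alpha}^t$ for large $t$ (Theorem~\ref{affine dissipativity}), and these two facts are incompatible with the nonamenability of $G$. The problem is the step you yourself flag as delicate. You claim that after reducing to a faithful action, Proposition~\ref{faithful free} guarantees that $\widehat{\alpha}^t$ is essentially free. That proposition only says that each individual $\widehat{g}$ with $g\neq\id$ acts essentially freely; for a \emph{discrete} group this yields essential freeness of the action, but for a general locally compact group it does not, and Remark~\ref{freeness} in the paper explicitly gives the counterexample of $\mathrm{O}(3)\curvearrowright\R^3$, which is faithful but not essentially free. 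Since the corollary is stated for arbitrary locally compact $G$, your argument as written only covers the discrete case, and the subsequent identification of the dissipative action with $G\curvearrowright G\otimes Y$ (which is where you pull back the invariant mean to $\rL^\infty(G)$) is exactly the point that needs freeness.

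The gap is repairable, and the repair is what the paper actually does: you do not need freeness at all. A dissipative action is amenable in Zimmer's sense (Corollary~\ref{dissipative amenable}, via the equivariant conditional expectation of Theorem~\ref{dissipative trivialization}), and an amenable action admitting an invariant mean forces $G$ to be amenable (Proposition~\ref{mean amenable}). This replaces your explicit embedding $\rL^\infty(G)\hookrightarrow\rL^\infty(\widehat{\cH}^t)$ and makes the reduction to the faithful case, and the discussion of $\ker\alpha$, unnecessary. (If you insist on your route, the structure theorem for dissipative actions still gives ergodic components of the form $G/K$ with $K$ compact, and one can average over $K$ to promote an invariant mean on $\rL^\infty(G/K)$ to one on $\rL^\infty(G)$; but this is just re-proving Proposition~\ref{mean amenable} in a special case.)
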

\begin{proof}
Since $\alpha$ has almost fixed points, Theorem \ref{dictionary} shows that $\widehat{\alpha}^t$ has an invariant mean for all $t$. Since $G$ is nonamenable, $\widehat{\alpha}^t$ is nonamenable hence recurrent for all $t$. By Theorem \ref{affine dissipativity}, we conclude that $\delta(\alpha)=+\infty$.
\end{proof}

We deduce a new characterization of property (T) in terms of nonsingular actions.
\begin{cor} \label{nonsingular property (T)}
Let $G$ be a locally compact group. The following are equivalent:
\begin{enumerate}[ \rm (i)]
\item  $G$ does not have property (T).
\item $G$ admits a nonsingular action which has an invariant mean but no invariant probability measure.
\item $G$ admits a nonsingular action which has almost vanishing entropy but no invariant probability measure.
\end{enumerate}
\end{cor}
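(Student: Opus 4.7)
The plan is to prove the cyclic chain (i) $\Rightarrow$ (iii) $\Rightarrow$ (ii) $\Rightarrow$ (i). The implication (iii) $\Rightarrow$ (ii) is automatic from the definition of almost vanishing entropy, which implies the existence of an invariant mean (as already recorded in Theorem \ref{dictionary}.(\ref{item invariant mean}) via Proposition \ref{zero entropy implies invariant mean}).

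For (i) $\Rightarrow$ (iii), I invoke the Delorme--Guichardet characterization of property (T) in its representation-theoretic form: since $G$ does not have (T), there exists an orthogonal representation $\pi$ with almost invariant vectors but no invariant vector. Feeding $\pi$ into Proposition \ref{existence evanescent} produces an evanescent affine isometric action $\alpha : G \curvearrowright \cH$ without fixed point whose linear part is contained in a multiple of $\pi$. By Proposition \ref{evanescent properties}, such an $\alpha$ automatically has almost fixed points. Theorem \ref{dictionary} then finishes the job: item (\ref{item invariant probability}) gives that $\widehat{\alpha}$ has no invariant probability measure, and item (\ref{item invariant mean}) gives that $\widehat{\alpha}$ has almost vanishing entropy. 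The Gaussian action $\widehat{\alpha}$ is the witness required by (iii).

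The non-routine direction is (ii) $\Rightarrow$ (i), which I prove by contraposition. Suppose $G$ has property (T) and let $\sigma : G \curvearrowright (X,\nu)$ be a nonsingular action admitting a $G$-invariant mean on $\rL^\infty(X)$. By a standard Namioka--Day convexity argument applied in the predual, the existence of such a mean is equivalent to Reiter's condition $P_1$: there is a net of probability densities $f_i \in \rL^1(X,\nu)_+$ with $\int f_i \rd \nu = 1$ and $\|g \cdot f_i - f_i\|_1 \to 0$ uniformly on compact subsets of $G$. The elementary inequality $(a^{1/2}-b^{1/2})^2 \leq |a-b|$ for $a,b \geq 0$ converts this into $\|g \cdot f_i^{1/2} - f_i^{1/2}\|_2 \to 0$, so the unit vectors $f_i^{1/2} \in \rL^2(X,\nu)$ are almost invariant for the Koopman representation of $\sigma$. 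Property (T) then yields an invariant unit vector $h \in \rL^2(X,\nu)$, and a direct Radon--Nikodym computation (of the same flavor as in the proof of Theorem \ref{dictionary}.(\ref{item invariant probability})) shows that $|h|^2 \nu$ is a $G$-invariant probability measure equivalent to $\nu$, contradicting (ii).

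The main obstacle is the equivalence ``invariant mean on $\rL^\infty(X) \Leftrightarrow$ almost invariant vectors in the Koopman representation'', which requires the Reiter-type $P_1$ reformulation and then the $\rL^1 \to \rL^2$ passage via square roots. Everything else amounts to plugging in the machinery already developed in Section~\ref{Affine Gaussian functor} and in Proposition \ref{existence evanescent}.
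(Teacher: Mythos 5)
Your proof is correct and follows essentially the route the paper intends: (i)$\Rightarrow$(iii) by feeding a representation with almost invariant but no invariant vectors into Proposition \ref{existence evanescent}, then Proposition \ref{evanescent properties} and Theorem \ref{dictionary}; (iii)$\Rightarrow$(ii) by Proposition \ref{zero entropy implies invariant mean}; and (ii)$\Rightarrow$(i) by passing from an invariant mean to almost invariant vectors in the Koopman representation and applying property (T). The only remark is that the step you single out as the main obstacle is already recorded in the paper as Proposition \ref{almost invariant vectors Koopman} (together with Proposition \ref{criterion invariant proba} for extracting the invariant probability measure), so it can simply be cited; also, the invariant probability measure $|h|^2\nu$ you produce need not be equivalent to $\nu$ when $h$ has non-full support, but this is harmless since (ii) excludes any invariant probability measure, faithful or not.
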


We also obtain a similar characterization of the Haagerup property.
\begin{cor} \label{nonsingular Haagerup property}
Let $G$ be a locally compact group. The following are equivalent:
\begin{enumerate}[ \rm (i)]
\item  $G$ has the Haagerup property.
\item $G$ admits a nonsingular action of zero-type which has an invariant mean.
\item $G$ admits a nonsingular action of zero-type with almost vanishing entropy.
\end{enumerate}
\end{cor}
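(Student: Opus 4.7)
The plan is to deduce this corollary from Theorem \ref{dictionary}, in the same spirit as Corollary \ref{nonsingular property (T)}. I would prove the chain (i) $\Rightarrow$ (iii) $\Rightarrow$ (ii) $\Rightarrow$ (i); the case where $G$ is compact is trivial throughout, so I may assume $G$ is non-compact.

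For (i) $\Rightarrow$ (iii), Schoenberg's theorem applied to a proper conditionally negative definite function on $G$ produces an orthogonal representation $\pi$ which is mixing (i.e.\ of class $C_0$) and has almost invariant vectors but no nonzero invariant vector. I would then feed $\pi$ into Proposition \ref{existence evanescent} to obtain an evanescent affine isometric action $\alpha : G \curvearrowright \cH$ with no fixed point whose linear part is contained in a multiple of $\pi$, and is therefore mixing. Since $\alpha$ is evanescent, has no fixed point, and has mixing linear part, Proposition \ref{evanescent properties} shows that $\alpha$ is proper; and $\alpha$ has almost fixed points purely from evanescence. Items (\ref{item zero-type}) and (\ref{item invariant mean}) of Theorem \ref{dictionary} then give, respectively, that $\widehat{\alpha}$ is of zero-type and has almost vanishing entropy, which is exactly (iii).

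The implication (iii) $\Rightarrow$ (ii) is immediate, since almost vanishing entropy implies the existence of an invariant mean, as already recorded in Theorem \ref{dictionary}(\ref{item invariant mean}).

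For (ii) $\Rightarrow$ (i), let $\sigma : G \curvearrowright (X,\mu)$ be a nonsingular action of zero-type admitting an invariant mean. Zero-type means that the Koopman representation $\pi_\sigma$ on $\rL^2(X,\mu)$ is mixing. A standard Day-type convexity argument converts the invariant mean into a sequence of absolutely continuous probability measures $\nu_n \ll \mu$ with $\|g_* \nu_n - \nu_n\|_{\rL^1} \to 0$ uniformly on compact subsets of $G$; applying the square root and using the Hellinger--$\rL^1$ comparison then yields an approximately invariant sequence of unit vectors for $\pi_\sigma$. Thus $\pi_\sigma$ is a mixing orthogonal representation of $G$ with almost invariant vectors, which witnesses the Haagerup property. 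The main subtlety is in the (i) $\Rightarrow$ (iii) direction, namely producing an affine isometric action that is simultaneously proper and has almost fixed points; this crucial combination is delivered by feeding the $C_0$ representation supplied by Haagerup into Proposition \ref{existence evanescent} and then invoking Proposition \ref{evanescent properties} to upgrade evanescence plus mixing linear part to properness.
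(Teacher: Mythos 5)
Your proof is correct and follows essentially the route the paper intends: the forward direction reproduces the construction of Theorem \ref{strong Haagerup} (Proposition \ref{existence evanescent} applied to a mixing representation with almost invariant vectors, then Proposition \ref{evanescent properties} and Theorem \ref{dictionary}), and the converse is exactly the dictionary of Propositions \ref{mixing Koopman} and \ref{almost invariant vectors Koopman} identifying zero-type with a mixing Koopman representation and an invariant mean with almost invariant vectors. No gaps.
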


We now give a formula for the spectral radius of Gaussian actions expressed in terms of the random walk on the group.

\begin{thm} \label{formula spectral radius}
Let $\alpha : G \curvearrowright \cH$ be an affine isometric action. Let $b(g)=gx-x$ for some $x \in \cH$. Take any symmetric probability measure $\mu$ on $G$ and define the function $f:[0,+\infty[ \rightarrow [0,1]$ by
$$ f(s)=\lim_{n \to \infty} \mathbb{E}\left( e^{-s\|b(g_n)\|^2} \right)^{1/n}$$
where $(g_n)_{n \in \N}$ is the $\mu$-random walk on $G$.
\begin{enumerate}[ \rm (i)]
\item For all $t \geq 0$, we have
$$ \rho_\mu(\widehat{\alpha}^t) =f\left(\frac{t^2}{8}\right).$$
\item For all $s \geq 0$, we have
$$  f(s) \geq \rho_{\mu}(G) \quad  \text{ and } \quad  f(s) \geq e^{-s \| \alpha \|_\mu^2} $$
\item The function $f$ is continuous, decreasing and $\log(f)$ is convex.
\end{enumerate}
\end{thm}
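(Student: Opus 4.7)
The core identity comes from Proposition \ref{formula Radon-Nikodym derivatives}(iii): in $\rL^2(\widehat{\cH}^t, \mu_{tx})$, the vacuum vector $\xi := \mu_{tx}^{1/2}$ has Koopman matrix coefficients
\[ \langle U_g \xi, \xi\rangle = \langle \mu_{tgx}^{1/2}, \mu_{tx}^{1/2}\rangle = \exp\!\bigl(-\tfrac{t^2}{8}\|gx-x\|^2\bigr). \]
Setting $s = t^2/8$ and using symmetry of $\mu$ (so that $\pi_\mu$ is self-adjoint and $\pi_\mu^n = \pi_{\mu^{\ast n}}$), integration gives
\[ a_n(s) := \mathbb{E}\bigl(e^{-s\|b(g_n)\|^2}\bigr) = \langle \pi_\mu^n \xi, \xi\rangle. \]
All three parts will be deduced from the analysis of this sequence.

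I will begin with part (iii) and the upper bound in (i). For each fixed $n$, H\"older's inequality gives convexity of $s \mapsto \log a_n(s)$. Applying the spectral theorem to the self-adjoint $\pi_\mu$ with spectral measure $E := E_\xi$ -- a probability measure supported in $[-\rho,\rho]$ where $\rho := \rho_\mu(\widehat{\alpha}^t) = \|\pi_\mu\|$ -- yields $a_n(s) = \int \lambda^n\, dE(\lambda)$. Since $a_n > 0$, a standard estimate (monotonicity of $L^{2n}$-norms for even $n$, Cauchy--Schwarz for odd $n$) shows that $\lim_n a_n^{1/n} = \sup|\supp E| =: R \leq \rho$ exists. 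This gives existence of $f(s)$, the upper bound in (i), and convexity of $\log f$ as a pointwise limit of convex functions; monotonicity of $f$ is immediate, continuity on $(0,\infty)$ follows from convexity, and continuity at $0$ will follow from boundedness of $\log f$ below (provided by (ii), namely $f \geq \rho_\mu(G) > 0$).

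For part (ii), I will establish $f(s) \geq e^{-s\|\alpha\|_\mu^2}$ by Jensen's inequality applied at an (approximately) $\mu$-harmonic base point $x_0$: the cocycle expansion $b(g_n) = \sum_{i=1}^n \alpha^0(h_1\cdots h_{i-1})\,b(h_i)$ combined with harmonicity $\int b(h)\,d\mu(h) = 0$ kills the cross-terms in $\mathbb{E}(\|b(g_n)\|^2)$ (after conditioning on all but one of the $h_i$), yielding $\mathbb{E}(\|b(g_n)\|^2) = n\|\alpha\|_\mu^2$; Jensen then gives $a_n \geq e^{-sn\|\alpha\|_\mu^2}$. The bound $f(s) \geq \rho_\mu(G)$ follows from $a_n(s) \geq e^{-sR^2}\,\mu^{\ast n}(K_R)$ where $K_R := \{g \in G : \|b(g)\| \leq R\}$ is a symmetric neighborhood of $e$, together with the Kesten-type lower bound $\limsup_n \mu^{\ast n}(K)^{1/n} \geq \rho_\mu(G)$ for such neighborhoods (as in \cite{HP84}).

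The main obstacle, and the heart of the proof, is the matching lower bound $f(s) \geq \rho_\mu(\widehat{\alpha}^t)$ in part (i): a priori, the vacuum $\xi$ need not be cyclic for $\pi_\mu$, so the inequality $R < \rho$ is conceivable. My plan is a density--pigeonhole argument. Given $\epsilon > 0$, pick a unit vector $\eta$ in the spectral subspace of $\pi_\mu$ corresponding to $\{|\lambda| \geq \rho - \epsilon\}$; approximate $\eta$ in $\rL^2$ by a finite combination $\eta' = \sum_i c_i\, \mu_{ty_i}^{1/2}$ (dense, since exponential functionals span a dense subspace of $\rL^2(\widehat{\cH}^t, \mu_{tx})$, as in the proof of Proposition \ref{quadratic buseman}); expand $\langle \pi_\mu^{2n}\eta', \eta'\rangle$ as a finite sum of matrix coefficients of the form $\int e^{-s\|gy_i - y_j\|^2}\, d\mu^{\ast 2n}(g)$; and use the elementary comparison
\[ \bigl|\|gy-z\|^2 - \|gx-x\|^2\bigr| \leq \delta\,\|gx-x\|^2 + C(\delta, y, z), \]
valid for every $\delta > 0$, to show that each such matrix coefficient has the same exponential growth rate as $a_{2n}$, up to arbitrary $\delta$-rescaling of $s$. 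Continuity of $f$ from (iii) together with pigeonhole on the finite sum then forces $f(s) \geq \rho - \epsilon$; letting $\epsilon \to 0$ gives equality. Carefully tracking the approximation errors through the $n$-th root is where the main technical work will lie.
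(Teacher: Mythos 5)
Your reduction to the moments $a_n(s)=\langle \pi_\mu^n\xi,\xi\rangle$ with $\xi=\mu_{tx}^{1/2}$ is exactly the paper's starting point, but from there you diverge: the paper disposes of the hard inequality $f(t^2/8)\geq\rho_\mu(\widehat{\alpha}^t)$ in one stroke by invoking Proposition \ref{single vector spectral radius} (a Perron--Frobenius-type statement: a self-adjoint operator preserving the positive cone, tested against a faithful positive vector, has $\lim_n\|P^n\xi\|^{1/n}=\|P\|$), whereas you replace it by a density--pigeonhole argument using coherent vectors $\mu_{ty}^{1/2}$ and the uniform comparison $\|gy-z\|^2\geq(1-\delta)\|gx-x\|^2-C(\delta,y,z)$. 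That argument is sound (the coherent vectors do span a dense subspace, the pigeonhole over a fixed finite index set is legitimate, and continuity of $f$ undoes the $\delta$-rescaling), and it is a genuinely different route: it trades the abstract positivity lemma --- which the authors needed Ozawa's help to prove correctly --- for the explicit Gaussian structure of the matrix coefficients. The price is that you must establish continuity of $f$ \emph{before} part (i), which your ordering does accommodate.

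There are, however, two concrete gaps. First, your justification for the existence of $\lim_n a_n^{1/n}$ does not close: Cauchy--Schwarz gives $|a_{2n+1}|\leq\sqrt{a_{2n}a_{2n+2}}$, i.e.\ only the \emph{upper} bound on the odd subsequence, and positivity of all the $a_n$ does not force the odd terms to have the same exponential rate (a spectral measure such as $\tfrac12\delta_1+\tfrac12\delta_{-1}+\varepsilon\delta_{1/2}$ has all moments positive while the odd moments decay like $2^{-n}$). You need an input specific to this setting, e.g.\ the estimate $a_{2n+1}(s)\geq c_\delta\, a_{2n}(s(1+\delta))$ obtained from $\|b(gh)\|\leq\|b(g)\|+\|b(h)\|$ by integrating $h$ over a set of $\mu$-measure $\geq 1/2$ where $\|b(h)\|$ is bounded, combined with continuity of the even-subsequence limit. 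Second, your proof of $f(s)\geq e^{-s\|\alpha\|_\mu^2}$ via an \emph{approximately} harmonic base point fails: when the infimum defining $\|\alpha\|_\mu$ is not attained (which is the generic situation, e.g.\ for actions with almost fixed points but no fixed point), an $\varepsilon$-minimizer only gives $\|\int b\,\rd\mu\|^2\leq\varepsilon$, and the $\binom{n}{2}$ cross-terms then contribute $O(n^2\sqrt{\varepsilon})$ to $\mathbb{E}\|b(g_n)\|^2$, which swamps the linear main term for fixed $\varepsilon$. The repair is simple and is what the paper does: $f(s)\geq a_1(s)=\int e^{-s\|gx-x\|^2}\rd\mu(g)\geq e^{-s\int\|gx-x\|^2\rd\mu(g)}$ by Jensen, for \emph{every} $x$, then take the supremum over $x$; the inequality $f(s)\geq a_1(s)$ follows from the monotonicity of $\|P^{n+1}\xi\|/\|P^n\xi\|$. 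This same bound $f(s)\geq a_1(s)\to 1$ (dominated convergence) is also what you need for continuity at $s=0$: a convex function bounded below can still jump at the left endpoint of its domain, so your stated reason for continuity at $0$ does not suffice. Finally, a small point: the Kesten-type bound $\limsup_n\mu^{*n}(V)^{1/n}\geq\rho_\mu(G)$ you use for $f(s)\geq\rho_\mu(G)$ is correct (and extends to the non-relatively-compact sets $K_R$ by monotonicity), but the reference is [Kes59]/[BC74], not \cite{HP84}.
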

\begin{proof}
$(\rm i)$  Let $P=\pi(\mu)=\int_G \pi(g) \rd \mu(g)$ where $\pi : G \curvearrowright \rL^2(\widehat{\cH}^t)$ is the Koopmann representation of $\widehat{\alpha}^t$. Then by definiton $\rho_\mu(\widehat{\alpha}^t)=\|P\|$. Take $x \in \cH$ and let $\xi=\mu_{tx}^{1/2} \in \rL^2(\widehat{\cH}^t)$. By Proposition \ref{single vector spectral radius}, we have
\begin{align*}
\|P\| &=\lim_{n} \| P^n \xi \|^{1/n}\\
 & =\lim_{n} \langle P^n \xi,\xi \rangle^{1/n} \\
 & =\lim_{n}\left( \int_G \langle \mu_{tgx}^{1/2} ,\mu_{tx}^{1/2}  \rangle \rd \mu^{*n}(g) \right)^{1/n} \\
 &= \lim_{n}\left( \int_G \exp \left( -\frac{t^2}{8}\|gx-x\|^2 \right)  \rd \mu^{*n}(g)  \right)^{1/n} \\
 &=f\left(\frac{t^2}{8}\right)
\end{align*}

$(\rm ii)$ The first inequality follows from item $(\rm i)$ and Proposition \ref{spectral radius action group}. The second inequality can be deduced from \cite[Lemma 2.2]{EO18} and the concavity of $\lambda \mapsto \lambda^{1/n}$, but one can also make the following easy computation. Let $\pi$ be the Koopman representation of $\widehat{\alpha}^t$. By Proposition \ref{formula Radon-Nikodym derivatives}, we have 
$$ \rho_\mu(\widehat{\alpha}^t) \geq  \langle \pi(\mu) \mu_{tx}^{1/2},\mu_{tx}^{1/2} \rangle = \int_G \exp \left( - \frac{t^2}{8} \| gx-x\|^2 \right) \rd \mu(g).$$
Thus, by convexity of $\exp$, we get
$$ \rho_\mu(\widehat{\alpha}^t) \geq \exp\left(- \int_G \frac{t^2}{8} \| gx-x\|^2 \rd \mu(g) \right).$$
Since this holds for all $x \in \cH$, we get 
$$ \rho_\mu(\widehat{\alpha}^t) \geq \exp\left( -\frac{t^2}{8} \| \alpha \|_\mu^2 \right).$$
hence our desired inequality by item $(\rm i)$.

$(\rm iii)$ The function $f$ is obviously decreasing. For $s \geq 1$, the convexity of $\lambda \mapsto \lambda^s$ shows that $f(st) \geq f(t)^s$, hence by letting $s \to 1$, we see that $f$ is continuous. Next, Cauchy-Schwartz inequality gives $f(\frac{t+s}{2}) \leq \sqrt{f(t)f(s)}$ for all $s,t > 0$ which implies that $\log(f)$ is convex since it is continuous. 
\end{proof}


Here is a consequence of Theorem \ref{formula spectral radius} and Proposition \ref{dictionary}.
\begin{cor}
Let $\pi : G \rightarrow \mathcal{O}(H)$ be an orthogonal representation which has stable spectral gap. Let $b \in Z^1(\pi)$ be a $1$-cocycle which is not a coboundary. Then for any symmetric probability measure $\mu$ on $G$ which spans $G$, we have 
$$ \uplim_{n \to \infty} \mathbb{E}\left(e^{-\|b(g_n)\|^2} \right)^{1/n} < 1$$
where $(g_n)_{n \in \N}$ is the $\mu$-random walk on $G$.
\end{cor}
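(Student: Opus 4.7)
The strategy is to interpret the left-hand side as the spectral radius of a Gaussian Koopman representation via Theorem \ref{formula spectral radius}, and then use Theorem \ref{dictionary}(iv) to conclude that this spectral radius is strictly less than $1$.

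First I would turn the cocycle $b$ into geometry. Let $\alpha : G \curvearrowright H$ be the affine isometric action given by $\alpha_g(\xi) = \pi(g)\xi + b(g)$. Its linear part is $\pi$ which has stable spectral gap, and since $b \notin B^1(\pi)$, the action $\alpha$ has no fixed point. Exactly the same holds for the rescaled action $\alpha^t : G \curvearrowright H^t$ for every $t > 0$: its linear part is still $\pi$ and its cocycle $tb$ is still not a coboundary. Therefore Theorem \ref{dictionary}(\ref{item no invariant mean}) applies to $\alpha^t$ and yields that $\widehat{\alpha}^t$ has no invariant mean for every $t > 0$.

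Next I would pass from the absence of an invariant mean to a spectral radius estimate. The assumption that $\mu$ is symmetric and spans $G$, combined with the fact that $\widehat{\alpha}^t$ has no invariant mean (and no invariant probability measure either, since $\alpha^t$ has no fixed point, by Theorem \ref{dictionary}(\ref{item invariant probability})), forces the Koopman representation of $\widehat{\alpha}^t$ to have no almost invariant unit vector, and hence $\rho_\mu(\widehat{\alpha}^t) < 1$. This is the standard link between amenability of an action and the spectral radius of a symmetric generating probability measure (used implicitly in Theorem \ref{formula spectral radius} via Proposition \ref{spectral radius action group}).

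Finally I would read off the conclusion from Theorem \ref{formula spectral radius}(i). Specialising to $t = \sqrt{8}$, so that $t^2/8 = 1$, that theorem gives
\[
\lim_{n \to \infty} \mathbb{E}\!\left( e^{-\|b(g_n)\|^2} \right)^{1/n} \;=\; f(1) \;=\; \rho_\mu(\widehat{\alpha}^{\sqrt{8}}) \;<\; 1,
\]
and since the limit exists, the $\uplim$ coincides with it. The only delicate point is step two, ensuring that ``no invariant mean plus $\mu$ spanning $G$ plus no invariant $\rL^2$-vector'' really implies the strict inequality $\rho_\mu < 1$; but this is precisely the content of the standard Kesten-type characterisation used in the appendix, applied to the Koopman representation on $\rL^2(\widehat{\cH}^{\sqrt{8}})$.
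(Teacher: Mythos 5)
Your proof is correct and takes essentially the same route the paper intends: the paper states this corollary without proof as a consequence of Theorem \ref{formula spectral radius} and Theorem \ref{dictionary}, and you supply exactly that derivation (identify the $\uplim$ with $\rho_\mu(\widehat{\alpha}^{\sqrt{8}})$ via Theorem \ref{formula spectral radius}(i), rule out an invariant mean via Theorem \ref{dictionary}(iv), and pass to $\rho_\mu(\widehat{\alpha}^{\sqrt{8}})<1$ via Proposition \ref{almost invariant vectors Koopman}). The one point you flag as delicate is indeed fine because the Koopman operator $\pi(\mu)$ preserves the positive cone of $\rL^2(\widehat{\cH}^{\sqrt{8}})$, so $\|\pi(\mu)\|=\sup_{\|\xi\|=1,\,\xi\geq 0}\langle \pi(\mu)\xi,\xi\rangle$ and norm $1$ would force almost invariant vectors along $\supp\mu$, hence along all of $G$ since $\mu$ is symmetric and spans $G$.
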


We also derive the following inequality relating the Poincar\'e exponent to the spectral radius. Under the normalization condition $\| \alpha \|_\mu^2=1$, this gives a \emph{universal} lower bound for the Poincar\'e exponent. Note that both Proposition \ref{poincare strictly positive} and Corollary \ref{poincare infinite} follow from this inequality.

\begin{prop} \label{lower bound amenability}
Let $\alpha : G \curvearrowright \cH$ be an affine isometric action. Let $\mu$ be a symmetric probability measure on $G$. If $t_{\rm amen}(\alpha) <+ \infty$ and $\| \alpha\|_\mu^2 < +\infty$ then
$$-8 \log\rho_\mu(G) \leq t_{\rm amen}(\alpha)^2  \| \alpha\|^2_\mu.$$
In particular, if $\delta(\alpha) < +\infty$, we have the following inequality:
$$ - \log\rho_\mu(G) \leq \delta(\alpha)  \| \alpha\|^2_\mu.$$
\end{prop}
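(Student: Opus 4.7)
The plan is to combine the two lower bounds for $\rho_\mu(\widehat{\alpha}^t)$ provided by Theorem~\ref{formula spectral radius} with amenability. Specifically, for every $t > t_{\rm amen}(\alpha)$, Proposition~\ref{amen scale} guarantees that the nonsingular action $\widehat{\alpha}^t$ is amenable; the key additional input is the standard characterization of amenability of a nonsingular action by temperedness of its Koopman representation, namely that an amenable action of $G$ has Koopman representation weakly contained in a multiple of the regular representation, so
\[ \rho_\mu(\widehat{\alpha}^t) \leq \rho_\mu(G). \]
I expect this characterization to be already recorded (or directly derivable) from the appendix on Koopman representations and amenability.

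Against this upper bound, Theorem~\ref{formula spectral radius}(i)--(ii) furnishes the lower bound
\[ \rho_\mu(\widehat{\alpha}^t) \;=\; f\!\left(\tfrac{t^2}{8}\right) \;\geq\; \exp\!\left(-\tfrac{t^2}{8}\,\|\alpha\|_\mu^2\right). \]
Chaining these two estimates for every $t > t_{\rm amen}(\alpha)$ yields $e^{-t^2\|\alpha\|_\mu^2/8} \leq \rho_\mu(G)$, and after taking logarithms and rearranging, $-8\log\rho_\mu(G) \leq t^2\|\alpha\|_\mu^2$. Letting $t \searrow t_{\rm amen}(\alpha)$ (or equivalently taking the infimum over such $t$) gives the first asserted inequality
\[ -8\log\rho_\mu(G) \;\leq\; t_{\rm amen}(\alpha)^2\,\|\alpha\|_\mu^2. \]

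For the ``in particular'' clause, the plan is to feed the chain $t_{\rm amen}(\alpha) \leq t_{\rm diss}(\alpha) \leq 2\sqrt{2\delta(\alpha)}$ into the inequality just obtained: the first inequality is the last assertion of Proposition~\ref{amen scale}, and the second is Theorem~\ref{affine dissipativity}(ii). Squaring gives $t_{\rm amen}(\alpha)^2 \leq 8\delta(\alpha)$, and substituting yields $-\log\rho_\mu(G) \leq \delta(\alpha)\|\alpha\|_\mu^2$ at once.

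The only step that is not a routine manipulation is the implication ``$\widehat{\alpha}^t$ amenable $\Rightarrow$ $\rho_\mu(\widehat{\alpha}^t) \leq \rho_\mu(G)$''; the companion inequality $\rho_\mu(\widehat{\alpha}^t) \geq \rho_\mu(G)$ already appears in Theorem~\ref{formula spectral radius}(ii) via Proposition~\ref{spectral radius action group}, so for amenable actions one actually has equality. This is the main (and essentially only) obstacle, and I expect it to be absorbed by the appendix material rather than requiring new work.
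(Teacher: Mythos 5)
Your proposal is correct and follows essentially the same route as the paper: the paper's proof takes $t > t_{\rm amen}(\alpha)$, invokes Proposition \ref{weakly contained amenable action} (which is exactly the appendix statement you anticipated, giving $\rho_\mu(\widehat{\alpha}^t)=\rho_\mu(G)$ for amenable actions), combines it with the lower bound $-\log\rho_\mu(\widehat{\alpha}^t)\leq \frac{t^2}{8}\|\alpha\|_\mu^2$ from Theorem \ref{formula spectral radius}, and deduces the second inequality from $t_{\rm amen}(\alpha)\leq t_{\rm diss}(\alpha)\leq 2\sqrt{2\delta(\alpha)}$. The one step you flagged as potentially needing work is indeed already recorded in the appendix, so there is no gap.
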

\begin{proof}
Take $t > t_{\rm amen}(\alpha)$. Since $\widehat{\alpha}^t$ is amenable, we have $\rho_\mu(\widehat{\alpha}^t)=\rho_\mu(G)$ by Proposition \ref{weakly contained amenable action}. On the other hand, we know by Theorem \ref{formula spectral radius} that 
$$- \log \rho_\mu(\widehat{\alpha}^t) \leq \frac{t^2}{8} \| \alpha\|_\mu^2.$$
For the second part, we use the fact that $t_{\rm amen}(\alpha) \leq t_{\rm diss}(\alpha) \leq 2\sqrt{2\delta(\alpha)}$.
\end{proof}


Here is an example where the function $f$ of Theorem \ref{formula spectral radius} can be computed explicitely.

\begin{thm} \label{computation free group random walk}
Let $(g_n)_{n \in \N}$ be the canonical symmetric random walk on the free group $\F_d$ on $d \geq 1$ generators. Let $|g|$ denote the word length of $g \in \F_d$. Then for all $s \geq 0$, we have

\renewcommand{\arraystretch}{2}
$$ \uplim_{n \to \infty} \mathbb{E}\left( e^{-s|g_n|} \right)^{1/n} =\left\lbrace
\begin{array}{ll}
\frac{1}{2d}\left( (2d-1)e^{-s}+e^{s} \right)  & \mbox{if $s < \frac{1}{2} \log(2d-1)$}\\
\frac{\sqrt{2d-1}}{d} & \mbox{if $s \geq \frac{1}{2}\log(2d-1)$.}\\
\end{array}
\right.$$
\renewcommand{\arraystretch}{1}
\end{thm}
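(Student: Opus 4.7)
For $d \geq 2$ my plan hinges on the observation that $(|g_n|)_{n \geq 0}$ is itself a Markov chain on $\{0,1,2,\dots\}$: from a state $k \geq 1$ exactly one of the $2d$ uniform generators cancels the last letter, so the length moves to $k-1$ with probability $q := \tfrac{1}{2d}$ and to $k+1$ with probability $p := \tfrac{2d-1}{2d}$; from $0$ it always jumps to $1$. Writing $u_s(k) := e^{-sk}$, the quantity of interest is $\phi_n(s) := \mathbb{E}(e^{-s|g_n|}) = (P^n u_s)(0)$ for the transition operator $P$, and a direct computation shows that $u_s$ is almost an eigenfunction of $P$, namely $(Pu_s)(k) = \lambda(s) u_s(k)$ for every $k \geq 1$, where
\[ \lambda(s) = pe^{-s} + qe^s = \frac{(2d-1)e^{-s} + e^s}{2d}. \]
The function $\lambda$ achieves its unique minimum at $s^* := \tfrac{1}{2}\log(2d-1)$ with value $\lambda(s^*) = 2\sqrt{pq} = \tfrac{\sqrt{2d-1}}{d}$, so the two pieces of the claimed formula coincide at $s = s^*$. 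The eigenfunction identity fails at the reflecting boundary, and the phase transition at $s^*$ reflects precisely this failure.

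I would produce matching upper and lower bounds in each regime. The upper bound is a simple coupling with the unreflected biased walk $X_n = \eta_1 + \cdots + \eta_n$, where $(\eta_i)$ are i.i.d.\ with $\mathbb{P}(\eta_i = +1) = p$, $\mathbb{P}(\eta_i = -1) = q$: using the rule ``copy $\eta_n$ whenever the length is $\geq 1$, replace a down-step at $0$ by an up-step'', the sequence $|g_n| - X_n$ is a nondecreasing non-negative integer, so $|g_n| \geq X_n$ almost surely and $\phi_n(s) \leq \mathbb{E}(e^{-sX_n}) = \lambda(s)^n$. Combined with the trivial monotonicity $\phi_n(s) \leq \phi_n(s^*)$ valid for $s \geq s^*$, this yields the correct upper bound in both regimes.

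For the matching lower bound when $s \geq s^*$, the inequality $\phi_n(s) \geq \mathbb{P}(|g_n| = 0)$ together with Kesten's formula $\rho_\mu(\F_d) = \tfrac{\sqrt{2d-1}}{d}$ for the spectral radius of the simple random walk gives $\uplim_n \phi_n(s)^{1/n} \geq \lambda(s^*)$ via the even subsequence. The harder case is $s < s^*$, where I plan to restrict to paths that stay in $\{1,2,\dots\}$ after the first step; on this event $|g_n|$ coincides with $1 + Z_{n-1}$ for an unreflected walk $Z$ started at $0$, so
\[ \phi_n(s) \geq e^{-s}\, \mathbb{E}\bigl(e^{-sZ_{n-1}}\, \mathbf{1}_{\{Z_j \geq 0,\ 0 \leq j \leq n-1\}}\bigr). \]
I would then apply the exponential tilt $\mathrm{d}\mathbb{Q}_s/\mathrm{d}\mathbb{P}\big|_{\mathcal{F}_n} = \lambda(s)^{-n} e^{-sZ_n}$, under which $Z$ is an i.i.d.\ $\pm 1$ walk with up-probability $pe^{-s}/\lambda(s)$ and drift $(pe^{-s} - qe^s)/\lambda(s) = -\lambda'(s)/\lambda(s)$, which is strictly positive precisely when $s < s^*$. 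Consequently $\mathbb{Q}_s(Z_j \geq 0\ \forall j \geq 0)$ is a positive constant by gambler's ruin, and inverting the tilt gives $\phi_n(s) \geq c\, \lambda(s)^n$.

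The main technical point will be verifying that the tilted positivity threshold is \emph{exactly} the critical $s^*$ (which is built into the relation $-\lambda'(s^*)/\lambda(s^*) = 0$) and that the gambler's ruin constant is uniform in $n$; everything else is routine sandwiching. The degenerate case $d = 1$ (where $p = q = \tfrac{1}{2}$ and $s^* = 0$, so only the second regime is relevant) is handled by the trivial bound $\phi_n(s) \leq \phi_n(0) = 1$ together with the classical local limit $\mathbb{P}(|g_{2n}| = 0) \sim c n^{-1/2}$ for the simple walk on $\Z$, both yielding $\uplim_n \phi_n(s)^{1/n} = 1 = \tfrac{\sqrt{2d-1}}{d}\big|_{d=1}$.
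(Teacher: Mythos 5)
Your argument is correct, and it is genuinely different from the one in the paper. The paper works with the generating function $A(x,y)=\sum_{n,m}\mathbb{P}(|g_n|=m)x^ny^m$: it derives a linear recursion for $p_{n,m}=\mathbb{P}(|g_n|=m)$, solves for $A(x,y)$ in closed form using Kesten's explicit formula for $B(x)=A(x,0)$, and then reads off $\uplim_n\phi_n(s)^{1/n}$ as the reciprocal of the radius of convergence in $x$ via Cauchy--Hadamard; the two regimes appear as the competition between the pole of the rational prefactor at $x=2dy/((2d-1)y^2+1)$ and the branch point of $B$ at $x=d/\sqrt{2d-1}$. You instead exploit that $|g_n|$ is a reflected biased walk, get the upper bound $\phi_n(s)\leq\lambda(s)^n$ by a monotone coupling with the unreflected walk (sharpened to $\lambda(s^*)^n$ for $s\geq s^*$ by the trivial monotonicity in $s$), and get matching lower bounds from $\mathbb{P}(g_n=e)$ plus Kesten's spectral radius in the supercritical regime and from a Cram\'er tilt plus gambler's ruin in the subcritical one. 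All the steps check out: the coupling inequality $|g_n|\geq X_n$ is valid, $\mathrm{d}\mathbb{Q}_s/\mathrm{d}\mathbb{P}|_{\mathcal F_n}=\lambda(s)^{-n}e^{-sZ_n}$ is a consistent change of measure under which the drift is $-\lambda'(s)/\lambda(s)>0$ exactly for $s<s^*$, and the survival probability $\mathbb{Q}_s(Z_j\geq 0\ \forall j)$ is a positive constant independent of $n$. Both proofs ultimately lean on Kesten, but in different ways (the explicit Green's function versus only the return-probability asymptotics); your version has the advantage of explaining the phase transition probabilistically --- it occurs where the tilted walk's drift changes sign --- and of adapting to settings without a closed-form Green's function, while the paper's computation is shorter once Kesten's formula is quoted.
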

\begin{proof}
For all $n,m \in \N$, let $p_{n,m}=\mathbb{P}(|g_n|=m)$. Extend the notation to $n, m \in \Z$ by letting $p_{n,m}=0$ if $n < 0$ or $m< 0$. Define a formal series
$$ A(x,y)=\sum_{n,m} p_{n,m} x^ny^m.$$
For a given $s \geq 0$, we have
$$\uplim_{n \to \infty} \mathbb{E}\left( e^{-s|g_n|} \right)^{1/n}=\uplim_{n \to \infty} \left( \sum_{m} p_{n,m} e^{-sm} \right)^{1/n}$$ and by the Cauchy-Hadamard theorem this is precisely the inverse of the radius of convergence in $x$ of the formal series 
$$ A(x,e^{-s})=\sum_n \left( \sum_{m} p_{n,m}e^{-sm} \right) x^n.$$ We will determine this radius of convergence by computing explicitely $A(x,y)$. 

First, observe that we have the following relation
$$ \forall n \in \Z, \quad p_{n,m}=\frac{2d-1}{2d} p_{n-1,m-1}+ \frac{1}{2d}p_{n-1,m+1}$$
which holds for all $m \geq 2$. It also obviously holds for all $m \leq -2$. To make it work for all $n, m \in \Z$, one can modify this equation as follows:
$$ \forall n,m \in \Z, \quad p_{n,m}=\frac{2d-1}{2d} p_{n-1,m-1}+ \frac{1}{2d}p_{n-1,m+1} + \delta_{m,0}\delta_{n,0} + (\delta_{m,1}-\delta_{m,-1})\frac{1}{2d}p_{n-1,0}.$$
This can be checked easily in each case $m=-1,0,1$. Now, we multiply this equation by $x^ny^m$ and we sum over all $n,m \in \Z$. We obtain
$$ A(x,y)=\frac{2d-1}{2d}xy A(x,y) + \frac{1}{2d}xy^{-1}A(x,y)+1+\frac{1}{2d}(y-y^{-1})xB(x)$$
where $B(x)=A(x,0)$. By multiplying by $2dy$ and rearranging the terms, we get
$$(2dy-(2d-1)xy^2-x)A(x,y)=2dy+x(y^2-1)B(x).$$
Finally, we obtain
$$A(x,y)=\frac{2dy+x(y^2-1)B(x)}{2dy-((2d-1)y^2+1)x}.$$
Now, the formal series $B(x)$ has been computed by Kesten \cite[Proof of Theorem 3]{Kes59}. He found
$$B(x)=\frac{\sqrt{d^2-(2d-1)x^2}-(d-1)}{1-x^2}$$
and its radius of convergence is $\frac{d}{\sqrt{2d-1}}.$
Thus if we fix $y=e^{-s} \in ]0,1[$, the radius of convergence of 
$$A(x,y)=\frac{2dy+x(y^2-1)B(x)}{2dy-((2d-1)y^2+1)x}.$$
with respect to the variable $x$ is given by 
$$ \min\left( \frac{2dy}{(2d-1)y^2+1},\frac{d}{\sqrt{2d-1}} \right)$$
which is what we wanted.
\end{proof}

\section{Ergodicity and Krieger type}
In this section we investigate the ergodicity and type of the Gaussian actions. We first observe the following fact.

\begin{prop}\label{erg scale} 
Let $\alpha : G \curvearrowright \cH$ be an affine isometric action. There exists $t_{\rm erg}(\alpha) \in [0,+\infty]$ such that $\widehat{\alpha}^t$ is ergodic for all $0 < t < t_{\rm erg}(\alpha) $ and not ergodic for all $t > t_{\rm erg}(\alpha)$.
\end{prop}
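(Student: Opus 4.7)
The plan is to mirror the proofs of Propositions~\ref{diss scale} and \ref{amen scale} by showing that non-ergodicity is monotone in $t$: if $0<b<a$ and $\widehat{\alpha}^b$ fails to be ergodic, then $\widehat{\alpha}^a$ fails to be ergodic as well. Granted this, $t_{\mathrm{erg}}(\alpha):=\sup\{t>0 \mid \widehat{\alpha}^t \text{ is ergodic}\}$ does the job.

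Fix $0<b<a$, set $c=\sqrt{a^2-b^2}$, and apply Proposition~\ref{Rotation trick} to the affine space $\cH^a$. By Remark~\ref{conjugacy different gaussian} this produces a $G$-equivariant nonsingular isomorphism
\[ \widehat{R}_\theta : \widehat{\cH}^a \otimes \widehat{\cH}^0 \to \widehat{\cH}^b \otimes \widehat{\cH}^c \]
intertwining $\widehat{\alpha}^a \otimes \widehat{\alpha}^0$ with $\widehat{\alpha}^b \otimes \widehat{\alpha}^c$ (with $\cos\theta=b/a$, $\sin\theta=c/a$). Starting from a non-constant $f \in \rL^\infty(\widehat{\cH}^b)^G$, the function $f\otimes 1$ on the right is $G$-invariant, and transporting it back via $\widehat{R}_\theta$ produces a $G$-invariant $F \in \rL^\infty(\widehat{\cH}^a \otimes \widehat{\cH}^0)$. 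Since $\widehat{\alpha}^0$ preserves the canonical pmp Gaussian measure $\mu_0$ on $\widehat{\cH}^0$, the conditional expectation $E_a : \rL^\infty(\widehat{\cH}^a \otimes \widehat{\cH}^0) \to \rL^\infty(\widehat{\cH}^a)$ obtained by integrating the second coordinate against $\mu_0$ is $G$-equivariant, so $E_a(F) \in \rL^\infty(\widehat{\cH}^a)^G$. It suffices to show that $E_a(F)$ is non-constant.

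To do this I would identify the restriction of $E_a$ to the image of $\rL^\infty(\widehat{\cH}^b)$ inside $\rL^\infty(\widehat{\cH}^a \otimes \widehat{\cH}^0)$ with the second quantisation operator $\Gamma(t)$, where $t=b/a$. A direct computation from the formula of Proposition~\ref{Rotation trick} shows that any first-chaos Gaussian variable $\widehat{f}$ on $\widehat{\cH}^b$ with linear part $\xi \in \cH^0$ pulls back, up to an additive constant, to $t\,\widehat{f}_a + s\,\widehat{\xi}_0$, where $s=c/a$, $\widehat{f}_a$ is the Gaussian on $\widehat{\cH}^a$ with the same linear part, and $\widehat{\xi}_0$ is the linear Gaussian on $\widehat{\cH}^0$. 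Using independence of the two summands under $\mu_x \otimes \mu_0$ and the Gaussian formula of Proposition~\ref{formula Radon-Nikodym derivatives}.(ii), one obtains
\[ E_a(\exp(i\widehat{f})) = e^{-s^2\|\xi\|^2/2}\,\exp(it\widehat{f}_a). \]
Expanding both sides as Taylor series in $\xi$ and comparing terms of each order shows that, under the natural identification of the Wiener chaos spaces of $\rL^2(\widehat{\cH}^b)$ and $\rL^2(\widehat{\cH}^a)$ over the common tangent Hilbert space $\cH^0$, $E_a$ acts on the $n$-th chaos as multiplication by $t^n$. Since $t>0$ this operator is injective on $\rL^2$, hence maps any non-constant bounded function to a non-constant function; applied to $f$ it yields the desired non-constant element of $\rL^\infty(\widehat{\cH}^a)^G$, so $\widehat{\alpha}^a$ is not ergodic.

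The main obstacle is the Wiener chaos computation identifying $E_a$ with $\Gamma(t)$: the decomposition of the rotated Gaussian and the subsequent Hermite-polynomial bookkeeping are elementary but must be carried out cleanly, taking care to match reference measures. Once this is in hand, the existence and required properties of $t_{\mathrm{erg}}(\alpha)$ follow formally from the monotonicity, exactly as for dissipativity and amenability in Propositions~\ref{diss scale} and \ref{amen scale}.
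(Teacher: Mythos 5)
Your proof is correct and follows the same overall strategy as the paper's: the rotation trick of Proposition \ref{Rotation trick} together with Remark \ref{conjugacy different gaussian}, the $G$-equivariant Gaussian conditional expectation onto the larger-scale factor, and the reduction of the statement to monotonicity in $t$. The one place where you genuinely diverge is how you establish non-degeneracy of the composite map $f \mapsto E_a(\psi(f\otimes 1))$. You do it by identifying this map with the second quantisation $\Gamma(b/a)$ and checking that it acts as $(b/a)^n$ on the $n$-th Wiener chaos, hence is injective on $\rL^2$; this is a valid Mehler-formula argument, but it is the heaviest part of your write-up and you rightly flag the chaos bookkeeping as the main obstacle. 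The paper avoids the chaos decomposition entirely: arguing in the contrapositive direction (ergodicity at the larger scale implies ergodicity at the smaller scale), it notes that $\mu_{tx}(f)=\mu_{(x,0)}(\psi(f\otimes 1))=\mu_{(x,0)}(E(\psi(f\otimes 1)))$ for all $x\in\cH$, so once $E(\psi(f\otimes 1))$ is constant by ergodicity, $f$ itself is constant because the linear span of $\{\mu_{tx}\mid x\in\cH\}$ is dense in $\rL^1$ (Proposition \ref{density}). That duality argument delivers exactly the injectivity you need in one line, with no Hermite polynomials; you could substitute it for your $\Gamma(t)$ computation and shorten the proof considerably, though your version has the small side benefit of making the Ornstein--Uhlenbeck structure of the map between the two scales explicit.
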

\begin{proof}
It is enough to show that if $\widehat{\alpha}$ is ergodic, then $\widehat{\alpha}^t$ is ergodic for all $t \in ]0,1[$. Write $t=\cos \theta $ and $s=\sin \theta$ for some $\theta \in ]0,\frac{\pi}{2}[$ and let $R_\theta : \cH \times \cH^0 \rightarrow \cH^t \times \cH^s$ be the isometry defined in Proposition \ref{Rotation trick}. Then it induces a $G$-equivariant nonsingular isomorphism $\widehat{R}_\theta : \widehat{\cH} \otimes \widehat{\cH}^0 \rightarrow \widehat{\cH}^t \otimes \widehat{\cH}^s$, hence a $G$-equivariant isomorphism $\psi  : \rL^\infty(\widehat{\cH}^t \otimes \widehat{\cH}^s) \rightarrow \rL^\infty(\widehat{\cH} \otimes \widehat{\cH}^0)$ which satisfies
$$\mu_{(x,\xi)}(\psi(f))=\mu_{R_\theta(x,\xi)}(f)$$
for all $f \in  \rL^\infty(\widehat{\cH}^t \otimes \widehat{\cH}^s)$ and all $(x,\xi) \in \cH \times \cH^0$.
Now, suppose that $f \in \rL^\infty( \widehat{\cH}^t)$ is an $ \widehat{\alpha}^t$-invariant function. Then $\psi(f \otimes 1)$ is $\widehat{\alpha} \otimes \widehat{\alpha}^0$-invariant. Moreover, for all $(x,\xi) \in \cH \times \cH^0$, we have 
$$\mu_{(x,\xi)}(\psi(f \otimes 1))=\mu_{R_\theta(x,\xi)}(f \otimes 1)=\mu_{tx+s\xi}(f).$$
Let $E: \rL^\infty(\widehat{\cH} \otimes \widehat{\cH}^0) \rightarrow \rL^\infty(\widehat{\cH})$ be the canonical Gaussian conditional expectation characterized by $\mu_{x} \circ E = \mu_{(x,0)}$ for all $x \in \cH$. Since $E$ is $G$-equivariant and $\widehat{\alpha}$ is ergodic, then $E(\psi(f \otimes 1))=\lambda 1$ for some $\lambda \in \R$. This implies that 
$$\mu_{tx}(f)=\mu_{(x,0)}(\psi(f \otimes 1))=\mu_{(x,0)}(E(\psi(f \otimes 1)))=\lambda$$ for all $x \in \cH$. Thus $f=\lambda 1$ thanks to Proposition \ref{density}. This shows that $\widehat{\alpha}^t$ is ergodic.
\end{proof}

\begin{rem}
When $G$ is a countable group, a dissipative action on a diffuse probability space can never be ergodic. Thus, in that case, we have $t_{\rm erg}(\alpha) \leq t_{\rm diss}(\alpha)$ for every isometric action $\alpha$ of $G$.
\end{rem}

\subsection{Ergodicity and type in the mixing case}
We first study the ergodicity and type of Gaussian actions under the assumption that the linear part of the affine isometric action is mixing. The main tool we use is the following result of Schmidt and Walters.

\begin{thm} \cite{SW81} \label{schmidt}
Let $G$ be a locally compact group. Let $\sigma : G \curvearrowright X$ be a nonsingular action and $\rho : G \curvearrowright (Y,\mu)$ a probability measure preserving action. Suppose that $\sigma$ is recurrent and $\rho$ is mixing. Then every $\sigma \otimes \rho$-invariant function in $\rL^\infty(X \otimes Y)$ is contained in $\rL^\infty(X)$.
\end{thm}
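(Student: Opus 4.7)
My plan is to follow the standard Hilbert-space argument. Let $\rE : \rL^\infty(X \otimes Y) \to \rL^\infty(X)$ denote the canonical conditional expectation given by integration in $y$ against $\mu$. Since $\rho$ preserves $\mu$, $\rE$ is $G$-equivariant from $\sigma \otimes \rho$ to $\sigma$, so $\tilde{f} := f - \rE(f)$ is again $\sigma \otimes \rho$-invariant and it suffices to show $\tilde{f} = 0$. The key reformulation is that the invariance of $\tilde f$ reads, fiberwise in $x$, as the identity
\[
\tilde{f}_{gx} = U_\rho(g)\, \tilde f_x \quad \text{in } \rL^2_0(Y,\mu),
\]
where $U_\rho(g)\xi(y) = \xi(g^{-1}y)$ is the Koopman representation of $\rho$ and $\rL^2_0$ denotes the orthogonal complement of the constants.

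Suppose for contradiction that $\tilde f \neq 0$. Then there is $\varepsilon > 0$ and a measurable set $A \subset X$ of positive (finite) measure on which $\|\tilde f_x\|_{\rL^2(Y)} \geq \varepsilon$. Since $\rL^2(Y,\mu)$ is separable and $x \mapsto \tilde f_x$ is Bochner-measurable, I can find a countable partition $X = \bigsqcup_i X_i$ and vectors $h_i \in \rL^2(Y)$ with $\|\tilde f_x - h_i\|_{\rL^2(Y)} \leq \varepsilon/10$ for every $x \in X_i$. Choosing $i$ so that $B := X_i \cap A$ has positive measure, the triangle inequality gives $\|\tilde f_x - \tilde f_{x'}\|_{\rL^2(Y)} \leq \varepsilon/5$ for all $x, x' \in B$.

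To finish, I would play recurrence of $\sigma$ against mixing of $\rho$. The recurrence hypothesis, applied to an exhaustion of $G$ by compact sets, implies that for a.e.\ $x \in B$ the set $\{ g \in G : gx \in B\}$ is not relatively compact in $G$; fix such an $x_0$ and a sequence $g_n \to \infty$ with $g_n x_0 \in B$. By the choice of $B$ we have $\| \tilde f_{g_n x_0} - \tilde f_{x_0}\|_{\rL^2(Y)} \leq \varepsilon/5$ for every $n$. But $\tilde f_{g_n x_0} = U_\rho(g_n) \tilde f_{x_0}$ with $\tilde f_{x_0} \in \rL^2_0(Y)$, and mixing of $\rho$ gives $\langle U_\rho(g_n) \tilde f_{x_0}, \tilde f_{x_0}\rangle \to 0$, whence
\[
\| U_\rho(g_n) \tilde f_{x_0} - \tilde f_{x_0}\|_{\rL^2(Y)}^2 \longrightarrow 2 \| \tilde f_{x_0}\|_{\rL^2(Y)}^2 \geq 2\varepsilon^2,
\]
contradicting the previous bound for $n$ large.

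The main obstacle is not really the mixing/recurrence tension, which is the whole content of the proof, but the technical upgrade of recurrence from its set-theoretic form (given $A$ of positive measure and $K \subset G$ compact, some $g \notin K$ satisfies $m(gA \cap A) > 0$) to the pointwise statement that the return times $\{g : gx \in B\}$ are non-relatively-compact for a.e.\ $x \in B$. This is a standard argument in the locally compact conservative setting but must be spelled out carefully. Given that upgrade, the structure above produces an $\rL^2$-norm that cannot be simultaneously small (by continuity of $x \mapsto \tilde f_x$ on $B$) and asymptotically equal to $\sqrt{2}\,\|\tilde f_{x_0}\|$ (by mixing), forcing $\tilde f_{x_0} = 0$ and hence $\tilde f = 0$.
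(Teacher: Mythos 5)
Your mechanism is sound, but it is a genuinely different route from the one the paper takes. The paper does not reprove this statement at all: it cites \cite{SW81} and observes that the argument there goes through verbatim with recurrence in place of proper ergodicity; the argument in that style is written out only for the generalization (Theorem \ref{generalized schmidt}). That proof works with an invariant \emph{set} $A$, realizes $X$ on a compact metric model, encodes $A$ as a measurable map $x \mapsto A_x$ into the Polish space $(\mathfrak{P}(Y),d_\mu)$, makes this map continuous on a closed set of measure $\geq 1-\varepsilon$ by Lusin's theorem, and then plays recurrence against mixing through approximating points $x_n \to x$ with $g_n x_n \to x$, concluding $\mu(A_x)=\mu(A_x)^2$. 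Your version replaces the topological model and Lusin by the conditional expectation $\rE$, the fiberwise identity $\tilde f_{gx}=U_\rho(g)\tilde f_x$ in $\rL^2_0(Y,\mu)$, and a countable partition of $X$ into pieces on which $x \mapsto \tilde f_x$ is nearly constant (separability of $\rL^2(Y)$ standing in for Lusin). This buys you two things: you treat arbitrary bounded invariant functions directly rather than reducing to sets, and the role of mixing becomes transparent as weak convergence to zero of $U_\rho(g_n)$ on $\rL^2_0$. In spirit the two proofs are the same tension between near-constancy of the fiber map on a return set and the mixing identity $\|U_\rho(g_n)\xi-\xi\|^2 \to 2\|\xi\|^2$.

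The one step you should not wave away is the equality $\tilde f_{g_n x_0}=U_\rho(g_n)\tilde f_{x_0}$ for your \emph{chosen} $x_0$ and $g_n$. For a non-discrete locally compact $G$, invariance of $\tilde f$ only gives this identity for almost every pair $(g,x)\in G\times X$; for a fixed $x_0$ it holds for Haar-almost every $g$, with an exceptional null set $N_{x_0}$ depending on $x_0$. Hence the upgrade of recurrence that you flag — that the return set $R_{x_0}=\{g : gx_0\in B\}$ is not relatively compact — is not the right one: an unbounded set could a priori sit inside $N_{x_0}$ off a compact. What you actually need, and what conservativity does provide (this is the Hopf-decomposition characterization of the recurrent part), is that for a.e.\ $x_0\in B$ one has $\int_G 1_B(gx_0)\,\rd g=+\infty$, so that $R_{x_0}\setminus N_{x_0}$ still has infinite Haar measure outside every compact set and the $g_n$ can be chosen there. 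This is exactly the difficulty that the approximating points $x_n\to x$, $g_nx_n\to x$ in the paper's Lusin argument are designed to sidestep. With that correction your proof closes.
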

\begin{rem}
Note that the recurrence assumption is weaker then the proper ergodicity assumption in \cite{SW81} but it is enough and the proof is the same.
\end{rem}

The following key lemma is a direct consequence of Theorem \ref{schmidt}.

\begin{lem} \label{affine recurrent and mixing}
Let $\alpha : G \curvearrowright \cH$ be an affine isometric action such that $\widehat{\alpha}^0$ is mixing. Suppose that $\cK \subset \cH$ is an $\alpha$-invariant affine subspace. Then for any nonsingular action $\sigma : G \curvearrowright X$ such that $\widehat{\alpha} \otimes \sigma$ is recurrent, we have
$$ \rL^\infty( \Mod( \widehat{\cH} \otimes X))^{\Mod(\widehat{\alpha} \otimes \sigma)} \subset \rL^\infty(\Mod(\widehat{\cK} \otimes X))$$
and in particular
$$ \rL^\infty( \widehat{\cH} \otimes X)^{\widehat{\alpha} \otimes \sigma} \subset \rL^\infty(\widehat{\cK} \otimes X).$$
\end{lem}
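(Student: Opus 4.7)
Set $E = (\cK^0)^\perp \subset \cH^0$. The $\alpha$-invariance of $\cK$ forces $\cK^0$, and hence $E$, to be $\alpha^0$-invariant, so fixing any $x_0 \in \cK$ yields a $G$-equivariant isomorphism $\cH \cong \cK \times E$ under which $\alpha$ corresponds to $\alpha|_{\cK} \times \alpha^0|_{E}$. Applying the Gaussian functor gives $\widehat{\cH} = \widehat{\cK} \otimes \widehat{E}$ and $\widehat{\alpha} = \widehat{\alpha|_{\cK}} \otimes \widehat{\alpha^0|_{E}}$. The key observation is that because $\alpha^0|_{E}$ is a linear orthogonal representation, the second factor $\widehat{\alpha^0|_{E}}$ preserves the canonical Gaussian probability measure $\mu_0$ on $\widehat{E}$; and since mixing of $\alpha^0$ descends to $\alpha^0|_{E}$, this pmp Gaussian action is moreover mixing.

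The idea is then to apply the Schmidt-Walters theorem (Theorem \ref{schmidt}) with $\widehat{\alpha^0|_{E}}$ playing the role of the pmp mixing factor. For the first (Maharam) assertion, I use the fact that a pmp factor contributes a trivial Radon-Nikodym cocycle, which produces a natural $G$-equivariant splitting
\[ \Mod(\widehat{\cH} \otimes X) \;\cong\; \Mod(\widehat{\cK} \otimes X) \otimes \widehat{E}, \qquad \Mod(\widehat{\alpha} \otimes \sigma) \;\cong\; \Mod(\widehat{\alpha|_{\cK}} \otimes \sigma) \otimes \widehat{\alpha^0|_{E}}, \]
realized precisely by the partial trivialization via $\mu_0$ described in Section 4 (the discussion of subspaces). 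Two further consequences of the pmp nature of $\widehat{\alpha^0|_{E}}$: the recurrence of $\widehat{\alpha} \otimes \sigma$ is equivalent to the recurrence of $\widehat{\alpha|_{\cK}} \otimes \sigma$, and the latter's Maharam extension $\Mod(\widehat{\alpha|_{\cK}} \otimes \sigma)$ is again recurrent (a standard property of Maharam extensions). Invoking Theorem \ref{schmidt} with this recurrent nonsingular factor and the pmp mixing action $\widehat{\alpha^0|_{E}}$ forces every $\Mod(\widehat{\alpha} \otimes \sigma)$-invariant function in $\rL^\infty(\Mod(\widehat{\cH} \otimes X))$ to lie in $\rL^\infty(\Mod(\widehat{\cK} \otimes X))$. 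The second assertion follows at once, since $\rL^\infty(\widehat{\cH} \otimes X)$ sits inside $\rL^\infty(\Mod(\widehat{\cH} \otimes X))$ as the subalgebra of functions constant along the modular flow, and this embedding is compatible with the inclusion $\cK \subset \cH$.

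The main subtlety I anticipate is purely bookkeeping at the level of the Maharam extension: one must check that the partial trivialization of $\Mod(\widehat{\cH} \otimes X)$ by $\mu_0$ really diagonalizes $\Mod(\widehat{\alpha} \otimes \sigma)$ as the tensor product $\Mod(\widehat{\alpha|_{\cK}} \otimes \sigma) \otimes \widehat{\alpha^0|_{E}}$, with the Radon-Nikodym cocycle of $\widehat{\alpha} \otimes \sigma$ depending only on the $\widehat{\cK} \otimes X$ coordinate. Once this compatibility is unwound using the pmp character of $\widehat{\alpha^0|_{E}}$, the argument reduces to a single direct application of Schmidt-Walters.
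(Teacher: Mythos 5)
Your proposal is correct and follows essentially the same route as the paper: decompose $\cH\cong\cK\times E$ with $E=(\cK^0)^\perp$, note that the factor $\widehat{\alpha^0|_E}$ is a mixing pmp Gaussian action so that recurrence of $\widehat{\alpha}\otimes\sigma$ is equivalent to that of $\widehat{\alpha|_\cK}\otimes\sigma$, split the Maharam extension as $\widehat{\alpha^0|_E}\otimes\Mod(\widehat{\alpha|_\cK}\otimes\sigma)$ via the partial trivialization by $\mu_0$, and apply Schmidt--Walters. The bookkeeping you flag is exactly the content of the paper's Section on subspaces of the modular bundle, and is handled there as you anticipate.
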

\begin{proof}
Let $\beta$ be the restriction of $\alpha$ to $\cK$ and let $\pi$ be the restriction of $\alpha^0$ to $(\cK^0)^\perp \subset \cH^0$. Then $\widehat{\alpha} \otimes \sigma=\widehat{\pi} \otimes \widehat{\beta} \otimes \sigma$. Since $\widehat{\pi}$ preserves a probability measure, $\widehat{\alpha}\otimes \sigma$ is recurrent if and only if $\widehat{\beta} \otimes \sigma$ is recurrent. Therefore, since $\widehat{\pi}$ is mixing, we can apply Theorem \ref{schmidt}. We can also apply it to $\Mod(\widehat{\alpha} \otimes \sigma)=\widehat{\pi} \otimes \Mod( \widehat{\beta} \otimes \sigma)$.
\end{proof}

\begin{thm} \label{evanescent ergodicity}
Let $\alpha : G \curvearrowright \cH$ be an affine isometric action. Suppose that $\alpha$ is evanescent, $\alpha^0$ is mixing and $\alpha$ has no fixed point. Then for any ergodic nonsingular action $\sigma : G \curvearrowright X$, the diagonal action $\widehat{\alpha} \otimes \sigma$ is either dissipative or ergodic of type $\III_1$.
\end{thm}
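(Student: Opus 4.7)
The plan is to assume that $\widehat{\alpha} \otimes \sigma$ is not dissipative and then show that the full Maharam extension $\Mod(\widehat{\alpha} \otimes \sigma)$ is ergodic, which simultaneously yields ergodicity and type $\III_1$. By the Hopf decomposition, non-dissipativity combined with the ergodicity of $\sigma$ (together with Proposition \ref{dissipativity evanescent}) implies that $\widehat{\alpha} \otimes \sigma$ is actually recurrent. First, I would invoke Proposition \ref{evanescent small subspaces} to produce a decreasing sequence $(\cH_n)_{n \in \N}$ of $\alpha$-invariant affine subspaces with $\bigcap_n \cH_n^0 = \{0\}$. Since $\alpha$ has no fixed point, any point $x \in \bigcap_n \cH_n$ would satisfy $gx - x \in \bigcap_n \cH_n^0 = \{0\}$ for all $g$, giving a fixed point; hence $\bigcap_n \cH_n = \emptyset$.

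Now fix an invariant element $f \in \rL^\infty(\Mod(\widehat{\cH} \otimes X))^{\Mod(\widehat{\alpha} \otimes \sigma)}$. Lemma \ref{affine recurrent and mixing}, applicable because $\widehat{\alpha} \otimes \sigma$ is recurrent and $\alpha^0$ is mixing, forces $f \in \rL^\infty(\Mod(\widehat{\cH_n} \otimes X))$ for every $n$. Trivializing the Maharam extension via a base measure $\mu_y \otimes \nu$ with $y \in \cH$ identifies $\Mod(\widehat{\cH} \otimes X)$ with $\widehat{\cH} \otimes X \otimes \R^*_+$, and the subalgebras above become $\rL^\infty(\widehat{\cH_n}) \bar\otimes \rL^\infty(X \otimes \R^*_+)$. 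Proposition \ref{intersection maharam extension}.(ii), applied to the family $(\cH_n)$ with $\bigcap_n \cH_n = \emptyset$, $E = \{0\}$ and $\cL = \cH/\cH^0$ a single point, gives $\bigcap_n \rL^\infty(\widehat{\cH_n}) = \C$. The standard commutation of decreasing intersection with von Neumann tensor product then yields $f \in \rL^\infty(X \otimes \R^*_+)$, i.e.\ after trivialization $f$ no longer depends on the $\widehat{\cH}$ coordinate.

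The remaining step exploits the Gaussian nature of the Radon--Nikodym cocycle $c_{\widehat{\alpha}}(g,\omega) = \frac{\rd \mu_{gy}}{\rd \mu_y}(\omega)$. Writing $f = f(x,t)$, the invariance equation for $\Mod(\widehat{\alpha} \otimes \sigma)$ reads, for every $g \in G$ and a.e.\ $(\omega,x,t)$,
\[
f\bigl( gx,\ t\, c_\sigma(g,x)\, c_{\widehat{\alpha}}(g,\omega) \bigr) = f(x,t).
\]
Since $\alpha$ has no fixed point, we may choose $g$ with $gy \neq y$; by Proposition \ref{formula Radon-Nikodym derivatives} the random variable $\log c_{\widehat{\alpha}}(g,\cdot)$ is then Gaussian with strictly positive variance, so $c_{\widehat{\alpha}}(g,\omega)$ has fully supported distribution on $\R^*_+$. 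Fixing a generic $(x,t)$ and varying $\omega$, one concludes that $f(gx,\cdot)$ is essentially constant on $\R^*_+$; transporting this back along $x \mapsto g^{-1}x$ forces $f \in \rL^\infty(X)$. The remaining $\sigma$-invariance combined with ergodicity of $\sigma$ yields $f = \text{const}$. This shows $\Mod(\widehat{\alpha}\otimes \sigma)$ is ergodic, whence $\widehat{\alpha}\otimes \sigma$ is ergodic of type $\III_1$.

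The main technical obstacle I foresee is the very last paragraph: one must justify the tensor-product/intersection commutation cleanly and, more importantly, extract the $\R^*_+$-ergodicity from the Gaussian log-cocycle while carefully handling the concomitant cocycle $c_\sigma$ of an arbitrary nonsingular action. Everything else is organized assembly of Lemma \ref{affine recurrent and mixing}, Proposition \ref{evanescent small subspaces}, Proposition \ref{intersection maharam extension}, and the explicit formulas of Proposition \ref{formula Radon-Nikodym derivatives}.
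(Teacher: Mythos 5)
Your overall skeleton is the paper's: reduce to the recurrent case via Proposition \ref{dissipativity evanescent}, use Lemma \ref{affine recurrent and mixing} to push any $\Mod(\widehat{\alpha}\otimes\sigma)$-invariant function into $\rL^\infty(\Mod(\widehat{\cK}\otimes X))$ for every invariant subspace $\cK$, and then intersect. The preliminary steps (deducing $\bigcap_n \cH_n=\emptyset$ from the absence of fixed points, and the applicability of Lemma \ref{affine recurrent and mixing}) are fine. The gap is in the middle step: the claim that, under a single trivialization by $\mu_y\otimes\nu$, the subalgebra $\rL^\infty(\Mod(\widehat{\cH_n}\otimes X))$ becomes $\rL^\infty(\widehat{\cH_n})\,\bar\otimes\,\rL^\infty(X\otimes\R^*_+)$ is false unless $y\in\cH_n$ — and since $\bigcap_n\cH_n=\emptyset$, no base point works for all $n$. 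The paper warns about exactly this in the ``Subspaces'' paragraph of Section \ref{Affine Gaussian functor}: $\rL^\infty(\widehat{\cK})$ depends only on $\cK^0$, but $\rL^\infty(\Mod(\widehat{\cK}))$ depends on the affine subspace $\cK$ itself; in the trivialization by $\mu_y$ it is twisted by the Radon--Nikodym derivative $\frac{\rd\mu_{y_n}}{\rd\mu_y}$ for $y_n\in\cH_n$, so it is not even \emph{contained} in the product algebra you write down. This is not a technicality one can wave away by ``standard commutation of intersections with tensor products'': the incompatibility of these twists as $d(y,\cH_n)\to\infty$ is precisely the mechanism by which the modular direction gets killed, and it is the content of Proposition \ref{intersection maharam extension}.(ii) (the second equality $\bigcap_i\rL^\infty(\Mod(\widehat{\cH_i}))=\rL^\infty(\widehat{\cL})$, which you do not use).

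The correct route, which is the paper's, is to recall that $\rL^\infty(\Mod(\widehat{\cK}\otimes X))$ is the fixed-point algebra of $\rL^\infty(\Mod(\widehat{\cK})\otimes\Mod(X))$ under the diagonal flow $\lambda\mapsto\theta_\lambda\otimes\theta_\lambda^{-1}$, and then apply Proposition \ref{intersection maharam extension}.(ii) in full: since $\bigcap_n\cH_n=\emptyset$ and $\bigcap_n\cH_n^0=\{0\}$, the intersection $\bigcap_n\rL^\infty(\Mod(\widehat{\cH_n}))$ is already $\C$ (no $\R^*_+$ factor survives), so the invariant function lands in $\rL^\infty(\Mod(X))^\theta=\rL^\infty(X)$ and ergodicity of $\sigma$ finishes. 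Your final paragraph — using the full support of the Gaussian log-Radon--Nikodym derivative of a single $g$ with $gy\neq y$ to rule out dependence on the $\R^*_+$ coordinate — is a correct argument \emph{if} one has first legitimately reduced to $f\in\rL^\infty(X\otimes\R^*_+)$, but that reduction is exactly what your trivialization claim fails to deliver; with the paper's Proposition \ref{intersection maharam extension} the reduction already gives $f\in\rL^\infty(X)$ and this last step becomes unnecessary.
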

\begin{proof}
By Proposition \ref{dissipativity evanescent}, we know that $\widehat{\alpha} \otimes \sigma$ is either dissipative or recurrent. Suppose that it is recurrent. Take $\cK$ a nonempty $\alpha$-invariant subspace of $\cH$. Lemma \ref{affine recurrent and mixing} shows that
$$ \rL^\infty( \Mod( \widehat{\cH} \otimes X))^{\Mod(\widehat{\alpha} \otimes \sigma)} \subset \rL^\infty( \Mod( \widehat{\cK} \otimes X)).$$
Recall (see the appendix) that $\rL^\infty( \Mod( \widehat{\cK} \otimes X))$ is the fixed point subalgebra of $\rL^\infty( \Mod( \widehat{\cK}) \otimes \Mod(X))$ under the action of the diagonal flow $\lambda \mapsto \theta_\lambda \otimes \theta_{\lambda}^{-1}$. Therefore, since $\cK$ is any invariant subspace and $\alpha$ is evanescent, Proposition \ref{intersection maharam extension} allows us to conclude that 
$$ \rL^\infty( \Mod( \widehat{\cH}) \otimes X)^{\Mod(\widehat{\alpha} \otimes \sigma)} \subset \rL^\infty(\Mod(X))^\theta=\rL^\infty(X).$$
By assumption, $\sigma$ is ergodic, hence we have proved that $\Mod(\widehat{\alpha} \otimes \sigma)$ is ergodic, i.e.\ $\widehat{\alpha} \otimes \sigma$ is ergodic of type $\III_1$.
\end{proof}

\begin{thm} \label{strong Haagerup}
Let $G$ be a locally compact but non-compact group with the Haagerup property. Then there exists an affine isometric action $\alpha : G \curvearrowright \cH$ such that for all $t > 0$, the nonsingular Gaussian action $\widehat{\alpha}^t$ satisfies :
\begin{enumerate}[ \rm (i)]
\item $\widehat{\alpha}^t$ is free.
\item $\widehat{\alpha}^t$ is weakly mixing of stable type $\III_1$.
\item for every ergodic nonsingular action $\rho : G \curvearrowright Y$, the diagonal action $\widehat{\alpha}^t \otimes \rho : G \curvearrowright \widehat{\cH} \otimes Y$ is either dissipative or ergodic of type $\III_1$.
\item $\widehat{\alpha}^t$ is of zero-type.
\item $\widehat{\alpha}^t$ has almost vanishing entropy. In particular, $\widehat{\alpha}^t$ has an invariant mean and it is nonamenable if $G$ is nonamenbale.
\end{enumerate}
\end{thm}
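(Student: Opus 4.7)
The plan is to construct a single evanescent affine isometric action $\alpha : G \curvearrowright \cH$ with mixing and faithful linear part, no fixed point, and with infinite Poincar\'e exponent, and then to derive all five properties from the structural results already proved in the paper. The non-compactness of $G$ together with its Haagerup property yields a mixing orthogonal representation $\pi_0$ with almost invariant vectors (take a direct sum of GNS representations of $C_0$ positive-definite functions converging to $1$ uniformly on compact sets), and any mixing representation of a non-compact group automatically has no invariant vectors. Setting $\pi := \pi_0 \oplus \lambda$, where $\lambda$ is the regular representation of $G$, one obtains a mixing, faithful orthogonal representation with almost invariant but no invariant vectors.

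Next I choose a proper function $f : G \to [1,+\infty)$ growing so slowly that
\begin{equation*}
\int_G e^{-s f(g)^2}\, \rd g = +\infty \quad \text{for every } s > 0.
\end{equation*}
Such an $f$ exists because $G$ has infinite Haar measure: fix any exhaustion $G = \bigcup_n U_n$ by compact sets with $|U_{n+1} \setminus U_n| \geq 1$ and set $f := \sqrt{\varphi(n)}$ on $U_{n+1}\setminus U_n$, where $\varphi(n) \to +\infty$ arbitrarily slowly (e.g.\ $\varphi(n) = \max(1,\log\log n)$) so that $e^{-s\varphi(n)}$ is not summable in $n$ for any $s > 0$. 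Applying Proposition \ref{existence evanescent} to $\pi$ and $f$ produces an evanescent affine isometric action $\alpha : G \curvearrowright \cH$ with no fixed point, linear part $\alpha^0 = \pi^{\oplus \N}$ (mixing and faithful), and a point $x \in \cH$ satisfying $\|\alpha_g x - x\| \leq f(g)$ for all $g \in G$. By direct comparison, $\int_G e^{-s\|\alpha_g x - x\|^2}\,\rd g = +\infty$ for every $s > 0$, so $\delta(\alpha) = +\infty$ and Theorem \ref{affine dissipativity} gives $t_{\rm diss}(\alpha) = +\infty$; thus $\widehat{\alpha}^t$ is recurrent for every $t > 0$.

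It remains to derive the five properties for arbitrary $t > 0$. The rescaled action $\alpha^t$ inherits evanescence, mixing and faithful linear part, and the absence of fixed points from $\alpha$, and by Proposition \ref{evanescent properties} it is proper with almost fixed points. Theorem \ref{dictionary}(ii) then gives (iv) zero-type, while Theorem \ref{dictionary}(iii), combined with the spectral radius lower bound of Theorem \ref{formula spectral radius}(ii) (which yields $\rho_\mu(\widehat{\alpha}^t) = 1$ from $\|\alpha^t\|_\mu = 0$ for compactly supported symmetric $\mu$, whereas $\rho_\mu(G) < 1$ for nonamenable $G$ and suitably chosen $\mu$), gives (v). Property (i) follows from Remark \ref{freeness} applied to $\alpha^0 = \pi^{\oplus \N}$ with $\pi$ faithful: the pmp Gaussian action is essentially free, and essential freeness depends only on the (shared) measure class. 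Property (iii) is exactly Theorem \ref{evanescent ergodicity} applied to $\alpha^t$; specialising to the trivial action on a point yields moreover that $\widehat{\alpha}^t$ is itself ergodic of type $\III_1$. For the stable type $\III_1$ in (ii), given any ergodic pmp action $\rho : G \curvearrowright Y$, the Radon-Nikodym cocycle of $\widehat{\alpha}^t \otimes \rho$ coincides with that of $\widehat{\alpha}^t$, so recurrence of $\widehat{\alpha}^t$ passes to the product and Theorem \ref{evanescent ergodicity} then excludes the dissipative alternative, forcing type $\III_1$. The sole delicate step is the calibration of $f$ ensuring $\delta(\alpha) = +\infty$; everything else follows by direct application of the theorems already established.
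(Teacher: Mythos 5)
Your construction and the derivation of items (ii)--(v) follow essentially the same route as the paper: build an evanescent, mixing, fixed-point-free action via Proposition \ref{existence evanescent} calibrated so that $\delta(\alpha)=+\infty$, deduce recurrence for all $t$ from Theorem \ref{affine dissipativity}, get properness and almost fixed points from Proposition \ref{evanescent properties}, and conclude with Theorem \ref{dictionary} and Theorem \ref{evanescent ergodicity}. Your explicit calibration of $f$, your observation that recurrence passes to products with pmp actions (needed to rule out the dissipative alternative in (ii)), and your citation of Theorem \ref{evanescent ergodicity} for item (iii) are all correct and in places more carefully spelled out than the paper's own proof.

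There is, however, a genuine gap in your argument for item (i). You assert that ``essential freeness depends only on the (shared) measure class,'' and use this to transfer the essential freeness of the pmp Gaussian action of $\pi^{\oplus\N}$ (Remark \ref{freeness}, via \cite{AEG94}) to the nonsingular action $\widehat{\alpha}^t$. This principle is false: essential freeness is a property of the point maps, not of the measure class, and $\widehat{\alpha}^t_g$ and $\widehat{\alpha^0_g}$ are different transformations of $\widehat{\cH}$ (they differ by the translation coming from the cocycle), so their fixed-point sets and the resulting stabilizers are a priori unrelated. Proposition \ref{faithful free} only gives that each individual $\widehat{\alpha}^t_g$, $g\neq e$, has null fixed-point set; by Fubini this shows almost every stabilizer is Haar-null, which for a non-discrete $G$ does not force it to be trivial --- this is precisely why the discrete-group argument does not extend. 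The correct repair is the paper's device: replace $\alpha$ by the product action $\alpha\times\pi^{\oplus\N}$ on $\cH\times H_\pi^{\oplus\N}$, where $\pi$ is a faithful mixing orthogonal representation acting \emph{linearly} (hence with a fixed point). Then $\widehat{\alpha\times\pi^{\oplus\N}}^t=\widehat{\alpha}^t\otimes\widehat{\pi^{\oplus\N}}$, the second factor is pmp and essentially free by \cite{AEG94}, the stabilizer of a pair is the intersection of the stabilizers, and none of the properties (ii)--(v) is affected by tensoring with a mixing pmp Gaussian factor. Baking $\lambda$ into $\pi$ from the start, as you do, does not by itself achieve this, since the cocycle produced by Proposition \ref{existence evanescent} need not vanish on the regular summand.
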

\begin{proof}
Since $G$ is not compact, we can find a continuous proper function $f:G \rightarrow [1,+\infty)$ such that
$$ \forall s > 0, \int_G e^{-sf(g)} \rd g =+\infty.$$
Since $G$ has the Haagerup property, it has a mixing orthogonal representation which has almost invariant vectors. By Proposition \ref{existence evanescent}, we can thus find an affine isometric action $\alpha : G \curvearrowright \cH$ such that $\alpha$ is evanescent and has no fixed point and $\alpha^0$ is mixing. Moreover, we can choose $\alpha$ such that $\|gx-x\|^2 \leq f(g)$ for all $g \in G$ and some $x \in \cH$. Then we have $\delta(\alpha)=+\infty$. Therefore $\widehat{\alpha}^t$ is recurrent for all $t > 0$ by Theorem \ref{affine dissipativity}. By Proposition \ref{evanescent properties}, $\alpha$ is proper, hence $\widehat{\alpha}^t$ is of zero-type. By Proposition \ref{evanescent properties}, $\alpha$ has almost fixed points, hence $\widehat{\alpha}^t$ has almost vanishing entropy (Theorem \ref{dictionary}). Finally, by Theorem \ref{general mixing}, we have that $\widehat{\alpha}^t \otimes \rho$ is either dissipative or ergodic of type $\III_1$. Finally, by Remark \ref{freeness}, up to replacing $\alpha$ by $\alpha \oplus \pi^{\N}$ for some faithful mixing orthogonal representation $\pi$ of $G$, we can assume that $\widehat{\alpha}^t$ is free for all $t$. 
\end{proof}

\begin{lem} \label{functional equation}
Let $w : \cH \rightarrow \{ z \in \C \mid |z| \leq 1 \}$ be a continuous function such that $w(\frac{a+b}{2})^2=w(a)w(b)$ for all $a,b \in \cH$. Then there exists a continuous affine function $f : \cH \rightarrow \R$ and a constant $\lambda \in [0,1]$ such that $w(x)=\lambda e^{\ri f(x)}$ for all $x \in \cH$. 
\end{lem}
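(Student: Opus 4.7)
The plan is to analyze the modulus and the phase of $w$ separately. First I would handle the modulus. Observe that taking modulus in the functional equation gives $|w(\tfrac{a+b}{2})|^2 = |w(a)|\,|w(b)|$. If $w(a_0)=0$ for some $a_0$, then setting $a=a_0$ in the equation shows $w(\tfrac{a_0+b}{2})=0$ for every $b\in\cH$; since $b\mapsto\tfrac{a_0+b}{2}$ is onto $\cH$, this forces $w\equiv 0$ and the conclusion holds with $\lambda=0$. So assume $w$ never vanishes and set $u(x):=\log|w(x)|\in(-\infty,0]$. Then $u$ is continuous and satisfies the Jensen equation $2u(\tfrac{a+b}{2})=u(a)+u(b)$.

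Next I would show $u$ is constant. Fix a base point $x_0\in\cH$ and set $\ell(\xi):=u(x_0+\xi)-u(x_0)$ for $\xi\in\cH^0$. Then $\ell(0)=0$ and $2\ell(\tfrac{\alpha+\beta}{2})=\ell(\alpha)+\ell(\beta)$; putting $\beta=0$ gives $2\ell(\alpha/2)=\ell(\alpha)$, and combining these yields $\ell(\alpha+\beta)=\ell(\alpha)+\ell(\beta)$. Hence $\ell$ is a continuous additive map on the Hilbert space $\cH^0$, so it is $\R$-linear. But $u\le 0$, so $\ell$ is bounded above, which forces $\ell\equiv 0$. Therefore $|w|$ is a constant $\lambda\in(0,1]$.

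Now I would analyze the phase. Set $v:=w/\lambda\colon\cH\to S^1$, which still satisfies $v(\tfrac{a+b}{2})^2=v(a)v(b)$. Define $\tilde v(\xi):=v(x_0+\xi)\,\overline{v(x_0)}$ on $\cH^0$. A direct substitution shows that $\tilde v$ satisfies the same functional equation with the additional normalisation $\tilde v(0)=1$. Setting $\beta=0$ in the equation gives $\tilde v(\alpha/2)^2=\tilde v(\alpha)$ for all $\alpha$, and substituting this into $\tilde v(\tfrac{\alpha+\beta}{2})^2=\tilde v(\alpha)\tilde v(\beta)$ yields $\tilde v(\alpha+\beta)=\tilde v(\alpha)\tilde v(\beta)$. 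Thus $\tilde v\colon\cH^0\to S^1$ is a continuous character. Since $\cH^0$ is a real Hilbert space, every such character has the form $\tilde v(\xi)=e^{\ri L(\xi)}$ for a unique continuous $\R$-linear functional $L\colon\cH^0\to\R$ (lift $\tilde v$ to a continuous additive map into $\R$ using simple connectedness of $\cH^0$, and use continuity to conclude linearity).

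Finally, writing $v(x_0)=e^{\ri\theta}$, we get $w(x)=\lambda\,v(x)=\lambda\,v(x_0)\tilde v(x-x_0)=\lambda\,e^{\ri f(x)}$ with $f(x):=\theta+L(x-x_0)$, which is continuous affine, as required. The only mildly delicate point is ensuring that the linear part of $u$ vanishes in the first step, which is exactly where the normalisation $|w|\le 1$ enters in an essential way.
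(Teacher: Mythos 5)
Your proof is correct and takes essentially the same route as the paper: the paper lifts $w$ in one step through the covering $\exp : \C \rightarrow \C^{\times}$ to obtain a midpoint-affine $h$ and then uses $|w| \leq 1$ to force the real part of $h$ to be constant, whereas you split off the modulus (handled by $\log$ plus the same boundedness argument) and the phase (handled by the covering $\R \rightarrow S^1$). The ingredients --- the Jensen equation forcing affineness, boundedness killing the linear part of the real component, and simple connectedness for the lift --- are identical, so this is the same argument in a slightly more factored form.
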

\begin{proof}
If $w(x)=0$ for some $x \in \cH$, then clearly $w(x)=0$ for all $x \in \cH$ and we are done. Now suppose that $w(x) \neq 0$ for all $x \in \cH$. Since $\cH$ is simply connected and $\exp : \C \rightarrow \C^{\times}$ is a covering map, then there exists a continuous function $h : \cH \rightarrow \C$, such that $w(x)=e^{h(x)}$ for all $x \in \cH$. The assumption on $w$ and the continuity of $h$ imply that $h(\frac{a+b}{2})=\frac{h(a)+h(b)}{2}$ for all $a,b \in \cH$. This means that $h: \cH \rightarrow \C$ is an affine map. But since $|w(x)| \leq 1$ for all $x \in \cH$, the real part of $h$ must be negative, hence constant because it is affine. This is precisely what we wanted.
\end{proof}

\begin{thm} \label{general mixing}
Let $\alpha : G \curvearrowright \cH$ be an affine isometric action such that $\alpha^0$ is mixing and $\alpha$ has no fixed point. Then we have
\begin{enumerate}[ \rm (i)]
\item  $\widehat{\alpha}^t$ is weakly mixing for all $t < t_{\rm diss}(\alpha)$. In particular, $t_{\rm erg}(\alpha) \geq t_{\rm diss}(\alpha)$, with equality if $G$ is countable.
\item For all $0 < t < \frac{1}{\sqrt{2}} t_{\rm diss}(\alpha)$, the Krieger $T$-invariant of $\widehat{\alpha}^t$ is trivial. In particular, $\widehat{\alpha}^t$ can only be of type $\III_0$ or of type $\III_1$.
\end{enumerate}
\end{thm}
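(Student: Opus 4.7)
The plan for (i) is to fix $t < t_{\rm diss}(\alpha)$ and an ergodic pmp action $\rho : G \curvearrowright (Y,\nu)$, and show that $\widehat{\alpha}^t \otimes \rho$ is ergodic. Since $\widehat{\alpha}^t$ is recurrent by the choice of $t$ and $\rho$ is pmp, the diagonal action $\widehat{\alpha}^t \otimes \rho$ is already recurrent. The main tool will be the rotation trick: pick $r$ with $t < r < t_{\rm diss}(\alpha)$ and set $s = \sqrt{r^2-t^2} > 0$; by Remark \ref{conjugacy different gaussian} there is a $G$-equivariant isomorphism $\widehat{\cH}^t \otimes \widehat{\cH}^s \cong \widehat{\cH}^r \otimes \widehat{\cH}^0$ intertwining $\widehat{\alpha}^t \otimes \widehat{\alpha}^s$ with $\widehat{\alpha}^r \otimes \widehat{\alpha}^0$, and crucially $\widehat{\alpha}^r$ remains recurrent.

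Given an invariant $f \in \rL^\infty(\widehat{\cH}^t \otimes Y)^{\widehat{\alpha}^t \otimes \rho}$, I would extend it trivially to $F = f \otimes 1 \in \rL^\infty(\widehat{\cH}^t \otimes \widehat{\cH}^s \otimes Y)$, which is $\widehat{\alpha}^t \otimes \widehat{\alpha}^s \otimes \rho$-invariant. Via the rotation, $F$ corresponds to an invariant of $\widehat{\alpha}^r \otimes \widehat{\alpha}^0 \otimes \rho$. Since $\widehat{\alpha}^r \otimes \rho$ is recurrent and $\widehat{\alpha}^0$ is mixing pmp (because $\alpha^0$ is mixing), Theorem \ref{schmidt} of Schmidt--Walters forces this function to lie in $\rL^\infty(\widehat{\cH}^r \otimes Y)$, i.e.\ to be independent of the $\widehat{\cH}^0$-coordinate. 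Pulling back, $F$ must belong simultaneously to the first-coordinate subalgebra $\rL^\infty(\widehat{\cH}^t \otimes Y)$ (by construction) and to the ``tilted'' subalgebra coming from the $\widehat{\cH}^r$-factor. The two corresponding affine subspaces of $\cH^t \times \cH^s$ sit at a nonzero angle because $s > 0$, so by Proposition \ref{intersection maharam extension} the intersection of the two Gaussian subalgebras reduces to $\C \otimes \rL^\infty(Y)$. Hence $f \in \rL^\infty(Y)$, and ergodicity of $\rho$ yields that $f$ is essentially constant, proving weak mixing. The main obstacle is the clean identification of this intersection of Gaussian subalgebras via Proposition \ref{intersection maharam extension}. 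The inequality $t_{\rm erg}(\alpha) \geq t_{\rm diss}(\alpha)$ is then immediate, with equality for countable $G$ because a countable-group dissipative action on a diffuse probability space is never ergodic.

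For part (ii), assume $t < \frac{1}{\sqrt{2}} t_{\rm diss}(\alpha)$, so that $t\sqrt{2} < t_{\rm diss}(\alpha)$. By part (i), $\widehat{\alpha}^{t\sqrt{2}}$ is weakly mixing, and since $\widehat{\alpha}^0$ is mixing pmp, the product $\widehat{\alpha}^{t\sqrt{2}} \otimes \widehat{\alpha}^0$ is ergodic. Applying Remark \ref{conjugacy different gaussian} with $r = t\sqrt{2}$ and $s = t$ yields a $G$-equivariant isomorphism $\widehat{\alpha}^{t\sqrt{2}} \otimes \widehat{\alpha}^0 \cong \widehat{\alpha}^t \otimes \widehat{\alpha}^t$, so $\widehat{\alpha}^t \otimes \widehat{\alpha}^t$ is ergodic. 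Because $\widehat{\alpha}^0$ is pmp and therefore preserves Krieger type, the Krieger type of $\widehat{\alpha}^t \otimes \widehat{\alpha}^t$ coincides with that of $\widehat{\alpha}^{t\sqrt{2}}$. If $\widehat{\alpha}^t$ were of type $\III_\lambda$ for some $\lambda \in ]0,1[$, a nonzero $u \in T(\widehat{\alpha}^t)$ would produce via an $S^1$-valued eigenfunction of the Radon--Nikodym cocycle an obstruction on the Maharam level; combined with the ``scale-invariance'' of the type under $t \mapsto t\sqrt{2}$ furnished by the rotation isomorphism, this forces $\widehat{\alpha}^{t\sqrt{2}}$ to have the same period $\lambda$, which is incompatible with the eventual dissipative transition at $t_{\rm diss}(\alpha)$. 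Hence the Krieger $T$-invariant of $\widehat{\alpha}^t$ is trivial. Ruling out types $\rm I$, $\II_1$, $\II_\infty$ via recurrence, absence of fixed point, and the genuinely nontrivial Gaussian Radon--Nikodym cocycle respectively, the only remaining possibilities are $\III_0$ and $\III_1$.
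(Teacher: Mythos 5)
Your part~(i) is essentially the paper's own argument: the rotation trick of Proposition~\ref{Rotation trick} transports an invariant function of $\widehat{\alpha}^t\otimes\rho$ to an invariant function of $\widehat{\alpha}^r\otimes\widehat{\alpha}^0\otimes\rho$, Schmidt--Walters (Theorem~\ref{schmidt}) kills the dependence on the $\widehat{\cH}^0$-coordinate, and Proposition~\ref{intersection maharam extension} applied to the two affine subspaces meeting in a single point finishes the job. The one step you state without justification is that $\widehat{\alpha}^r\otimes\rho$ is recurrent; this does hold (the Radon--Nikodym cocycle of the product with respect to $\mu\otimes\nu$ equals that of $\widehat{\alpha}^r$, so the pointwise convergence criterion of Theorem~\ref{affine dissipativity}.(\ref{dissipativity integral}) is unchanged), and the paper leaves the same point implicit, so I regard (i) as correct.

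Part~(ii), however, has a genuine gap. First, the assertion that tensoring with the pmp action $\widehat{\alpha}^0$ ``preserves Krieger type'' is unjustified: what is free is only the inclusion $T(\widehat{\alpha}^{t\sqrt 2})\subseteq T(\widehat{\alpha}^{t\sqrt 2}\otimes\widehat{\alpha}^0)$, whereas your argument needs the reverse inclusion, and in general the type of an ergodic action and of its product with an ergodic pmp action differ (this is exactly the distinction between type and stable type that the paper is at pains to make). Second, and fatally, the concluding step --- that $\widehat{\alpha}^{t\sqrt 2}$ having period $\lambda$ is ``incompatible with the eventual dissipative transition at $t_{\rm diss}(\alpha)$'' --- is not a deduction at all: nothing about dissipativity of $\widehat{\alpha}^s$ for $s>t_{\rm diss}(\alpha)$ forbids $\widehat{\alpha}^{t\sqrt 2}$ from being of type $\III_\lambda$, and the theorem itself makes no claim about the type above $\frac{1}{\sqrt 2}t_{\rm diss}(\alpha)$. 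A telltale symptom is that your argument for (ii) never uses the hypothesis that $\alpha$ has no fixed point, which is indispensable (with a fixed point, $\widehat{\alpha}^t$ preserves a probability measure and $T(\widehat{\alpha}^t)=\R$). The actual content of the paper's proof is the following: if $r\neq 0$ lies in the $T$-set, pick a unitary $u$ with $u\mu_x^{\ri r}$ invariant under $\Mod(\widehat{\alpha})$, tensor $u$ with itself, and transport through the $45$-degree rotation $V(\sqrt2 x,\sqrt2\xi)=(x+\xi,x-\xi)$; Schmidt--Walters applied to $\Mod(\widehat{\alpha}^{\sqrt2}\otimes\widehat{\alpha}^0)$ (this is where the hypothesis $t<\frac{1}{\sqrt2}t_{\rm diss}(\alpha)$ enters) shows $\psi(u\otimes u)$ is independent of the $\widehat{\cH}^0$-coordinate, which yields the functional equation $\mu_{y+\xi}(u)\,\mu_{y-\xi}(u)=\mu_y(u)^2$. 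Lemma~\ref{functional equation} then forces $u=e^{\ri\widehat f}$ for a continuous affine $f$, and substituting back into the eigenvalue equation shows that the cocycle $g\mapsto gx-x$ is a coboundary, i.e.\ $\alpha$ has a fixed point --- the desired contradiction. This functional-equation mechanism is the missing idea in your proposal.
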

\begin{proof}
$(\rm i)$ It is enough to show that $\widehat{\alpha}^t$ is ergodic for all $t \in ]0,1[$ under the assumption that $\widehat{\alpha}$ is recurrent. Write $t=\cos \theta $ and $s=\sin \theta$ for some $\theta \in ]0,\frac{\pi}{2}[$. Let $R_\theta : \cH \times \cH^0 \rightarrow \cH^t \times \cH^s$ be the isometry defined in Proposition \ref{Rotation trick}. Then it induces a $G$-equivariant nonsingular isomorphism $$\widehat{R}_\theta : \widehat{\cH} \otimes \widehat{\cH}^0 \rightarrow \widehat{\cH}^t \otimes \widehat{\cH}^s$$ which induces a $G$-equivariant isomorphism $$\psi  : \rL^\infty(\widehat{\cH}^t \otimes \widehat{\cH}^s) \rightarrow \rL^\infty(\widehat{\cH} \otimes \widehat{\cH}^0).$$
Pick $f \in \rL^\infty(\widehat{\cH}^t)$ an $\widehat{\alpha}^t$-invariant function. By Theorem \ref{schmidt}, we must have $\psi(f \otimes 1) \in \rL^\infty( \widehat{\cH})$. But $f \otimes 1 \in \rL^\infty(\widehat{\cK})$ where $\cK=\cH^t \times \{sy\} \subset \cH^t \times \cH^s$ for any point $sy \in \cH^s$. Since $R_{\theta}(\cH) \cap \cK=\{(ty,sy)\}$, it follows from Proposition \ref{intersection maharam extension} that
$$ \psi(\rL^\infty(\widehat{\cK})) \cap \rL^\infty(\widehat{\cH})=\C.$$
This implies that $\widehat{\alpha}^t$ is ergodic.

$(\rm ii)$ We assume that $\widehat{\alpha}^{\sqrt{2}}$ is recurrent and we only have to show that if the $T$-set of $\widehat{\alpha}$ is nontrivial then $\alpha$ has a fixed point. Suppose that $r \neq 0$ is in the $T$-set of $\widehat{\alpha}$. Pick $x \in \cH$. Then we can find a unitary $u \in \rL^\infty(\widehat{\cH})$ such that $u\mu_{x}^{\ri r} \in \rL^\infty(\Mod(\widehat{\cH}))$ is $\Mod(\widehat{\alpha})$-invariant (here we view $\mu_x$ as a $1$-density hence as a function on $\Mod(\widehat{\alpha})$, see the appendix). Then $v=(u \otimes u)\mu_{(x,x)}^{\ri r} \in \rL^\infty(\Mod(\widehat{\cH} \otimes \widehat{\cH}))$ is $\Mod(\widehat{\alpha} \otimes \widehat{\alpha})$-invariant. Using the isometry $V : \cH^{\sqrt{2}} \times \cH^0 \rightarrow \cH \times \cH$ given by $V(\sqrt{2}x,\sqrt{2}\xi)=(x+\xi,x-\xi)$, we obtain a $G$-equivariant isomorphism 
$$ \psi : \rL^{\infty}( \widehat{\cH} \otimes \widehat{\cH}) \rightarrow \rL^\infty(\widehat{\cH}^{\sqrt{2}} \otimes \cH^0).$$
Then we know that
$$\Mod(\psi)(v)=\psi(u \otimes u) \mu_{(\sqrt{2}x,0)}^{\ri r} \in \rL^{\infty}(\Mod(\widehat{\cH}^{\sqrt{2}} \otimes \widehat{\cH}^0))$$ is $\Mod(\widehat{\alpha}^{\sqrt{2}} \otimes \widehat{\alpha}^0)$-invariant. By Theorem \ref{schmidt}, we must have $\psi(u \otimes u) \in \rL^\infty(\Mod(\widehat{\cH}^{\sqrt{2}}))$. This means that $\mu_{(\sqrt{2}y,\xi)}(\psi(u \otimes u))=\mu_{(\sqrt{2}y,0)}(\psi(u \otimes u))$ for all $(\sqrt{2}y, \xi) \in \cH^{\sqrt{2}} \times \cH^0$. This yields $\mu_{y+\xi}(u) \mu_{y-\xi}(u)=\mu_{y}(u)^2$ for all $(y, \xi) \in \cH \times \cH^0$. By Lemma \ref{functional equation}, we can find a constant $\lambda \in [0,1]$ and a continuous affine map $f : \cH \rightarrow \R$ such that $\mu_{y}(u)=\lambda e^{\ri f(y)}$ for all $y \in \cH$. Since $\mu_{y}(e^{\ri \widehat{f}})=e^{-\|f\|^2} e^{\ri f(y)}$ for all $y \in \cH$, we conclude that $u=\lambda e^{\|f\|^2} e^{\ri \widehat{f}}$, hence $u=e^{\ri \widehat{f}}$ because $u$ is a unitary. Now, by assumption, we have that $u\mu_{x}^{\ri r}$ is invariant by $\Mod(\widehat{\alpha})$. This easily yields for all $g \in G$
$$ \exp\left( -\ri r b(gx,x) \right)=\exp( \ri (f - gf))$$
where $b(gx,x)=\langle \cdot -\frac{gx+x}{2}, gx-x \rangle$ and $gf=f \circ g^{-1}$.
Let $f^0=\langle \cdot, \xi \rangle$ be the linear part of $f$. Then we get $r(gx-x)=\xi-g^0\xi$ for all $g \in G$, which means that $\alpha$ fixes $x+r^{-1}\xi$. Thus, we have shown that if $\widehat{\alpha}$ has a nontrivial $T$-set, then $\alpha$ has a fixed point.
\end{proof}

\subsection{Ergodicity and type in the weakly mixing case} \label{section weakly mixing}

We now investigate the ergodicity and type of the Gaussian actions when the linear part of the affine isometric action is only weakly mixing. Recall that an orthogonal representation $\pi$ is weakly mixing if and only if it does not admit any finite-dimensional subrepresentation. Recall also that $\pi$ is weakly mixing if and only if there exists a sequence $g_n \in G$ such that $\pi(g_n)$ converges weakly to $0$, i.e.\ $\langle \pi(g_n) \xi, \eta \rangle \to 0$ for all $\xi, \eta \in \cH$. By using this, it is easy to see that the pmp Gaussian action $\widehat{\pi}$ is ergodic if and only if $\pi$ is weakly mixing. This is no longer true in the nonsingular case since, as we know, the Gaussian action can be dissipative. In the mixing case, we saw that this is the only obstruction. But in the weakly mixing case, the situation is even more delicate as demonstrated by the following example.

\begin{example}  \label{counter-example weak mixing}
Let $\alpha : \Z \curvearrowright \cH$ be an affine isometric action of $\Z$ such that the linear part of $\alpha$ is mixing and $\delta(\alpha)=0$. Take $\pi : \Gamma \rightarrow O(\cK)$ be any weakly mixing orthogonal representation of a group $\Gamma$. Consider the product action $\beta = \alpha \times \pi : \Z \times \Gamma \rightarrow \cH \times \cK $. Then the linear part of $\beta$ is weakly mixing, $\widehat{\beta}^t$ is recurrent for all $t$ but $\widehat{\beta}^t$ is never ergodic for $t > 0$ because $\widehat{\alpha}^t$ is dissipative.
\end{example}

Intuitively, to avoid the situation of Example \ref{counter-example weak mixing}, one really needs to assume that the ``weakly mixing direction" is recurrent in some sense by adding a growth condition on the orbits. This section is an attempt to make this idea more precise. But let us first start with a very simple criterion.
\begin{prop} \label{weak-mixing basic criterion}
Let $\alpha : G \curvearrowright \cH$ be an affine isometric action without a fixed point and suppose that there exists a closed subgroup $L \subset G$ such that $\alpha|_L$ has a fixed point and $\alpha^0|_L$ is weakly mixing. Then $\widehat{\alpha}^t$ is weakly mixing of stable type $\III_1$ for all $t > 0$.
\end{prop}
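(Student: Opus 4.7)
My plan is to exploit the subgroup $L$ to reduce everything to a classical pmp Gaussian situation. Since $\alpha^0|_L$ is weakly mixing it has no nonzero invariant vector, so the fixed-point set of $\alpha|_L$ reduces to a single point $x_0 \in \cH$. For every $t > 0$, the point $tx_0 \in \cH^t$ is then fixed by $\alpha^t|_L$, the probability measure $\mu_{tx_0}$ is $\widehat{\alpha}^t|_L$-invariant, and the pmp system $(\widehat{\cH}^t, \mu_{tx_0}, \widehat{\alpha}^t|_L)$ is canonically conjugate to the classical pmp Gaussian action of the orthogonal representation $\alpha^0|_L$. Since $\alpha^0|_L$ is weakly mixing, the Fock-space decomposition of the Koopman representation yields that this pmp Gaussian action is weakly mixing in the classical pmp sense.

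To prove stable type $\III_1$, I fix an arbitrary ergodic pmp action $\rho : G \curvearrowright (Y,\nu)$ and show that the Maharam extension $\Mod(\widehat{\alpha}^t \otimes \rho) = \Mod(\widehat{\alpha}^t) \otimes \rho$ is ergodic on $\Mod(\widehat{\cH}^t) \otimes Y$. Trivialising $\Mod(\widehat{\cH}^t)$ via the $L$-invariant probability measure $\mu_{tx_0}$ identifies it with $\widehat{\cH}^t \times \R_+^*$, and the combined $G$-action takes the form $g\cdot(\omega,\lambda,y) = (g\omega,\lambda\Delta_g(\omega)^{-1}, gy)$ where $\Delta_g(\omega) = \frac{\rd\mu_{tgx_0}}{\rd\mu_{tx_0}}(\omega)$. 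For $g \in L$ one has $\Delta_g \equiv 1$, so the restriction of this action to $L$ is simply $(\widehat{\alpha}^t|_L) \otimes \id_{\R_+^*} \otimes \rho|_L$, which is measure preserving for $\mu_{tx_0} \otimes \frac{\rd\lambda}{\lambda} \otimes \nu$.

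Given a $G$-invariant $F \in L^\infty(\widehat{\cH}^t \times \R_+^* \times Y)$, the pmp weak mixing of $\widehat{\alpha}^t|_L$ implies, by a standard tensor-product invariants argument extended to the $\sigma$-finite second factor $\R_+^* \times Y$, that $F$ is independent of the first coordinate, so $F = h(\lambda,y)$ for some $h \in L^\infty(\R_+^* \times Y)$. Since $\alpha$ has no global fixed point, there exists $g_0 \in G$ with $g_0 x_0 \neq x_0$, and by Proposition \ref{formula Radon-Nikodym derivatives} the random variable $\log\Delta_{g_0}$ is, up to an additive constant, a centred Gaussian of variance $t^2\|g_0 x_0 - x_0\|^2 > 0$ with respect to $\mu_{tx_0}$; in particular its essential range is all of $\R$. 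The invariance identity $h(\lambda\Delta_{g_0}(\omega)^{-1}, g_0 y) = h(\lambda,y)$, which must hold for a.e.\ $\omega$, then forces $h(\cdot, g_0 y)$ to be essentially constant in its first argument for a.e.\ $y$, so $h$ is independent of $\lambda$: write $h(\lambda,y)=\tilde h(y)$. For every $g \in G$, the $G$-invariance of $F$ now reads $\tilde h(gy) = \tilde h(y)$ (this is immediate when $gx_0 = x_0$ and follows by repeating the previous computation otherwise), so $\tilde h$ is $G$-invariant and hence constant by ergodicity of $\rho$. This gives stable type $\III_1$ (which automatically entails weak mixing). The main obstacle is the use of pmp weak mixing in the $\sigma$-finite tensor setting at the start of this paragraph; everything else is bookkeeping with the explicit Gaussian form of the Radon-Nikodym cocycle.
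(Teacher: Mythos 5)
Your proof is correct and is, at bottom, the same argument as the paper's, with one step unpacked differently. The paper also kills the spatial coordinate by weak mixing of the pmp Gaussian action of $L$ at the fixed measure $\mu_x$; to kill the modular coordinate it then applies the identical weak-mixing step to the conjugate subgroup $gLg^{-1}$, which fixes $\mu_{gx}$ for any $g$ with $gx\neq x$, and invokes Proposition \ref{intersection maharam extension} to get $\rL^\infty(\Mod(\widehat{\{x\}})\otimes Y)\cap\rL^\infty(\Mod(\widehat{\{gx\}})\otimes Y)=\C\otimes\rL^\infty(Y)$. You instead handle that step by hand with a single element $g_0$ moving $x_0$, using that $\log\frac{\rd\mu_{tg_0x_0}}{\rd\mu_{tx_0}}$ is a nondegenerate Gaussian with full essential range; this is exactly the computation hidden inside the proof of Proposition \ref{fixed point Maharam}(ii), so nothing is lost and your version is arguably more self-contained. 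Concerning the step you flag as the main obstacle: the extension of the weak-mixing invariants argument to the second factor $\R^*_+\otimes Y$ is legitimate, but the reason deserves to be made explicit. Since $L$ acts \emph{trivially} on the $\R^*_+$ coordinate and measure-preservingly on the probability space $Y$, you may disintegrate an invariant function over $\R^*_+$ and apply the genuine ``weakly mixing pmp times pmp'' statement fibrewise. This is not a cosmetic point: for a general nonsingular or infinite measure preserving $L$-action on the auxiliary factor, weak mixing of the Gaussian factor would not suffice (that is the distinction between weak and mild mixing), so the triviality of the $\R^*_+$-action is precisely what saves the argument.
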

\begin{proof}
We may assume that $t=1$. For the first part, take $\rho : G \curvearrowright (Y,\nu)$ a probability measure preserving action. Since the pmp Gaussian action $\widehat{\alpha}|_L$ is weakly mixing and preserves the probability measure $\mu_x$ for some $x \in \cH$, we know that $\rL^\infty( \Mod(\widehat{\cH}) \otimes Y)^L \subset \rL^\infty( \Mod(\{x\}) \otimes Y)$. Take $g \in G$ such that $gx \neq x$. Then $\widehat{\alpha}|_{gLg^{-1}}$ is also weakly mixing and preserves $\mu_{gx}$. We conclude that
$$\rL^\infty( \Mod(\widehat{\cH}) \otimes Y)^G \subset \rL^\infty( \Mod(\widehat{\{x\}}) \otimes Y) \cap \rL^\infty( \Mod(\widehat{\{gx\}}) \otimes Y)=\C \otimes \rL^\infty(Y)$$
by Proposition \ref{intersection maharam extension}. This shows that $\Mod(\widehat{\alpha})$ is weakly mixing.
\end{proof}

%

\begin{df}
Let $\pi : G \rightarrow \mathcal{O}(\cH)$ be an orthogonal representation.  A countable subset $\Lambda \subset G$ is called \emph{mixing with respect to $\pi$} if 
$$\forall \xi, \eta \in \cH, \; \lim_{g \in \Lambda, \: g \to \infty} \langle \pi(g) \xi,\eta \rangle=0.$$
When $\pi$ is the reduced Koopman representation of a probability measure preserving action $\sigma : G \curvearrowright (X,\mu)$, we will say that $\Lambda \subset G$ is \emph{mixing with respect to $\sigma$}.
\end{df}

\begin{df}
Let $\sigma : G \curvearrowright X$ be a nonsingular action. A countable subset $\Lambda \subset G$ is called \emph{recurrent with respect to $\sigma$} if for every subset $A \subset X$ with $A \neq \emptyset$ there exists infinitely many $g \in \Lambda$ such that $g A \cap A \neq \emptyset$.
\end{df}

Here is the key criterion which generalizes Theorem \ref{schmidt}.
\begin{thm}  \label{generalized schmidt}
Let $\sigma : G \curvearrowright X$ be a nonsingular action and $\rho : G \curvearrowright (Y,\mu)$ a probability measure preserving action. Suppose that there exists a subset $\Lambda \subset G$ such that $\Lambda$ is recurrent with respect to $\sigma$ and mixing with respect to $\rho$. Then every $\sigma \otimes \rho$-invariant function in $\rL^\infty(X \otimes Y)$ is contained in $\rL^\infty(X)$.
\end{thm}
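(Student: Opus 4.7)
My plan is to first reduce to the case where $f$ has vanishing conditional expectation over $Y$, then encode the reduced function as a vector-valued map on $X$, and finally derive a contradiction from the recurrence/mixing hypotheses via a Lusin-type selection argument.

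For the reduction, let $E_Y : \rL^\infty(X \otimes Y) \to \rL^\infty(X)$ denote the conditional expectation given by integration against $\mu$. Since $\rho$ preserves $\mu$, the map $E_Y$ intertwines $\sigma \otimes \rho$ with $\sigma$, so if $f$ is $\sigma \otimes \rho$-invariant then $E_Y(f)$ is $\sigma$-invariant and $f - E_Y(f)$ is again $\sigma \otimes \rho$-invariant with $E_Y(f - E_Y(f)) = 0$. Hence it suffices to show that any $\sigma \otimes \rho$-invariant $f \in \rL^\infty(X \otimes Y)$ with $E_Y(f) = 0$ vanishes. Such an $f$ can be viewed as a bounded measurable map $H : X \to \rL^2_0(Y,\mu)$, $H(x) = f(x,\cdot)$, taking values in the orthogonal complement of constants; Pettis's theorem together with separability of $\rL^2_0(Y)$ ensures strong measurability. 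The invariance of $f$ translates, after a change of variable using that $\rho$ preserves $\mu$, into the equivariance $H(gx) = \rho(g) H(x)$ a.e., where $\rho$ denotes the (unitary) Koopman representation restricted to $\rL^2_0(Y)$.

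To derive a contradiction, I assume $f \neq 0$ and choose $\epsilon > 0$ small enough that $B := \{x : \|H(x)\| \geq \epsilon\}$ has positive measure. Covering a ball in $\rL^2_0(Y)$ by countably many $(\epsilon/3)$-balls centered at a dense sequence $(\xi_n)$, I find an index $n$ such that $A := B \cap H^{-1}(B(\xi_n, \epsilon/3))$ has positive measure. Setting $\xi := \xi_n$, we have $\|\xi\| \geq 2\epsilon/3$ and $\|H(x) - \xi\| < \epsilon/3$ for every $x \in A$. Applying recurrence of $\Lambda$ with respect to $\sigma$ to $A$ produces infinitely many $g \in \Lambda$ with $gA \cap A$ of positive measure; for any such $g$ and any $x \in A$ with $gx \in A$, the equivariance and triangle inequality give
$$ \|\rho(g) \xi - \xi\| \leq \|\rho(g)(\xi - H(x))\| + \|H(gx) - \xi\| < \frac{2\epsilon}{3} \leq \|\xi\|. $$
Using $\|\rho(g)\xi - \xi\|^2 = 2\|\xi\|^2 - 2\mathrm{Re}\langle \rho(g)\xi, \xi\rangle$, this bound yields $\mathrm{Re}\langle \rho(g)\xi, \xi\rangle > 2\epsilon^2/9 > 0$ for infinitely many $g \in \Lambda$, contradicting the mixing of $\Lambda$ with respect to $\rho$ applied to the nonzero vector $\xi \in \rL^2_0(Y)$.

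The main obstacle I anticipate is the measurable selection step: Pettis measurability and separability of $\rL^2_0(Y)$ handle it cleanly in principle, but one must reconcile null-set conventions carefully so that the measure-theoretic interpretation of recurrence (where ``$A \neq \emptyset$'' means ``$\nu(A) > 0$'' and ``$gA \cap A \neq \emptyset$'' means positive measure) is compatible with the a.e.\ validity of the equivariance $H(gx) = \rho(g) H(x)$, and so that the selection of $x \in A$ with $gx \in A$ can actually be performed.
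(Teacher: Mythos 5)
Your proof is correct. The reduction via $E_Y$ is legitimate (since $\rho$ preserves $\mu$, the slice-integration conditional expectation intertwines $\sigma\otimes\rho$ with $\sigma$, so both $E_Y(f)$ and $f-E_Y(f)$ are invariant), the equivariance $H(gx)=\rho(g)H(x)$ holds a.e.\ for each fixed $g$, which is all you use, and the final contradiction is sound: infinitely many $g\in\Lambda$ with $\mathrm{Re}\langle \rho(g)\xi,\xi\rangle>2\varepsilon^2/9$ cannot coexist with $\langle \rho(g)\xi,\xi\rangle\to 0$ along $\Lambda$, since $\xi$ lies in $\rL^2_0(Y,\mu)$, which is exactly where the mixing hypothesis lives.

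Your route differs from the paper's in its technical execution, though both are ``fiber map plus recurrence plus mixing'' arguments. The paper follows Schmidt--Walters: it works with an invariant \emph{set} $A$, encodes it as a Borel map $x\mapsto A_x$ into the metric space $(\mathfrak{P}(Y),d_\mu)$, invokes Lusin's theorem on a compact model of $X$ to get a continuous version $F$ on a set of measure $\geq 1-\varepsilon$, and then uses recurrence at density points to produce $g_n\to\infty$ in $\Lambda$ with $x_n\to x$ and $g_nx_n\to x$, so that continuity forces $g_n(F_x)\to F_x$ while mixing forces $\mu(g_n(F_x)\cap F_x)\to\mu(F_x)^2$; hence $\mu(F_x)\in\{0,1\}$. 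You avoid the topological model and Lusin's theorem entirely: after projecting off the constants, you replace continuity by the elementary observation that a separably-valued measurable map is within $\varepsilon/3$ of a fixed vector on some positive-measure set, and then a single application of recurrence to that set suffices. Your version is arguably cleaner and handles general bounded functions directly rather than through their level sets; the paper's version stays closer to the original Schmidt--Walters argument it is explicitly modeled on. The only point worth being explicit about in a final write-up is the one you already flag: for each of the infinitely many $g$ returned by recurrence, you must intersect $A\cap g^{-1}A$ with the conull set where $H(g\,\cdot)=\rho(g)H(\cdot)$ holds before selecting $x$, which costs nothing since that set is conull.
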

\begin{proof}
The proof is essentially the same as in \cite{SW81}. Represent $X$ as the nonsingular space associated to a compact metrizable space $\Omega$ equipped with a borel probability measure $m$. Let $A \subset X \otimes Y$ be a $\sigma \otimes \rho$-invariant subset. Then we can represent $A$ as a borel function $\Omega \ni x \mapsto A_x \in \mathfrak{P}(Y)$ where $\mathfrak{P}(Y)$ is the Polish space of all equivalence classes of measurable subsets of $Y$ equipped with the distance $d_\mu(B,C)=\mu(B \triangle C)$.  The $G$-invariance of $A$ implies that for every $g \in G$ and $m$-almost every $x \in \Omega$, we have $A_{gx}=g(A_x)$. Take $\varepsilon > 0$. By Lusin's theorem, we can find a continuous function $F : \Omega \rightarrow \mathfrak{P}(Y)$ and a closed subset $C \subset \Omega$ with $m(C) \geq 1- \varepsilon$ such that $F_x=A_x$ for all $x \in C$. We may further assume that $F_{gx}=g(F_x)$ for all $x \in C$ and all $g \in G$ such that $gx \in C$.

Now, for $m$-almost every $x \in C$, we have $m(C \cap V) > 0$ for every neighborhood $V$ of $x$. Therefore, by assumption, for $m$-almost every $x \in C$, we can find a sequence $g_n \in \Lambda$ and a sequence $x_n \in \Omega$ such that both $x_n$ and $g_n x_n$ converge to $x$ and $\lim_n \mu(g_n(F_x) \cap F_x)=\mu(F_x)^2$. But then, by the continuity of $F$ and the $G$-invariance of $d_\mu$, we get $$\lim_n g_n(F_x)=\lim_n g_n(F_{x_n})=\lim_n F_{g_n x_n} = F_x.$$ We conclude that $\mu(F_x)=\mu(F_x)^2$ for $m$-almost every $x \in C$. Since $m(C) \geq 1-\varepsilon$ and $\varepsilon > 0$ is arbitrary, this shows that $\mu(A_x)=\mu(A_x)^2$ for $m$-almost every $x \in \Omega$. This means precisely that $A=A_0 \otimes Y$ for some subset $A_0 \subset X$ or equivalently that $1_A \in \rL^\infty(X)$ as we wanted.
\end{proof}

The following lemma is inspired by \cite[Proposition 4.3]{BKV19}.

\begin{lem} \label{general recurrent criterion}
Let $\sigma : G \curvearrowright X$ be a nonsingular action. Let $\Lambda \subset G$ be a countable subset such that
$$ \sum_{g \in \Lambda} \frac{g_* \mu}{\mu}=+\infty$$
for some faithful probability measure $\mu$ on $X$. Then $\Lambda \Lambda^{-1}$ is recurrent with respect to $\sigma$. This condition is satisfied if there exists a sequence of finite subsets $F_n \subset \Lambda$ such that
$$\lim_n \frac{1}{|F_n|^2}\sum_{g \in F_n} \int_X \frac{\mu}{g_*\mu} \rd \mu =0.$$
\end{lem}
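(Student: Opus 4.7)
The plan is to work throughout with the Radon--Nikodym cocycle $\phi_g := \frac{\rd (g_*\mu)}{\rd \mu}$, so that $\frac{\rd \mu}{\rd (g_*\mu)} = \phi_g^{-1}$ as a function on $X$. In this notation, the hypothesis of the first assertion reads $\sum_{g \in \Lambda} \phi_g = +\infty$ $\mu$-a.e., while that of the second becomes $|F_n|^{-2}\sum_{g \in F_n}\int_X \phi_g^{-1} \rd \mu \to 0$. A useful preliminary identity, valid for every measurable $A \subset X$ and every $h \in G$, is $\mu(h^{-1}A) = \int_A \phi_h \rd \mu$, which comes straight from $(h_*\mu)(A) = \mu(h^{-1}A)$.

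For the first claim I would argue by contradiction: suppose $\Lambda\Lambda^{-1}$ is not recurrent with respect to $\sigma$, so there exists a measurable $A \subset X$ of positive measure such that $F := \{g \in \Lambda\Lambda^{-1} : \mu(gA \cap A) > 0\}$ is finite. The key observation is that for $h, h' \in \Lambda$, applying $\sigma_h$ (which is nonsingular), $h^{-1}A \cap (h')^{-1}A$ has positive measure if and only if $h(h')^{-1} \in F$. Consequently, outside a null set, any two elements $h, h'$ of $\{h \in \Lambda : x \in h^{-1}A\}$ satisfy $h(h')^{-1} \in F$, so this set has cardinality at most $|F|$. Integrating the pointwise bound $\sum_{h \in \Lambda} 1_{h^{-1}A} \leq |F|$ and applying the preliminary identity yields $|F| \geq \sum_{h \in \Lambda} \mu(h^{-1}A) = \int_A \sum_{h \in \Lambda}\phi_h \rd \mu = +\infty$, which is the desired contradiction.

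For the second claim I would also proceed by contradiction: assume $\sum_g \phi_g$ is not a.e.\ equal to $+\infty$, so by passing to a level set there exists $B \subset X$ with $\mu(B) > 0$ and $M < +\infty$ such that $\sum_{g \in \Lambda} \phi_g \leq M$ on $B$. Write $S_n = \sum_{g \in F_n} \phi_g$ and $T_n = \sum_{g \in F_n} \phi_g^{-1}$. A pointwise Cauchy--Schwarz inequality $|F_n|^2 = (\sum_{g \in F_n} \phi_g^{1/2}\phi_g^{-1/2})^2 \leq S_n T_n$ gives $|F_n|^2/T_n \leq S_n \leq M$ on $B$. Integrating over $B$ yields $|F_n|^2 \int_B T_n^{-1}\rd \mu \leq M\mu(B)$; a second Cauchy--Schwarz, $\mu(B)^2 \leq \int_B T_n\rd \mu \cdot \int_B T_n^{-1}\rd \mu$, then gives $\sum_{g \in F_n}\int_X \phi_g^{-1}\rd \mu \geq \int_B T_n\rd \mu \geq |F_n|^2 \mu(B)/M$, and dividing by $|F_n|^2$ contradicts the hypothesis.

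I do not anticipate a serious obstacle: both parts reduce to appropriate applications of Cauchy--Schwarz combined with the Radon--Nikodym identity. The only genuine subtleties are the correct reading of $\sum_g \phi_g = +\infty$ as an almost-everywhere statement (and the corresponding reduction to a set $B$ of uniform boundedness in part 2), and careful bookkeeping of which side, $\Lambda\Lambda^{-1}$ or $\Lambda^{-1}\Lambda$, naturally appears in the counting step of the first part.
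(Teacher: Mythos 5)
Your proof is correct. Both parts follow the same overall strategy as the paper (argue by contradiction from non-recurrence, bound $\sum_{h \in \Lambda} \mu(h^{-1}A)$ by $|F|\,\mu(X)$, and derive the sufficiency of the averaged condition from the inequality between the arithmetic and harmonic means), but your route to the key counting bound in the first part is genuinely different and somewhat cleaner. The paper chooses $(A,F)$ with $|F|$ minimal, deduces from minimality that $gA=A$ for all $g \in F$, and concludes that the sets $g^{-1}A$, $g \in \Lambda$, are pairwise equal or disjoint with fibers of size at most $|F|$; you instead observe directly that for a.e.\ $x$ the set $\{h \in \Lambda : x \in h^{-1}A\}$ is contained in a single right coset $Fh_0$, giving the pointwise multiplicity bound $\sum_{h}1_{h^{-1}A} \le |F|$ without any minimality argument. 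This avoids the exhaustion step entirely and makes the null-set bookkeeping (a countable union over pairs $(h,h')$ with $h(h')^{-1}\notin F$) completely explicit. For the second part, your pointwise Cauchy--Schwarz $|F_n|^2 \le S_nT_n$ is the same arithmetic--harmonic mean inequality the paper invokes as convexity of $t \mapsto t^{-1}$; your detour through the second Cauchy--Schwarz on $B$ is unnecessary (the pointwise bound $T_n \ge |F_n|^2/M$ on $B$ integrates directly to $\int_B T_n \rd\mu \ge |F_n|^2\mu(B)/M$), but it is not wrong.
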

\begin{proof}
Suppose that $\Lambda \Lambda^{-1}$ is not recurrent. Then there exists $\emptyset \neq A \subset X$ and a finite subset $F \subset G$ such that for all $g \in \Lambda \Lambda^{-1} \setminus F$, we have $gA \cap A=\emptyset$. We can moreover choose $A$ and $F$ such that $|F|$ is as small as possible. In that case, we claim that $gA=A$ for all $g \in F$. Indeed, assume that there exists $g \in F$ such that $gA \neq A$. Then we can find $\emptyset \neq B \subset A$ such that $gB \cap B=\emptyset$. Then by replacing $(A,F)$ by $(B,F \setminus \{g\})$,  we would contradict the minimality of $|F|$.

It follows from the choice of $(A,F)$ and the claim, that for all $g,h \in \Lambda$, we have either $g^{-1}A=h^{-1}A$ or $g^{-1}A \cap h^{-1}A =\emptyset$. Moreover, the map $\Lambda \ni g \mapsto g^{-1}A$ has fibers of cardinality at most $|F|$. Thus, we have
$$ \sum_{g \in \Lambda} \mu(g^{-1}A) \leq |F| \mu\left( \bigcup_{g \in \Lambda} g^{-1}A \right) \leq |F| \mu(X) <+\infty.$$
This shows that the measure $\nu=\sum_{g \in \Lambda} g_*\mu$ is not purely infinite, contradicting the assumption. Finally, the convexity of the function $t \mapsto t^{-1}$ gives
$$ \frac{1}{|F|^2}\sum_{g \in F} \int_X \frac{\mu}{g_*\mu} \rd \mu \geq \int_X \left( \sum_{g \in F} \frac{g_*\mu}{\mu} \right)^{-1} \rd \mu \geq \int_X \left( \sum_{g \in \Lambda} \frac{g_*\mu}{\mu} \right)^{-1} \rd \mu$$
for any finite subset $F \subset \Lambda$.
 \end{proof}
 
The following lemma is inspired by \cite[Proposition 5.3]{BKV19}.
\begin{lem} \label{affine recurrent criterion}
Let $\alpha : G \curvearrowright \cH$ be an affine isometric action. Suppose that for some subset $\Lambda \subset G$, some $s > 0$ and some $x \in \cH$, we have
$$ \sum_{g \in \Lambda} e^{-s\|gx-x\|^2} =+\infty $$
Then $\Lambda \Lambda^{-1}$ is recurrent with respect to $\Mod(\widehat{\alpha}^t) \otimes \rho$ for all $t < \sqrt{s}$ and for every probability measure preserving action $\rho$ of $G$.
\end{lem}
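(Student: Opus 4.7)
The plan is to apply Lemma \ref{general recurrent criterion} to the action $\Mod(\widehat{\alpha}^t) \otimes \rho$: I will exhibit a faithful probability measure $\tilde \mu$ on $\Mod(\widehat{\cH}^t) \otimes Y$ and finite subsets $F_n \subset \Lambda$ with $\frac{1}{|F_n|^2}\sum_{g \in F_n}\int \frac{\rd \tilde\mu}{\rd g_* \tilde\mu}\,\rd\tilde\mu \to 0$. First I would trivialize $\Mod(\widehat{\cH}^t)$ with respect to $\mu_{tx}$, identifying it with $\widehat{\cH}^t \times \R$ in the log-flow coordinate $s$, with invariant infinite measure $\rd\mu_{tx}\otimes e^s\,\rd s$ and Maharam action $\tilde g(\omega, s) = (\widehat{\alpha}_g^t\omega, s + b_g(\omega))$; a direct verification (using the cocycle formula from Proposition \ref{formula Radon-Nikodym derivatives}) shows that $b_g(g^{-1}\omega) = \log\frac{\rd \mu_{tgx}}{\rd\mu_{tx}}(\omega)$.

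I then propose the faithful probability measure
$$ \tilde\mu := \rd \mu_{tx}(\omega) \otimes \frac{\rd s}{\pi(1+s^2)} \otimes \rd\nu(y), $$
the product of $\mu_{tx}$, the standard Cauchy density on $\R$, and the $\rho$-invariant probability $\nu$. Since $\rho$ is probability measure preserving it contributes trivially to the Radon--Nikodym derivative, and a direct computation gives
$$ \int \frac{\rd \tilde\mu}{\rd g_* \tilde\mu}\,\rd\tilde\mu \;=\; \int_{\widehat{\cH}^t} e^{-L_g(\omega)}\, h(L_g(\omega))\, \rd\mu_{tx}(\omega), $$
where $L_g := \log\frac{\rd \mu_{tgx}}{\rd\mu_{tx}}$ and $h(L) := \int_\R f(s)^2/f(s-L)\,\rd s$ for $f(s) = 1/(\pi(1+s^2))$. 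The critical feature of the Cauchy density is the exact identity $h(L) = 1 + L^2/2$: growth is only polynomial. Any density with exponential tails (Gaussian or Laplace) would give $h$ of exponential growth and destroy the sharp bound.

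By Proposition \ref{formula Radon-Nikodym derivatives} the variable $L_g$ has law $\cN(-R_g/2, R_g)$ under $\mu_{tx}$ with $R_g := t^2 \|gx - x\|^2$, so a standard Gaussian moment-generating-function computation yields a bound of the form $P(R_g)\,e^{R_g}$ for an explicit polynomial $P$ of degree $2$. To conclude, I set $F_n := \{g \in \Lambda : \|gx-x\|^2 \leq R_n\}$ for a suitable sequence $R_n \to \infty$. The divergence hypothesis combined with Lemma \ref{growth exponent} (applied to the counting measure on $\Lambda$ and the function $g \mapsto \|gx-x\|^2$) gives $\limsup_R \tfrac{1}{R}\log|F_R| \geq s$, so for any $s' \in (t^2, s)$ --- which exists precisely because $t < \sqrt s$ --- one can find $R_n \to \infty$ with $|F_{R_n}| \geq e^{s' R_n}$. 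Then
$$ \frac{1}{|F_n|^2}\sum_{g \in F_n}\int \frac{\rd \tilde\mu}{\rd g_* \tilde\mu}\,\rd\tilde\mu \;\leq\; \frac{P(t^2 R_n)\,e^{t^2 R_n}}{|F_n|} \;\leq\; P(t^2 R_n)\,e^{(t^2 - s') R_n} \;\to\; 0. $$

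The main obstacle in this approach is the handling of the Maharam extension: its infinite invariant measure forces one to pick a faithful probability whose behavior along the modular direction is delicately controlled, and the Cauchy density is precisely tuned so that the modular ``defect'' $h(L)$ grows only polynomially. This is what lets the method reach the sharp exponent $t < \sqrt s$; with an exponentially decaying density in $s$ the same calculation would only yield recurrence for $t < \sqrt{s/3}$ or worse.
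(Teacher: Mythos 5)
Your proof is correct and follows essentially the same route as the paper: trivialize the Maharam extension over $\mu_{tx}$, place an auxiliary finite density on the modular fibre, bound $\int \rd\tilde\mu/\rd g_*\tilde\mu \,\rd\tilde\mu$ by a Gaussian moment computation for $L_g \sim \cN(-R_g/2,R_g)$, and feed the counting estimate from Lemma \ref{growth exponent} into Lemma \ref{general recurrent criterion}; the only difference is your choice of the Cauchy density, where the paper uses $\rd\nu(\lambda)=\min(\lambda,\lambda^{-1})^a\,\rd\lambda$ with $a>1$ taken close to $1$. Your closing claim that an exponentially decaying density cannot reach the sharp threshold is inaccurate — a fixed exponential rate does lose a constant, but letting $a\to 1^+$ recovers $t<\sqrt{s}$ exactly, which is what the paper does — though your exact identity $h(L)=1+L^2/2$ is a clean simplification that replaces that limiting argument by a single polynomial bound.
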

\begin{proof}
Take $t < \sqrt{s}$. Let $\pi_{\mu} : \Mod(\widehat{\cH}^t) \rightarrow \widehat{\cH}^t \otimes \R^*_+$ be the trivialization with respect to the Gaussian measure $\mu=\mu_{tx}$. Then we have to show that $\Lambda \Lambda^{-1}$ is recurrent with respect to the action $\sigma=\pi_{\mu} \circ \Mod(\widehat{\alpha}^t) \circ \pi_{\mu}^{-1}$ on $\widehat{\cH}^t \otimes \R^*_+$ given by 
$$g \cdot (\omega,\lambda)=(g\omega,\lambda \frac{g_* \mu}{\mu}(\omega)), \quad (\omega, \lambda) \in \widehat{\cH}^t \otimes \R^*_+.$$
Take $s_0 < s$ such that $t < \sqrt{s_0}$ and take $a > 1$ such that $(a^2+a)t^2 < 2s_0$. Let $\nu$ be the measure on $\R^*_+$ with density $\rd \nu(\lambda)=\min(\lambda,\lambda^{-1})^a \rd \lambda$. Then by the invariance of $\mu \otimes \rd \lambda$ with respect to $\sigma$, we have
\begin{align*}
& \int_{\widehat{\cH}^t \otimes \R^*_+} \frac{\mu \otimes \nu}{(\sigma_g)_* (\mu \otimes \nu)}\rd\mu \rd\nu =\\
& \int_{\widehat{\cH}^t \otimes \R^*_+}\left( \frac{\mu \otimes \nu}{\mu \otimes \rd \lambda} \right) \sigma_g\left(\frac{\mu \otimes \nu}{\mu \otimes \rd \lambda} \right)^{-1}   \rd\mu \rd\nu =\\
& \int_{\widehat{\cH}^t \otimes \R^*_+} \left(1 \otimes \frac{\rd \nu}{\rd \lambda}\right) \sigma_g \left( 1 \otimes \frac{\rd \nu}{\rd \lambda} \right)^{-1} \rd \mu \rd\nu =\\
& \int_{\widehat{\cH}^t \otimes \R^*_+} \min(\lambda,\lambda^{-1})^{a} \min\left(\lambda \frac{ g_*\mu}{\mu}(\omega), \left(\lambda \frac{ g_*\mu}{\mu}(\omega) \right)^{-1} \right)^{-a} \rd \mu(\omega) \rd\nu(\lambda)   \leq \\
&\nu(\R^*_+) \int_{\widehat{\cH}^t } \left( \frac{ g_*\mu}{\mu} \right)^{-a}+\left( \frac{ g_*\mu}{\mu}  \right)^{a} \rd \mu.
\end{align*}
Then one computes
$$ \int_{\widehat{\cH}^t } \left( \frac{ g_*\mu}{\mu} \right)^{\pm a} = \exp\left( \frac{1}{2}(a^2 \mp a)t^2 \|gx-x\|^2 \right) \leq \exp\left( s_0 \|gx-x\|^2 \right).$$
Now, since $s_0 < s$, Lemma \ref{growth exponent} shows that there exists a sequence $r_n> 0$ with $\lim_n r_n= +\infty$ and a sequence of finite subsets $F_n \subset \Lambda$ such that $\lim_n |F_n| e^{-s_0 r_n} = +\infty$ and $\|gx-x\|^2 \leq r_n$ for all $g \in F_n$. This implies
$$\frac{1}{|F_n|^2}\sum_{g \in F_n} \int_{\widehat{\cH}^t \otimes \R^*_+} \frac{\mu \otimes \nu}{(\sigma_g)_* (\mu \otimes \nu)}\rd\mu \rd\nu \leq \frac{2 \nu(\R^*_+)}{|F_n|} e^{s_0r_n} \to 0$$
By Lemma \ref{general recurrent criterion} we conclude that $\Lambda \Lambda^{-1}$ is recurrent with respect to $\Mod(\sigma) \otimes \rho$ for any probability measure preserving action $\rho$ (apply the criterion of Lemma $\ref{general recurrent criterion}$ to the measure $\mu \otimes \nu \otimes \eta$ where $\eta$ is an invariant probability measure for $\rho$).

\end{proof}

\begin{lem} \label{mixing set}
Let $\pi : G \rightarrow \mathcal{O}(\cH)$ be an orthogonal representation of a locally compact group $G$ which is weakly mixing. Then there exists an infinite subset $\Lambda \subset G$ such that $ \Lambda \Lambda^{-1}$ is mixing with respect to $\pi$.
\end{lem}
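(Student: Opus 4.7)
The plan is to construct $\Lambda = \{h_n\}_{n \geq 1}$ by a diagonal argument, arranging the matrix coefficients $\langle \pi(h_n h_m^{-1})\xi, \eta\rangle$ to decay as $\max(n,m) \to \infty$ for $\xi, \eta$ in a dense subset, and hence for all $\xi, \eta \in \cH$.

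First I will fix a dense sequence $(\xi_k)_{k \geq 1}$ in $\cH$, which exists by separability, and I will use the reformulation of weak mixing recalled at the beginning of Section \ref{section weakly mixing}: there exists a sequence $g_k \in G$ with $g_k \to \infty$ in $G$ and $\pi(g_k) \to 0$ in the weak operator topology. In particular, for any finite subset $\{v_1, \ldots, v_r\} \subset \cH$, any $\varepsilon > 0$, and any compact subset $K \subset G$, there is an element $h \in G \setminus K$ with $|\langle \pi(h) v_i, v_j\rangle| < \varepsilon$ for all $i, j \leq r$ (simply take $h = g_k$ for $k$ large enough).

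Next I will inductively construct $h_1, h_2, \ldots$ in $G$ together with an increasing sequence of compact subsets $K_1 \subset K_2 \subset \cdots$. Pick $h_1$ arbitrarily and let $K_1$ be a compact neighborhood of $h_1$. Having chosen $h_1, \ldots, h_{n-1}$ and $K_{n-1}$, apply the preceding observation to the finite family $\{\pi(h_m)\xi_j : m < n,\, j \leq n\}$ (and the vectors $\xi_i$, $i \leq n$) to obtain $h_n \in G \setminus K_{n-1}$ such that
$$|\langle \pi(h_n) \xi_i, \pi(h_m) \xi_j\rangle| \leq \frac{1}{n} \quad \text{for all } m < n \text{ and } i, j \leq n,$$
which is the same as $|\langle \pi(h_n h_m^{-1}) \xi_i, \xi_j\rangle| \leq 1/n$; the symmetric bound for $h_m h_n^{-1}$ follows by swapping $i,j$. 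Then take $K_n$ to be a compact neighborhood of $K_{n-1} \cup \{h_n\}$. By construction, $\Lambda := \{h_n : n \geq 1\}$ is infinite.

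Finally I will verify that $\Lambda\Lambda^{-1}$ is mixing with respect to $\pi$. Given $\xi, \eta \in \cH$ and $\varepsilon > 0$, choose $i_0, j_0$ with $\|\xi - \xi_{i_0}\|(\|\eta\|+1) + \|\xi_{i_0}\|\|\eta - \xi_{j_0}\| < \varepsilon/2$, possible by density of $(\xi_k)$. Since $\|\pi(g)\| \leq 1$, this gives $|\langle \pi(g)\xi, \eta\rangle - \langle \pi(g)\xi_{i_0}, \xi_{j_0}\rangle| \leq \varepsilon/2$ for every $g \in G$. Setting $N = \max(i_0, j_0, \lceil 2/\varepsilon \rceil)$, the inductive estimate yields $|\langle \pi(h_n h_m^{-1})\xi_{i_0}, \xi_{j_0}\rangle| \leq 1/\max(n,m) \leq \varepsilon/2$ whenever $n \neq m$ and $\max(n,m) > N$, so $|\langle \pi(h_n h_m^{-1})\xi, \eta\rangle| \leq \varepsilon$ for all such pairs. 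The remaining elements of $\Lambda\Lambda^{-1}$ lie in the finite (hence compact) set $\{h_n h_m^{-1} : \max(n,m) \leq N\}$, proving mixing. The only genuine point requiring care is the inductive step, which amounts to the flexibility of weak convergence combined with separability of $\cH$: one may simultaneously realize finitely many approximate orthogonality conditions while pushing $h_n$ arbitrarily far in $G$.
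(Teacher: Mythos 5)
Your construction is essentially the paper's proof: the paper also builds $\Lambda$ by induction, choosing each new element so that (in a metric for the weak operator topology on the unit ball of $\B(\cH)$) all products with the previously chosen elements have small matrix coefficients; your use of a dense sequence of vectors is just an unrolled version of that metric. One slip to fix: since $\pi(h_m)$ is orthogonal, $\langle \pi(h_n)\xi_i,\pi(h_m)\xi_j\rangle=\langle \pi(h_m^{-1}h_n)\xi_i,\xi_j\rangle$, not $\langle \pi(h_nh_m^{-1})\xi_i,\xi_j\rangle$, so as written your induction makes $\Lambda^{-1}\Lambda$ mixing rather than $\Lambda\Lambda^{-1}$; this is harmless because you can either impose smallness of $\langle \pi(h_n^{-1})\xi_i,\pi(h_m^{-1})\xi_j\rangle$ in the inductive step instead, or simply output $\Lambda^{-1}$ in place of $\Lambda$ at the end.
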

\begin{proof}
Let $d$ be a distance on the unit ball of $\B(\cH)$ which metrizes the weak operator topology. Define inductively a sequence $(g_n)_{n \in \N}$ of elements of $G$ as follows: take $g_0$ to be any element of $G$ and then for each $n \geq 1$, choose $g_n \in G$ such that
$$ \forall k < n, \quad d(\pi(g_{k}g_{n}^{-1}),0)+d(\pi(g_{n}g_{k}^{-1}),0) \leq 2^{-n}.$$
Now, let $\Lambda=\{ g_{n} \mid n \in \N \}$. By construction, we have $d(\pi(g_{n}g_{m}^{-1}),0) \leq 2^{-\max(n,m)}$ for all $n,m \in \N$ such that $n \neq m$. This implies that $ \Lambda \Lambda^{-1}$ is mixing.
\end{proof}

The main application of this section is the following theorem.

\begin{thm}[Theorem \ref{letter non T}] \label{strong non T}
Let $G$ be a locally compact group without property (T). Then there exists an affine isometric action $\alpha : G \curvearrowright \cH$ such that for all $t > 0$, the nonsingular Gaussian action $\widehat{\alpha}^t$ satisfies :
\begin{enumerate}[ \rm (i)]
\item $\widehat{\alpha}^t$ is free.
\item $\widehat{\alpha}^t$ is weakly mixing of stable type $\III_1$.
\item $\widehat{\alpha}^t$ has almost vanishing entropy. In particular, $\widehat{\alpha}^t$ has an invariant mean and it is nonamenable if $G$ is nonamenbale.
\end{enumerate}
\end{thm}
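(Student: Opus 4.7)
The plan is to construct a single affine isometric action $\alpha$ that works simultaneously for all $t > 0$, by combining an evanescent construction with a carefully engineered cocycle growth. The new difficulty compared to the Haagerup case (Theorem \ref{strong Haagerup}) is that one cannot assume $\alpha^0$ is mixing, only weakly mixing, so the classical Schmidt--Walters theorem must be replaced by its generalization Theorem \ref{generalized schmidt}. To apply the latter, I will fabricate a single subset $\Lambda \subset G$ that is simultaneously mixing for the linear part and recurrent for the Gaussian action.

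Since $G$ fails property (T), a standard decomposition argument (split off the finite-dimensional part of any representation with almost invariant vectors and observe that a finite-dimensional representation with almost invariant vectors must contain the trivial representation) produces a weakly mixing orthogonal representation $\pi$ of $G$ with almost invariant vectors but no nonzero invariant vector. By Lemma \ref{mixing set}, I select a countable, uniformly discrete $\Lambda = \{g_n\}_{n \in \N} \subset G$ such that $\Lambda\Lambda^{-1}$ is mixing with respect to $\pi$. Using Proposition \ref{existence evanescent}, I build an evanescent affine isometric action $\alpha_0 : G \curvearrowright \cH_0$ without fixed point, whose linear part is contained in a multiple of $\pi$, and whose cocycle satisfies $\|\alpha_0(g)(x_0)-x_0\| \leq f(g)$ for a prescribed continuous proper function $f : G \to [1,\infty)$. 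I pick $f$ so that $f(g_n) \leq (1+\log n)^{1/4}$, which is possible since $\Lambda$ is discrete in $G$. To secure freeness I replace $\alpha_0$ by $\alpha := \alpha_0 \oplus \pi_0^{\oplus \N}$ with trivial cocycle on the second summand, where $\pi_0$ is a faithful weakly mixing representation of $G$; the enlarged action is still evanescent with no fixed point and still has weakly mixing linear part, while $\widehat{\alpha}^t$ is essentially free for every $t>0$ by (the argument of) Remark \ref{freeness}.

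Properties (i) and (iii) are then immediate: freeness by construction, and almost vanishing entropy from evanescence (which yields almost fixed points by Proposition \ref{evanescent properties}) combined with Theorem \ref{dictionary}(iii). The growth bound forces $\|\alpha(g_n)(x_0)-x_0\|^2 = O(\sqrt{\log n})$, hence
\[ \sum_{n \in \N} e^{-s\|\alpha(g_n)(x_0)-x_0\|^2} = +\infty \quad \text{for every } s > 0, \]
so Lemma \ref{affine recurrent criterion} shows that $\Lambda\Lambda^{-1}$ is recurrent with respect to $\Mod(\widehat{\alpha}^t) \otimes \rho$ for every $t > 0$ and every probability measure preserving action $\rho$; in particular $\widehat{\alpha}^t$ itself is recurrent.

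The hard part is to establish weak mixing of stable type $\III_1$: fixing an ergodic pmp $\rho : G \curvearrowright (Y,\nu)$, I must show that $\Mod(\widehat{\alpha}^t) \otimes \rho$ is ergodic. I mimic the proof of Theorem \ref{evanescent ergodicity}, replacing Theorem \ref{schmidt} by Theorem \ref{generalized schmidt}. For each $\alpha$-invariant affine subspace $\cK \subset \cH$, write $\cH = E \times \cK$ with $E = (\cK^0)^\perp$, so that $\widehat{\alpha}^t$ splits as the pmp Gaussian action $\widehat{\alpha^0|_E}^t$ tensored with the nonsingular Gaussian action $\widehat{\alpha|_\cK}^t$. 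The set $\Lambda\Lambda^{-1}$ remains mixing for $\alpha^0|_E$ (as a subrepresentation of a multiple of $\pi$), and mixing is inherited by each symmetric tensor power and then by the full (reduced) Fock space via dominated convergence, so $\Lambda\Lambda^{-1}$ is mixing for the pmp Gaussian action $\widehat{\alpha^0|_E}^t$. Projecting $x_0$ orthogonally onto $\cK$ only shrinks the displacements, so the Poincar\'e sum for $\alpha|_\cK$ along $\Lambda$ still diverges for every $s > 0$; hence $\Lambda\Lambda^{-1}$ is recurrent for $\Mod(\widehat{\alpha|_\cK}^t) \otimes \rho$ by Lemma \ref{affine recurrent criterion}. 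Theorem \ref{generalized schmidt} then forces every $\Mod(\widehat{\alpha}^t) \otimes \rho$-invariant element of $\rL^\infty(\Mod(\widehat{\cH}^t) \otimes Y)$ to lie in $\rL^\infty(\Mod(\widehat{\cK}^t) \otimes Y)$. Intersecting over all $\alpha$-invariant $\cK$ and using Proposition \ref{intersection maharam extension}(ii) (noting $\bigcap_{\cK} \cK^0 = \{0\}$ by evanescence, and $\bigcap_{\cK} \cK = \emptyset$ because $\alpha$ has no fixed point), the invariants collapse to $\rL^\infty(Y)$, and ergodicity of $\rho$ concludes the argument.
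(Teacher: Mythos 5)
Your argument is essentially the paper's own proof of this theorem: weakly mixing representation with almost invariant vectors, Lemma \ref{mixing set} to extract $\Lambda$ with $\Lambda\Lambda^{-1}$ mixing, Proposition \ref{existence evanescent} with a slowly growing proper $f$ so that $\sum_{g\in\Lambda}e^{-s\|gx-x\|^2}=+\infty$ for all $s>0$, Lemma \ref{affine recurrent criterion} for recurrence, Theorem \ref{generalized schmidt} in place of Schmidt--Walters, and the intersection argument of Theorem \ref{evanescent ergodicity}. You fill in two details the paper leaves implicit (the explicit growth rate $(1+\log n)^{1/4}$ and the passage from mixing of $\Lambda\Lambda^{-1}$ for $\pi$ to mixing for the pmp Gaussian action via the symmetric Fock space), both correctly.

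One step would fail as literally written: in the freeness modification you take $\alpha=\alpha_0\oplus\pi_0^{\oplus\N}$ with $\pi_0$ only \emph{weakly mixing}. The invariant subspaces $\cK$ used in the evanescence intersection have orthogonal complements $E$ containing the whole $\pi_0^{\oplus\N}$ summand, so applying Theorem \ref{generalized schmidt} requires $\Lambda\Lambda^{-1}$ to be mixing with respect to $\pi_0$ as well --- which you have not arranged, since $\Lambda$ was chosen only relative to $\pi$. The repair is immediate and is what the paper does: take $\pi_0$ faithful and \emph{mixing} (e.g.\ the left regular representation), so that every infinite subset of $G$ is automatically mixing for it; alternatively, choose $\Lambda$ via Lemma \ref{mixing set} applied to $\pi\oplus\pi_0^{\oplus\N}$ from the start.
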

\begin{proof}
Since $G$ does not have property (T), it admits a weakly mixing orthogonal representation $\pi$ that has almost invariant vectors. Let $\Lambda \subset G$ be an infinite subset such that $\Lambda \Lambda^{-1}$ is mixing with respect to $\pi$. By Proposition \ref{existence evanescent}, we can find an affine isometric action $\alpha : G \curvearrowright \cH$ such that $\alpha$ is evanescent and has no fixed point and $\alpha^0$ contained in a multiple of $\pi$. Moreover, we can choose $\alpha$ such that
$$ \sum_{g \in \Lambda} e^{-s\|gx-x\|^2}=+\infty$$
for all $s > 0$ and some $x \in \cH$. Then $\Lambda \Lambda^{-1}$ is mixing with respect to $\alpha^0$ and by Lemma \ref{affine recurrent criterion}, it is recurrent with respect to $\Mod(\widehat{\alpha}^t) \otimes \rho$ for all $t > 0$ and every probability measure preserving action $\rho$ of $G$. Then, as in Lemma \ref{affine recurrent and mixing}, Theorem \ref{generalized schmidt} shows that for any $\alpha$-invariant subspace $\cK$, we have
$$ \rL^\infty( \Mod( \widehat{\cH} \otimes X))^{\Mod(\widehat{\alpha} \otimes \rho)} \subset \rL^\infty(\Mod(\widehat{\cK} \otimes X)).$$
Then we can conclude as in Theorem \ref{evanescent ergodicity} that $\widehat{\alpha} \otimes \rho$ is ergodic of type $\III_1$. For the freeness property, we can modify $\alpha$ as in Theorem \ref{strong Haagerup}.
\end{proof}

\subsection{Strong ergodicity}
We now give a criterion for the strong ergodicity of Gaussian actions. We recall the definition of strong ergodicity. Let $\sigma : G \curvearrowright X$ be a nonsingular action. We say that a bounded sequence $(a_n)_{n \in \N}$ in $\rL^\infty(X)$ is \emph{almost invariant} with respect to $\sigma$, if the sequence of functions $g \mapsto \sigma_g(a_n)-a_n$ converges to $0$ in the measure topology, uniformly on compact subsets of $G$. We say that $\sigma$ is strongly ergodic if every almost invariant bounded sequence $(a_n)_n$ is \emph{trivial}, i.e.\ there exists a bounded sequence $(\lambda_n)_n$ in $\C$ such that $a_n-\lambda_n1$ converges to $0$ in the measure topology.

 Recall that an orthogonal representation $\pi : G \rightarrow \mathcal{O}(H)$ has \emph{spectral gap} if it has no almost invariant vectors. We say that $\pi$ has \emph{stable spectral gap} if the representation $\pi \otimes \rho$ has spectral gap for every representation $\rho : G \rightarrow \mathcal{O}(H)$. Finally, we say that a pmp action $\rho : G \curvearrowright (Y,\nu)$ has (stable) spectral gap if its reduced Koopman representation $\rho : G \curvearrowright \rL^2(Y,\nu)^0$ has (stable) spectral gap.

\begin{lem} \label{lemma spectral gap}
Let $\sigma : G \curvearrowright X$ be an ergodic nonsingular action and $\rho : G \curvearrowright (Y,\nu)$ a pmp action. Suppose that:
\begin{enumerate}[ \rm (i)]
\item $\rho$ has stable spectral gap, hence there exists $\kappa > 0$ and a symmetric compact subset $K \subset G$ such that
$$ \| \xi \|^2 \leq \kappa \int_{K} \| \pi(g) \xi-\xi \|^2 \rd g \quad \text{ for all } \; \xi \in \rL^2(X) \otimes \rL^2(Y,\nu)^0$$
where $\pi$ is the Koopman representation of $\sigma \otimes \rho$.
\item We have
$$ \kappa \int_K \| g \mu^{1/2}- \mu^{1/2} \|^2 \rd g < 1$$
for some faithful probability measure $\mu$ on $X$.
\end{enumerate}
Then for any bounded almost invariant sequence $(a_n)_n$ in $\rL^\infty(X \otimes Y)$, we have $\lim_n a_n-E(a_n)=0$ in the measure topology, where $E : \rL^\infty(X \otimes Y) \rightarrow \rL^\infty(X)$ is the unique conditional expectation such that $\mu \circ E=\mu \otimes \nu$.
\end{lem}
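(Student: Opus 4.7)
The plan is to reduce the assertion to an $L^2(\mu\otimes\nu)$-convergence statement and then attack it directly with the stable spectral-gap hypothesis applied to a carefully chosen vector in $L^2(X)\otimes L^2(Y,\nu)^0$.

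First, note that because $\nu$ is $\rho$-invariant, the conditional expectation $E:\rL^\infty(X\otimes Y)\to\rL^\infty(X)$ commutes with the action of $G$, so $(b_n):=(a_n-E(a_n))$ is again a bounded almost invariant sequence, and it satisfies $E(b_n)=0$. Since $(b_n)$ is uniformly bounded in $\rL^\infty$, convergence to $0$ in the measure topology is equivalent to $\rL^2(\mu\otimes\nu)$-convergence. Thus it suffices to show $\|b_n\|_{\rL^2(\mu\otimes\nu)}\to 0$, i.e.\ $\|\xi_n\|\to 0$, where $\xi:=\mu^{1/2}\otimes\nu^{1/2}$ and $\xi_n:=b_n\xi$.

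The identity $E(b_n)=0$ is precisely the statement that $\xi_n$ is orthogonal to $\rL^2(X)\otimes\C\nu^{1/2}$, so $\xi_n\in\rL^2(X)\otimes\rL^2(Y,\nu)^0$. Hypothesis (i) therefore gives $\|\xi_n\|^2\leq\kappa\int_K\|\pi(g)\xi_n-\xi_n\|^2\,\rd g$, where $\pi$ is the Koopman representation of $\sigma\otimes\rho$. I split
\[
\pi(g)\xi_n-\xi_n \;=\; \bigl((\sigma_g\otimes\rho_g)(b_n)-b_n\bigr)\xi \;+\; (\sigma_g\otimes\rho_g)(b_n)\bigl((g\mu^{1/2}-\mu^{1/2})\otimes\nu^{1/2}\bigr).
\]
The first term has $\rL^2$-norm equal to $\|(\sigma_g\otimes\rho_g)(b_n)-b_n\|_{\rL^2(\mu\otimes\nu)}$, which tends to $0$ uniformly for $g\in K$ by almost-invariance, the uniform bound, and dominated convergence. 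The second term is bounded in norm by $\|b_n\|_\infty\cdot\|g\mu^{1/2}-\mu^{1/2}\|$. Using $(A+B)^2\leq(1+\epsilon)A^2+(1+\epsilon^{-1})B^2$, integrating against $\kappa$ over $K$, sending $n\to\infty$ to kill the first term, then $\epsilon\to\infty$, hypothesis (ii) produces
\[
\limsup_n\|\xi_n\|^2 \;\leq\; \bigl(\sup_n\|b_n\|_\infty\bigr)^2\cdot\kappa\int_K\|g\mu^{1/2}-\mu^{1/2}\|^2\,\rd g \;=\; \theta D^2
\]
with $\theta<1$ and $D:=\sup_n\|b_n\|_\infty$.

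The most delicate part will be converting this strict $L^2$-contraction into actual decay to $0$: the above argument alone only trades the trivial bound $\|\xi_n\|^2\leq D^2$ for the improved one $\theta D^2$. I would attempt to bootstrap as follows. For every $k\geq 1$, the sequence $b_n^k$ is still bounded and almost invariant (because products of bounded almost invariant sequences are almost invariant), so the same spectral-gap estimate controls $\|b_n^k-E(b_n^k)\|_{\rL^2}$ strictly below $\|b_n^k-E(b_n^k)\|_\infty$. On the other hand $E(b_n^k)\in\rL^\infty(X)$ is itself bounded and almost invariant for $\sigma$ (because $E$ intertwines the actions and carries measure-convergence to measure-convergence), so the ergodicity of $\sigma$ lets one feed the $L^\infty(X)$-information back into an improved integral bound on $\int F\cdot b_n^2\,\rd\mu\,\rd\nu$, where $F(x)=\int_K(1-\sqrt{c_g(x)})^2\,\rd g$. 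Iterating these two ingredients should eventually force $\|\xi_n\|\to 0$. Making this bootstrap quantitative — in particular, exchanging the $L^\infty$-bound in the second-term estimate for an $L^2$-bound without losing the strict contraction $\theta<1$ — is where I expect the main technical difficulty.
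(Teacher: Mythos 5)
Your setup is exactly the paper's (same $b_n$, same $\xi_n=b_n(\mu\otimes\nu)^{1/2}\in\rL^2(X)\otimes\rL^2(Y,\nu)^0$, same application of hypothesis (i) and the same splitting of $\pi(g)\xi_n-\xi_n$), but there is a genuine gap at the decisive step, and you have correctly diagnosed where it is: bounding the second term by $\|b_n\|_\infty\cdot\|g\mu^{1/2}-\mu^{1/2}\|$ only yields $\limsup_n\|\xi_n\|^2\leq\theta D^2$ with $D=\sup_n\|b_n\|_\infty$, which proves nothing. The proposed bootstrap via the powers $b_n^k$ is not carried out, and it is not clear it can be: raising $b_n$ to powers does not improve the ratio between the $\rL^2$-norm and the $\rL^\infty$-norm, which is the quantity you would need to contract.

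The missing idea is to compute the second term exactly rather than estimate it in $\rL^\infty$. Since $\rho$ preserves $\nu$, the Radon--Nikodym cocycle of $\sigma\otimes\rho$ with respect to $\mu\otimes\nu$ depends only on the $X$-coordinate, so
$$\bigl\|(\sigma_g\otimes\rho_g)(b_n)\bigl((g\mu^{1/2}-\mu^{1/2})\otimes\nu^{1/2}\bigr)\bigr\|^2=\int_X \sigma_g\bigl(E(|b_n|^2)\bigr)\Bigl(\sqrt{\tfrac{\rd g_*\mu}{\rd\mu}}-1\Bigr)^2\rd\mu .$$
Now $(E(|b_n|^2))_n$ is a bounded almost invariant sequence in $\rL^\infty(X)$ (because $E$ intertwines the actions), so after passing to a subsequence it converges weak$^*$ to a $G$-invariant function, which by ergodicity of $\sigma$ is the constant $\lambda=\lim_n(\mu\otimes\nu)(|b_n|^2)=\lim_n\|\xi_n\|^2$. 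Feeding this back in replaces $D^2$ by $\lambda$ in your estimate: $\lambda\leq\kappa\lambda\int_K\|g\mu^{1/2}-\mu^{1/2}\|^2\rd g=\theta\lambda$ with $\theta<1$, forcing $\lambda=0$; since every subsequence admits a further subsequence along which $\|\xi_n\|\to 0$, the whole sequence converges. This single use of ergodicity closes the loop in one pass; no iteration is needed.
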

\begin{proof}
Let $b_n=a_n-E(a_n)$. We have to show that $b_n \to 0$. Observe that the sequence $(b_n)_n$ is again almost invariant as well as the sequence $(E(|b_n|^2))_n$. Up to extracting a subsequence, we may assume that $(E(|b_n|^2))_n$ converges to some $f \in \rL^\infty(X)$ in the weak$^*$-topology. Then the function $f$ is $G$-invariant and since $\sigma$ is ergodic, $f$ must be constant equal to $\lambda=\lim_n \mu( E(|b_n|^2))=\lim_n (\mu\otimes \nu)(|b_n|^2)$.

Let $\eta=(\mu \otimes \nu)^{1/2}$ and $\xi_n=b_n \eta  \in \rL^2(X) \otimes \rL^2(Y,\nu)^0 $. Then by item $(\rm i)$, we have
\begin{align*}
 \| \xi_n \|^2 &\leq \kappa \int_K \| \pi(g) \xi_n- \xi_n \|^2 \rd g \\
 &\simeq \kappa \int_K \| b_n (\pi(g) \eta-\eta) \|^2  \rd g \\
  &= \kappa \int_K \| |b_n|^2 \left( \pi(g) \eta-\eta \right)^2 \|   \rd g\\
 &= \kappa \int_K \| E(|b_n|^2)  \left( \pi(g) \eta-\eta \right)^2 \| \rd g\\
 &\simeq \kappa \lambda \int_K \| \pi(g)\eta-\eta \|^2 \rd g \\
 &\simeq \kappa \| \xi_n \|^2 \int_K \| \pi(g) \mu^{1/2}-\mu^{1/2} \|^2 \rd g.
\end{align*}
We conclude by item $(\rm ii)$ that $\lim_n \| \xi_n \|=0$ as we wanted. 
\end{proof}

\begin{thm} \label{strongly ergodic}
Let $\alpha : G \curvearrowright \cH$ be an affine isometric action such that $\alpha^0$ has stable spectral gap. Suppose that $t_{\rm erg}(\alpha) > 0$. Then there exists $t_0 > 0$ such that $\widehat{\alpha}^t$ is strongly ergodic for all  $t < t_0$.
\end{thm}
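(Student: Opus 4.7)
The plan is to combine Lemma \ref{lemma spectral gap} with the rotation isomorphism of Proposition \ref{Rotation trick} and a Wiener-chaos spectral-gap estimate. Since $\alpha^0$ has stable spectral gap, the classical pmp Gaussian action $\widehat{\alpha}^0$ also has stable spectral gap (by the chaos decomposition of \cite{Bo14}), so condition~(i) of Lemma~\ref{lemma spectral gap} is available for $\rho = \widehat{\alpha}^0$, furnishing constants $\kappa > 0$ and a symmetric compact $K \subset G$.

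First I would fix a scale $r > 0$ small enough that (a) $r < t_{\rm erg}(\alpha)$, so that $\widehat{\alpha}^r$ is ergodic, and (b) by the explicit formula of Proposition~\ref{formula Radon-Nikodym derivatives},
$$
\int_K \|g\mu_{rx}^{1/2} - \mu_{rx}^{1/2}\|^2 \rd g \;=\; \int_K 2\bigl(1-e^{-r^2\|gx-x\|^2/8}\bigr) \rd g \;<\; 1/\kappa,
$$
which is achievable because the integrand tends to $0$ uniformly on $K$ as $r \to 0$. Condition~(ii) of Lemma~\ref{lemma spectral gap} then holds for $\mu = \mu_{rx}$, and the lemma applied to $\sigma = \widehat{\alpha}^r$, $\rho = \widehat{\alpha}^0$ yields: every bounded almost invariant sequence in $\rL^\infty(\widehat{\cH}^r \otimes \widehat{\cH}^0)$ is close in measure to its Gaussian conditional expectation onto $\rL^\infty(\widehat{\cH}^r)$.

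Set $t_0 := r$. Given $t \in (0,t_0)$ and a bounded almost invariant sequence $(a_n)$ in $\rL^\infty(\widehat{\cH}^t)$, let $s = \sqrt{r^2 - t^2} > 0$ and invoke the $G$-equivariant isomorphism $\psi : \rL^\infty(\widehat{\cH}^t \otimes \widehat{\cH}^s) \to \rL^\infty(\widehat{\cH}^r \otimes \widehat{\cH}^0)$ of Remark~\ref{conjugacy different gaussian}. The sequence $\psi(a_n \otimes 1)$ is almost invariant, so the previous step produces $a_n \otimes 1 - Q(a_n \otimes 1) \to 0$ in measure, where $Q$ is the conditional expectation onto $\psi^{-1}(\rL^\infty(\widehat{\cH}^r)) \subset \rL^\infty(\widehat{\cH}^t \otimes \widehat{\cH}^s)$. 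Since the sequence is bounded and we can work with the probability measure $\mu_{tx} \otimes \mu_{sx}$, this convergence is equivalent to convergence in $\rL^2$.

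The core estimate, and the main obstacle, is that on $\rL^2(\widehat{\cH}^t \otimes \widehat{\cH}^s, \mu_{tx} \otimes \mu_{sx})$, if $P_t$ denotes the orthogonal projection onto $\rL^2(\widehat{\cH}^t)$, then both $P_t$ and $Q$ fix the constants and $\|Q P_t\| \leq t/r < 1$ on the orthogonal complement of constants. This reduces, by identifying $\psi$ with the second quantization of the tangent-space rotation of Proposition~\ref{Rotation trick}, to the first-chaos observation that $\psi^{-1}$ sends the ``affine-on-$\cH^r$'' first-chaos subspace into the first-chaos of $\widehat{\cH}^t \otimes \widehat{\cH}^s$ making an angle of cosine $t/r$ with the ``affine-on-$\cH^t$'' subspace, which propagates to $(t/r)^N$ on the $N$-th symmetric tensor power by the standard identity $\|T^{\odot N}\| = \|T\|^N$. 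Writing $a_n = \lambda_n + a_n^0$ with $\lambda_n = \int_{\widehat{\cH}^t} a_n \rd\mu_{tx}$ and applying the Pythagorean identity gives
$$
\|a_n^0 \otimes 1 - Q(a_n^0 \otimes 1)\|^2 \;\geq\; \bigl(1 - (t/r)^2\bigr)\|a_n^0\|^2,
$$
forcing $a_n^0 \to 0$ in $\rL^2$. Hence $a_n - \lambda_n \to 0$ in measure, proving that $\widehat{\alpha}^t$ is strongly ergodic.
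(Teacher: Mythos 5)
Your proof is correct, and its skeleton is the same as the paper's: apply Lemma \ref{lemma spectral gap} with $\rho=\widehat{\alpha}^0$ (whose stable spectral gap comes from that of $\alpha^0$ via the chaos decomposition, exactly as the paper implicitly uses), then transport an almost invariant sequence through the Pythagorean identification $\widehat{\alpha}^t\otimes\widehat{\alpha}^s\cong\widehat{\alpha}^r\otimes\widehat{\alpha}^0$ of Remark \ref{conjugacy different gaussian}. Where you genuinely diverge is the final step. The paper writes $r=t/\sqrt{2}$, views $a_n\otimes 1$ on $\widehat{\cH}^r\otimes\widehat{\cH}^r\cong\widehat{\cH}^t\otimes\widehat{\cH}^0$, and after the lemma forces $a_n\otimes 1$ close to $E(a_n\otimes 1)\in\rL^\infty(\widehat{\cH}^t)$, it invokes the flip $\xi\mapsto-\xi$ of $\cH^0$, which exchanges the two copies of $\cH^r$ while fixing $\cH^t$; this gives $a_n\otimes 1-1\otimes a_n\to 0$ and hence triviality by a soft symmetry argument, with no further spectral input. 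You instead prove the quantitative estimate $\|QP_t\|\le t/r<1$ on the orthocomplement of the constants, via the angle $\cos\theta=t/r$ between the two first-chaos subspaces and the second-quantization bound $\|T^{\odot N}\|\le\|T\|^N$, and then conclude by a Pythagorean inequality. Your route costs an extra Fock-space computation (which is correct: the conditional expectation onto the subalgebra generated by a first-chaos subspace $V$ is $\Gamma(P_V)$, and $\|P_VP_W\|=\cos\angle(V,W)$), but it buys an explicit rate $1-(t/r)^2$ and works for every $t<r$ from a single choice of $r$, whereas the paper's flip trick is softer and avoids any chaos decomposition beyond what is already inside the lemma. Both arguments are valid.
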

\begin{proof}
Take $x \in \cH$. Since $\| g \mu_{tx}^{1/2}-\mu_{tx}^{1/2} \|^2 =2-2e^{-\frac{t^2}{8}\|gx-x\|}$ converges to $0$ uniformly on compact subsets of $G$ when $t \to 0$, we can find $s > 0$ with $s < t_{\rm erg}(\alpha)$ such that $\sigma = \widehat{\alpha}^t$ and $\rho=\widehat{\alpha}^0$ satisfy the assumption of Lemma \ref{lemma spectral gap} for all $t < s$. 

Take $r=\frac{1}{\sqrt{2}}t$ and identify as usual $\widehat{\cH}^t \otimes \widehat{\cH}^0$ with $\widehat{\cH}^r \otimes \widehat{\cH}^r$ and $\widehat{\alpha}^t \otimes \widehat{\alpha}^0$ with $\widehat{\alpha}^r \otimes \widehat{\alpha}^r$. Take $(a_n)_n$ in $\rL^\infty(\widehat{\cH}^r)$ a bounded almost invariant sequence for $\widehat{\alpha}^r$. Then $a_n \otimes 1 \in \rL^\infty(\widehat{\cH}^r \otimes \widehat{\cH}^r)$ is also $\widehat{\alpha}^r \otimes \widehat{\alpha}^r=\widehat{\alpha}^t \otimes \widehat{\alpha}^0$-almost invariant. Therefore, by Lemma \ref{lemma spectral gap}, we know that the sequence $a_n \otimes 1 - E(a_n \otimes 1)$ converges to $0$ in the measure topology, where $E$ is the conditional expectation from $\rL^\infty(\widehat{\cH}^t \otimes \widehat{\cH}^0)$ onto $\rL^\infty(\widehat{\cH}^t)$. But the isometry $\xi \mapsto -\xi$ of $\cH^0$ induces a $G$-equivariant isometry of $\cH^t \times \cH^0$ which flips the two copies of $\cH^r$ and leaves $\cH^t$ fixed. We conclude that $a_n \otimes 1-1 \otimes a_n$ converges to $0$ in the measure topology and this easily implies that $(a_n)_n$ is trivial.
\end{proof}

\begin{cor}
Let $\alpha : G \curvearrowright \cH$ be an affine isometric action such that $\alpha^0$ is mixing and has stable spectral gap. Then there exists $t_0 > 0$ such that the actions $\widehat{\alpha}^t$ are strongly ergodic of type $\III_1$ for all $t \in ]0,t_0[$.
\end{cor}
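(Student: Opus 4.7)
The plan is to combine Theorem \ref{strongly ergodic} with Theorem \ref{general mixing}, once the hypotheses of the former are seen to be satisfied and type $\III_0$ is ruled out.

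First, I would verify $t_{\rm erg}(\alpha) > 0$, the key hypothesis of Theorem \ref{strongly ergodic}. Stable spectral gap of $\alpha^0$ forces $G$ to be nonamenable: for amenable $G$, Fell absorption $\pi \otimes \lambda_G \simeq \lambda_G^{\oplus \dim \pi}$ shows that $\pi \otimes \lambda_G$ always inherits almost invariant vectors from $\lambda_G$, so no representation $\pi$ can have stable spectral gap. Hence Proposition \ref{poincare strictly positive} gives $\delta(\alpha) > 0$, and Theorem \ref{affine dissipativity} yields $t_{\rm diss}(\alpha) \geq \sqrt{2\delta(\alpha)} > 0$. Since $\alpha^0$ is mixing, Theorem \ref{general mixing}(i) shows that $\widehat{\alpha}^t$ is weakly mixing (in particular ergodic) for all $0 < t < t_{\rm diss}(\alpha)$, so $t_{\rm erg}(\alpha) \geq t_{\rm diss}(\alpha) > 0$.

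Theorem \ref{strongly ergodic} now applies and produces $t_1 > 0$ such that $\widehat{\alpha}^t$ is strongly ergodic for every $t \in (0, t_1)$. Simultaneously, Theorem \ref{general mixing}(ii) guarantees that the Krieger $T$-invariant of $\widehat{\alpha}^t$ is trivial for every $0 < t < \frac{1}{\sqrt{2}} t_{\rm diss}(\alpha)$, so that $\widehat{\alpha}^t$ is of type $\III_0$ or $\III_1$. Setting $t_0 := \min\bigl(t_1, \tfrac{1}{\sqrt{2}} t_{\rm diss}(\alpha)\bigr) > 0$, for every $t \in (0, t_0)$ the Gaussian action $\widehat{\alpha}^t$ is simultaneously strongly ergodic and has trivial $T$-invariant.

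The remaining point, which I expect to be the main subtlety, is to rule out type $\III_0$. The cleanest self-contained route, in the spirit of the present paper, is to show directly that the Maharam extension $\Mod(\widehat{\alpha}^t)$ is ergodic, which is the very definition of type $\III_1$. Using the identification $\widehat{\alpha}^t \otimes \widehat{\alpha}^0 \cong \widehat{\alpha}^{t/\sqrt{2}} \otimes \widehat{\alpha}^{t/\sqrt{2}}$ from Remark \ref{conjugacy different gaussian}, together with the spectral-gap estimate of Lemma \ref{lemma spectral gap} applied at the Maharam level, any $\Mod(\widehat{\alpha}^t)$-invariant function is reduced to a $\widehat{\alpha}^t$-invariant function, which is constant by strong ergodicity. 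Equivalently, one may appeal to the standard fact that strong ergodicity of a free nonsingular action makes the associated crossed product a full factor in Connes' sense, and that a full factor of type $\III$ with trivial $T$-invariant is necessarily of type $\III_1$. Either route yields type $\III_1$ and completes the proof.
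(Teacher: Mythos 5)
Your proposal follows the same route as the paper: establish nonamenability of $G$ (hence $t_{\rm diss}(\alpha)>0$), invoke Theorem \ref{general mixing} to get type $\III_0$ or $\III_1$ for small $t$, invoke Theorem \ref{strongly ergodic} to get strong ergodicity for small $t$, and then rule out type $\III_0$. For that last step the paper uses exactly your ``route (b)'': the standard fact that a type $\III_0$ ergodic action is never strongly ergodic (equivalently, a type $\III_0$ factor is never full), so with the $T$-invariant already trivial the type must be $\III_1$.

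One caution: your ``route (a)'' — deducing ergodicity of $\Mod(\widehat{\alpha}^t)$ directly from Lemma \ref{lemma spectral gap} applied ``at the Maharam level'' — does not work as sketched. The Maharam extension carries an extra $\R^*_+$ direction with an infinite invariant measure, so the $\rL^2$-spectral gap estimate of Lemma \ref{lemma spectral gap} (which lives in $\rL^2$ of a probability space and controls almost invariant \emph{bounded} sequences against a conditional expectation) gives no handle on invariant functions of $\Mod(\widehat{\alpha}^t)$ that depend nontrivially on the $\R^*_+$ coordinate; this is precisely the gap between ``trivial $T$-set'' and ``type $\III_1$'' that makes type $\III_0$ delicate. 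Since you explicitly offer the correct alternative, the proof stands, but the $\III_0$ exclusion genuinely requires the strong-ergodicity-versus-$\III_0$ incompatibility rather than a spectral gap computation.
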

\begin{proof}
Observe first that $G$ is nonamenable. Therefore $t_{\rm diss}(\alpha) > 0$. By Theorem \ref{general mixing}, we know that $\widehat{\alpha}^t$ is ergodic of type $\III_1$ or type $\III_0$ for $t > 0$ small enough. By Theorem \ref{strongly ergodic}, we know that $\widehat{\alpha}^t$ is strongly ergodic for $t$ small enough. Since a type $\III_0$ action is never strongly ergodic, we conclude that $\widehat{\alpha}^t$ is of type $\III_1$ for $t > 0$ small enough.
\end{proof}

%

\section{Trees, random walks and groups acting on trees} \label{trees}
A tree $T$ is a connected unoriented graph without cycles. We will abusively identify $T$ with its set of vertices. We denote by $\overrightarrow{E}(T) \subset T^2$ the set of oriented edges of $T$ (i.e.\ couples of adjacent vertices). We denote by $E(T)$ the set of all unoriented edges of $T$ (pairs of adjacent vertices). An \emph{orientation} of $T$ is a map 
$$\omega : E(T) \ni e \mapsto (\omega_+(e),\omega_-(e)) \in \overrightarrow{E}(T)$$ such that $e=\{ \omega_+(e),\omega_-(e) \}$ for all $e \in E(T)$.

 A vertex of degree one is called a \emph{leaf}. A \emph{subtree} $S \subset T$ is a connected subgraph (this implies that $S$ itself is a tree). We say that $S$ is a \emph{line} (resp.\ a \emph{half-line}, resp.\ a \emph{segment}) if every vertex of $S$ has degree at most two and $S$ has no leaves (resp.\ only one leaf, resp.\ two leaves). For every $x,y \in T$, we denote by $[x,y] \subset T$ the segment defined as the smallest subtree containing $x$ and $y$. We view $T$ as a metric space where $d(x,y)$ is simply the cardinality of $[x,y]$ minus $1$. Note that one can recover the graph structure of $T$ from this metric. If we fix a distinguished vertex $\rho \in T$, we will call it a \emph{root}. In that case, we will use the notation $|x|=d(x,\rho)$ for all $x \in T$. Our trees $T$ are always assumed to be countable. We will also often assume that they are \emph{locally finite}, i.e.\ every vertex of $T$ has finite degree. For $q \geq 2$, we say that $T$ is \emph{$q$-regular} if every vertex of $T$ has degree $q$.

\subsection{The boundary of a tree}
Let $T$ be a tree. Let $\partial T$ be the set of all possible orientations of $T$ such that every vertex admits at most one outgoing edge. We equip $\overline{T}$ with the topology of pointwise convergence on $T$. Then $\overline{T}$ is a compact space. For every $x \in T$, we can define an ortientation of $T$ for which every edge is oriented towards $x$. This allows to identify $T$ with an open discrete subset of $\overline{T}$. The set $\partial T = \overline{T} \setminus T$ is called the \emph{boundary} of $T$. It consists of all orientations for which every vertex admits exactly one outgoing edge. Since $\overline{T}$ is compact, the boundary $\partial T$ is nonempty when $T$ is infinite.

For every $\omega \neq \omega' \in \partial T$ there exists a smallest subtree $S \subset T$ such that $\{ \omega, \omega'\} \subset \overline{S}$. The tree $S$ is a line and is denoted $[\omega, \omega']$.

Fix a root $\rho \in T$. Then one can metrize the topology of $\partial T$ by the following distance
$$ d_\rho(\omega, \omega')=e^{-d(\rho, [\omega, \omega'])}, \quad \omega, \omega' \in \partial T.$$
It is a fact that the Hausdorff dimension of the metric space $(\partial T,d_\rho)$ does not depend on the choice of $\rho$. It is called the \emph{Hausdorff dimension} of $\partial T$ and we denote it by $\dim_H \partial T$. When $\Aut(T) \backslash T$ is finite the Hausdorff dimension of $\partial T$ can be computed by the following simple formula
$$ \dim_H \partial T = \lim_{n \to \infty} \frac{1}{n} \log | \{ x \in T \mid |x|\leq n\} | $$
which (see Lemma \ref{growth exponent}) is also the exponent of convergence of the series
$$ \sum_{x \in T} e^{-s|x|}, \; s > 0.$$
For example, if $T$ is $q$-regular with $q \geq 2$, we have $\dim_H \partial T=\log(q-1)$.

\subsection{Tree-indexed random walks}
Let $T$ be an infinite tree and fix a root $\rho \in T$. Let $X$ be a random variable and let $(X_e)_{e \in E(T)}$ be a family of i.i.d.\ random variables with the same distribution as $X$. For each $v \in T$, set 
$$S_v = \sum_{e \in E([\rho,v])}  X_e.$$
The random process $(S_v)_{v \in T}$ is a random walk on $\R$ indexed by the tree $T$. In the study of Gaussian actions (and nonsingular Bernoulli actions) of groups acting on trees, we will need to estimate the speed of this tree-indexed random walk, i.e.\ the growth of $\frac{1}{|v|} S_v$ when $v \to \infty$.

\begin{thm}[{\cite[Theorem 4]{LP92}}]
Set
	\[m(y) = \inf_{x \leq 0} {\mathbb E}[e^{x(X-y)}]\]
	and
	\[m_1(z) = \sup \{y \mid m(y) < z\}.\]	
If $\delta = \dim_H \partial T > 0$ then almost surely we have
	\[\inf_{\omega \in \partial T} \uplim_{v \in [\rho,\omega]} \frac{1}{|v|}S_v = m_1(e^{-\delta})\]
\end{thm}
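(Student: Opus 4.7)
The plan is to establish the equality by proving the two inequalities separately, combining classical large deviation estimates with a tree-indexed percolation argument. The starting point is the Chernoff bound: for i.i.d.\ copies $X_1,\dots,X_n$ of $X$ and any $y$,
\[
\mathbb{P}(X_1+\cdots+X_n \leq ny) \leq m(y)^n,
\]
while Cramér's theorem gives a matching lower bound of the same exponential order for $y < \mathbb{E}[X]$. A direct check shows that $m$ is nondecreasing on $\R$, equals $1$ on $[\mathbb{E}[X],\infty)$, and tends to $0$ as $y \to -\infty$, so $m_1(e^{-\delta})$ is the unique value at which $m$ attains $e^{-\delta}$.

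For the lower bound, I fix $y < m_1(e^{-\delta})$ and pick $\eta > 0$ such that $m(y)\,e^{\delta+2\eta} < 1$. Since $\delta = \dim_H \partial T$ coincides with the exponential growth rate of $|\{v : |v|=n\}|$, one has $|\{v : |v|=n\}| \leq e^{(\delta+\eta)n}$ for all $n$ large. By independence of the edge weights $(X_e)$ and the Chernoff bound above,
\[
\mathbb{E}\bigl[ \#\{v : |v|=n,\ S_v \leq yn\} \bigr] \leq e^{(\delta+\eta)n} m(y)^n \leq e^{-\eta n}.
\]
Markov's inequality and Borel--Cantelli then give, almost surely, that only finitely many $v \in T$ satisfy $S_v \leq y|v|$. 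Consequently every ray $\omega \in \partial T$ has $\liminf_{v \in [\rho,\omega]} S_v/|v| \geq y$, and letting $y \uparrow m_1(e^{-\delta})$ yields $\inf_\omega \uplim_v S_v/|v| \geq m_1(e^{-\delta})$ almost surely.

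For the upper bound, I fix $y > m_1(e^{-\delta})$, so that $m(y) > e^{-\delta}$, and aim to produce almost surely a ray $\omega$ along which $S_v \leq y|v|$ for all but finitely many $v \in [\rho,\omega]$. I study the random subset $T_y := \{v \in T : S_v \leq y|v|\}$ as a tree-indexed process. Its expected occupation at level $n$ is of order $|T_n|\,m(y)^n$, growing at rate $e^{\delta} m(y) > 1$. A level-by-level Paley--Zygmund / second-moment argument, together with Lyons' characterisation of percolation on trees in terms of the branching number $\operatorname{br}(T) = e^{\delta}$, shows that $T_y$ contains an infinite ray with positive probability. A standard zero--one law for tree-indexed processes (conditional on the tree) upgrades this to almost sure existence, and letting $y \downarrow m_1(e^{-\delta})$ closes the upper bound.

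The main obstacle is the second-moment estimate in the upper bound: the partial sums $S_u$ and $S_v$ for $u,v$ at level $n$ are strongly correlated through their common ancestors, so the indicators $\mathbf{1}_{\{S_v \leq yn\}}$ are far from independent. Lyons and Pemantle handle this by expanding $\mathbb{E}[N_n^2]$, with $N_n = \#\{v : |v|=n,\ S_v \leq yn\}$, as a sum over vertex pairs parametrised by the depth of their common ancestor, performed at the optimal Cramér tilt $x^\ast \leq 0$ realising $m(y)$. The ratio $\mathbb{E}[N_n^2]/\mathbb{E}[N_n]^2$ remains bounded precisely when $m(y) > e^{-\delta}$, which is exactly the threshold encoded by $m_1(e^{-\delta})$; combined with the branching-number interpretation of $\dim_H \partial T$, this explains why the critical slope equals $m_1(e^{-\delta})$.
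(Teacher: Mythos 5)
The first thing to note is that the paper does not prove this statement at all: it is imported verbatim as \cite[Theorem 4]{LP92}, so there is no internal proof to compare yours against. Judged on its own, your outline reproduces the correct global strategy of Lyons and Pemantle (first moment for one inequality, tilted second moment for the other), but it contains a genuine gap in the lower bound and leaves the crux of the upper bound unproved.

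The gap: in the lower bound you substitute the exponential growth rate of the spheres $|\{v:|v|=n\}|$ for $e^{\delta}=e^{\dim_H\partial T}$. These agree only for special trees (the paper itself records the identity $\dim_H\partial T=\lim_n\frac1n\log|\{x:|x|\le n\}|$ only when $\Aut(T)\backslash T$ is finite); in general $\dim_H\partial T=\log\mathrm{br}(T)$, the logarithm of the branching number, which can be strictly smaller than the growth rate. When it is, your bound $|\{v:|v|=n\}|\le e^{(\delta+\eta)n}$ is false, the series $\sum_n |T_n|\,m(y)^n$ can diverge even though $m(y)<e^{-\delta}$, and the Borel--Cantelli step collapses. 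The correct argument replaces spheres by cutsets: since $m(y)\,\mathrm{br}(T)<1$ there are cutsets $\Pi$ with $\sum_{v\in\Pi}m(y)^{|v|}$ arbitrarily small, and a first-moment bound over such $\Pi$ (applied also to the subtrees $T^w$, then a countable union over $w$ and over rational $y$) shows that almost surely no ray is eventually contained in $\{v:S_v\le y|v|\}$, which is all the statement needs. For the upper bound the same issue recurs: a Paley--Zygmund argument on the sphere counts $N_n$ again only sees the growth rate, and the bounded ratio $\mathbb{E}[N_n^2]/\mathbb{E}[N_n]^2$ is not equivalent to $m(y)>e^{-\delta}$ in general; one must run the second moment against a flow (a measure on $\partial T$ of finite $\lambda$-energy for $\lambda<\mathrm{br}(T)$) under the optimal exponential tilt. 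You correctly identify the correlation through common ancestors as the difficulty, but you then defer the decisive estimate back to Lyons--Pemantle, so the hard half of the theorem is cited rather than proved; the concluding ``standard zero--one law'' also needs an argument, since the survival event is not a tail event for the edge variables. None of this matters for the way the paper uses the theorem (there $G'\backslash T'$ is finite, so growth rate and branching number coincide), but as a proof of the statement in the generality in which it is asserted, the argument is incomplete.
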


\begin{example}[Gaussian random walk] \label{gaussian random walk}
Suppose that $X$ is a standard Gaussian random variable. Then 
$$m(y)=\inf_{x \leq 0} e^{\frac{1}{2}x^2-xy}$$
hence $m(y)=e^{-\frac{1}{2}y^2}$ if $y \leq 0$ and $m(y)=1$ if $y \geq 0$. This implies that
$$m_1(e^{-\delta})=-\sqrt{2\delta}.$$
Therefore, we obtain
$$\inf_{\omega \in \partial T} \uplim_{v \in [\rho,\omega]} \frac{1}{|v|}S_v = -\sqrt{2\delta}.$$
By replacing $S_v$ by $-S_v$, we also obtain
$$ \sup_{\omega \in \partial T} \lowlim_{v \in [\rho,\omega]} \frac{1}{|v|}S_v = \sqrt{2\delta}.$$
\end{example}

\begin{example}[Bernoulli random walk] \label{bernoulli random walk}
Let $X$ be a Bernoulli random variable with values in $\{-1,1\}$ with parameter $p \in ]0,1[$. Then
$$ m(y) = \inf_{x \leq 0} (p e^{x(\pm 1-y)}+ (1-p) e^{x(\mp 1-y)}).$$
Observe that $m(0)=2\sqrt{p(1-p)}$. Therefore, by continuity of $m$, we get 
$$\inf_{\omega \in \partial T} \uplim_{v \in [\rho,\omega]} \frac{1}{|v|}S_v < 0$$
if and only if $2\sqrt{p(1-p)} > e^{-\delta}$. By replacing $S_v$ with $-S_v$, we also have
$$\sup_{\omega \in \partial T} \lowlim_{v \in [\rho,\omega]} \frac{1}{|v|}S_v > 0$$
if and only if $2\sqrt{p(1-p)} > e^{-\delta}$.
\end{example}

The following statement gives a more precise estimate for regular trees. It is based on an additive martingale argument which is classical in the theory of branching random walks. We are very grateful to Michel Pain for explaining it to us.
\begin{thm} \label{speed regular tree}
Suppose that $T$ is a $q$-regular tree with $q \geq 3$ and that $X$ is a standard Gaussian random variable. Then 
$$\mathbb{P}\left( \frac{1}{|v|} S_v \leq \sqrt{2\delta} \text{ for all } v \in T \right) > 0.$$
\end{thm}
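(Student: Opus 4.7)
The plan is to decompose $T$ at the root and reduce the claim to the almost sure finiteness of the running supremum of Biggins' additive martingale at critical inverse temperature on a single rooted $(q-1)$-ary tree. Write $v_1,\dots,v_q$ for the neighbours of the root $\rho$, $X_i$ for the Gaussian edge weight along $\{\rho,v_i\}$, and $T_i$ for the connected component of $T\setminus\{\rho\}$ containing $v_i$, rooted at $v_i$: each $T_i$ is then a rooted $(q-1)$-ary tree. For $v \in T_i$ let $S'_v$ denote the sum of the edge weights along the path from $v_i$ to $v$ inside $T_i$, so that $S_v = X_i + S'_v$ and $|v| = 1 + |v|_{T_i}$. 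Setting $\theta := \sqrt{2\delta} = \sqrt{2\log(q-1)}$, this decomposition gives
\[
\sup_{v \neq \rho}\bigl(S_v - \theta |v|\bigr) \;=\; \max_{1 \le i \le q}\bigl(X_i - \theta + M'_i\bigr), \qquad M'_i := \sup_{v \in T_i}\bigl(S'_v - \theta |v|_{T_i}\bigr).
\]
Hence the event in the theorem contains $\{X_i \le \theta - K \text{ and } M'_i \le K \text{ for all } i\}$ for any $K > 0$, and since the vector $(X_i)_i$ is independent of the vector $(M'_i)_i$ (whose entries are i.i.d.\ copies of a common variable $M'$ defined on a single $(q-1)$-ary rooted tree), it will suffice to produce $K$ with $\mathbb{P}(M' \le K) > 0$; equivalently, to show that $M' < \infty$ almost surely.

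For that last point I would use the additive martingale at the critical parameter $\theta = \sqrt{2\delta}$. On the $(q-1)$-ary tree, set
\[
W_n \;=\; e^{-2n\delta} \sum_{|v|=n} e^{\theta S'_v}.
\]
The identity $(q-1)\,\mathbb{E}[e^{\theta X}] = (q-1)\,e^{\theta^2/2} = e^{2\delta}$, which is built into the definition $\delta = \log(q-1)$ together with $\theta^2/2 = \delta$, readily implies that $(W_n)_n$ is a nonnegative martingale with $\mathbb{E}[W_n] = 1$. Doob's maximal inequality then yields $\sup_n W_n < \infty$ almost surely. Retaining only the largest term in the defining sum gives
\[
W_n \;\ge\; \exp\!\Bigl(\theta\bigl(\max_{|v|=n} S'_v - \theta n\bigr)\Bigr),
\]
so that $M' \le \theta^{-1}\log \sup_n W_n < \infty$ almost surely, as desired.

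Combining the two steps, choose $K$ large enough that $\mathbb{P}(M' \le K) \ge \tfrac{1}{2}$; by independence,
\[
\mathbb{P}\!\left( \tfrac{1}{|v|}S_v \le \sqrt{2\delta} \text{ for all } v \in T\setminus\{\rho\} \right)
\;\ge\; \mathbb{P}(X \le \sqrt{2\delta} - K)^{q}\, \mathbb{P}(M' \le K)^{q} \;>\; 0,
\]
which will be the theorem. The main subtle point is that the argument operates exactly at the critical parameter $\sqrt{2\delta}$: the relation $(q-1)e^{\theta^2/2}=e^{2\delta}$ simultaneously produces the martingale property and pins the natural barrier at the Lyons--Pemantle speed. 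Using Doob's maximal inequality to bound $\sup_n W_n$, rather than relying on $\rL^1$-convergence of $W_n$ (which typically fails at criticality), is what will make the critical case go through.
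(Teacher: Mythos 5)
Your argument is correct and is essentially the paper's proof: both hinge on the additive martingale $e^{-n(s^2/2+\delta)}\sum_{|v|=n}e^{sS_v}$ at the critical parameter $s=\sqrt{2\delta}$, bound it below by its largest summand to get $\sup_v(S_v-\sqrt{2\delta}\,|v|)<+\infty$ almost surely, and then use independence of the edges incident to the root to absorb the resulting additive constant. The only differences are cosmetic — you split the tree at the root before running the martingale argument while the paper does it afterwards, and you invoke Doob's maximal inequality where the paper uses almost sure convergence of the nonnegative martingale.
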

\begin{proof}
Let $\delta=\log(q-1)=\dim_H \partial T$.  Fix $s > 0$ and define a sequence of random variables
$$ W_{n}=\sum_{|v|=n} \exp\left(sS_v \right)$$
Observe that for every $v,w \in T$ with $|v|=n$ and $|w|=n+1$, the random variable $S_w-S_v$ is independent from $S_u$ for all $u \in T$ with $|u| \leq n$. From this observation, one can easily compute the conditional expectation
$$ \mathbb{E}(W_{n+1} \mid  S_v, \: |v| \leq n )=(q-1) e^{\frac{1}{2}s^2} W_n.$$
This means that the random process 
$$\left( e^{-n(\frac{1}{2}s^2+\delta)} W_n \right)_{n \geq 1}$$
is a martingale. Since it is positive, it converges almost surely to a finite limit when $n \to \infty$. Since we have
$$ W_n \geq \max_{|v|=n} e^{sS_v}$$
we conclude that
$$ \uplim_{n \to \infty}  \max_{|v|=n}  \left(  -n \left(\frac{1}{2}s^2+\delta \right) + sS_v \right) < +\infty$$
almost surely. Now take $s=\sqrt{2\delta}$. We get
$$ \uplim_{n \to \infty}  \max_{|v|=n}  \left(  - n\sqrt{2\delta}+ S_v \right) < +\infty$$
almost surely. In particular, there exists a constant $C > 0$ such that
$$\mathbb{P}\left( S_v \leq \sqrt{2\delta}|v|+C \text{ for all } v \in T \right) > 0.$$
Note that for any $w \in T$, by changing the root $\rho$ to $w$, we also get
$$\mathbb{P}\left( S_v-S_w \leq \sqrt{2\delta}|v|+C \text{ for all } v \in T \right) > 0.$$
Now, let $w$ be a neighbour of $\rho$. Let $T_w=\{ v \in T \mid |v| \geq d(v,w) \}$ be the set of all childs of $w$. Since $\mathbb{P}(S_w < -C) > 0$ and $S_v-S_w$ is independent from $S_w$ for all $v \in T_w$, we get
$$\mathbb{P}\left( S_v \leq \sqrt{2\delta}|v| \text{ for all } v \in T_w \right) > 0.$$
Since this holds for all (finitely many) neighbours $w$ of $\rho$, we conclude, again by independence, that
$$\mathbb{P}\left( S_v \leq \sqrt{2\delta}|v| \text{ for all } v \in T \right) > 0.$$
\end{proof}

\subsection{Actions of locally compact groups on trees}
The automorphism group $\Aut(T)$ of a locally finite tree $T$ is a locally compact group for the topology of pointwise convergence on $T$. Moreover, for this topology, the stabilizer of every vertex is an open compact subgroup of $\Aut(T)$. Thus $\Aut(T)$ is a totally disconnected group. An action of a locally compact group $G$ on a tree $T$ is simply a continuous homomorphism $\sigma : G \rightarrow \Aut(T)$.

We start with two very elementary propositions. The proofs are  left to the reader.

\begin{prop}
Let $\sigma : G \curvearrowright T$ be an action of a locally compact group on a locally finite tree. Then the following are equivalently:
\begin{enumerate}[ \rm (i)]
\item $\sigma(G)$ is relatively compact in $\Aut(T)$.
\item $G$ fixes some vertex or some edge of $T$.
\item The orbit $G \cdot x$ is finite for some $x \in T$.
\item The orbit $G \cdot x$ is finite for every $x \in T$.
\end{enumerate}
\end{prop}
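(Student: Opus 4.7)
The plan is to establish the cycle of implications (ii) $\Rightarrow$ (i) $\Rightarrow$ (iv) $\Rightarrow$ (iii) $\Rightarrow$ (ii).

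For (ii) $\Rightarrow$ (i), recall from the setup that the stabilizer of every vertex is an open compact subgroup of $\Aut(T)$. If $G$ fixes a vertex $x$, then $\sigma(G)$ sits inside $\Stab(x)$, which is compact. If $G$ fixes an (unoriented) edge $\{x,y\}$, then $\sigma(G)$ sits inside $\Stab(\{x,y\})$, which is contained in $\Stab(x) \cup g\Stab(x)$ for any $g$ swapping $x$ and $y$, hence is still compact.

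For (i) $\Rightarrow$ (iv), let $K$ be the closure of $\sigma(G)$ in $\Aut(T)$; by hypothesis $K$ is compact. Fix $x \in T$. The orbit map $K \to T,\; k \mapsto kx$ is continuous, and its fibers are the cosets of the open subgroup $K \cap \Stab(x)$. Thus the orbit $K \cdot x$ is the continuous image of a compact set, while being discrete (cosets of an open subgroup form a clopen partition of $K$); hence $K \cdot x$ is finite, and a fortiori $G\cdot x$ is finite.

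The implication (iv) $\Rightarrow$ (iii) is trivial. For (iii) $\Rightarrow$ (ii), suppose $F := G\cdot x$ is finite. Let $S \subset T$ be the convex hull of $F$, i.e.\ the smallest subtree containing $F$, which coincides with $\bigcup_{y,z\in F}[y,z]$. Since $F$ is finite and $T$ is locally finite, $S$ is a finite subtree and it is clearly $G$-invariant. Now a finite tree has a canonical \emph{center}, constructed by iteratively pruning all leaves: at each step one removes the vertices of degree $\leq 1$, obtaining a strictly smaller subtree; the process terminates at either a single vertex or a single edge, and this center is preserved by every automorphism of $S$. Applying this to the $G$-action on $S$ we obtain a vertex or edge of $T$ fixed by $G$.

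The only delicate point is the last implication, and the center-of-a-finite-tree argument handles it cleanly; the other implications are direct consequences of the fact that vertex stabilizers in $\Aut(T)$ are open and compact, combined with local finiteness of $T$.
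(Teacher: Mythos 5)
The paper does not actually supply a proof of this proposition --- it is one of the two ``very elementary propositions'' whose proofs are explicitly left to the reader --- so there is nothing to compare against except the intended standard argument, and your proof is exactly that argument, carried out correctly: vertex stabilizers being compact open gives (ii) $\Rightarrow$ (i) and (i) $\Rightarrow$ (iv) (a compact group partitioned into open cosets of $K\cap\Stab(x)$ has finitely many of them), and the convex hull of a finite orbit together with the center of a finite tree gives (iii) $\Rightarrow$ (ii). The only cosmetic imprecision is in the pruning step: if one literally deletes all vertices of degree $\leq 1$ from a two-vertex tree one gets the empty set, so the iteration should be understood as stopping once a single vertex or a single edge remains; this does not affect the validity of the argument, since the center so obtained is canonical and hence fixed by every automorphism of the invariant finite subtree.
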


\begin{prop}
Let $\sigma : G \curvearrowright T$ be a faithful action of a locally compact group on a locally finite tree. Then the following are equivalently:
\begin{enumerate}[ \rm (i)]
\item $\sigma(G)$ is closed in $\Aut(T)$.
\item $\sigma$ is proper.
\item For some $v \in T$, the stabilizer $G_v=\{ g \in G \mid gv=v\}$ is compact.
\item For every $v \in T$, the stabilizer $G_v=\{ g \in G \mid gv=v\}$ is compact.
\end{enumerate}
\end{prop}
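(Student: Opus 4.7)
The plan is to establish three equivalences: $(\rm iii) \Leftrightarrow (\rm iv)$ using local finiteness of $T$, $(\rm ii) \Leftrightarrow (\rm iii)$ using the existence of compact open stabilizers in $\Aut(T)$, and $(\rm i) \Leftrightarrow (\rm iii)$ using the open mapping theorem together with a subsequence extraction argument.

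For $(\rm iii) \Leftrightarrow (\rm iv)$, I would fix two vertices $v, w \in T$ and observe that since $T$ is locally finite the sphere of radius $d(v, w)$ around $w$ is finite. It contains the orbit $G_w \cdot v$, hence $G_v \cap G_w$ has finite index in $G_w$ (and, by symmetry, in $G_v$). Since $\Stab_{\Aut(T)}(v)$ is open in $\Aut(T)$, the subgroup $G_v = \sigma^{-1}(\Stab_{\Aut(T)}(v))$ is open in $G$, so $G_v \cap G_w$ is a finite-index open subgroup of $G_w$. This forces $G_w$ to be compact iff $G_v \cap G_w$ is, and by symmetry iff $G_v$ is.

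For $(\rm ii) \Leftrightarrow (\rm iii)$, the stabilizer $\Stab_{\Aut(T)}(v)$ is both compact and open in $\Aut(T)$. Any compact $K \subset \Aut(T)$ is therefore covered by finitely many left cosets $k_1 \Stab_{\Aut(T)}(v), \dots, k_n \Stab_{\Aut(T)}(v)$, and each nonempty preimage $\sigma^{-1}(k_i \Stab_{\Aut(T)}(v))$ is a left coset of $G_v$ in $G$. Hence $\sigma^{-1}(K)$ is a closed finite union of cosets of $G_v$, which is compact iff $G_v$ is; taking $K = \Stab_{\Aut(T)}(v)$ then shows that the two conditions are equivalent.

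For $(\rm i) \Leftrightarrow (\rm iii)$, if $\sigma(G)$ is closed in $\Aut(T)$ then it is a Polish locally compact group, and since $\sigma$ is faithful the open mapping theorem makes $\sigma : G \to \sigma(G)$ a topological isomorphism; hence $G_v$ corresponds to $\sigma(G) \cap \Stab_{\Aut(T)}(v)$, which is compact. Conversely, suppose $G_v$ is compact and let $(g_n)_{n \in \N}$ be a sequence in $G$ with $\sigma(g_n) \to h$ in $\Aut(T)$. Then $\sigma(g_n) v \to hv$ in the discrete space $T$, so eventually $\sigma(g_n) v = \sigma(g_{n_0}) v$ for some $n_0$, giving $g_{n_0}^{-1} g_n \in G_v$ for all large $n$. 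The compactness of $G_v$ then furnishes a subsequence $g_{n_0}^{-1} g_{n_k} \to k \in G_v$, so $g_{n_k} \to g_{n_0} k$ in $G$ and $h = \sigma(g_{n_0} k) \in \sigma(G)$ by continuity. The main obstacle is this final subsequence extraction, which is exactly where the compactness of $G_v$ (and the Polish assumption, letting us work with sequences rather than nets) is essential.
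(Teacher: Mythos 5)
The paper gives no proof of this proposition (it is explicitly ``left to the reader''), and your argument is correct and complete: the finite-index trick for $(\rm iii)\Leftrightarrow(\rm iv)$, the coset covering for $(\rm ii)\Leftrightarrow(\rm iii)$, and the open mapping theorem plus eventual-constancy of $\sigma(g_n)v$ in the discrete set $T$ for $(\rm i)\Leftrightarrow(\rm iii)$ all work as written. The only point worth making explicit is that ``proper'' here means metrically proper, $\lim_{g\to\infty} d(gv,v)=+\infty$; this coincides with properness of the homomorphism $\sigma : G \rightarrow \Aut(T)$ because the compact subsets of $\Aut(T)$ are exactly the closed sets contained in finitely many cosets of a vertex stabilizer, which is precisely the structure your coset argument exploits.
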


Let $T$ be a locally finite tree. Let $g \in \Aut(T)$ be an automorphism. Then either $g$ fixes a point or and edge (in which case, we say that $g$ is \emph{elliptic}) or there exists a unique line $L \subset T$ such that $g$ acts by translation on $L$ (in which case we say that $g$ is \emph{hyperbolic} and $L$ is called the \emph{axis} of $g$). Note that in the latter case, the end points of $L$ are the only fixed points of $g$ in $\partial T$. More generally, we have the following classification of closed automorphism groups of trees \cite[Section 3]{Ti70} which is a particular case of Gromov's classification of isometry groups of hyperbolic spaces \cite[Section 3.1]{Gro87} (see also \cite{CCMT15} and \cite[Section 1.1.4]{DSU17}).

\begin{thm}
Let $T$ be a locally finite tree and $G < \Aut(T)$ a closed subgroup. Let $\Lambda_G$ be the limit set of $G$, i.e.\ the set of accumulation points of the orbits of $G$ in $\partial T$. Then $G$ belongs to one and exactly one of the following types:
\begin{enumerate}[ \rm (i)]
\item Elliptic: $\Lambda_G$ is empty, $G$ is compact and fixes a vertex or an edge of $T$ and every element of $G$ is elliptic.
\item Parabolic: $\Lambda_G$ consists of a single point, which is fixed by $G$ and every element of $G$ is elliptic.
\item Axial: $\Lambda_G$ consists of only two points, $G$ preserves the line $L$ joining them and there exists a hyperbolic element whose axis is $L$.
\item Focal: $G$ has a unique fixed point in $\partial T$, $G$ contains hyperbolic elements (all of their axes sharing a common end) and $\Lambda_G$ is uncountable.
\item General type: $G$ does not fix any point or pair of points in $T \cup \partial T$, $G$ contains two hyperbolic elements without a common end point and $\Lambda_G$ is uncountable.
\end{enumerate}
\end{thm}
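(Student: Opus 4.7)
The plan is to prove this classical result (due to Tits for trees, Gromov in greater generality) by performing a case analysis on the cardinality of the limit set $\Lambda_G \subset \partial T$, using the two basic principles that $\Lambda_G$ is closed and $G$-invariant, and that the two endpoints of the axis of any hyperbolic element of $G$ automatically belong to $\Lambda_G$. These two facts immediately force certain structural consequences in each case.

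First I would handle the three small cases. If $\Lambda_G = \emptyset$, then every $G$-orbit in $T$ is bounded (any unbounded orbit would, by compactness of $\overline{T}$ and local finiteness, accumulate in $\partial T$). A bounded $G$-invariant subset of a tree has a canonical circumcenter, which is either a vertex or the midpoint of an edge; hence $G$ fixes a vertex or an edge, is therefore contained in a compact open stabilizer, and in particular contains no hyperbolic element. If $|\Lambda_G|=1$, the single boundary point is necessarily $G$-fixed; again $G$ has no hyperbolic element, else its axis endpoints would both lie in $\Lambda_G$. If $|\Lambda_G|=2$, say $\Lambda_G = \{\omega_+,\omega_-\}$, the pair is preserved by $G$, so $G$ stabilizes the bi-infinite line $L = [\omega_-,\omega_+]$; then using that both $\omega_\pm$ are genuine accumulation points of $G\cdot x$, a standard argument produces a translation along $L$, i.e.\ a hyperbolic element with axis $L$.

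Now assume $|\Lambda_G| \geq 3$. The key combinatorial tool is a ping-pong / North-South dynamics argument: given any two distinct points $\omega,\omega' \in \Lambda_G$ together with a third distinct limit point, one can find sequences $g_n,h_n \in G$ with $g_n x \to \omega$, $g_n^{-1} x \to \omega_1$, $h_n x \to \omega'$, $h_n^{-1} x \to \omega_2$ in $\overline{T}$, and suitable products $g_n^{a_n} h_n^{b_n}$ exhibit contracting/expanding behavior on disjoint boundary neighborhoods. This produces many hyperbolic elements, shows $\Lambda_G$ is perfect hence uncountable, and moreover shows that $\Lambda_G$ coincides with the closure of the set of endpoints of axes of hyperbolic elements in $G$. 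Then the split into focal versus general type is dictated by whether $G$ has a global fixed point in $\partial T$: if it does, that point must be the unique common endpoint of the axes of every hyperbolic element (since any hyperbolic axis endpoint is a limit of its $G$-orbit, which converges to the fixed end, forcing sharing of that end), giving the focal case; otherwise we are in the general type and two hyperbolic elements with no common endpoint can be extracted directly from the ping-pong construction.

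The main obstacle, and the place where one must be most careful, is the focal case: proving that if $G$ fixes an end $\omega \in \partial T$ and contains hyperbolic elements, then \emph{every} hyperbolic element has $\omega$ as an endpoint of its axis, and that there is no second $G$-fixed end. The cleanest way is to use horocyclic / Busemann arguments based at $\omega$: the fixed end defines a $G$-equivariant Busemann function $\beta_\omega : T \to \Z$, and a hyperbolic element $g$ is contracting on $\partial T \setminus \{\omega'\}$ toward its attracting end, which must be compatible with preservation of $\beta_\omega$, forcing $\omega$ to be one of its two axis endpoints. The uncountability of $\Lambda_G$ in this case then follows because the attracting endpoints of all $G$-conjugates of a fixed hyperbolic element form a dense orbit in $\Lambda_G \setminus \{\omega\}$, which must be perfect.
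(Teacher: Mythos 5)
The paper does not actually prove this theorem: it is quoted from the literature, with citations to Tits [Ti70, Section 3], Gromov [Gro87], [CCMT15] and [DSU17, Section 1.1.4], so there is no in-paper argument to compare yours against. Your outline is a correct reconstruction of the standard proof: the case analysis on $|\Lambda_G|\in\{0,1,2\}$ versus $|\Lambda_G|\ge 3$, with the last case split according to whether $G$ fixes an end, is exactly how the classification is organised in the cited references, and the small cases (circumcenter of a bounded invariant set, axis endpoints of a hyperbolic element lying in $\Lambda_G$, the homomorphism to $\Isom(\Z)$ in the axial case) are all handled correctly. Two remarks. First, the focal case is easier than you make it: a hyperbolic isometry of a tree fixes exactly the two endpoints of its axis in $\partial T$ (any other end $\eta$ determines a branch point $p(\eta)$ on the axis, and $p(g\eta)=gp(\eta)\neq p(\eta)$), so a global $G$-fixed end is automatically an endpoint of the axis of every hyperbolic element of $G$; no Busemann function is needed, and uniqueness of the fixed end follows because a group fixing two ends preserves the line between them and is therefore elliptic or axial. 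Second, the one genuinely nontrivial step --- producing hyperbolic elements when $|\Lambda_G|\ge 3$ --- is where your sketch is vaguest; the cleanest route is Serre's lemma (the product of two elliptic isometries with disjoint fixed-point sets is hyperbolic) combined with the dichotomy that a group all of whose elements are elliptic either has a bounded orbit or fixes an end, rather than a bare-hands ping-pong with products $g_n^{a_n}h_n^{b_n}$ whose north--south dynamics you would still have to establish.
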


For the following proposition we refer to \cite[Section 2]{CM11}

\begin{prop} \label{convex hull}
Let $T$ be a locally finite tree and $G < \Aut(T)$ a closed subgroup. Suppose that $G$ contains hyperbolic elements (i.e.\ $G$ is not elliptic or parabolic). Then there exists a unique minimal nonempty $G$-invariant subtree $S \subset T$ and we have $\Lambda_G=\partial S$. Moreover, $G$ acts cocompactly on $S$ if and only if $G$ is compactly generated.
\end{prop}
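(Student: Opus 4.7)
The plan is to construct $S$ explicitly as the smallest subtree containing all axes of hyperbolic elements of $G$, verify minimality, identify its boundary with $\Lambda_G$, and finally establish the cocompactness equivalence using elementary arguments about tree actions.

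First I would let $A = \bigcup_{g} \text{axis}(g)$ where $g$ ranges over all hyperbolic elements of $G$, and define $S$ to be the smallest subtree of $T$ containing $A$ (i.e.\ the union of all segments $[x,y]$ with $x,y \in A$). Since conjugation permutes hyperbolic elements and satisfies $\text{axis}(hgh^{-1})=h \cdot \text{axis}(g)$, the set $A$ is $G$-invariant, hence so is $S$. By hypothesis, $G$ contains a hyperbolic element, so $S \neq \emptyset$. To show minimality, let $S' \subset T$ be any nonempty $G$-invariant subtree; pick $x \in S'$ and let $g \in G$ be hyperbolic. The bi-infinite sequence $(g^n x)_{n \in \Z}$ is contained in $S'$, and a standard tree computation shows that the convex hull of this sequence is exactly the axis of $g$, union the (possibly empty) segment from $x$ to its projection on $\text{axis}(g)$. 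In particular, $\text{axis}(g) \subset S'$ for every hyperbolic $g \in G$, hence $A \subset S'$ and thus $S \subset S'$. Uniqueness of the minimal subtree follows.

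Next I would prove $\Lambda_G = \partial S$. For the inclusion $\partial S \subset \Lambda_G$: if $g \in G$ is hyperbolic, then for any $y \in T$ the sequence $(g^n y)_{n \geq 0}$ converges in $\overline{T}$ to one of the two ends of $\text{axis}(g)$, so both endpoints belong to $\Lambda_G$; passing to closures, the set of endpoints of $G$-axes is dense in $\partial S$, and since $\Lambda_G$ is closed we get $\partial S \subset \Lambda_G$. For the reverse inclusion: if $\xi \in \Lambda_G$, there is a sequence $g_n \in G$ and $x \in T$ with $g_n x \to \xi$ in $\overline{T}$. Choose $x \in S$ (possible since $S \neq \emptyset$); by $G$-invariance $g_n x \in S$, so $\xi \in \overline{S} \setminus S = \partial S$.

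For the last equivalence, suppose first $G$ acts cocompactly on $S$. Because $T$ is locally finite and $G < \Aut(T)$ is closed, every vertex stabilizer $G_v$ is open and compact in $G$. Cocompactness means the quotient graph $G \backslash S$ is finite, so by Bass--Serre theory $G$ is generated by finitely many coset representatives together with finitely many compact vertex stabilizers, hence is compactly generated. Conversely, suppose $G$ is compactly generated by a symmetric compact set $\Omega$. Fix $x \in S$; the orbit $\Omega \cdot x$ is finite (compact in the discrete set $T$), so the convex hull $F$ of $\{x\} \cup \Omega \cdot x$ in $T$ is a finite subtree of $S$. I would then show that $G \cdot F$ is convex, hence a $G$-invariant subtree of $S$: given $g \in G$, write $g = s_k \cdots s_1$ with $s_i \in \Omega$, and check inductively that for each $j$, the segment $[x, s_j \cdots s_1 x]$ is contained in $(s_{j-1}\cdots s_1) \cdot F \cup \cdots \cup s_1 \cdot F \cup F$; this gives that $[x, gx] \subset G \cdot F$ for all $g$, and then any two points of $G \cdot F$ are connected by a path in $G \cdot F$ since the orbit of $x$ is ``dense'' along $G \cdot F$. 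By minimality of $S$, $G \cdot F = S$, proving cocompactness.

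The main obstacle I anticipate is the convexity verification for $G \cdot F$ in the compactly-generated direction; the inductive argument above sketches the idea but needs care to ensure every vertex of $G \cdot F$ is linked to $x$ by a path inside $G \cdot F$, which ultimately relies on the elementary fact that translating $F$ by a generator of $\Omega$ keeps the new copy adjacent to the previous one along $F$. The boundary identification in the focal case is also delicate: there one must use that a focal group necessarily contains hyperbolic elements whose axes share the common fixed end but whose second endpoints exhaust an uncountable subset of $\partial S$, matching the description of $\Lambda_G$.
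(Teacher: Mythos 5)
The paper itself offers no proof of this proposition; it is quoted from \cite[Section 2]{CM11}. Your argument is the standard one and is essentially correct: the construction of $S$ as the convex hull of the union of axes, the minimality via $(g^nx)_{n\in\Z}\subset S'$, and the equivalence with compact generation all go through. Two remarks. First, the only real gap is the assertion that endpoints of axes are dense in $\partial S$: this does need a short argument, e.g.\ every edge of $S$ lies on a segment $[a,b]$ with $a,b$ on axes, so given a ray $\rho\subset S$ toward $\xi\in\partial S$ one finds, for each $n$, a point $b_n$ on some axis with $[\rho(0),b_n]\supset[\rho(0),\rho(n)]$, and then the end of that axis on the far side of the projection of $\rho(0)$ gives a point of $\Lambda_G$ within distance $e^{-n}$ of $\xi$; once this is said, no case distinction between focal and general type is needed, so your worry about the focal case is unfounded. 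Second, the convexity verification you flag as the main obstacle is the routine overlapping-translates argument: taking $\Omega$ symmetric containing $1$ and $F$ the convex hull of $\{x\}\cup\Omega x$ (finite because $G_x$ is open and $\Omega$ compact), consecutive translates $t_1\cdots t_jF$ and $t_1\cdots t_{j+1}F$ share the vertex $t_1\cdots t_{j+1}x$, so $\bigcup_{g}gF$ is connected, hence a $G$-invariant subtree of $S$, hence equal to $S$ by minimality; there is no need to track segments $[x,gx]$ inductively.
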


\begin{lem} \label{limit set cocompact}
Let $T$ be a locally finite tree and $G < \Aut(T)$ a closed subgroup. Let $H < G$ be a closed compactly generated subgroup such that $H$ contains hyperbolic elements and $\Lambda_H=\Lambda_G$. Then $G/H$ is compact.
\end{lem}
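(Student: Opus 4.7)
The plan is to first establish that $G$ and $H$ share the same minimal invariant subtree $S \subset T$, and then to exploit the cocompactness of the $H$-action on $S$ together with the compactness of vertex stabilizers in $\Aut(T)$.

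Since $H$ contains hyperbolic elements, so does $G$, so neither group is elliptic nor parabolic, and Proposition \ref{convex hull} applies to both. Let $S_G$ and $S_H$ denote their respective minimal invariant subtrees, with $\partial S_G = \Lambda_G$ and $\partial S_H = \Lambda_H$. The key observation is that each minimal invariant subtree coincides with the convex hull of its boundary --- the union of all bi-infinite geodesics in $T$ joining pairs of points of the limit set: indeed, such a convex hull is itself a nonempty invariant subtree contained in the minimal one, and conversely any subtree already contains every geodesic joining two of its own boundary points. Since $\Lambda_H = \Lambda_G$ is $G$-invariant, its convex hull $S_H$ is therefore $G$-invariant, and minimality of $S_G$ gives $S_G \subset S_H$; conversely $S_G$ is trivially $H$-invariant, so $S_H \subset S_G$. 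Thus $S := S_G = S_H$, and since $H$ is compactly generated, Proposition \ref{convex hull} ensures that $H \backslash S$ is finite.

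To prove that $G/H$ is compact, take a net $(g_n H)_n$ in $G/H$ and fix a vertex $v \in S$; then $g_n^{-1} v \in S$ for every $n$. Since $H\backslash S$ is finite, we may pass to a subnet along which all $g_n^{-1} v$ lie in a single $H$-orbit $H \cdot u$, say $g_n^{-1} v = h_n u$ with $h_n \in H$. Then $(g_n h_n) \cdot u = v$, which shows in particular that $u$ lies in the $G$-orbit of $v$; choose $g_0 \in G$ with $g_0 u = v$. We then have $g_n h_n = g_0 c_n$ where $c_n \in G_u := \{ g \in G \mid gu=u\}$. Because $T$ is locally finite and $G$ is closed in $\Aut(T)$, the subgroup $G_u$ is compact, so after passing to a further subnet $c_n \to c \in G_u$; hence $g_n h_n \to g_0 c$ in $G$, and consequently $g_n H = (g_n h_n) H \to (g_0 c) H$ in $G/H$.

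The main conceptual step is the identification $S_G = S_H$, which rests on viewing the minimal invariant subtree as the convex hull of the limit set (a fact essentially built into Proposition \ref{convex hull}); once this is in place, compactness of $G/H$ reduces to a standard orbit-stabilizer argument using only the finiteness of $H\backslash S$ and the compactness of vertex stabilizers in $\Aut(T)$.
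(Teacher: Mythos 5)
Your proof is correct and follows essentially the same route as the paper: both identify the minimal invariant subtrees of $G$ and $H$ as the convex hull of the common limit set (so that $S:=S_G=S_H$ is $G$-invariant and $H\backslash S$ is finite by Proposition \ref{convex hull}), and both finish using compactness of vertex stabilizers of the closed group $G$. The only cosmetic difference is in the last step, where you verify compactness of $G/H$ by a net argument, while the paper observes that the compact stabilizer $K=G_v$ acts on $G/H$ with finitely many orbits; the mathematical content is identical.
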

\begin{proof}
By Proposition \ref{convex hull}, the subtree $S \subset T$ is the convex hull of $\partial S=\Lambda_H=\Lambda_G$ and $H$ acts cocompactly on $S$. Since $\Lambda_G$ is $G$-invariant, this implies that $S$ is also $G$-invariant. Take $v \in S$. Then $G \cdot v \subset S$. Thus the action of $H$ on $G \cdot v$ has finitely many orbits. But as $G$-spaces, we have $G \cdot v \simeq G/K$ where $K=G_v$ is the stabilizer of $v$ in $G$. Therefore the action of $H$ on $G/K$ has finitely many orbits. Equivalently, the action of $K$ on $G/H$ has finitely many orbits. Since $K$ is compact, we conclude that $G/H$ is compact.
\end{proof}

\subsection{The Poincar\'e exponent and Patterson-Sullivan theory}
In this section, we review a construction of Patterson relating the Poincar\'e exponent of isometry groups of hyperbolic spaces to the existence of \emph{conformal densities} on the boundary. This theory has been generalized to large classes of negatively curved spaces including trees \cite{DSU17}. Moreover, in the case of trees, the theory can be easily adapted to the case of locally compact group of isometries and not only discrete groups. The reason is that even when the group is not discrete, the \emph{orbits} remain discrete as we see in the following proposition.

\begin{df}
Let $T$ be a locally finite tree and $G < \Aut(T)$ a closed subgroup. The Poincar\'e exponent $\delta(G)$ of $G$ is the exponent of convergence of the Poincar\'e integral
$$ \int_G e^{-sd(gx,y)} \rd g , \; s > 0$$
where $x,y$ is any pair of elements of $T$.
\end{df}

\begin{prop} \label{poincare tree}
Let $T$ be a locally finite tree and $G < \Aut(T)$ a closed subgroup. The Poincar\'e exponent $\delta(G)$ of $G$ is given by the exponent of convergence of the following series:
$$ \sum_{z \in G \cdot x} e^{-sd(z,y)}, \; s > 0$$
where $x,y$ is any pair of elements of $T$. In particular, if $G$ acts cocompactly on $T$, then $\delta(G)$ is the Hausdorff dimension of $\partial T$. 
\end{prop}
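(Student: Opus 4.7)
The plan is to reduce both statements to a single computation: that the function $g \mapsto d(gx,y)$ is constant on left cosets of the vertex stabilizer $G_x$, which is a compact open subgroup of $G$.

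First I would observe that since $T$ is locally finite, the stabilizer $\Aut(T)_x$ is compact open in $\Aut(T)$ for every $x \in T$. Hence $G_x = G \cap \Aut(T)_x$ is a compact open subgroup of $G$, so it has positive finite Haar measure $m(G_x) > 0$. For any fixed $y \in T$ and any $g \in G$ and $h \in G_x$, we have $d(ghx,y) = d(gx,y)$, so the function $g \mapsto e^{-s d(gx,y)}$ is constant on each left coset $gG_x$. Using the identification $G/G_x \cong G \cdot x$ via $gG_x \mapsto gx$ and the quotient integration formula for the compact subgroup $G_x$, this yields
\begin{equation*}
\int_G e^{-sd(gx,y)}\, \mathrm{d} g \;=\; m(G_x) \sum_{z \in G\cdot x} e^{-sd(z,y)}.
\end{equation*}
Since $m(G_x) \in (0,+\infty)$, the two expressions share the same exponent of convergence, proving the first assertion.

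For the second assertion, suppose $G$ acts cocompactly on $T$. Then $G \backslash T$ is finite, so $\Aut(T) \backslash T$ is also finite and the formula stated right after the definition of $\dim_H \partial T$ applies; combined with Lemma \ref{growth exponent} it gives that $\dim_H \partial T$ is the exponent of convergence of $\sum_{z \in T} e^{-sd(z,y)}$. It remains to compare this with the orbit sum. By cocompactness, there exists $R > 0$ such that every vertex $z \in T$ lies within distance $R$ of some $w \in G\cdot x$, and conversely $G \cdot x \subset T$. Since $T$ is locally finite, every ball of radius $R$ in $T$ contains at most $N$ vertices for some uniform constant $N$, and every vertex of $T$ is within distance $R$ of at most $N$ orbit points. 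The triangle inequality then gives the two-sided bound
\begin{equation*}
\sum_{z \in G\cdot x} e^{-sd(z,y)} \;\leq\; \sum_{z \in T} e^{-sd(z,y)} \;\leq\; N\, e^{sR} \sum_{w \in G \cdot x} e^{-sd(w,y)},
\end{equation*}
which shows that both series have the same exponent of convergence, namely $\dim_H \partial T$.

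The only mild subtlety, and hence the step most worth double-checking, is the cocompactness-to-cofiniteness reduction and the assertion that the Haar measure of $G_x$ really is finite and positive, i.e. that $G_x$ is compact open. Both follow from standard facts about $\Aut(T)$ for $T$ locally finite, but they are the hinge on which the entire argument turns; once they are in hand, the rest is the elementary coset-integration and ball-counting computation sketched above.
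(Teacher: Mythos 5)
Your proof is correct, and its first half --- reducing the Poincar\'e integral to the orbit sum by integrating over the compact open stabilizer $G_x$, whose Haar measure is positive and finite --- is exactly the paper's argument. For the cocompact case the paper instead writes $\sum_{z \in T} e^{-sd(z,y)}$ as a finite sum over the $G$-orbits and uses that each orbit series has exponent of convergence $\delta(G)$, whereas you compare the full vertex sum to a single orbit sum via an $Ne^{sR}$ bounded-geometry estimate; both routes are valid and equally elementary (just note that the uniform bound $N$ on balls of radius $R$ uses cocompactness together with local finiteness, not local finiteness alone).
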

\begin{proof}
Let $K=G_x$ be the stabilizer of $x$. Then $K$ is a compact open subgroup of $G$. Therefore if $\Delta \subset G$ is a set of representatives of $G/K$, we have for all $s > 0$,
\begin{align*}
\int_G e^{-sd(gx,y)} \rd g &=\sum_{g \in \Delta} \int_{K} e^{-sd(gkx,y)} \rd k \\
 &=\sum_{g \in \Delta} \int_{K} e^{-sd(gx,y)} \rd k \\
&=M \sum_{z \in G \cdot x} e^{-sd(z,y)}
\end{align*}
where $M$ is the Haar volume of $K$.

Now, suppose that $G$ acts cocompactly on $T$. Take $F$ a finite set of representatives of each orbit of $G \curvearrowright T$. Fix $y \in T$. Then $\dim_H \partial T$ is the exponent of convergence of the series
$$ \sum_{z \in T} e^{-sd(z,y)} = \sum_{x \in F} \sum_{z \in G \cdot x} e^{-sd(z,y)}, \quad s > 0.$$
Since $F$ is finite, this exponent of convergence is precisely $\delta(G)$.
\end{proof}

\begin{rem}
Let $G \subset \Aut(T)$ be a subgroup which is not necessarily closed. Then $G \cdot x=\overline{G} \cdot x$ for every $x \in T$. Therefore $\delta(\overline{G})$ is also equal to the exponent of convergence of the series
$$ \sum_{y \in G \cdot x} e^{-sd(y,x)}, \; s > 0.$$
\end{rem}

The definition of a conformal density uses the following Busemann cocycle.

\begin{df} \label{Busemann cocycle}
Let $T$ be a locally finite tree. For $\omega \in \partial T$, the \emph{Busemann cocycle} $b : T \times T \rightarrow C(\partial T)$ is defined by the formula
$$b(x,y) : \omega  \mapsto \lim_{z \to \omega} d(x,z)-d(y,z)$$ for all $x,y \in T$.
\end{df}

\begin{df} \label{conformal density}
Let $T$ be a locally finite tree. A \emph{conformal density of dimension $s$} on $T$ is a family of equivalent positive finite measures $(\nu_x)_{x \in T}$ on $\partial T$ such that 
$$ \frac{\rd \nu_y}{\rd \nu_x}=e^{-sb(y,x)}$$
for all $x,y \in T$. We say that $\nu$ is $G$-invariant, with respect to some subgroup $G \subset \Aut(T)$, if $\nu_{gx}=g_* \nu_x$ for all $x \in T$ and all $g \in G$.
\end{df}

\begin{thm} \label{Patterson-Sullivan}
Let $T$ be a locally finite tree and $G < \Aut(T)$ a closed subgroup of general type. Then there exists a $G$-invariant conformal density of dimension $\delta(G)$ on $\partial T$. Conversely, if there exists a $G$-invariant conformal density of dimension $s$, then $s \geq \delta(G)$.
\end{thm}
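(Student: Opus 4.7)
The plan is to adapt the classical Patterson--Sullivan construction to the setting of a locally compact isometry group of a tree; the key reduction is already provided by Proposition~\ref{poincare tree}, which shows that the Poincar\'e integral is, up to a multiplicative constant, just a series over the discrete orbit $G\cdot x_0$. This allows me to follow Patterson's original argument almost verbatim, replacing group sums by Haar integrals over $G$ and interpreting orbits as discrete subsets of $T$.

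For the existence direction, fix $x_0\in T$ and for each $s>\delta(G)$ define a finite positive measure on the compact space $\overline{T}=T\cup\partial T$ by
\[
\nu_{x,s}\;=\;\frac{1}{P_s(x_0,x_0,m)}\int_G e^{-s\,d(gx_0,x)}\,\delta_{gx_0}\,\rd m(g),\qquad x\in T.
\]
The first task is to arrange that, after letting $s\searrow \delta(G)$, the denominator $P_s(x_0,x_0,m)$ diverges. If it already does (the divergence type case) we do nothing; otherwise I invoke Patterson's classical trick and replace $e^{-s\,d(gx_0,x_0)}$ by $h(d(gx_0,x_0))\,e^{-s\,d(gx_0,x_0)}$ for a suitable slowly varying function $h:\R_+\to\R_+$ such that the new series has exponent of convergence exactly $\delta(G)$ but diverges there; here I use that the orbit is discrete to quote the standard construction of $h$.

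Next I pass to a weak-$*$ limit along some subsequence $s_n\searrow\delta(G)$. By compactness of $\overline{T}$, the family $(\nu_{x,s_n})_n$ has a weak-$*$ cluster point $\nu_x$ for every $x$, and a diagonal argument produces a single subsequence that works simultaneously for a countable dense set of $x\in T$. The divergence of the denominator (after Patterson's modification) combined with the triangle inequality $d(gx_0,x)\le d(gx_0,x_0)+d(x_0,x)$ ensures that every bounded subset of $T$ gets mass tending to $0$, so each $\nu_x$ is supported on $\partial T$; the general-type hypothesis enters here only to guarantee that the orbit is unbounded, but this is automatic unless $G$ is elliptic or parabolic. $G$-equivariance $g_*\nu_x=\nu_{gx}$ is immediate from unimodularity of the orbit counting (using that $G_{x_0}$ is compact open and the left invariance of $m$, together with Proposition~\ref{poincare tree}). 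The conformal density relation $\rd\nu_y/\rd\nu_x=e^{-\delta(G)\,b(y,x)}$ follows by the standard computation: along the approximating measures $\nu_{x,s}$, the Radon--Nikodym derivative at a point $gx_0$ equals $e^{-s(d(gx_0,y)-d(gx_0,x))}$, which converges uniformly on compact subsets of $\overline{T}\setminus\{y,x\}$ to $e^{-\delta(G)\,b(y,x)(\omega)}$ as $gx_0\to\omega\in\partial T$; the slowly varying factor $h$ contributes a ratio tending to $1$ by slow variation.

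For the converse, let $(\nu_x)_{x\in T}$ be a $G$-invariant conformal density of dimension $s$. I will use the Sullivan shadow estimate: for $y\in T$, the shadow $O(x_0,y)\subset\partial T$ is the set of $\omega$ such that $y\in[x_0,\omega)$. From the definition of the Busemann cocycle one checks $b(x_0,y)(\omega)=-d(x_0,y)$ for all $\omega\in O(x_0,y)$, so
\[
\nu_{x_0}\bigl(O(x_0,y)\bigr)\;=\;\int_{O(x_0,y)}e^{-s\,b(x_0,y)}\,\rd\nu_y\;=\;e^{-s\,d(x_0,y)}\,\nu_y\bigl(O(x_0,y)\bigr),
\]
and by equivariance $\nu_y(O(x_0,y))=\nu_{gx_0}(O(x_0,gx_0))$ when $y=gx_0$. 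The general-type hypothesis produces two hyperbolic elements in $G$ with no common endpoint, which in turn yields a point of $\partial T$ that is a limit of $G\cdot x_0$ from at least two distinct directions; applying equivariance to a small neighbourhood one obtains a uniform lower bound $\nu_{gx_0}(O(x_0,gx_0))\ge c>0$ for all $g$ with $d(x_0,gx_0)$ sufficiently large. Finally, the shadows $O(x_0,gx_0)$ corresponding to distinct orbit points at the same distance from $x_0$ are pairwise disjoint (any two distinct vertices at the same distance lie on distinct geodesic rays from $x_0$), so grouping by distance and summing yields
\[
c\sum_{g\in\Delta}e^{-s\,d(x_0,gx_0)}\;\le\;\|\nu_{x_0}\|<\infty,
\]
where $\Delta$ is a set of representatives of $G/G_{x_0}$. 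By Proposition~\ref{poincare tree} this forces $s\ge\delta(G)$. The main technical obstacle is the convergence-type case of the existence argument, where the Patterson function $h$ must be produced and controlled along the orbit; the rest is a routine compactness and Busemann-cocycle calculation.
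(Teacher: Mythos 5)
Your overall strategy is exactly the one the paper intends: the paper's ``proof'' of Theorem \ref{Patterson-Sullivan} is a citation to Patterson, Sullivan and Quint together with the remark that in the locally compact case one replaces sums over the group by sums over the (discrete) orbit via Proposition \ref{poincare tree}. Your existence argument (orbital measures normalized by the Poincar\'e series, Patterson's slowly varying correction in the convergence-type case, weak-$*$ limits on $\overline{T}$, equivariance from left invariance of the Haar measure, and the Busemann computation for the conformal relation) is a faithful rendition of that classical construction, and the converse via the Sullivan shadow lemma is also the standard route.

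There are, however, two errors in the converse, one cosmetic and one genuine. The cosmetic one: with the paper's convention $b(x,y)(\omega)=\lim_{z\to\omega}d(x,z)-d(y,z)$, for $\omega\in O(x_0,y)$ one has $b(x_0,y)(\omega)=+d(x_0,y)$, not $-d(x_0,y)$; your displayed identity $\nu_{x_0}(O(x_0,y))=e^{-s\,d(x_0,y)}\nu_y(O(x_0,y))$ is nevertheless the correct one. The genuine error is the final summation. The inequality $c\sum_{g\in\Delta}e^{-s\,d(x_0,gx_0)}\le\|\nu_{x_0}\|$ cannot hold in general: for a divergence-type group the Patterson construction produces a conformal density of dimension exactly $\delta(G)$ while the Poincar\'e series diverges at $s=\delta(G)$, so your inequality would be violated by the very densities the first half of the theorem constructs. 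The reason your derivation fails is that disjointness of shadows only holds among orbit points lying on a \emph{fixed} sphere about $x_0$; shadows of orbit points at different distances are nested, so you may not add the estimates across spheres. The correct finish is: for each $n$, disjointness on the sphere of radius $n$ gives
$$\bigl|\{\,y\in G\cdot x_0 \mid d(x_0,y)=n\,\}\bigr|\;\le\;c^{-1}\,\|\nu_{x_0}\|\,e^{sn},$$
hence $m\bigl(\{g\in G\mid d(gx_0,x_0)\le n\}\bigr)\le C e^{sn}$ by Proposition \ref{poincare tree}, and then Lemma \ref{growth exponent} yields $\delta(G)=\uplim_n\frac1n\log m(\{g\mid d(gx_0,x_0)\le n\})\le s$. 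With this replacement (and a slightly more careful proof of the uniform lower bound $\nu_{gx_0}(O(x_0,gx_0))\ge c$, which requires ruling out that $\nu_{x_0}$ is concentrated on one or two boundary points --- this is where general type is really used), the argument is complete.
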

\begin{proof}
A proof in the discrete case can be found in the original articles \cite{Pa76} and \cite{Su79} or in \cite[Theorem 4.8 and 4.11]{Qui06}. The proof in the locally compact case is exactly the same as in the discrete case if one replaces sums over the group by sums over an orbit as in Proposition \ref{poincare tree}. We leave the details to the reader.
\end{proof}

\begin{thm} \label{coamenable exponent}
Let $T$ be a locally finite tree and $G \subset \Aut(T)$ a closed subgroup of general type. If $H \lhd G$ is a normal open subgroup such that $G/H$ is amenable then $\delta(G)=\delta(H)$.
\end{thm}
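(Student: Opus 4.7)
The inequality $\delta(H) \leq \delta(G)$ is immediate from the inclusion $H \subset G$; the substantive content is $\delta(G) \leq \delta(H)$, which I would establish via Patterson-Sullivan theory combined with an amenable averaging argument.

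First, I would verify that $H$ is itself of general type. Normality of $H$ makes $\Lambda_H \subset \Lambda_G$ a closed $G$-invariant subset, and since the $G$-action on $\Lambda_G$ is minimal (a standard fact for closed subgroups of general type), one has either $\Lambda_H = \emptyset$ or $\Lambda_H = \Lambda_G$. If $H$ were of elementary type (elliptic, parabolic, axial, or focal), then $H$ would be amenable: elliptic $H$ is compact since it fixes a point or edge of $T$ and is closed in $\Aut(T)$; parabolic, axial, and focal $H$ are solvable-by-compact extensions of $\mathbf{Z}$ or a compact group. Combined with amenability of $G/H$, this would force $G$ to be amenable as an extension of amenable by amenable. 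But $G$ of general type contains two hyperbolic elements with disjoint pairs of ends, which generate by ping-pong a closed discrete non-abelian free subgroup of $G$, precluding amenability. Therefore $H$ is of general type with $\Lambda_H = \Lambda_G$.

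Next, Theorem \ref{Patterson-Sullivan} applied to $H$ yields an $H$-invariant conformal density $(\nu_x)_{x \in T}$ of dimension $s := \delta(H)$, supported on $\Lambda_G$. By normality of $H$, the formula $(g \cdot \nu)_x := g_* \nu_{g^{-1}x}$ again defines, for every $g \in G$, an $H$-invariant conformal density of dimension $s$; using the $G$-equivariance of the Busemann cocycle one checks the conformality relation, and $h \cdot \nu = \nu$ for $h \in H$ makes the assignment factor through $G/H$. Since $H$ is open, $G/H$ is a discrete amenable group, so it admits a F\o lner sequence $(F_n)$. Fix a base point $x_0 \in T$, lift $F_n$ to $\widetilde F_n \subset G$, and form the averaged measures
\[
\sigma_n := \frac{1}{|F_n|} \sum_{g \in \widetilde F_n} (g \cdot \nu)_{x_0}
\]
on $\Lambda_G$. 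Each $\sigma_n$ defines an $H$-invariant conformal density of dimension $s$. Renormalize to obtain a probability measure $\bar\sigma_n$ and extract a weak-$*$ limit $\mu_\infty \in \mathcal{P}(\Lambda_G)$. The F\o lner property of $(F_n)$, together with the cocycle identity for $b$, ensures that the limit $\mu_\infty$ transforms under every $g \in G$ by the correct Busemann factor up to a scalar which, after a final normalization, defines a $G$-invariant conformal density on $\Lambda_G$ of dimension $s$. Applying the converse statement in Theorem \ref{Patterson-Sullivan} then yields $s \geq \delta(G)$, and hence $\delta(H) \geq \delta(G)$.

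The main obstacle is the averaging step. The unnormalized $G/H$-action on conformal densities is affine, but requiring unit mass at $x_0$ breaks affinity: the total mass $(g \cdot \nu)_{x_0}(\Lambda_G) = \int e^{-s\, b(g^{-1}x_0, x_0)}\,\rd\nu_{x_0}$ depends on $g$, so the renormalized $G/H$-action on the convex weak-$*$ compact set of $H$-invariant probability densities is only projectively affine and the standard amenable fixed-point theorem does not apply directly. The delicate point is to control the ratios of these mass factors along a F\o lner sequence so that the renormalized F\o lner averages $\bar\sigma_n$ converge weak-$*$ to a measure whose induced density is genuinely $G$-invariant and not merely $G$-invariant up to scaling. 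This is the analogue, in the Patterson-Sullivan framework, of Grigorchuk's classical cogrowth argument for discrete groups, and the key estimate should follow from the coamenability hypothesis exactly as in that setting.
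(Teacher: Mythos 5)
The paper does not give a self-contained argument here: it reduces the statement to the discrete case, which is \cite[Th\'eor\`eme 0.1]{RT13}, and observes that the locally compact case follows verbatim once sums over the group are replaced by sums over an orbit as in Proposition \ref{poincare tree}. Your opening reduction is fine: $\Lambda_H$ is a closed $G$-invariant subset of $\Lambda_G$, so by minimality it is empty or everything; in the elementary cases $H$ is amenable, which together with amenability of $G/H$ would make $G$ amenable, contradicting the free subgroup produced by ping-pong. So $H$ is of general type with $\Lambda_H=\Lambda_G$, and the one substantive inequality is $\delta(G)\leq\delta(H)$.

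The averaging step, however, is a genuine gap, and you have diagnosed it yourself. The set of $H$-invariant conformal densities of dimension $s$ normalized at $x_0$ is weak-$*$ compact and convex, but $G/H$ acts on it only \emph{projectively} (a linear action on the cone followed by renormalization), and amenability does not yield fixed points for projective actions: already $\Z$ acting on $P^1(\R)$ through an irrational rotation has none, so no general fixed-point theorem can be invoked. Nor does F\o lner averaging rescue the argument: the masses $(g\cdot\nu)_{x_0}(\Lambda_G)=\int e^{-s\,b(g^{-1}x_0,x_0)}\rd\nu_{x_0}$ can grow or decay like $e^{\pm s\,d(gx_0,x_0)}$, so the normalized averages $\bar\sigma_n$ are dominated by the few $g\in F_n$ of largest mass, and the F\o lner property gives no control over the transformation behaviour of a weak-$*$ limit. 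Closing this is precisely the content of the cited theorem, and the known proofs do not average an existing $H$-density: one either descends the Poincar\'e series to a positive convolution-type operator on $\ell^2(G/H)$ and applies a Kesten-type characterization of amenability, or one reruns Patterson's construction with weights taken from positive almost-invariant vectors on $G/H$ (a ``twisted'' Patterson--Sullivan density). Both routes use amenability quantitatively at exactly the point your sketch defers to ``the key estimate should follow,'' so as written the proposal establishes only the trivial inequality $\delta(H)\leq\delta(G)$ and the reduction to the general-type case, not the theorem itself.
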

\begin{proof}
The proof when $G$ is discrete can be found in \cite[Th\'eor\`eme 0.1]{RT13}. The proof in the locally compact case is exactly the same as in the discrete case if one replaces sums over the group by sums over an orbit as in Proposition \ref{poincare tree}. We leave the details to the reader.
\end{proof}

\begin{thm} \label{subgroup of large critical exponent}
Let $T$ be a locally finite tree and $G < \Aut(T)$ a closed subgroup of general type. Then for any $s < \delta(G)$, there exists a closed compactly generated subgroup of general type $H < G$ such that $\delta(H) > s$. Moreover, we can choose $H$ such that $G/H$ is not compact.
\end{thm}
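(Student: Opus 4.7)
The plan is to take $H$ of the form $H = \langle K, h_1, \dots, h_k\rangle$, where $K = G_x$ is the compact open stabilizer of a vertex $x \in T$ and $\{h_1, \dots, h_k\}$ is a Schottky family of hyperbolic elements of $G$. Such an $H$ is automatically open, hence closed, in $G$; it is compactly generated by the compact set $K \cup \{h_1, \dots, h_k\}$; and it is of general type, since it contains the non-abelian free Schottky subgroup $\Gamma = \langle h_1, \dots, h_k\rangle$. Because $\Gamma x \subseteq Hx$, the orbital characterization of the Poincar\'e exponent in Proposition~\ref{poincare tree} gives $\delta(H) \geq \delta(\Gamma)$, so the whole problem reduces to producing a Schottky subgroup $\Gamma < G$ with $\delta(\Gamma) > s$ whose limit set $\Lambda_\Gamma$ is contained in a prescribed proper clopen subset of $\partial T$.

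This is a Bishop--Jones-type density result: for every $s < \delta(G)$ and every nonempty proper clopen $V \subset \partial T$ with $V \cap \Lambda_G \neq \emptyset$, one shows that $G$ contains a Schottky subgroup $\Gamma$ with $\Lambda_\Gamma \subset V$ and $\delta(\Gamma) > s$. The proof starts from a $G$-invariant conformal density $(\nu_y)_{y \in T}$ of dimension $\delta := \delta(G)$ provided by Theorem~\ref{Patterson-Sullivan}, together with the shadow estimate $\nu_x(\mathcal{O}_x(y)) \asymp e^{-\delta d(x,y)}$, which follows by a direct computation from Definition~\ref{Busemann cocycle}. Fixing $s' \in (s, \delta)$, a greedy disjoint covering of a clopen subset of $V \cap \Lambda_G$ by shadows yields, for every sufficiently large $n$, at least $k \geq e^{s' n}$ orbit points $y_1, \dots, y_k \in G x$ with $d(x, y_i) \leq n$ whose shadows are pairwise disjoint and contained in $V$. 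Since hyperbolic fixed points are dense in $\Lambda_G$, each $y_i$ can be replaced by a hyperbolic $h_i \in G$ with attracting fixed point in $\mathcal{O}_x(y_i)$, repelling fixed point in a common small reservoir $U \subset V^c$, and axis passing through a bounded region near $x$. A standard iterative ping-pong refinement then arranges the ping-pong conditions with displacements $d(h_i x, x)$ of order $n$ and with a multiplicative constant $C$ that tends to $1$ as $n \to \infty$ and $s' \to \delta$; the resulting Schottky group satisfies $\delta(\Gamma) \geq \log(2k-1)/(C n) > s$.

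The ``moreover'' conclusion is then formal: by construction, $\Lambda_\Gamma \subset \overline{V} \cup \overline{U}$ is a proper subset of $\Lambda_G$, and since $K$ is compact this yields $\Lambda_H = \Lambda_\Gamma \subsetneq \Lambda_G$; any finite-index open subgroup of $G$ would inherit the full limit set, so $H$ has infinite index in $G$, and since $H$ is open the coset space $G/H$ is discrete and noncompact. The principal obstacle lies in the second paragraph: one must simultaneously produce exponentially many (of order $e^{s' n}$) Schottky players and keep their displacements linear in $n$ with a multiplicative constant approaching $1$, so that the exponent of $\Gamma$ genuinely approaches $\delta(G)$. This is the combinatorial heart of Bishop--Jones-type density theorems; its transposition from discrete groups to closed subgroups of $\Aut(T)$ is routine thanks to Proposition~\ref{poincare tree}, which lets one replace Haar-measure integrals on $G$ by sums over the countable orbit $G x$.
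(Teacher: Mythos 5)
Your route is genuinely different from the paper's and much more ambitious: you are in effect proving a Bishop--Jones type density theorem (Schottky subgroups of exponent arbitrarily close to $\delta(G)$), whereas the paper only needs the soft statement and gets it in a few lines by exhausting $G$ by compactly generated subgroups $G_i$ of general type, normalizing the $G_i$-invariant conformal densities of dimension $\delta(G_i)$, passing to a weak$^*$ limit to obtain a $G$-invariant conformal density of dimension $\lim_i\delta(G_i)$, and invoking the lower-bound half of Theorem \ref{Patterson-Sullivan} to conclude $\lim_i\delta(G_i)\geq\delta(G)$. Your stronger statement is true (see \cite{DSU17}), but the step you yourself flag as the combinatorial heart is not merely technical: as written it fails. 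The greedy covering needs the shadows of orbit points at distance roughly $n$ from $x$ to cover a subset of $V\cap\Lambda_G$ of $\nu_x$-measure bounded below uniformly in $n$; a point $\omega$ is covered at scale $n$ only if the ray $[x,\omega)$ passes within bounded distance of an orbit point near distance $n$, i.e.\ only if $\omega$ is radial. By the Hopf--Tsuji--Sullivan dichotomy, when the Poincar\'e series converges at $s=\delta(G)$ the Patterson--Sullivan measure gives zero mass to the radial limit set, the measure of the union of scale-$n$ shadows tends to $0$, and your covering inequality gives no lower bound on $k$ whatsoever. The genuine Bishop--Jones argument does not count with the conformal density at all: it uses the divergence of the Poincar\'e series at each $s'<\delta(G)$ to build, by a stopping-time construction, a subtree of the orbit with branching number controlled from below, and nothing in your sketch substitutes for that construction.

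There is also a flaw in your ``moreover'' step. Adjoining $K=G_x$ makes $H$ open and compactly generated, but the identity $\Lambda_H=\Lambda_\Gamma$ is unjustified: $H$ contains every conjugate $kh_ik^{-1}$ with $k\in K$, whose attracting fixed points sweep out $K\cdot V$, and for large vertex stabilizers (e.g.\ inside $\Aut(T)$ for a regular tree) $K\cdot V$ can be all of $\partial T$; such an $H$ can have full limit set and even finite index, making $G/H$ compact. The repair is to discard $K$: the Schottky group $\Gamma$ itself is discrete (ping-pong forces $\Gamma\cap G_x=\{1\}$, and $G_x$ is open), hence closed and compactly generated, and since a cocompact closed subgroup has the same limit set as $G$, the strict inclusion $\Lambda_\Gamma\subsetneq\Lambda_G$ already gives that $G/\Gamma$ is not compact. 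For comparison, the paper secures non-cocompactness by an entirely separate device: inside a compactly generated subgroup of large exponent it exhibits a coamenable, non-cocompact closed normal subgroup (the kernel of a surjection onto $\Z$ from a uniform free lattice in the unimodular case, the kernel of the modular function otherwise) and uses Theorem \ref{coamenable exponent} to see that coamenability preserves the Poincar\'e exponent.
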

\begin{proof}
The proof of the first part is the same as in \cite[Corollary 6]{Su79}.
Let $(G_i)_{i \in I}$ be an increasing net of closed compactly generated subgroups of $G$ such that $G=\bigcup_{i \in I} G_i$. By adding hyperbolic elements, we can assume that every $G_i$ is of general type. For each $i \in I$, take $(\nu^i_x)_{x \in T}$ a $G_i$-invariant conformal density of dimension $\delta(G_i)$. Fix $y \in T$, and for all $i \in I$, normalize $\nu^i_y$ so that it becomes a probability measure. Then up to extracting a subnet, we can assume that $\nu^i_y$ converges weakly to a probability measure $\nu_y$ on $\partial T$. Then, we have $g_* \nu_y=e^{-d b(gy,y)} \nu_y$ for all $g \in \bigcup_{i \in I} G_i=G$ where $d=\lim_i \delta(G_i)$. This means that the formula $\nu_x=e^{-db(x,y)} \nu_y$ defines a $G$-invariant conformal density of dimension $d$. We conclude that $d \geq \delta(G)$ by Theorem \ref{Patterson-Sullivan}, hence $\delta(G_i) > s$ for $i$ large enough.

For the second part, it is enough to show that if $G$ is a compactly generated group of general type, then there exists a closed subgroup of general type $H < G$ with $\delta(H)=\delta(G)$ such that $H$ is not cocompact in $G$. Suppose first that $G$ is unimodular. Then, by \cite[Corollary 4.8.(a)]{BK90}, $G$ contains a uniform lattice $\Gamma < G$ which is a finitely generated free group. Take $H \lhd \Gamma$ a co-amenable normal subgroup which is not of finite index (for example take $H$ to be the kernel of some surjection $\Gamma \twoheadrightarrow \Z$). Then, by Corollary \ref{coamenable exponent}, we have $\delta(H)=\delta(\Gamma)=\delta(G)$ and $H$ is not cocompact in $G$. Next, assume that $G$ is not unimodular. Let $\Delta : G \rightarrow \R^*_+$ be the modular function and let $H=\ker \Delta$. Then $H$ is a coamenable open normal subgroup of $G$ which is not cocompact and we conclude again by Corollary \ref{coamenable exponent}.
\end{proof}


\section{Gaussian actions of groups acting on trees}
For every tree $T$, there exists a unique embedding $\iota : T \rightarrow \cH$ into an affine Hilbert space $\cH$ such that:
\begin{enumerate}[ \rm (i)]
\item $d(x,y)=\| \iota(x)-\iota(y) \|^2$.
\item $\cH$ is the closed affine span of $\iota(T)$.
\end{enumerate}
The proof of the existence can be found in \cite{BHV08}. The uniqueness of $\iota : T \rightarrow \cH$ (up to conjugacy by an affine isometry) is trivial since condition $(\rm i)$ determines all the scalar products $\langle \iota(x)-\iota(z),\iota(y)-\iota(z) \rangle$ for all $x,y,z \in T$. We call $\iota$ the \emph{quadratic embedding} of $T$. From now on we will omit $\iota$ and view directly $T$ as a subset of $\cH$. Observe that if $S \subset T$ is a subtree, then the quadratic embedding of $S$ can be realized by taking its closed affine span $\cK$ inside $\cH$. 

\begin{remark} \label{orientation basis}
Observe that if $\{x_0,\dots, x_n\}$ is a segment in $T$, where $x_i$ is adjacent to $x_{i-1}$ and $x_{i+1}$ for all $ 0 < i < n$, then we have 
$$\| x_i-x_j\|^2+\|x_j-x_k\|^2= |i-j|+|j-k|=|i-k|=\|x_i - x_k\|^2$$ for all $i \leq j \leq k \in \{0,\dots,n\}$. It follows easily from this that $x_i-x_{i+1} \perp x_{j}-x_{j+1}$ whenever $j \neq i$. Therefore, whenever we choose an orientation $\omega$ of $T$, we obtain an orthonormal basis $(\omega_+(e)-\omega_{-}(e))_{e \in E(T)}$ of $\cH^0$. 
\end{remark}

The following Proposition wil be useful later and is also helpful to familiarize with the quadratic embedding $T \subset \cH$.

\begin{prop} \label{intersection space vs intersection tree}
Let $T$ be a tree and $T \subset \cH$ its affine quadratic embedding. Let $(T_i)_{i \in I}$ be a family of subtrees of $T$ and define the (possibly empty) subtree $S=\bigcap_{i \in I} T_i$. Let $\cH_i \subset \cH$ be the affine span of $T_i$ for all $i \in I$ and $\cK$ the affine span of $S$ (with $\cK=\emptyset$ if $S=\emptyset$). Then we have $\bigcap_{i \in I} \cH_i=\cK$ and $\bigcap_{i \in I} \cH_i^0=\cK^0$ (with the convention $\cK^0=\{0\}$ if $\cK=\emptyset$).
\end{prop}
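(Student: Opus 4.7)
The plan is to use the orthonormal basis $(e_f)_{f \in E(T)}$ of $\cH^0$ associated to any orientation of $T$ (Remark \ref{orientation basis}). In this basis, every tangent space $\cH_i^0$ is the closed linear span of the coordinate vectors indexed by the edges of $T_i$: $\cH_i^0 = \overline{\mathrm{span}}\{e_f : f \in E(T_i)\}$, so it is a ``coordinate subspace'' and its orthogonal projection is diagonal in the basis. A vector $v \in \cH^0$ lies in $\bigcap_i \cH_i^0$ iff $\langle v, e_f \rangle = 0$ for every $f$ that is missing from at least one $E(T_i)$, so
\[
\bigcap_i \cH_i^0 \;=\; \overline{\mathrm{span}}\{e_f : f \in \bigcap_i E(T_i)\} \;=\; \overline{\mathrm{span}}\{e_f : f \in E(S)\} \;=\; \cK^0,
\]
using that an edge belongs to every $E(T_i)$ iff its two endpoints belong to every $T_i$, i.e., lie in $S$. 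If $S = \emptyset$, this evaluates to $\{0\}$ as required. If $S \neq \emptyset$, pick $y \in S$: then $y$ lies in every $T_i \subset \cH_i$, giving $\cK \subset \bigcap_i \cH_i$, and for $x \in \bigcap_i \cH_i$ we get $x - y \in \bigcap_i \cH_i^0 = \cK^0$, hence $x \in \cK$.

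When $S = \emptyset$ I must further show $\bigcap_i \cH_i = \emptyset$. First suppose some pair $T_i, T_j$ is already disjoint. Pick $u \in T_i$, let $w = P_{T_j}(u) \in T_j$ be its tree projection, and let $u'$ be the last vertex of $[u, w]$ that lies in $T_i$. Then every edge $f$ of the (nontrivial) segment $[u', w]$ is outside $E(T_i) \cup E(T_j)$, because the interior vertices of $[u', w]$ lie in neither subtree. For any hypothetical $x \in \cH_i \cap \cH_j$, one would have $x - u' \in \cH_i^0$ and $x - w \in \cH_j^0$, hence $w - u' \in \cH_i^0 + \cH_j^0$ has vanishing component along $e_f$; but $w - u' = \pm e_{f_1} \pm \cdots \pm e_{f_n}$ with the $f_k$ ranging over the edges of $[u', w]$, giving a contradiction.

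Otherwise no pair $T_i, T_j$ is disjoint, and Helly's theorem for subtrees then forces every finite subfamily of $(T_i)$ to have nonempty intersection. The finite intersections $T'_J := \bigcap_{j \in J} T_j$ (indexed by finite $J \subset I$) then form a decreasing net of nonempty subtrees, and the nonempty finite case already proved identifies $\bigcap_{j \in J} \cH_j$ with the affine span $\cH'_J$ of $T'_J$. Since $\bigcap_J T'_J = S = \emptyset$, Proposition \ref{empty intersection diverge} reduces the problem to showing $d(v_0, \cH'_J) \to \infty$ for some vertex $v_0 \in T$. Using that the affine projection of a vertex onto the affine span of a subtree coincides with the tree projection (the difference vector being a sum of $e_f$'s with $f$ outside the subtree), this is equivalent to $d(v_0, w_J) \to \infty$ where $w_J := P_{T'_J}(v_0)$. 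The tree projection is monotone along the net — for $J \subset J'$, the geodesic from $v_0$ to $w_{J'} \in T'_{J'} \subset T'_J$ must enter $T'_J$ at $w_J$ — so if $d(v_0, w_J)$ remained bounded, the increasing family of geodesics $[v_0, w_J]$ would stabilize to some $[v_0, w^*]$, forcing $w^* \in \bigcap_J T'_J = \emptyset$, a contradiction. The main obstacle is precisely this monotone-net case: one must pass to the net of finite intersections and carefully verify the compatibility between tree and affine projections used to invoke Proposition \ref{empty intersection diverge}.
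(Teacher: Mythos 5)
Your proof is correct, and while it rests on the same two pillars as the paper's argument --- the edge orthonormal basis of Remark \ref{orientation basis} (so that an edge not contained in a subtree is orthogonal to the tangent space of its affine span) and the identification of the affine projection onto the span of a subtree with the combinatorial nearest-point projection --- it organizes the cases differently. For $S \neq \emptyset$ you get $\bigcap_i \cH_i = \cK$ directly from the tangent-space identity plus a single common point, which is cleaner than the paper's route through the operator identities $P_{\cH_1} \circ P_{\cH_2} = P_{\cK}$ and their pointwise limits along a decreasing net. For $S = \emptyset$ the paper splits into the two-subtree case (where $P_{T_1}\circ P_{T_2}$ and $P_{T_2}\circ P_{T_1}$ project onto two distinct vertices, forcing $\cH_1 \cap \cH_2 = \emptyset$) and the decreasing-net case (where $\|v - P_{\cH_i}(v)\|^2 = d(v, P_{T_i}(v)) \to \infty$ feeds into Proposition \ref{empty intersection diverge}); you instead dichotomize on whether some pair of subtrees is disjoint, resolving that case by exhibiting a bridge edge orthogonal to both $\cH_i^0$ and $\cH_j^0$, and otherwise invoking the Helly property of subtrees to produce the decreasing net of nonempty finite intersections, after which your gate-property monotonicity argument is a correct substitute for the paper's divergence estimate. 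The one extra ingredient on your side is the Helly property, which you should cite or prove (it follows in a line from the fact that the median of three vertices lies in every subtree containing at least two of them); the paper sidesteps it by iterating the pair case over finite intersections and stopping as soon as one becomes empty. Both approaches buy the same theorem; yours isolates the purely combinatorial input (Helly, gates) more explicitly, while the paper's stays entirely inside the projection calculus.
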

\begin{proof}
First, we start with a general observation. Let $S \subset T$ be a nonempty subtree. Then it is easy to see that for every $x \in T$, there exists a unique $y \in S$ which minimizes the distance $d(x,y)$. We call $y$ the projection of $x$ onto $S$ and we denote it by $P_S(x)$. Let $\cK \subset \cH$ be the affine span of $S$ and let $P_\cK : \cH \rightarrow \cK$ be the orthogonal projection. 
\begin{newclaim} We have $P_\cK(v)=P_S(v)$ for all $v \in T$.
\end{newclaim}
\begin{proof}[Proof of the claim]
It is enough to show that $w-P_S(v)$ is orthogonal to $v-P_S(v)$ for all $w \in S$. This follows from the following equalities
$$ \| w-v\|^2=d(w,v)=d(w,P_S(v))+d(P_S(v),v)=\| w-P_S(v)\|^2+\|P_S(v)-v\|^2.$$ \end{proof}

Now, we can prove the proposition. Let $(T_i)_{i \in I}$ be a family of subtrees of $T$ and define the subtree $S=\bigcap_{i \in I} T_i$. Let $e$ be any oriented edge which is not in $S$. Then there exists $i \in I$ such that $e$ is not in $T_i$. This implies that $e$ is orthogonal to $\cH_i^0$ hence to $\bigcap_{i \in I} \cH_i^0$. Since this holds for every oriented edge which is not in $S$, we conclude that $\bigcap_i \cH_i^0 \subset \cK^0$ and the other inclusion is obvious. 

Next, we show that $\bigcap_{i \in I} \cH_i=\cK$. First, we deal with the case where $I=\{1,2\}$. If $S=T_1 \cap T_2 \neq \emptyset$, then it is easy to see that $P_{T_1} \circ P_{T_2}=P_{S}$. It follows from the claim that $P_{\cH_1} \circ P_{\cH_2} =P_{\cK}$. If $S=\emptyset$, it is easy to see that $P_{T_1} \circ P_{T_2}=P_{\{v_1\}}$ for some $v_1 \in T_1$ and $P_{T_2} \circ P_{T_1} = P_{\{v_2\} }$ for some $v_2 \in T_2$. It follows from the claim that $P_{\cH_1} \circ P_{\cH_2}=P_{\{v_1\}}$ and $P_{\cH_2} \circ P_{\cH_1}=P_{\{v_2\}}$ which implies that $\cH_1 \cap \cH_2=\emptyset$ as we wanted. 

Now, we are left with the case where $(T_i)_{i \in I}$ is a decreasing net of subtrees. If $S \neq \emptyset$, then it is easy to see that $P_{T_i}$ converges pointwise to $P_S$. It follows from  the claim that $P_{\cH_i}$ also converges pointwise to $P_{\cK}$ and we are done. If $S=\emptyset$, then for every $v \in T$, we have $\|v-P_{\cH_i}(v)\|^2=d(v,P_{T_i}(v)) \to \infty$ when $i \to \infty$. This implies that $\bigcap_i \cH_i = \emptyset$.
\end{proof}

Because of the uniqueness of the quadratic embedding of a given locally finite tree $T$, the action of $\Aut(T)$ on $T$ and hence of any locally compact group $G$ acting on $T$ extends uniquely to an affine isometric action $\alpha : G \curvearrowright \cH$. Here are some basic facts which are left to the reader:

\begin{prop}
Let $T$ be a locally finite tree and $G \subset \Aut(T)$ a closed subgroup. Let $\alpha : G \curvearrowright \cH$ be the associated affine isometric action. The following holds:
\begin{enumerate}[ \rm (i)]
\item $\alpha$ is proper.
\item $\alpha^0$ is mixing.
\item $\delta(\alpha)=\delta(G)$.
\end{enumerate}
\end{prop}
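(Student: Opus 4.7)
The plan is to verify each of the three items by reducing to properties of the tree that have essentially already been used in the paper, exploiting the quadratic embedding identity $\|x-y\|^2 = d(x,y)$ for $x,y \in T$.

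For (i), observe that $\alpha$ extends the isometric action $G \curvearrowright T$. Since $G$ is a closed subgroup of $\Aut(T)$ and $T$ is locally finite, the action $G \curvearrowright T$ is proper: the stabilizer $G_v$ of any vertex is a closed subgroup of the compact stabilizer $\Aut(T)_v$. Hence for any $v \in T$, $d(gv,v) \to \infty$ as $g \to \infty$ in $G$, and therefore $\|gv-v\|^2 = d(gv,v) \to \infty$, which gives $\|gv-v\| \to \infty$. For (iii), the formula $\|gx-y\|^2 = d(gx,y)$ valid on $T \subset \cH$ identifies the Poincar\'e integrals of $\alpha$ and of $G$ for any $x,y \in T$, so their exponents of convergence coincide: $\delta(\alpha) = \delta(G)$.

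The remaining content is (ii). The idea is to choose an orientation $\omega$ of $T$; by Remark \ref{orientation basis}, the family $v_e = \omega_+(e) - \omega_-(e)$, $e \in E(T)$, is an orthonormal basis of $\cH^0$. For any $g \in G$ and any edge $e$, the vector $gv_e = g\omega_+(e) - g\omega_-(e)$ equals $\pm v_{g(e)}$ depending on whether $g$ respects the orientation of $e$. Consequently, for edges $e_1, e_2 \in E(T)$,
\[
\langle \alpha^0(g)v_{e_1}, v_{e_2}\rangle \in \{-1,0,1\}, \qquad \langle \alpha^0(g)v_{e_1}, v_{e_2}\rangle \neq 0 \iff g(e_1) = e_2.
\]
The set $\{g \in G : g(e_1)=e_2\}$ is either empty or a coset of the stabilizer $G_{e_1}$, which is compact by the properness established in (i). Therefore this set is relatively compact, so the matrix coefficient $\langle \alpha^0(g) v_{e_1}, v_{e_2}\rangle$ vanishes for $g$ outside a compact subset of $G$.

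The extension from basis vectors to arbitrary $\xi, \eta \in \cH^0$ is routine: given $\varepsilon > 0$ approximate $\xi,\eta$ by finite linear combinations $\xi', \eta'$ of basis vectors $v_e$ within $\varepsilon$, and estimate
\[
|\langle \alpha^0(g)\xi,\eta\rangle| \leq |\langle \alpha^0(g)\xi',\eta'\rangle| + \varepsilon(\|\eta\| + \|\xi'\|),
\]
using that $\alpha^0(g)$ is an isometry. The first term is a finite sum of matrix coefficients of basis vectors, each of which vanishes outside a compact set. Hence $\limsup_{g \to \infty}|\langle \alpha^0(g)\xi,\eta\rangle| \leq \varepsilon(\|\eta\|+\|\xi'\|)$, and letting $\varepsilon \to 0$ shows $\alpha^0$ is mixing. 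None of these steps presents a real obstacle; the only delicate point is keeping track of the sign ambiguity $gv_e = \pm v_{g(e)}$, which is harmless since we only care about $\langle gv_{e_1},v_{e_2}\rangle$ being eventually zero.
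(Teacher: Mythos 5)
Your proof is correct. The paper leaves this proposition to the reader, and your argument is precisely the intended one: item (i) follows from the paper's earlier equivalence between closedness of $\sigma(G)$ in $\Aut(T)$ and properness of the action on $T$ together with $\|gv-v\|^2=d(gv,v)$, item (ii) from the orthonormal basis of edge vectors attached to an orientation (the set $\{g : g(e_1)=e_2\}$ being relatively compact), and item (iii) from the identity of the Poincar\'e integrals when $x,y$ are taken in $T\subset\cH$.
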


\begin{prop} \label{general type spectral gap}
Let $G < \Aut(T)$ be a closed subgroup for some locally finite tree $T$. Let $\alpha : G \curvearrowright \cH$ be the associated affine isometric action.  If $G$ is of general type, then $\alpha^0$ has stable spectral gap.
\end{prop}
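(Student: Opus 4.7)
The strategy is to show two things and combine them: first, that $\alpha^0$ is \emph{tempered}, i.e.\ weakly contained in the left regular representation $\lambda_G$; second, that $G$ is nonamenable. These two facts, via Fell's absorption principle, will imply that $\alpha^0$ has stable spectral gap.

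For the weak containment, I would use the basis of $\cH^0$ from Remark \ref{orientation basis}. Fix an orientation $\omega$ of $T$, so that $(e_\eta)_{\eta \in E(T)}$ with $e_\eta = \omega_+(\eta)-\omega_-(\eta)$ is an orthonormal basis of $\cH^0$. Let $\widetilde{\pi}$ denote the permutation representation of $G$ on $\ell^2(\overrightarrow{E}(T))$ (oriented edges). Define a map $\phi : \cH^0 \to \ell^2(\overrightarrow{E}(T))$ by
\[
\phi(e_\eta) = \tfrac{1}{\sqrt{2}}\bigl(\delta_{\omega(\eta)} - \delta_{\overline{\omega(\eta)}}\bigr),
\]
where $\overline{\omega(\eta)}$ denotes the orientation opposite to $\omega(\eta)$. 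One checks directly that $\phi$ is an isometric $G$-equivariant embedding, so $\alpha^0$ is a subrepresentation of $\widetilde{\pi}$. Since $G$ is closed in $\Aut(T)$ and $T$ is locally finite, every stabilizer in $G$ of an oriented edge is a compact open subgroup of $G$. Decomposing $\overrightarrow{E}(T)$ into $G$-orbits, $\widetilde{\pi}$ splits as a direct sum of quasi-regular representations $\ell^2(G/K_i)$ with each $K_i$ compact. Each such $\ell^2(G/K_i) = \operatorname{Ind}_{K_i}^G 1$ is weakly contained in $\lambda_G$ because $K_i$ is amenable. Hence $\alpha^0 \prec \widetilde{\pi} \prec \lambda_G$.

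For nonamenability, since $G$ is of general type, it contains two hyperbolic elements $h_1,h_2$ whose axes have four distinct endpoints in $\partial T$. A standard ping-pong argument on $\partial T$ applied to sufficiently large powers of $h_1,h_2$ produces a free subgroup of rank two in $G$; because translation lengths of tree automorphisms are integer-valued and bounded away from $0$, this free subgroup is discrete. Nonamenability of discrete $\F_2$ therefore forces $G$ to be nonamenable. To conclude, let $\rho : G \to \mathcal{O}(K)$ be any orthogonal representation. By Fell's absorption principle (applied after complexification), $\lambda_G \otimes \rho \cong \lambda_G^{\oplus \dim K}$. Since $\alpha^0 \prec \lambda_G$ implies $\alpha^0 \otimes \rho \prec \lambda_G \otimes \rho \cong \lambda_G^{\oplus \dim K}$, and the nonamenability of $G$ means that $\lambda_G$ (hence any amplification of it) has no almost invariant vectors, we conclude that $\alpha^0 \otimes \rho$ has no almost invariant vectors. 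This is exactly stable spectral gap.

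The main potential obstacle is the bookkeeping with signs in Step 1: $\alpha^0$ is not literally a permutation representation on the edges because the action of $G$ flips orientations. The trick of embedding into the oriented-edge permutation representation via antisymmetric indicator functions circumvents this cleanly, but one must verify $G$-equivariance case by case (orientation preserved vs.\ reversed). Everything else is a routine application of Fell absorption and standard facts about weak containment for representations induced from compact subgroups.
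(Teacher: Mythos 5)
Your proof is correct and takes essentially the same route as the paper: realize $\alpha^0$ inside a direct sum of quasi-regular representations $\ell^2(G/K)$ with $K$ compact (using the edge basis of Remark \ref{orientation basis}), deduce $\alpha^0 \prec \lambda_G$, and combine this with the nonamenability of a general-type group via Fell absorption. The only difference is cosmetic: the paper passes to an index-$2$ orientation-preserving subgroup, whereas you absorb the sign issue by embedding antisymmetrically into $\ell^2(\overrightarrow{E}(T))$; both handle the orientation-flipping correctly.
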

\begin{proof}
Up to taking an index $2$ subgroup of $G$, we may assume that $G$ preserves some orientation $\omega$ of the tree $T$. Then by Remark \ref{orientation basis}, we see that $\alpha^0$ is a sum of quasi-regular representations of $G$ on $\rL^2(G/K)$ for some compact subgroups $K < G$. Since $G$ is of general type, it is nonamenable and the conclusion follows.
\end{proof}

\begin{prop} \label{parabolic evanescent}
Let $G < \Aut(T)$ be a parabolic closed subgroup for some locally finite tree $T$. Then the associated affine isometric action $\alpha : T \curvearrowright \cH$ is evanescent. 
\end{prop}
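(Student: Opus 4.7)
The plan is to exhibit a decreasing sequence of $G$-invariant nonempty subtrees of $T$ whose intersection is empty, and then use Proposition \ref{intersection space vs intersection tree} together with Proposition \ref{evanescent small subspaces}. The natural candidates are the sub-level sets of the Busemann function based at the unique boundary fixed point of $G$.

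Let $\omega \in \partial T$ be the unique fixed point of $G$, fix a base vertex $v_0 \in T$, and set $\ell(v) := b_\omega(v,v_0)$ where $b_\omega$ is the Busemann cocycle of Definition \ref{Busemann cocycle}. Concretely $\ell$ takes integer values, differs by $\pm 1$ across each edge, and decreases by $1$ along the ray $[v_0,\omega)$. The first step is to show that $G$ preserves $\ell$. From the cocycle identity and $g\omega=\omega$, one gets $\ell(gv) = \ell(v) + c_g$ where $c_g := b_\omega(gv_0,v_0)$ does not depend on $v$. Since $G$ is parabolic, every $g \in G$ is elliptic; moreover no $g$ can invert an edge (if $g$ swapped the two endpoints of an edge $\{u,v\}$ with $\ell(u)=\ell(v)-1$, then $g$ would map the ray $[u,\omega)$ onto the strictly longer ray $[v,\omega) = \{v\} \cup [u,\omega)$, which is impossible). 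Hence each $g$ fixes at least one vertex, which forces $c_g=0$.

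Next, for each $n \in \N$ set
\[
S_n := \{ v \in T \mid \ell(v) \le -n\}, \qquad \cH_n := \overline{\operatorname{aff}}(S_n) \subset \cH.
\]
Each $S_n$ is $G$-invariant because $G$ preserves $\ell$, and it is nonempty because the ray $[v_0,\omega)$ contains a vertex $v_n$ with $\ell(v_n) = -n$. I claim $S_n$ is a subtree: given $u,v \in S_n$, the two rays $[u,\omega)$ and $[v,\omega)$ meet at a vertex $m$, and along each of the two half-paths $[u,m]$ and $[m,v]$ the level strictly decreases (or stays the same) as one moves towards $\omega$; in particular every vertex on the geodesic $[u,v]$ has level at most $\max(\ell(u),\ell(v)) \le -n$, so it lies in $S_n$. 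The sequence $(S_n)$ is visibly decreasing and $\bigcap_n S_n = \emptyset$ since $\ell$ takes finite values on $T$. Consequently the affine subspaces $\cH_n$ form a decreasing sequence of $G$-invariant affine subspaces of $\cH$.

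Finally, apply Proposition \ref{intersection space vs intersection tree} to the family $(S_n)$: since $\bigcap_n S_n = \emptyset$, the proposition yields $\bigcap_n \cH_n^0 = \{0\}$. By Proposition \ref{evanescent small subspaces} this is precisely the criterion for $\alpha$ to be evanescent. No serious obstacle is anticipated; the only delicate point is the absence of edge inversions in a parabolic group, which is used to conclude that $G$ genuinely preserves $\ell$ (and not merely translates it by a common constant).
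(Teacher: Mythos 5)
Your proof is correct and follows essentially the same route as the paper, which simply observes that the horoballs centered at the unique fixed boundary point $\omega$ form a decreasing family of $G$-invariant subtrees with empty intersection and then invokes Proposition \ref{intersection space vs intersection tree}. Your sub-level sets of the Busemann function are exactly these horoballs, and the extra details you supply (in particular that $c_g=0$ because every element of a parabolic group is elliptic, so the horoballs are genuinely preserved rather than translated) are precisely the points the paper leaves implicit.
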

\begin{proof}
Let $\omega \in \partial T$ the unique fixed point of $G$. Then every horoball centered at $\omega$ is a $G$-invariant subtree of $T$ and the intersections of all this horoballs is empty. By taking their closed affine spans in $\cH$, we obtain a decreasing family of $\alpha$-invariant affine subspaces of $\cH$ and we conclude by Proposition \ref{intersection space vs intersection tree}.
\end{proof}

\begin{thm} \label{dissipativity general type}
Let $T$ be a locally finite tree and $G < \Aut(T)$ a closed subgroup of general type. Let $\alpha : G \curvearrowright \cH$ be the associated affine isometric action. Then $t_{\rm diss}(\alpha)=2\sqrt{2\delta(\alpha)}=2\sqrt{2\delta(G)}$.
\end{thm}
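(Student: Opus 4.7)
Since Theorem \ref{affine dissipativity}.(\ref{estimation dissipativity exponent}) already gives the upper bound $t_{\rm diss}(\alpha) \leq 2\sqrt{2\delta(\alpha)}$, my task is to prove recurrence of $\widehat{\alpha}^t$ for every $t < 2\sqrt{2\delta(G)}$. The strategy is to convert the integral criterion of Theorem \ref{affine dissipativity}.(\ref{dissipativity integral}) into a divergence statement for an exponential sum along the Gaussian tree-indexed random walk, and then apply the Lyons-Pemantle speed lower bound (Example \ref{gaussian random walk}), after reducing to a cocompact action.

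The first move is a double reduction to cocompactness. Given $t < 2\sqrt{2\delta(G)}$, pick $\epsilon > 0$ with $t < 2\sqrt{2(\delta(G)-\epsilon)}$ and invoke Theorem \ref{subgroup of large critical exponent} to find a closed compactly generated general-type subgroup $H < G$ with $\delta(H) > \delta(G) - \epsilon$. Since $H$-recurrence implies $G$-recurrence, I may replace $G$ by $H$. Proposition \ref{convex hull} gives a unique minimal $H$-invariant subtree $S \subset T$ on which $H$ acts cocompactly. Letting $\cH_S$ be the closed affine span of $S$, the action decomposes as $\alpha|_H = \beta \times \pi$ on $\cH_S \times (\cH_S^0)^\perp$ with $\pi$ orthogonal, so the Gaussian action factors as $\widehat{\beta}^t \otimes \widehat{\pi}^t$ with the second factor probability-measure-preserving; recurrence of $\widehat{\beta}^t$ therefore suffices. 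Thus I may assume $G$ itself acts cocompactly on $T$, and by Proposition \ref{poincare tree} we have $\delta := \delta(G) = \dim_H \partial T$.

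For the main argument, fix a root $y \in T$ and orient every edge of $T$ away from $y$. By Remark \ref{orientation basis}, the oriented edge vectors form an orthonormal basis of $\cH^0$, so under the canonical Gaussian measure on $\widehat{\cH}^0$ the coordinates $X_e = \langle \varphi, e_+-e_-\rangle$ are i.i.d.\ standard Gaussians, and $S_v := \langle \varphi, v-y\rangle = \sum_{e \in [y,v]} X_e$ is exactly the Gaussian tree-indexed random walk on $T$. Applying Example \ref{gaussian random walk} and choosing $\epsilon' > 0$ with $-\tfrac{t^2}{2} + t(\sqrt{2\delta} - \epsilon') > 0$ (possible since $t < 2\sqrt{2\delta}$), the a.s.\ existence of a ray $[y,\omega]$ along which $S_v \geq (\sqrt{2\delta}-\epsilon')|v-y|$ eventually forces
$$\sum_{v \in T} \exp\!\left(-\tfrac{t^2}{2}\,d(v,y) + tS_v\right) = +\infty \quad \text{a.s.}$$

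Finally I transfer this divergence to a specific orbit. Since $G$ acts cocompactly, $T = \bigsqcup_{j=1}^k G \cdot x_j$ is a finite union, so some orbit $G \cdot x_{j_0}$ satisfies $\mathbb{P}(F_{j_0} = +\infty) > 0$, where $F_j$ is the sum restricted to $G \cdot x_j$. The event $\{F_{j_0} = +\infty\}$ is invariant under altering any single $X_e$: splitting $F_{j_0} = A + B$ according to whether the path $[y,z]$ uses $e$, such a perturbation sends $A + B$ to $e^{\pm t \Delta} A + B$, preserving divergence. Kolmogorov's $0$–$1$ law therefore upgrades to $\mathbb{P}(F_{j_0} = +\infty) = 1$. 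Up to the Haar volume of the compact stabilizer $G_{x_{j_0}}$, this sum equals the integral of Theorem \ref{affine dissipativity}.(\ref{dissipativity integral}) for $x = x_{j_0}$, so $\widehat{\alpha}^t$ is recurrent, as desired. The most delicate step will be this orbit-selection via Kolmogorov; the rest is a direct translation between tree combinatorics, the quadratic Poincar\'e integrand, and the probabilistic speed estimate.
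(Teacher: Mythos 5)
Your proof is correct and follows essentially the same route as the paper's: reduce via Theorem \ref{subgroup of large critical exponent} and Proposition \ref{convex hull} to a compactly generated subgroup acting cocompactly on its minimal subtree, identify the Gaussian field along the quadratic embedding as a tree-indexed Gaussian random walk, and apply the Lyons--Pemantle speed estimate of Example \ref{gaussian random walk} to force divergence of the integrand of Theorem \ref{affine dissipativity}.(\ref{dissipativity integral}). The only (harmless) deviation is at the end: the paper stops at divergence with positive probability, concluding non-dissipativity and invoking the monotonicity of Proposition \ref{diss scale}, whereas you upgrade to almost sure divergence by a Kolmogorov $0$--$1$ law (as the paper itself does in the Bernoulli analogue, Theorem \ref{dissipativity general type bernoulli}).
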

\begin{proof}
The inequality $t_{\rm diss}(\alpha) \leq 2\sqrt{2\delta(G)}$ is already proved in Theorem \ref{affine dissipativity}. Let us prove that $t_{\rm diss}(\alpha) \geq 2\sqrt{2\delta(G)}$. Take $t < 2\sqrt{2\delta(G)}$. Then by Corollary \ref{subgroup of large critical exponent}, there exists a closed compactly generated subgroup $G' < G$ such that $2\sqrt{2\delta(G')} > t$. Let $T'$ be the minimal $G'$-invariant subtree of $T$. Then $G'$ acts cocompactly on $T'$ by Proposition \ref{convex hull}. Fix a root $\rho \in T'$. For all $v \in T'$, define the random variable $S_v=\langle \cdot, v-\rho\rangle$ on  $(\widehat{\cH^0},\mu)$. Observe that for all $v \in T'$, we have 
$$S_v=\sum_{e \in E([\rho,v])} \langle \cdot, e \rangle$$ where the sum is over the oriented edges pointing from $\rho$ to $v$ and $\langle \cdot, e \rangle$ are independent standard Gaussian random variables. Thus $(S_v)_{v \in T'}$ is a Gaussian random walk indexed by the tree $T'$. By Example \ref{gaussian random walk}, we then know that
$$ \mathbb{P}\left(  S_v > \frac{t}{2}|v| \text{ for infinitely many } v \in T' \right) > 0$$
where $|v|=d(v,\rho)$ for all $v \in T'$. Since $G' \backslash T'$ is finite, this implies that there exists some $x \in T'$ such that
$$ \mathbb{P}\left(  S_v > \frac{t}{2}|v| \text{ for infinitely many } v \in G' \cdot x \right) > 0 .$$ 
This implies that the series
$$ \sum_{v \in G' \cdot x} \exp\left( - \frac{1}{2}t^2 \| v-\rho\|^2+t\langle \varphi,v-\rho \rangle \right), \; \varphi \in \widehat{\cH^0} $$
diverges with positive probability. We conclude by Proposition \ref{affine dissipativity}, that the action $\widehat{\alpha}^t$ is not dissipative. This shows that $t_{\rm diss}(\alpha) \geq 2 \sqrt{2\delta(G)}$ as we wanted.
\end{proof}


\begin{thm} \label{general type proper weakly mixing}
Let $T$ be a locally finite tree and $G < \Aut(T)$ a closed subgroup of general type. Let $\alpha : G \curvearrowright \cH$ be the quadratic affine isometric action associated to the action of $G$ on $T$. Let $\delta:=\delta(G)=\delta(\alpha)$. Then for all $0 < t < 2\sqrt{2\delta}$, the Gaussian action $\widehat{\alpha}^t$ is weakly mixing of stable type $\III_1$. Moreover, for all $ t \leq  2\sqrt{2\delta}$, the actions $\widehat{\alpha}^t$ are pairwise nonconjugate and for $t$ small enough, $\widehat{\alpha}^t$ is strongly ergodic.
\end{thm}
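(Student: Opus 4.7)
The plan is to split the argument into three pieces corresponding to the three assertions, starting with the setup. Since $G$ is of general type on a locally finite tree $T$, the basic facts collected just before Proposition \ref{general type spectral gap} give that $\alpha$ is proper, $\alpha^0$ is mixing, $\alpha$ has no fixed point, and $\delta := \delta(\alpha) = \delta(G)$. Note $\delta < +\infty$ because $T$ is locally finite (the orbit growth is bounded by $\dim_H \partial T$). By Proposition \ref{general type spectral gap}, $\alpha^0$ in fact has \emph{stable} spectral gap, and Theorem \ref{dissipativity general type} gives $t_{\rm diss}(\alpha) = 2\sqrt{2\delta}$.

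The easiest assertion is pairwise non-conjugacy for $t \leq 2\sqrt{2\delta}$: since $t_{\rm diss}(\alpha) < +\infty$, this is immediate from Proposition \ref{dissipative nonconjugate}. Strong ergodicity for small $t$ comes next: the weak mixing half of Theorem \ref{general mixing}.(i) yields $t_{\rm erg}(\alpha) \geq t_{\rm diss}(\alpha) > 0$, so the hypotheses of Theorem \ref{strongly ergodic} are met and there is some $t_0 > 0$ with $\widehat{\alpha}^t$ strongly ergodic for $t \in \,]0,t_0[$. The weak mixing half of the main assertion is also immediate from Theorem \ref{general mixing}.(i), and on the smaller interval $\,]0,\min(t_0,2\sqrt{\delta})[$ type $\III_1$ follows by combining Theorem \ref{general mixing}.(ii) (trivial Krieger $T$-invariant, so $\III_0$ or $\III_1$) with strong ergodicity (which rules out $\III_0$). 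The stable version on this smaller interval follows from the same diagonalization/rotation trick appearing in the proof of Theorem \ref{strongly ergodic}, using that $\alpha^0$ has stable spectral gap to control $\widehat{\alpha}^t \otimes \rho$ for ergodic pmp $\rho$.

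The main obstacle I expect is extending type $\III_1$ from $\,]0, 2\sqrt{\delta}[$ all the way up to $\,]0, 2\sqrt{2\delta}[$, i.e.\ past the threshold where Theorem \ref{general mixing}.(ii) stops giving information on the $T$-invariant. My plan for this step is to invoke Patterson-Sullivan theory: by Theorem \ref{Patterson-Sullivan}, $G$ admits a quasi-invariant conformal density of dimension $\delta$ on $\partial T$. The point is that the tree-indexed Gaussian random walk of Example \ref{gaussian random walk} gives a natural $G$-equivariant boundary map $\widehat{\cH^t} \to \partial T$ for $t < 2\sqrt{2\delta}$ (the threshold is dictated precisely by the Lyons--Pemantle speed estimate), and pushing the Gaussian measure $\mu_x$ forward along this map lands in the measure class of the conformal density. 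Using this boundary map I would identify the asymptotic $\sigma$-algebra of $\widehat{\alpha}^t$ (and of $\widehat{\alpha}^t\otimes \rho$) with a factor of the Patterson-Sullivan action $G\curvearrowright \partial T$, and then apply a Schmidt-type recurrence/mixing argument in the spirit of Theorem \ref{generalized schmidt} together with Lemma \ref{affine recurrent and mixing} to show that the Maharam extension of $\widehat{\alpha}^t \otimes \rho$ is ergodic.

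For the geometric simplification, I would first reduce to a compactly generated subgroup of general type $H < G$ with $\delta(H)$ arbitrarily close to $\delta(G)$ via Theorem \ref{subgroup of large critical exponent}: such $H$ acts cocompactly on its minimal invariant subtree (Proposition \ref{convex hull}), so that all estimates from Patterson-Sullivan theory reduce to sums over a single cocompact orbit as in Proposition \ref{poincare tree}, and the conformal density has a finite-measure fundamental domain. Once stable type $\III_1$ is established for all the $\widehat{\alpha|_H}^t$ with $t < 2\sqrt{2\delta(H)}$, taking the supremum over such $H$ and invoking that recurrence of $\widehat{\alpha|_H}^t$ implies recurrence of $\widehat{\alpha}^t$ transfers the conclusion back to $G$.
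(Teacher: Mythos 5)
Your treatment of the second sentence of the theorem is essentially the paper's: pairwise non-conjugacy for $t \leq 2\sqrt{2\delta}$ via Proposition \ref{dissipative nonconjugate} together with $t_{\rm diss}(\alpha)=2\sqrt{2\delta}$ from Theorem \ref{dissipativity general type}, and strong ergodicity for small $t$ via Proposition \ref{general type spectral gap} and Theorem \ref{strongly ergodic}. Weak mixing on all of $]0,2\sqrt{2\delta}[$ via Theorem \ref{general mixing}.(i) is also correct. The problem is exactly the step you flag as the main obstacle: (stable) type $\III_1$ on the whole interval, in particular past $t=2\sqrt{\delta}$ where Theorem \ref{general mixing}.(ii) stops working. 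Your proposed route --- a $G$-equivariant boundary map $\widehat{\cH}^t\to\partial T$ pushing $\mu_x$ into the class of a Patterson--Sullivan conformal density, followed by an identification of the asymptotic $\sigma$-algebra with a factor of $G\curvearrowright\partial T$ --- is an unproved assertion, not a reduction to known results: nothing in the paper or in your sketch shows such a map exists, is measurable and equivariant, or that it captures the Maharam-invariant functions. Two further soft spots: even on $]0,2\sqrt{\delta}[$, ``trivial $T$-set plus strong ergodicity'' gives type $\III_1$ for $\widehat{\alpha}^t$ itself but not the \emph{stable} version, since Theorem \ref{general mixing}.(ii) says nothing about the $T$-set of $\widehat{\alpha}^t\otimes\rho$; and the final ``transfer back to $G$'' is underspecified, because stable type $\III_1$ of the $H$-action on the span of its minimal subtree does not by itself yield ergodicity of $\Mod(\widehat{\alpha}^t\otimes\rho)$ on all of $\widehat{\cH}^t\otimes Y$ without a further Schmidt--Walters argument for the complementary pmp Gaussian coordinate.

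The paper's actual mechanism is more elementary and avoids the boundary entirely. Fix $t<2\sqrt{2\delta}$ and an ergodic pmp action $\rho: G\curvearrowright Y$. By Corollary \ref{subgroup of large critical exponent} there is a compactly generated closed subgroup $H<G$ of general type with $t<2\sqrt{2\delta(H)}$ \emph{and} $G/H$ non-compact. Theorem \ref{dissipativity general type} gives recurrence of $\widehat{\alpha}^t|_H$, hence of $(\widehat{\alpha}^t\otimes\rho)|_H$, and Lemma \ref{affine recurrent and mixing} localizes every $\Mod(\widehat{\alpha}^t\otimes\rho)$-invariant function into $\rL^\infty(\Mod(\widehat{g\cK}^t)\otimes Y)$ for every $g\in G$, where $\cK$ is the affine span of the minimal $H$-invariant subtree $S$ (replacing $H$ by $gHg^{-1}$). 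Since $G/H$ is non-compact, Lemma \ref{limit set cocompact} gives $\Lambda_H\subsetneq\Lambda_G$, whence $\bigcap_{g\in G}gS=\emptyset$, and Propositions \ref{intersection space vs intersection tree} and \ref{intersection maharam extension} collapse $\bigcap_g\rL^\infty(\Mod(\widehat{g\cK}^t)\otimes Y)$ to $\rL^\infty(Y)$; ergodicity of $\rho$ finishes. You should replace your boundary-map step with this argument.
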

\begin{proof}
For the first part, we have to show that $\widehat{\alpha}^t$ has a weakly mixing Maharam extension. By Corollary \ref{subgroup of large critical exponent}, we can find a compactly generated closed subgroup of general type $H < G$ such that $t < 2 \sqrt{2\delta(H)}$. By Theorem \ref{dissipativity general type}, $\widehat{\alpha}^t|_H$ is recurrent. Let $S \subset T$ be the minimal $H$-invariant subtree. Let $\cK \subset \cH$ be the closed affine span of $S$. By Lemma \ref{affine recurrent and mixing}, we know that for any probability measure preserving action $\sigma : G \curvearrowright (Y,\nu)$, we have
$$ \rL^\infty( \Mod( \widehat{\cH}^t) \otimes Y)^{\Mod(\widehat{\alpha}^t) \otimes \sigma} \subset \rL^\infty(\Mod(\widehat{\cK}^t) \otimes Y)$$
and the same holds if we replace $\cK$ by $g\cK$ for any $g \in G$ (this is the same thing as replacing $H$ by $gHg^{-1}$). Thus, thanks to Proposition \ref{intersection maharam extension} and Proposition \ref{intersection space vs intersection tree}, we only have to show that $\bigcap_{g \in G} gS=\emptyset$. 

Now, remember that by the second part of Corollary \ref{subgroup of large critical exponent}, we can choose $H < G$ such that $G/H$ is not compact. Then by Lemma \ref{limit set cocompact}, we know that $\Lambda_H$ is a proper subset of $\Lambda_G$. Take $x \in \Lambda_G\setminus  \Lambda_H$. Take a sequence $g_n \in G$ such that $\lim_n g_n v=x$ for some, hence any, $v \in T$. Since $x$ is not in $\Lambda_H=\partial S$, then for every $v \in T$, there exists $n \in \N$ such that $g_n v \notin S$. This shows that $\bigcap_{n \in \N} g_n^{-1} S=\emptyset$ as we wanted.

Finally, the second part of the theorem follows from Proposition \ref{dissipative nonconjugate}, Proposition \ref{general type spectral gap} and Theorem \ref{strongly ergodic}.
\end{proof}

\begin{thm} \label{main trees Gaussian}
Let $\sigma : G \curvearrowright T$ be an action of a locally compact group $G$ on a locally finite tree $T$ such that $G$ does not fix any point or pair of points in $T \cup \partial T$ and let $\alpha : G \curvearrowright \cH$ be the associated affine isometric action. Define $\delta$ as the infinimum of all $s > 0$ such that
$$ \sum_{ y \in G \cdot x} e^{-sd(y,x)} < + \infty$$
for some (hence any) $x \in T$. Then for all $t \geq 0$, the Krieger type of the Gaussian action $\widehat{\alpha}^t$ is given by:
\begin{center}
\renewcommand{\arraystretch}{1.5}
\begin{tabular}{l|l}

	Value of $t$ &  Ergodicity and type of $\widehat{\sigma}$ \\
	\hline
	$t=0$ & Ergodic of type $\II_1$.\\
	$0 < t < 2 \sqrt{2\delta}$ & Ergodic of type $\III_1$.\\
	$t = 2 \sqrt{2\delta}$& $?$\\
	$t > 2 \sqrt{2\delta}$ & Type $\mathrm{I}$ if $\sigma(G)$ is closed and type $\II_\infty$ otherwise.\\

	\end{tabular}
	\renewcommand{\arraystretch}{1}
\end{center}

\end{thm}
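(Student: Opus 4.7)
The plan is to reduce the theorem to the closed subgroup case already handled in Theorems \ref{general type proper weakly mixing} and \ref{dissipativity general type}, and then transfer conclusions by a density/continuity argument. Set $H := \overline{\sigma(G)} \subset \Aut(T)$ and let $\beta : H \curvearrowright \cH$ be the induced affine isometric action, so that $\alpha = \beta \circ \sigma$ and $\widehat{\alpha}^t = \widehat{\beta}^t \circ \sigma$ for all $t \geq 0$. Any fixed point or pair of fixed points of $H$ in $T \cup \partial T$ would automatically be a fixed point of $\sigma(G)$, so the hypothesis forces $H$ to be a closed subgroup of $\Aut(T)$ of general type, and $\delta(\alpha) = \delta(H) = \delta$ by Proposition \ref{poincare tree}. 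The crucial transfer principle is that the action $H \curvearrowright \widehat{\cH}^t$ is continuous in the measure topology (Proposition \ref{continuous functoriality}), and $\sigma(G)$ is dense in $H$, so $\rL^\infty(\widehat{\cH}^t)^G = \rL^\infty(\widehat{\cH}^t)^H$, with the analogous identity holding for the Maharam extensions. Hence ergodicity and the type $\III_1$ property (ergodicity of the Maharam extension) transfer freely between $\widehat{\beta}^t$ and $\widehat{\alpha}^t$.

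The cases $t = 0$ and $0 < t < 2\sqrt{2\delta}$ are then immediate. For $t = 0$, the representation $\beta^0$ is mixing on a nontrivial Hilbert space, so it has no nontrivial finite-dimensional $H$-invariant subspace (as $H$ is non-compact); by the transfer principle applied to finite-dimensional invariant subspaces, $\alpha^0$ has no nontrivial finite-dimensional $G$-invariant subspace either, i.e.\ is weakly mixing, and by \cite{Bo14} the pmp Gaussian action $\widehat{\alpha}^0$ is ergodic, hence of type $\II_1$. For $0 < t < 2\sqrt{2\delta}$, Theorem \ref{general type proper weakly mixing} gives that $\widehat{\beta}^t$ is ergodic of type $\III_1$, which transfers to $\widehat{\alpha}^t$.

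For $t > 2\sqrt{2\delta}$, Theorem \ref{dissipativity general type} shows that $\widehat{\beta}^t$ is dissipative. Using a Mackey-style cross-section, one can identify $\widehat{\cH}^t$ as a bundle of $H$-orbits $H/H_\omega$ over an orbit space $Y$, with compact stabilizers $H_\omega$ (the Gaussian action is essentially free via Proposition \ref{faithful free} and separability of $H$, and properness of $\beta$ propagates to give compact stabilizers). If $\sigma(G)$ is closed in $\Aut(T)$, then $\sigma(G) = H$, the $G$-orbits coincide with the $H$-orbits, and the cross-section is a fundamental domain for $\widehat{\alpha}^t$, which is therefore dissipative, i.e.\ of type $\mathrm{I}$. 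If $\sigma(G)$ is not closed, it is a proper dense subgroup of $H$, and on each ergodic component $H/H_\omega$ of the $G$-action, the $G$-action becomes left translation by $\sigma(G)$, which preserves the natural (infinite) $H$-invariant measure on $H/H_\omega$, is free modulo the kernel of $\sigma$, and is ergodic by applying the density/continuity transfer fiberwise. Each such component is therefore of type $\II_\infty$, and $\widehat{\alpha}^t$ is globally of type $\II_\infty$.

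The main obstacle is the non-closed case for $t > 2\sqrt{2\delta}$, where one must carefully implement the cross-section decomposition of the dissipative $H$-action and verify that restriction to the dense subgroup $\sigma(G) \subset H$ produces an infinite-measure-preserving ergodic action on each slice. The essential freeness of the $H$-action (possibly requiring a harmless modification of $\alpha$ by direct sum with an infinite multiple of a faithful mixing representation, as in Remark \ref{freeness}) and the compactness of Gaussian stabilizers (via the zero-type property of $\widehat{\beta}^t$ from Theorem \ref{dictionary}) are the trickiest points; the rest of the theorem follows mechanically from the transfer principle of the first paragraph together with the existing closed-subgroup results.
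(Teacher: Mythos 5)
Your proposal is correct and follows essentially the same route as the paper: pass to the closure $L=\overline{\sigma(G)}$, invoke Theorems \ref{dissipativity general type} and \ref{general type proper weakly mixing} for the closed group, transfer ergodicity of the Maharam extension through the dense-range map $\iota : G \to L$, and in the dissipative regime identify each ergodic component with $G \curvearrowright L/K$ carrying an infinite invariant measure. The only difference is that your worries about essential freeness and cross-sections in the last step are unnecessary, since Theorem \ref{dissipative trivialization}(v) already describes every ergodic component of a dissipative action as $L/K$ with $K$ compact without any freeness hypothesis.
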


\begin{proof}
We may assume that $\sigma$ is faithful. If $\sigma(G)$ is closed in $\Aut(T)$ (i.e.\ if $\sigma$ is proper), then the result follows from Theorem \ref{dissipativity general type} and Theorem \ref{general type proper weakly mixing}. Now, suppose that $\sigma(G)$ is not closed and let $L$ be the closure of $\sigma(G)$ in $\Aut(T)$. Let $\beta : L \curvearrowright \cH$ be the associated affine isometric action, so that $\alpha=\beta \circ \iota$ where $\iota : G \rightarrow L$ is the inclusion map with dense range. We have $\delta=\delta(L)$ and $L$ is of general type. Thus if $0 < t < 2 \sqrt{2 \delta}$, the ergodicity of $\Mod(\widehat{\alpha}^t)=\Mod(\widehat{\beta}^t) \circ \iota$ follows from the ergodicity of $\Mod(\widehat{\beta}^t)$ because $\iota$ has dense range. If $t > 2 \sqrt{2 \delta}$, then $\widehat{\beta}^t$ is dissipative, i.e.\ every ergodic component of $\widehat{\beta}^t$ is of the form $L \curvearrowright L/K$ for some compact subgroup $K < L$. Note that $L$ preserves the pushforward of the Haar measure on $L/K$ which is semifinite but infinite. Then every ergodic component of $\widehat{\alpha}^t=\widehat{\beta}^t \circ \iota$ is of the form $G \curvearrowright L/K$ hence of type $\II_\infty$.
\end{proof}

\begin{thm} \label{spectral radius cayley}
Let $\alpha : \F_d \curvearrowright \cH$ be the affine isometric action associated to the action of the group $\F_d, \: d \geq 2$ on its Cayley tree. Let $\delta=\delta(\alpha)=\log(2d-1)$. Then $\widehat{\alpha}^t$ is nonamenable for all $t < 2\sqrt{\delta}$.
\end{thm}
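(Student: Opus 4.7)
The plan is to combine the spectral radius formula of Theorem \ref{formula spectral radius} with the explicit computation for the free group in Theorem \ref{computation free group random walk}, and then exploit the fact that amenability forces equality of spectral radii (as used in the proof of Proposition \ref{lower bound amenability}, via the cited Proposition \ref{weakly contained amenable action}). Throughout, take $\mu$ to be the canonical symmetric probability measure on $\F_d$ supported on the standard generators and their inverses, and note that for the quadratic embedding of the Cayley tree, $\|b(g)\|^2 = d(gx,x) = |g|$.

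First, I apply Theorem \ref{formula spectral radius}.(i) to get
$$\rho_\mu(\widehat{\alpha}^t) = f(t^2/8), \quad \text{where} \quad f(s) = \lim_{n\to\infty}\mathbb{E}(e^{-s|g_n|})^{1/n}.$$
Theorem \ref{computation free group random walk} then gives a piecewise-explicit formula for $f(s)$, with threshold at $s_0 := \tfrac{1}{2}\log(2d-1) = \delta/2$. The condition $t < 2\sqrt{\delta}$ is precisely $t^2/8 < \delta/2 = s_0$, so we land in the first branch and obtain
$$\rho_\mu(\widehat{\alpha}^t) = \frac{1}{2d}\bigl((2d-1)e^{-t^2/8} + e^{t^2/8}\bigr).$$

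Next, set $q = 2d-1$ and $x = e^{t^2/8}$. By the AM--GM inequality,
$$\frac{qx^{-1} + x}{2} \geq \sqrt{q},$$
with equality if and only if $x = \sqrt{q}$, i.e.\ $t = 2\sqrt{\delta}$. Therefore, for every $t$ with $0 < t < 2\sqrt{\delta}$ we have the strict inequality
$$\rho_\mu(\widehat{\alpha}^t) = \frac{qx^{-1}+x}{2d} > \frac{\sqrt{q}}{d} = \rho_\mu(\F_d),$$
the last identity being Kesten's classical computation (already used in the proof of Theorem \ref{computation free group random walk}).

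Finally, if $\widehat{\alpha}^t$ were amenable, then by Proposition \ref{weakly contained amenable action} its Koopman representation would be weakly contained in the regular representation, forcing $\rho_\mu(\widehat{\alpha}^t) = \rho_\mu(\F_d)$. The strict inequality above contradicts this, so $\widehat{\alpha}^t$ is nonamenable for all $t < 2\sqrt{\delta}$. There is no real obstacle in this argument: all the heavy lifting has already been done in Theorem \ref{formula spectral radius} and Theorem \ref{computation free group random walk}, and the remainder is the one-line AM--GM comparison. The case $t=0$ is already covered by the remark that $\F_d$ is nonamenable, so we may restrict to $t>0$.
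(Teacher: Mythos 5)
Your proof is correct and follows essentially the same route as the paper: compute $\rho_\mu(\widehat{\alpha}^t)=f(t^2/8)$ via Theorem \ref{formula spectral radius}, evaluate it with Theorem \ref{computation free group random walk}, observe the strict inequality $\rho_\mu(\widehat{\alpha}^t)>\rho_\mu(\F_d)=\sqrt{2d-1}/d$ for $t<2\sqrt{\delta}$, and conclude by Proposition \ref{weakly contained amenable action}. The paper states the strict inequality without spelling out the AM--GM comparison; your write-up simply makes that elementary step explicit.
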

\begin{proof}
Let $\mu$ be the canonical symmetric probabily measure on $\F_d$. By Theorem \ref{formula spectral radius} and Theorem \ref{computation free group random walk}, the $\mu$-spectral radius of $\widehat{\alpha}^t$ satisfies 
$$\rho_\mu(\widehat{\alpha}^t) > \rho_\mu(G)=\frac{\sqrt{2d-1}}{d}$$
for all $t < 2 \sqrt{\log(2d-1)}$. We conclude by Proposition \ref{weakly contained amenable action}.
\end{proof}

\begin{thm}
Let $\alpha : \F_d \curvearrowright \cH$ be the affine isometric action associated to the action of the group $\F_d, \: d \geq 2$ on its Cayley tree. Let $\delta=\delta(\alpha)=\log(2d-1)$. Then $\widehat{\alpha}^t$ is dissipative for $t=2 \sqrt{2\delta}$.
\end{thm}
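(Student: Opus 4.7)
Plan:

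By Theorem~\ref{affine dissipativity}(\ref{dissipativity integral}), dissipativity at $t=2\sqrt{2\delta}$ is equivalent to the almost sure finiteness of
\[
\Sigma(\varphi) \;:=\; \sum_{v\in T}\exp\!\bigl(-4\delta|v|+2\sqrt{2\delta}\,S_v\bigr),
\]
where $T$ is the Cayley tree of $\F_d$ rooted at $x$, $|v|=d(x,v)=\|v-x\|^2$, and $S_v:=\langle\varphi,v-x\rangle$. By Remark~\ref{orientation basis} the edge variables $Y_e$ form an i.i.d.\ family of standard Gaussians under $\mu_0$, and $S_v=\sum_{e\in[x,v]}Y_e$ is a Gaussian tree-indexed random walk on the $(2d-1)$-ary downward tree.

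The core of the plan is to reuse the critical additive martingale from the proof of Theorem~\ref{speed regular tree} at the parameter $s:=\sqrt{2\delta}=t/2$. With $W_n:=\sum_{|v|=n}e^{sS_v}$, the sequence $M_n:=e^{-2\delta n}W_n$ is a positive martingale (converging a.s.\ to $0$ by Biggins at criticality), hence a.s.\ bounded. The elementary estimate $\sum_{|v|=n}e^{2sS_v}\leq (\max_{|v|=n}e^{sS_v})\cdot W_n$ together with the sharp Bramson-type upper bound
\[
\max_{|v|=n}S_v \;\leq\; \sqrt{2\delta}\,n-\tfrac{3}{2\sqrt{2\delta}}\log n + C \quad\text{a.s.}
\]
for the maximum of the critical branching random walk (valid since $2d-1\geq 3$ for $d\geq 2$) yields the layer bound
\[
T_n \;:=\; e^{-4\delta n}\sum_{|v|=n}e^{2sS_v} \;\leq\; C'\,M_n\,n^{-3/2}.
\]
Since $M_n$ is a.s.\ bounded and $\sum_n n^{-3/2}<\infty$, we conclude that $\Sigma=\sum_n T_n$ is a.s.\ finite, i.e.\ $\widehat{\alpha}^t$ is dissipative.

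The main obstacle is the sharp logarithmic correction in the upper bound for $\max_{|v|=n}S_v$: the bare additive-martingale argument of Theorem~\ref{speed regular tree} only gives $\max_{|v|=n}S_v\leq\sqrt{2\delta}\,n+O(1)$ a.s., under which each $T_n$ is merely of order one and $\Sigma$ diverges term by term. The needed correction is the classical branching-random-walk result of Bramson, Addario-Berry--Reed, Hu--Shi, and A\"id\'ekon; in the present Gaussian regular-tree setting it also follows from the a.s.\ convergence of the derivative martingale $D_n := \sum_{|v|=n}(\sqrt{2\delta}\,n - S_v)\,e^{sS_v-2\delta n}$ to a non-negative limit. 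One could alternatively package the argument via Kolmogorov's 0-1 law applied to the tail event $\{\Sigma<\infty\}$ for the i.i.d.\ family $(Y_e)_e$---modifying finitely many $Y_e$ multiplies each summand by a uniformly bounded positive factor and so preserves convergence---but establishing positive probability still requires the same sharper branching-random-walk input.
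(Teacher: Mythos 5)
Your reduction to the almost sure convergence of $\Sigma(\varphi)=\sum_{v}e^{-4\delta|v|+2\sqrt{2\delta}\,S_v}$ and the identification of $(S_v)_{v}$ as a Gaussian tree-indexed random walk are both correct, but the central estimate of your argument is false. The inequality
$$\max_{|v|=n}S_v\;\le\;\sqrt{2\delta}\,n-\tfrac{3}{2\sqrt{2\delta}}\log n+C$$
does \emph{not} hold almost surely for all large $n$: the $\tfrac{3}{2\sqrt{2\delta}}\log n$ correction is the in-probability (median) centering of the maximum of the branching random walk, whereas the almost sure fluctuations, by the theorem of Hu and Shi, are
$$\limsup_{n}\frac{\max_{|v|=n}S_v-\sqrt{2\delta}\,n}{\log n}=-\frac{1}{2\sqrt{2\delta}},\qquad \liminf_{n}\frac{\max_{|v|=n}S_v-\sqrt{2\delta}\,n}{\log n}=-\frac{3}{2\sqrt{2\delta}}\quad\text{a.s.}$$
Hence almost surely there are infinitely many $n$ with $\max_{|v|=n}S_v\ge\sqrt{2\delta}\,n-(\tfrac{1}{2\sqrt{2\delta}}+\varepsilon)\log n$, and since $T_n\ge e^{2\sqrt{2\delta}\,(\max_{|v|=n}S_v-\sqrt{2\delta}\,n)}$, one gets $T_n\gtrsim n^{-1-\varepsilon'}$ along that subsequence, contradicting the claimed bound $T_n\le C'M_n n^{-3/2}$ (recall $M_n\to 0$). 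The termwise summation therefore breaks down, and the derivative-martingale remark does not repair it: convergence of $D_n$ identifies the limit \emph{law} of the recentred maximum, not an almost sure upper envelope at the $\tfrac32\log n$ level.

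The paper's proof avoids any logarithmic correction and is more elementary. It does not attempt to control $\Sigma$ layer by layer; instead it shows that the Gaussian Dirichlet domain $\widehat{\mathcal D}=\{\omega\mid S_v\le\sqrt{2\delta}\,|v|\ \text{for all}\ v\}$ has positive measure, using only the almost sure boundedness of the critical additive martingale $M_n=e^{-2\delta n}W_n$ (Theorem \ref{speed regular tree}). By Theorem \ref{dirichlet domain} the translates $\widehat g\,\widehat{\mathcal D}$ are pairwise disjoint and their union is exactly the dissipative part, so the dissipative part is non-null; concretely, $\int_{\widehat{\mathcal D}}\Sigma\,\rd\mu_{tx}=\sum_{g}\mu_{tx}(\widehat g\,\widehat{\mathcal D})\le 1$, so $\Sigma<\infty$ on a set of positive measure. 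Your closing observation that $\{\Sigma<\infty\}$ is a tail event for the i.i.d.\ edge Gaussians is precisely the remaining step: Kolmogorov's zero--one law upgrades positive probability to probability one, whence dissipativity by Theorem \ref{affine dissipativity}. You are mistaken, however, in asserting that establishing positive probability ``still requires the same sharper branching-random-walk input'': the critical additive martingale, which you already invoke, suffices once it is combined with the Dirichlet-domain structure of Theorem \ref{dirichlet domain}. I recommend replacing the Bramson-type estimate by this route.
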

\begin{proof}
This follows from Theorem \ref{dirichlet domain} and Theorem \ref{speed regular tree} which precisely says that the Gaussian Dirichlet domain has positive measure at $t=2 \sqrt{2\delta}$.
\end{proof}

\section{Nonsingular Bernoulli actions of groups acting on trees} \label{section bernoulli trees}
Let $T$ be a locally finite tree. Let $\Omega(T)$ be the compact space of all possible orientations of $T$. For each $x,y \in T$, define a continuous function $c(x,y)$ on $\Omega(T)$ by
$$ c(x,y)(\omega)=n_e-m_e$$
where $n_e$ (resp.\ $m_e$) is the number of edges of $[x,y]$ oriented towards $x$ (resp.\ $y$) by the orientation $\omega \in \Omega$. Observe that we have the cocycle relation $c(x,y)+c(y,z)=c(x,z)$ for all $x,y,z \in T$.

Fix $p \in ]0,1[$. For every $x \in T$, we define a random orientation of $T$ as follows: each edge $e \in E(T)$ independently is oriented towards $x$ with probability $p$ and oriented away from $x$ with probability $1-p$. This defines a probability measure $\mu_x^p$ on $\Omega(T)$. Take another $y \in T$. Orienting an edge $e$ towards $x$ or towards $y$ is the same thing, unless $e$ lies in $[x,y]$. Since $[x,y]$ is finite, this implies that $\mu_y^p$ is equivalent to $\mu_x^p$ and the Radon-Nikodym derivative is given by
$$ \frac{\rd \mu_y^p}{\rd \mu_x^p}=\left( \frac{1-p}{p} \right)^{-c(y,x)}.$$
In particular, we see that the probability measures $(\mu_x^p)_{x \in T}$ are all in the same $\lambda$-measure class $m$ where $\lambda=\min \left( \frac{1-p}{p},\frac{p}{1-p} \right) \in ]0,1]$. We denote by $\Omega^p(T)=(\Omega(T),m)$ the associated $\lambda$-nonsingular space (see the appendix \ref{lambda nonsingular}).

Observe that $\Aut(T)$ acts naturally on $\Omega$ by homeomorphisms. Since the family of equivalent probability measures $(\mu_x^p)_{x \in T}$ is defined naturally in terms of the tree structure, it is obvious that for every $g \in \Aut(T)$, the induced homeomorphism $\widehat{g}$ of $\Omega$ satisfies $\widehat{g}_*\mu_x^p=\mu_{gx}^p$ for all $x \in T$. In particular, $\widehat{g}$ preserves the $\lambda$-measure class of $(\mu_x^p)_{x \in T}$, hence it induces a $\lambda$-nonsingular automorphism of $\Omega^p(T)$. Now, let $\sigma : G \curvearrowright \Aut(T)$ be an action of a locally compact group $G$ on the tree $T$. It induces a  $\lambda$-nonsingular action $\widehat{\sigma}^p: G \curvearrowright \Omega^p(T)$. Our goal is to study these nonsingular actions $\widehat{\sigma}^p$ when $p$ varies in $]0,1[$.

\begin{rem}
We observe that the nonsingular actions $\widehat{\sigma}^p : G \curvearrowright \Omega^p(T)$ can be identified (in a non-natural way) with \emph{nonsingular generalized Bernoulli shifts}. Indeed, assume that $G$ acts on $T$ without edge inversion, or equivalently, assume that there exists an orientation $\omega_0 \in \Omega$ which is fixed by $G$ (note that we can always assume this up to restricting $\sigma$ to an index $2$ subgroup of $G$). Define a homeomorphism $\theta : \Omega \rightarrow \{0,1\}^{E(T)}$ which encodes each orientation $\omega \in \Omega$ by assigning $0$ or $1$ to each edge $e \in E(T)$ according to whether $\omega$ differs or coincides with $\omega_0$ on $e$. Since $\omega_0$ is fixed by $G$, the map $\theta$ is $G$-equivariant where $G$ acts by generalized Bernoulli shifts on $\{0,1\}^{E(T)}$. Moreover, for every $p \in ]0,1[$ and every $x \in T$, the probability measure $\theta_* \mu_x^p$ on $\{0,1\}^{E(T)}$ is clearly an infinite product of probability  measures of the form $p\delta_0+(1-p)\delta_1$ or $(1-p)\delta_0+p\delta_1$ depending on the edge in $E(T)$. This shows that $\widehat{\sigma}^p : G \curvearrowright \Omega^p(T)$ is conjugate to a nonsingular generalized Bernoulli shift. For example, if $\sigma$ is the action of a free group on its Cayley tree, then this is really a nonsingular Bernoulli shift exactly as the ones studied in \cite{VW18} or \cite{BKV19}.
\end{rem}

\begin{rem} \label{subtree bernoulli} Let $S \subset T$ be a subtree. Then there is a natural homeomorphism $\theta : \Omega(T) \rightarrow \{0,1\}^{E(T) \setminus E(S)} \times \Omega(S)$. Indeed, on $E(T) \setminus E(S)$ an orientation $\omega \in \Omega$ can be encoded by assigning to each edge $e \in E(T) \setminus E(S)$ the value $0$ if $\omega$ orients it towards $S$ and $1$ otherwise. If one also knows the restriction $\omega|_S \in \Omega(S)$, than one can recover $\omega$. This defines our map $\theta$. Observe that for all $p \in ]0,1[$, we have $\theta_* \mu_x^p= \mu^p \otimes \mu_x^p$ for any $x \in S$, where $\mu^p$ is the usual Bernoulli product measure on $\{0,1\}^{E(T) \setminus E(S)}$. We will thus omit $\theta$ and simply make the identification
$$ \Omega^p(T)=(\{0,1\}^{E(T) \setminus E(S)},\mu^p) \otimes \Omega^p(S).$$
Now, suppose that we have a group action $\sigma : G \curvearrowright T$ which leaves $S$ globally invariant. Then, under the above identification, the nonsingular action $\widehat{\sigma}^p : G \curvearrowright \Omega^p(T)$ is identified with the diagonal action $$\beta^p \otimes \widehat{\sigma|_S} : G \curvearrowright (\{0,1\}^{E(T) \setminus E(S)},\mu^p) \otimes \Omega^p(S)$$ where $\beta^p : G \curvearrowright (\{0,1\}^{E(T) \setminus E(S)},\mu^p)$ is the generalized Bernoulli shift comming from the action of $G$ on $E(T) \setminus E(S)$.
\end{rem}

\begin{thm} \label{dissipativity general type bernoulli}
Let $T$ be a locally finite tree and $G \subset \Aut(T)$ a closed subgroup of general type and let $\delta=\delta(G)$. Take $p \in ]0,1[$ and let $\widehat{\sigma}^p : G \curvearrowright \Omega^p(T)$ be the nonsingular action associated to the action $\sigma: G \curvearrowright T$. Then $\widehat{\sigma}^p$ is recurrent if $2 \sqrt{p(1-p)} > e^{-\delta}$ and dissipative if $2 \sqrt{p(1-p)} < e^{-\delta}$.
\end{thm}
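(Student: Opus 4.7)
The plan is to mirror Theorem \ref{dissipativity general type}, replacing the Gaussian tree-indexed random walk by a Bernoulli one. The first ingredient is the Hellinger integral, which by independence of edge orientations equals
$$\int_{\Omega(T)} \sqrt{\rd \mu_y^p/\rd \mu_x^p}\, \rd \mu_x^p = \left(2\sqrt{p(1-p)}\right)^{d(x,y)},$$
since each edge of $[x,y]$ contributes $p\sqrt{(1-p)/p}+(1-p)\sqrt{p/(1-p)}=2\sqrt{p(1-p)}$. For the dissipative direction $2\sqrt{p(1-p)} < e^{-\delta}$, set $s = -\log(2\sqrt{p(1-p)}) > \delta$. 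Proposition \ref{poincare tree} gives $\sum_{y \in G \cdot x} e^{-sd(x,y)} < \infty$, so Fubini yields $\sum_{y \in G \cdot x} \sqrt{\rd \mu_y^p/\rd \mu_x^p}(\omega) < \infty$ for $\mu_x^p$-a.e.\ $\omega$. The elementary implication $\sum \sqrt{a_n}<\infty \Rightarrow \sum a_n<\infty$ (using $\sqrt{a_n}\to 0$) upgrades this to $\sum_{y \in G \cdot x}\rd \mu_y^p/\rd \mu_x^p(\omega) < \infty$ a.e., which by Hopf's criterion gives dissipativity.

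For the recurrent direction $2\sqrt{p(1-p)} > e^{-\delta}$, I would apply Theorem \ref{subgroup of large critical exponent} to choose a closed compactly generated subgroup $G' < G$ of general type with $\delta(G') > -\log(2\sqrt{p(1-p)})$, take its minimal invariant subtree $T'$ (so $G'$ is cocompact on $T'$ and $\dim_H \partial T' = \delta(G')$ by Proposition \ref{poincare tree}), root it at $\rho \in T'$, and, assuming $p \leq 1/2$ without loss of generality, set $\beta = \log((1-p)/p) \geq 0$ so that $\rd \mu_v^p/\rd \mu_\rho^p = e^{\beta S_v}$ with $S_v = c(\rho,v)$. Under $\mu_\rho^p$, $(S_v)_{v \in T'}$ is exactly the $\{\pm 1\}$-valued Bernoulli tree-indexed random walk of Example \ref{bernoulli random walk}, and since $2\sqrt{p(1-p)} > e^{-\delta(G')}$ that example produces almost surely a boundary point $\omega \in \partial T'$ along which $\liminf_v S_v/|v| > 0$, so that $\sum_{v \in T'} e^{\beta S_v} = \infty$ almost surely. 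Pigeonholing over the finite quotient $G'\backslash T'$ then produces some $x \in T'$ for which $\sum_{v \in G' \cdot x} e^{\beta S_v} = \infty$ on a set of positive measure, and this is exactly (up to an a.s.\ positive factor) the Hopf dissipativity integrand for $\widehat{\sigma}^p|_{G'}$ based at $x$.

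To promote this ``positive probability'' statement into full recurrence of $\widehat{\sigma}^p|_{G'}$, I would appeal to Kolmogorov's 0--1 law on the product space $\Omega(T) \cong \prod_e \{0,1\}$. Altering $\omega$ on any finite edge set $E'$ changes every $\rd \mu_v^p/\rd \mu_x^p(\omega)$ by a multiplicative factor in $[e^{-2\beta|E'|}, e^{2\beta|E'|}]$ uniformly in $v$, so the divergence event is tail-measurable; positive probability therefore forces probability $1$, the $G'$-dissipative part of $\widehat{\sigma}^p$ is null, and recurrence passes up from $G'$ to $G$. The main obstacle is exactly this last step: in the Gaussian setting Proposition \ref{diss scale} already builds in the recurrent-vs-dissipative dichotomy via the rotation trick, but no analogous monotonicity is apparent in the Bernoulli framework, which is why the 0--1 law must be invoked explicitly.
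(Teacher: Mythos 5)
Your proposal is correct and follows essentially the same route as the paper: the Hellinger computation $\int \sqrt{\rd\mu_y^p/\rd\mu_x^p}\,\rd\mu_x^p=(2\sqrt{p(1-p)})^{d(x,y)}$ against the Poincar\'e series for the dissipative regime, and for the recurrent regime a compactly generated subgroup of large critical exponent (Theorem \ref{subgroup of large critical exponent}), the Lyons--Pemantle estimate of Example \ref{bernoulli random walk} on the minimal subtree, pigeonholing over the finite quotient, Kolmogorov's $0$--$1$ law, and passage from $G'$ back to $G$. The only (harmless) divergence is in the dissipative direction, which the paper runs by contraposition through the $\rL^{1/2}$-recurrence criterion of Proposition \ref{L2 criterion recurrent}, whereas you run it forwards via Tonelli and the elementary implication $\sum\sqrt{a_n}<\infty\Rightarrow\sum a_n<\infty$; if anything your version lands more squarely on the stated conclusion ``dissipative'' rather than merely ``not recurrent''.
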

\begin{proof}
Assume that $\widehat{\sigma}^p$ is recurrent. Take any $x \in T$. Then, assuming that the Haar measure of $G_x=\{ g \in G \mid gx=x\}$ is $1$, we have
$$\int_G \frac{\rd \mu_{gx}^p}{\rd \mu_x^p} \rd g=   \sum_{y \in G \cdot x} \frac{\rd \mu_y^p}{\rd \mu_x^p}  = \sum_{y \in G \cdot x} \left(\frac{1-p}{p}\right)^{-c(y,x)} = + \infty.$$
Taking the square roots, we get
	\[\sum_{y \in G \cdot x} \left(\frac{1-p}{p}\right)^{-\frac{1}{2}c(y,x)} = +\infty.\]
	A computation shows that 
	$$\int_{\Omega^p} \left(\frac{1-p}{p}\right)^{-\frac{1}{2}c(y,x)} \rd \mu^p_x = \left( 2\sqrt{p(1-p)} \right)^{d(y,x)}.$$
Therefore, we get
	\[\sum_{y \in G \cdot x} \left( 2\sqrt{p(1-p)} \right)^{d(y,x)}= + \infty.\]
	This implies that $2\sqrt{p(1-p)} \geq e^{-\delta}$.
	
Conversely, assume that $2\sqrt{p(1-p)} > e^{-\delta}$. Then by Corollary \ref{subgroup of large critical exponent}, we can find a compactly generated closed subgroup $G' \subset G$ such that $2\sqrt{p(1-p)} > e^{-\delta'}$ where $\delta'=\delta(G')$. Let $T'$ be the minimal $G'$-invariant subtree of $T$. Then $G'$ acts cocompactly on $T'$ and $\delta'=\dim_H \partial T'$. Fix a root $\rho \in T'$. Define the random variables $S_v=c(v,\rho)$ with respect to the probability measure $\mu^p_\rho$. Observe that the random process $(S_v)_{v \in T'}$ is a tree-indexed Bernoulli random walk of parameter $p$. It follows from Example \ref{bernoulli random walk} that almost surely, we have $S_v < 0$ for infinitely many $v \in T'$ and $S_v > 0$ for infinitely many $v \in T'$. Therefore, we conclude that almost surely, we have
$$ \sum_{ v \in T'} \frac{\rd \mu_v^p}{\rd \mu_\rho^p}=\sum_{ v \in T'} \left( \frac{1-p}{p} \right)^{-S_v}=+\infty.$$
Since $G'\backslash T'$ is finite, this implies easily that
$$ \sum_{ v \in G' \cdot \rho } \frac{\rd \mu_v^p}{\rd \mu_\rho^p}=+\infty$$
with positive probability, hence almost surely by Kolmogorov's $0$-$1$ law. This shows that $\widehat{\sigma}^p$ is recurrent.
\end{proof}

%

	Since the image of the Radon--Nikodym derivative $\lambda^{S^\rho_{g \rho}}$ lies in the multiplicative subgroup $\lambda^{\mathbb Z} \subset \mathbb{R}^\times$, we may take the $\lambda$-modular bundle $\Mod_{\lambda}(X)$, which is a bundle over $X$ whose fiber is  $\lambda^{\mathbb Z}$ instead of the usual modular bundle $\mathrm{Mod}(X)$.

\begin{thm} \label{general type proper weakly mixing bernoulli}
Let $T$ be a locally finite tree and $G \subset \Aut(T)$ a closed subgroup of general type and let $\delta=\delta(G)$. Take $p \in ]0,1[$ and let $\widehat{\sigma}^p : G \curvearrowright \Omega^p(T)$ be the nonsingular action associated to the action $\sigma: G \curvearrowright T$. If $p \neq \frac{1}{2}$ and $2 \sqrt{p(1-p)} > e^{-\delta}$ then $\widehat{\sigma}^p$ is weakly mixing of stable type $\III_\lambda$ where $\lambda = \min\left(\frac{1-p}{p}, \frac{p}{1-p} \right)$.
\end{thm}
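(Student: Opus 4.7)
The plan is to mirror the strategy of Theorem \ref{general type proper weakly mixing}, replacing the Gaussian ``direction'' by the Bernoulli product outside the minimal invariant subtree. First, using the second part of Corollary \ref{subgroup of large critical exponent}, I would pick a compactly generated closed subgroup $H \subset G$ of general type with $2\sqrt{p(1-p)} > e^{-\delta(H)}$ and $G/H$ non-compact. Let $S \subset T$ be the minimal $H$-invariant subtree (Proposition \ref{convex hull}); then $\partial S = \Lambda_H$, and Lemma \ref{limit set cocompact} gives $\Lambda_H \subsetneq \Lambda_G$. Choosing $\omega \in \Lambda_G \setminus \Lambda_H$ and a sequence $g_n \in G$ with $g_n v \to \omega$ for any $v \in T$, the orbits $g_n v$ eventually leave $S$, so $\bigcap_{n} g_n^{-1} S = \emptyset$ and in particular $\bigcap_{g \in G} gS = \emptyset$, exactly as in the Gaussian case.

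Next I would invoke the decomposition of Remark \ref{subtree bernoulli}:
$$\Omega^p(T) = (\{0,1\}^{E(T) \setminus E(S)}, \mu^p) \otimes \Omega^p(S),$$
together with the corresponding factorization $\widehat{\sigma}^p|_H = \beta^p|_H \otimes \widehat{\sigma|_S}^p|_H$, where $\beta^p|_H$ is a generalized Bernoulli shift. Because $H$ is a closed subgroup of $\Aut(T)$ and $T$ is locally finite, $H$ acts on $E(T) \setminus E(S)$ with compact open stabilizers, so $\beta^p|_H$ is pmp and mixing. On the other hand, Theorem \ref{dissipativity general type bernoulli} gives recurrence of $\widehat{\sigma}^p|_H$, which (since $\beta^p|_H$ is pmp) is equivalent to recurrence of $\widehat{\sigma|_S}^p|_H$. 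For any ergodic pmp action $\rho : G \curvearrowright (Y,\nu)$, applying Theorem \ref{schmidt} with recurrent component $\widehat{\sigma|_S}^p|_H \otimes \rho|_H$ and mixing component $\beta^p|_H$ yields
$$\rL^\infty(\Omega^p(T) \otimes Y)^{\widehat{\sigma}^p|_H \otimes \rho|_H} \subset \rL^\infty(\Omega^p(S) \otimes Y).$$
Running this for every $G$-conjugate $gHg^{-1}$ (with minimal invariant subtree $gS$) and using $\bigcap_g E(gS) = E(\bigcap_g gS) = \emptyset$ (so that $\bigcap_g \rL^\infty(\Omega^p(gS)) = \C$), I obtain that $\widehat{\sigma}^p \otimes \rho$-invariant functions lie in $\rL^\infty(Y)$. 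Ergodicity of $\rho$ then gives weak mixing.

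For the stable type $\III_\lambda$ claim, I would run the exact same argument on the $\lambda$-Maharam extension $\Mod_\lambda(\widehat{\sigma}^p)$. The crucial observation is that, with base point $x \in S$, the Radon-Nikodym cocycle of $\widehat{\sigma|_S}^p|_H$ reads $\gamma(h,\omega) = \lambda^{-c(hx,x)(\omega)}$, and $[x,hx] \subset S$ for $h \in H$, so this cocycle depends only on $\omega|_S$. Hence the Maharam extension factors as $\Mod_\lambda(\widehat{\sigma}^p|_H) = \beta^p|_H \otimes \Mod_\lambda(\widehat{\sigma|_S}^p|_H)$, and the second factor is again recurrent since Maharam extensions preserve recurrence. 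The Schmidt-Walters + intersection step then pushes $\Mod_\lambda(\widehat{\sigma}^p) \otimes \rho$-invariants into $\rL^\infty(\lambda^{\Z}) \otimes \rL^\infty(Y)$.

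The main obstacle, which I would execute carefully, is the final reduction of functions of $(n,y) \in \lambda^{\Z} \times Y$: the invariance equation $f(n\gamma(g,\omega), gy) = f(n,y)$ for a.e.\ $\omega$ and every $g \in G$ must be combined with the fact that for any $g$ with $gx \neq x$, the integer $c(gx,x)(\omega)$ takes every value of the parity of $d(gx,x)$ in $[-d(gx,x), d(gx,x)]$ on a positive-measure set. This forces $f$ to be independent of $n$, and ergodicity of $\rho$ then gives the constant. Finally freeness of $\widehat{\sigma}^p$ (needed to make ``type $\III_\lambda$'' unambiguous) is ensured by the analogue of Proposition \ref{faithful free} for the Bernoulli space, which is straightforward from the explicit formula for the Radon-Nikodym derivatives.
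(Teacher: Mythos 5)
Your overall architecture matches the paper's proof almost exactly: the choice of a compactly generated, non-cocompact subgroup $H$ of general type via Corollary \ref{subgroup of large critical exponent}, the minimal subtree $S$ with $\bigcap_{g} gS = \emptyset$ via Lemma \ref{limit set cocompact}, the decomposition of Remark \ref{subtree bernoulli} into a mixing pmp Bernoulli part outside $S$ and a recurrent part on $\Omega^p(S)$, the application of Theorem \ref{schmidt} to the $\lambda$-Maharam extension, and the intersection over all conjugates $gHg^{-1}$. All of that is sound and is what the paper does.

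The gap is in your final reduction. First, the identification of $\bigcap_{g}\rL^\infty(\Mod_\lambda(\Omega^p(gS))\otimes Y)$ with $\rL^\infty(\lambda^{\Z})\otimes\rL^\infty(Y)$ is not justified: it implicitly fixes the trivialization of the modular bundle by $\mu^p_x$ with $x \in S$, but the subalgebra $\rL^\infty(\Mod_\lambda(\Omega^p(gS)))$ is naturally described in the trivialization by $\mu^p_{gx}$, and the transition map twists the fiber coordinate by $\lambda^{\mp c(gx,x)(\omega)}$, which depends on edges outside $E(S)\cap E(gS)$. So the various subalgebras do not split off a common $\rL^\infty(\lambda^{\Z})$ factor, and the intersection has to be computed, not read off. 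Second, and more seriously, even if you reduce to a function $f(n,y)$, your mechanism for killing the $n$-dependence does not work: for a fixed $g$, all essential values of $c(gx,x)(\omega)$ have the \emph{same parity} as $d(gx,x)$, so the relations $f(n\lambda^{-k_1},gy)=f(n,y)=f(n\lambda^{-k_2},gy)$ only give $k_1-k_2 \in 2\Z$, hence only $\lambda^{2\Z}$-periodicity of $f$ in $n$, together with equivariance under the homomorphism $g \mapsto d(gx,x) \bmod 2$. This cannot force independence of $n$: if $G$ happens to preserve the canonical bipartition of $T$ (so that every $d(gx,x)$ is even, as for a free product of two groups acting on its Bass--Serre tree), then \emph{every} Radon--Nikodym derivative $\frac{\rd\mu^p_{gx}}{\rd\mu^p_x}$ takes values in $\lambda^{2\Z}$ and the function $1_{\lambda^{2\Z}}$ survives your argument. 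The missing ingredient has to produce ratios $\lambda^{\pm1}$ with odd exponent; in the paper this is supplied by the single-edge flips $\tau_e$, whose Radon--Nikodym derivative is exactly $\lambda^{\pm1}$, and by the ergodicity of the group they generate on $\Mod_\lambda(\Omega^p(T))$. You need to incorporate an argument of this kind (or an explicit hypothesis ruling out the bipartite-preserving case) to close the proof; as written, the last step fails.

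The remaining ingredients you use — the factorization $\Mod_\lambda(\widehat{\sigma}^p|_H)=\beta^p|_H\otimes\Mod_\lambda(\widehat{\sigma|_S}^p|_H)$ justified by $[x,hx]\subset S$ for $h\in H$, the persistence of recurrence under Maharam extensions, and the freeness remark — are all correct and consistent with the paper.
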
	
	
	\begin{proof}
We have to show that $\widehat{\sigma}^p$ has a weakly mixing $\lambda$-Maharam extension. By Corollary \ref{subgroup of large critical exponent}, we can find a closed comapctly generated subgroup of general type $H \subset G$ such that $2 \sqrt{p(1-p)} > e^{-\delta(H)}$ such that $G/H$ is not compact. By Theorem \ref{dissipativity general type}, $\widehat{\sigma}^p|_H$ is recurrent. Let $S \subset T$ be the minimal $H$-invariant subtree. Then we have a $H$-equivariant identification
$$\Mod_\lambda(\Omega^p(T)) = \Mod_\lambda(\Omega^p(S)) \otimes (\{0,1\}^{E(T) \setminus E(S)} , \mu^p )$$
where $H$ acts diagonally on the right hand side (see Remark \ref{subtree bernoulli}). The pmp generalized Bernoulli action of $H$ on  $(\{0,1\}^{E(T) \setminus E(S)} , \mu^p )$ is mixing (because the edge stabilizers in $H$ are compact since $H$ is closed in $\Aut(T)$) and the action of $H$ on $\Mod_\lambda(\Omega^p(S))$ is recurrent by Theorem \ref{dissipativity general type bernoulli}. Therefore, for every pmp ergodic action $\rho : G \curvearrowright (Y,\nu)$, we know by Theorem \ref{schmidt} that every $H$-invariant function in $\rL^\infty(\Mod_\lambda(\Omega^p(T)) \otimes Y )$ lives in $\rL^\infty(\Mod_\lambda(\Omega^p(S)) \otimes Y)$. By applying the same argument to $gHg^{-1}$ and the  $gHg^{-1}$-invariant subtree $gS$, we obtain
$$ \rL^\infty(\Mod_\lambda(\Omega^p(T)) \otimes Y)^G \subset \bigcap_{g \in G} \rL^\infty(\Mod_\lambda(\Omega^p(gS)) \otimes Y).$$
Let us show that $\bigcap_{g \in G} gS=\emptyset$. By Lemma \ref{limit set cocompact}, we know that $\Lambda_H$ is a proper subset of $\Lambda_G$. Take $x \in \Lambda_G\setminus  \Lambda_H$. Take a sequence $g_n \in G$ such that $\lim_n g_n v=x$ for some, hence any, $v \in T$. Since $x$ is not in $\Lambda_H=\partial S$, then for every $v \in T$, there exists $n \in \N$ such that $g_n v \notin S$. This shows that $\bigcap_{n \in \N} g_n^{-1} S=\emptyset$. Finally, let us conclude from this that 
$$\bigcap_{g \in G} \rL^\infty(\Mod_\lambda(\Omega^p(gS)) \otimes Y)=\rL^\infty(Y).$$
In order to show this, it suffices to note that if $e \in E(T) \setminus E(S)$, then every function in $\rL^\infty(\Mod_\lambda(\Omega^p(S))$ is invariant by the $\Mod_\lambda(\tau_e)$ where $\tau_e \in \Aut(\Omega^p(T))$ is the involution which flips the orientation of the edge $e$. Since $\bigcap_{g \in G} gS=\emptyset$, we see that every function $\bigcap_{g \in G} \rL^\infty(\Mod_\lambda(\Omega^p(gS)) \otimes Y)$ is invariant by $\Mod_\lambda(\tau_e) \otimes \id$ for all $e \in E(T)$. It is easy (we leave it to the reader) to see that the involutions $(\tau_e)_{e \in E(T)}$ generate a group which acts ergodically on $\Mod_\lambda(\Omega^p(T))$ and the conclusion follows.
\end{proof}

\begin{thm} \label{main trees Bernoulli}
Let $\sigma : G \curvearrowright T$ be an action of a locally compact group $G$ on a locally finite tree $T$ such that $G$ does not fix any point or pair of points in $T \cup \partial T$. Define the Poincar\'e exponent $\delta$ as the infinimum of all $s > 0$ such that
$$ \sum_{ y \in G \cdot x} e^{-sd(y,x)} < + \infty$$
for some (hence any) $x \in T$. Then for all $p \in ]0,1[$, the type of $\widehat{\sigma}^p : G \curvearrowright \Omega^p(T)$ is given by:

\begin{center}
\renewcommand{\arraystretch}{1.5}
\begin{tabular}{l|l}

	Value of $p$ &  Ergodicity and type of $\widehat{\sigma}$ \\
	\hline
	$p=1/2$ &  Ergodic of type $\II_1$. \\
	$e^{-\delta} < 2\sqrt{p(1-p)} < 1$ & Ergodic of type $\III_\lambda$ where $\lambda = \min \left(\frac{1-p}{p},\frac{p}{1-p} \right)$.\\
	$2\sqrt{p(1-p)} =e^{-\delta}$ & $?$\\
	$2\sqrt{p(1-p)}  < e^{- \delta}$ & Type $\mathrm{I}$ if $\sigma(G)$ is closed and type $\II_\infty$ otherwise.\\

	\end{tabular}
	\renewcommand{\arraystretch}{1}
\end{center}
\end{thm}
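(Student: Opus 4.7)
The plan is to follow the same strategy as for Theorem \ref{main trees Gaussian}, using Theorems \ref{dissipativity general type bernoulli} and \ref{general type proper weakly mixing bernoulli} as the key inputs, together with a Dirichlet-domain argument in the dissipative range. Since the action $\widehat{\sigma}^p$ factors through $\sigma(G)$, we may reduce to the case where $\sigma$ is faithful, so that $\sigma(G)$ closed becomes equivalent to $\sigma$ being proper. The case $p = 1/2$ is straightforward: the measure $\mu_x^{1/2}$ is independent of $x$ and equals the $\Aut(T)$-invariant fair-coin Bernoulli measure, so $\widehat{\sigma}^{1/2}$ preserves a faithful probability measure and becomes a generalized Bernoulli shift with compact edge stabilizers acting on the infinite edge set; the usual mixing argument gives ergodicity, hence type $\II_1$.

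Now assume $p \neq 1/2$ and that $\sigma(G)$ is closed in $\Aut(T)$, so $L := \sigma(G)$ is a closed subgroup of general type with $\delta(L) = \delta$. In the range $2\sqrt{p(1-p)} > e^{-\delta}$, Theorem \ref{general type proper weakly mixing bernoulli} directly yields that $\widehat{\sigma}^p$ is weakly mixing of stable type $\III_\lambda$ with $\lambda = \min\!\left(\frac{1-p}{p}, \frac{p}{1-p}\right)$. In the range $2\sqrt{p(1-p)} < e^{-\delta}$, Theorem \ref{dissipativity general type bernoulli} gives dissipativity, and we construct a fundamental domain by adapting Theorem \ref{dirichlet domain}: fix a vertex $x \in T$ and set
\[
\widehat{\mathcal D} := \left\{ \omega \in \Omega^p(T) \;\middle|\; \forall g \in L,\; \frac{\rd \mu_{gx}^p}{\rd \mu_x^p}(\omega) \le 1 \right\}.
\]
Exactly as in Theorem \ref{dirichlet domain}, the non-atomicity of the distribution of $c(gx,x)$ under $\mu_x^p$ for $gx \neq x$ ensures that the sets $(\widehat{h}\widehat{\mathcal D})_{h \in L/L_x}$ are pairwise essentially disjoint, while dissipativity together with the Borel--Cantelli argument forces their union to be conull. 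Since $L_x$ is compact and preserves $\mu_x^p$, this gives a genuine fundamental domain for $L$, hence $\widehat{\sigma}^p$ is of type I.

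It remains to treat the case where $\sigma(G)$ is not closed. Let $L = \overline{\sigma(G)} \subset \Aut(T)$, still of general type with $\delta(L) = \delta$. Writing $\widehat{\sigma}^p = \beta^p \circ \iota$, where $\beta^p : L \curvearrowright \Omega^p(T)$ is the Bernoulli action of $L$ and $\iota : G \to L$ is the inclusion with dense range, the ergodicity of the $\lambda$-Maharam extension of $\widehat{\sigma}^p$ in the range $2\sqrt{p(1-p)} > e^{-\delta}$ is inherited from that of $\beta^p$ proved in Theorem \ref{general type proper weakly mixing bernoulli}, because the fixed-point algebra of a continuous group action depends only on the closure of the acting group in $\Aut(\Omega^p(T))$. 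This gives type $\III_\lambda$. In the dissipative range, each ergodic component of $\beta^p$ is conjugate to $L \curvearrowright L/K$ for some compact open $K < L$ equipped with an $L$-invariant semifinite infinite measure (the pushforward of Haar measure on $L$). Restricting to the dense subgroup $\iota(G)$ preserves both ergodicity and the invariant semifinite infinite measure, so $\widehat{\sigma}^p$ is of type $\II_\infty$.

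The main obstacle I anticipate is the adaptation of the Dirichlet-domain argument from the discrete setting of Theorem \ref{dirichlet domain} to the locally compact $L$: one must correctly quotient out by the compact open stabilizer $L_x$, which is nontrivial even for faithful proper actions. This is resolved by the observation that $L_x$ acts by $\mu_x^p$-preserving automorphisms and that the equality $\frac{\rd \mu_{gx}^p}{\rd \mu_x^p} = 1$ almost surely forces $gx = x$; after this reduction, the combinatorial identity from Theorem \ref{dirichlet domain} that covers $\Omega^p(T)$ by the translates $\widehat{h}\widehat{\mathcal D}$ indexed by $h$ running over the orbit $L \cdot x = L/L_x$ goes through verbatim.
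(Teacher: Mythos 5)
Your overall architecture coincides with the paper's: reduce to faithful $\sigma$; when $\sigma(G)$ is closed invoke Theorem \ref{dissipativity general type bernoulli} for the dissipative/recurrent dichotomy and Theorem \ref{general type proper weakly mixing bernoulli} for stable type $\III_\lambda$ in the recurrent range; when $\sigma(G)$ is not closed pass to $L=\overline{\sigma(G)}$, use density of $\iota(G)$ to transfer ergodicity of the $\lambda$-Maharam extension, and use the description of the ergodic components of the dissipative action $\widehat{\beta}^p$ as $L \curvearrowright L/K$ with the pushforward Haar measure to get type $\II_\infty$. The treatment of $p=1/2$ via the invariant fair-coin measure is also fine.

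The one step that would fail is your Dirichlet-domain construction in the dissipative range. In the Bernoulli model the Radon--Nikodym derivative is $\frac{\rd \mu_{gx}^p}{\rd \mu_x^p}=\left(\frac{1-p}{p}\right)^{-c(gx,x)}$, where $c(gx,x)$ is a sum of $d(gx,x)$ i.i.d.\ $\pm 1$ variables, so its law is supported on finitely many integers; whenever $d(gx,x)$ is even the event $\{c(gx,x)=0\}$, i.e.\ $\{\frac{\rd \mu_{gx}^p}{\rd \mu_x^p}=1\}$, has \emph{positive} $\mu_x^p$-measure. Your appeal to ``non-atomicity of the distribution of $c(gx,x)$'' is therefore false: unlike the Gaussian setting of Theorem \ref{dirichlet domain}, where the derivative has a continuous log-normal law, here the translates $\widehat{h}\widehat{\mathcal D}$ overlap on sets of positive measure and do not form a partition. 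This is precisely why the paper does not attempt a Dirichlet domain for the Bernoulli actions: it relies instead on the general fact that a dissipative nonsingular action is of type $\mathrm{I}$ (Theorem \ref{dissipative trivialization} and the corollary following it), so once Theorem \ref{dissipativity general type bernoulli} gives dissipativity there is nothing left to prove. Deleting your Dirichlet-domain detour and citing that general fact repairs the argument; as written, that step is both unnecessary and incorrect.
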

\begin{proof}
We may assume that $\sigma$ is faithful. If $\sigma(G)$ is closed in $\Aut(T)$ (i.e.\ if $\sigma$ is proper), then the result follows from Theorem \ref{dissipativity general type bernoulli} and Theorem \ref{general type proper weakly mixing bernoulli}. Now, suppose that $\sigma(G)$ is not closed and let $L$ be the closure of $\sigma(G)$ in $\Aut(T)$. Let $\beta : L \curvearrowright T$ be the associated action, so that $\sigma=\beta \circ \iota$ where $\iota : G \rightarrow L$ is the inclusion map with dense range. We have $\delta=\delta(L)$ and $L$ is of general type. Thus if $e^{-\delta} < 2\sqrt{p(1-p)} < 1$, the ergodicity of $\Mod_\lambda(\widehat{\sigma}^p)=\Mod_\lambda(\widehat{\beta}^p) \circ \iota$ follows from the ergodicity of $\Mod_\lambda(\widehat{\beta}^p)$ because $\iota$ has dense range. If $ 2\sqrt{p(1-p)} < e^{-\delta}$, then $\widehat{\beta}^p$ is dissipative, i.e.\ every ergodic component of $\widehat{\beta}^p$ is of the form $L \curvearrowright L/K$ for some compact subgroup $K < L$. Note that $L$ preserves the pushforward of the Haar measure on $L/K$ which is semifinite but infinite. Then every ergodic component of $\widehat{\sigma}^p=\widehat{\beta}^p \circ \iota$ is of the form $G \curvearrowright L/K$ hence of type $\II_\infty$.
\end{proof}

\appendix

\section{Nonsingular measure theory}

\subsection{Basic terminology and notations}
\begin{df}
The \emph{nonsingular category} is the category whose objects are triples $(X,\mathcal{B},m)$ where $(X,\mathcal{B})$ is a standard borel space and $m$ is an equivalence class of $\sigma$-finite measures on $(X,\mathcal{B})$. A morphism in this category is an equivalence class of nonsingular borel maps $\psi : (X,\mathcal{B},m) \rightarrow (X',\mathcal{B}',m')$ for the relation of equality almost everywhere. 
\end{df}
When we consider an object of this category, we will simply denote it $X$ and omit both the borel $\sigma$-algebra and the measure class. We will refer to $X$ as a \emph{nonsingular space}. Similarly, a morphism $\psi :X \rightarrow Y$ will be called a \emph{nonsingular map} and equalitites between nonsingular maps are always implicitely assumed to hold almost everywhere. The set of all equivalence classes of measurable subsets of $X$ is denoted $\mathfrak{P}(X)$. We will use the usual set-theoretic notations $A \cap B$, $A \subset B$ etc. when dealing with elements of $\mathfrak{P}(X)$ even though we always neglect null-sets. The null-set (the equivalence class consisting of all measurable subsets with zero measure) will be denoted $0$. When we consider a measure $\mu$ on a nonsingular space $X$, we always mean a measure which is absolutely continuous with respect to the measure class of $X$. We will say that $\mu$ is \emph{faithful} if $\mu$ belongs to the measure class of $X$ (equivalently this means that $\mu(A) \neq 0$ for all $0 \neq A \subset X$). Since we will use the letter $\sigma$ to denote nonsingular action, we will often use the terminology \emph{semifinite} for measures instead of $\sigma$-finite.

\subsubsection{Measurable maps into Polish spaces}
If $M$ is a Polish space, we denote by $\rL^0(X,M)$ the space of all measurable maps from $X$ to $M$ modulo equality almost everywhere and we equip it with the topology of convergence in measure (which only depends on the measure class of $X$). Then $\rL^0(X,M)$ becomes itself a Polish space. We will often use the shorter notation $\rL^0(X)$ when $M=\R$ or $M=\C$. We also denote by $\rL^\infty(X) \subset \rL^0(X)$ the subalgebra of (essentially) bounded functions. Note that $\mathfrak{P}(X)$ can be identified with $\rL^0(X,\{0,1\})$, so that it is also a Polish space.

\subsubsection{Quotients}
Suppose that we have a nonsingular map $\psi : X \rightarrow Y$ such that $\psi^{-1}(A)=0$ implies $A=0$ for all $A \in \mathfrak{P}(Y)$. Then we say that $\psi$ is a \emph{quotient map} and that $Y$ is a \emph{quotient} of $X$. Under this assumption, the map $\psi^* : \rL^0(Y,M) \rightarrow \rL^0(X,M)$ is injective for any Polish space $M$. Therefore, we will often view $\rL^0(Y,M)$ as a subset of $\rL^0(X,M)$. In particular $\rL^\infty(Y)$ will be viewed as a subalgebra of $\rL^\infty(X)$. Note that $\rL^\infty(Y)$ is closed in $\rL^\infty(X)$ for the topology of convergence in measure. Conversely, if $A \subset \rL^\infty(X)$ is a subalgebra which is closed for the topology of convergence in measure, then there exists a quotient $Y$ of $X$ such that $A=\rL^\infty(Y)$. Thus, we have a one-to-one correspondence between quotients of $X$ and closed subalgebras of $\rL^\infty(X)$.

\subsubsection{Products of two nonsingular spaces}
 If $X$ and $Y$ are two nonsingular spaces, then we can define a nonsingular space $X \otimes Y$ whose measure class is the product of the measure classes of $X$ and $Y$. If $\mu$ and $\nu$ are two measures on $X$ and $Y$ respectively, we denote by $\mu \otimes \nu$ the product measure on $X \otimes Y$. One has a natural identification $\rL^0(X \otimes Y,M)=\rL^0(X,\rL^0(Y,M))$ for any Polish space $M$.

\subsubsection{Nonsingular actions}
We denote by $\Aut(X)$ the group of all nonsingular automorphisms of $X$ and we define a topology on $\Aut(X)$ by declaring that a sequence $\theta_n \in \Aut(X)$ converges to the identity if and only if $\| \mu \circ \theta_n - \mu \|_1 \to 0$ for every probability measure $\mu$ on $X$. With this topology, $\Aut(X)$ is a Polish group. A \emph{nonsingular action} $\sigma : G \curvearrowright X$ of a locally compact group $G$ is simply a continuous group homomorphism $\sigma : G \rightarrow \Aut(X)$.

\subsection{The modular bundle} \label{the modular bundle}
Here we will see how to associate to every nonsingular space, a canonical bundle equipped with a measure scaling flow that we call the \emph{modular bundle}. This construction is usually introduced only in the presence of a nonsingular action and under the name of \emph{Maharam extension} but Proposition \ref{intersection maharam extension}, for example, shows that it can be useful to study it on its own. We particularly emphasize its canonical nature.

\begin{df}
A \emph{measure scaling flow} is a triple $(X,\tau,\theta)$ where $(X,\tau)$ is a $\sigma$-finite measure space and $\theta  : \R^*_+ \curvearrowright X$ is a nonsingular action such that $\tau \circ \theta_\lambda=\lambda \tau$ for all $\lambda \in \R^*_+$.
\end{df}

The structure of measure scaling flows turns out to be trivial, but not in a \emph{canonical} way.

\begin{thm}
Let $(X,\tau,\theta)$ be a measure scaling flow. Let $X_0=\R^*_+ \backslash X$. Then the following holds:
\begin{enumerate}[ \rm (i)]
\item The identity map $\id : X_0 \rightarrow X_0$ can be lifted to a nonsingular isomorphism $\pi : X \rightarrow X_0 \otimes \R^*_+$ which conjugates $\theta$ with $\id \otimes \rho$ where $\rho : \R^*_+ \curvearrowright \R^*_+$ is the action by multiplication.
\item For any $\pi$ as in $(\rm i)$, there exists a faithful semifinite measure $\mu$ on $X_0$ such that $\pi_* \tau=\mu \otimes \mathrm{d} \lambda$. Conversely, for any faithful semifinite measure $\mu$ on $X_0$, there exists a unique $\pi$ as in $(\rm i)$ such that $\pi_* \tau=\mu \otimes \mathrm{d} \lambda$.
\end{enumerate}
\end{thm}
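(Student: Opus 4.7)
The plan is to construct $\pi$ explicitly by choosing a Borel transversal to the $\R^*_+$-orbits and then to derive (ii) from the scaling identity by a disintegration.

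The first step is to verify that $\theta$ is essentially free. If $\theta_\lambda$ fixes a set $B$ of finite positive measure for some $\lambda \neq 1$, then $\tau(B) = \tau(\theta_\lambda(B)) = \lambda \tau(B)$ gives $\tau(B) = 0$, a contradiction; since $\tau$ is semifinite this shows that the fixed set $F_\lambda$ of each individual $\lambda \neq 1$ is $\tau$-null. A Fubini argument applied to the Borel set $\{(\lambda, x) : \theta_\lambda(x) = x\} \subset \R^*_+ \times X$ then shows that for $\tau$-almost every $x$ the stabilizer $\Stab(x)$ has zero Lebesgue measure in $\R^*_+$, hence is discrete; an analogous direct argument rules out the remaining possibility of a nontrivial discrete stabilizer (a measure scaling flow cannot factor through the circle $\R^*_+/\lambda_0^{\Z}$, since no semifinite measure on the circle scales properly under a trivial action), giving $\Stab(x) = \{1\}$ for $\tau$-a.e.\ $x$.

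With essential freeness in hand, the orbit equivalence relation of $\R^*_+ \curvearrowright X$ is a smooth Borel equivalence relation on a standard Borel space, and Mackey's cross-section theorem for free nonsingular actions of Polish groups produces a Borel transversal $S \subset X$ meeting each orbit in exactly one point up to null sets. Using the canonical identification $S \cong X_0$ via the quotient map, we define
\[\pi : X \to X_0 \otimes \R^*_+, \quad \pi(\theta_\lambda(s)) = (s, \lambda) \text{ for } s \in S, \ \lambda \in \R^*_+ .\]
The identity $\theta_\mu(\theta_\lambda(s)) = \theta_{\mu\lambda}(s)$ immediately gives $\pi \circ \theta_\mu = (\id \otimes \rho_\mu) \circ \pi$, and by construction $\pi$ lifts $\id_{X_0}$, which proves (i).

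For (ii), fix such a $\pi$ and disintegrate $\pi_*\tau$ along the projection $X_0 \otimes \R^*_+ \to X_0$, obtaining fiber measures $(m_{x_0})_{x_0 \in X_0}$ on $\R^*_+$. The equivariance of $\pi$ combined with $\tau \circ \theta_\lambda = \lambda\tau$ yields $m_{x_0} \circ \rho_\lambda = \lambda m_{x_0}$ for a.e.\ $x_0$, and the only nonzero semifinite Borel measures on $\R^*_+$ satisfying this scaling are the positive scalar multiples $f(x_0)\rd\lambda$ of Lebesgue measure. Absorbing $f$ into the base yields $\pi_*\tau = \mu \otimes \rd\lambda$ for a unique faithful semifinite $\mu$ on $X_0$. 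For the converse, start from any $\pi_0$ produced by (i), with $(\pi_0)_*\tau = \mu_0 \otimes \rd\lambda$, and given a prescribed faithful semifinite $\mu$ on $X_0$, compose with the fiberwise rescaling $F(s,\lambda) = (s, (\rd\mu_0/\rd\mu)(s)\, \lambda)$: a direct change of variables gives $F_*(\mu_0 \otimes \rd\lambda) = \mu \otimes \rd\lambda$, while $F$ clearly commutes with $\id \otimes \rho$ and fixes the base. Uniqueness follows from the observation that any two such $\pi$'s differ by a Borel automorphism of $X_0 \otimes \R^*_+$ which fixes $X_0$, commutes with $\id \otimes \rho$, and preserves $\mu \otimes \rd\lambda$; such an automorphism has the form $(s,\lambda) \mapsto (s, h(s)\lambda)$ for some measurable $h : X_0 \to \R^*_+$, and the same change of variables forces $h = 1$ a.e.

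The only non-routine step is the appeal to Mackey's cross-section theorem to produce the transversal $S$; this step is inherently non-canonical, which is exactly what the statement of the theorem reflects in saying that the structure of measure scaling flows is trivial but not in a canonical way. All other verifications are straightforward manipulations of the scaling identity $\tau \circ \theta_\lambda = \lambda \tau$.
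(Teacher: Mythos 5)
There is a genuine gap in part (i), at the step where you pass from essential freeness to the existence of a Borel transversal meeting each orbit in exactly one point. Essential freeness of a nonsingular action of a non-compact group does \emph{not} imply that the orbit equivalence relation is smooth: an irrational linear flow on the torus, or any free properly ergodic nonsingular flow, is essentially free yet admits no measurable set meeting a.e.\ orbit exactly once (if it did, the action would be conjugate to a translation action on $X_0\otimes\R$, hence non-ergodic on a non-trivial base). The versions of the cross-section theorem that apply to general free flows (Ambrose--Kakutani and its nonsingular analogues, or the cross-sections used elsewhere in this paper) only produce a set meeting each orbit in a countable discrete subset, not a singleton. So the appeal to ``Mackey's cross-section theorem'' assumes exactly what has to be proved, namely that the flow is dissipative.

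The missing idea is that the scaling identity $\tau\circ\theta_\lambda=\lambda\tau$ by itself forces dissipativity, and this is where the real work lies. Fix $0<\lambda<1$ and $A\subset X$ with $0<\tau(A)<\infty$; then $B=\bigcup_{n\in\N}\theta_{\lambda^n}(A)$ has $\tau(B)\le\sum_n\lambda^n\tau(A)<\infty$, while $\theta_\lambda(B)\subset B$ with $\tau(\theta_\lambda(B))=\lambda\tau(B)$, so $C=B\setminus\theta_\lambda(B)$ is a subset of $A$ of positive finite measure whose translates $\theta_{\lambda^n}(C)$, $n\in\Z$, are pairwise disjoint. This shows the fibered Haar measure of $\theta$ is semifinite, i.e.\ $\theta$ is dissipative, and then the general structure theorem for dissipative actions (every ergodic component is $G/K$ for $K$ compact, and $\R^*_+$ has no nontrivial compact subgroups) yields the trivialization of (i). Your freeness argument, while correct, cannot substitute for this step. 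Part (ii) of your argument (disintegration, identification of the fiber measures with multiples of Lebesgue measure via the scaling relation, and the fiberwise rescaling for existence and uniqueness) is sound and essentially matches the paper's, which instead multiplies $\pi_*\tau$ by $1\otimes\lambda^{-1}$ to produce an invariant measure and factors it as $\mu\otimes\frac{\rd\lambda}{\lambda}$.
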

\begin{proof}
$(\rm i)$. We claim that $\theta$ is dissipative (see Section \ref{appendix dissipative}). Fix $0 < \lambda < 1$ and let $\alpha=\theta_\lambda$. By Proposition \ref{lattice dissipative} It is enough to show that $\alpha$ is dissipative (as an action of $\Z$). Take $A \subset X$ such that $0 < \tau(A) < +\infty$. Let $B=\bigvee_{n \in \N} \alpha^n(A)$. Then $\tau(B) \leq \sum_n \lambda^n \tau(A) < +\infty$. Observe that $\alpha(B) \subset B$ and $\tau(\alpha(B))=\lambda \tau(B)$. Write $C=B \setminus \alpha(B)$. Then $0 < \tau(C) < +\infty$ and the subsets $\alpha^n(C)$ for $n \in \Z$ are pairwise disjoint. This implies that $\sum_n \alpha^n(1_C) < +\infty$. Note that $C \subset A$ and the choice of $A$ is arbitrary. We conclude that the fibered counting measure of $\alpha$ is semifinite, i.e.\ $\alpha$ is dissipative.  Thus we proved that $\theta$ is dissipative. Now, since $\R^*_+$ has no compact subgroups, item $(\rm i)$ follows from Theorem \ref{dissipative trivialization}.

$(\rm ii)$. Take $\pi$ as in $(\rm i)$. Let $\nu=(1 \otimes h) \pi_* \tau$ where $h : \R^*_+ \rightarrow \R^*_+$ is defined by $h(\lambda)=\lambda^{-1}$. Then $\nu$ is invariant under the action $\id \otimes \rho$ of $\R^*_+$. Thus $\nu=\mu \otimes m$ where $m$ is the Haar measure of $\R^*_+$ which is given by $\rd m= h\rd \lambda$. It follows that $\pi_* \tau = \mu \otimes \rd \lambda$. To check the uniqueness of $\pi$ for a given faithful semifinite measure $\mu$ on $X_0$, it is enough to check that the only nonsingular automorphism pf $X_0 \otimes \R^*_+$ which commutes with $\id \otimes \rho$ and preserves the measure $\mu \otimes \rd \lambda$ is the identity map.
\end{proof}

We call a map $\pi$ as in $(\rm i)$ a \emph{trivialization} of $(X,\tau,\theta)$. Item $(\rm ii)$ says that such trivializations are in one-to-one correspondence with faithful semifinite measures on $X_0=\R^*_+ \backslash X$. For any such measure $\mu$, we will denote by $\pi_\mu$ the corresponding trivialization of $X$. Observe that we have the following formula for the transition map between two trivializations:
$$ \pi_{\mu} \circ \pi_{\nu}^{-1}(x,\lambda)=\left( x,\lambda\frac{\rd \nu}{\rd \mu}(x) \right), \quad (x,\lambda) \in X_0 \otimes \R^*_+.$$

Clearly if $\psi : (X,\tau,\theta) \rightarrow (X',\tau',\theta')$ is a measure preserving isomorphism conjugating two measure scaling flows, then it induces a nonsingular isomorphism $$\psi_0 : X_0=\R^*_+ \backslash X \rightarrow X_0'=\R^*_+ \backslash X'.$$
Conversely, any nonsingular isomorphism $\psi_0 : X_0  \rightarrow X_0'$ can be lifted in a unique way to a measure preserving conjugacy $\psi : (X,\tau,\theta) \rightarrow (X',\tau',\theta')$. Indeed, by using trivializations, the map $\psi$ is given by
$$ \pi_\nu \circ \psi \circ \pi_\mu^{-1} = \psi_0 \otimes \id : X_0 \otimes \R^*_+ \rightarrow X_0' \otimes \R^*_+$$
where $\mu$ is any faithful semifinite measure on $X_0$ and $\nu =(\psi_0)_* \mu$.

In conclusion, we obtain an equivalence of categories from the category of mesure scaling flows and measure preserving conjugacies to the category of nonsingular spaces and nonsingular isomorphisms. We choose once and for all an inverse functor which associates to every nonsingular space $X$ a measure scaling flow, that we denote by $(\Mod(X),\tau,\theta)$, such that $\R^*_+ \backslash \Mod(X) = X$. We will call it the \emph{modular bundle} of $X$. Since we identify $X$ with the quotient of $\Mod(X)$ by the action $\theta$, we view $\rL^\infty(X)$ as the fixed point subalgebra of $\rL^\infty(\Mod(X))$ under the action $\theta$. Intuitively, it is helpful to think of $\Mod(X)$ as a canonical line bundle over $X$ whose sections are exactly the faithful semifinite measures of $X$. Each choice of such a measure $\mu$ provides a trivialization of the bundle via the map $\pi_\mu$. For example, if $X$ is a smooth manifold, then $\Mod(X)$ can be identifed with the canonical bundle of (positive) $1$-densities over $X$.

\subsubsection{Modular bundle of product spaces}
Let $X$ and $Y$ be two nonsingular spaces and let $(\Mod(X),\tau_X,\theta^X)$ and $(\Mod(Y),\tau^Y,\theta^Y)$ be their modular bundles. Then, we have the following identification
 $$ \Mod( X \otimes Y)  \cong \Mod(X) \otimes_{\R^*_+} \Mod(Y) $$
 where $\Mod(X) \otimes_{\R^*_+} \Mod(Y)$ is the quotient of $\Mod(X) \otimes \Mod(Y)$ by the flow $$\beta : \R^*_+ \ni \lambda \mapsto \theta^X_{\lambda} \otimes \theta^Y_{\lambda^{-1}} \in \Aut(\Mod(X) \otimes \Mod(Y)).$$
By construction, the two flows $\theta^X \otimes \id$ and $\id \otimes \theta^Y$ induce the same flow on $\Mod(X) \otimes_{\R^*_+} \Mod(Y)$. This flow is identified with $\theta^{X \otimes Y}$. Observe also that $\tau^X \otimes \tau^Y$ is $\beta$-invariant and $\beta$ is dissipative. Therefore, there exists a unique faithful semifinite measure $\tau$ on $\Mod(X) \otimes_{\R^*_+} \Mod(Y)$ such that 
$$\tau^X \otimes \tau^Y=\tau \circ T $$
where $T$ is the fibered Haar measure of $\beta$ (see Section \ref{appendix dissipative}). The measure $\tau$ is identified with $\tau^{X \otimes Y}$.

 We now define partial trivializations of $\Mod(X \otimes Y)$. Choose a faithful semifinite measure $\mu$ on $X$. Observe that $(X \otimes \Mod(Y),\mu \otimes \tau^Y, \id \otimes \theta^Y)$ is a measure scaling flow. Therefore, there is a unique way to lift the identity map $X \otimes Y \rightarrow X \otimes Y$ to a measure preserving conjugacy 
 $$\kappa_\mu : (\Mod(X \otimes Y), \tau^{X \otimes Y}, \theta^{X \otimes Y}) \rightarrow  (X \otimes \Mod(Y),\mu \otimes \tau^Y,\id \otimes \theta^Y).$$
In terms of trivializations, we have $\pi_{\mu \otimes \nu} =(\id \otimes \pi_{\nu}) \circ \kappa_\mu $ for any faithful semifinite measure $\nu$ on $Y$. We call $\kappa_\mu$ a \emph{partial trivialization} over $Y$.

\subsection{Densities and canonical $\rL^p$-spaces} \label{densities Lp spaces} We explain a construction of canonical $\rL^p$-spaces which is due to Haagerup. Let $X$ be a nonsingular space. Take $z \in \C$. A \emph{density of degree $z$} over $X$ is a function $\zeta \in \rL^0(\Mod(X),\C)$ such that $\theta_\lambda(\zeta)=\lambda^z\zeta$ for all $\lambda \in \R^*_+$. We denote by $\Lambda^z(X)$ the set of all densities of degree $z$. When $z=s \in \R$, we denote by $\Lambda^s_+(X)$ the space of all non-negative $s$-densities. Note that $\Lambda^{z}(X)\Lambda^{w}(X) \subset  \Lambda^{z+w}(X)$ for all $z,w \in \C$. In particular, $\Lambda^{z}(X)$ is a module over $\Lambda^0(X)=\rL^0(X)$ for all $z \in \C$.

Let $\zeta \in \Lambda^{1}_+(X)$. Then the semifinite measure $\zeta \tau$ on $\Mod(X)$ is $\theta$-invariant. Therefore, by Corollary \ref{dissipative invariant measure}, there exists a unique semifinite measure $\mu$ on $X$ such that $\zeta\tau = \mu \circ T$ where $T$ is the fibered Haar measure of $\theta$. Conversely, if $\mu$ is a semifinite measure on $X$, then 
$$\zeta=\frac{\rd (\mu \circ T)}{\rd \tau} \in \Lambda^1_+(X).$$
We will therefore identify $\Lambda^{1}_+(X)$ with the set of all semifinite measures on $X$. For every faithful semifinite measure $\mu$ on $X$, by viewing $\mu$ as an element of $\Lambda^1_+(X)$ hence as a positive function on $\Mod(X)$, we can define $\mu^z \in \Lambda^z(X)$ and the map $\rL^0(X) \ni f \mapsto f\mu^z \in \Lambda^z(X)$ is a linear bijection.

Take $p \in [1,+\infty[$ and $\zeta \in \Lambda^{1/p}(X)$. Then $|\zeta|^p \in \Lambda^1_+(X)$. Therefore, viewing $|\zeta|^p$ as a measure on $X$, we can define
$$ \| \zeta \|_p=\left( \int_X |\zeta|^p \right)^{1/p}.$$
The space $\rL^p(X)=\{ \zeta \in \Lambda^{1/p}(X) \mid \| \zeta \|_p < +\infty \}$ is a Banach space with respect to the norm $\| \cdot \|_p$. We call it the canonical $\rL^p$-space of $X$. Note that for every finite measure $\mu$ on $X$, we can define $\mu^{1/p} \in \rL^p(X)$ and we have $\nu^{1/p}=(\frac{\rd \nu}{\rd \mu})^{1/p} \mu^{1/p}$. Note that 
$$\| f \mu^{1/p} \|_p=\left( \int_X |f|^p \rd \mu \right)^{1/p}$$
for all $f \in \rL^\infty(X)$. Therefore, if $\mu$ is faithful, the map $f \mapsto f \mu^{1/p}$ extends to an isometric isomorphism from the usual $\rL^p$-space $\rL^p(X,\mu)$ to the canonical one $\rL^p(X)$.

Note that $\rL^1(X)$ is naturally identified with the space of all signed (or complex valued) finite measures on $X$. 

Note that we have a natural duality between $\rL^p(X)$ and $\rL^q(X)$ when $\frac{1}{p}+ \frac{1}{q}=1$ because $\rL^p(X) \rL^q(X) \subset \rL^1(X)$. The duality is then defined by integration $\langle \zeta, \zeta'\rangle =\int_X \zeta \zeta'$. In particular, $\rL^2(X)$ is a Hilbert space and we have the formula
$$ \langle \mu^{1/2},\nu^{1/2} \rangle = \int \left( \frac{\rd \nu}{\rd \mu} \right)^{1/2}  \rd \mu=\int \left( \frac{\rd \mu}{\rd \nu} \right)^{1/2} \rd \nu.$$

\subsection{The Maharam extension and Krieger invariants}
Thanks to its functorial properties, the modular bundle plays an important role in the study of nonsingular actions. Indeed, every automorphism $\alpha \in \Aut(X)$ can be lifted in a unique way to an automorphism $\Mod(\alpha) \in \Aut(\Mod(X))$ which preserves the measure $\tau$ and commutes with $\theta$. If we choose a trivialization $\pi_\mu$, then we have the following formula
$$ \pi_{\mu} \circ \Mod(\alpha) \circ \pi_{\mu}^{-1}(x,\lambda)=\left( \alpha(x),\lambda \frac{\rd \alpha_* \mu }{\rd \mu}(x) \right).$$
In particular, any nonsingular action $\sigma : G \curvearrowright X$ can be lifted to a nonsingular action $\Mod(\sigma) : G \curvearrowright \Mod(X)$ called the \emph{Maharam extension} of $\sigma$. The Maharam extension $\Mod(\sigma)$ preserves the measure $\tau$ and commutes with $\theta$.

\subsubsection{Krieger flow and Krieger type}
Let $\sigma : G \curvearrowright X$ be a nonsingular action. The flow $\theta : \R^*_+ \curvearrowright \Mod(X)$ descends to a flow $\widetilde{\theta}_{\sigma} : \R^*_+ \curvearrowright G \backslash \Mod(X)$ called the \emph{Krieger flow} of $\sigma$. Note that $\widetilde{\theta}_{\sigma}$ is ergodic if and only if $\sigma$ is ergodic because 
$$\R^*_+\backslash (G \backslash \Mod(X))=G \backslash ( \R^*_+ \backslash \Mod(X))=G \backslash X.$$
Suppose that $\sigma$ is ergodic. The flow $\widetilde{\theta}$ is dissipative if and only if $\sigma$ admits an invariant faithful semifinite measure. When this is not the case, we say that $\sigma$ is of type $\III$. Then one of the following situation occurs:
\begin{itemize}
\item  $\widetilde{\theta}$ is trivial, i.e.\ $\Mod(\sigma)$ is ergodic. Then we say that $\sigma$ is of type $\III_1$.
\item $\widetilde{\theta}$ is periodic with $\ker(\widetilde{\theta})=\lambda^\Z$ for some $\lambda \in ]0,1[$. Then we say that $\sigma$ is of type $\III_\lambda$.
\item $\widetilde{\theta}$ is properly ergodic (not transitive). Then we say that $\sigma$ is of type $\III_0$.
\end{itemize}

\subsubsection{The Krieger $T$-set}
Let $\sigma : G \curvearrowright X$ be an ergodic nonsingular action. Note that $G$ acts on the space of $z$-densities $\Lambda^z(X)$ for all $z \in \C$. The Krieger $T$-set of $\sigma$ is defined by
$$ T(\sigma) = \{ t \in \R \mid \Lambda^{\ri t}(X)^G \neq \{ 0 \} \}.$$
It is clearly a subgroup of $\R$. We have $T(\sigma)=\R$ if and only if $\sigma$ is not of type $\III$. If $\sigma$ is of type $\III_\lambda$ for some $\lambda \in ]0,1[$, then $T(\sigma)=\frac{2\pi}{|\log(\lambda)|} \Z$.

We observe that if $t \in T(\sigma)$, then there exists a faithful finite measure $\mu$ on $X$ such that $\mu^{\ri t}$ is $G$-invariant. Indeed, if $\zeta \in \Lambda^{\ri t}(X)^G$ with $\zeta \neq 0$, then $|\zeta|^2=\zeta\bar{\zeta} \in \rL^0(X)$ is $G$-invariant hence constant and we can assume that it is equal to $1$. Then, we can write $\zeta=e^{\ri h} \nu^{\ri t}=(e^{ h/t} \nu)^{\ri t}$ where $\nu$ is any given faithful probability measure on $X$ and $h \in \rL^0(X,[0,2\pi])$.

\subsubsection{The $\lambda$-modular bundle} \label{lambda nonsingular} Let $\lambda \in ]0,1[$. A \emph{$\lambda$-nonsingular space} is a standard borel space $X$ equiped with a class of $\lambda$-equivalent $\sigma$-finite measures. Two measures $\mu, \nu$ are \emph{$\lambda$-equivalent} if they are equivalent and the Radon-Nikodym derivative $\frac{\rd \mu }{\rd \nu}$ takes only values in $\lambda^{\Z}=\{ \lambda^n \mid n \in \Z\}$. There are obvious notions of $\lambda$-nonsingular maps and $\lambda$-nonsingular actions etc. The modular bundle can also be adapted: replace the measure scaling flows in all this section by $\lambda$-scaling actions of $\Z$, i.e.\ a nonsingular automorphism $\theta$ of a measure space $(Y,\tau)$ such that $\tau \circ \theta=\lambda \tau$. We obtain a notion of $\lambda$-modular bundle $\Mod_\lambda(X)$ for every $\lambda$-nonsingular space $X$. If we have a $\lambda$-nonsingular action $\sigma : G \curvearrowright X$ (i.e.\ $\sigma$ preserves the $\lambda$-equivalence class), then we get a $\lambda$-Maharam extension $\Mod_\lambda(\sigma)$. As a nonsingular action, $\sigma$ is of type $\III_\lambda$ if and only if $\Mod_\lambda(\sigma)$ is ergodic.


\subsection{The Koopman representation} \label{appendix koopman}
Let $\sigma : G \curvearrowright X$ be a nonsingular action. Then the Maharam extension $\Mod(\sigma)$ restricts to a natural action of $G$ on the canonical $\rL^2$-space $\rL^2(X)$ which is an orthogonal (or unitary) representation of $G$ called the \emph{Koopman representation} of $\sigma$. We review some useful properties of this representation. 

\begin{lem} \label{lem convergence proba}
Let $X$ be a nonsingular space. For any bounded sequence of positive vectors $\xi_n \in \rL^2(X)^+$, the following are equivalent:
\begin{enumerate}[ \rm (i)]
\item $\lim_n \langle \xi_n , \eta \rangle =0$ for some faithful $\eta \in \rL^2(X)^+$.
\item $\forall \eta \in \rL^2(X), \; \lim_n \langle \xi_n,\eta\rangle =0$.
\end{enumerate}
\end{lem}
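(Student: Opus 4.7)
The direction (ii)$\Rightarrow$(i) is immediate (choose any faithful $\eta\in\rL^2(X)^+$, which exists because $X$ is a standard nonsingular space with a $\sigma$-finite measure). The content is (i)$\Rightarrow$(ii), and the essential feature to exploit is the positivity of the $\xi_n$.

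My plan is to trivialize matters by choosing a faithful probability measure $\mu$ on $X$ and writing every relevant vector via its $\mu$-density: $\xi_n=h_n\mu^{1/2}$, $\eta=f\mu^{1/2}$, $\eta'=g\mu^{1/2}$, with $h_n,f,g\in\rL^2(X,\mu)$. The hypothesis that $\xi_n\geq 0$ gives $h_n\geq 0$ a.e., and the faithfulness of $\eta$ gives $f>0$ a.e. The claim to prove is that $\int h_n g\,\rd\mu\to 0$ for every $g\in \rL^2(X,\mu)$, knowing that $\int h_n f\,\rd\mu\to 0$ and $\sup_n\|h_n\|_2<\infty$. By splitting real and imaginary parts and writing $g=g_+-g_-$, I may assume $g\geq 0$.

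The main step is a domination argument with a parameter $M>0$: cut the domain at $B_M=\{g>Mf\}$. On the complement $\{g\leq Mf\}$ we have
\[ \int_{\{g\leq Mf\}} h_n g\,\rd\mu \;\leq\; M\int h_n f\,\rd\mu \;\longrightarrow\; 0, \]
while on $B_M$ Cauchy--Schwarz gives
\[ \int_{B_M} h_n g\,\rd\mu \;\leq\; \|h_n\|_2\,\|g\mathbf{1}_{B_M}\|_2 \;\leq\; C\,\|g\mathbf{1}_{B_M}\|_2. \]
Since $f>0$ a.e.\ and $g<\infty$ a.e., the sets $B_M$ decrease to a null set as $M\to\infty$, so dominated convergence forces $\|g\mathbf{1}_{B_M}\|_2\to 0$. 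Given $\varepsilon>0$ I choose $M$ large so that the second piece is below $\varepsilon/2$, then take $n$ large so that the first piece is below $\varepsilon/2$, and the proof is complete.

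There is no serious obstacle here; the only subtlety is to make sure that the faithfulness of $\eta$ really does translate into $f>0$ $\mu$-a.e., but this is immediate from $\langle\eta,\eta\rangle=\int f^2\rd\mu$ and the fact that faithfulness of $\eta$ as a $1/2$-density means exactly that the associated measure $f^2\mu$ has full support.
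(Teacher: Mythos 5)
Your proof is correct and is essentially the paper's argument made explicit: the paper passes from $\eta$ to the dense set of vectors $f\eta$ with $f\in\rL^\infty(X)^+$ using $\langle\xi_n,f\eta\rangle\leq\|f\|_\infty\langle\xi_n,\eta\rangle$ plus boundedness of $(\xi_n)$, and your cutoff at $\{g\leq Mf\}$ together with the Cauchy--Schwarz bound on $\{g>Mf\}$ is exactly that density-plus-boundedness argument unwound in terms of $\mu$-densities. No gaps.
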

\begin{proof}
Suppose that $(\rm i)$ holds. Since the sequence $(\xi_n)_n$ is bounded, it suffices to prove $(\rm ii)$ for a dense subset of $\rL^2(X)$. Let $f \in \rL^\infty(X)$ be a positive function. Since $\xi_n$ and $\eta$ are positive, we have $\langle \xi_n,f \eta\rangle \leq \| f\|_\infty \langle \xi_n,\eta\rangle$ for all $n$, hence $\lim_n \langle \xi_n , f \eta \rangle =0$. Since $\eta$ is faithful, the linear span of all such vectors $f \eta$ is dense in $\rL^2(X)^+$ and we are done.
\end{proof}

The following criterion is well-known, at least when $G$ is a countable group.

\begin{prop} \label{criterion invariant proba}
Let $\sigma:G \curvearrowright X$ be a nonsingular action and $\pi : G \curvearrowright \rL^2(X)$ the associated Koopman representation. Then the following are equivalent:
\begin{enumerate}[\rm (i)]
\item $\sigma$ has no invariant probability measure.
\item $\pi$ has no invariant vectors.
\item $\inf_{g \in G} \langle \pi(g) \xi,\xi\rangle =0$ for some faithful $\xi \in \rL^2(X)^+$.
\item $\pi$ is weakly mixing.
\end{enumerate}
\end{prop}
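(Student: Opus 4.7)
The plan is to take (ii) as the hub and establish (i) $\Leftrightarrow$ (ii), (ii) $\Leftrightarrow$ (iii), and (ii) $\Leftrightarrow$ (iv) separately. For (i) $\Leftrightarrow$ (ii), I would use the identification of $\rL^1(X)^+$ with finite measures on $X$ from Section \ref{densities Lp spaces}. If $\mu$ is a $\sigma$-invariant probability measure then the $1/2$-density $\mu^{1/2} \in \rL^2(X)^+$ is a nonzero $\pi$-invariant vector. Conversely, given a nonzero invariant $\eta \in \rL^2(X)$, the modulus $|\eta|$ is also invariant because the Koopman representation is the restriction of $\Mod(\sigma)$ to $1/2$-densities, hence commutes with the pointwise absolute value. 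Thus $|\eta|^2 \in \rL^1(X)^+$ is a nonzero $\sigma$-invariant finite measure, which can be normalized.

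For (ii) $\Rightarrow$ (iii), I would apply the Alaoglu-Birkhoff mean ergodic theorem: the orthogonal projection $P_G \xi$ onto $\rL^2(X)^G$ lies in $\conv\{\pi(g)\xi : g \in G\}$ as its element of minimal norm, and vanishes under (ii). Hence there exist convex combinations $\eta_n = \sum_i \lambda_i^{(n)} \pi(g_i^{(n)}) \xi$ with $\|\eta_n\| \to 0$. Since $\xi \in \rL^2(X)^+$ and each $\pi(g)\xi$ is positive, the quantity $\langle \eta_n, \xi\rangle = \sum_i \lambda_i^{(n)} \langle \pi(g_i^{(n)})\xi, \xi\rangle$ is a convex combination of nonnegative numbers, whence $\min_i \langle \pi(g_i^{(n)})\xi, \xi\rangle \leq \langle \eta_n, \xi\rangle \to 0$ and $\inf_{g \in G} \langle \pi(g)\xi, \xi\rangle = 0$. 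Conversely, for (iii) $\Rightarrow$ (ii), I would pick a sequence $g_n$ with $\langle \pi(g_n)\xi, \xi\rangle \to 0$ and apply Lemma \ref{lem convergence proba} to the bounded positive sequence $\pi(g_n)\xi$, tested against the faithful vector $\xi$, yielding $\langle \pi(g_n)\xi, \zeta\rangle \to 0$ for every $\zeta \in \rL^2(X)$. If $\pi$ had a nonzero invariant vector, then by (i) $\Leftrightarrow$ (ii) there would exist a nonzero invariant $\zeta \in \rL^2(X)^+$, and $\langle \pi(g_n)\xi, \zeta\rangle = \langle \xi, \zeta\rangle$ would be a strictly positive constant (since $\xi$ is faithful and $\zeta$ is nonzero positive), contradicting the convergence to $0$.

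The implication (iv) $\Rightarrow$ (ii) is immediate from the definition of weak mixing. For (ii) $\Rightarrow$ (iv), I would assume a finite-dimensional $G$-invariant subspace $V \subset \rL^2(X)$ with orthonormal basis $\xi_1, \dots, \xi_d$ and coefficient matrix $A(g) \in \mathcal{O}(d)$ defined by $\pi(g) \xi_i = \sum_j A_{ji}(g)\, \xi_j$. A direct computation using $A(g) A(g)^\ast = \id$ gives
$$\pi(g) \Big( \sum_i |\xi_i|^2 \Big) = \sum_i |\pi(g)\xi_i|^2 = \sum_{i,j,k} A_{ji}(g) \overline{A_{ki}(g)}\, \xi_j \overline{\xi_k} = \sum_j |\xi_j|^2,$$
so $h := \sum_i |\xi_i|^2 \in \rL^1(X)^+$ is a nonzero $\sigma$-invariant finite measure and $h^{1/2} \in \rL^2(X)^+$ is a nonzero invariant vector, contradicting (ii).

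The main subtlety is the identity $|\pi(g)\xi| = \pi(g)|\xi|$ and, more generally, the compatibility between $|\cdot|^2 : \rL^2(X) \to \rL^1(X)$ and the Koopman actions on both levels. Both follow by passing to the modular bundle $\Mod(X)$, where $\pi(g)$ acts pointwise via the Maharam extension and preserves the cone of positive densities. With this in hand, Lemma \ref{lem convergence proba} is the key analytic input that makes (iii) $\Rightarrow$ (ii) work despite the absence of any uniform spectral gap.
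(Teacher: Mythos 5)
Your proof is correct, and for the implications involving weak mixing it takes a genuinely different route from the paper. The parts you share with the paper: (i) $\Leftrightarrow$ (ii) is the same standard density argument (the paper merely declares it clear), and your Alaoglu--Birkhoff argument for (ii) $\Rightarrow$ (iii) is the contrapositive of the paper's minimal-norm-element-of-the-convex-hull argument, so these coincide in substance. Where you diverge: the paper's real work is the implication (iii) $\Rightarrow$ (iv), proved by applying Lemma \ref{lem convergence proba} \emph{twice} --- first to upgrade $\langle \pi(g_n)\xi,\xi\rangle \to 0$ to $\langle \pi(g_n)\xi,\eta\rangle \to 0$ for all $\eta$, then, after passing to adjoints, to positive $\eta$ again --- which directly produces a sequence along which $\pi(g_n) \to 0$ weakly. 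You instead prove the weaker (iii) $\Rightarrow$ (ii) with a single application of the lemma (testing against a hypothetical invariant positive vector), and close the cycle with a separate algebraic argument for (ii) $\Rightarrow$ (iv) via the orthogonality of the coefficient matrix of a finite-dimensional subrepresentation, showing $\sum_i |\xi_i|^2$ is an invariant finite measure. Both routes are valid; yours trades the paper's bootstrapping for reliance on the standard equivalence between ``no finite-dimensional subrepresentation'' and the sequential characterization of weak mixing (an equivalence the paper itself invokes in Section \ref{section weakly mixing}, so this is unobjectionable), and in exchange your finite-dimensional argument is a clean, self-contained way to see (ii) $\Rightarrow$ (iv). The one point worth making explicit in your write-up is the intertwining identity $|\pi(g)\eta|^2 = g_*\bigl(|\eta|^2\bigr)$ between $1/2$-densities and $1$-densities, which you correctly flag as following from the pointwise nature of the Maharam extension; it underlies both your (i) $\Leftrightarrow$ (ii) and your (ii) $\Rightarrow$ (iv).
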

\begin{proof}
The equivalence $(\rm i) \Leftrightarrow (\rm ii)$ is clear as well as the implication $(\rm iv) \Rightarrow (\rm ii)$. Therefore, we only have to show that $(\rm ii) \Rightarrow (\rm iii) \Rightarrow (\rm iv)$.

$(\rm ii) \Rightarrow (\rm iii)$. Suppose that $c=\inf_{g \in G} \langle  \pi(g)\xi,\xi \rangle > 0$. Let $\eta$ be the element of minimal norm in the closed convex hull of $\{\pi(g)\xi \mid g \in G \}$. Then $\eta$ is an invariant vector for $\pi$ and $\eta \neq 0$ because $\langle \eta, \xi \rangle \geq c > 0$.

$(\rm iii) \Rightarrow (\rm iv)$. Take a sequence $g_n \in G$ such that $\lim_n \langle \pi(g_n)\xi,\xi \rangle =0$. Then by Lemma \ref{lem convergence proba}, we get $\lim_n \langle \pi(g_n)\xi,\eta \rangle=0$ for every $\eta \in \rL^2(X)$. This implies that $\lim_n \langle \pi(g_n)^{¨*} \eta, \xi \rangle =0$. Now, suppose that $\eta$ is positive. Then by Lemma \ref{lem convergence proba} again, we get $\lim_n \langle \pi(g_n)^{*} \eta, \eta' \rangle =0$ for all $\eta' \in \rL^2(X)$. Since $\rL^2(X)$ is closure of the linear span of $\rL^2(X)^+$, we conclude that $\pi$ is weakly mixing.
\end{proof}

With a similar proof, we also have the following equivalence.

\begin{prop} \label{mixing Koopman}
Let $\sigma:G \curvearrowright X$ be a nonsingular action and $\pi : G \curvearrowright \rL^2(X)$ the associated Koopman representation. Then the following are equivalent:
\begin{enumerate}[\rm (i)]
\item $\lim_{g \to \infty} \langle \pi(g) \xi,\xi\rangle =0$ for some faithful $\xi \in \rL^2(X)^+$.
\item $\pi$ is mixing.
\end{enumerate}
\end{prop}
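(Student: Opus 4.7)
The plan is to mirror the proof of $(\mathrm{iii}) \Rightarrow (\mathrm{iv})$ in Proposition \ref{criterion invariant proba}, replacing the sequential limit ``$\inf_{g \in G}$'' by the net limit ``$\lim_{g \to \infty}$''. The implication $(\mathrm{ii}) \Rightarrow (\mathrm{i})$ is immediate from the definition of mixing with $\eta=\xi$, so the real content is in $(\mathrm{i}) \Rightarrow (\mathrm{ii})$.

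Assuming $(\mathrm{i})$, I would first consider the bounded net of positive vectors $\pi(g)\xi \in \rL^2(X)^+$ (each of norm $\|\xi\|$). Since $\langle \pi(g)\xi, \xi\rangle \to 0$ along the filter of complements of compact subsets of $G$, the direct analogue of Lemma \ref{lem convergence proba} for nets (whose proof is word-for-word the same as for sequences, since it relies only on positivity, boundedness and density of $\{f\xi \mid f \in \rL^\infty(X)^+\}$ in $\rL^2(X)^+$) yields $\lim_{g \to \infty} \langle \pi(g)\xi, \eta \rangle = 0$ for every $\eta \in \rL^2(X)$. Taking complex conjugates and using $\pi(g)^* = \pi(g^{-1})$ gives $\lim_{g \to \infty} \langle \pi(g^{-1}) \eta, \xi \rangle = 0$ for every $\eta$.

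To upgrade this into genuine mixing, I would then fix an arbitrary positive $\eta \in \rL^2(X)^+$ and note that the net $\pi(g^{-1}) \eta$ is again bounded and positive. Applying the net version of Lemma \ref{lem convergence proba} once more, this time using $\xi$ as the faithful positive reference vector, I obtain $\lim_{g \to \infty} \langle \pi(g^{-1}) \eta, \eta' \rangle = 0$ for every $\eta' \in \rL^2(X)$, i.e.\ $\lim_{g \to \infty} \langle \pi(g) \eta', \eta \rangle = 0$. Since $\rL^2(X)$ is the complex linear span of $\rL^2(X)^+$, polarization in the variable $\eta$ extends this conclusion to arbitrary $\eta \in \rL^2(X)$, proving that $\pi$ is mixing. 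The only minor point to check, and the only conceivable obstacle, is the passage from sequences to nets in Lemma \ref{lem convergence proba}; but since $G$ is second countable, one may equivalently test mixing on sequences $g_n$ eventually leaving every compact set, and the lemma applies directly as stated.
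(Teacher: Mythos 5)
Your proof is correct and is essentially the paper's own argument: the authors prove this proposition "with a similar proof" to Proposition \ref{criterion invariant proba}, namely by applying Lemma \ref{lem convergence proba} twice (once to $\pi(g)\xi$ with the faithful reference vector $\xi$, then after taking adjoints to $\pi(g^{-1})\eta$ for positive $\eta$) and concluding by density of the span of $\rL^2(X)^+$. Your remark that second countability of $G$ lets one reduce the net $g \to \infty$ to sequences leaving every compact set correctly disposes of the only point where the lemma's statement does not literally apply.
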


\begin{df}
We say that a nonsingular action is of \emph{zero-type} if it satisfies the equivalent conditions of Proposition \ref{mixing Koopman}.
\end{df}

\begin{prop} \label{almost invariant vectors Koopman}
Let $\sigma:G \curvearrowright X$ be a nonsingular action and $\pi : G \curvearrowright \rL^2(X)$ the associated Koopman representation. Then the following are equivalent:
\begin{enumerate}[\rm (i)]
\item $\sigma$ has almost invariant probability measures, i.e.\ for every $\varepsilon > 0$ and every compact subset $K \subset G$, there exists a probability measure $\eta$ on $X$ such that
$$\forall g \in K, \; \| g_* \eta-\eta\| < \varepsilon.$$
\item $\sigma$ admits an invariant mean, i.e.\ a $G$-invariant state $\phi \in \rL^\infty(X)^*$.
\item $\pi$ has almost invariant vectors.
\item $\pi \otimes \rho$ has almost invariant vectors for some orthogonal representation $\rho : G \rightarrow \mathcal{O}(H)$.
\end{enumerate}
\end{prop}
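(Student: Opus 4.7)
The plan is to establish the implications (i) $\Leftrightarrow$ (iii), (iii) $\Leftrightarrow$ (iv), and (i) $\Leftrightarrow$ (ii), which together give the required equivalence. The key analytic tools are the Powers--Stinespring--St\o rmer inequalities (already recorded in Proposition \ref{density}) relating the $\rL^1$-distance of two positive measures to the $\rL^2$-distance of their square roots, namely $\| \mu^{1/2}-\nu^{1/2}\|^2 \leq \| \mu - \nu \|_1 \leq 2 \| \mu^{1/2}-\nu^{1/2}\|$ (valid for probability measures), a classical convexity/Day--Namioka argument, and the existence of a canonical positive cone in the Koopman representation.

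For (i) $\Leftrightarrow$ (iii), I would observe that the Koopman representation sends a probability measure $\eta$ to the unit vector $\eta^{1/2} \in \rL^2(X)^+$, and that $\pi(g)\eta^{1/2} = (g_*\eta)^{1/2}$. The Powers--St\o rmer inequalities then translate the almost invariance of $\eta$ in $\rL^1$ into the almost invariance of $\eta^{1/2}$ in $\rL^2$ and conversely: given almost invariant unit vectors $\xi_n \in \rL^2(X)$, we may replace them by $|\xi_n|$ (which is still almost invariant since $\pi$ preserves the positive cone, by the reverse triangle inequality $\bigl| \, |\pi(g)\xi|-|\xi|\,\bigr| \leq |\pi(g)\xi-\xi|$ valid pointwise) and set $\eta_n = |\xi_n|^2$. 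For (iii) $\Rightarrow$ (iv), take $\rho$ to be the trivial representation.

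For (iv) $\Rightarrow$ (iii), the central observation is that $\rL^2(X) \otimes H$ can be identified with $\rL^2(X,H)$, where the action of $(\pi\otimes \rho)(g)$ on a function $\xi : X \to H$ is given by $((\pi\otimes\rho)(g)\xi)(x) = \bigl(\tfrac{\rd g_*\mu}{\rd\mu}(x)\bigr)^{1/2}\rho(g)\bigl(\xi(g^{-1}x)\bigr)$ for any fixed faithful probability measure $\mu$. Taking the pointwise $H$-norm and using that $\rho(g)$ is isometric, the reverse triangle inequality in $H$ gives pointwise
\[
\bigl|\,\bigl(\tfrac{\rd g_*\mu}{\rd\mu}(x)\bigr)^{1/2}\|\xi(g^{-1}x)\|_H - \|\xi(x)\|_H\,\bigr| \leq \bigl\|((\pi\otimes\rho)(g)\xi)(x)-\xi(x)\bigr\|_H.
\]
Squaring and integrating shows that if $\xi_n \in \rL^2(X,H)$ is a sequence of almost invariant unit vectors for $\pi\otimes\rho$, then the scalar-valued functions $\|\xi_n(\cdot)\|_H \in \rL^2(X)^+$ are almost invariant unit vectors for $\pi$. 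This is the main substantive step.

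Finally, (i) $\Rightarrow$ (ii) follows by taking a weak-$*$ limit point of a net of almost invariant probability measures, viewed as states on $\rL^\infty(X)$. The converse (ii) $\Rightarrow$ (i) is the standard Day--Namioka argument: given a compact $K \subset G$ and $\varepsilon > 0$, the convex set $C_{K} = \bigl\{(g_*\eta-\eta)_{g \in K}\mid \eta \text{ a probability measure on } X\bigr\}$ has $0$ in its weak closure inside $\bigoplus_{g \in K}\rL^1(X)$ (paired against $\bigoplus_{g\in K} \rL^\infty(X)$); indeed, approximating the invariant mean $\phi$ weak-$*$ by probability measures shows that every weak-$*$ continuous linear functional vanishes on $0$ as a limit from $C_K$. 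By Hahn--Banach the norm closure of the convex set $C_K$ also contains $0$, yielding for each $\varepsilon > 0$ a probability measure $\eta$ with $\sup_{g \in K} \|g_*\eta - \eta\|_1 < \varepsilon$ (some care is needed because $K$ may be uncountable, but one reduces to countable dense subsets of $K$ using uniform continuity of $g \mapsto g_*\eta$). The main obstacle is making the Day--Namioka argument rigorous in the locally compact setting with uncountable compact $K$, but this is a standard routine once the countable-dense reduction is in place.
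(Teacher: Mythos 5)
Your proof is correct, and one step takes a genuinely different route from the paper. For (iv)$\Rightarrow$(iii) the paper argues: if $\pi\otimes\rho$ has almost invariant vectors then so does $\pi\otimes\pi$ (the standard Hilbert--Schmidt argument, using $\overline{\pi}\cong\pi$ for orthogonal representations); since $\pi\otimes\pi$ is the Koopman representation of $\sigma\otimes\sigma$, the latter has an invariant mean, which restricts along $\rL^\infty(X)\otimes 1$ to an invariant mean for $\sigma$, giving (ii) and hence (iii). Your argument instead identifies $\rL^2(X)\otimes H$ with $\rL^2(X,H)$ and takes pointwise $H$-norms: the intertwining relation $\|((\pi\otimes\rho)(g)\xi)(\cdot)\|_H=\pi(g)\bigl(\|\xi(\cdot)\|_H\bigr)$ together with the reverse triangle inequality in $H$ turns almost invariant unit vectors for $\pi\otimes\rho$ into \emph{positive} almost invariant unit vectors for $\pi$. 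This is more elementary (no tensor-square trick, no passage through the invariant mean) and has the added benefit of landing directly in the positive cone, so it feeds straight into the Powers--St{\o}rmer step for (i). The remaining implications ((i)$\Leftrightarrow$(iii) via Powers--St{\o}rmer, (i)$\Rightarrow$(ii) by weak-$*$ limits, (ii)$\Rightarrow$(i) by Hahn--Banach/Day) coincide with the paper's, and you supply more detail than the paper's one-line appeal to Hahn--Banach. One minor repair: in the finite-to-compact upgrade for (ii)$\Rightarrow$(i), reducing to a countable dense subset of $K$ does not by itself give uniformity, because the modulus of continuity of $g\mapsto g_*\eta$ depends on $\eta$; the standard fix is to replace $\eta$ by its average over a compact neighbourhood of the identity, which gives a uniform modulus over $K$. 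The paper does not address this point either, so this is a refinement rather than a gap.
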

\begin{proof}
The equivalence $(\rm i) \Leftrightarrow (\rm iii)$ follows from the inequalities 
$$ \| \mu^{1/2}- \nu^{1/2} \|^2 \leq \| \mu - \nu \| \leq 2\| \mu^{1/2}- \nu^{1/2} \|  $$
for every pair of probability measures $\mu, nu$ on $X$. The implication $(\rm i) \Rightarrow (\rm ii)$ is obtained by taking a weak$^*$-limit of almost invariant probability measures. The converse $(\rm ii) \Rightarrow (\rm i)$ follows from the Hahn-Banach theorem. It only remains to show that $(\rm iv) \Rightarrow (\rm iii)$. Suppose that $\pi \otimes \rho$ has almost invariant vectors for some representation $\rho$. Then $\pi \otimes \pi$ has almost invariant vectors. This means that the action $\sigma \otimes \sigma : G \curvearrowright X \otimes X$ has an invariant mean $\phi \in \rL^\infty(X \otimes X)^*$. By restricting this invariant mean to $\rL^\infty(X) \otimes 1$, we conclude that $\sigma : G \curvearrowright X$ has an invariant mean.
\end{proof}

\begin{prop} \label{product action invariant mean}
Let $\sigma:G \curvearrowright X$ and $\rho : G \curvearrowright Y$ be two nonsingular actions. Then $\sigma \otimes \rho$ has an invariant mean if and only if both $\sigma$ and $\rho$ have an invariant mean.
\end{prop}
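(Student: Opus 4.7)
The plan is to prove the two directions separately, both by leveraging Proposition \ref{almost invariant vectors Koopman} which characterizes the existence of an invariant mean through almost invariance of Koopman vectors.

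For the forward direction, suppose $\phi \in \rL^\infty(X \otimes Y)^*$ is a $G$-invariant state. The inclusion $\rL^\infty(X) \hookrightarrow \rL^\infty(X \otimes Y)$ given by $f \mapsto f \otimes 1$ is isometric, unital, and $G$-equivariant (and likewise for the inclusion $g \mapsto 1 \otimes g$ of $\rL^\infty(Y)$). Thus the pullbacks of $\phi$ to $\rL^\infty(X)$ and $\rL^\infty(Y)$ are $G$-invariant states, so both $\sigma$ and $\rho$ admit invariant means.

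For the backward direction, assume both $\sigma$ and $\rho$ have invariant means. By Proposition \ref{almost invariant vectors Koopman}, the Koopman representations $\pi_\sigma$ on $\rL^2(X)$ and $\pi_\rho$ on $\rL^2(Y)$ each have almost invariant unit vectors. Under the canonical identification $\rL^2(X \otimes Y) \cong \rL^2(X) \otimes \rL^2(Y)$ (valid because $\Lambda^{1/2}(X \otimes Y) = \Lambda^{1/2}(X)\Lambda^{1/2}(Y)$ via the partial trivialization $\kappa_\mu$ of Section \ref{the modular bundle}), the Koopman representation of $\sigma \otimes \rho$ is identified with $\pi_\sigma \otimes \pi_\rho$. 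Pick unit sequences $\xi_n \in \rL^2(X)$ and $\eta_n \in \rL^2(Y)$ which are almost invariant uniformly on compact sets, and consider $\xi_n \otimes \eta_n$. Using the triangle inequality
\[
\|(\pi_\sigma(g) \otimes \pi_\rho(g))(\xi_n \otimes \eta_n) - \xi_n \otimes \eta_n\| \leq \|\pi_\sigma(g)\xi_n - \xi_n\| + \|\pi_\rho(g)\eta_n - \eta_n\|,
\]
together with the orthogonality $\|\pi_\rho(g)\eta_n\| = 1$, one sees that $\xi_n \otimes \eta_n$ is almost invariant for $\pi_\sigma \otimes \pi_\rho$ uniformly on compact subsets of $G$. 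Applying Proposition \ref{almost invariant vectors Koopman} again (the implication $(\rm iii) \Rightarrow (\rm ii)$) yields an invariant mean for $\sigma \otimes \rho$.

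There is no real obstacle here: the argument is a straightforward application of the Koopman/almost-invariance dictionary already established in Proposition \ref{almost invariant vectors Koopman}. The only mild point of care is the tensor product identification of Koopman representations of $\sigma \otimes \rho$, which relies on the canonical factorization of $\rL^2$-spaces from Section \ref{densities Lp spaces}, and the observation that one cannot in general tensor a single almost invariant vector with a fixed unit vector on the other side (since the other Koopman representation may not fix any vector), so one must use almost invariant vectors on \emph{both} factors—this is precisely why the equivalence requires invariant means for both $\sigma$ and $\rho$, not just one of them.
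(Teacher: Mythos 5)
Your proof is correct. The paper actually states this proposition without proof, so there is nothing to compare against directly; but your argument uses exactly the toolkit the paper deploys in the immediately preceding Proposition \ref{almost invariant vectors Koopman} — the restriction of an invariant state along the unital $G$-equivariant inclusion $\rL^\infty(X) \ni f \mapsto f \otimes 1$ for the ``only if'' direction (the same trick appears verbatim in the paper's proof of $(\rm iv) \Rightarrow (\rm iii)$ there), and the identification of the Koopman representation of $\sigma \otimes \rho$ with $\pi_\sigma \otimes \pi_\rho$ together with tensoring almost invariant vectors for the ``if'' direction. The only point you leave implicit, which the paper also glosses over, is that almost invariant vectors can be taken positive (replace $\xi$ by $|\xi|$, using that the Koopman operators preserve $\rL^2(X)^+$), which is what makes the passage back to almost invariant probability measures legitimate.
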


\begin{df}
Let $\pi : G \curvearrowright \cH$ be an orthogonal representation of a locally compact group $G$. Let $\mu$ be a symmetric probability measure on $G$. The \emph{$\mu$-spectral radius} of $\pi$, denoted by $\rho_\mu(\pi)$, is defined by
$$ \rho_\mu(\pi)= \left \| \int_{G} \pi(g) \rd \mu(g) \right \|_{\B(\cH)}.$$
The $\mu$-spectral radius of $G$ is defined by $\rho_\mu(G)=\rho_{\mu}(\lambda_G)$ where $\lambda_G : G \curvearrowright \rL^2(G)$ is the left regular representation. Let $\sigma : G \curvearrowright X$ be a nonsingular action. The $\mu$-spectral radius of $\sigma$ is defined by $\rho_\mu(\sigma)=\rho_\mu(\pi)$ where $\pi$ is the Koopman representation of $\sigma$.
\end{df}

The spectral radius can be easily computed in concrete examples with the following formula. We are grateful to Stefaan Vaes for noticing a mistake in the proof of this proposition in an earlier version of this paper and we are grateful to Narutaka Ozawa for providing us with the following correct proof.

\begin{prop} \label{single vector spectral radius}
Let $X$ be a nonsingular space and let $P : \rL^2(X) \rightarrow \rL^2(X)$ be a self-adjoint bounded operator such that $P(\rL^2(X)^+) \subset \rL^2(X)^+$. Then for any faithful vector $\xi \in \rL^2(X)^+$, we have
$$ \|P\|=\lim_n  \| P^{n}\xi\|^{\frac{1}{n}}.$$
More generally, the same conclusion holds if we only assume that the family of positive vectors $(P^n\xi)_{n \in \N}$ is faithful, i.e.\ $\bigcup_n \supp( P^n \xi)=X$.
\end{prop}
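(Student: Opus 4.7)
The plan is to prove the (trivial) inequality $\limsup_n \|P^n \xi\|^{1/n} \leq \|P\|$ and then to establish the matching lower bound via a spectral projection argument that leverages both the positivity preservation of $P$ and the faithfulness of $\xi$ (or of the family $(P^n\xi)_n$). First I would observe that $n \mapsto \log\|P^n\xi\|$ is convex (from the Cauchy-Schwarz estimate $\|P^{n+1}\xi\|^2 = \langle P^n\xi, P^{n+2}\xi\rangle \leq \|P^n\xi\|\,\|P^{n+2}\xi\|$) and bounded above by $n\log\|P\| + \log\|\xi\|$, so the limit $r := \lim_n \|P^n\xi\|^{1/n}$ exists and is at most $\|P\|$.

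The key positivity lemma is the following: for any $f \in \rL^\infty(X)$ and any $\eta \in \rL^2(X)^+$, the inequalities $-\|f\|_\infty\,\eta \leq f\eta \leq \|f\|_\infty\,\eta$ in the positive cone are preserved by each application of $P$, since $P$ maps $\rL^2(X)^+$ into itself. Iterating yields
\[-\|f\|_\infty\,P^n\eta \;\leq\; P^n(f\eta) \;\leq\; \|f\|_\infty\,P^n\eta,\]
hence the pointwise bound $|P^n(f\eta)| \leq \|f\|_\infty\,P^n\eta$, which in turn gives $\|P^n(f\eta)\|_2 \leq \|f\|_\infty\,\|P^n\eta\|_2$.

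Now suppose for contradiction that $r < \|P\|$ and pick $r < r' < \|P\|$. Since $P$ is self-adjoint, the spectral projection $G := E\big([-\|P\|,-r']\cup[r',\|P\|]\big)$ is nonzero because $\|P\| = \max|\sigma(P)|$ forces $\pm\|P\| \in \sigma(P)$. On the range of $G$, $\|P^n\eta\| \geq (r')^n\|\eta\|$ for all $\eta$, and $G$ commutes with $P$. Combining with the key lemma applied to $\xi$: for every $f \in \rL^\infty(X)$,
\[(r')^n\,\|Gf\xi\| \;\leq\; \|P^n(Gf\xi)\| \;=\; \|G P^n(f\xi)\| \;\leq\; \|P^n(f\xi)\| \;\leq\; \|f\|_\infty\,\|P^n\xi\|.\]
Since $\|P^n\xi\|^{1/n} \to r < r'$, dividing by $(r')^n$ and letting $n \to \infty$ forces $Gf\xi = 0$ for every $f \in \rL^\infty(X)$. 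Faithfulness of $\xi$ (truncation by $\{x : \xi(x) \geq 1/N\} \cap \{|f| \leq N\}$) shows that $\{f\xi : f \in \rL^\infty(X)\}$ is dense in $\rL^2(X)$, so $G = 0$, a contradiction. This gives $r = \|P\|$.

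For the generalized statement, the same argument applies verbatim after replacing $\xi$ by each $P^k\xi$: the key lemma yields $\|P^n(fP^k\xi)\| \leq \|f\|_\infty\,\|P^{n+k}\xi\|$, and for fixed $k$ one has $\|P^{n+k}\xi\|^{1/n} \to r$, so the spectral projection computation gives $GfP^k\xi = 0$ for all $f,k$; the hypothesis $\bigcup_n \supp(P^n\xi) = X$ ensures that $\{fP^k\xi : f \in \rL^\infty(X), k \in \N\}$ is dense in $\rL^2(X)$, so again $G = 0$. The main obstacle is conceptual rather than technical: one needs to realize that positivity preservation provides exactly the bridge required to transfer control of $\|P^n\xi\|$ to control of $\|P^n\eta\|$ for every $\eta$ in a dense subspace built from $\xi$, thereby coupling the measure-theoretic faithfulness of $\xi$ with the spectral data of $P$.
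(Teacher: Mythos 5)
Your proof is correct and is essentially the paper's argument run in the contrapositive: both combine a spectral projection onto the part of the spectrum near $\max|\sigma(P)|$ with the positivity-preservation of $P$ (to dominate $P^n$ applied to a dense family built from $\xi$, here $\{fP^k\xi\}$, in the paper vectors dominated by $C\sum_{j<k}P^j\xi$) and the faithfulness hypothesis for density. One small imprecision: $\|P\|=\max|\sigma(P)|$ does not force both $\pm\|P\|$ to lie in $\sigma(P)$, only at least one of them — but that already makes your projection $G=E\bigl([-\|P\|,-r']\cup[r',\|P\|]\bigr)$ nonzero, so nothing breaks (and your symmetric spectral window conveniently avoids the fact, used implicitly in the paper, that $\sup\sigma(P)=\|P\|$ for positivity-preserving self-adjoint $P$).
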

\begin{proof}
Without loss of generality, we may assume that $\|P\|=1$. Take $\varepsilon > 0$ and let $\eta$ be a unit vector in $1_{[1-\varepsilon,1]}(P)\rL^2(X)$. Since the family of positive vectors $(P^j \xi)_{j \in \N}$ is faithful, there exists $\eta' \in \rL^2(X)$, $C > 0$, and $k \in \N$ such that 
$\|\eta-\eta'\|<1/2$ and $|\eta'|\le C\sum_{j=0}^{k-1} P^j\xi$. This implies that $$ \| P^n \eta' \|  \leq \| P^n |\eta'| \| \leq C \| P^n \sum_{j=0}^{k-1} P^j \xi \|$$
for all $n \in \N$. Moreover, since $\|\eta-\eta'\|<1/2$ and $\eta=1_{[1-\varepsilon,1]}(P)\eta$, we know that $\eta'':= 1_{[1-\varepsilon,1]}(P)\eta'$ has norm $>1/2$. Hence for every $n$ one has 
\begin{align*}
kC\|P^n\xi\| 
\geq C\|P^n (\sum_{j=0}^{k-1} P^j\xi)\| \geq \|P^n\eta'\|\geq \|P^n\eta''\| 
\geq \frac{1}{2}(1-\varepsilon)^n.
\end{align*}
This implies $\liminf \|P^n\xi\|^{1/n}\geq 1-\varepsilon$. Since $\varepsilon>0$ was arbitrary, we get $\liminf \|P^n\xi\|^{1/n}\geq 1$. The inequality $\limsup_n \| P^n \xi \|^{1/n} \leq 1$ being obvious, we conclude that $\lim_n \| P^n \xi \|^{1/n}=1$ as we wanted.
\end{proof}

It was pointed out to us that one can deduce from the previous proposition the following well-known fact, due to \cite{Kes59} for discrete groups and to \cite{BC74} in general.
\begin{cor}[\cite{Kes59, BC74}]
Let $G$ be a locally compact group and $\mu$ be a symmetric probability measure on $G$. Then for any relatively-compact neighborhood of the identity $V\subset G$, one has  
\[
\rho_\mu(G)=\limsup_n \mu^n(V)^{1/n}=\lim_n \mu^{2n}(V)^{1/2n}
\]
\end{cor}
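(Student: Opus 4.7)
The plan is to deduce this Kesten--Berg--Christensen type identity directly from Proposition \ref{single vector spectral radius} applied to the self-adjoint, positivity-preserving operator $P=\lambda_G(\mu)$ on $\rL^2(G,dm)$, where $dm$ is a left Haar measure (self-adjointness uses symmetry of $\mu$). By replacing $G$ with the closure of the subgroup generated by $\supp \mu$ if necessary, we may assume $\mu$ spans $G$, and neither $\rho_\mu(G)$ nor $\mu^n(V)$ changes. Fix a symmetric relatively compact neighborhood $V$ of $e$ and take $\xi = 1_V \in \rL^2(G)^+$. A short computation gives $(P^n \xi)(x)= \mu^n(xV^{-1})$, so $\bigcup_n \supp(P^n\xi) = V\cdot \bigcup_n \supp(\mu^n) = G$; thus the family $(P^n\xi)_n$ is faithful and Proposition \ref{single vector spectral radius} yields $\rho_\mu(G) = \lim_n \|P^n\xi\|^{1/n}$.

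The next step is to turn this into a statement about $\mu^{2n}(V)$. Since $P$ is self-adjoint,
\[
\|P^n\xi\|^2 = \langle P^{2n}\xi,\xi\rangle = \int_G m(gV\cap V)\,d\mu^{2n}(g) = m(V)\int_G \phi(g)\,d\mu^{2n}(g),
\]
where $\phi(g) = m(gV\cap V)/m(V)$. By continuity of translation on $\rL^1$, the function $\phi$ is continuous on $G$, takes values in $[0,1]$, satisfies $\phi(e)=1$, and is supported in $V^2$. Hence for any open symmetric neighborhood $U$ of $e$ with $\overline U \subset V^2$ there is a constant $c_U > 0$ with $\phi \geq c_U \mathbf{1}_U$, giving the two-sided bound
\[
c_U \mu^{2n}(U) \;\leq\; \int \phi\,d\mu^{2n} \;\leq\; \mu^{2n}(V^2).
\]

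Taking $2n$-th roots, combining with $\rho_\mu(G) = \lim_n (\int \phi\,d\mu^{2n})^{1/(2n)}$, and letting the constants' $2n$-th roots tend to $1$, we obtain
\[
\limsup_n \mu^{2n}(U)^{1/(2n)} \;\leq\; \rho_\mu(G) \;\leq\; \liminf_n \mu^{2n}(V^2)^{1/(2n)}
\]
for every such pair $U\subset V^2$. Since any relatively compact symmetric neighborhood $W$ can play the role of both $U$ (by choosing $V$ slightly larger so $W \subset V^2$) and the role of $V^2$ (by choosing $V$ slightly smaller so $V^2 \subset W$), we conclude that $\lim_n \mu^{2n}(W)^{1/(2n)}$ exists and equals $\rho_\mu(G)$ for every relatively compact symmetric neighborhood of $e$. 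The symmetry restriction is then removed by sandwiching an arbitrary relatively compact neighborhood $V$ between $V\cap V^{-1}$ and $V\cup V^{-1}$ (noting that $\mu^{2n}(V^{-1}) = \mu^{2n}(V)$ by symmetry of $\mu$).

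Finally, for the equality $\limsup_n \mu^n(V)^{1/n} = \rho_\mu(G)$: the lower bound follows by restricting to the even subsequence and invoking the $\lim$ result just proved. For the upper bound, the Fubini identity $\mu^{2n}(V^2) = \int \int \mathbf{1}_{V^2}(xy)\,d\mu^n(x)\,d\mu^n(y) \geq \mu^n(V)^2$ yields $\mu^n(V)^{1/n} \leq \mu^{2n}(V^2)^{1/(2n)}$, and passing to the $\limsup$ gives $\rho_\mu(G)$ on the right hand side. The only mildly delicate step is the passage from the clean spectral formula $\|P\|=\lim\|P^n\xi\|^{1/n}$ to an estimate genuinely about $\mu^{2n}(V)$; this is handled by the continuity-of-$\phi$ argument above, which is really the heart of the proof.
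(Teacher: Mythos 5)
Your proof is correct and rests on the same engine as the paper's: apply Proposition \ref{single vector spectral radius} to $P=\lambda(\mu)$ with an indicator function as the test vector, and convert $\|P^n\xi\|^2=\langle\lambda(\mu^{2n})1_V,1_V\rangle=\int_G m(gV\cap V)\,\rd\mu^{2n}(g)$ into information about $\mu^{2n}$. The two arguments diverge in how they secure the faithfulness hypothesis $\bigcup_n\supp(P^n\xi)=X$. You replace $G$ by the closed subgroup $H$ generated by $\supp\mu$ and assert that $\rho_\mu$ is unchanged; that is true, but it relies on the (standard, yet not immediate when $H$ is not open) fact that the restriction of $\lambda_G$ to a closed subgroup $H$ is quasi-equivalent to a multiple of $\lambda_H$ — you should either cite this or prove it. The paper's proof is arranged precisely to avoid this point: it never leaves $G$, but instead applies the proposition on each $\lambda(\mu)$-invariant subspace $\rL^2(HUg)$ with test vector $1_{Ug}$, and uses that these subspaces have dense span to recover $\|\lambda(\mu)\|$ as a supremum. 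Your remaining steps are correct elementary substitutes for the paper's: the continuity-of-$\phi$ sandwich over neighborhoods (note that the lower bound $\phi\geq c_U$ on $\overline U\subset V^2$ uses positivity of $\phi$ on all of $V^2$, which holds for $V$ open symmetric, not merely continuity at $e$) replaces the paper's single inequality $\langle\lambda(\mu^{2n})1_{Ug},1_{Ug}\rangle\lesssim\mu^{2n}(V)$ obtained by choosing $UU^{-1}\subset V$ at the outset; and your Fubini bound $\mu^{2n}(V^2)\geq\mu^n(V)^2$ for the $\limsup_n\mu^n(V)^{1/n}$ upper estimate replaces the paper's choice of a vector $\xi$ with $\xi*\xi^*\geq 1$ on $V$. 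Both of your alternatives are fine; the price is a slightly longer bookkeeping over neighborhoods, the benefit is that no auxiliary vector $\xi$ needs to be constructed.
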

\begin{proof}
Let $V$ be given and choose an open neighborhood of the identity $U$ such that $UU^{-1}\subset V$. 
Let $H$ be the subgroup generated by $\supp\mu$ and put $X:=HU\subset G$. 
For any $g\in G$, by applying the above theorem 
to $P:=\lambda(\mu)|_{L^2(Xg)}$ and $\xi:=1_{Ug}\in L^2(Xg)_+$, 
one obtains 
\[
\|P\| = \lim_n\|P^n\xi\|^{1/n}=\lim_n\langle \lambda(\mu^{2n})1_{Ug},1_{Ug} \rangle^{1/2n}
 \le \liminf_n \mu^{2n}(V)^{1/2n}.
\]
Since the subspaces $L^2(Xg)$ are $\lambda(\mu)$-invariant and have 
a dense span in $L^2(G)$, one has 
\[
\| \lambda(\mu) \|=\sup_g\| \lambda(\mu)|_{L^2(Xg)} \| \le \liminf_n \mu^{2n}(V)^{1/2n}.
\]
On the other hand, there is $\xi\in L^2(G)$ such that 
$(\xi*\xi^*)(g)=\int_G\xi(x)\overline{\xi(gx)}\,dx\geq 1$ for all $g\in V$ 
and so 
\[
\|\lambda(\mu)\|^n\|\xi\|^2\geq \langle \lambda(\mu)^n\xi,\xi \rangle=\int_G(\xi*\xi^*)(g)\,d\mu^{n}(g)\geq\mu^n(V),
\]
which yields $\|\lambda(\mu)\|\geq\limsup_n\mu^n(V)^{1/n}$. 
\end{proof}

For the following proposition, see \cite[Corollary 2.3.3]{AD03}.
\begin{prop} \label{spectral radius action group}
Let $\sigma : G \curvearrowright X$ be a nonsingular action and $\mu$ a symmetric probability measure on $G$. Then $\rho_\mu(\sigma) \geq \rho_\mu(G)$.
\end{prop}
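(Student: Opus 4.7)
The plan is to exhibit a single test vector $\xi \in \rL^2(X)$ whose diagonal matrix coefficient $\phi(g) = \langle \pi_\sigma(g)\xi,\xi\rangle$ is bounded below by a positive constant on some compact neighborhood of the identity, and then combine this with the Kesten--Berg--Christensen formula $\rho_\mu(G) = \lim_n \mu^{*2n}(V)^{1/(2n)}$ (stated in the corollary immediately preceding the proposition) to get a lower bound on the powers $\|\pi_\sigma(\mu)^{2n}\|$.

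Concretely, I would fix a faithful probability measure $\nu$ on $X$ and set $\xi = \nu^{1/2} \in \rL^2(X)^+$, a unit vector. The matrix coefficient
$$\phi(g) = \langle \pi_\sigma(g) \nu^{1/2}, \nu^{1/2}\rangle = \int_X \sqrt{d(g_*\nu)/d\nu}\, d\nu$$
is continuous on $G$ (by the strong continuity of $\pi_\sigma$), non-negative (since $\pi_\sigma(g)\xi = (g_*\nu)^{1/2}$ is positive in $\rL^2(X)$), and satisfies $\phi(e) = 1$. Hence there exist a compact neighborhood $V$ of $e$ and a constant $\epsilon > 0$ with $\phi(g) \geq \epsilon$ for all $g \in V$.

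Since $\mu$ is symmetric, $\pi_\sigma(\mu)$ is self-adjoint, and therefore for every $n \in \N$
$$\|\pi_\sigma(\mu)\|^{2n} = \|\pi_\sigma(\mu)^{2n}\| \geq \langle \pi_\sigma(\mu^{*2n}) \xi, \xi\rangle = \int_G \phi(g)\, d\mu^{*2n}(g) \geq \epsilon \cdot \mu^{*2n}(V).$$
Taking $(2n)$-th roots and letting $n \to \infty$, the right-hand side converges to $\rho_\mu(G)$, yielding $\rho_\mu(\sigma) = \|\pi_\sigma(\mu)\| \geq \rho_\mu(G)$.

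There is essentially no serious obstacle. One might initially be tempted to prove the stronger statement that the regular representation $\lambda_G$ is weakly contained in $\pi_\sigma$, which would require more care (for instance, it is not literally true when stabilizers fail to be amenable), but this is not necessary: a single lower bound on one matrix coefficient near the identity already suffices once one has the Kesten--Berg--Christensen characterization of $\rho_\mu(G)$ in terms of return probabilities. The only mild subtlety is verifying the continuity of $\phi$ at $e$, which reduces to the $\rL^1$-continuity of $g \mapsto g_*\nu$, a standard consequence of the continuity of nonsingular actions.
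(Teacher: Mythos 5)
Your argument is correct, and every step checks out: $\xi=\nu^{1/2}$ is a unit vector, $\pi_\sigma(g)\xi=(g_*\nu)^{1/2}$ is positive so $\phi\geq 0$ everywhere, $\phi(e)=1$ with continuity at $e$ following from $|\phi(g)-1|\leq \|(g_*\nu)^{1/2}-\nu^{1/2}\|_2\leq \|g_*\nu-\nu\|_1^{1/2}$ (the inequality recorded in Proposition \ref{density}), and the chain $\|\pi_\sigma(\mu)\|^{2n}=\|\pi_\sigma(\mu)^{2n}\|\geq\langle\pi_\sigma(\mu^{*2n})\xi,\xi\rangle\geq\epsilon\,\mu^{*2n}(V)$ is valid since $\pi_\sigma(\mu)$ is self-adjoint. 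The comparison with the paper is somewhat one-sided: the paper gives no proof at all, referring instead to \cite[Corollary 2.3.3]{AD03}, so your argument is a genuine self-contained alternative. It is a natural companion to the material the authors do prove, since it consumes exactly the Kesten--Berg--Christensen corollary established just above the proposition (itself a consequence of Proposition \ref{single vector spectral radius}), and it has the virtue of isolating the only structural input really needed, namely that the Koopman representation preserves the positive cone of $\rL^2(X)$ and admits a faithful positive unit vector. Your parenthetical warning is also well taken: the stronger statement $\lambda_G\prec\pi_\sigma$ fails in general (e.g.\ for the one-point action of a nonamenable group the Koopman representation is trivial), so the ``one matrix coefficient bounded below near $e$'' route is not merely a convenience but essentially the right level of generality for this inequality.
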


\subsection{Amenability} \label{section amenability}

\begin{df}
A nonsingular action $\sigma : G \curvearrowright X$ is called \emph{amenable} (in Zimmer's sense) if there exists a $G$-equivariant conditional expectation $E : \rL^\infty(G \otimes X) \rightarrow \rL^\infty(X)$ where $G$ acts diagonnaly on $G \otimes X$.
\end{df}

The trivial action of $G$ on the one-point space is amenable if and only if $G$ is amenable in the usual sense. Conversely, if $G$ is amenable, then every nonsingular action of $G$ is amenable. Nonamenable groups admit amenable actions. Indeed, the left translation action of a locally compact group $G$ on itself is always amenable. We will need the following facts which can be found in the reference \cite{AD03}.

\begin{prop} \label{mean amenable}
Let $\sigma : G \curvearrowright X$ be a nonsingular action. If $\sigma$ is amenable and admits an invariant mean, then $G$ is amenable.
\end{prop}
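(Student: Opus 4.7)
The plan is to construct a left-invariant mean on $\rL^\infty(G)$ out of the two given data: the $G$-equivariant conditional expectation $E : \rL^\infty(G \otimes X) \rightarrow \rL^\infty(X)$ coming from amenability of $\sigma$, and a $G$-invariant state $\phi$ on $\rL^\infty(X)$ coming from the existence of an invariant mean.

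First I would embed $\rL^\infty(G)$ inside $\rL^\infty(G \otimes X)$ via $f \mapsto f \otimes 1$. Since the action on $G \otimes X$ is the diagonal action $g \cdot (h,x) = (gh, gx)$, the corresponding action on $\rL^\infty(G \otimes X)$ restricts on the subalgebra $\rL^\infty(G) \otimes 1$ to the action of $G$ on $\rL^\infty(G)$ by left translation; in particular, the embedding $f \mapsto f \otimes 1$ is $G$-equivariant. I then define
\[
\psi : \rL^\infty(G) \rightarrow \C, \quad \psi(f) = \phi(E(f \otimes 1)).
\]

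The verification that $\psi$ is a left-invariant mean on $G$ is then routine: unitality follows from $E(1) = 1$ and $\phi(1) = 1$; positivity follows because $E$, being a conditional expectation, is positive, and $\phi$ is a state; and left-invariance follows from the chain
\[
\psi(g \cdot f) = \phi(E((g \cdot f) \otimes 1)) = \phi(E(g \cdot (f \otimes 1))) = \phi(g \cdot E(f \otimes 1)) = \phi(E(f \otimes 1)) = \psi(f),
\]
using equivariance of $E$ at the third equality and $G$-invariance of $\phi$ at the fourth. I do not anticipate a real obstacle here; the only point to keep straight is the identification of the diagonal $G$-action on the subalgebra $\rL^\infty(G) \otimes 1$ with left translation, which is exactly what makes the construction work.
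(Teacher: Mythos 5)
Your argument is correct. The paper itself gives no proof of this proposition (it simply refers to \cite{AD03}), and your construction $\psi(f)=\phi(E(f\otimes 1))$ is exactly the standard argument found there: the key identification $(g\cdot f)\otimes 1=g\cdot(f\otimes 1)$, which holds because the diagonal action on $G\otimes X$ restricts to left translation on the subalgebra $\rL^\infty(G)\otimes 1$, is correctly isolated, and unitality, positivity and left-invariance of $\psi$ follow as you say, yielding a left-invariant mean on $\rL^\infty(G)$ and hence amenability of $G$.
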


\begin{prop} \label{amenable factor map}
Let $\sigma : G \curvearrowright X$ and $\rho : G \curvearrowright Y$ be two nonsingular actions. Suppose that $\rho$ is amenable and that we have a $G$-equivariant quotient map from $X$ to $Y$. Then $\sigma$ is amenable. 
\end{prop}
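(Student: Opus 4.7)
My plan is to exploit the equivalent characterization of Zimmer amenability via equivariant means, which makes this statement essentially a one-line composition. Concretely, a nonsingular action $\sigma : G \curvearrowright X$ is amenable if and only if there exists a unital positive $G$-equivariant linear map $m : \rL^\infty(G) \to \rL^\infty(X)$, where $G$ acts on $\rL^\infty(G)$ by left translation and on $\rL^\infty(X)$ via $\sigma$. One direction is immediate from the paper's definition: given a $G$-equivariant conditional expectation $E : \rL^\infty(G \otimes X) \to \rL^\infty(X)$, set $m(f) := E(f \otimes 1_X)$, and a direct computation using the diagonal $G$-action on $G \otimes X$, namely $g \cdot (f \otimes 1_X) = (g \cdot f) \otimes 1_X$, shows that $m$ is unital, positive and $G$-equivariant.

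With this reformulation in hand the proposition becomes essentially trivial. By amenability of $\rho$, we obtain a unital positive $G$-equivariant linear map $m_Y : \rL^\infty(G) \to \rL^\infty(Y)$. The hypothesis of a $G$-equivariant quotient map $\pi : X \to Y$ gives, by the quotient/subalgebra correspondence recalled earlier in the appendix, a $G$-equivariant unital $*$-embedding $\iota : \rL^\infty(Y) \hookrightarrow \rL^\infty(X)$. The composition $m_X := \iota \circ m_Y : \rL^\infty(G) \to \rL^\infty(X)$ is then unital, positive and $G$-equivariant, which under the reformulation above exhibits the amenability of $\sigma$.

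To close the loop, I would reconstruct the conditional expectation $E_X : \rL^\infty(G \otimes X) \to \rL^\infty(X)$ from $m_X$ by the formula $E_X(f \otimes h) := m_X(f) \cdot h$ on simple tensors $f \in \rL^\infty(G)$, $h \in \rL^\infty(X)$, and extend it by $\rL^\infty(X)$-bimodularity; its $G$-equivariance then reduces to that of $m_X$ together with the identity $g \cdot (f \otimes h) = (g \cdot f) \otimes (g \cdot h)$, while the conditional-expectation property $E_X|_{\rL^\infty(X)} = \id$ follows from $m_X(1_G) = 1$. The main (mild) obstacle I anticipate is making sense of the extension of this formula to all of $\rL^\infty(G \otimes X) \simeq \rL^\infty(G) \ovt \rL^\infty(X)$, since $m_X$ is not typically ultraweakly continuous. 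This is handled in the standard way by viewing $\rL^\infty(G \otimes X)$ as $\rL^\infty(X)$-valued bounded measurable functions on $G$ and applying $m_X$ fiberwise, using that $m_X$ is bounded and $\C$-linear; it is a purely routine measurability-and-boundedness check but is the only point that requires genuine care.
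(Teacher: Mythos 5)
Your reduction of the proposition to the implication ``equivariant mean $\Rightarrow$ amenable'' is where the argument breaks down. The easy half of your claimed equivalence ($m(f):=E(f\otimes 1_X)$) and the composition $\iota\circ m_Y$ are fine, but recovering a $G$-equivariant conditional expectation $\rL^\infty(G\otimes X)\to\rL^\infty(X)$ from a unital positive $G$-equivariant map $m_X:\rL^\infty(G)\to\rL^\infty(X)$ is not the ``routine measurability-and-boundedness check'' you describe; it is the entire content of the statement, and neither of your two mechanisms produces it. First, $f\otimes h\mapsto m_X(f)h$ extends (by complete positivity of $m_X$ on a commutative algebra) only to the \emph{norm} closure of the simple tensors, i.e.\ to $\rL^\infty(G)\otimes_{\min}\rL^\infty(X)$, which is a proper subalgebra of the von Neumann tensor product $\rL^\infty(G\otimes X)=\rL^\infty(G)\ovt\rL^\infty(X)$; since $m_X$ is not normal, no continuity is available to pass to the larger algebra, and $\rL^\infty(X)$-bimodularity does not enlarge the domain beyond the simple tensors. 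Second, ``applying $m_X$ fiberwise'' to $F\in\rL^\infty(G\otimes X)$, viewed as an $\rL^\infty(X)$-valued function on $G$, is not defined: $m_X$ accepts scalar-valued classes, and the natural recipe $x\mapsto m_X(F(\cdot,x))(x)$ is meaningless because $m_X(F(\cdot,x))$ is only an almost-everywhere equivalence class. A non-normal mean on the non-separable algebra $\rL^\infty(G)$ does not disintegrate into a measurable family of states $(m_x)_{x\in X}$, which is exactly what a fiberwise application would require.

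I would also caution that the implication you need is not a standard fact in the form you state it: the ``invariant mean'' characterization of Zimmer amenability in the literature is a mean defined on all of $\rL^\infty(G\otimes X)$ (that is, precisely the conditional expectation in the paper's definition), not a mean on $\rL^\infty(G)$ alone, and the Day-type convexity argument that would upgrade your $m_X$ controls the equivariance defects of its normal approximants only against the proper subspace $\rL^\infty(G)\otimes_{\min}\rL^\infty(X)$ of the dual of $\rL^1(G\otimes X)$, so Mazur's theorem does not apply. The paper itself offers no proof and cites \cite{AD03}; the standard arguments either pull back an asymptotically invariant density $\xi_i\in\rL^1(G\otimes Y)^+$ along the quotient map $X\to Y$ and take a point-weak$^*$ cluster point of the associated normal, $\rL^\infty(X)$-modular unital positive maps $\rL^\infty(G\otimes X)\to\rL^\infty(X)$, or run Zimmer's fixed-point formulation on fields of compact convex sets. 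Either route uses a characterization of amenability beyond the bare definition, and the detour through means on $\rL^\infty(G)$ does not supply one.
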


\begin{prop} \label{weakly contained amenable action}
Let $\sigma : G \curvearrowright X$ be an amenable nonsingular action. Then the Koopman representation of $\sigma$ is weakly contained in the left regular representation of $G$ and in particular $\rho_\mu(\sigma)=\rho_\mu(G)$ for any probability measure $\mu$ on $G$.
\end{prop}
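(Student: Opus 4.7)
The plan is to establish the weak containment $\pi_\sigma \prec \lambda_G$ (where $\pi_\sigma$ denotes the Koopman representation of $\sigma$); the spectral radius equality will then follow because weak containment implies $\|\pi_\sigma(\mu)\| \leq \|\lambda_G(\mu)\|$ for any $\mu$, while the reverse inequality is Proposition \ref{spectral radius action group}. The proof of the weak containment splits into two ingredients: Fell's absorption principle and a Reiter-type characterization of amenability.

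First, Fell's absorption principle gives that $\lambda_G \otimes \pi_\sigma$ is unitarily equivalent to a multiple of $\lambda_G$, so in particular $\lambda_G \otimes \pi_\sigma$ is weakly equivalent to $\lambda_G$. It therefore suffices to prove the weak containment $\pi_\sigma \prec \lambda_G \otimes \pi_\sigma$, acting on $\rL^2(G) \otimes \rL^2(X)$ (where $\rL^2(G)$ is taken with respect to a fixed left Haar measure $m$, and $\rL^2(X)$ with respect to a fixed faithful probability measure $\nu$).

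Next, from the $G$-equivariant conditional expectation $E \colon \rL^\infty(G \otimes X) \to \rL^\infty(X)$, the standard Hahn--Banach / Day convexity argument (see \cite{AD03}) produces a net $(F_i)$ of positive norm-one elements in $\rL^1(G \otimes X, m \otimes \nu)$ such that $\|g \cdot F_i - F_i\|_1 \to 0$ uniformly on compact subsets of $G$ (where $G$ acts diagonally) and such that the pushforward of $F_i$ to $\rL^1(X)$ converges to $\nu$. Setting $\xi_i = F_i^{1/2} \in \rL^2(G \otimes X)^+$, we obtain a net of positive unit vectors in $\rL^2(G) \otimes \rL^2(X)$ that is almost invariant for $\lambda_G \otimes \pi_\sigma$, and whose ``$X$-marginal'' approximates the constant vector $1 \in \rL^2(X, \nu)$ in the appropriate sense.

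Now, given a positive unit vector $\eta \in \rL^2(X)$, consider the vectors $\zeta_i := (1 \otimes \eta) \xi_i \in \rL^2(G) \otimes \rL^2(X)$. A direct computation, using the almost invariance of $\xi_i$ and the fact that its $X$-marginal approximates $\nu$, shows that for every $g \in G$,
\[
\langle (\lambda_G \otimes \pi_\sigma)(g) \zeta_i, \zeta_i \rangle \longrightarrow \langle \pi_\sigma(g) \eta, \eta \rangle,
\]
with the convergence being uniform on compact subsets of $G$. The main obstacle I anticipate is verifying this convergence carefully, as it requires handling the Radon--Nikodym cocycle $(d g_*^{-1}\nu/d\nu)^{1/2}$ appearing in $\pi_\sigma$ and checking that the cross terms vanish in the limit; this is where the positivity of $\xi_i$ and the marginal condition are essential. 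Once this is done, matrix coefficients of $\pi_\sigma$ on positive vectors (which span $\rL^2(X)$) are pointwise uniform-on-compacts limits of matrix coefficients of $\lambda_G \otimes \pi_\sigma \sim \lambda_G$, yielding $\pi_\sigma \prec \lambda_G$ and completing the proof.
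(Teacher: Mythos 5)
The paper gives no proof of this proposition: it is stated as a known fact with a citation to \cite{AD03}, and your outline is precisely the standard argument from that reference (Fell absorption to reduce $\pi_\sigma\prec\lambda_G$ to $\pi_\sigma\prec\lambda_G\otimes\pi_\sigma$, plus the Reiter-type reformulation of Zimmer amenability). The step you flag as an obstacle does go through: writing $\langle\pi_\sigma(s)\eta,\eta\rangle$ as an integral against $F_i(g,x)\,dg$ using $\int_G F_i(g,x)\,dg=1$, the difference of matrix coefficients is controlled via Cauchy--Schwarz and the pointwise inequality $|a^{1/2}-b^{1/2}|^2\le|a-b|$ by the $\rL^1$-almost invariance of the $F_i$, first for bounded $\eta$ and then by density, so your proposal is correct.
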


\subsection{Relative entropy} \label{appendix entropy}

\begin{df}
Let $X$ be a nonsingular space and $\mu$, $\nu$ be two faithful probability measures on $X$. The \emph{relative entropy} of $\mu$ with respect to $\nu$ is defined by
$$ H(\mu \mid \nu) = -\int_X \log\left( \frac{\rd \mu}{\rd \nu} \right) \rd \nu.$$
\end{df}

\begin{prop} \label{entropy inequality}
Let $X$ be a nonsingular space and $\mu$, $\nu$ be two faithful probability measures on $X$. Then we have
$$ -2 \log \langle \mu^{1/2},\nu^{1/2} \rangle \leq H(\mu \mid \nu).$$
\end{prop}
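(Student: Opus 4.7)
The plan is to reduce both sides to integrals against $\nu$ and then apply Jensen's inequality to the concave function $\log$. Setting $f = \frac{\rd \mu}{\rd \nu} \in \rL^1(X,\nu)$, one has $f > 0$ almost everywhere since both $\mu$ and $\nu$ are faithful, and the definitions of Section \ref{densities Lp spaces} give
\[
\langle \mu^{1/2}, \nu^{1/2}\rangle = \int_X f^{1/2}\,\rd\nu, \qquad H(\mu \mid \nu) = -\int_X \log(f)\,\rd\nu,
\]
so that the desired inequality becomes
\[
\log\!\int_X f^{1/2}\,\rd\nu \;\geq\; \int_X \log(f^{1/2})\,\rd\nu.
\]

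This is exactly Jensen's inequality for the concave function $\log : \R_+^* \to \R$ applied to the random variable $f^{1/2}$ on the probability space $(X,\nu)$. I would briefly note that the integrals make sense in $[-\infty,+\infty[$ (the positive part of $\log f^{1/2}$ is controlled by $f^{1/2}$ which is in $\rL^2(\nu) \subset \rL^1(\nu)$), so no integrability issue arises in the direction of the inequality we need.

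There is no real obstacle here; the only small point worth spelling out is the identification of $\langle \mu^{1/2}, \nu^{1/2}\rangle$ with $\int f^{1/2}\,\rd\nu$, which follows directly from the formula $\mu^{1/2} = f^{1/2} \nu^{1/2}$ in $\rL^2(X)$ recalled in Section \ref{densities Lp spaces}, together with $\nu^{1/2} \cdot \nu^{1/2} = \nu$ viewed as an element of $\rL^1(X)$.
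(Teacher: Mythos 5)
Your proof is correct and is essentially the paper's own argument: the paper also writes $\langle \mu^{1/2},\nu^{1/2}\rangle = \int_X \sqrt{\tfrac{\rd\mu}{\rd\nu}}\,\rd\nu$ and applies Jensen's inequality for the concave function $\log$ to get $-\log\langle\mu^{1/2},\nu^{1/2}\rangle \leq \tfrac{1}{2}H(\mu\mid\nu)$. Your extra remarks on integrability and on the identification $\mu^{1/2}=f^{1/2}\nu^{1/2}$ are fine but not needed beyond what the paper states.
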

\begin{proof}
By the concavity of $\log$, we have 
\begin{align*}
 - \log \langle \mu^{1/2},\nu^{1/2} \rangle &= - \log \int_X  \sqrt{\frac{ \rd \mu}{\rd \nu}}  \rd \nu \\
 & \leq -\int_X  \log \sqrt{\frac{ \rd \mu}{\rd \nu}}  \rd \nu \\
 &= \frac{1}{2}H(\mu | \nu ).
\end{align*}
\end{proof}

\begin{df} \label{almost vanishing entropy}
Let $\sigma: G \curvearrowright X$ be a nonsingular action. We say that $\sigma$ has \emph{almost vanishing entropy} if for every $\varepsilon > 0$ and every compact set $K \subset G$ there exists a faithful probability measure $\eta$ on $X$ such that
$$ \forall g \in K, \; H(g_* \eta \mid \eta) \leq \varepsilon.$$
When $\sigma$ is the left translation action $G \curvearrowright G$, we say that $G$ has \emph{almost vanishing entropy}.
\end{df}

\begin{prop} \label{zero entropy implies invariant mean}
Let $\sigma: G \curvearrowright X$ be a nonsingular action. If $\sigma$ has almost vanishing entropy then $\sigma$ has an invariant mean. In particular, if $G$ has almost vanishing entropy then $G$ is amenable.
\end{prop}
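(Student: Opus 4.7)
The plan is to combine the entropy inequality of Proposition \ref{entropy inequality} with the characterization of invariant means in terms of almost invariant vectors of the Koopman representation (Proposition \ref{almost invariant vectors Koopman}). More precisely, I would show that the almost vanishing entropy condition forces the existence of almost invariant probability measures for $\sigma$ (condition $(\rm i)$ of Proposition \ref{almost invariant vectors Koopman}), from which the existence of an invariant mean follows.

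First, fix $\varepsilon > 0$ and a compact set $K \subset G$. By the almost vanishing entropy hypothesis, applied to a suitably small bound $\varepsilon' > 0$ (to be determined), there exists a faithful probability measure $\eta$ on $X$ such that $H(g_* \eta \mid \eta) \leq \varepsilon'$ for all $g \in K$. Applying Proposition \ref{entropy inequality} gives
\[
\langle (g_* \eta)^{1/2}, \eta^{1/2} \rangle \geq e^{-\varepsilon'/2}
\]
for all $g \in K$, and hence
\[
\| (g_* \eta)^{1/2} - \eta^{1/2} \|^2 = 2 - 2 \langle (g_* \eta)^{1/2}, \eta^{1/2} \rangle \leq 2(1 - e^{-\varepsilon'/2}).
\]
Using the standard comparison $\| \mu - \nu \| \leq 2 \| \mu^{1/2} - \nu^{1/2}\|$ between the $\rL^1$-norm and the $\rL^2$-norm of square roots for probability measures, this yields $\| g_* \eta - \eta \| \leq \varepsilon$ for all $g \in K$ provided $\varepsilon'$ was chosen small enough depending on $\varepsilon$.

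Since $K$ and $\varepsilon$ were arbitrary, this shows that $\sigma$ admits almost invariant probability measures in the sense of Proposition \ref{almost invariant vectors Koopman}$(\rm i)$, and therefore admits an invariant mean by the equivalence $(\rm i) \Leftrightarrow (\rm ii)$ of that proposition. The ``in particular'' part then follows by applying this to the left translation action of $G$ on itself: the existence of an invariant mean for the left translation action is, by definition, the amenability of $G$. The argument is essentially a direct calculation once one has the entropy inequality in hand, so there is no real obstacle; the only point requiring care is the passage from control of the $\rL^2$ inner product to control of the total variation norm, which is handled by the elementary inequality recalled above.
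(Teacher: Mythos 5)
Your proof is correct and follows exactly the paper's route: the paper's own proof is the one-line observation that the statement "follows from Proposition \ref{entropy inequality} and Proposition \ref{almost invariant vectors Koopman}," and your argument is simply that deduction written out in full (entropy bound $\Rightarrow$ inner product close to $1$ $\Rightarrow$ almost invariant probability measures $\Rightarrow$ invariant mean). No issues.
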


\begin{proof}
This follows from Proposition \ref{entropy inequality} and Proposition \ref{almost invariant vectors Koopman}.
\end{proof}

\subsection{Fibered measures}
\begin{df}
Let $\pi : X \rightarrow Y$ a quotient map between two nonsingular spaces. A fibered measure over $\pi$ is a map $T : \rL^0(X,[0,+\infty]) \rightarrow \rL^0(Y,[0,+\infty])$ which is:
\begin{enumerate}[ \rm (i)]
\item \emph{linear}: $T(0)=0$ and $T(\alpha f + \beta g)= \alpha T(f)+\beta T(g)$ for all $f,g \in \rL^0(X,[0,+\infty])$ and all $\alpha, \beta > 0$.
\item \emph{normal}: $T(\sup_n f_n)=\sup_n T(f_n)$ for any increasing sequence $f_n \in \rL^0(X,[0,+\infty])$. 
\item \emph{$\pi$-modular}: $T((f\circ \pi) g)=fT(g)$ for all $g \in \rL^0(X,[0,+\infty])$ and all $f \in \rL^0(Y,[0,+\infty])$.
\end{enumerate}
We say that $T$ is:
\begin{enumerate}[ \rm (i)]
\item \emph{faithful} if $T(f)=0$ implies $f=0$ for all $f \in \rL^0(X,[0,+\infty])$.
\item \emph{semifinite} if the subspace $\{ f \in \rL^0(X,[0,+\infty]) \mid T(f) < +\infty \}$ is dense in  $\rL^0(X,[0,+\infty])$.
\item \emph{purely infinite} if $T(f)=+\infty$ for all $f \in \rL^0(X,]0,+\infty])$.
\item a \emph{conditional expectation} if $T(1)=1$.
\end{enumerate}
\end{df}

\begin{example}
Let $Y=\{ * \} $ be the one-point space so that $\rL^0(Y,[0,+\infty])=[0,+\infty]$. Let $\pi : X \rightarrow \{ * \} $ be the unique quotient map. Then fibered measures $T$ over $\pi$ are in one-to-one correspondance with measures $\mu$ on $X$ via integration
$$ T(f)=\int_X f \rd \mu.$$
\end{example}

\begin{prop} \label{desintegration}
Let $\pi : X \rightarrow Y$ be a quotient map between two nonsingular spaces. Let $\mu$ and $\nu$ be two measures on $X$ and $Y$ respectively. Suppose that $\nu$ is faithful and semifinite. Then there exists a unique fibered measure $T$ over $\pi$ such that $\nu \circ T=\mu$. Moreover, $T$ is faithful (resp.\ semifinite) if and only if $\mu$ is faithful (resp.\ semifinite).
\end{prop}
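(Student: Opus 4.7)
The plan is to obtain $T$ as a parametrized Radon--Nikodym derivative and then extend it by normality, exactly following the pattern of the classical disintegration theorem.

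First, I would address \emph{uniqueness}. If $T$ and $T'$ are two such fibered measures, then for every $h \in \rL^\infty(Y,[0,+\infty))$ with $\int_Y h \rd \nu < +\infty$ and every $f \in \rL^0(X,[0,+\infty])$, the $\pi$-modularity and the defining equation $\nu \circ T = \mu = \nu \circ T'$ give
$$ \int_Y h T(f) \rd \nu = \int_Y T((h \circ \pi)f) \rd \nu = \int_X (h \circ \pi) f \rd \mu = \int_Y h T'(f) \rd \nu.$$
Since $\nu$ is semifinite, such $h$ separate points of $\rL^0(Y,[0,+\infty])$, hence $T(f)=T'(f)$.

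For \emph{existence}, I would start with $f \in \rL^\infty(X,[0,+\infty))$ such that $\int_X f \rd \mu < +\infty$, and consider the positive finite measure $\mu_f$ on $Y$ given by $\mu_f(A)=\int_X (1_A\circ \pi)f \rd \mu$. Because $\pi$ is a quotient map and $\nu$ is faithful, $\nu(A)=0$ forces $A=0$, hence $\pi^{-1}(A)=0$ and $\mu_f(A)=0$; so $\mu_f \ll \nu$ and the Radon--Nikodym derivative $T(f):=\rd \mu_f/\rd \nu \in \rL^0(Y,[0,+\infty))$ is well-defined. Linearity of $T$ is immediate, and for any $h \in \rL^\infty(Y,[0,+\infty))$ one checks using the definitions that
$$ \int_Y h T((g\circ \pi)f) \rd \nu = \int_X (hg \circ \pi) f \rd \mu = \int_Y hg T(f) \rd \nu, \quad g \in \rL^\infty(Y,[0,+\infty)),$$
which yields the $\pi$-modularity $T((g \circ \pi)f)=g T(f)$ on this class of functions. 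The equality $\nu \circ T = \mu$ holds on bounded integrable $f$ by construction. I then extend $T$ to all of $\rL^0(X,[0,+\infty])$ by the forced normality formula $T(f)=\sup_n T(f_n)$ for any increasing sequence $f_n$ of bounded $\mu$-integrable functions converging to $f$ (such sequences exist by $\sigma$-finiteness of $\nu$ and the quotient property: take $Y_n \subset Y$ with $\nu(Y_n) < +\infty$ exhausting $Y$, and truncate $f$); independence of the sequence and normality follow from the monotone convergence theorem applied to $\mu_{f_n} \uparrow \mu_f$, and linearity and $\pi$-modularity pass to the limit.

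Finally, the characterizations of faithfulness and semifiniteness. Since $\nu$ is faithful, for $f \in \rL^0(X,[0,+\infty])$ one has $T(f)=0$ iff $\int_Y T(f) \rd \nu = 0$ iff $\int_X f \rd \mu = 0$, so $T$ is faithful iff $\mu$ is faithful. For semifiniteness, if $\mu$ is $\sigma$-finite, fix an increasing sequence $X_n \subset X$ with $\mu(X_n) < +\infty$ and $\bigcup_n X_n = X$; then every $f \in \rL^0(X,[0,+\infty])$ is approximated in measure by $\min(f,n) 1_{X_n}$, on which $T$ is finite $\nu$-a.e., so $T$ is semifinite. Conversely, if $T$ is semifinite, one can find a countable family $f_n$ with $T(f_n) < +\infty$ $\nu$-a.e.\ whose supports exhaust $X$ (up to null sets), and on each sublevel set where $T(f_n) \leq k$ the function $f_n$ is $\mu$-integrable, giving a $\sigma$-finite exhaustion of $X$. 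The main technical step is the extension by normality, where one must check that the extension does not depend on the approximating sequence; this is the standard monotone class argument and is the only step requiring genuine verification.
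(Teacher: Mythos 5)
The paper actually gives no proof of this proposition (it is stated in the appendix as a standard disintegration fact), so your argument has to stand on its own. Your strategy --- a parametrized Radon--Nikodym derivative extended by normality --- is the natural one, and the uniqueness argument and the faithfulness equivalence are correct. But the existence step has a genuine gap, and it occurs exactly in the case that the ``moreover'' clause is designed to detect: when $\mu$ is \emph{not} semifinite.

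Concretely, your approximating sequence $\min(f,n)1_{\pi^{-1}(Y_n)}$ is controlled by $\nu(Y_n)<+\infty$, but its $\mu$-integral is only bounded by $n\,\mu(\pi^{-1}(Y_n))$, and $\sigma$-finiteness of $\nu$ gives no control on $\mu(\pi^{-1}(Y_n))$; so these functions need not be $\mu$-integrable. Worse, if $\mu$ is purely infinite (say $\mu(A)=+\infty$ for every $A\neq 0$, which is a legitimate measure on $X$ in the paper's sense and is precisely what arises as $\nu\circ T$ for a recurrent action), then the only bounded $\mu$-integrable function is $0$, so your ``forced normality formula'' produces $T\equiv 0$, which does not satisfy $\nu\circ T=\mu$ --- even though the proposition is true in that case (for $Y=\{*\}$ the required $T$ is integration against $\mu$). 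The repair is to define $T(f)$ for \emph{arbitrary} $f\in\rL^0(X,[0,+\infty])$ directly as the generalized Radon--Nikodym derivative of the possibly non-$\sigma$-finite measure $\mu_f(A)=\int_X(1_A\circ\pi)f\rd\mu$ with respect to $\nu$: since $\nu$ is $\sigma$-finite, every measure $m$ with $m\ll\nu$ can be written $m=g\nu$ with $g$ valued in $[0,+\infty]$ (take a maximal subset on which $m$ is $\sigma$-finite; on its complement $m$ equals $+\infty\cdot\nu$). With this definition, linearity, $\pi$-modularity, normality and $\nu\circ T=\mu$ all follow from monotone convergence with no integrability hypothesis on $f$. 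Two smaller remarks: the absolute continuity $\mu_f\ll\nu$ uses the nonsingularity of $\pi$ (null sets of $Y$ pull back to null sets of $X$) together with the faithfulness of $\nu$, not the quotient property, which is the reverse implication; and in your semifiniteness argument the truncations $\min(f,n)1_{X_n}$ converge to $f$ only off the complement of $\bigcup_n X_n$, which is $\mu$-null but need not be null for the measure class of $X$ when $\mu$ is not faithful, so one must add $f$ back on that set (where $T$ vanishes anyway).
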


\subsection{\'Etale maps}
Let $\pi : X \rightarrow Y$ be a quotient map. A \emph{section} of $\pi$ is a subset $A \subset X$ such that $\pi|_A: A \rightarrow X$ is an isomorphism. If $\pi|_A: A \rightarrow X$ is only injective, we say that $A$ is a \emph{partial section}. 

\begin{prop} \label{proposition etale}
Let $\pi : X \rightarrow Y$ be a quotient map. The following are equivalent:
\begin{enumerate}[ \rm (i)]
\item $X$ can be covered by sections of $\pi$.
\item $X$ can be covered by partial sections of $\pi$.
\item \label{index etale} There exists a semifinite fibered measure $T$ over $\pi$ such that $T(f) \geq f$ for all $f \in \rL^0(X, [0,+\infty])$.
\item There is an injective nonsingular map $i : X \rightarrow Y \otimes \N$ such that $\pi' \circ i=\pi$ where $\pi' : Y \otimes \N \rightarrow Y$ is the projection on the first coordinate.
\end{enumerate}
If these conditions are satisfied, there exists a unique fibered measure $\Lambda$ over $\pi$ such that $\Lambda(1_A)=1$ for every section $A$ of $\pi$.
\end{prop}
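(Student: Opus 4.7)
The plan is to establish the chain $(\rm i) \Leftrightarrow (\rm ii) \Leftrightarrow (\rm iv)$ by direct manipulation of countable partitions into partial sections, then to prove $(\rm ii) \Leftrightarrow (\rm iii)$ by constructing the candidate $\Lambda$ explicitly, and finally to deduce uniqueness of $\Lambda$ from the fact that it is already determined on indicators of sections. The implication $(\rm i) \Rightarrow (\rm ii)$ is trivial. For $(\rm ii) \Rightarrow (\rm iv)$, I would choose a countable cover $(A_n)_{n \in \N}$ of $X$ by partial sections, disjointify them as $B_n = A_n \setminus \bigcup_{k < n} A_k$, and define $i : X \to Y \otimes \N$ by $i(x) = (\pi(x), n)$ whenever $x \in B_n$; since each $B_n$ is a partial section, $i$ is injective and $\pi' \circ i = \pi$. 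For $(\rm iv) \Rightarrow (\rm i)$, the horizontal slices $A_n := i^{-1}(Y \otimes \{n\})$ are disjoint partial sections covering $X$, and each one can be enlarged to a full section by using the second coordinate of $i$ to enumerate the countable fiber $i(X) \cap (\{y\} \otimes \N)$ and to measurably assign, for every $y \in Y \setminus \pi(A_n)$, one of the other preimages of $\pi$; the union of these sections still covers $X$.

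For $(\rm ii) \Rightarrow (\rm iii)$, starting from the disjoint partition $X = \bigsqcup_n B_n$ into partial sections, I would define
$$\Lambda(f) := \sum_{n \in \N} \left((f|_{B_n}) \circ (\pi|_{B_n})^{-1}\right) \cdot 1_{\pi(B_n)}, \quad f \in \rL^0(X, [0, +\infty]).$$
Linearity, normality and $\pi$-modularity are immediate, and $\Lambda(1_{B_n}) = 1_{\pi(B_n)} \leq 1$ together with $\bigcup_n B_n = X$ yields semifiniteness. Identifying $\rL^0(Y)$ with its image in $\rL^0(X)$ via $\pi^*$, the value of $\Lambda(f)$ at $\pi(x)$ dominates $f(x)$ for $x \in B_n$, which gives $\Lambda(f) \geq f$ as required. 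For the reverse implication $(\rm iii) \Rightarrow (\rm ii)$, semifiniteness of $T$ provides a countable family of sets $A \subset X$ covering $X$ with $T(1_A) \leq N_A$ bounded; combining with $T(1_A) \geq 1_A$ forces each such $A$ to meet every fiber $\pi^{-1}(y)$ in at most $N_A$ points for a.e.\ $y \in \pi(A)$, and a Lusin--Novikov type Borel selection then decomposes $A$ into at most $N_A$ partial sections.

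Uniqueness of $\Lambda$ will follow from the following observation: any fibered measure with $\Lambda(1_A) = 1$ on every section $A$ must assign $\Lambda(1_B) = 1_{\pi(B)}$ to every partial section $B$. Indeed, one completes $B$ to a section by adjoining, via the constructions above, a disjoint partial section $B'$ with $\pi(B') = Y \setminus \pi(B)$, and then $\Lambda(1_B) + \Lambda(1_{B'}) = 1$ combined with $\pi$-modularity forces $\Lambda(1_B) = 1_{\pi(B)}$. Linearity and normality, applied to a partition of $X$ into partial sections, then determine $\Lambda$ on all of $\rL^0(X, [0, +\infty])$. The main technical obstacle is expected to be $(\rm iii) \Rightarrow (\rm ii)$, whose fiber-counting step requires the standard Borel selection toolkit (Lusin--Novikov, Kuratowski--Ryll-Nardzewski) applied to the analytic relation $\{(x, y) \in X \times Y : \pi(x) = y\}$, which by the boundedness of $T(1_A)$ has uniformly finite vertical sections; the other implications amount to straightforward combinatorial rearrangement of countable partitions.
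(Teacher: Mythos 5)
The paper states this proposition without proof (it is used later, e.g.\ in the proof of Theorem \ref{dissipative trivialization}, but no argument is supplied), so there is nothing to compare your route against; judged on its own, your argument is correct and is the natural one. The cycle $(\rm i) \Leftrightarrow (\rm ii) \Leftrightarrow (\rm iv)$ via disjointification and completion of partial sections to sections is fine (for the completion step you implicitly use that $\bigcup_n \pi(A_n) = Y$ up to null sets, which holds precisely because $\pi$ is a quotient map), and your explicit formula for $\Lambda$ gives $(\rm ii) \Rightarrow (\rm iii)$ together with existence and uniqueness of the counting measure. The one place where the wording should be tightened is $(\rm iii) \Rightarrow (\rm ii)$: in the nonsingular category the phrase ``$A$ meets every fiber $\pi^{-1}(y)$ in at most $N_A$ points'' has no direct meaning, and the honest formulation goes through disintegration. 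Fix a faithful probability measure $\nu$ on $Y$, observe that $m = \nu \circ T$ is a faithful semifinite measure on $X$ (faithfulness uses $T(f) \geq f$), and disintegrate $m = \int_Y m_y \, \rd\nu(y)$ over $\pi$; the hypothesis $T(1_B) \geq 1_B$ for all $B$ in a countable generating algebra then forces, for $\nu$-a.e.\ $y$, every Borel set to have $m_y$-measure $0$ or $\geq 1$, whence $m_y$ is purely atomic with countably many atoms of mass $\geq 1$, and Lusin--Novikov applied to the (Borel, once models are chosen) set of atoms yields the countable cover by partial sections. This is exactly the ``fiber-counting plus selection'' step you flag as the technical crux, so the gap is one of formulation rather than of substance.
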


We say that $\pi$ is \emph{\'etale} if it satisfies the above equivalent conditions. The fibered measure $\Lambda$ is called the fibered \emph{counting} measure over $\pi$.

\subsection{Dissipative and recurrent actions} \label{appendix dissipative}

In this section we study the notion of dissipativity and recurrence for actions of locally compact groups. This simply generalizes what is already known for flows and single transformations as in \cite{Aa97}, or for countable groups as in \cite{VW18}.

\begin{df}
Let $\sigma : G \curvearrowright X$ be a nonsingular action of a locally compact group $G$. Let $\pi : X \rightarrow G \backslash X$ be the quotient map. Choose a left Haar measure $\rd g$ on $G$. The \emph{fibered Haar measure} over $\pi$ is defined by
$$ T(f) =\int_G \sigma_g(f) \rd g.$$
We say that $\sigma$ is \emph{dissipative} (or \emph{integrable}) if $T$ is semifinite. We say that $\sigma$ is \emph{recurrent} (or \emph{conservative}) if $T$ is purely infinite.
\end{df}

\begin{example}
Let $H$ be a closed subgroup of $G$. Then the left translation action $G \curvearrowright G/H$ is dissipative if and only if $H$ is compact, otherwise it is recurrent.
\end{example}

The following property justifies the term \emph{recurrent}.
\begin{prop}
Let $\sigma : G \curvearrowright X$ be a nonsingular action of a locally compact group $G$. If $\sigma$ is recurrent then for every subset $A \subset X$ with $A \neq 0$ and every compact set $K \subset G$, there exists $g \in G \setminus K$ such that $g A \cap A \neq 0$.
\end{prop}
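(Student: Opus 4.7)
The plan is to argue by contrapositive: assuming there exists $A \neq 0$ and a compact $K \subset G$ such that $gA \cap A = 0$ for all $g \in G \setminus K$, I will show that the fibered Haar measure $T$ is not purely infinite, contradicting recurrence.

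The main calculation is to estimate $T(1_A) \in \rL^0(Y, [0,+\infty])$, viewed via pullback as a $G$-invariant function on $X$. For $x \in A$, one has
$$T(1_A)(x) = \int_G 1_A(\sigma_g^{-1}(x)) \, \rd g = m\bigl(\{g \in G : \sigma_g^{-1}(x) \in A\}\bigr),$$
where $m$ is the chosen left Haar measure. If $g$ belongs to the set above, then $x \in \sigma_g(A) \cap A$, so by hypothesis $g \in K$. Hence $T(1_A)(x) \leq m(K)$ for every $x \in A$. Since $T(1_A)$ is $G$-invariant and vanishes off the saturation $\pi^{-1}(\pi(A))$, the uniform bound $T(1_A) \leq m(K)$ holds everywhere on $Y$.

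Next I need to check $T(1_A) \neq 0$. Pick any faithful finite measure $\mu$ on $X$; then Fubini gives
$$\int_X T(1_A)\,\rd\mu = \int_G \mu(\sigma_g(A)) \, \rd g,$$
which is strictly positive since $\mu(\sigma_g(A)) > 0$ for every $g \in G$ (nonsingularity of $\sigma$ together with $A \neq 0$). So $T(1_A)$ is a bounded, nonzero element of $\rL^0(Y,[0,+\infty])$.

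The final step is to derive a contradiction with the purely infinite condition of Definition of recurrence. The clean way is to observe that $S(f) := T(1_A \cdot f)$ defines a nonzero fibered measure over $\pi$ with $S(f) \leq \|f\|_\infty m(K)$ for $f \in \rL^\infty(X)$, so $S$ is semifinite; the existence of a nonzero semifinite $S$ dominated by $T$ is incompatible with $T$ being purely infinite. (Equivalently, from the bound $T(1_A) \leq m(K)$ on the positive-measure set $\pi(A)$, one builds a strictly positive function on $X$ whose $T$-value is finite on a positive measure subset of $Y$, directly contradicting the definition.) The only nontrivial point in the whole argument is the calculation of $T(1_A)$, which is the first step; everything else is essentially formal. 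I do not foresee a serious obstacle.
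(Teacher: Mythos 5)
Your argument is essentially identical to the paper's proof: argue by contrapositive, compute $T(1_A)\,1_A=\int_K \sigma_g(1_A)\,1_A\,\rd g\leq m(K)$ using that the integrand vanishes off $K$, and conclude that the fibered Haar measure is not purely infinite. If anything you are slightly more careful than the paper about the last step (the definition of ``purely infinite'' quantifies over strictly positive $f$, so one does need to upgrade the bound on $T(1_A)$ over $\pi(A)$ to a strictly positive test function, e.g.\ via a weighted sum of translates $\sum_n c_n 1_{g_nA}$ covering the saturation); just note that ``$x\in gA\cap A$ forces $g\in K$'' should be run through Fubini on $G\otimes X$, since the hypothesis $gA\cap A=0$ is only mod null sets.
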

\begin{proof}
Take $0 \neq A \subset X$. Suppose that there exists a compact set $K \subset G$ such that $g A \cap A = 0$ for all $g \notin K$. Then we have 
$$T_m(1_A)1_A=\int_K \sigma_g(1_A) 1_A \rd m g \leq m(K).$$
This shows that $T_m$ is not purely infinite, hence $\sigma$ is not recurrent.
\end{proof}

Item $(\rm iv)$ of the following theorem was pointed out to us by Narutaka Ozawa. Compare it with the definition of amenability in \ref{section amenability}.
\begin{thm} \label{dissipative trivialization}
Let $\sigma : G \curvearrowright X$ be a nonsingular action of a locally compact group $G$. Then the following are equivalent:
\begin{enumerate}[ \rm (i)]
\item $\sigma$ is dissipative 
\item for some faithful probability measure $\mu$ on $X$, we have
$$ \int_G \frac{\rd g_* \mu}{ \rd \mu}(x) \rd g < +\infty, \quad \text{ for almost every } x \in X.$$
\item for every faithful probability measure $\mu$ on $X$, we have
$$ \int_G \frac{\rd g_* \mu}{ \rd \mu}(x) \rd g < +\infty, \quad \text{ for almost every } x \in X.$$
\item there exists a $G$-equivariant normal conditional expectation $E : \rL^\infty(G \otimes X) \rightarrow \rL^\infty(X)$ where $G$ acts diagonnaly on $G \otimes X$.
\item every ergodic component in the ergodic decomposition of $\sigma$ is a left translation action of the form $G \curvearrowright G/K$ for some compact subgroup $K$.
\end{enumerate}
\end{thm}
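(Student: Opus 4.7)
The plan is to establish the chain $(\mathrm{iii})\Rightarrow(\mathrm{ii})\Leftrightarrow(\mathrm{i})\Leftrightarrow(\mathrm{v})\Leftrightarrow(\mathrm{iv})\Rightarrow(\mathrm{iii})$, with the heart of the argument being the interplay between the fibered Haar measure $T$ over $\pi:X\to G\backslash X$ and the averaged measure $\tau_\mu := \int_G g_*\mu \,\mathrm{d}g$.

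First I would prove $(\mathrm{i})\Leftrightarrow(\mathrm{ii})\Leftrightarrow(\mathrm{iii})$. Given a faithful probability measure $\mu$ on $X$, by left-invariance of Haar measure $\tau_\mu$ is a $G$-invariant $\sigma$-finite-or-infinite measure on $X$, and an unraveling via Fubini gives $\tau_\mu = \bar\mu \circ T$ where $\bar\mu = \pi_*\mu$, together with the Radon--Nikodym formula $\frac{\mathrm{d}\tau_\mu}{\mathrm{d}\mu}(x) = \int_G \frac{\mathrm{d}g_*\mu}{\mathrm{d}\mu}(x)\,\mathrm{d}g$. By Proposition \ref{desintegration}, $T$ is semifinite iff $\tau_\mu$ is semifinite, iff this density is finite almost everywhere. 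This proves $(\mathrm{i})\Leftrightarrow(\mathrm{ii})$. Independence of the reference measure $\mu$ is then a formal consequence: if $\tau_\mu$ is semifinite and equivalent to the measure class of $X$, it is in particular equivalent to any other faithful probability $\nu$, and the corresponding $\tau_\nu$, being another $G$-invariant measure in the same class with the same fibered-Haar-measure structure on $G\backslash X$, is also semifinite by the same characterization; hence $(\mathrm{ii})\Leftrightarrow(\mathrm{iii})$.

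Next, for $(\mathrm{v})\Rightarrow(\mathrm{i})$: on each component of the form $G\curvearrowright G/K$ with $K$ compact, the pushforward of Haar measure provides a $G$-invariant semifinite faithful measure, so the fibered Haar measure is semifinite there; assembling the components shows $\sigma$ is dissipative. For the converse $(\mathrm{i})\Rightarrow(\mathrm{v})$, I would work in a single ergodic component $X_\alpha$ with the invariant semifinite measure $\tau_\alpha$ provided by $(\mathrm{ii})$. Semifiniteness of $T$ yields a measurable set $A\subset X_\alpha$ with $T(1_A)$ finite and positive (and by ergodicity constant) on $G\backslash X_\alpha$; then $A$ is a measurable ``cross-section'' whose $G$-translates cover $X_\alpha$ up to a null set. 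A standard Mackey-type argument identifies $X_\alpha$ with $G/K_\alpha$ where $K_\alpha$ is the stabilizer of a generic point. Finally, the existence of a $G$-invariant semifinite measure on $G/K_\alpha$, which is unique up to scale and comes from Haar modulo $K_\alpha$, forces $K_\alpha$ to be compact.

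Finally, for $(\mathrm{iv})\Leftrightarrow(\mathrm{v})$ (which also closes the loop back to $(\mathrm{i})$--$(\mathrm{iii})$): a $G$-equivariant normal conditional expectation $E:\rL^\infty(G\otimes X)\to\rL^\infty(X)$ disintegrates as $E(F)(x)=\int_G F(g,x)\,\mathrm{d}\mu_x(g)$ for a measurable family $(\mu_x)_{x\in X}$ of probability measures on $G$ satisfying the equivariance $h_*\mu_x=\mu_{hx}$. Restricted to the stabilizer $G_x$, this forces $\mu_x$ to be $G_x$-invariant under left translation of $G_x$ on $G$. A standard compactness argument -- pick $K\subset G$ compact with $\mu_x(K)>1/2$, so $hK\cap K\neq\emptyset$ for all $h\in G_x$, whence $G_x\subset KK^{-1}$ -- shows $G_x$ is compact. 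Together with the equivariant map $x\mapsto\mu_x$, this identifies the orbit structure and gives $(\mathrm{iv})\Rightarrow(\mathrm{v})$. Conversely, given $(\mathrm{v})$, on each component $G/K$ the normalized Haar measure of $K$ pushed into $G$ provides the needed base measure $\mu_{eK}$, and its translates define a CE satisfying $(\mathrm{iv})$.

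The main obstacle will be the step $(\mathrm{i})\Rightarrow(\mathrm{v})$: namely, extracting from semifiniteness of the fibered Haar measure a genuine measurable trivialization of each ergodic component as a homogeneous space $G/K$. This requires care with measurable cross-sections and the uniqueness of $G$-invariant semifinite measures on ergodic $G$-spaces; the other implications are essentially formal manipulations with the fibered Haar measure and the averaging operator.
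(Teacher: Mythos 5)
Your overall architecture and the easy implications match the paper: the equivalence of (i), (ii), (iii) via the averaged measure $\int_G g_*\mu \,\rd g$ and Proposition \ref{desintegration} is exactly the paper's argument, and (v)$\Rightarrow$(i), (v)$\Rightarrow$(iv) are fine. The paper differs in one structural choice worth noting: it proves (iii)$\Rightarrow$(iv) by the explicit formula $E(f)(x)=\int_G f(g,x)F(g^{-1}x)\rd g$ with $F(x)=\bigl(\int_G \tfrac{\rd g_*\mu}{\rd\mu}(x)\rd g\bigr)^{-1}$, and then gets (iv)$\Rightarrow$(i) in one line from $T\circ E=E\circ T'$ for the fibered Haar measures of $G\curvearrowright X$ and $G\curvearrowright G\otimes X$. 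This lets it avoid ever proving (iv)$\Rightarrow$(v) directly, which is the weakest link in your plan.

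The genuine gap is that both of your arguments ending at (v) stop short of the actual content of (v), namely that each ergodic component is \emph{transitive}. In (iv)$\Rightarrow$(v), your disintegration $E(F)(x)=\int_G F(g,x)\rd\mu_x(g)$ with $h_*\mu_x=\mu_{hx}$ and the $hK\cap K\neq\emptyset$ trick correctly show that stabilizers are compact, but compact (even trivial) stabilizers are perfectly compatible with properly ergodic, non-transitive actions; ``identifies the orbit structure'' is doing all the work and is unproven. (The map $x\mapsto\mu_x$ into $\mathrm{Prob}(G)$ \emph{can} be leveraged, since the translation action of $G$ on $\mathrm{Prob}(G)$ is smooth with compact stabilizers and a compact group acting ergodically on a standard space acts transitively, but none of that is in your text.) Similarly, in (i)$\Rightarrow$(v), a set $A$ with $T(1_A)$ finite and positive is not a cross-section, and ``a standard Mackey-type argument identifies $X_\alpha$ with $G/K_\alpha$ where $K_\alpha$ is the stabilizer of a generic point'' presupposes essential transitivity, which is precisely what must be ruled out against proper ergodicity. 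The paper fills this hole with real machinery: for free actions it invokes the cross-sections of \cite{KPV15} to produce an \'etale $G$-map $p:G\otimes S\to X$, uses ergodicity and semifiniteness of the fibered counting and Haar measures to show $S$ is atomic, and then reduces the non-free case to the free one by tensoring with a free mixing pmp action and applying the Schmidt--Walters theorem. You flagged (i)$\Rightarrow$(v) as the main obstacle, correctly; as written, neither it nor (iv)$\Rightarrow$(v) is a proof, and the simplest repair is to reroute (iv)$\Rightarrow$(i) through the fibered Haar measure as the paper does and concentrate all the hard work in a single implication (i)$\Rightarrow$(v) carried out with cross-sections.
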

\begin{proof}
The confition $$ \int_G \frac{\rd g_* \mu}{ \rd \mu}(x) \rd g < +\infty, \quad \text{ for almost every } x \in X$$
means precisely that the measure $\mu \circ T$ is semifinite where $T$ is the fibered Haar measure of $\sigma$. Thus the equivalence $(\rm i) \Leftrightarrow (\rm ii) \Leftrightarrow (\rm iii)$ follows from the second part of Proposition \ref{desintegration}.

$(\rm iii) \Rightarrow (\rm iv)$ Let $\mu$ be a faithful probability measure $\mu$ on $X$ and define
$$ F(x)=\left( \int_G \frac{\rd g_* \mu}{ \rd \mu}(x) \rd g \right)^{-1}.$$
Observe that
$$ \int_G F(g^{-1}x) \rd g = 1$$
for almost every $x \in X$. Therefore, we can define a $G$-equivariant normal conditional expectation $E  : \rL^\infty(G \otimes X) \rightarrow \rL^\infty(X)$ by the formula
$$ E(f)(x)=\int_G f(g,x) F(g^{-1}x) \rd g.$$

$(\rm iv) \Rightarrow (\rm i)$ Let $T$ be the fibered Haar measure of $G \curvearrowright X$ and $T'$ be the fibered Haar measure of $G \curvearrowright G \otimes X$. Then, since $E$ is $G$-equivariant, we have $T \circ E=E \circ T'$. Since $T'$ is semifinite, this forces $T$ to be also semifinite.

$(\rm i) \Rightarrow (\rm v)$. First observe that a nonsingular action is dissipative if and only if each one of its ergodic components is dissipative. Therefore, we may assume that $\sigma$ is ergodic, and it is enough to show that $\sigma$ must be transitive, i.e.\ that there exist a $G$-equivariant nonsingular map from $G$ to $X$. 

We first assume that $\sigma$ is free. Then here exists a pair $(S,p)$, called a \emph{cross-section} of $\sigma$ (see \cite[Section 4]{KPV15}), where $S$ is a nonsingular space and $p : G \otimes S \rightarrow X$ is a quotient map such that:
\begin{enumerate}[ \rm (i)]
\item $U \otimes S$ is a partial section of $p$ for some nonempty open subset $U$ in $G$.
\item $p$ is $G$-equivariant with respect to the actions $\rho \otimes \id : G \curvearrowright G \otimes S$ and $\sigma : G \curvearrowright X$.
\end{enumerate}
These conditions imply that the map $ p :G \otimes S \rightarrow X$ is \'etale since $gU \otimes S$ is a partial section of $p$ for all $g \in G$. Let $\Lambda$ be the fibered counting measure over $p$. Note that $\Lambda$ is $G$-equivariant. Therefore, for every $f \in \rL^0(S,[0,+\infty]) \subset \rL^0(G \otimes S, [0,+\infty])$ we have $\Lambda(f) \in \rL^0(X,[0,+\infty])^{G}$ which means that $\Lambda(f)$ is a constant in $[0,+\infty]$ because $\sigma$ is ergodic. This means that $\Lambda$ restricts to a measure $\mu$ on $S$. Note that $\mu(f) =\Lambda(f) \geq f$ for all $f \in \rL^0(S,[0,+\infty])$.

Let $\nu$ be the fibered Haar measure of $\sigma$ which is a faithful semifinite measure on $X$ because $\sigma$ is dissipative and ergodic. Let $T$ be the fibered Haar measure of $\rho \otimes \id : G \curvearrowright G \otimes S$. By the $G$-equivariance of $\Lambda$, we have $\mu \circ T=\Lambda \circ T=\nu \circ \Lambda$. Since $\Lambda$ and $\nu$ are semifinite, it follows that $\mu$ is also semifinite. But $\mu(f) \geq f$ for all $f \in \rL^0(S,[0,+\infty])$. This implies that $S$ is atomic. By restricting $p$ to an atom of $S$, we obtain a $G$-equivariant map from $G$ to $X$ hence $\sigma$ is transitive.

Now, suppose that $\sigma$ is not free. Take $\rho : G \curvearrowright Y$ a free mixing pmp action (which exists, see Remark \ref{freeness}). Then $\sigma \otimes \rho  : G \curvearrowright X \otimes Y$ is free and dissipative. Suppose that $\sigma$ is properly ergodic. Then $\sigma \otimes \rho$ is ergodic by \cite{SW81}. By the first part of the proof, we conclude that $\sigma \otimes \rho$ is transitive which implies that $\sigma$ is transitive (contradicting the assumption that $\sigma$ is properly ergodic).

$(\rm v) \Rightarrow (\rm i)$ is clear.
\end{proof}

Recall that a nonsingular action $\sigma$ of a locally compact group $G$ is called of type $\mathrm{I}$ if every ergodic component of its ergodic decomposition is transitive, i.e.\ of the form $G \curvearrowright G/H$ for some closed subgroup $H \subset G$. 
\begin{cor}
Every dissipative nonsingular action is of type $\mathrm{I}$. The converse is also true if the action is free.
\end{cor}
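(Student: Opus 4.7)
\emph{Proof plan.} The forward direction is essentially a restatement of Theorem~\ref{dissipative trivialization}.$(\mathrm{v})$: a dissipative action decomposes into ergodic components of the form $G \curvearrowright G/K$ with $K \subset G$ compact, and each such transitive action is by definition of type~$\mathrm{I}$.

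For the converse, I would argue as follows. Suppose $\sigma : G \curvearrowright X$ is free and of type $\mathrm{I}$. Using the ergodic decomposition, it suffices to treat the ergodic components separately, since the property of being dissipative is preserved under integral decompositions (by the semifiniteness criterion on the fibered Haar measure, or alternatively by Theorem~\ref{dissipative trivialization}.$(\mathrm{iii})$ applied componentwise). So I may assume that $\sigma$ itself is ergodic, hence transitive, i.e.\ conjugate to a left translation action $G \curvearrowright G/H$ for some closed subgroup $H \subset G$. The freeness of $\sigma$ translates into the fact that for almost every coset $gH$, the stabilizer $gHg^{-1}$ is trivial; since the stabilizers are all conjugate and the space of cosets has full measure, this forces $H = \{e\}$.

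Once $H = \{e\}$, the action is $G \curvearrowright G$ by left translation, which is dissipative: the trivial subgroup $\{e\}$ is compact, so this is exactly the base case of Theorem~\ref{dissipative trivialization}.$(\mathrm{v})$. Concretely, the Haar measure on $G$ itself is an invariant, semifinite measure supported on a single orbit, and the fibered Haar measure over the one-point quotient is just the Haar measure, which is semifinite.

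The only step that requires a little care is the passage between the global freeness of $\sigma$ on $X$ and the freeness of its ergodic components; this should follow from the fact that the ergodic decomposition is obtained as a measurable disintegration and freeness is a measurable, $G$-invariant property, so it descends to almost every fiber. Everything else is essentially bookkeeping using the equivalence $(\mathrm{i})\Leftrightarrow(\mathrm{v})$ of Theorem~\ref{dissipative trivialization}.
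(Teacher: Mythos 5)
Your proposal is correct and follows essentially the same route the paper intends: the corollary is stated without proof because it is an immediate consequence of Theorem~\ref{dissipative trivialization}.$(\mathrm{v})$ together with the observation that $G \curvearrowright G/H$ is dissipative exactly when $H$ is compact, and your reduction to ergodic components plus the freeness argument forcing $H=\{e\}$ is precisely that. Nothing is missing.
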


\begin{cor} \label{dissipative amenable}
A dissipative nonsingular action is amenable.
\end{cor}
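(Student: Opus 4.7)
The plan is to observe that this corollary is essentially immediate from the equivalences already established in Theorem \ref{dissipative trivialization}. Recall that a nonsingular action $\sigma : G \curvearrowright X$ is amenable in the sense of Zimmer precisely when there exists a $G$-equivariant conditional expectation $E : \rL^\infty(G \otimes X) \rightarrow \rL^\infty(X)$, where $G$ acts diagonally on $G \otimes X$. No normality is required.

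First, I would invoke item $(\rm iv)$ of Theorem \ref{dissipative trivialization}, which asserts that if $\sigma$ is dissipative, then there exists a $G$-equivariant \emph{normal} conditional expectation $E : \rL^\infty(G \otimes X) \rightarrow \rL^\infty(X)$ with the required equivariance property. Since a normal conditional expectation is in particular a conditional expectation, this immediately verifies the definition of amenability. Thus dissipativity is strictly stronger than amenability, the difference being the normality of the expectation.

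There is no real obstacle here; the proof of the corollary is a single sentence citing Theorem \ref{dissipative trivialization}. The actual work has been done in establishing the equivalence $(\rm i) \Leftrightarrow (\rm iv)$ of that theorem, where the expectation was constructed explicitly from the integrability formula $\int_G F(g^{-1}x) \rd g = 1$ for the function $F(x) = \left( \int_G \frac{\rd g_* \mu}{\rd \mu}(x) \rd g \right)^{-1}$.
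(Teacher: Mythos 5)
Your proof is correct and is exactly the argument the paper intends: the corollary follows immediately from the implication $(\rm i) \Rightarrow (\rm iv)$ of Theorem \ref{dissipative trivialization}, since a $G$-equivariant normal conditional expectation $\rL^\infty(G \otimes X) \rightarrow \rL^\infty(X)$ is in particular the conditional expectation required by Zimmer's definition of amenability. No further comment is needed.
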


\begin{cor} \label{dissipative invariant measure}
Let $\sigma : G \curvearrowright X$ be a nonsingular action. Suppose that $\sigma$ is dissipative and that $\mu$ is a $G$-invariant semifinite measure on $X$. Then there exists a $G$-invariant semifinite measure $\nu$ on $G \backslash X$ such that $\mu=\nu \circ T$ where $T$ is the fibered Haar measure of $\sigma$.
\end{cor}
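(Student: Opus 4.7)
The plan is to exhibit $\nu$ explicitly via Radon--Nikodym theory after fixing a reference semifinite measure on the quotient $G \backslash X$. First, I would pick an arbitrary faithful semifinite measure $\nu_0$ on $G \backslash X$ and set $\mu_0 := \nu_0 \circ T$. Since $\sigma$ is dissipative, the fibered Haar measure $T$ is semifinite (and obviously faithful), so by the ``moreover'' part of Proposition \ref{desintegration} the measure $\mu_0$ is faithful and semifinite on $X$. It is also $G$-invariant: indeed, $T$ takes values in $\rL^0(G \backslash X) \subset \rL^0(X)^G$, hence $T(\sigma_g f) = T(f)$ for every $g \in G$ and every $f \in \rL^0(X,[0,+\infty])$.

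Because $\mu_0$ is faithful, the given $G$-invariant semifinite measure $\mu$ is absolutely continuous with respect to $\mu_0$, and thus admits a Radon--Nikodym derivative $h \in \rL^0(X,[0,+\infty])$ such that $\mu = h \mu_0$. The $G$-invariance of both $\mu$ and $\mu_0$ forces $h \circ \sigma_g = h$ almost everywhere for every $g \in G$, so $h$ descends to a function, still denoted $h$, in $\rL^0(G\backslash X,[0,+\infty])$. Setting $\nu := h \nu_0$, the $\pi$-modularity of $T$ immediately yields, for every $f \in \rL^0(X,[0,+\infty])$,
\[
\mu(f) \;=\; \mu_0(h f) \;=\; \nu_0\bigl(T(h f)\bigr) \;=\; \nu_0\bigl(h \cdot T(f)\bigr) \;=\; \nu\bigl(T(f)\bigr),
\]
which is the desired identity $\mu = \nu \circ T$.

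The one step that genuinely requires argument is the semifiniteness of $\nu$. This is where I expect the main obstacle: a priori $\nu = h \nu_0$ could have a purely infinite part even though $\mu$ does not. To rule this out, suppose $A \subset G \backslash X$ is nonzero and $\nu$ is purely infinite on $A$. Since $\mu$ is semifinite, we can find $B \subset \pi^{-1}(A)$ with $0 < \mu(B) < +\infty$. Then $f := T(1_B) \in \rL^0(G\backslash X,[0,+\infty])$ satisfies $\nu(f) = \mu(B) < +\infty$ and $\supp(f) \subset A$. For any $\varepsilon > 0$ the set $\{f > \varepsilon\} \subset A$ has $\nu$-measure bounded by $\nu(f)/\varepsilon < +\infty$, so by the purely infinite assumption $\{f > \varepsilon\} = 0$; hence $f = 0$, which by faithfulness of $T$ forces $1_B = 0$, a contradiction. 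Thus $\nu$ is semifinite, completing the proof.

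A minor technicality is the descent of $h$ from $\rL^0(X)$ to $\rL^0(G\backslash X)$: strictly speaking, $h \circ \sigma_g = h$ holds only almost everywhere for each fixed $g$, and one must pass to a jointly measurable representative to conclude $h$ is $G$-invariant as an element of $\rL^0(X)^G = \rL^0(G \backslash X)$. This is standard since $G$ is Polish and the action is jointly measurable, and requires no new ideas.
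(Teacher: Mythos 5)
Your proof is correct, and it takes a genuinely different route from the paper's. The paper reduces, via Theorem \ref{dissipative trivialization}, to a single ergodic component $G \curvearrowright G/K$ with $K$ compact, where $T$ is the pushforward of Haar measure and uniqueness of the invariant measure on $G/K$ gives $\mu = \lambda T$; the scalars $\lambda$ over the ergodic components then (implicitly) assemble into the measure $\nu$ on $G\backslash X$. You instead stay entirely inside the fibered-measure formalism: fix a reference $\nu_0$, note that $\mu_0 := \nu_0 \circ T$ is faithful, semifinite and invariant, write $\mu = h\mu_0$ with $h$ invariant, push $h$ down and use $\pi$-modularity. This avoids both the ergodic decomposition and the classification of ergodic dissipative actions (the hardest part of Theorem \ref{dissipative trivialization}), needing only the semifiniteness and faithfulness of $T$ together with Proposition \ref{desintegration}, and it makes the measurable dependence of $\nu$ on the orbit automatic where the paper leaves it implicit. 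The price is some extra bookkeeping at the end, though in fact your final step can be shortened: $h = \rd\mu/\rd\mu_0$ is almost everywhere finite because $\mu$ is semifinite and $\mu_0$ is faithful and semifinite, so $\nu = h\nu_0$ is automatically semifinite and no contradiction argument is needed.

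One small correction: the inference ``$T$ takes values in $\rL^0(G\backslash X) \subset \rL^0(X)^G$, hence $T(\sigma_g f) = T(f)$'' is not valid as stated — invariance of the \emph{outputs} of $T$ (which follows from left-invariance of the Haar measure under the substitution $g \mapsto h^{-1}g$) is a different statement from invariance of $T$ under \emph{precomposition} with $\sigma_g$ (which requires the substitution $g \mapsto gh$, hence right-invariance). The identity $T \circ \sigma_g = T$, and with it the $G$-invariance of $\mu_0$, does hold for unimodular $G$; in the non-unimodular case a modular-function factor appears, but that already affects the statement of the corollary as written (and its intended applications are to $\R^*_+$-flows), so this is a convention issue shared with the paper rather than a defect of your argument.
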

\begin{proof}
By Theorem \ref{dissipative trivialization}, it is enough to consider the case where $\sigma$ is just a left translation action $G \curvearrowright G/K$ for some compact subgroup $K \subset G$. In that case, $\mu$ is simply a $G$-invariant measure on $G/K$ and $T$ is the pushforward of the Haar measure of $G$ on $G/K$. We indeed have $\mu=\lambda T$ for some $\lambda > 0$.
\end{proof}

The following result is due to Maharam in the case of flows or single transformations, and to Schmidt in the case of countable groups. We generalize it to locally compact groups.
\begin{prop}
Let $\sigma : G \curvearrowright X$ be a nonsingular action. Then $\sigma$ is dissipative if and only if its Maharam extension $\Mod(\sigma)$ is dissipative.
\end{prop}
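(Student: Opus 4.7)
The plan is to use characterization (v) of Theorem \ref{dissipative trivialization}, which asserts that a nonsingular action is dissipative precisely when each of its ergodic components is transitive of the form $G \curvearrowright G/K$ with $K$ compact. First I would reduce to the ergodic case. Since $Z := G \backslash X$ is also a quotient of $\Mod(X)$ (via $\Mod(X) \to X \to Z$), the functoriality of $\Mod$ under direct integrals gives $\Mod(X) = \int_Z \Mod(X_z)\, dz$ for the ergodic decomposition $X = \int_Z X_z\, dz$ of $\sigma$, and the ergodic decomposition of $\Mod(\sigma)$ is obtained by further ergodic-decomposing each $\Mod(\sigma_z)$. Hence $\Mod(\sigma)$ is dissipative if and only if $\Mod(\sigma_z)$ is dissipative for a.e.\ $z$, and $\sigma$ is dissipative if and only if $\sigma_z$ is for a.e.\ $z$. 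So it suffices to treat the case where $\sigma$ is ergodic.

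For the direction $\sigma$ dissipative $\Rightarrow$ $\Mod(\sigma)$ dissipative, write $\sigma \cong (G \curvearrowright G/K)$ with $K$ compact and let $\mu_0$ be the pushforward of Haar measure. Then $\mu_0$ is $G$-invariant, so the cocycle $h_g = \rd g_*\mu_0/\rd\mu_0 \equiv 1$, and under the trivialization $\pi_{\mu_0}:\Mod(X)\to (G/K)\otimes \R^*_+$ the Maharam extension becomes $(y,\lambda)\mapsto(gy,\lambda)$, i.e.\ $\Mod(\sigma)\cong \sigma\otimes\id_{\R^*_+}$, whose ergodic components are the transitive actions $G\curvearrowright (G/K)\times\{\lambda\}$. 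For the converse, assume $\sigma$ is ergodic and $\Mod(\sigma)$ is dissipative. Applying Corollary \ref{dissipative invariant measure} to the $\Mod(\sigma)$-invariant measure $\tau$ yields a $G$-invariant semifinite measure $\tilde\tau$ on $G\backslash \Mod(X)$ with $\tau = \tilde\tau\circ\tilde T$. The flow $\theta$ commutes with $\Mod(\sigma)$ and satisfies $\tau\circ\theta_\lambda=\lambda\tau$, so it descends to the Krieger flow $\tilde\theta$ on $G\backslash\Mod(X)$ with $\tilde\tau\circ\tilde\theta_\lambda = \lambda\tilde\tau$. Thus $(G\backslash \Mod(X),\tilde\tau,\tilde\theta)$ is a measure scaling flow, hence dissipative by the dissipativity of measure scaling flows proved inside Theorem \ref{dissipative trivialization}, and $\tilde\theta$ is ergodic because $\R^*_+\backslash(G\backslash\Mod(X)) = G\backslash X$ is a point.

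An ergodic dissipative action of $\R^*_+$ is $\R^*_+/H$ for closed $H\leq \R^*_+$, and since $\R^*_+$ has no nontrivial compact subgroups, $H\in\{\{1\},\R^*_+\}$. If $H=\R^*_+$ then $\Mod(\sigma)$ is itself ergodic, so $\sigma$ is of type $\III_1$; but being ergodic and dissipative, $\Mod(X)\cong G/K_1$ with $G$ acting by left translation and $\theta$ acting by right translation through a one-parameter subgroup $\{n_\lambda\}\subset N_G(K_1)/K_1$, so $X = \R^*_+\backslash \Mod(X) \cong G/\tilde H_1$ for the closed subgroup $\tilde H_1\leq G$ generated by $K_1$ and the $n_\lambda$. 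Then $\sigma$ is transitive, hence of type $\mathrm{I}$ or $\mathrm{II}$, contradicting type $\III_1$. So $H=\{1\}$ and $G\backslash\Mod(X)\cong \R^*_+$ freely; picking an ergodic component $Y\cong G/K$ of $\Mod(\sigma)$, every other component is a $\theta$-translate of $Y$ and $G$-equivariantly isomorphic to it, so $(z,\lambda)\mapsto\theta_\lambda(z)$ provides a $G$- and $\theta$-equivariant identification $\Mod(X)\cong (G/K)\otimes \R^*_+$ with $G$ acting on the first and $\theta$ on the second coordinate. Quotienting by $\theta$ gives $X\cong G/K$, so $\sigma$ is transitive with compact stabilizer, hence dissipative.

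The main obstacle will be the ergodic-decomposition reduction in the first paragraph, since making precise the functoriality of $\Mod$ under direct integrals and verifying that the ergodic decomposition of $\Mod(\sigma)$ really refines that of $\sigma$ requires some bookkeeping that is only sketched in the paper. A secondary subtlety is the last step, where one must justify that a common compact subgroup $K$ can be chosen for all ergodic components $\theta_\lambda(Y)$ simultaneously; this follows from the fact that the $\theta_\lambda$ are $G$-equivariant isomorphisms, but writing the equivariant trivialization $\Mod(X)\cong(G/K)\otimes\R^*_+$ cleanly requires a measurable section of the orbit map $\lambda\mapsto\theta_\lambda(Y)$.
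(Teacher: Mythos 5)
Your argument is correct, but it follows a genuinely different route from the paper's. The paper's proof never touches the ergodic decomposition: it introduces the fibered Haar measures $S$, $\tilde S$, $T$, $T'$ of the modular flow $\theta$, the Krieger flow $\tilde\theta$, $\sigma$ and $\Mod(\sigma)$ respectively, observes the commutation $\tilde S\circ T'=T\circ S$ (a Fubini-type identity between $\rL^0(\Mod(X))$ and $\rL^0(G\backslash X)$), and concludes in two lines: $S$ is semifinite because $\theta$ is dissipative, so $T$ semifinite forces $T'$ semifinite; conversely, pushing the $G$-invariant measure $\tau$ down via Corollary \ref{dissipative invariant measure} exhibits $\tilde\theta$ as a measure scaling flow, hence dissipative with $\tilde S$ semifinite, and the same identity then forces $T$ to be semifinite. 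You instead reduce to the ergodic case and invoke the structural characterization (v) of Theorem \ref{dissipative trivialization} to identify everything with transitive actions $G\curvearrowright G/K$; the converse direction of your argument in fact rediscovers, in the ergodic case, the same key observation as the paper (that the Krieger flow is a measure scaling flow, hence free and transitive on $\R^*_+$), but then builds an explicit equivariant trivialization $\Mod(X)\cong(G/K)\otimes\R^*_+$ rather than just comparing fibered measures. What your approach buys is a concrete picture of the Maharam extension of a dissipative action; what it costs is exactly the two pieces of bookkeeping you flag yourself — compatibility of $\Mod$ with direct integrals and the measurable identification of the ergodic components along the $\theta$-orbit — neither of which is needed in the paper's fibered-measure computation. (Two small points: in your case analysis $H=\R^*_+$ can be excluded immediately, since $\R^*_+\curvearrowright\{{\rm pt}\}$ is not dissipative as $\R^*_+$ is noncompact, so the detour through type $\III_1$ versus type $\mathrm{I}$ is unnecessary; and the dissipativity of measure scaling flows is established in the unnumbered theorem of the appendix on the modular bundle, not inside Theorem \ref{dissipative trivialization}.)
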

\begin{proof}
Let $T$ and $T'$ be the fibered Haar measure of $\sigma$ and $\Mod(\sigma)$ respectively. Let $\tilde{\theta} : \R^*_+ \curvearrowright G \backslash \Mod(X)$ be the Krieger flow induced by $\theta$. Let $S$ and $\tilde{S}$ be the fibered Haar measures of $\theta$ and $\tilde{\theta}$ respectively. Then we have $\tilde{S} \circ T'=T \circ S$. Since $\theta$ is dissipative, we know that $S$ is semifinite. This already shows that if $\sigma$ is dissipative, i.e.\ $T$ is semifinite, then $T'$ must also be semifinite hence $\Mod(\sigma)$ is dissipative.

Conversely, suppose that $\Mod(\sigma)$ is dissipative. Since the modular measure $\tau$ on $\Mod(X)$ is $G$-invariant, there exists a faithful semifinite measure $\nu$ on $G \backslash \Mod(X)$ such that $\tau=\nu \circ T'$. Since $\tilde{\theta} \circ T=T \circ \theta$, we get $\nu \circ \tilde{\theta}_\lambda=\lambda \nu$ for all $\lambda \in \R^*_+$. Thus $\tilde{\theta}$ is a measure scaling flow. In particular, it is dissipative and $\tilde{S}$ is semifinite. We conclude that $\tilde{S} \circ T'=T \circ S$ is semifinite, hence $T$ is semifinite which means that $\sigma$ is dissipative.
\end{proof}

\begin{prop} \label{L2 criterion recurrent}
Let $\sigma : G \curvearrowright X$ be a nonsingular action. If $\sigma$ is recurrent, then 
$$ \int_G \left( \frac{\rd g_* \mu }{\rd \mu } \right)^s  \rd g=+\infty $$
for every faithful semifinite measure $\mu$ on $X$ and every $s \in \R$.
\end{prop}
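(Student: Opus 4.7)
The plan is to reduce the claim to the corresponding statement on the Maharam extension. By the proposition immediately preceding this one, $\Mod(\sigma) : G \curvearrowright \Mod(X)$ is recurrent, so its fibered Haar measure $T'$ is purely infinite: $T'(F) = +\infty$ a.e.\ on $G \backslash \Mod(X)$ for every $F \in \rL^0(\Mod(X), \,]0, +\infty])$.

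For each $s \in \R$, I view the Haagerup density $\mu^s \in \Lambda^s_+(X)$ as an element of $\rL^0(\Mod(X), [0, +\infty])$. Since $\mu$ is faithful and semifinite, $\mu^s$ is strictly positive and finite almost everywhere on $\Mod(X)$. Setting $\rho_g := \frac{\rd g_* \mu}{\rd \mu} \in \rL^0(X)$ and using the explicit description of the Maharam flow in the trivialization $\pi_\mu$,
$$\pi_\mu \circ \Mod(\sigma_g) \circ \pi_\mu^{-1}(x, \lambda) = (gx, \lambda \rho_g(x)),$$
together with the fact that $\mu^s$ corresponds to a pure power of $\lambda$ under $\pi_\mu$, a direct computation using the cocycle identity for $\rho_g$ yields $\Mod(\sigma_g)(\mu^s) = f_g \cdot \mu^s$, where $f_g \in \rL^0(X)$ is a power of $\rho_g$. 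Integrating over $G$ and factoring the $g$-independent density $\mu^s$ out of the fibered integral by linearity and $\pi$-modularity of $T'$, I obtain
$$T'(\mu^s) = \left( \int_G f_g \, \rd g \right) \mu^s.$$
Since $T'(\mu^s) = +\infty$ a.e.\ and $0 < \mu^s < +\infty$ a.e., it follows that $\int_G f_g \, \rd g = +\infty$ a.e. Unwinding conventions, $f_g$ is either $\rho_g^s$ or (after the substitution $g \mapsto g^{-1}$, which modifies the integral only by a modular function of $G$ that is bounded on compact sets and hence irrelevant) $\rho_g^{-s}$; as the claim is symmetric under $s \mapsto -s$, this yields the statement for every $s \in \R$.

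The one genuinely delicate step is verifying the factorization $T'(\mu^s) = (\int_G f_g \, \rd g)\,\mu^s$ rigorously inside the modular-bundle formalism. The two ingredients — that $\Mod(\sigma_g)$ multiplies $\mu^s$ by an $X$-function which is a power of $\rho_g$, and that this $g$-independent factor $\mu^s$ can be pulled out of the fibered integral — are both direct consequences of the trivialization formula combined with normality and $\pi$-modularity of $T'$, but some care is needed since $\mu^s$ lives on $\Mod(X)$ while $\rho_g^s$ and the final conclusion live on the base $X$.
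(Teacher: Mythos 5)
Your proposal is correct and follows essentially the same route as the paper: pass to the Maharam extension, use that its fibered Haar measure is purely infinite, apply it to the density $\mu^s$, and factor out $\mu^s$ via the identity $\Mod(\sigma_g)(\mu^s)=\left(\frac{\rd g_*\mu}{\rd\mu}\right)^s\mu^s$. The paper's proof is just a more compressed version of this computation (and your worry about $g$ versus $g^{-1}$ is harmless, since the direct computation produces $\rho_g^s$ with no inversion needed).
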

\begin{proof}
Since $\sigma$ is recurrent, $\Mod(\sigma)$ is also recurrent. Therefore, the fibered Haar measure $T$ of $\Mod(\sigma)$ is purely infinite. In particular, viewing $\mu$ as a $1$-density $\mu \in \Lambda^1(X)^+$, we get $T(\mu^s) = +\infty$ for all $s \in \R$. In particular, we get $T(\mu^{s})\mu^{-s}=+\infty$ for all $s \in \R$, which is precisely the desired conclusion.
\end{proof}

\begin{prop} \label{lattice dissipative}
Let $\sigma : G \curvearrowright X$ be a nonsingular action and let $H < G$ be a closed subgroup. If $\sigma$ is dissipative then $\sigma|_{H}$ is dissipative. If $G/H$ is compact and $\sigma|_{H}$ is dissipative, then $\sigma$ is dissipative.
\end{prop}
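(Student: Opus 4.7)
For the first direction, the plan is to rely on the structural characterization~(v) of Theorem~\ref{dissipative trivialization}. Since $\sigma$ is dissipative, the ergodic decomposition of $X$ consists of components of the form $G/K$ with $K$ a compact subgroup of $G$. Restricting the action to $H$, each such component further decomposes into $H$-orbits of the form $HgK/K \cong H/(H\cap gKg^{-1})$. Because $gKg^{-1}$ is compact and $H$ is closed in $G$, the intersection $H\cap gKg^{-1}$ is a compact subgroup of $H$. This exhibits the ergodic decomposition of $\sigma|_H$ as consisting of components $H/K'$ with $K'$ compact, and the measure on each such component is (up to scale) the Haar measure, since the restriction of the Haar measure of $G/K$ disintegrates in the standard way over $H\backslash G/K$. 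Applying~(v) once more gives that $\sigma|_H$ is dissipative.

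For the second direction, the plan is to use the conditional expectation characterization~(iv). Since $\sigma|_H$ is dissipative, there exists $F_H\in \rL^0_+(X)$ such that $\int_H F_H(h^{-1}x)\,\rd h = 1$ for $\mu$-a.e.\ $x$. I will construct an analogous $F\in \rL^0_+(X)$ satisfying $\int_G F(g^{-1}x)\,\rd g = 1$. Fixing a measurable section $s:G/H\to G$ and a quasi-invariant probability measure $\rd \bar g$ on $G/H$ (which exists since $G/H$ is compact), Weil's integration formula gives a decomposition $\int_G f(g)\,\rd g = \int_{G/H}\int_H f(s(\bar g)h)\chi(\bar g,h)\,\rd h\,\rd \bar g$ for a suitable positive cocycle $\chi$. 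The candidate is $F = F_H / c$, modulated by an appropriate factor coming from $\chi$ and the modular functions of $G$ and $H$, where $c = \int_{G/H}\chi_0\,\rd \bar g$ is a finite normalizing constant. The key verification is that for $\mu$-a.e.\ $x$, the inner integral $\int_H F_H(h^{-1}s(\bar g)^{-1}x)\,\rd h$ equals $1$ for $\rd\bar g$-a.e.\ $\bar g$: this follows from the $H$-hypothesis by pulling back along the nonsingular map $x\mapsto s(\bar g)^{-1}x$ and invoking Fubini on the $\mu\otimes \rd\bar g$-null ``bad set''. Integrating over the compact quotient $G/H$ then yields the desired identity.

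The main technical obstacle is precisely the presence of the cocycle $\chi$ in Weil's formula when $G$ and $H$ are not unimodular (equivalently when $G/H$ admits no $G$-invariant finite measure). In that case $F$ must be multiplied by a Radon--Nikodym factor built from the rho-function, so that the $H$-integration still collapses to $1$ fiberwise; cocompactness of $G/H$ guarantees that this rho-function is bounded above and below by positive constants on the relevant sets, which is exactly what is needed for $F$ to remain a well-defined element of $\rL^0_+(X)$ and for the compact averaging to produce a finite answer. All other ingredients (Fubini, nonsingularity of translations, Tonelli-style computations of the Radon--Nikodym cocycle) are routine.
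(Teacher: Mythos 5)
The paper states this proposition without proof, so there is no argument of the authors to compare yours against; I am judging the proposal on its own merits. Your first direction is essentially correct: on an ergodic component $G/K$ with $K$ compact, each $H$-orbit $HgK$ is closed in $G$ (product of a closed set with a compact set), so the orbit partition is smooth and the ergodic components of $\sigma|_H$ are the transitive actions $H/(H\cap gKg^{-1})$ with compact stabilizer; combined with the fact that dissipativity can be checked on ergodic components, characterization $(\mathrm{v})$ of Theorem \ref{dissipative trivialization} finishes the argument.

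The second direction has a genuine gap, precisely at the point you flag and then wave away. In the quotient integral formula the cocycle $\chi(\bar g,h)$ carries the factor $\Delta_G(h)/\Delta_H(h)$ \emph{inside} the $H$-integral, and this is an unbounded character of $H$ whenever $\Delta_G|_H\neq\Delta_H$. This does happen for cocompact closed subgroups: a minimal parabolic of $\SL_2(\R)$, or the stabilizer of an end of a regular tree inside $\Aut(T)$, are cocompact and non-unimodular inside a unimodular group. Cocompactness of $G/H$ only bounds the rho-function on a compact section; it cannot bound its variation along a coset, since $\rho(gh)=\frac{\Delta_H(h)}{\Delta_G(h)}\rho(g)$. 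Nor can this factor be absorbed into a modification of $F_H$: it is a function of the group element $h$, not of the point $h^{-1}x$, and the hypothesis $\int_H F_H(h^{-1}x)\,\rd h=1$ gives no control on $\int_H F_H(h^{-1}x)\frac{\Delta_G(h)}{\Delta_H(h)}\,\rd h$, which may be infinite. So the construction of $F$ does not close as written.

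A workable fix is to abandon the exact conditional-expectation identity and aim instead for the finiteness criterion $(\mathrm{ii})$--$(\mathrm{iii})$ of Theorem \ref{dissipative trivialization}, which is insensitive to these normalizations. Choose a compact set $C=C_0V$ with $HC_0=G$ and $V=V^{-1}$ a compact neighbourhood of $e$; then $w(g):=m_H(gC^{-1}\cap H)$ satisfies $0<\epsilon\leq w\leq M<\infty$, and Tonelli gives
$$\epsilon\int_G f(g)\,\rd g\;\leq\;\int_C\int_H f(hc)\,\rd m_H(h)\,\rd m_G(c)\;\leq\;M\int_G f(g)\,\rd g$$
for every measurable $f\geq 0$. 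Applying this to $f(g)=\frac{\rd g_*\mu}{\rd\mu}(x)$ and integrating the measures $c_*\mu$ over $C$ first yields
$$\int_G\frac{\rd g_*\mu}{\rd\mu}(x)\,\rd g\;\leq\;\epsilon^{-1}\,\frac{\rd\lambda}{\rd\mu}(x)\int_H\frac{\rd h_*\lambda}{\rd\lambda}(x)\,\rd m_H(h),\qquad \lambda=\int_C c_*\mu\,\rd m_G(c),$$
and both factors on the right are finite almost everywhere: the first because $\lambda$ is a finite measure equivalent to $\mu$, the second because criterion $(\mathrm{iii})$ for $\sigma|_H$ holds for \emph{every} faithful probability measure, in particular for $\lambda/\lambda(X)$. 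This bypasses rho-functions and modular functions entirely.
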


\bibliographystyle{plain}

\end{document}